\title{$\mathbb{P}^n$-functors}
\author{Rina Anno}
\email{ranno@math.ksu.edu}
\address{Department of Mathematics \\
Kansas State University \\
138 Cardwell Hall \\
Manhattan, KS 66506\\
USA}
\author{Timothy Logvinenko} 
\email{LogvinenkoT@cardiff.ac.uk} 
\address{School of Mathematics\\ 
Cardiff University\\
Senghennydd Road,\\
Cardiff, CF24 4AG\\
UK}
\let\amsamp=&
\gdef\smallampmatrix{%
  \begingroup
  \let&=\amsamp
  \begin{smallmatrix}%
}
\gdef\endsmallampmatrix{\end{smallmatrix}\endgroup}
\DeclareMathOperator{\codim}{codim}
\DeclareMathOperator{\krn}{Ker}
\DeclareMathOperator{\img}{Im}
\DeclareMathOperator{\homm}{Hom}
\DeclareMathOperator{\shhomm}{{\it\mathcal{H}om\rm}}
\DeclareMathOperator{\eend}{End}
\DeclareMathOperator{\autm}{Aut}
\DeclareMathOperator{\picr}{Pic}
\DeclareMathOperator{\tot}{Tot}
\DeclareMathOperator{\cl}{Cl}
\DeclareMathOperator{\spec}{Spec}
\DeclareMathOperator{\relspec}{{\mathit{Spec}}}
\DeclareMathOperator{\proj}{Proj\;}
\DeclareMathOperator{\ext}{Ext}
\DeclareMathOperator{\tor}{Tor}
\DeclareMathOperator{\ev}{ev}
\DeclareMathOperator{\trace}{tr}
\DeclareMathOperator{\composition}{cmps}
\DeclareMathOperator{\action}{act}
\DeclareMathOperator{\qcohcat}{QCoh}
\DeclareMathOperator{\cohcat}{Coh}
\DeclareMathOperator{\modd}{\bf Mod}
\DeclareMathOperator{\lder}{\bf L}
\DeclareMathOperator{\rder}{\bf R}
\DeclareMathOperator{\ldertimes}{\overset{\lder}{\otimes}}
\DeclareMathOperator{\id}{Id}
\DeclareMathOperator{\cone}{Cone}
\DeclareMathOperator{\opp}{{opp}}
\DeclareMathOperator{\fg}{{\it fg}}
\DeclareMathOperator{\qrep}{\it \mathcal{Q}r}
\DeclareMathOperator{\hproj}{\mathcal{P}}
\DeclareMathOperator{\acyc}{\it \mathcal{A}c}
\DeclareMathOperator{\semifree}{\mathcal{S}\mathcal{F}}
\DeclareMathOperator{\sffg}{\mathcal{S}\mathcal{F}_{\fg}}
\DeclareMathOperator{\perf}{{\it \mathcal{P}erf}}
\DeclareMathOperator{\hmtpy}{{Ho}}
\DeclareMathOperator{\tria}{{Tria}}
\DeclareMathOperator{\pretriag}{{Pre\text{-}Tr}}
\DeclareMathOperator{\TPair}{{\bf TPair}}
\DeclareMathOperator{\alg}{{\bf Alg}}
\DeclareMathOperator{\fmcatweak}{{\it \mathcal{F}\mathcal{M}uk}}
\begin{document}

\def\bv{\mathbf{v}}
\def\kgc_{K^*_G(\mathbb{C}^n)}
\def\kgchi_{K^*_\chi(\mathbb{C}^n)}
\def\kgcf_{K_G(\mathbb{C}^n)}
\def\kgchif_{K_\chi(\mathbb{C}^n)}
\def\gpic_{G\text{-}\picr}
\def\gcl_{G\text{-}\cl}
\def\trch_{{\chi_{0}}}
\def\regring{{R}}
\def\regrep{{V_{\text{reg}}}}
\def\givrep{{V_{\text{giv}}}}
\def\lbar{{(\mathbb{Z}^n)^\vee}}
\def\genpx_{{p_X}}
\def\genpy_{{p_Y}}
\def\genpcn_{p_{\mathbb{C}^n}}
\def\gnat{gnat}
\def\twalg{{\regring \rtimes G}}
\def\L{{\mathcal{L}}}
\def\O{{\mathcal{O}}}
\def\gcd{\mbox{gcd}}
\def\lcm{\mbox{lcm}}
\def\tf{{\tilde{f}}}
\def\tD{{\tilde{D}}}
\def\A{{\mathcal{A}}}
\def\B{{\mathcal{B}}}
\def\C{{\mathcal{C}}}
\def\D{{\mathcal{D}}}
\def\F{{\mathcal{F}}}
\def\H{{\mathcal{H}}}
\def\L{{\mathcal{L}}}
\def\N{{\mathcal{N}}}
\def\R{{\mathcal{R}}}
\def\barA{{\bar{\mathcal{A}}}}
\def\barAi{{\bar{\mathcal{A}}_1}}
\def\barAj{{\bar{\mathcal{A}}_2}}
\def\barB{{\bar{\mathcal{B}}}}
\def\barC{{\bar{\mathcal{C}}}}
\def\barD{{\bar{\mathcal{D}}}}
\def\M{{\mathcal{M}}}
\def\Aopp{{\A^{\opp}}}
\def\Bopp{{\B^{\opp}}}
\def\Copp{{\C^{\opp}}}
\def\aA{\leftidx{_{a}}{\A}}
\def\bA{\leftidx{_{b}}{\A}}
\def\Aa{{\A_a}}
\def\Ea{E_a}
\def\aE{\leftidx{_{a}}{E}{}}
\def\Eb{E_b}
\def\bE{\leftidx{_{b}}{E}{}}
\def\Fa{F_a}
\def\aF{\leftidx{_{a}}{F}{}}
\def\Fb{F_b}
\def\bF{\leftidx{_{b}}{F}{}}
\def\aM{\leftidx{_{a}}{M}{}}
\def\aMb{\leftidx{_{a}}{M}{_{b}}}
\def\biAMA{\leftidx{_{\A}}{M}{_{\A}}}
\def\biAMC{\leftidx{_{\A}}{M}{_{\C}}}
\def\biCMA{\leftidx{_{\C}}{M}{_{\A}}}
\def\biCMC{\leftidx{_{\C}}{M}{_{\C}}}
\def\biALA{\leftidx{_{\A}}{L}{_{\A}}}
\def\biALC{\leftidx{_{\A}}{L}{_{\C}}}
\def\biCLA{\leftidx{_{\C}}{L}{_{\A}}}
\def\biCLC{\leftidx{_{\C}}{L}{_{\C}}}
\def\Na{{N_a}}
\def\modk{{\modd\text{-}k}}
\def\modA{{\modd\text{-}\A}}
\def\modbar{{\overline{\modd}}}
\def\modbarA{{\overline{\modd}\text{-}\A}}
\def\modbarAopp{{\overline{\modd}\text{-}\Aopp}}
\def\modB{{\modd\text{-}\B}}
\def\modC{{\modd\text{-}\C}}
\def\modD{{\modd\text{-}\D}}
\def\modbarB{{\overline{\modd}\text{-}\B}}
\def\modbarC{{\overline{\modd}\text{-}\C}}
\def\modbarD{{\overline{\modd}\text{-}\D}}
\def\modbarBopp{{\overline{\modd}\text{-}\Bopp}}
\def\AmodA{{\A\text{-}\modd\text{-}\A}}
\def\AmodB{{\A\text{-}\modd\text{-}\B}}
\def\BmodB{{\B\text{-}\modd\text{-}\B}}
\def\BmodA{{\B\text{-}\modd\text{-}\A}}
\def\DmodD{{\D\text{-}\modd\text{-}\D}}
\def\AmodbarA{\A\text{-}{\overline{\modd}\text{-}\A}}
\def\AmodbarB{\A\text{-}{\overline{\modd}\text{-}\B}}
\def\AmodbarC{\A\text{-}{\overline{\modd}\text{-}\C}}
\def\AmodbarD{\A\text{-}{\overline{\modd}\text{-}\D}}
\def\BmodbarA{\B\text{-}{\overline{\modd}\text{-}\A}}
\def\BmodbarB{\B\text{-}{\overline{\modd}\text{-}\B}}
\def\BmodbarC{\B\text{-}{\overline{\modd}\text{-}\C}}
\def\BmodbarD{\B\text{-}{\overline{\modd}\text{-}\D}}
\def\CmodbarA{\C\text{-}{\overline{\modd}\text{-}\A}}
\def\CmodbarB{\C\text{-}{\overline{\modd}\text{-}\B}}
\def\CmodbarC{\C\text{-}{\overline{\modd}\text{-}\C}}
\def\CmodbarD{\C\text{-}{\overline{\modd}\text{-}\D}}
\def\DmodbarA{\D\text{-}{\overline{\modd}\text{-}\A}}
\def\DmodbarB{\D\text{-}{\overline{\modd}\text{-}\B}}
\def\DmodbarC{\D\text{-}{\overline{\modd}\text{-}\C}}
\def\DmodbarD{\D\text{-}{\overline{\modd}\text{-}\D}}
\def\sfA{{\semifree(\A)}}
\def\sfB{{\semifree(\B)}}
\def\sffgA{{\sffg(\A)}}
\def\sffgB{{\sffg(\B)}}
\def\hprojA{{\hproj(\A)}}
\def\hprojB{{\hproj(\B)}}
\def\qrepA{{\qrep(\A)}}
\def\qrepB{{\qrep(\B)}}
\def\opp{{\text{opp}}}
\def\perfsf{{\semifree^{\perf}}}
\def\prfhpr{{\hproj^{\scriptscriptstyle\perf}}}
\def\prfhprA{{\prfhpr(\A)}}
\def\prfhprB{{\prfhpr(\B)}}
\def\prfhprAopp{{\prfhpr(\Aopp)}}
\def\prfhprBopp{{\prfhpr(\Bopp)}}
\def\perfsfA{{\perfsf(\A)}}
\def\perfsfB{{\perfsf(\B)}}
\def\qrhpr{{\hproj^{qr}}}
\def\qrhprA{{\qrhpr(\A)}}
\def\qrhprB{{\qrhpr(\B)}}
\def\qrsf{{\semifree^{qr}}}
\def\qrsf{{\semifree^{qr}}}
\def\qrsfA{{\qrsf(\A)}}
\def\qrsfB{{\qrsf(\B)}}
\def\Aperfsf{{\semifree^{\A\text{-}\perf}(\AbimB)}}
\def\Bperfsf{{\semifree^{\B\text{-}\perf}(\AbimB)}}
\def\Aprfhpr{{\hproj^{\A\text{-}\perf}(\AbimB)}}
\def\Bprfhpr{{\hproj^{\B\text{-}\perf}(\AbimB)}}
\def\Aqrhpr{{\hproj^{\A\text{-}qr}(\AbimB)}}
\def\Bqrhpr{{\hproj^{\B\text{-}qr}(\AbimB)}}
\def\Aqrsf{{\semifree^{\A\text{-}qr}(\AbimB)}}
\def\Bqrsf{{\semifree^{\B\text{-}qr}(\AbimB)}}
\def\modAopp{{\modd\text{-}\Aopp}}
\def\modBopp{{\modd\text{-}\Bopp}}
\def\AmodA{{\A\text{-}\modd\text{-}\A}}
\def\AmodB{{\A\text{-}\modd\text{-}\B}}
\def\AmodC{{\A\text{-}\modd\text{-}\C}}
\def\BmodA{{\B\text{-}\modd\text{-}\A}}
\def\BmodB{{\B\text{-}\modd\text{-}\B}}
\def\BmodC{{\B\text{-}\modd\text{-}\C}}
\def\CmodA{{\C\text{-}\modd\text{-}\A}}
\def\CmodB{{\C\text{-}\modd\text{-}\B}}
\def\CmodC{{\C\text{-}\modd\text{-}\C}}
\def\AbimA{{\A\text{-}\A}}
\def\AbimC{{\A\text{-}\C}}
\def\BbimA{{\B\text{-}\A}}
\def\BbimB{{\B\text{-}\B}}
\def\BbimC{{\B\text{-}\C}}
\def\BbimD{{\B\text{-}\D}}
\def\CbimA{{\C\text{-}\A}}
\def\CbimB{{\C\text{-}\B}}
\def\CbimC{{\C\text{-}\C}}
\def\DbimA{{\D\text{-}\A}}
\def\DbimB{{\D\text{-}\B}}
\def\DbimC{{\D\text{-}\C}}
\def\DbimD{{\D\text{-}\D}}
\def\AhprA{{\hproj\left(\AbimA\right)}}
\def\BhprB{{\hproj\left(\BbimB\right)}}
\def\AhprB{{\hproj\left(\AbimB\right)}}
\def\BhprA{{\hproj\left(\BbimA\right)}}
\def\AbarA{{\overline{\A\text{-}\A}}}
\def\AbarB{{\overline{\A\text{-}\B}}}
\def\BbarA{{\overline{\B\text{-}\A}}}
\def\BbarB{{\overline{\B\text{-}\B}}}
\def\QAbimB{{Q\A\text{-}\B}}
\def\AbimB{{\A\text{-}\B}}
\def\AonebimB{{\A_1\text{-}\B}}
\def\AtwobimB{{\A_2\text{-}\B}}
\def\BbimA{{\B\text{-}\A}}
\def\Aperf{{\A\text{-}\perf}}
\def\Bperf{{\B\text{-}\perf}}
\def\MddA{{M^{\tilde{\A}}}}
\def\MddB{{M^{\tilde{\B}}}}
\def\MhdA{{M^{h\A}}}
\def\MhdB{{M^{h\B}}}
\def\NhdB{{N^{h\B}}}
\def\Cat{{\it \mathcal{C}at}}
\def\twoCat{{\it 2\text{-}\;\mathcal{C}at}}
\def\DGCat{{DG\text{-}Cat}}
\def\HoDGCat{{\hmtpy(\DGCat)}}
\def\HoDGCatV{{\hmtpy(\DGCat_\mathbb{V})}}
\def\tr{{tr}}
\def\pretr{{pretr}}
\def\kctr{{kctr}}
\def\PreTrCat{{\DGCat^\pretr}}
\def\KcTrCat{{\DGCat^\kctr}}
\def\HoPretrCat{{\hmtpy(\PreTrCat)}}
\def\HoKcTrCat{{\hmtpy(\KcTrCat)}}
\def\Aquasirep{{\A\text{-}qr}}
\def\QAquasirep{{Q\A\text{-}qr}}
\def\Bquasirep{{\B\text{-}qr}} 
\def\lderA{{\tilde{\A}}} 
\def\lderB{{\tilde{\B}}} 
\def\adjunit{{\text{adj.unit}}}
\def\adjcounit{{\text{adj.counit}}}
\def\degzero{{\text{deg.0}}}
\def\degone{{\text{deg.1}}}
\def\degminusone{{\text{deg.-$1$}}}
\def\bareta{{\overline{\eta}}}
\def\barzeta{{\overline{\zeta}}}
\def\Ract{{R {\action}}}
\def\barRact{{\overline{\Ract}}}
\def\actL{{{\action} L}}
\def\baractL{{\overline{\actL}}}
\def\Ainfty{{A_{\infty}}}
\def\noddinf{{{\bf Nod}_{\infty}}}
\def\noddinfstr{{{\bf Nod}^{\text{strict}}_{\infty}}}
\def\noddinfA{{\noddinf\A}}
\def\noddinfB{{\noddinf\B}}
\def\noddinfAB{{\noddinf\AbimB}}
\def\noddinfBA{{\noddinf\BbimA}}
\def\noddinfu{{({\bf Nod}_{\infty})_u}}
\def\noddinfuA{{(\noddinfA)_u}}
\def\noddinfhu{{({\bf Nod}_{\infty})_{hu}}}
\def\noddinfhuA{{(\noddinfA)_{hu}}}
\def\noddinfdg{{({\bf Nod}_{\infty})_{dg}}}
\def\noddinfdgA{{(\noddinfA)_{dg}}}
\def\noddinfdgAA{{(\noddinf\AbimA)_{dg}}}
\def\noddinfdgAB{{(\noddinf\AbimB)_{dg}}}
\def\noddinfdgB{{(\noddinfB)_{dg}}}
\def\moddinf{{\modd_{\infty}}}
\def\moddinfA{{\modd_{\infty}\A}}
\def\infbar{{B_\infty}}
\def\infbarA{{B^\A_\infty}}
\def\infbarB{{B^\B_\infty}}
\def\infbarC{{B^\C_\infty}}
\def\inftimes{{\overset{\infty}{\otimes}}}
\def\infhom{{\overset{\infty}{\homm}}}
\def\barhom{{\overline{\homm}}}
\def\barend{{\overline{\eend}}}
\def\bartimes{{\;\overline{\otimes}}}
\def\triaA{{\tria \A}}
\def\TPairdg{{\TPair^{dg}}}
\def\algA{{\alg(\A)}}
\def\Ainfty{{A_{\infty}}}
\def\gpmu{{\boldsymbol{\mu}}}
\def\odd{{\text{odd}}}
\def\even{{\text{even}}}
\def\tta{{TT}}
\def\bartta{{\overline{\tta}}}

\theoremstyle{definition}
\newtheorem{defn}{Definition}[section]
\newtheorem*{defn*}{Definition}
\newtheorem{exmpl}[defn]{Example}
\newtheorem*{exmpl*}{Example}
\newtheorem{exrc}[defn]{Exercise}
\newtheorem*{exrc*}{Exercise}
\newtheorem*{chk*}{Check}
\newtheorem*{remarks*}{Remarks}
\theoremstyle{plain}
\newtheorem{theorem}{Theorem}[section]
\newtheorem*{theorem*}{Theorem}
\newtheorem{conj}[defn]{Conjecture}
\newtheorem*{conj*}{Conjecture}
\newtheorem{prps}[defn]{Proposition}
\newtheorem*{prps*}{Proposition}
\newtheorem{cor}[defn]{Corollary}
\newtheorem*{cor*}{Corollary}
\newtheorem{lemma}[defn]{Lemma}
\newtheorem*{claim*}{Claim}
\newtheorem{Specialthm}{Theorem}
\renewcommand\theSpecialthm{\Alph{Specialthm}}
\numberwithin{equation}{section}
\renewcommand{\textfraction}{0.001}
\renewcommand{\topfraction}{0.999}
\renewcommand{\bottomfraction}{0.999}
\renewcommand{\floatpagefraction}{0.9}
\setlength{\textfloatsep}{5pt}
\setlength{\floatsep}{0pt}
\setlength{\abovecaptionskip}{2pt}
\setlength{\belowcaptionskip}{2pt}

\begin{abstract}
We propose a new theory of (non-split) $\mathbb{P}^n$-functors. 
These are $F\colon \A \rightarrow \B$ for which the adjunction monad
$RF$ is a repeated extension of $\id_\A$ by powers of an
autoequivalence $H$ and three conditions are satisfied: the monad
condition, the adjoints condition, and the highest degree term
condition. This unifies and extends the two earlier notions 
of spherical functors and split $\mathbb{P}^n$-functors. We 
construct the $\mathbb{P}$-twist of such $F$ and prove it to 
be an autoequivalence. We then give a criterion for 
$F$ to be a $\mathbb{P}^n$-functor which is stronger than 
the definition but much easier to check in practice. It involves
only two conditions: the strong monad condition and the weak adjoints
condition. For split $\mathbb{P}^n$-functors, we prove Segal's conjecture 
on their relation to spherical functors.  
Finally, we give four examples of non-split $\mathbb{P}^n$-functors: 
spherical functors, extensions by zero, cyclic covers, and 
family $\mathbb{P}$-twists. For the latter, we show the $\mathbb{P}$-twist
to be the derived monodromy of associated Mukai flop, the so-called
``flop-flop = twist'' formula.  
\end{abstract}

\maketitle

\section{Introduction}
\label{section-introduction}

In the literature to date there appeared several distinct, yet
related notions of \em twist autoequivalences\rm. 
In all of them, an autoequivalence of the derived category $D(X)$ 
of an algebraic variety $X$ is cooked 
up from an object of $D(X)$ or a functor $D(Z) \rightarrow D(X)$ 
from another variety $Z$. The result is usually a non-trivial, 
genuinely derived autoequivalence, which can nonetheless retain 
a lot of geometric sense if the defining object was itself geometric 
in nature. This allowed to construct interesting new categorical 
actions on $D(X)$, to categorify existing such 
actions on the cohomology ring $H^*(X)$, and to explain 
some other phenomena e.g. wall-crossing for moduli of sheaves or 
derived monodromy of flops. 
Before long, these constructions became ubiquitous in algebraic geometry, 
representation theory, and theoretical physics.
In this paper we construct the new theory of 
\em (non-split) $\mathbb{P}^n$-functors\rm. It provides
a common framework for all the existing notions of twist
autoequivalences such as \em spherical functors \rm 
\cite{SeidelThomas-BraidGroupActionsOnDerivedCategoriesOfCoherentSheaves}
\cite{AnnoLogvinenko-SphericalDGFunctors}
and \em split $\mathbb{P}^n$-functors \rm 
\cite{HuybrechtsThomas-PnObjectsAndAutoequivalencesOfDerivedCategories}
\cite{Addington-NewDerivedSymmetriesOfSomeHyperkaehlerVarieties}
\cite{Cautis-FlopsAndAboutAGuide}. It also opens up a wealth of new 
and previously impossible non-split examples. 

Specifically, in this paper:
\begin{enumerate}
\item \em Cyclic covers (\S\ref{section-cyclic-covers}): \rm 
We construct a large new family of geometrical 
$\mathbb{P}^n$-functors which have no prior analogues. 
Indeed, we show that any cyclic cover
of any algebraic variety ramified in an effective Cartier divisor
gives rise to a $\mathbb{P}^n$-functor which is always 
non-split.  
\item \em Family
$\mathbb{P}^n$-twists (\S\ref{section-family-p-twists}): \rm For any
$\mathbb{P}^n$-fibration $P$ over a smooth projective variety $Z$, 
we prove that any embedding of $P$ into a smooth projective variety
$X$ with $\mathcal{N}_{P/X} = \Omega^1_{P/Z}$ yields 
an apriori non-split $\mathbb{P}^n$-functor whose 
twist is the derived monodromy of the associated Mukai
flop. This was previously only done under the
assumption $\mathrm{HH}^{odd}(Z) = 0$ in which case the 
arising $\mathbb{P}^n$-functor is split
\cite{AddingtonDonovanMeachan-MukaiFlopsAndPTwists}.
\item \em Abstract machinery (\S\ref{section-ptwists}-\ref{section-strong-monad-condition}): \rm 
The above is enabled by the abstract theory 
of \em (non-split) $\mathbb{P}^n$-functors \rm 
we develop for arbitrary DG enhanced triangulated categories.
We employ involved DG categorical machinery -- so that others don't have to. 
Our results are stated purely in the language of underlying
triangulated categories, and can be used without any knowledge of
DG enhancements. For example, by algebraic geometers like us who 
work with Fourier-Mukai transforms. 
\item \em Segal's conjecture (\S\ref{section-segals-conjecture}): \rm 
Our abstract machinery allows us to prove a conjecture made by 
Segal in \cite[Remark 4.6]{Segal-AllAutoequivalencesAreSphericalTwists}
that a split $\mathbb{P}^n$-functor can always be deformed to a spherical
functor. 
\item \em Other
examples (\S\ref{section-examples-spherical-functors}-\ref{section-extensions-by-zero}): \rm
We show that non-split $\mathbb{P}^n$-functors now include 
all spherical functors. We also  
show that a $\mathbb{P}^n$-functor can be glued over a
semiorthogonal decomposition with a zero functor and the result is always 
a non-split $\mathbb{P}^n$-functor. We used the zero functor as we
wanted a quick example, but our methods were later employed by Barbacovi 
\cite{Barbacovi-OnTheCompositionOfTwoSphericalTwists} to glue
non-trivial spherical functors. 
\end{enumerate}

It is worth explaining the necessity of working with DG
enhancements. Without them, one can not control convolutions
of complexes of objects. A convolution is a repeated cone: we 
repeatedly replace two consequent objects of the complex by the cone 
of the differential between them until a single object is left. 
Each step involves choices, so apriori a convolution is neither unique
nor guaranteed to exist. Luckily, the complex defining 
the $\mathbb{P}$-twist $P$ has the unique convolution
\cite{AnnoLogvinenko-OnUniquenessOfPTwists}, but most complexes do
not. This makes direct computations, 
e.g. of the composition of $P$ with either of its adjoints, impossible. 

This problem isn't solved by working with Fourier-Mukai kernels or,
more generally, working in some plain triangulated category of enhanced
functors. This only solves the usual problem of taking the cone of
morphism of functors, that is -- taking the convolution of a $2$-term
complex. As long as one works in a plain triangulated category, 
the convolutions of longer complexes of objects are still neither 
unique nor are guaranteed to exist. The direct computations of
the composition of $P$ with its adjoints are still impossible. 

A traditional way to tackle this problem within the axiomatics of
plain triangulated categories is by diagram chasing and repeated use
of the octahedral axiom. However, octahedral axiom offers no
control over morphisms between cones, so while these traditional methods
can tell you something about the object defining $P$, e.g.~its
Fourier-Mukai kernel, they tell you very little about the morphisms
which define any natural transformations involved. Similarly, 
while they can tell you something about what $P$ does on objects, 
they tell you very little about its $\homm$-space maps. This
has long been a problem as to prove $P$ to be an equivalence  
one needs to prove these maps to be isomorphisms. 

For example, in the split case proof of the $\mathbb{P}$-twist
being an equivalence in \cite[Theorem
3]{Addington-NewDerivedSymmetriesOfSomeHyperkaehlerVarieties}
it was shown that the source and the target $\homm$-spaces
are isomorphic. But it it not clear to us how to check that 
the map with which $P$ acts on them is, indeed, an isomorphism. 
Strictly speaking, it is something that needs to be checked to prove 
that $P$ is equivalence. We tried to come up with a way to check it
using just the axiomatics of plain triangulated categories, and
failed. While it may theoretically be possible, it would be very
hard. And would be even harder in the general, non-split case we
work with in this paper.

DG enhancements give a natural solution to this problem. A twisted
complex in the enhancement is the data of a 
complex of objects in the triangulated category plus the choice
of a specific convolution of this complex. We can compute
with twisted complexes -- tensor them, dualise, etc -- and keep track 
of the convolution. Thus direct computations become not only possible, 
but relatively straightforward. Since the complex defining $P$ has 
the unique convolution, we can choose any lift of it to a twisted 
complex and in \S\ref{section-ptwists} we show that there is a 
very natural choice. We then prove that $P$ is an equivalence in 
\S\ref{section-Pn-functors} by using this lift to
directly compute the composition of $P$ with its left adjoint. 

We now explain the results of this paper in more detail, starting 
with the background. \em Spherical objects \rm and their \em spherical
twists \rm were introduced in
\cite{SeidelThomas-BraidGroupActionsOnDerivedCategoriesOfCoherentSheaves}
as mirror symmetric analogues of
Lagrangian spheres on a symplectic
manifold and their associated Dehn twists
\cite[\S5a]{Seidel-GradedLagrangianSubmanifolds}. 
These were generalised to \em spherical functors \rm
\cite{Horja-DerivedCategoryAutomorphismsFromMirrorSymmetry}, 
\cite{Rouquier-CategorificationOfTheBraidGroups}, 
\cite{Toda-OnACertainGeneralizationOfSphericalTwists}, 
\cite{AnnoLogvinenko-SphericalDGFunctors} 
in the setting of enhanced triangulated categories 
and enhanced exact functors 
\cite{BondalKapranov-EnhancedTriangulatedCategories}. 
A functor 
$ F\colon \A \rightarrow \B $ 
with left and right adjoints $L,R\colon \B \rightarrow \A$ 
is spherical if:
\begin{enumerate}
\item 
\label{intro-item-twist-is-equivalence}
The adjunction comonad $FR$ is an extension of $\id_\B$ by
an autoequivalence $T \in \autm(\B)$. 
\item
\label{intro-item-cotwist-is-equivalence}
The adjunction monad $RF$ is a coextension of $\id_\A$ by
an autoequivalence $C[1] \in \autm(\A)$. 
\item 
\label{intro-item-twist-identifies-adjoints}
$R$ is canonically isomorphic to $LT[-1]$. 
\item 
\label{intro-item-cotwist-identifies-adjoints}
$R$ is canonically isomorphic to $CL[1]$. 
\end{enumerate}
The autoequivalences $T$ and $C$ are the 
\em spherical twist \rm and 
\em cotwist \rm of $F$.  
Any two of the above conditions imply all 
four and can be taken as the definition 
\cite[Theorem 5.1]{AnnoLogvinenko-SphericalDGFunctors}.
A spherical object $E \in D(X)$ is a spherical functor
$D(\text{pt}) \rightarrow D(X)$ sending $\mathbb{C}$ to $E$. 
The applications of 
spherical functors include categorifications of link homology 
\cite{KhovanovThomas-BraidCobordismsTriangulatedCategoriesAndFlagVarieties}, 
\cite{CautisKamnitzer-KnotHomologyViaDerivedCategoriesOfCoherentSheavesISL2Case}, moduli and wall-crossing 
on K3 surfaces \cite{Mukai-OnTheModuliSpaceOfBundlesOnK3SurfacesI},
\cite{Bridgeland-StabilityConditionsOnK3Surfaces}, 
\cite{BayerMacri-MMPForModuliOfSheavesOnK3sViaWallcrossingNefAndMovableConesLagrangianFibrations},
McKay correspondence 
\cite{Bridgeland-StabilityConditionsAndKleinianSingularities},
\cite{IshiiUehara-AutoequivalencesOfDerivedCategoriesOnTheMinimalResolutionsOfA_nSingularitiesOnSurfaces}, 
derived monodromy of Atiyah flops
\cite{Toda-OnACertainGeneralizationOfSphericalTwists},
\cite{DonovanWemyss-NoncommutativeDeformationsAndFlops}, 
\cite{BodzentaBondal-FlopsAndSphericalFunctors}, 
semiorthogonal decompositions \cite{Bondal-RepresentationOfAsssociativeAlgebrasAndCoherentSheaves},
variation of GIT and window shifts 
\cite{HalpernLeistnerShipman-AutoequivalencesOfDerivedCategoriesViaGeometricInvariantTheory}, 
\cite{DonovanSegal-WindowShiftsFlopEquivalencesAndGrassmannianTwists}, 
and perverse schobers \cite{KapranovSchechtman-PerverseSchobers}. 

At the same time, Huybrechts and Thomas introduced 
\em $\mathbb{P}^n$-objects \rm
\cite{HuybrechtsThomas-PnObjectsAndAutoequivalencesOfDerivedCategories}.
These were inspired by Lagrangian 
$\mathbb{C}\mathbb{P}^n$s on symplectic manifolds for which Seidel 
constructed analogues of Dehn twists
\cite[\S4b]{Seidel-GradedLagrangianSubmanifolds}. 
The definition of a $\mathbb{P}^n$-object $E \in D(X)$ 
asked for $\ext^*_X(E,E)$ to be isomorphic as a graded ring to
$$
H^*(\mathbb{P}^n,\mathbb{C})
\simeq \mathbb{C} \oplus \mathbb{C}[-2] \oplus \dots \oplus
\mathbb{C}[-2n] \simeq \mathbb{C}[h]/(h^{n+1}) 
\quad \quad 
\text{ with } \deg(h) = 2. 
$$ 
and for $E \otimes \omega_X \simeq E$. 
Thinking of $E$ as a functor $F\colon D(\text{pt}) \rightarrow D(X)$, 
we have $\ext^*_X(E,E) \simeq RF(\mathbb{C})$ as graded algebras. 
This led Addington 
\cite{Addington-NewDerivedSymmetriesOfSomeHyperkaehlerVarieties}
and Cautis \cite{Cautis-FlopsAndAboutAGuide} to define
a \em (split) $\mathbb{P}^n$-functor \rm to be a functor 
$F \colon \A \rightarrow \B$ as above which satisfies 
\begin{enumerate}
\item 
\label{intro-split-pn-functor-RF-condition}
$RF \simeq \id_\A \oplus H \oplus \dots \oplus H^n$ for some
autoequivalence $H \in \autm(\A)$. 
\item 
\label{intro-split-pn-functor-monad-condition}
\em The monad condition. \rm The restriction of the monad multiplication 
$RFRF \xrightarrow{m} RF$ to the map
\begin{equation}
\label{eqn-left-multiplication-by-h-in-RF-monad}
H(\id \oplus \dots \oplus H^{n-1}) \rightarrow H \oplus \dots \oplus H^n
\end{equation}
is an upper triangular matrix with $\id$s down the main diagonal. 
\item 
\label{intro-split-pn-functor-adjoints-condition}
\em The adjoints condition. \rm $R \simeq H^n L$. 
\end{enumerate}
The $\mathbb{P}$-twist of $F$ was constructed as a certain 
convolution (double cone) of the two-step complex 
\begin{equation}
\label{intro-the-formula-for-the-p-twist}
FHR \xrightarrow{\psi} FR \xrightarrow{\trace} \id_\B,
\end{equation}
where $\trace$ is the adjunction counit and $\psi$ is the
map
$FHR \hookrightarrow FRFR \xrightarrow{FR \trace - \trace FR} FR$.  
A $\mathbb{P}^n$-object is a $\mathbb{P}^n$-functor
$D(\text{pt}) \rightarrow D(X)$ with $H = [-2]$.  
The applications of split $\mathbb{P}^n$-functors
include Hilbert schemes of points on K3 and abelian surfaces 
\cite{Addington-NewDerivedSymmetriesOfSomeHyperkaehlerVarieties}
\cite{Meachan-DerivedAutoequivalencesOfGeneralisedKummerVarieties}
\cite{KrugMeachan-UniversalFunctorsOnSymmetricQuotientStacksOfAbelianVarieties}, 
moduli of torsion sheaves on K3 surfaces
\cite{AddingtonDonovanMeachan-ModuliSpacesOfTorsionSheavesOnK3SurfacesAndDerivedEquivalences}, 
and derived monodromy of Mukai flops
\cite{AddingtonDonovanMeachan-MukaiFlopsAndPTwists}. 

From a flurry of applications that followed, it was clear that 
split $\mathbb{P}^n$-functors were an important notion and 
a major step forward. It was equally clear that it was not 
the whole story:
\begin{enumerate}
\item  For a split $\mathbb{P}^n$-functor, the monad $RF$ splits as a
direct sum $\id \oplus H \oplus H^2 \oplus \dots \oplus H^n$. 
For a spherical functor, $RF$ is an extension of $\id$ by $H$. 
It would be logical for 
$\mathbb{P}^1$-functors to be the spherical functors, but
split $\mathbb{P}^1$-functors only give the \em split \rm
spherical functors --- those where $RF$ splits up as $\id \oplus H$.  
\item A split $\mathbb{P}^n$-functor $F$ must have $\krn F = 0$.
By contrast, spherical functors can have a non-trivial kernel and
frequently do, e.g. the derived pullback to a divisor. 
This is due to $RF$ being a non-trivial extension of $\id_\A$ in that case.  
\item 
\label{intro-issues-uniqueness-of-p-twists}
There was an issue with uniqueness of $\mathbb{P}$-twists. 
A two-step complex like \eqref{intro-the-formula-for-the-p-twist} 
can apriori have several convolutions. A certain choice was made
in \cite{Addington-NewDerivedSymmetriesOfSomeHyperkaehlerVarieties}, 
but would making different choices yield a different twist autoequivalence? 
This was eventually resolved by in \cite{AnnoLogvinenko-OnUniquenessOfPTwists}. 

\item In \cite{Addington-NewDerivedSymmetriesOfSomeHyperkaehlerVarieties}
and \cite{Cautis-FlopsAndAboutAGuide} the theory of split 
$\mathbb{P}^n$-functors was developed only for 
the derived categories of smooth, projective varieties and
Fourier-Mukai transforms. For the theory to be more universally
applicable, it needed to work with arbitrary enhanced triangulated 
categories. 
\end{enumerate}

In this paper, we propose the new notion of \em $\mathbb{P}^n$-functors \rm 
which deals with all the issues above and incorporates spherical 
functors and split $\mathbb{P}^n$-functors as special cases. 
Being a $\mathbb{P}^n$-functor is an extra structure on $F$ which
has to satisfy certain conditions. This structure consists of:
\begin{enumerate}
\item An enhanced autoequivalence $H \in \autm(\A)$ with $H(\krn F)
= \krn F$. 
\item A degree $n$ cyclic coextension $Q_n$ of $\id$ by $H$, that is
--- the data of a filtration
\begin{footnotesize}
\begin{equation}
\label{eqn-intro-cyclic-extension-of-id-by-H-of-degree-n}
\begin{tikzcd}
\id 
\ar[phantom]{drr}[description, pos=0.45]{\star}
\ar{rr}{\iota_1}
& &
Q_1 
\ar{rr}{\iota_2}
\ar{ld}{\mu_1}
\ar[phantom]{drr}[description, pos=0.45]{\star}
& &
Q_2  
\ar{r}
\ar{ld}{\mu_2}
&
\dots
\ar{r}
&
Q_{n-2}
\ar[phantom]{drr}[description, pos=0.45]{\star}
\ar{rr}{\iota_{n-1}}
\ar{ld}
& 
& 
Q_{n-1}
\ar{ld}{\mu_{n-1}}
\ar{rr}{\iota_{n}}
& & 
Q_n,
\ar[phantom]{dll}[description, pos=0.45]{\star}
\ar{ld}{\mu_n}
\\
&
H
\ar[dashed]{lu}
& ~ &
H^2 
\ar[dashed]{lu}
& ~ & 
\dots 
\ar[dashed]{lu}
& ~ &
H^{n-1}
\ar[dashed]{lu}
& ~ & 
H^n
\ar[dashed]{lu}
& 
\end{tikzcd}
\end{equation}
\end{footnotesize}
where $\star$ denotes exact triangles and dashed arrows ---  morphisms 
of degree $1$. For any $i$ let $\iota$ denote the composition
$Q_i \xrightarrow{\iota_n \circ \dots \circ \iota_i} Q_n$ 
and let $J_n$ be defined by the exact triangle 
$\id \xrightarrow{\iota} Q_n \xrightarrow{\kappa} J_n \rightarrow \id[1]. $
\item An isomorphism $Q_n \xrightarrow{\gamma} RF$
which intertwines the adjunction unit $\id_\A \xrightarrow{\action} RF$ and 
$\id_\A \xrightarrow{\iota} Q_n$. 
\end{enumerate}

One of the major difficulties in generalising $\mathbb{P}^n$-functors
to include the non-split case was that the map $\psi\colon FHR
\rightarrow FR$ in the definition \eqref{intro-the-formula-for-the-p-twist} of
$\mathbb{P}$-twist involved the direct summand inclusion 
$H \hookrightarrow RF$. In the non-split case, this no longer
exists. However, since $F \xrightarrow{F\action} FRF$ is split, so must be 
$F \xrightarrow{F\iota} FQ_n$. It follows that 
$FR \xrightarrow{F\iota_1 R} FQ_1R$ is split, see 
\S\ref{section-pn-functor-data}. Choose any 
splitting $FQ_1R \simeq FR \oplus FHR$ and
define $\psi$ be the composition of $FHR \hookrightarrow FQ_1R$ with
\begin{equation}
\label{eqn-intro-latter-part-of-psi}
FQ_1R \xrightarrow{\iota_n\circ\ldots \circ \iota_2}
FQ_nR \xrightarrow{ F \gamma R} FRFR \xrightarrow{FR\trace -
\trace FR} FR.
\end{equation}
This is independent of the choice of splitting since 
$\eqref{eqn-intro-latter-part-of-psi} \circ F\iota_1 R 
= (FR \trace - \trace FR) \circ F\action R = 0$. 

\begin{defn*}[Definition \ref{defn-Pn-functor}]
Let $\A$ and $\B$ be enhanced triangulated categories. Let
$F$ be an enhanced functor $\A \rightarrow \B$
with enhanced left and right adjoints 
$L,R\colon \B \rightarrow \A$.
The structure of a \em $\mathbb{P}^n$-functor \rm on $F$ 
is a collection $(H, Q_n, \gamma)$ as above 
which satisfies the following three conditions:
\begin{enumerate}
\item \em The monad condition. \rm The following composition 
is an isomorphism:
\begin{equation}
\label{eqn-monad-condition}
\nu\colon FHQ_{n-1} \xrightarrow{FH \iota_{n-1}} FHQ_n 
\xrightarrow{\psi F} FQ_n \xrightarrow{F\kappa} FJ_n. 
\end{equation}
\item \em The adjoints condition. \rm The following composition 
is an isomorphism:
\begin{equation}
\label{eqn-adjoints-condition}
FR \xrightarrow{FR \action} FRFL \xrightarrow{F\mu_n L} FH^n L.
\end{equation}
 \item \em The highest degree term condition. \rm There exists 
$F H^n L \xrightarrow{\sim} F H H^n H' L$ making the diagram
\begin{equation}
\label{eqn-highest-degree-term-condition}
\begin{tikzcd}[column sep={2cm}]
FHQ_{n-1}L
\ar{r}{FH\iota_n L}
\ar[equals]{d}
&
FHRFL
\ar{r}{\psi FL}
&
FRFL
\ar{r}{F\mu_n L}
&
FH^nL
\ar[dashed]{d}{\text{isomorphism}}
\\
FHQ_{n-1}L
\ar{r}{FH\iota_n L}
&
FHRFL
\ar{r}{FHR\psi'}
&
FHRFH'L
\ar{r}{FH\mu_n H'L}
&
FHH^nH'L
\end{tikzcd}
\end{equation}
commute. Here $\psi': FL \to FH'L$ is the left dual of $\psi$.
\end{enumerate}
\end{defn*}

Our monad condition is strictly weaker than the one 
in the definition of a split $\mathbb{P}^n$-functor above. 
In \S\ref{section-strong-monad-condition-split-case} we show that
in the split case treated by Addington the objects $FHQ_{n-1}$
and $FJ_n$ are both isomorphic to 
\begin{equation}
\label{eqn-intro-FHQ_n-1-FJn}
FH \oplus \dots \oplus FH^n 
\end{equation}
in the way that identifies the map $\nu: FHQ_{n-1} \rightarrow FJ_n$
with the image under $F$ of the map 
\eqref{eqn-left-multiplication-by-h-in-RF-monad} 
of left monad multiplication by $H$ 
minus a strictly upper triangular matrix. Thus our monad condition
is strictly weaker than Addington's condition on two counts: 
we only consider the image of \eqref{eqn-left-multiplication-by-h-in-RF-monad} 
under $F$ and we only ask for that to be invertible rather than 
upper triangular with $\id$'s down the main diagonal. 

Unfortunately, this weakening of the monad condition necessitates 
the highest degree term condition. Equally unfortunately, 
while we weaken Addington's adjoints condition by 
applying $F$ to it, we then strengthen it by asking for
a specific map $FR \rightarrow FH^nL$ to be an isomorphism.
Fortunately, if a certain $\ext^{-1}$-vanishing holds,
both issues can be resolved by strengthening our monad condition
to what is essentially a non-split version of the original monad
condition of
\cite{Addington-NewDerivedSymmetriesOfSomeHyperkaehlerVarieties}. 
To formulate it in the non-split case, 
we use the filtration on $Q_n$ provided by its
structure of a cyclic coextension:
\begin{itemize}
\item \em Strong monad condition: \rm 
For any $0 < j < n$ the map 
\begin{align}
\label{eqn-intro-the-map-Q_1-Q_j-to-Q_n}
Q_1 Q_j \xrightarrow{\iota \iota} Q_nQ_n \xrightarrow{m}  Q_n, 
\end{align}
filters through 
$Q_{j+1} \xrightarrow{\iota} Q_n$ via some map 
$ m_{1j}\colon Q_1Q_j \rightarrow  Q_{j+1}$. 
Here $m$ is the monad multiplication. 

Moreover, there is an isomorphism 
$\rho_{1j}\colon HH^j \xrightarrow{\; \sim \;} H^{j+1}$
making the following diagram commute:
\begin{equation}
\label{eqn-intro diagonal-isomorphisms-in-strong-monad-condition-square}
\begin{tikzcd}[column sep={2cm}]
Q_1 Q_j
\ar{r}{m_{1j}}
\ar{d}{\mu_1\mu_j}
&
Q_{j+1}
\ar{d}{\mu_{1+j}}
\\
HH^j
\ar{r}{\rho_{1j}}
&
H^{j+1}.
\end{tikzcd}
\end{equation}
\item \em Weak adjoints condition: \rm 
There exists \em some \rm isomorphism $FR \simeq FH^nL$.
\end{itemize}

The two main theorems of this paper are:
\begin{theorem*}[Theorem \ref{theorem-strong-monad-and-weak-adjoints-imply-pn-functor}]
Let $(H, Q_n, \gamma)$ be as above and suppose that 
the strong monad condition and the weak adjoints condition hold. 
If additionally
\begin{equation*}
\label{eqn-the-minus-one-ext-assumption}
\homm^{-1}(\id, H^i) = 0, \quad \quad 1 \leq i \leq n
\end{equation*}
then $(H, Q_n, \gamma)$ is a structure of a $\mathbb{P}^n$-functor on $F$. 
\end{theorem*}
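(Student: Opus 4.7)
The plan is to exploit the filtered structure of $Q_n$ imposed by the strong monad condition, and then to promote isomorphism on associated graded pieces into honest isomorphisms using the $\ext^{-1}$-vanishing hypothesis to kill the relevant obstructions.

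For the \emph{monad condition}, I would iterate the maps $m_{1j}\colon Q_1 Q_j \to Q_{j+1}$ of the strong monad condition, viewing them as a ``left multiplication by $Q_1$'' compatible with the filtration. Using the defining triangles of the $Q_j$ together with the commuting squares \eqref{eqn-intro diagonal-isomorphisms-in-strong-monad-condition-square}, one constructs a comparison between the filtration on $HQ_{n-1}$ and the filtration on $J_n$ that is $F(\rho_{1j})$ on each associated graded piece $H^{j+1}$. Unwinding the definition of $\psi$ via $F\gamma R$ and the commutator $FR\trace - \trace FR$ identifies this comparison (after applying $F$) with $\nu$ from \eqref{eqn-monad-condition}, and an inductive five-lemma along the filtrations shows that $\nu$ is an isomorphism.

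For the \emph{adjoints condition}, let $\alpha\colon FR \to FH^nL$ denote the composition \eqref{eqn-adjoints-condition}. The weak adjoints condition provides \emph{some} isomorphism $\beta\colon FR \xrightarrow{\sim} FH^nL$, so it suffices to show that $\alpha \circ \beta^{-1}$ is an automorphism of $FH^nL$. Tensoring the filtration on $RF$ by $L$ on the right yields a filtration of $RFL$ with graded pieces $L, HL, \ldots, H^nL$ compatible with $\mu_n L$, and expanding the $R\action$ factor of $\alpha$ through it writes $\alpha$ as a top-graded-piece isomorphism plus correction terms whose obstructions to vanishing lie in $\homm^{\leq 0}(\id_\A, H^i)$ for $1 \leq i \leq n$. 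The hypothesis $\homm^{-1}(\id, H^i) = 0$ kills these obstructions, forcing $\alpha$ to agree with $\beta$ up to an automorphism of $FH^nL$.

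For the \emph{highest degree term condition}, the two paths in \eqref{eqn-highest-degree-term-condition} go through $\psi FL$ versus the left dual $\psi'\colon FL \to FH'L$. The map $\psi'$ is constructed by the same recipe as $\psi$ but with the $(L,F)$-adjunction in place of $(F,R)$, so the strong monad condition supplies a parallel filtration analysis on the $\psi'$ side. Following both compositions through the respective filtrations, the strong monad squares force agreement on every associated graded piece, and the obstruction to lifting this agreement to an equality in the full $\homm$-group is controlled by $\homm^{-1}(\id_\A, H^i)$ for $1 \leq i \leq n$, which vanishes by hypothesis. The dashed isomorphism in \eqref{eqn-highest-degree-term-condition} is then the induced map on the $FH^nL$ graded piece.

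The main obstacle is the final step: the parallel manipulation of $\psi$ and its left dual $\psi'$, together with the identification of the obstruction class in the vanishing $\homm^{-1}$-groups, requires careful bookkeeping of filtration structures on both sides of the adjunction, and this is where the $\ext^{-1}$-vanishing hypothesis plays its most decisive role.
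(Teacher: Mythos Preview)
Your approach to the monad condition is essentially what the paper does: build filtration-compatible maps $\nu_i\colon FHQ_i \to FJ_{i+1}$ that are $F\rho_{1i}$ on graded pieces and induct from $\nu_0=\id$. This part is fine and does not even use the $\ext^{-1}$-vanishing.

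However, your treatment of the adjoints and highest degree term conditions has a genuine gap. You describe the $\ext^{-1}$-vanishing as killing ``obstructions'' in a generic filtration argument, but you never identify what those obstructions actually are or why they land in $\homm^{-1}(\id,H^i)$. In the paper, the $\ext^{-1}$-vanishing is used for a single, sharply defined purpose which you have missed entirely: it allows one to prove (via an inductive argument on $i,j$) that for \emph{all} $i,j$ the map $Q_iQ_j\to Q_n$ filters through $Q_{i+j}$, and hence that the monad multiplication $m\colon Q_nQ_n\to Q_n$ lifts to a \emph{one-sided} map of twisted complexes on the DG level. This one-sided DG lift, together with the resulting \emph{stronger} monad condition (all $\rho_{ij}$ are isomorphisms, not just $\rho_{1j}$), is the actual engine for the remaining two conditions.

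With the one-sided lift in hand, the paper's arguments are quite different from yours. For the adjoints condition, one first checks that the canonical map $RF\to H^nLF$ is an isomorphism by writing it as a one-sided map of twisted complexes and computing its degree-zero components explicitly as $H^i\xrightarrow{\action}H^iH^{n-i}(H')^{n-i}\xrightarrow{\rho_{i,n-i}}H^n(H')^{n-i}$; the stronger monad condition makes these isomorphisms. Then a purely categorical cancellation trick (Proposition~\ref{prps-R-HnL-canonical-map}) bootstraps: given that $\alpha F$ is an isomorphism and that \emph{some} isomorphism $FR\simeq FH^nL$ exists, one concludes $\alpha$ itself is an isomorphism by showing the cone $Z$ satisfies both $\alpha F=0\Rightarrow\alpha=0$ and $\beta F=0\Rightarrow\beta=0$ via the adjunction. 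Your direct analysis of $\alpha$ via a filtration on $RFL$ does not obviously connect to the source $FR$. For the highest degree term condition, the one-sided DG lift forces both composites in \eqref{eqn-highest-degree-term-condition} to have only a single nonzero component (degree $-n$ to degree $-n$), which one writes down explicitly using $\rho_{1,n-1}$ and $\rho_{n-1,1}$; no further obstruction analysis is needed.
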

\begin{theorem*}[Theorem \ref{theorem-Pn-functor-gives-autoequivalence}]
Let $\A$ and $\B$ be enhanced triangulated categories. Let 
$F$ be an enhanced functor $\A \rightarrow \B$ with 
enhanced left and right adjoints $L,R\colon \B \rightarrow \A$
and a structure $(H, Q_n, \gamma)$ of a $\mathbb{P}^n$-functor. 

Define the $\mathbb{P}$-twist $P_F$ to be the unique convolution of 
the two-step complex
\begin{equation}
\label{eqn-intro-ptwist-definition-nonsplit-theorem}
FHR \xrightarrow{\psi} FR \xrightarrow{\trace} \id_\B. 
\end{equation}
Then $P_F$ is an autoequivalence of $\B$. 
\end{theorem*}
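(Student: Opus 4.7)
My approach is to produce an explicit candidate inverse $P_F^{-1}$ built symmetrically from the left adjoint $L$, and then verify that both compositions with $P_F$ are isomorphic to $\id_\B$. The $\mathbb{P}^n$-functor data $(H, Q_n, \gamma)$ has a left-dual incarnation: under duality, the cyclic coextension $Q_n$ of $\id_\A$ by $H$ corresponds to a cyclic coextension of $\id_\A$ by $H'$ (the left dual of $H$), and the map $\psi'\colon FL \to FH'L$ featured in the highest degree term condition plays the role for the left adjoint that $\psi$ does for the right. Accordingly, I would define $P_F^{-1}$ to be the convolution of
$$\id_\B \xrightarrow{\action} FL \xrightarrow{\psi'} FH'L,$$
invoking the non-split uniqueness result of \cite{AnnoLogvinenko-OnUniquenessOfPTwists} in its left-dual formulation to see that this convolution exists and is uniquely determined.

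The proof then proceeds by computing $P_F^{-1} \circ P_F$. I would realise both twists as convolutions and assemble the combined convolution of the resulting $3 \times 3$ array of functors. The outer layers collapse almost immediately: the diagonal contribution produced by $\trace$ and $\action$ contracts to $\id_\B$, and the corner terms simplify via the natural interaction of $\psi, \psi'$ with the adjunction unit and counit. What remains is an interior piece built from $FHR$, $FRFL$, $FRFH'L$, $FHRFL$ and $FHRFH'L$, whose structure maps are controlled by $\psi$, $\psi'$, and the monad multiplication induced by $\gamma\colon Q_n \xrightarrow{\sim} RF$.

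The crux is to show that this interior piece is zero, and this is where all three defining conditions enter simultaneously. The adjoints condition identifies $FR$ with $FH^n L$ in a manner compatible with $\psi$, allowing the $FR$- and $FH'L$-entries of the grid to be related through the filtration on $Q_n$. The monad condition, i.e.\ the invertibility of $\nu\colon FHQ_{n-1} \to FJ_n$, ensures that the main block of the interior, naturally filtered by the graded pieces of $Q_n$, admits a genuine trivialisation rather than merely a quasi-isomorphism of associated gradeds. The highest degree term condition, encoded in the commutative square \eqref{eqn-highest-degree-term-condition}, guarantees that this interior trivialisation is compatible with the boundary identification of $FR$ with $FH^n L$, so that the edge terms cancel cleanly against the interior ones. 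The main obstacle is precisely this bookkeeping: organising the filtrations arising from $Q_n$ on both sides of the grid and checking that their graded pieces, together with the structure maps between them, conspire to leave only $\id_\B$. Once $P_F^{-1} \circ P_F \simeq \id_\B$ is established, the symmetric argument with the roles of $L$ and $R$ (and of $H$ and $H'$) interchanged yields $P_F \circ P_F^{-1} \simeq \id_\B$, proving that $P_F$ is an autoequivalence.
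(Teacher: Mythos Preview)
Your plan to construct $P'_F$ as the left-dual convolution $\id_\B \to FL \to FH'L$ and to analyse one composition as a $3\times 3$ bicomplex, collapsing it via the three $\mathbb{P}^n$-conditions, matches the paper's approach for that direction. The gap is in your final step.

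The ``symmetric argument with the roles of $L$ and $R$ interchanged'' does not go through. The conditions in Definition~\ref{defn-Pn-functor} are genuinely asymmetric in $L$ and $R$: the monad condition concerns $RF$, the adjoints condition is the specific map $FR \to FH^nL$, and the highest-degree-term condition already mixes $\psi$ and $\psi'$ in a one-sided way. Swapping $L\leftrightarrow R$, $H\leftrightarrow H'$ would demand control over $LF$ and a map $FL\to F(H')^nR$ that are nowhere assumed; the paper notes this asymmetry explicitly in the ``Symmetricity'' paragraph at the end of \S\ref{section-introduction}. Concretely, the interior terms you list---$FRFL$, $FHRFL$, $FHRFH'L$---belong to the grid for $P_FP'_F$, not $P'_FP_F$; the latter would produce terms $FH'LFHR$, $FLFR$, etc., whose simplification needs splittings of $FLF$ that the given data does not provide.

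The paper avoids computing the second composition altogether. Having shown $P_FP'_F\simeq\id_\B$, it invokes Johnstone's Lemma to deduce that the adjunction unit $\id_\B\to P_FP'_F$ is an isomorphism, so $P'_F$ is fully faithful. The other half is supplied by a kernel argument: Proposition~\ref{prps-PF-is-isomorphic-to-FH^n+1} gives $P_FF\simeq FH^{n+1}[2]$ using only the monad condition, whence the hypothesis $H(\krn F)=\krn F$ forces $\krn P_FF=\krn F$. Since the defining two-step complex shows any $a\in\krn P_F$ lies in $\img F$, this yields $\krn P_F=0$. One then concludes by \cite[Lemma~2.1]{BKR01} that a functor with fully faithful left adjoint and trivial kernel is an equivalence. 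This is precisely where the otherwise opaque hypothesis $H(\krn F)=\krn F$ is used, and it replaces the unavailable symmetric computation.
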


We also prove the conjecture by Segal in 
\cite[Remark 4.6]{Segal-AllAutoequivalencesAreSphericalTwists}. 
Let $F\colon \A \rightarrow \B$ be a split
$\mathbb{P}^n$-functor as above. 
Segal constructed a category $\A_H$ which can be viewed as as 
the derived category of objects supported on the 
zero section of the noncommutative line-bundle over $\A$ defined by $H$. 
It comes with the functor $j_*\colon \A_H \rightarrow \A$ and a 
lift $\tilde{H}$ of $H$. 
The map $h\colon FH \hookrightarrow FRF \rightarrow F$
defines a deformation $\tilde{F}$ of the functor $FHj_*\colon 
\A_H \rightarrow \B$. 
\begin{theorem*}[Theorem \ref{theorem-segals-conjecture}]
If $F\colon \A \rightarrow \B$ is a split 
$\mathbb{P}^n$-functor satisfying the strong monad condition, 
then  
$$\tilde{F}\colon \A_H \rightarrow \B$$ 
is a spherical functor whose twist is the $\mathbb{P}$-twist 
of $F$, and whose cotwist is $\tilde{H}^{n+1}$. 
\end{theorem*}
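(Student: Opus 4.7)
The plan is to compute the monad $\tilde{R}\tilde{F}$ and comonad $\tilde{F}\tilde{R}$ explicitly and then verify two of the four sphericality conditions of \cite[Theorem 5.1]{AnnoLogvinenko-SphericalDGFunctors}. I first recall Segal's enhancement: objects of $\A_H$ can be described (up to the DG structure) as pairs $(a, h_a)$ with $a \in \A$ and $h_a\colon \tilde{H}a \to a$ a structure map; the direct image $j_*$ forgets $h_a$, $\tilde{H}$ acts by shifting $h_a$, and the adjoints $j^!$, $j^*$ are the standard ``zero section'' adjoints. The deformation $\tilde{F}$ sends $(a,h_a)$ to the totalisation of
\[
FHa \xrightarrow{\; Fh_a \,-\, h_a\;} Fa,
\]
using the canonical map $h\colon FH \hookrightarrow FRF \to F$ coming from the split summand $H \hookrightarrow RF$. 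From this I construct adjoints $\tilde{R}$ and $\tilde{L}$ of $\tilde{F}$ as the corresponding deformations of $j^!H^{-1}R$ and $j^*H^{-1}L$, using the duals of $h$ on $R$ and $L$.

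Next I compute the monad $\tilde{R}\tilde{F}$. By the splitting $RF \simeq \id \oplus H \oplus \dots \oplus H^n$, the underlying $\A$-complex of $\tilde{R}\tilde{F}(a,h_a)$ is a totalisation assembled from $H^j a$ for $0\leq j\leq n+1$, with gluing coming from $h_a$ and from the monad structure of $RF$. The strong monad condition says exactly that the monad multiplication restricted to $H\cdot H^j \to H^{j+1}$ agrees, modulo strictly upper triangular corrections, with an isomorphism $\rho_{1j}$ compatible with $\mu_{j+1}$. Interpreted in $\A_H$, this means that the maps from the $\tilde{H}^j$-summand to the $\tilde{H}^{j+1}$-summand combine with the $h_a$-deformation to produce acyclic subfactors for $1 \leq j \leq n$, leaving the triangle
\[
\id_{\A_H} \longrightarrow \tilde{R}\tilde{F} \longrightarrow \tilde{H}^{n+1}[1].
\]
This identifies the cotwist of $\tilde{F}$ with the autoequivalence $\tilde{H}^{n+1}$.

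Dually, I compute the comonad $\tilde{F}\tilde{R}$, whose underlying $\B$-object is the totalisation of
\[
FHR \xrightarrow{\;\tilde{\psi}\;} FR \xrightarrow{\;\trace\;} \id_\B,
\]
with $\tilde{\psi}$ produced from Segal's deformation together with the $R$-side twisting. I identify $\tilde{\psi}$ with the map $\psi$ of \eqref{eqn-intro-latter-part-of-psi}: both are obtained by composing the split inclusion $FHR \hookrightarrow FRFR$ with $FR\trace - \trace FR$, and in the split case this prescription is unique. By the main theorem of \cite{AnnoLogvinenko-OnUniquenessOfPTwists}, the convolution of this complex is the $\mathbb{P}$-twist $P_F$, so the twist of $\tilde{F}$ is $P_F$, an autoequivalence by Theorem \ref{theorem-Pn-functor-gives-autoequivalence}.

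Since both the cotwist $\tilde{H}^{n+1}$ and the twist $P_F$ are autoequivalences, the two-out-of-four criterion of \cite[Theorem 5.1]{AnnoLogvinenko-SphericalDGFunctors} yields that $\tilde{F}$ is spherical with the claimed twist and cotwist. The main obstacle I expect is the monad computation: one must carefully set up $\tilde{R}\tilde{F}$ as an object of $\A_H$ and show that the strong monad condition is precisely what guarantees cancellation of the intermediate $\tilde{H}^j$-summands against the $h_a$-deformation, leaving only the top degree $\tilde{H}^{n+1}$. The comonad side is then essentially formal, modulo the identification of $\tilde{\psi}$ with $\psi$.
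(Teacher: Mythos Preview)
Your overall strategy matches the paper's: construct $\tilde{R}$ explicitly, compute $\tilde{F}\tilde{R}$ and $\tilde{R}\tilde{F}$, identify the twist with $P_F$ and the cotwist with $\tilde{H}^{n+1}$, and then invoke the two-out-of-four criterion of \cite[Theorem 5.1]{AnnoLogvinenko-SphericalDGFunctors}. The comonad side is indeed formal: the paper computes $\tilde{F}\tilde{R}$ as the cokernel of the $\A_H$-balanced tensor product and finds it to be the convolution of $FHR \xrightarrow{\psi} FR$, just as you predict.

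There is, however, a genuine technical gap in your monad computation. The twisted complex underlying $\tilde{R}\tilde{F}$ involves \emph{both} the left monad multiplication $A_l$ (through the map $h'$ defining $\tilde{R}$) and the right monad multiplication $A_r$ (through the map $h$ defining $\tilde{F}$). The strong monad condition, as stated, controls only $A_l$. To construct the map $\tilde{R}\tilde{F} \to \tilde{H}^{n+1}[1]$ as a map of $\A_H$-$\A_H$-bimodules one needs a compatibility between $\mu_n \circ A_r$ and $\mu_n \circ A_l$ on $HRF$ (see the diagram \eqref{eqn-right-action-analogue-of-the-highest-degree-term-condition} in the paper), and the subsequent acyclicity argument uses that $A_r$ restricted to $H \oplus \dots \oplus H^n$ is an isomorphism. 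The paper obtains this by first upgrading the strong monad condition to the stronger one via associativity (Lemma~\ref{lemma-split-case-strong-monad-iff-stronger-monad}) and then renormalising $\gamma$ as in Corollary~\ref{cor-the-strongest-monad-condition} to force $A_l = A_r$. Your claim that ``the strong monad condition is precisely what guarantees cancellation'' elides this step; without it, the map to $\tilde{H}^{n+1}[1]$ is not obviously well-defined and the intermediate summands need not cancel.
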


Next, we give four families of examples of non-split 
$\mathbb{P}^n$-functors. The first two examples are more formal. One 
is the \em spherical functors\rm. We prove that a functor 
$F\colon \A \rightarrow \B$ is spherical if and only if it is a
$\mathbb{P}^1$-functor and in such case $P_F \simeq T_F^2$, 
the $\mathbb{P}$-twist is the square of its spherical twist
(Prop.~\ref{prps-every-spherical-functor-is-a-P1-functor}
and \ref{eqn-P1-twist-is-the-square-of-a-spherical-twist}). 

The other example are \em extensions by zero \rm of existing
$\mathbb{P}^n$-functors. Let $\D \simeq \left< \A, \C \right>$ be 
an enhanced semi-orthogonal decomposition with 
the gluing functor $N\colon \C \rightarrow \A$. Let 
$F\colon \A \rightarrow \B$ have a 
$\mathbb{P}^n$-functor structure $(H,Q_n,\gamma)$. If
$Q_i N = 0$ for odd $i$ and $Q_i N = N[i]$ for even $i$, 
we can extend $F$ to a $\mathbb{P}^n$-functor 
$\tilde{F}\colon \D \rightarrow \B$ with $\tilde{F}|_\A = F$ 
and $\tilde{F}|_\C = 0$ and $P_{\tilde{F}} \simeq P_F$ 
(Prop.~\ref{prps-extension-by-zero-of-a-pn-functor}). 
Such $\mathbb{P}^n$-functor is necessarily non-split since 
it has by construction a non-trivial kernel. Moreover, 
the strong monad condition doesn't hold for such functors. 

The remaining two examples are more geometrical in nature. First 
are \em cyclic covers\rm. Let $Z$ and $X$ be algebraic varieties
and $f\colon Z \rightarrow X$ be a degree $n + 1$ cyclic cover
ramified in an effective Cartier divisor $D \subset X$, as per 
\S\ref{section-cyclic-covers-the-setup}. Let $E$ be the divisor 
$\frac{1}{n+1}f^{-1}(D)$ on $Z$ and 
$\sigma \in \gpmu_{n+1}$ be a primitive generator of the cyclic group 
$\gpmu_{n+1}$ whose action on $Z$ permutes the branches of the cover. 
The fiber product $Z \times_X Z$ has $n+1$ irreducible
components: the graphs $\Gamma_\lambda$ of $\lambda \in \gpmu_{n+1}$
which can be viewed as the 
$\lambda$-twisted diagonals $(z,\lambda z) \subset Z \times_X Z$. 
Hence $\mathcal{O}_{Z \times_X Z}$ has a filtration whose factors
are the sheaves $\mathcal{O}_{\Delta}, \mathcal{O}_{\Gamma_\sigma}, 
\dots, \mathcal{O}_{\Gamma_{\sigma^n}}$. 

\begin{theorem*}[Theorem \ref{theorem-cyclic-cover-as-a-non-split-Pn-functor}]
With $f: Z \rightarrow X$ as above let 
$F_*$ and $F^!$ be the standard Fourier-Mukai enhancements of 
the direct image functor $f_*$ and its right adjoint $f^!$, as per 
\S\ref{section-standard-fourier-mukai-kernels-and-the-key-lemma}. 
Let $H \simeq \mathcal{O}_{\Gamma_\sigma}(0,E) \in D(Z
\times Z)$ be the standard enhancement of the autoequivalence 
$h = \sigma_* (-) \otimes \mathcal{O}_Z(E)$ of $D(Z)$. 

The structure of the degree $n$ cyclic coextension of $\id_Z$ by $H$ 
given on $F^! F_* \simeq \mathcal{O}_{Z \times_X Z}(0,nE)$ 
by the above decomposition of the fiber product $Z \times_X Z$ 
into its irreducible components defines a $\mathbb{P}^n$-functor
structure on $F_*$ whose twist is 
the autoequivalence $(-) \otimes \mathcal{O}_X(D)$ of $D(X)$. 
\end{theorem*}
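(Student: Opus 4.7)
The plan is to verify the hypotheses of Theorem~\ref{theorem-strong-monad-and-weak-adjoints-imply-pn-functor} and then to compute the resulting $\mathbb{P}$-twist directly from its definition. First I would set up the cyclic coextension: the scheme $Z \times_X Z$ decomposes into the $n+1$ twisted diagonals $\Gamma_{\sigma^i}$, yielding a natural filtration on $\mathcal{O}_{Z\times_X Z}$. After twisting by $\mathcal{O}(0, nE)$, the $i$-th subquotient becomes $\mathcal{O}_{\Gamma_{\sigma^i}}(0, iE)$, which is precisely the Fourier--Mukai kernel of $H^i$. Taking $Q_i$ to be the subobject cut out by the first $i+1$ steps of the filtration, with $\mu_i$ the projection onto the top graded piece, gives a cyclic coextension of $\id_Z$ by $H$ realising $F^!F_* \simeq Q_n$. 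A standard adjunction--trace identification then shows the inclusion $\mathcal{O}_\Delta \hookrightarrow \mathcal{O}_{Z\times_X Z}(0, nE)$ corresponds to the adjunction unit, so the intertwining isomorphism $\gamma$ is canonical.

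Next I would verify the strong monad condition. By the key lemma of \S\ref{section-standard-fourier-mukai-kernels-and-the-key-lemma}, the monad multiplication is computed by convolving kernels and restricting along the appropriate diagonal. The convolution of $\mathcal{O}_{\Gamma_\sigma}(0,E)$ with $\mathcal{O}_{\Gamma_{\sigma^j}}(0, jE)$ equals $\mathcal{O}_{\Gamma_{\sigma^{j+1}}}(0, (j+1)E)$: the twists by multiples of $E$ are precisely what absorbs the Tor contribution at the ramification locus where the graphs fail to meet transversely. Tracing through, the composite $Q_1Q_j \to Q_nQ_n \to Q_n$ lands in $Q_{j+1} \hookrightarrow Q_n$, and its projection to the top graded piece provides the required isomorphism $HH^j \xrightarrow{\sim} H^{j+1}$ compatibly with $\mu_1\mu_j$ and $\mu_{j+1}$.

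For the weak adjoints condition, since $f$ is finite flat of degree $n+1$ and its ramification divisor equals $nE$, one has $f^! \simeq f^*(-)\otimes\mathcal{O}_Z(nE)$. Because $\sigma$ is a deck transformation, $f\circ \sigma^n = f$ and so $(\sigma^n)_* f^* \simeq f^*$, hence $f^! \simeq H^n f^*$, which at the kernel level yields $F_*F^! \simeq F_* H^n L$. The Ext vanishing $\homm^{-1}(\id_Z, H^i) = 0$ is immediate since $H^i$ is represented by a sheaf on $Z\times Z$ concentrated in cohomological degree zero and so is $\mathcal{O}_\Delta$. Theorem~\ref{theorem-strong-monad-and-weak-adjoints-imply-pn-functor} then furnishes the $\mathbb{P}^n$-functor structure.

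It remains to compute the twist. By Theorem~\ref{theorem-Pn-functor-gives-autoequivalence}, $P_{F_*}$ is the unique convolution of $F_* H F^! \to F_* F^! \xrightarrow{\trace} \id_X$. Passing to Fourier--Mukai kernels on $X \times X$, this is a double cone built from the convolutions of the kernels of $f_*$, $f^!$, and $H$ pushed forward to $X\times X$. A direct calculation, using that $f_*\mathcal{O}_{\Gamma_{\sigma^i}}(0, iE) \simeq \mathcal{O}_\Delta \otimes \pi_2^* f_* \mathcal{O}_Z(iE)$ decomposes according to the $\mu_{n+1}$-eigenspaces of $f_*\mathcal{O}_Z$, should identify the resulting kernel with $\mathcal{O}_\Delta \otimes \pi_2^*\mathcal{O}_X(D)$, giving the autoequivalence $(-)\otimes\mathcal{O}_X(D)$. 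The main obstacle is the strong monad condition: ensuring the convolution of the twisted-graph kernels genuinely equals the next graded piece of the filtration on $Q_n$, with the $\mu_i$'s compatibly identified. This is where the precise choice of the ramification-absorbing twist $(0, iE)$ is essential and where the bulk of the geometric work is concentrated.
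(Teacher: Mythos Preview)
Your overall strategy---reduce to Theorem~\ref{theorem-strong-monad-and-weak-adjoints-imply-pn-functor} by checking the $\ext^{-1}$-vanishing, the weak adjoints condition, and the strong monad condition---matches the paper, and your arguments for the first two are essentially those of the paper. The gap is in the strong monad condition.

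Your sketch there computes the convolution $H \star H^j \simeq H^{j+1}$, but this is automatic: it is just the statement that powers of an autoequivalence compose as expected, and there are in fact no Tor contributions since the preimages of $\Gamma_\sigma$ and $\Gamma_{\sigma^j}$ in $Z^3$ meet transversely. What the strong monad condition actually asks is that the \emph{monad multiplication} $Q_1 Q_j \hookrightarrow Q_n Q_n \xrightarrow{m} Q_n$ factors through $Q_{j+1}$, where $m$ is induced by the adjunction counit $F_* F^! \to \id_X$, not by composition of kernels. Knowing what happens on the top graded piece $H H^j$ does not by itself control where the full extension $Q_1 Q_j$ lands under $m$. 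The paper notes that one could do this by a local computation, but instead takes a different and more geometric route: it passes to the \emph{comonad} $F^* F_* \simeq \mathcal{O}_{Z \times_X Z}$ on the left-adjoint side, where the filtration pieces $Q'_k$ are literally structure sheaves of the subschemes $S_k = \Delta \cup \Gamma_\sigma \cup \dots \cup \Gamma_{\sigma^k}$ with no line-bundle twists. The comultiplication is then the adjunction unit $\mathcal{O}_{S_n} \to \pi_{13*}\mathcal{O}_{S_n \times_Z S_n}$, and the factoring $Q'_{k+1} \to Q'_k Q'_1$ follows from the scheme-theoretic inclusion $S_1 \times_Z S_k \subset S_n \times_Z S_n$ lying over $S_{k+1} \subset S_n$. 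The strong monad condition for $F^! F_*$ then follows by relative Verdier duality. This dualisation is the missing idea in your proposal.

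For the twist, the paper's argument is close in spirit to yours: pushing everything to $X \times X$ lands on the diagonal as sums of powers of $\mathcal{L}$, and the key observation is that $\homm_X(\mathcal{L}^i, \mathcal{L}^j) = 0$ for $i > j$ forces the map $\psi$ (and its left semi-inverse coming from the monad condition) to be one-sided, hence upper-triangular with isomorphisms on the diagonal. The cone is then $\Delta_* \mathcal{L}^{n+1}[2] \simeq \Delta_* \mathcal{O}_X(D)[2]$.
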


Our last example are \em family $\mathbb{P}^n$-twists \rm where
$F\colon D(Z) \rightarrow D(X)$ is defined 
by a flat $\mathbb{P}^n$-fibration over $Z$. The $\mathbb{P}$-twist
is the monodromy of the associated Mukai flop, the
``flop-flop = twist'' formula. This was done in
\cite{AddingtonDonovanMeachan-MukaiFlopsAndPTwists} under the
assumption of $HH^{odd}(Z) = 0$ which forced $RF$ to split up as a sum
of its cohomologies. Our non-split theory works just as well with
the standard filtration of $RF$ by cohomology sheaves:

\begin{theorem*}[Theorem \ref{theorem-family-p-twists}]
Let $Z$ be a smooth projective variety and 
let $\mathcal{V}$ be a vector bundle of rank $n+1$ over $Z$. 
Let $P = \mathbb{P}\mathcal{V}$ and let 
$\pi\colon P \twoheadrightarrow Z$ be the resulting
flat $\mathbb{P}^n$-fibration. Let $X$ be another smooth projective variety 
and let $ \iota \colon P \hookrightarrow X $ be a codimension $n$
closed immersion with $\mathcal{N}_{P/X} \simeq  \Omega^1_{P/Z}.$
Let 
$$ f_k \quad\overset{\text{def}}{=}\quad
\iota_*\left(\mathcal{O}_{P}(k) \otimes \pi^*(-)\right) \colon 
D(Z) \rightarrow D(X), $$ 
let $r_k$ be its right adjoint, and let $h = [-2]$. 
Let $F_k$, $R_k$, and $H$ be their standard enhancements. 

The structure of a cyclic coextension of $\id$ by $H$ of degree $n$
on the adjunction monad $R_k F_k$ provided by the standard filtration
by cohomology sheaves makes $F_k$ into a $\mathbb{P}^n$-functor. 
Let $P_{F_k}$ be its $\mathbb{P}$-twist. If the Mukai flop 
$X \xleftarrow{\beta} \tilde{X} \xrightarrow{\gamma} X'$ exists 
we have an isomorphism in $D(X \times X)$:
\begin{equation}
\label{eqn-flop-flop-equals-twist}	
KN'_{-k} \circ KN_{n + k + 1} \simeq P_{F_k},
\quad \quad \quad \text{`` flop-flop $=$ twist ''} 
\end{equation}
where $KN_{n + k + 1}$ and $KN'_{-k}$ are Kawamata-Namikawa 
derived flop equivalences $D(X) \rightleftarrows D(X')$, 
cf.~\S\ref{section-family-the-setup}. 
\end{theorem*}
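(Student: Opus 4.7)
My plan is as follows. First, identify the cohomology filtration of the adjunction monad $R_k F_k$ with a cyclic coextension of $\id_Z$ by $H = [-2]$ of degree $n$: using $R_k = \pi_*(\mathcal{O}_P(-k) \otimes \iota^!(-))$ and the projection formula, $R_k F_k(a) \simeq a \otimes \pi_*\iota^!\iota_*\mathcal{O}_P$. The Koszul resolution for the regular immersion $\iota$, combined with $\mathcal{N}_{P/X} \simeq \Omega^1_{P/Z}$, yields $\mathcal{H}^j(\iota^!\iota_*\mathcal{O}_P) \simeq \Omega^j_{P/Z}$; then $R\pi_* \Omega^j_{P/Z} \simeq \mathcal{O}_Z[-j]$ on the $\mathbb{P}^n$-fibration collapses the hypercohomology spectral sequence to cohomology sheaves $\mathcal{O}_{\Delta_Z}$ in even degrees $0, 2, \ldots, 2n$ and zero elsewhere. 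The standard Postnikov filtration of $R_k F_k$ by its cohomology sheaves is then the claimed cyclic coextension, with the adjunction unit $\id_Z \to R_kF_k$ realized as the bottom inclusion.

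Next, to promote this to a $\mathbb{P}^n$-functor structure, I would apply Theorem~\ref{theorem-strong-monad-and-weak-adjoints-imply-pn-functor}. The hypothesis $\homm^{-1}(\id, H^i) = 0$ is automatic: for $H = [-2]$ on smooth $Z$, this group is $\Ext^{-2i-1}(\mathcal{O}_{\Delta_Z}, \mathcal{O}_{\Delta_Z}) = 0$, since Hochschild cohomology lives in non-negative degrees. The strong monad condition reduces fiberwise: the monad multiplication on $R_k F_k$ descends from the wedge product $\Omega^i_{P/Z} \otimes \Omega^j_{P/Z} \to \Omega^{i+j}_{P/Z}$, which after $\pi_*$ realizes the cup product on the Hodge ring $\mathbb{C}[h]/(h^{n+1})$ of the $\mathbb{P}^n$-fiber; since $h \cup h^j = h^{j+1}$ is an isomorphism for $j+1 \leq n$, the required factorization of $Q_1 Q_j \to Q_n$ through $Q_{j+1}$ and its compatibility with the diagonal isomorphism $HH^j \simeq H^{j+1}$ follow. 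The weak adjoints condition $F_k R_k \simeq F_k H^n L_k$ is a relative Serre duality statement along $\pi$, using $\omega_{P/Z} \simeq \mathcal{O}_P(-n-1) \otimes \pi^* \det\mathcal{V}$ to identify $L_k$ with a shift-and-twist of $R_k$.

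The flop-flop comparison is the main geometric step. I would compute both kernels in $D(X \times X)$. The Kawamata-Namikawa equivalence $KN_m$ has kernel supported on $\tilde X$, viewed inside $X \times X'$ via $(\beta, \gamma)$, twisted by the appropriate power of the exceptional divisor. Composing $KN'_{-k} \circ KN_{n+k+1}$ produces a kernel supported on $\Delta_X \cup (P \times_Z P)$, which sits in a natural exact triangle whose outer terms match $F_k R_k$ and $\id_X$ as graded objects. The hardest part is identifying the connecting morphism with the map $\psi$ from the $\mathbb{P}^n$-functor structure. In the split case $HH^{\odd}(Z)=0$ treated by Addington-Donovan-Meachan, the monad splits and this can be done summand-by-summand; in general I would use the uniqueness of convolutions from \cite{AnnoLogvinenko-OnUniquenessOfPTwists} to reduce the identification to the first non-trivial connecting map, which is controlled by the Atiyah-class of $\mathcal{O}_P(1)$ on the projective bundle and so agrees on both sides. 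Finally, one extracts the $\mathbb{P}$-twist as the unique convolution of the resulting two-step complex and matches it to $P_{F_k}$.
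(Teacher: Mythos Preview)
Your approach to the $\mathbb{P}^n$-functor structure is essentially the paper's: compute the cohomology of $R_kF_k$ via the excess bundle formula for $\iota$ together with relative Bott vanishing, check $\ext^{-1}$-vanishing, and invoke Theorem~\ref{theorem-strong-monad-and-weak-adjoints-imply-pn-functor}. One gap: you say the factorisation of $Q_1Q_j\to Q_n$ through $Q_{j+1}$ ``follows'' from the wedge product $\Omega^1_{P/Z}\otimes\Omega^j_{P/Z}\to\Omega^{j+1}_{P/Z}$. That wedge product only controls the induced map on associated gradeds (the isomorphism $HH^j\simeq H^{j+1}$), not the factorisation itself. The paper gets the factorisation for free by a degree argument: since $Q_i=\tau_{\le 2i}(R_kF_k)$ and (by a K\"unneth-type splitting lemma) $Q_1Q_j$ has cohomology concentrated in degrees $\le 2(j+1)$, there are no maps from $Q_1Q_j$ to $\tau_{\ge 2(j+1)+1}(R_kF_k)$, forcing the factorisation through $Q_{j+1}$. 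You should make this explicit. The weak adjoints condition is also simpler than you indicate: the paper just reads off $R_k\simeq L_k[-2n]$ from the explicit kernels \eqref{eqn-L_k-explicitly}--\eqref{eqn-R_k-explicitly}, no Serre duality packaging needed.

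For ``flop-flop $=$ twist'' you diverge substantially from the paper, and your sketch has a structural gap. You describe $KN'_{-k}\circ KN_{n+k+1}$ as sitting in an exact triangle with outer terms $F_kR_k$ and $\id_X$; but $P_{F_k}$ is the convolution of a \emph{three}-term complex $F_kHR_k\xrightarrow{\psi}F_kR_k\xrightarrow{\trace}\id_X$, so the relevant triangle has $\cone(\psi)$, not $F_kR_k$, as an outer term. Identifying a single connecting map with ``$\psi$'' is not enough; you would need to produce the full two-step complex on the flop side and match both maps, and your appeal to uniqueness of convolutions only kicks in after that identification. The paper bypasses all of this: it observes that the argument of Addington--Donovan--Meachan (reduction to the local model $X=\Omega^1_{P/Z}$ as a hyperplane section of an Atiyah flop, intertwining of $P_{F_k}$ with a spherical twist via $S_*$, then deformation to the normal cone for general $X$) never actually used the $HH^{\odd}(Z)=0$ hypothesis or the splitting of $R_kF_k$, and so goes through verbatim in the non-split setting.
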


We finish with some speculation:

\em Symmetricity: \rm If $F\colon \A \rightarrow \B$ is spherical, 
so are its adjoints $L$ and $R$.  
The definition of a spherical functor is symmetrical with 
respect to $\A$ and $\B$. 
Could we have made our definition of a $\mathbb{P}^n$-functor equally
symmetric? Asking also for $FR$ to be a cyclic extension of $\id_B$ by 
some autoequivalence $J$ satisfying analogues of the monad, the
adjoints, and the highest degree term conditions? Thus $L$ and
$R$ would also be $\mathbb{P}^n$-functors.  
This question wasn't asked before, when only the split notion was
considered, since in almost all the known examples $FR$ is not split. 
Indeed, it is probably too strong to expect all known $\mathbb{P}^n$-functors 
to be symmetrical, 
but it would be interesting to study the ones that are. 
Our first example are the cyclic covers $f\colon Z \rightarrow X$ considered 
above. In \S\ref{section-pn-twists-for-f_*-f^*-f^!} we prove that
while $f_*$ is a non-split $\mathbb{P}^n$-functor, $f^*$ and $f^!$
are split $\mathbb{P}^n$-functors. 

\em $\mathbb{P}$-twist data: \rm Let $(H,Q_n,\gamma)$ 
be a $\mathbb{P}^n$-functor struncture on a functor 
$F\colon \A \rightarrow \B$. By its definition 
\eqref{eqn-intro-ptwist-definition-nonsplit-theorem}
the $\mathbb{P}$-twist $P_F$ is completely determined by 
$F$ and the map $\psi$ defined using 
a fraction of the data of $(H,Q_n,\gamma)$. 
In \S\ref{section-ptwists} we axiomatise this and 
define \em $\mathbb{P}$-twist data $(H,Q_1, \gamma_1)$ \rm
for an arbitrary functor $F$, though the resulting $\mathbb{P}$-twist
is not in general an autoequivalence. Each $\mathbb{P}^n$-functor
structure $(H,Q_n,\gamma)$ contains $\mathbb{P}$-twist 
data $(H, Q_1, \gamma_1)$. Is $(H,Q_n,\gamma)$ 
determined by its $(H, Q_1, \gamma_1)$ in some sense? In 
\S\ref{section-truncated-twisted-tensor-algebras}
we give a natural construction which takes 
any $\mathbb{P}$-twist data $(H, Q_1, \gamma_1)$ and produces
a degree $n$ cyclic coextension $Q_n$ 
of $\id_\A$ by $H$ and a map $\gamma\colon Q_n
\rightarrow RF$. If we apply this to the $\mathbb{P}$-twist data of 
a $\mathbb{P}^n$-functor structure is the result 
another $\mathbb{P}^n$-functor structure on $F$? Having the same 
$\mathbb{P}$-twist data as the old structure 
it would therefore have the same $\mathbb{P}$-twist. 
This question is non-trivial even in the split case. 
For a split $\mathbb{P}^n$-functor
satisfying the strong monad condition, i.e.~the definition of 
\cite{Addington-NewDerivedSymmetriesOfSomeHyperkaehlerVarieties}, 
 the answer is yes. We prove this in Corollary
\ref{cor-the-strongest-monad-condition} and we also prove that the
newly obtained $\mathbb{P}^n$-functor structure satisfies the strongest 
possible monad condition: the matrix of the map 
\eqref{eqn-left-multiplication-by-h-in-RF-monad} is not just upper
triangular with $\id$s down the main diagonal, but the identity matrix
itself. Thus, asking whether the same is true of a general split 
$\mathbb{P}^n$-functor, is asking whether the monad
multiplication which satisfies the monad condition can be made
to satisfy the strong monad condition. In the Appendix we provide
some evidence towards this by studying the case when 
$\A = D(\text{Vect})$. 

\em Segal's conjecture: \rm Our proof of Segal's conjecture raises two
questions. The first is whether its converse is true. Let 
$F\colon \A \rightarrow \B$ be an enhanced functor, 
$H$ be an autoequivalence of $\A$ and $\gamma_1\colon H \hookrightarrow RF$ 
be a direct summand inclusion. As before, the map  
$h \colon FH \hookrightarrow FRF \rightarrow F$ defines a functor 
$\tilde{F}$ from the non-commutative line bundle $\A_H$ to $\B$. 
If $\tilde{F}$ is spherical, is there $n$ such that 
$F$ a split $\mathbb{P}^n$-functor 
satisfying the strong monad condition? If 
it is true, our Corollary \ref{cor-the-strongest-monad-condition}
gives a way to try and prove it. It constructs  
$\gamma\colon \id_\A \oplus \dots \oplus H^n \rightarrow RF$ which 
is an isomorphism if $F$ is a $\mathbb{P}^n$-functor. 
To prove the converse to Segal's conjecture one needs to find a way 
to exploit the fact that $\tilde{F}$ is spherical to 
show that $\gamma$ is an isomorphism. However, it doesn't 
seem straightforward. The second question is whether Segal's 
conjecture can be generalised to non-split $\mathbb{P}^n$-functors.
Here, we do not see what should be the analogue of the map $h$. 
In the non-split case, it is perfectly possible to have
$\homm_{D(\AbimB)}^0(FH, F) = 0$. For example, in the cyclic cover
example in \S\ref{section-cyclic-covers-the-setup} 
$F \in D(Z \times X)$ is $\mathcal{O}_{\Gamma_f}$ where $\Gamma_f$
is the graph of the cyclic cover $f\colon Z \rightarrow X$, while 
$FH \simeq \mathcal{O}_{\Gamma_f}(E,0)$ where
$E$ is $\frac{1}{n}$ of the ramification divisor. 
For smooth projective $Z$ and non-trivial $E$
there are no non-zero sheaf morphisms 
$\mathcal{O}_{\Gamma_f}(E,0) \rightarrow \mathcal{O}_{\Gamma_f}$. 
More conceptually, in the split case Segal's construction works  
because the map $\psi\colon FHR \rightarrow FR$ defining 
the $\mathbb{P}$-twist is a difference of two natural maps. 
One is $hR$, where $h$ is the map defining $\tilde{F}$, and 
the other is $Fh'$, where $h'$ is the map defining 
the right adjoint $\tilde{R}$ of $\tilde{F}$. 
Hence $\tilde{F}\tilde{R} \simeq \cone(\psi)$, 
and the twist of $\tilde{F}$ is the $\mathbb{P}$-twist of $F$. 
In the non-split case the map $\psi$ does exist, but it is no
longer a difference of $hR$ and its right adjoint for some map $h$. Thus 
it is not clear how to proceed. 

On the structure of the paper. In \S\ref{section-preliminaries}
we give preliminaries on DG- and $\Ainfty$-categories, enhancements, 
repeated and cyclic extensions, truncated twisted tensor algebras, 
and Fourier-Mukai kernels. In \S\ref{section-ptwists} we give 
the formalism of $\mathbb{P}$-twist data for arbitrary enhanced
functors. In \S\ref{section-Pn-functors} we give our new definition 
of a $\mathbb{P}^n$-functor and prove its
$\mathbb{P}$-twist to be an autoequivalence. In 
\S\ref{section-strong-monad-condition} we first, 
in the split case, compare our definition of a $\mathbb{P}^n$-functor
with the definition of a split $\mathbb{P}^n$-functor
in \cite{Addington-NewDerivedSymmetriesOfSomeHyperkaehlerVarieties}.
With that insight in mind, we go back to the fully general situation, 
write down a non-split analogue of Addington's definition and prove it 
to imply ours. In \S\ref{section-examples-of-split-Pn-functors}
we give a brief survey of all the examples of split $\mathbb{P}^n$-functors
which appeared in literature to date. Then in
\S\ref{section-examples-of-non-split-Pn-functors} we give four brand
new examples of non-split $\mathbb{P}^n$-functors using our new
theory. 

\em Acknowledgements: \rm We would like to thank 
Nicolas Addington, Alexei Bondal, Will Donovan, 
Alexander Efimov, Andreas Hochenegger, Gabriel Kerr, Andreas Krug, 
Alexander Kuznetsov, Zongzhu Lin, and Richard Thomas for useful
discussions in the course of writing this paper. We would like to
thank Richard Thomas for help with the introduction. 
We would also like to thank Ciaran Meachan for bringing Johnstone's Lemma 
to our attention. The first author would like to thank Kansas State
University for providing a stimulating research environment while
working on this paper.  The second author would like to offer similar
thanks to Cardiff University. Both authors would like to thank Banff
International Research Station in Canada for enabling them to work
together in person, and Castello Montegufoni in Tuscany for doing the
same as well as providing some perfect scenery. 

\section{Preliminaries}
\label{section-preliminaries}

\subsection{DG- and $\Ainfty$-categories}
\label{section-preliminaries-dg-and-ainfty-categories}

For a brief introduction to DG-categories, DG-modules and 
the technical notions involved we direct the reader to 
\cite{AnnoLogvinenko-SphericalDGFunctors}, 
\S2-4. The present paper was written with that survey in mind. 
Below we employ freely any notion or 
piece of notation introduced in \cite{AnnoLogvinenko-SphericalDGFunctors},
\S2-4. We particularly stress 
the importance of the material on \em twisted complexes \rm in 
\cite{AnnoLogvinenko-SphericalDGFunctors}, $\S3$. We also recall that the 
\em derived category $D(\A)$ \rm of a DG-category $\A$ is the
localisation of the homotopy category $H^0(\modA)$ of (right)
DG-modules by the class of quasi-isomorphisms. It is constructed as
Verdier quotient of $H^0(\modA)$ by the full subcategory $H^0(\acyc \A)$ 
consisting of acyclic modules, thus it comes with 
a projection $H^0(\modA) \rightarrow D(\A)$.

For an introduction to $\Ainfty$-categories we recommend
\cite{Keller-AInfinityAlgebrasModulesAndFunctorCategories}, 
for a comprehensive technical text -- \cite{Lefevre-SurLesAInftyCategories}.
For the summary of the technical details relevant to this paper,
see
\cite{AnnoLogvinenko-BarCategoryOfModulesAndHomotopyAdjunctionForTensorFunctors}.
In this paper, we employ freely any notion or a piece of notation
introduced in
\cite{AnnoLogvinenko-BarCategoryOfModulesAndHomotopyAdjunctionForTensorFunctors},
particularly that of the bar category of modules $\modbar(\A)$ as a
DG-enhancement of the derived category $D(\A)$ of a DG-category $\A$. 

The rest of this section contains several new technical results 
for setting up the theoretical machinery of
$\mathbb{P}^n$-functors in the language of
DG-bimodules and their bar-categories. 

Recall 
\cite[\S 3.7]{Drinfeld-DGQuotientsOfDGCategories}, 
\cite{Tabuada-UneStructureDeCategorieDeModelesDeQuillenSurLaCategorieDesDG-Categories}, 
\cite[Appendix A]{AnnoLogvinenko-SphericalDGFunctors} that
in any DG-category any homotopy equivalence $y \xrightarrow{\alpha} x$
can be completed to the following system of morphisms and relations:
\begin{equation}
\label{eqn-tabuadas-universal-homotopy-equivalence-category}
\begin{minipage}[c][1in][c]{1.5in}
$d \theta_x = \alpha \circ \beta - \id_x,$ \\
$d \theta_y = \id_y - \beta \circ \alpha,$ \\
$d \alpha = d \beta = 0,$ \\
$d \phi = - \beta \circ \theta_x - \theta_y \circ \beta.$ 
\end{minipage}
\quad\quad
\begin{tikzcd}[column sep={2cm},row sep={1.5cm}] 
x
\ar[bend left=20]{rr}{\beta}
\ar[bend left=50, dashed]{rr}{\phi}
\ar[out=-150,in=150,loop,distance=6em,dotted]{}{\theta_x}
& &
y
\ar[bend left=20]{ll}{\alpha}
\ar[out=30, in=-30,loop,distance=6em, dotted]{}{\theta_y}
\end{tikzcd}
\end{equation}

\begin{lemma}[Replacement Lemma]
\label{lemma-replacement-lemma} 

Let $E \in \modA$ and let its underlying graded module split
as $Q \oplus R$. Let the differential of $E$ with respect to that splitting be
\begin{equation}
\label{eqn-differential-of-the-original-module}
\left(
\begin{matrix}
d_Q & \eta
\\
\zeta & \delta 
\end{matrix}
\right).
\end{equation}

Assume futher that $d_Q$ is a differential on $Q$. 
Let $P \in \modA$ be homotopy equivalent to $Q \in \modA$. 
More specifically, 
let $Q \xrightarrow{\beta} P$ and $P \xrightarrow{\alpha} Q$ 
be homotopy equivalences such that there exist
$\theta_Q \in \homm^{-1}_{\A}(Q,Q)$, 
$\theta_P \in \homm^{-1}_{\A}(P,P)$ and 
$\phi \in \homm^{-2}_{\A}(Q,P)$ as on 
\eqref{eqn-tabuadas-universal-homotopy-equivalence-category}. 

Then $E$ is homotopy equivalent to $P \oplus R$ 
equipped with the differential 
\begin{equation}
\label{eqn-differential-of-the-replaced-module}
\left(
\begin{matrix}
d_P & \beta \circ \eta  
\\
\zeta \circ \alpha & \delta - \zeta \circ \theta_Q \circ \eta
\end{matrix}
\right).
\end{equation}
\end{lemma}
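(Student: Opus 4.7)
The plan is to exhibit homotopy-equivalence data between $E$ and $\tilde E := (P\oplus R,\, d_{\tilde E})$ by hand, in block $2\times 2$ matrix form. First, unpack $(d_E)^2 = 0$ under the extra assumption $d_Q^2 = 0$: block by block this yields the four Maurer-Cartan-type identities $\eta\zeta = 0$, $d_Q\eta + \eta\delta = 0$, $\zeta d_Q + \delta\zeta = 0$, and $\delta^2 + \zeta\eta = 0$. These, combined with $d\theta_Q = \alpha\beta - \id_Q$, imply after a routine block expansion that $d_{\tilde E}$ as given in \eqref{eqn-differential-of-the-replaced-module} is a differential; the only nontrivial block is $(R,R)$, where the term $\zeta\alpha\beta\eta$ and the identity $\delta^2 = -\zeta\eta$ conspire via $d\theta_Q$ to cancel the cross-contributions of $\zeta\theta_Q\eta$.

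Construct the chain maps
$$ A = \begin{pmatrix} \alpha & \theta_Q\eta \\ 0 & \id_R \end{pmatrix}, \qquad B = \begin{pmatrix} \beta & 0 \\ \zeta\theta_Q & \id_R \end{pmatrix}. $$
The off-diagonal corrections $\theta_Q\eta$ and $\zeta\theta_Q$ are exactly the minimal terms needed for $A$ and $B$ to be closed: the defect $(\id - \alpha\beta)\eta = -(d\theta_Q)\eta$ of the naive diagonal attempt in the $(Q,R)$-block of $dA$ is absorbed by $\theta_Q\eta$ using $d_Q\eta + \eta\delta = 0$, and symmetrically for $B$ via $\zeta d_Q + \delta\zeta = 0$. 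For the easy direction set $\Theta_E := \mathrm{diag}(\theta_Q,\, 0) \in \homm^{-1}(E,E)$; a short block calculation gives $d\Theta_E = AB - \id_E$, the $(Q,Q)$-block amounting to $d\theta_Q = \alpha\beta - \id_Q$, the off-diagonal blocks matching $\theta_Q\eta$ and $\zeta\theta_Q$ on the nose, and the $(R,R)$-entry of $AB$ reducing to $\id_R$ because $\theta_Q\eta\zeta\theta_Q = 0$ via $\eta\zeta = 0$.

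The harder direction is the main obstacle. The blocks of $BA - \id_{\tilde E}$ read $-d\theta_P$, $\beta\theta_Q\eta$, $\zeta\theta_Q\alpha$, and $\zeta\theta_Q^2\eta$, and only the first is the differential of the obvious candidate $-\theta_P$. The remedy is to exploit $\phi$ and, as needed, enlarge the given data of \eqref{eqn-tabuadas-universal-homotopy-equivalence-category}. One sets
$$ \Theta_{\tilde E} = \begin{pmatrix} -\theta_P & -\phi\eta \\ \zeta\phi' & * \end{pmatrix}, $$
where $d\phi = -\beta\theta_Q - \theta_P\beta$ handles the $(P,R)$-block, a symmetric companion $\phi' \in \homm^{-2}(P,Q)$ with $d\phi' = -\alpha\theta_P - \theta_Q\alpha$ handles the $(R,P)$-block, and the $(R,R)$-entry $*$ absorbs the residue $\zeta\theta_Q^2\eta$. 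Such a $\phi'$ exists because $\alpha\theta_P + \theta_Q\alpha$ is $d$-closed (a direct check using $d\theta_P = \id_P - \beta\alpha$ and $d\theta_Q = \alpha\beta - \id_Q$), and it can be constructed from standard enlargements of the homotopy-equivalence data. Checking $d\Theta_{\tilde E} = BA - \id_{\tilde E}$ block by block is then routine but lengthy: the $(P,R)$-block uses $d\phi$ plus $d_Q\eta + \eta\delta = 0$, the $(R,P)$-block uses $d\phi'$ plus $\zeta d_Q + \delta\zeta = 0$, and the $(R,R)$-block reduces to an $\eta\zeta = 0$ argument. The technical heart of the lemma is precisely this last step: assembling the correct auxiliary homotopies and managing all signs and cross-terms.
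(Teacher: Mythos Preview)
Your overall strategy---write explicit block maps and verify the compositions are homotopic to the identity---is exactly the paper's, but three of your steps are not actually carried out.

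First, your $A$ and $B$ are not closed. You only check the $(Q,R)$-block of $dA$, but the $(R,R)$-block gives $\zeta\theta_Q\eta + \delta - (\delta - \zeta\theta_Q\eta) = 2\zeta\theta_Q\eta$, which need not vanish. The paper takes
\[
A = \begin{pmatrix} \alpha & -\theta_Q\eta \\ 0 & \id \end{pmatrix}, \qquad
B = \begin{pmatrix} \beta & 0 \\ -\zeta\theta_Q & \id \end{pmatrix},
\]
and with these signs all four blocks of $dA$ and $dB$ vanish.

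Second, the existence of your $\phi'$ with $d\phi' = -\alpha\theta_P - \theta_Q\alpha$ is the technical heart and cannot be handwaved. That $\alpha\theta_P + \theta_Q\alpha$ is closed does not by itself make it a boundary in $\homm^{-1}_\A(P,Q)$, and ``standard enlargements of the homotopy-equivalence data'' is not an argument: the lemma's hypotheses hand you $\phi$ but not a symmetric partner. The paper constructs it explicitly from the available data,
\[
\psi = \alpha\phi\alpha + \theta_Q^2\alpha + \alpha\theta_P^2 + \theta_Q\alpha\theta_P,
\]
and verifies $d\psi = -\alpha\theta_P - \theta_Q\alpha$ directly.

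Third, the $(R,R)$-block does not ``reduce to an $\eta\zeta = 0$ argument.'' After placing $-\phi\eta$ and $\zeta\psi$ in the off-diagonal slots of $\Theta_{\tilde E}$, the residual $(R,R)$-term is $\zeta(\psi\beta - \alpha\phi - \theta_Q^2)\eta$, and one still has to exhibit $\psi\beta - \alpha\phi - \theta_Q^2$ as a boundary. The paper gives the primitive explicitly:
\[
\psi\beta - \alpha\phi - \theta_Q^2 = d(\alpha\phi\theta_Q + \theta_Q\alpha\phi + \alpha\theta_P\phi + \theta_Q^3).
\]
This is where $\phi$ is genuinely used and where the computation is least routine; your proposal stops precisely at the point where the real work begins.
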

\begin{proof}
By assumption, $d_Q^2 = 0$. Since
\eqref{eqn-differential-of-the-original-module} is 
a derivation, $\delta$ is also a derivation, 
while $\eta$ and $\zeta$ are maps of graded $\A$-modules. 
Since \eqref{eqn-differential-of-the-original-module}
squares to zero, we have  
$$ \eta \circ \zeta = 0, $$
$$ d_Q \circ \eta + \eta \circ \delta = 0, $$
$$ \zeta \circ d_Q + \delta \circ \zeta = 0. $$
$$ \zeta \circ \eta + \delta^2 = 0, $$

We first claim that $\eqref{eqn-differential-of-the-replaced-module}$ 
is a differential, and thus makes $P \oplus R$ into DG $\A$-module.
Indeed, the
 map $\eqref{eqn-differential-of-the-replaced-module}$ is a derivation 
as it is the sum of the derivation 
$\left( 
\begin{smallmatrix}
d_P & 0 \\
0 & \delta 
\end{smallmatrix}
\right)
$
and the graded $\A$-module map 
$\left( 
\begin{smallmatrix}
0 & 
\beta \circ \eta  
\\
\zeta \circ \alpha & - \zeta \circ \theta_Q \circ \eta
\end{smallmatrix}
\right). 
$
Moreover, we have 
\begin{align*}
d_P^2 + \beta \circ \eta \circ \zeta \circ \alpha & = 0 + 
\beta \circ 0 \circ \alpha = 0, \\
d_P \circ \beta \circ \eta + \beta \circ \eta \circ (\delta
- \zeta \circ \theta_Q \circ \eta) &= 
d_P \circ \beta \circ \eta - \beta \circ d_Q \circ \eta
- \beta \circ 0 \circ \theta_Q \circ \eta = 
\\
& = (d_P \circ \beta - \beta \circ d_Q) \circ \eta = 0 \circ \eta = 0, 
\\
\zeta \circ \alpha \circ d_P + (\delta - \zeta \circ \theta_Q \circ \eta)
\circ \zeta \circ \alpha &= 
\zeta \circ \alpha \circ d_P - \zeta \circ d_Q \circ \alpha 
- \zeta \circ \theta_Q \circ 0 \circ \alpha = 
\\
& = \zeta \circ (\alpha \circ d_P - d_Q \circ \alpha) = \zeta \circ 0 = 0, 
\\
\zeta \circ \alpha \circ \beta \circ \eta + 
(\delta - \zeta \circ \theta_Q \circ \eta)^2
&  = \zeta \circ (\id_Q - d\theta_Q) \circ \eta 
+  \delta^2 - \delta \circ \zeta \circ \theta_Q \circ \eta
- \zeta \circ \theta_Q \circ \eta \circ \delta + 
\zeta \circ \theta_Q \circ \eta \circ \zeta \circ \theta_Q \circ \eta = 
\\
& = \zeta \circ \eta + \delta^2 - \zeta \circ d\theta_Q \circ \eta
+ \zeta \circ d_Q \circ \theta_Q \circ \eta
- \zeta \circ \theta_Q \circ d_Q \circ \eta + 
\zeta \circ \theta_Q \circ 0 \circ \theta_Q \circ \eta
=
\\
& = 0 - \zeta \circ d\theta_Q \circ \eta
+ \zeta \circ (d_Q \circ \theta_Q - \theta_Q \circ d_Q) \circ \eta
= 
- \zeta \circ d\theta_Q \circ \eta
+ \zeta \circ d\theta_Q \circ \eta
= 0. 
\end{align*}
 
Now, consider the following degree $0$ maps of $\mathcal{A}$-modules:
\begin{align}
\label{eqn-extraction-lemma-Q+R-to-P+R-homotopy-equivalence}
Q \oplus R 
\xrightarrow{ 
\left(
\begin{smallmatrix}
\beta & 0 \\
- \zeta \circ \theta_Q & \id 
\end{smallmatrix}
\right)
}
P \oplus R, 
\\
\label{eqn-extraction-lemma-P+R-to-Q+R-homotopy-equivalence} 
P \oplus R
\xrightarrow{ 
\left(
\begin{smallmatrix}
\alpha & -\theta_Q \circ \eta \\
0 & \id 
\end{smallmatrix}
\right)
}
Q \oplus R.
\end{align}
These maps are readily seen to commute with the differentials, 
and thus are closed. We claim that they define mutually 
inverse homotopy equivalences 
$Q \oplus R \xrightarrow{\sim} P \oplus R$
and $P \oplus R \xrightarrow{\sim} Q \oplus R$ in $\modA$. Indeed, 
\begin{align}
\eqref{eqn-extraction-lemma-P+R-to-Q+R-homotopy-equivalence}
\circ 
\eqref{eqn-extraction-lemma-Q+R-to-P+R-homotopy-equivalence}
= 
\left(
\begin{smallmatrix}
\alpha & -\theta_Q \circ \eta \\
0 & \id 
\end{smallmatrix}
\right)
\circ 
\left(
\begin{smallmatrix}
\beta & 0 \\
- \zeta \circ \theta_Q & \id 
\end{smallmatrix}
\right)
=
\left(
\begin{smallmatrix}
\alpha \circ \beta & -\theta_Q \circ \eta \\
- \zeta \circ \theta_Q & \id 
\end{smallmatrix}
\right)
=
\left(
\begin{smallmatrix}
\id & 0 \\
0 & \id
\end{smallmatrix}
\right)
-
\left(
\begin{smallmatrix}
d\theta_Q & \theta_Q \circ \eta \\
\zeta \circ \theta_Q & 0 
\end{smallmatrix}
\right)
= 
\id - d
\left(
\begin{smallmatrix}
\theta_Q & 0 \\
0 & 0 
\end{smallmatrix}
\right)
\end{align}
while
\begin{align*}
\eqref{eqn-extraction-lemma-Q+R-to-P+R-homotopy-equivalence}
\circ 
\eqref{eqn-extraction-lemma-P+R-to-Q+R-homotopy-equivalence}
&= 
\left(
\begin{smallmatrix}
\beta & 0 \\
- \zeta \circ \theta_Q & \id 
\end{smallmatrix}
\right)
\circ
\left(
\begin{smallmatrix}
\alpha & -\theta_Q \circ \eta \\
0 & \id 
\end{smallmatrix}
\right)
= 
\left(
\begin{smallmatrix}
\beta \circ \alpha & -\beta \circ \theta_Q \circ \eta \\
-\zeta \circ \theta_Q \circ \alpha & \zeta \circ \theta_Q^2 \circ \eta + \id 
\end{smallmatrix}
\right)
= 
\left(
\begin{smallmatrix}
\id & 0 \\
0 & \id 
\end{smallmatrix}
\right)
-
\left(
\begin{smallmatrix}
-d\theta_P & \beta \circ \theta_Q \circ \eta \\
\zeta \circ \theta_Q \circ \alpha & - \zeta \circ \theta_Q^2 \circ \eta  
\end{smallmatrix}
\right)
= 
\\
& =
\left(
\begin{smallmatrix}
\id & 0 \\
0 & \id 
\end{smallmatrix}
\right)
+
d\left(
\begin{smallmatrix}
\theta_P & 0 \\
0 & 0 
\end{smallmatrix}
\right)
- 
\left(
\begin{smallmatrix}
0 & (\theta_P \circ \beta + \beta \circ \theta_Q) \circ \eta \\
\zeta \circ (\alpha \circ \theta_P + \theta_Q \circ \alpha) & 
- \zeta \circ \theta_Q^2 \circ \eta
\end{smallmatrix}
\right)
. 
\end{align*}
Now let 
\begin{align*}
\psi = \alpha \circ \phi \circ \alpha + \theta_Q^2 \circ \alpha +
\alpha \circ \theta_P^2 + \theta_Q \circ \alpha \circ \theta_P 
\end{align*} 
then 
$$ d\psi = - \alpha \circ \theta_P - \theta_Q \circ \alpha. $$
We then further have 
\begin{align*}
\left(
\begin{smallmatrix}
0 & (\theta_P \circ \beta + \beta \circ \theta_Q) \circ \eta \\
\zeta \circ (\alpha \circ \theta_P + \theta_Q \circ \alpha) & 
- \zeta \circ \theta_Q^2 \circ \eta 
\end{smallmatrix}
\right)
=
d\left(
\begin{smallmatrix}
0 & - \phi \circ \eta \\
\zeta \circ \psi & 0 
\end{smallmatrix}
\right)
+
\left(
\begin{smallmatrix}
0 & 0 \\
0 & \zeta \circ (\psi \circ \beta - \alpha \circ \phi - \theta^2_Q) \circ \eta 
\end{smallmatrix}
\right). 
\end{align*}
For any map $f: Q \rightarrow Q$ we have $d\left(
\begin{smallmatrix}
0 & 0 \\
0 & \zeta \circ f \circ \eta
\end{smallmatrix}
\right) = \left(
\begin{smallmatrix}
0 & 0 \\
0 & \zeta \circ df \circ \eta
\end{smallmatrix}
\right)$, thus it remains to show that 
$\psi \circ \beta - \alpha \circ \phi + \theta^2_Q$ is a boundary.
Indeed, it can be readily checked that  
$$\psi \circ \beta - \alpha \circ \phi - \theta^2_Q  
= d(\alpha \circ \phi \circ \theta_Q + \theta_Q \circ \alpha \circ
\phi + \alpha \circ \theta_P \circ \phi + \theta_Q^3). $$  
\end{proof}

Next is the lemma which allows us to cancel an
additive functor $F$ from certain natural transformations 
involving its right or left adjoint:

\begin{lemma}[Cancellation Lemma]
\label{lemma-cancelling-F-from-natural-transformations-involving-R-or-L}
Let $\A$ and $\B$ be two additive categories. Let $F\colon \A
\rightarrow \B$ be an additive functor and $R, L \colon \B
\rightarrow \A$ be its right and left adjoints. 

Let $\C$ be an additive category. Let 
$G\colon \A \rightarrow \C$, $H\colon \B \rightarrow \C$, 
$I\colon \C \rightarrow \B$, and $J\colon \C \rightarrow \A$
be additive functors.  

\begin{enumerate}
\item \label{item-GR-alpha-H}
For any natural transformation $GR \xrightarrow{\alpha} H$
we have $\alpha = 0$ if and only if $\alpha F = 0$. 
\item For any natural transformation $J \xrightarrow{\alpha} RI$
we have $\alpha = 0$ if and only if $F \alpha = 0$. 
\item For any natural transformation $H \xrightarrow{\alpha} GL$
we have $\alpha = 0$ if and only if $\alpha F = 0$. 
\item For any natural transformation $LI \xrightarrow{\alpha} J$
we have $\alpha = 0$ if and only if $F \alpha = 0$. 
\end{enumerate}
\end{lemma}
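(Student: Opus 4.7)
The plan is entirely standard categorical massaging: all four statements have the ``only if'' direction for free (whiskering a zero natural transformation by $F$ stays zero), so only the ``if'' direction in each case requires work, and each of those four reductions follows the same two-move pattern: apply naturality with respect to an adjunction unit or counit to absorb the hypothesis $\alpha F = 0$ (or $F\alpha = 0$), then use a triangle identity to produce a one-sided inverse and conclude $\alpha = 0$.

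Let me spell out the archetypal case (1) and then indicate how the other three are symmetric variants. Let $\eta\colon \id_\A \to RF$ and $\epsilon\colon FR \to \id_\B$ be the unit and counit of $F \dashv R$. Naturality of $\alpha\colon GR \to H$ with respect to $\epsilon\colon FR \to \id_\B$ gives a commutative square
\[
\begin{tikzcd}
GRFR \ar[r, "\alpha FR"] \ar[d, "GR\epsilon"'] & HFR \ar[d, "H\epsilon"] \\
GR \ar[r, "\alpha"'] & H,
\end{tikzcd}
\]
so $\alpha \circ GR\epsilon = H\epsilon \circ \alpha FR$. If $\alpha F = 0$ then $\alpha FR = 0$, hence $\alpha \circ GR\epsilon = 0$. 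The triangle identity $R\epsilon \circ \eta R = \id_R$ yields $GR\epsilon \circ G\eta R = \id_{GR}$, so
\[
\alpha = \alpha \circ GR\epsilon \circ G\eta R = 0.
\]

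For part (2), naturality of $\alpha\colon J \to RI$ with respect to $\eta$ gives $\eta_{RI} \circ \alpha = RF\alpha \circ \eta_J$; the hypothesis $F\alpha = 0$ kills the right hand side, and the triangle identity $R\epsilon \circ \eta R = \id_R$ supplies $R\epsilon_I$ as a left inverse to $\eta_{RI}$. Parts (3) and (4) are exactly dual, using instead the unit $\eta'\colon \id_\B \to FL$ and counit $\epsilon'\colon LF \to \id_\A$ of $L \dashv F$ together with the triangle identity $\epsilon' L \circ L\eta' = \id_L$; in (3) one uses naturality against $\eta'$ and the produced left inverse $G\epsilon'_L$ of $GL\eta'$, while in (4) one uses naturality against $\epsilon'$ and the produced right inverse $L\eta'_I$ of $\epsilon'_{LI}$.

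There is no genuine obstacle here: the only subtlety is keeping straight which of the four adjunction data (two units, two counits, and their two triangle identities) plays the role of the absorbing arrow and which plays the role of providing the one-sided inverse in each of the four cases. A small compatibility table at the start of the proof listing, for each of (1)--(4), the naturality square to invoke and the triangle identity to apply would make the bookkeeping transparent and the four arguments essentially identical.
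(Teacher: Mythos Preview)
Your proof is correct and follows essentially the same approach as the paper's: both use naturality of $\alpha$ with respect to the counit $\epsilon\colon FR \to \id_\B$ to obtain $\alpha \circ GR\epsilon = H\epsilon \circ \alpha FR = 0$, and then precompose with the section $G\eta R$ provided by the triangle identity to conclude $\alpha = 0$. The paper proves only part (1) and states the rest are similar, just as you do.
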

\begin{proof}
We only prove the assertion \eqref{item-GR-alpha-H}, as the rest are proved
similarly.

The ``only if'' implication is trivial. For ``if'' implication, 
assume that 
$$ GRF \xrightarrow{ \alpha F } HF $$
is the zero morphism. Then so is the composition  
$$ GRFR \xrightarrow{ \alpha FR } HFR \xrightarrow{H\trace} H. $$ 
By naturality it equals the composition 
$$ GRFR \xrightarrow{GR \trace} GR \xrightarrow{ \alpha } H $$
which is therefore also zero. But $GRFR \xrightarrow{ GR \trace } GR$
has the left inverse $GR \xrightarrow{G \action R} GRFR$, thus we
conclude that $GR \xrightarrow{\alpha} H$ must be zero. 
\end{proof}

Recall that in the bar category of bimodules 
we have left (resp. right) dualisation functor which sends
any left (resp. right) perfect bimodule to its left (resp. right) 
homotopy adjoint \cite[\S4.2]{AnnoLogvinenko-BarCategoryOfModulesAndHomotopyAdjunctionForTensorFunctors}.  
The next lemma describes the action of the left dualisation 
functor on morphisms between bimodules in terms of the corresponding 
homotopy adjunctions. There is also an identical result for 
the right dualisation functor, which we leave to the reader. 
\begin{lemma}[Dualisation Lemma]
\label{lemma-left-dual-morphism}
Let $\A$ and $\B$ be two DG-categories. 
Let $M, N \in \AmodbarB$ be $\A$-perfect.  
For any $f \in \barhom_{\AbimB}(M,N)$ the composition
\begin{equation*}
N^\barA 
\xrightarrow{\action \bartimes \id}
M^\barA \bartimes M \bartimes N^\barA
\xrightarrow{\id \bartimes f \bartimes \id}
 M^\barA\bartimes N \bartimes N^\barA
\xrightarrow{\id \bartimes \trace}
M^\barA
\end{equation*}
is homotopic to the map $f^\barA \in \barhom_{\BbimA}(N^\barA, M^\barA)$.
\end{lemma}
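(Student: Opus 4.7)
The approach is to recognize that the composition displayed in the statement is the classical ``mates'' formula for transporting a morphism across an adjunction, and then to verify that this formula computes $f^\barA$ up to an explicit homotopy in the bar category. In any strict 2-category with adjunctions, this would be a tautology; the content of the lemma is that it remains true at the DG-level once the triangle identities are required only up to homotopy.

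First I would recall from \cite{AnnoLogvinenko-BarCategoryOfModulesAndHomotopyAdjunctionForTensorFunctors}, \S4.2 the explicit construction of the left dualisation functor on morphisms together with the homotopy adjunction data: for any $\A$-perfect $M \in \AmodbarB$, the dual $M^\barA \in \BmodbarA$ comes with a unit $\action_M\colon \B \to M^\barA \bartimes M$ and counit $\trace_M\colon M \bartimes M^\barA \to \A$, satisfying the two triangle identities up to specified degree $-1$ maps. Moreover, the dual $f^\barA$ of a closed morphism $f\colon M \to N$ is characterised (up to an explicit homotopy) by commutativity of the unit-naturality square
\[
(\id_{M^\barA} \bartimes f) \circ \action_M \;\sim\; (f^\barA \bartimes \id_N) \circ \action_N
\]
as maps $\B \to M^\barA \bartimes N$.

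Denoting the composition in the statement by $\Phi\colon N^\barA \to M^\barA$ and using bifunctoriality of $\bartimes$, I would then compute
\[
\Phi \;=\; (\id_{M^\barA} \bartimes \trace_N) \circ \bigl(((\id_{M^\barA} \bartimes f) \circ \action_M) \bartimes \id_{N^\barA}\bigr).
\]
Substituting the characterisation of $f^\barA$ recalled above turns this (up to homotopy) into
\[
(\id_{M^\barA} \bartimes \trace_N) \circ \bigl((f^\barA \bartimes \id_N) \bartimes \id_{N^\barA}\bigr) \circ (\action_N \bartimes \id_{N^\barA}) \;=\; f^\barA \circ (\id_{N^\barA} \bartimes \trace_N) \circ (\action_N \bartimes \id_{N^\barA}),
\]
and applying the triangle identity for the homotopy adjunction $(N, N^\barA)$ collapses the second factor to $\id_{N^\barA}$, again up to a specified degree $-1$ map. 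Combining the two homotopies yields the desired $\Phi \sim f^\barA$ in $\barhom_{\BbimA}(N^\barA, M^\barA)$.

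The main obstacle is purely bookkeeping: the two homotopies invoked above are explicit degree $-1$ maps in the bar category, and one must check via the Leibniz rule that their sum is in fact a closed element of $\barhom^{-1}_{\BbimA}(N^\barA, M^\barA)$ whose differential is $\Phi - f^\barA$. This reduces to assembling the standard higher-coherence data for the homotopy adjunctions $(M, M^\barA)$ and $(N, N^\barA)$ as constructed in \cite{AnnoLogvinenko-BarCategoryOfModulesAndHomotopyAdjunctionForTensorFunctors}; no conceptually new input is required, but the bar-combinatorics must be tracked carefully. The analogous statement for the right dualisation functor follows by the same argument with the roles of unit and counit reversed.
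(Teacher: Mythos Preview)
Your mates-style argument is formally sound once the unit-naturality square for dualisation is available, but there is a real risk of circularity, and in any case the paper's route is different and much shorter.

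The paper's proof is a single commutative-up-to-homotopy diagram. It uses that $M^\barA = \barhom_\A(M,\A)$ and $N^\barA = \barhom_\A(N,\A)$ by definition, and that the natural evaluation maps $M^\barA \bartimes M \to \barhom_\A(M,M)$ and $M^\barA \bartimes N \to \barhom_\A(M,N)$ identify the displayed composition with
\[
\barhom_\A(N,\A) \xrightarrow{\action \bartimes \id} \barhom_\A(M,M) \bartimes \barhom_\A(N,\A) \xrightarrow{(f\circ -)\bartimes\id} \barhom_\A(M,N) \bartimes \barhom_\A(N,\A) \xrightarrow{\composition} \barhom_\A(M,\A).
\]
This bottom row is visibly $g \mapsto g \circ f$, which is $f^\barA$ by its very definition as precomposition in $\barhom$. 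No triangle identities and no naturality of the unit are invoked.

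The problem with your route is the step where you assume that $f^\barA$ is characterised by the square $(\id \bartimes f)\circ\action_M \sim (f^\barA \bartimes \id)\circ\action_N$. In this paper that square is exactly Corollary~\ref{cor-f-and-f-dual-commuting-square}, and it is \emph{deduced from} the Lemma you are proving. So unless \S4.2 of the cited reference contains an independent proof of that naturality---as opposed to merely the construction of $f^\barA$ via $\barhom$ and the unit/counit/triangle-homotopy data for each fixed $M$---you are assuming what you set out to establish. The paper's $\barhom$-model argument avoids this entirely: there $f^\barA$ is literally precomposition, so the Lemma reduces to compatibility of the evaluation maps with composition, and the subsequent Corollary then yields the unit-naturality you wanted to start from.
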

\begin{proof}
The following diagram commutes up to homotopy:
\begin{small}
\begin{equation*}
\begin{tikzcd}[column sep={1.5cm}]
N^\barA 
\ar{r}{\action \bartimes \id}
\ar[equals]{d}
&
M^\barA \bartimes M \bartimes N^\barA
\ar{r}{\id \bartimes f \bartimes \id}
\ar{d}
&
 M^\barA\bartimes N \bartimes N^\barA
\ar{r}{\id \bartimes \trace}
\ar{d}
&
M^\barA
\ar[equals]{d}
\\
\barhom_\A(N,\A)
\ar{r}{\action\bartimes\id}
&
\barhom_\A(M,M) \bartimes \barhom_\A(N,\A)
\ar{r}{(f\circ(-))\bartimes\id}
&
\barhom_\A(M,N) \bartimes\barhom_\A(N,\A)
\ar{r}{\composition}
&
 \barhom_\A(M,\A).
\end{tikzcd}
\end{equation*}
\end{small}
\end{proof}

\begin{cor}
\label{cor-f-and-f-dual-commuting-square}
Let $\A$ and $\B$ be two DG-categories. 
Let $M, N \in \AmodbarB$ be $\A$-perfect bimodules. 
For any $f \in \barhom_{\AbimB}(M,N)$ the 
following square commutes up to homotopy:
\begin{equation*}
\begin{tikzcd}[column sep={2cm}]
\B
\ar{r}{\action}
\ar[']{d}{\action}
&
M^\barA \bartimes M
\ar{d}{\id \bartimes f}
\\
N^\barA \bartimes N
\ar{r}{f^\barA \bartimes \id}
&
M^\barA \bartimes N.
\end{tikzcd}
\end{equation*}
\end{cor}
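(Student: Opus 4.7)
The plan is to deduce this corollary by combining the Dualisation Lemma with the triangle identities of the homotopy adjunctions $(\action_M, \trace_M)$ and $(\action_N, \trace_N)$ recorded in \cite{AnnoLogvinenko-BarCategoryOfModulesAndHomotopyAdjunctionForTensorFunctors}. No new computation on $f$ itself is required; the work is all in rearranging structure maps.

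First I would invoke the Dualisation Lemma to replace $f^\barA$ in the counterclockwise composition of the square by the explicit three-step formula it provides. Whiskering that formula by $\id_N$ on the right and precomposing with the unit $\action_N$, the counterclockwise path turns into the four-step composition
\[
\B \xrightarrow{\action_N} N^\barA \bartimes N \xrightarrow{\action_M \bartimes \id} M^\barA \bartimes M \bartimes N^\barA \bartimes N \xrightarrow{\id \bartimes f \bartimes \id} M^\barA \bartimes N \bartimes N^\barA \bartimes N \xrightarrow{\id \bartimes \trace_N \bartimes \id} M^\barA \bartimes N.
\]

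Next I would use the interchange law in the bicategory of DG-bimodules to rewrite $(\action_M \bartimes \id_{N^\barA \bartimes N}) \circ \action_N$ as $(\id_{M^\barA \bartimes M} \bartimes \action_N) \circ \action_M$, and then the naturality of the endofunctor $(-) \bartimes N^\barA \bartimes N$ of $\AmodbarB$ to slide the middle occurrence of $f$ past $\id \bartimes \action_N$. The composition simplifies to
\[
\B \xrightarrow{\action_M} M^\barA \bartimes M \xrightarrow{\id \bartimes f} M^\barA \bartimes N \xrightarrow{\id \bartimes \action_N} M^\barA \bartimes N \bartimes N^\barA \bartimes N \xrightarrow{\id \bartimes \trace_N \bartimes \id} M^\barA \bartimes N.
\]

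Finally I would apply the triangle identity $(\trace_N \bartimes \id_N) \circ (\id_N \bartimes \action_N) \simeq \id_N$ of the homotopy adjunction $(\action_N, \trace_N)$; its whisker by $\id_{M^\barA}$ collapses the last two arrows to the identity on $M^\barA \bartimes N$, leaving exactly the clockwise path $(\id \bartimes f) \circ \action_M$. The main obstacle I anticipate is bookkeeping: each of the three simplifications above is strict in a bicategory of \emph{strict} bimodules, but only homotopy-valid in the bar-category setting, so one must consistently invoke the coherent homotopies witnessing interchange, naturality, and the triangle identity provided by the apparatus of \cite{AnnoLogvinenko-BarCategoryOfModulesAndHomotopyAdjunctionForTensorFunctors} rather than appeal to strict equalities.
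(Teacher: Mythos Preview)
Your proof is correct, and it rests on the same underlying content as the paper's (the Dualisation Lemma together with the homotopy adjunction for $(-)^\barA$), but the packaging differs. You expand $f^\barA$ via the Dualisation Lemma into an explicit three-step composition, then manipulate the resulting string of maps with interchange and the triangle identity until it collapses to $(\id \bartimes f)\circ \action_M$. The paper instead observes that the top contour is the element of $\homm_{D(\BbimB)}(\B, M^\barA \bartimes N)$ corresponding to $f$ under the adjunction isomorphism $\homm(M,N) \simeq \homm(\B, M^\barA \bartimes N)$, while the bottom contour is the element corresponding to $f^\barA$ under $\homm(\B, M^\barA \bartimes N) \simeq \homm(N^\barA, M^\barA)$; the Dualisation Lemma says precisely that the composition of these two adjunction isomorphisms sends $f$ to $f^\barA$, so the two contours agree. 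Your argument is the unwound, elementwise version of this: the interchange and triangle identity you invoke are exactly what makes the adjunction isomorphisms compose correctly. The paper's phrasing is shorter and hides the homotopy-coherence bookkeeping inside the statement that adjunction isomorphisms exist in $D(\BbimB)$; your phrasing makes visible where each coherence homotopy is needed, which is the honest trade-off you flag at the end.
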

\begin{proof}
The top contour is the element in $\homm_{D(\BbimB)}(\B, M^\barA\bartimes N)$
that corresponds to $f$
under the isomorphism 
$$ \homm_{D(\AbimB)}(M,N) \simeq \homm_{D(\BbimB)}(\B, M^\barA\bartimes N) $$
and the bottom contour is the element that corresponds to 
$f^\barA$ under the isomorphism
$$\homm_{D(\BbimB)}(\B, M^\barA\bartimes N) \simeq
\homm_{D(\BbimA)}(N^\barA,M^\barA).$$
By Lemma \ref{lemma-left-dual-morphism}
the element of $\homm_{D(\BbimA)}(N^\barA,M^\barA)$ that corresponds 
to $f$ under the composition of the above adjunction isomorphisms 
is $f^\barA$, whence the top and the bottom contour are equal 
in $D(\BbimB)$ as desired. 
\end{proof}

The next two lemmas give a criterion for a map 
between one-sided twisted complexes to be itself one-sided:

\begin{lemma}
\label{lemma-onesidedness-via-true-filtration}
Let $\C$ be a DG category. Let $X=(X_i,\alpha_{ij})$
and $Y=(Y_i, \beta_{ij})$ be one-sided twisted complexes in $\pretriag \C$. 
Denote by $X_{\geq i}$ (resp. $Y_{\geq i}$) the subcomplex of $X$ (resp. $Y$) 
that sits in degrees $\geq i$.
Let $f: X \to Y$ be a map of twisted complexes. If for every $i$ 
the restriction of $f$ to $X_{\geq i}$ is homotopic to a map which 
filters through $Y_{\geq i}$, then $f$ is homotopic to 
a one-sided map of twisted complexes.
\end{lemma}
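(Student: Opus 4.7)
The plan is to prove this by induction on the number of non-zero terms of $X$. The base case, where $X$ consists of a single term $X_{i_0}$, is immediate: being one-sided reduces to $f \colon X_{i_0} \to Y$ factoring through $Y_{\geq i_0}$, which is precisely the hypothesis at $i = i_0$.

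For the inductive step, let $i_0$ be the bottom index of $X$, so that $X_{\geq i_0+1}$ is a strictly shorter one-sided twisted complex and a subcomplex of $X$. The restriction $f|_{X_{\geq i_0+1}}$ satisfies the hypothesis of the lemma for the pair $(X_{\geq i_0+1}, Y)$: the condition for $i \geq i_0+1$ is inherited directly from the hypothesis for $(X, Y)$, and for $i \leq i_0+1$ it follows from the containment $Y_{\geq i_0+1} \subset Y_{\geq i}$. By induction, there exists a degree $-1$ morphism $h \colon X_{\geq i_0+1} \to Y$ such that $g := f|_{X_{\geq i_0+1}} + dh$ is a one-sided map. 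I will extend $h$ by zero on $X_{i_0}$ to $\tilde h \colon X \to Y$, and set $f^{(1)} = f + d\tilde h$. A direct computation using that $X_{\geq i_0+1}$ is a subcomplex (so $d_X$ preserves it) shows $f^{(1)}|_{X_{\geq i_0+1}} = g$, which is already one-sided. It remains to modify $f^{(1)}|_{X_{i_0}}$ so that its image lies in $Y_{\geq i_0}$. Crucially, a homotopy supported on $X_{i_0}$ leaves $f^{(1)}|_{X_{\geq i_0+1}}$ unchanged, since the twist in $X$ is one-sided and sends $X_{i_0}$ into $X_{\geq i_0+1}$ but not vice versa.

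The hard part will be showing this final correction exists, i.e.\ that the composition of $f^{(1)}|_{X_{i_0}}$ with the projection $Y \twoheadrightarrow Y_{<i_0}$ is null-homotopic in the mapping complex $\homm(X_{i_0}, Y_{<i_0})$. The hypothesis at $i = i_0$, applied to $f^{(1)} \simeq f$, gives null-homotopy only in $\homm(X, Y_{<i_0})$. However, since $f^{(1)}|_{X_{\geq i_0+1}}$ is one-sided and lands in $Y_{\geq i_0+1} \subset Y_{\geq i_0}$, the composed map $X \to Y \to Y_{<i_0}$ vanishes on the subcomplex $X_{\geq i_0+1}$ and thus factors through the quotient complex $X/X_{\geq i_0+1} = X_{i_0}$. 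Applying the long exact sequence associated to the distinguished triangle $X_{\geq i_0+1} \to X \to X_{i_0}$, I can modify the global null-homotopy by subtracting a closed extension of its restriction to $X_{\geq i_0+1}$ to obtain one vanishing on $X_{\geq i_0+1}$, which then descends to a null-homotopy in $\homm(X_{i_0}, Y_{<i_0})$. Adding the induced homotopy (supported on $X_{i_0}$) to $f^{(1)}$ produces the desired one-sided map, completing the induction.
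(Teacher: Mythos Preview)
Your argument has a genuine gap in the last paragraph. Given the global null-homotopy $H\colon X \to Y_{<i_0}$ with $dH = \phi$, you assert that the closed map $H|_{X_{\geq i_0+1}}$ extends to a closed map $K\colon X \to Y_{<i_0}$. But the long exact sequence you invoke says precisely that the obstruction to such an extension is the class $\delta\bigl[H|_{X_{\geq i_0+1}}\bigr] \in H^0\homm(X_{i_0}, Y_{<i_0})$; computing $\delta$ with the lift $H$ itself gives $\delta\bigl[H|_{X_{\geq i_0+1}}\bigr] = [\bar\phi]$, the very class you need to show vanishes. In other words, null-homotopy of the pullback $\phi = p^*\bar\phi$ along the projection $p\colon X \to X_{i_0}$ does not in general descend to null-homotopy of $\bar\phi$, and your argument at this point is circular. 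The hypothesis at the single index $i=i_0$ is not enough to kill this obstruction.

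The paper's proof avoids this extension problem by organising the induction differently: rather than stripping off the bottom term of $X$, it builds a chain $f = f_{-1}, f_0, \ldots, f_n$ where the projection of each $f_m$ to $Y_{\leq m}$ is one-sided. At step $m$ one uses the hypothesis at $i=m+1$ to produce a homotopy $h_m$ on $X_{\geq m+1}$ and extends it by zero to all of $X$ --- an extension of a \emph{homotopy}, not of a closed map, so no obstruction-theoretic issue arises. The one-sidedness of $X$ and $Y$ then ensures this correction does not disturb what was already achieved in degrees $\leq m-1$.
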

\begin{proof}
Without loss of generality let $Y$ sit in degrees $0$ to $n$. 
We are going to construct a chain of homotopic maps $f_m: X\to Y$
with $f_{-1}=f$ and such that the projection of $f_m$ onto $Y_{\leq m}$
is one-sided. Suppose we have constructed the map $f_{m-1}$. 
The restriction of $f_{m-1}$ to $X_{\geq m}$ filters through 
$Y_{ \geq m}$. Thus so does the restriction of $f_{m-1}$ 
to $X_{\geq m + 1}$. 
Since $X$ is one-sided the inclusion of $X_{\geq m+1}$ into $X$ is
closed. Hence the restrictions of $f$ and $f_{m-1}$ to $X_{\geq
m+1}$ are homotopic. Hence $f_{m-1}|_{X_{\geq m + 1}}$ 
is still homotopic to a map that filters through $Y_{\geq m+1}$. 
Let 
$$ h_m \colon X_{\geq m+1} \to Y_{\geq m} $$ be the homotopy. 
Let $\tilde{h}_m$ be the map
$X \rightarrow Y$ that has the same components as $h_m$. 
Since $Y$ is one-sided $d\tilde{h}_m$ coincides with $dh_m$ on all terms of $X$ in degrees
$\geq m+1$. Define $f_m$ to be $f_{m-1}+d\tilde{h}_m$. 
By one-sidedness of $Y$ again, 
the projection of $d\tilde{h}_m$ to $Y_{\leq m-1}$
is $0$, thus the projection of $f_m$ to $Y_{\leq m-1}$ is still
one-sided. On the other hand, by construction $f_m$ has no components 
going from the degrees $ \geq m + 1$ to the degrees $\leq m$. We conclude that
the projection of $f_m$ to $Y_{\leq m}$ is also one-sided. 

We can thus construct $f_m$ for any $m \geq -1$. The map $f_n$ is 
one-sided and homotopic to $f$, as desired.  
\end{proof}

\begin{lemma}
\label{lemma-resolution-of-true-filtration}
Let $\C$ be a DG category. Let $X=(X_{ij}, \alpha_{ij,kl})$ be a 
twisted bicomplex over $\C$ which is one-sided in both directions, i.e. 
$\alpha_{ij;kl} = 0$ for $i > k$ or $j > l$.
Then the complex $\tot(X)_{\geq m}$ is homotopy equivalent
to the cone of the map of twisted complexes
\begin{equation}
\label{eqn-resolution-of-true-filtration}
\bigoplus\limits_{i+j=m+1} \tot(X_{\geq i, \geq j})
\xrightarrow{\sum \iota_{ij}^{i-1,j} - \sum \iota_{ij}^{i,j-1}}
\bigoplus\limits_{i+j=m} \tot(X_{\geq i, \geq j}), 
\end{equation}
where $\iota_{ij}^{kl}$ is the totalization of the natural map $X_{\geq i,\geq j} \to X_{\geq k, \geq l}$ for $i\geq k$, $j\geq l$.
\end{lemma}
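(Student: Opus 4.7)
The plan is to identify the cone $C$ in \eqref{eqn-resolution-of-true-filtration} with $\tot(X)_{\geq m}$ via an explicit chain map whose homotopy inverse is built from the combinatorics of a \v{C}ech-style resolution for the cover of the ``staircase'' $S = \{(i,j) : i + j \geq m\}$ by the sub-staircases $U_a = \{(i,j) : i \geq a,\; j \geq m - a\}$ for $0 \leq a \leq m$. Under this identification, the summands $\tot(X_{\geq i, \geq j})$ with $i+j=m$ correspond to the $U_a$'s, while those with $i+j=m+1$ correspond to the consecutive pairwise intersections $U_a \cap U_{a+1}$. The key combinatorial observation is that any longer intersection $U_{a_0} \cap \cdots \cap U_{a_r}$ with $a_0 < \cdots < a_r$ already coincides with the outermost pair $U_{a_0} \cap U_{a_r}$, so the full \v{C}ech complex already truncates to the two-step complex of \eqref{eqn-resolution-of-true-filtration}.

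Concretely, define a comparison map $\phi \colon C \to \tot(X)_{\geq m}$ as the sum of the tautological inclusions $\tot(X_{\geq i, \geq j}) \hookrightarrow \tot(X)_{\geq m}$ on the degree-$0$ summand $\bigoplus_{i+j=m}\tot(X_{\geq i, \geq j})$, and as zero on the shifted summand $\bigoplus_{i+j=m+1}\tot(X_{\geq i, \geq j})[1]$. Closure is immediate: the cross-term of the cone differential is $\iota^{i-1, j} - \iota^{i, j-1}$, and both maps post-compose to the same tautological inclusion into $\tot(X)_{\geq m}$, so their difference vanishes; compatibility with the internal differentials of each $\tot(X_{\geq i, \geq j})$ is manifest since all $\iota$'s are closed.

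To show $\phi$ is a homotopy equivalence, I would work pointwise. Fix $(k, l)$ with $k + l \geq m$ and set $S_{k, l} = \{a : 0 \leq a \leq m,\; a \leq k,\; m - a \leq l\}$, a contiguous subset of integers indexing those $U_a$'s that contain $(k, l)$. The preimage of $X_{kl}$ as a graded summand of $C$ has the form of $X_{kl}$ tensored (over the ground ring) with the augmented simplicial chain complex of the path graph on $|S_{k, l}|$ vertices: the degree-$0$ generators are indexed by $a \in S_{k, l}$ and the degree-$1$ generators by consecutive pairs in $S_{k, l}$. Since a path is contractible, this complex is acyclic, with an explicit contracting homotopy obtained by retraction to the leftmost vertex $a_0(k, l) := \max(0, m - l)$.

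The main obstacle is upgrading these pointwise contractions into a globally coherent inverse $\psi \colon \tot(X)_{\geq m} \to C$ and homotopy $h \colon C \to C[-1]$ that intertwine the cross-bidegree differentials $\alpha_{ij;kl}$ of the ambient twisted bicomplex. The difficulty is that the leftmost-vertex retraction depends on $(k, l)$, and for an arrow $\alpha_{ij;kl}\colon X_{ij} \to X_{kl}$ one has $a_0(i, j) \geq a_0(k, l)$ but typically not equality, so the naive $\psi$ fails to be closed. The repair is to add correction terms landing in the degree-$1$ summand of $C$ and corresponding to the ``path'' from $a_0(i, j)$ down to $a_0(k, l)$ inside $S_{k, l}$. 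Verifying that the resulting $\psi$ is a chain map and that $\phi\psi = \id$, $\id - \psi\phi = dh + hd$, reduces in each bidegree to the standard acyclicity identities for the path simplicial complex, a bookkeeping-heavy but routine computation. (Alternatively, one can bypass the bookkeeping by an induction on $m$, using the short exact sequence $0 \to \tot(X)_{\geq m+1} \to \tot(X)_{\geq m} \to \bigoplus_{i+j=m} X_{ij} \to 0$ and the parallel observation that each $X_{ij}$ with $i + j = m$ occurs in $C$ exactly once, namely in the summand $\tot(X_{\geq i, \geq j})$.)
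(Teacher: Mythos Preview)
Your approach is correct and coincides with the paper's: the paper defines exactly the same comparison map $\phi$ (as the sum of the inclusions $\iota_{ij}^m\colon \tot(X_{\geq i,\geq j})\to\tot(X)_{\geq m}$ over $i+j=m$, and zero on the shifted summand) and then simply asserts that it ``can be readily checked to be a homotopy equivalence.'' Your \v{C}ech/path-graph analysis and the sketch of the global homotopy inverse are precisely the details behind that phrase, so there is nothing to compare --- you have written out what the paper leaves to the reader.
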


\begin{proof}
The natural map from the cone of \eqref{eqn-resolution-of-true-filtration} 
to $\tot(X)_{\geq m}$ given by $\Sigma_{m = i +j} \iota_{ij}^m$ where 
$$ \iota_{ij}^m: \tot(X_{\geq i, \geq j})\to \tot(X)_{\geq m} $$
can be readily checked to be a homotopy equivalence. 

\end{proof}

\begin{lemma}
\label{lemma-true-filtration-filtering-via-double-filtration-filtering}
Let $\C$ be a DG category. Let $X=(X_{ij}, \alpha_{ij,kl})$ be 
a twisted bicomplex over $\C$ which is bounded above and one-sided 
in both directions, i.e.  $\alpha_{ij,kl} = 0$ for $i > k$ or $j > l$. 
Let $Y=(Y_i, \beta_{ij})$ be a one-sided bounded below twisted complex 
over $\C$. Assume also that $\homm_{D(\C)}^{-1}(X_{ij}[-(i+j)], Y_k[-k])=0$ 
for $i+j>k$.

Let $f$ be a map $\tot(X) \rightarrow Y$. If for every $i,j$ 
with $i + j \geq m$ 
the restriction of $f$ to $\tot(X_{\geq i, \geq j})$ is homotopic 
to a map which filters through $Y_{\geq m}$,
then the restriction of $f$ to $\tot(X)_{\geq m}$
filters through $Y_{\geq m}$.
\end{lemma}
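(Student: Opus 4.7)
The plan is to translate the required factorization into a cohomological vanishing and reduce via Lemma \ref{lemma-resolution-of-true-filtration}. Since $Y$ is a bounded-below one-sided twisted complex, the inclusion $Y_{\geq m} \hookrightarrow Y$ fits into an exact triangle $Y_{\geq m} \to Y \to Y_{<m}$, and a map $Z \to Y$ filters through $Y_{\geq m}$ up to homotopy precisely when its composition with $Y \to Y_{<m}$ is null-homotopic. I therefore set $\phi \in \homm^0_{D(\C)}(\tot(X)_{\geq m}, Y_{<m})$ to be the composition of $f|_{\tot(X)_{\geq m}}$ with $Y \to Y_{<m}$, and the goal becomes to prove $\phi = 0$.

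Next, Lemma \ref{lemma-resolution-of-true-filtration} presents $\tot(X)_{\geq m}$ as $\cone(A \to B)$ with $A = \bigoplus_{i+j=m+1} \tot(X_{\geq i, \geq j})$ and $B = \bigoplus_{i+j=m} \tot(X_{\geq i, \geq j})$, so the induced long exact sequence contains
\[ \homm^{-1}(A, Y_{<m}) \longrightarrow \homm^0(\tot(X)_{\geq m}, Y_{<m}) \longrightarrow \homm^0(B, Y_{<m}). \]
Applying the standing hypothesis to each summand of $B$ (where $i+j = m \geq m$) makes $f|_{\tot(X_{\geq i, \geq j})}$ factor through $Y_{\geq m}$ up to homotopy, so its further composition with $Y \to Y_{<m}$ vanishes and the image of $\phi$ in $\homm^0(B, Y_{<m})$ is zero. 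Hence $\phi$ lifts to some element of $\homm^{-1}(A, Y_{<m})$, and it suffices to prove that this group is zero.

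To that end, observe that by the boundedness assumptions each summand $\tot(X_{\geq i, \geq j})$ of $A$ is a finite one-sided twisted complex with graded pieces $X_{i'j'}[-(i'+j')]$ for $i' \geq i$, $j' \geq j$, and $Y_{<m}$ is a finite one-sided twisted complex with graded pieces $Y_k[-k]$ for $k < m$. Iterated long exact sequences applied first to the filtration of $Y_{<m}$ and then to the filtration of each summand of $A$ reduce the vanishing of $\homm^{-1}(A, Y_{<m})$ to that of each graded piece $\homm^{-1}_{D(\C)}(X_{i'j'}[-(i'+j')], Y_k[-k])$. In every such term $i' + j' \geq i + j = m + 1 > m > k$, so $i' + j' > k$ and the standing hypothesis kills the group. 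This yields $\phi = 0$, and hence $f|_{\tot(X)_{\geq m}}$ filters through $Y_{\geq m}$ as required.

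The main delicate point I expect is the bookkeeping of the double filtration in the last step, checking at each extension in each of the two filtrations that the outer $\homm^{-1}$ terms in the relevant four-term exact sequence are zero so that the middle term is squeezed between zeros; however, because the strict inequality $i' + j' \geq m + 1 > k$ is available uniformly across all pieces and both filtrations are finite under the boundedness hypotheses, this reduces to a routine finite induction rather than a conceptual obstacle.
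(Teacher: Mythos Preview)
Your proposal is correct and follows essentially the same approach as the paper: both use Lemma \ref{lemma-resolution-of-true-filtration} to present $\tot(X)_{\geq m}$ as the cone of $A \to B$, observe that the composition to $Y_{<m}$ vanishes on $B$ by hypothesis, and then kill the residual class coming from $A$ using the $\homm^{-1}$ vanishing on graded pieces. The only difference is packaging: the paper carries this out by an explicit homotopy on twisted-complex maps in $\pretriag(\pretriag\C)$, whereas you phrase the same steps via the long exact sequence for $\homm(-,Y_{<m})$ applied to the triangle $A\to B\to \tot(X)_{\geq m}$.
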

\begin{proof}
By Lemma \ref{lemma-resolution-of-true-filtration} the complex
$\tot(X)_{\geq m}$ is homotopy equivalent to the totalisation 
of the two-step twisted complex  
\eqref{eqn-resolution-of-true-filtration}
in $\pretriag (\pretriag \C)$. 
Moreover, this homotopy equivalence identifies the map 
$$ \tot(X)_{\geq m} \to \tot(X) \xrightarrow{f} Y \rightarrow Y_{\leq m - 1} $$
in $\pretriag \C$ with (the totalisation of) some map 
\begin{equation}
\label{equation-resolution-of-a-truncated-bicomplex-map}
\begin{tikzcd}[column sep={3cm}]
\bigoplus\limits_{i+j=m+1} \tot(X_{\geq i, \geq j})
\ar{r}{\sum \iota_{ij}^{i-1,j} - \sum \iota_{ij}^{i,j-1}}
\ar{rd}[']{0}
&
\underset{\degzero}{\bigoplus\limits_{i+j=m} \tot(X_{\geq i, \geq j})}
\ar{d}{\sum \eta_{ij}}
\\ 
&
\underset{\degzero}{Y_{\leq m - 1}}
\end{tikzcd}
\end{equation}
of the twisted complexes in $\pretriag (\pretriag \C)$. Now, 
by assumption all $\eta_{ij}$ are null-homotopic. Thus 
we can find degree $-1$ map $h$ with $dh =\sum \eta_{ij}$. Therefore
\eqref{equation-resolution-of-a-truncated-bicomplex-map}
is homotopic to the map 
\begin{equation}
\label{equation-resolution-of-a-truncated-bicomplex-map}
\begin{tikzcd}[column sep={3cm}]
\bigoplus\limits_{i+j=m+1} \tot(X_{\geq i, \geq j})
\ar{r}{\sum \iota_{ij}^{i-1,j} - \sum \iota_{ij}^{i,j-1}}
\ar{rd}[']{-h \circ (\sum \iota_{ij}^{i-1,j} - \sum \iota_{ij}^{i,j-1})}
&
\underset{\degzero}{\bigoplus\limits_{i+j=m} \tot(X_{\geq i, \geq j})}
\ar{d}{0}
\\ 
&
\underset{\degzero}{Y_{\leq m - 1}}
\end{tikzcd}
\end{equation}
Since $\homm^{-1}_{D(\C)}\left(\tot(X_{\geq i, \geq j}),Y_{\leq m-1}\right) = 0$
for all $i + j \geq m$ 
every such map is null-homotopic, as desired. 
\end{proof}

\subsection{DG-enhancements of categories and functors}
\label{section-dg-enhancements-of-categories-and-functors}

Our main setup in this paper is two enhanced triangulated
categories $\A$ and $\B$ and an enhanceable functor 
$$ F\colon \A \rightarrow \B $$ 
which has enhanceable left and right adjoints 
$$ L,R \colon \B \rightarrow \A. $$

For technical details on DG-enhancements, pretriangulated categories, 
quasi-functors, twisted complexes, etc. see
\cite[\S3]{AnnoLogvinenko-SphericalDGFunctors} or
\cite{LuntsOrlov-UniquenessOfEnhancementForTriangulatedCategories}. 
Below we give a brief summary and fix the notation. 

An \em enhanced triangulated category \rm is a pre-triangulated DG-category. 
For any DG-category $\A$ there is a natural triangulated structure 
on $H^0(\modA)$ induced from $\modk$. The DG-category $\A$ 
is \em pre-triangulated \rm if the Yoneda embedding of $H^0(\A)$ 
into $H^0(\modA)$ is a triangulated subcategory. In our context, 
the truncation $H^0(\A)$ is the underlying triangulated category 
and $\A$ its DG-enhancement. We consider enhanced
triangulated categories up to \em quasi-equivalences\rm, 
the DG-functors $\A \rightarrow \B$ for which 
$H^*(\A) \rightarrow H^*(\B)$ and hence 
$H^0(\A) \rightarrow H^0(\B)$ are equivalences. 
An exact functor $f\colon H^0(\A) \rightarrow H^0(\B)$ is 
\em enhanceable \rm if there exists a DG-functor between some pair 
of DG-categories quasi-equivalent to $\A$ and $\B$ whose
$H^0$-truncation is $f$ up to identifications of its domain and
image with $H^0(\A)$ and $H^0(\B)$. See 
\cite{Toen-TheHomotopyTheoryOfDGCategoriesAndDerivedMoritaTheory}
for further detail. 

For technical reasons, it is best to work with Morita enhancements.
These are DG-categories of DG-modules over a small DG-category. 
Given a small enhanced triangulated category $\A$ 
we can enlarge it to $\prfhprA$, the (still small) 
category of $h$-projective, perfect DG $\A$-modules. 
On the underlying triangulated categories this has 
the effect of Karoubi completion, since $H^0(\prfhprA)$ is 
the Karoubi envelope of $H^0(\A)$. More concretely, 
it is the direct summand completion of $H^0(\A)$ inside $H^0(\modA)$. 
It is also canonically isomorphic to $D_c(\A)$, the derived category 
of perfect $\A$-modules. If the underlying triangulated category 
$H^0(\A)$ was Karoubi complete to start with, 
then the Yoneda embedding $\A \hookrightarrow \prfhprA$ is 
a quasi-equivalence, and $\A$ and $\prfhprA$ are equivalent
enhancements of the triangulated category $H^0(\A) \simeq D_c(\A)$. 
Thus any small Karoubi-complete enhanced triangulated category can 
be identified with the \em standard enhancement \rm of 
the derived category $D_c(\A)$ of some small  $\A$. 
Here the standard enhancement is the quasi-equivalence class 
containing e.g. $\prfhprA$ or $\modbar^{\perf}(A)$, the bar-category 
of perfect $\A$-modules introduced in
\cite{AnnoLogvinenko-BarCategoryOfModulesAndHomotopyAdjunctionForTensorFunctors}.

The Morita point of view has the advantage that all functors 
are naturally enhanced by bimodules: any enhanceable exact 
functor $f: D_c(\A) \rightarrow D_c(B)$ is a \em tensor functor\rm, 
that is --- isomorphic to $(-) \ldertimes_\A M$ 
for some $\B$-perfect $M \in D(\AbimB)$ \cite[Theorem
7.2]{Toen-TheHomotopyTheoryOfDGCategoriesAndDerivedMoritaTheory}. 
We say that the bimodule $M$ \em enhances \rm the functor $f$.
Such $f$ has an enhanceable left adjoint $l\colon D_c(\B) \rightarrow
D_c(\A)$ if and only if $M$ is also $\A$-perfect. In such case
$l$ is enhanced by the bimodule $M^\barA$. On the level of full
derived categories, the functor $f$ always has a right adjoint $r\colon
D(\B) \rightarrow D(\A)$ which is enhanced by the bimodule $M^\barB$.
This right adjoint restricts to a functor $D_c(\B) \rightarrow
D_c(\A)$ if and only if $M^\barB$ is $\A$-perfect. See
\cite[\S5.1]{AnnoLogvinenko-BarCategoryOfModulesAndHomotopyAdjunctionForTensorFunctors}
for the details. Thus, enhanceable functors $f\colon D_c(\A) \rightarrow D_c(B)$ with enhanceable left and right adjoints 
form a triangulated category which is equivalent to the full subcategory of $D(\AbimB)$ consisting of $\A$- and $\B$-perfect bimodules $M$ 
for which $M^\barB$ is also $\A$-perfect. 

In this paper we prove our results working in this subcategory of 
$D(\AbimB)$. They immediately apply to all enhanceable functors 
between small Karoubi-complete enhanced triangulated categories 
and to the tensor functors between big categories. These
are precisely the \em continuous \rm enhanceable functors, 
i.e.  those which commute with arbitrary direct sums, cf.
\cite[\S5.1]{AnnoLogvinenko-BarCategoryOfModulesAndHomotopyAdjunctionForTensorFunctors}

Does this mean that our results do not apply to discontinuous 
exact $f\colon \A \rightarrow \B$ between big $\A$ and $\B$? 
No, because we can enlarge our universe to make $\A$ and $\B$ small.
Then $\A$ and $\B$ can be considered in Morita enhancement framework, 
where every enhanceable functor is a tensor functor. A reader may well 
be confused at this stage: how did we make a discontinuous functor into 
a continuous one by mucking about with set-theoretic issues? 
The authors were also pretty confused by this, until they realised 
the following: 

\underline{On continuity, enhanceability, and the universe:}

Let $\A$ be a small enhanced triangulated category. It doesn't have arbitrary 
(indexed by a set)  direct sums. Otherwise, it would have to contain 
a direct sum of all its objects, which is impossible. However, it may still 
have some infinite direct sums, e.g. indexed by $\mathbb{Z}$. E.g. let $\A$ be the unbounded derived category $D_{qc}(X)$ of
quasi-coherent sheaves on a given scheme $X$ and choose a Grothendieck
universe in which $\A$ is small. 

\begin{center}
\underline{\em The embedding $D_c(\A) \hookrightarrow D(\A)$ doesn't preserve infinite direct sums!}
\end{center}

 Thus a continuous $f \colon D(\A) \rightarrow D(\B)$ doesn't have to
respect the direct sums in $D_c(\A)$ and $D_c(\B)$. Hence, somewhat
counter intuitively, in Morita enhancement framework all functors
$D_c(\A) \rightarrow D_c(\B)$, including the discontinuous ones, are
enhanced by bimodules and thus extend to continuous functors $D(\A)
\rightarrow D(\B)$. 
 
 Another way to say the same thing is as folows. Let $\A$ and $\B$ be
two small enhanced triangulated categories and let $f\colon D(\A)
\rightarrow D(\B)$ be a discontinous exact functor.  Then it can't be
enhanced by an $\AbimB$-bimodule.  However $D(\A) \simeq
D_c(\modbarA)$ and $D(\B) \simeq D_c(\modbarB)$.  The induced functor
$f\colon D(\modbarA) \rightarrow D(\modbarB)$ is, by construction,
continuous.  Thus while $f$ can not be enhanced by $\AbimB$-bimodule,
it can be enhanced by a $(\modbarA)$-$(\modbarB)$-bimodule, provided we
enlarge the universe appropriately. 

\subsection{Repeated extensions}
\label{section-repeated-extensions}

We begin by fixing terminology for some well-known concepts: 

\begin{defn}
Let $\C$ be a triangulated category and let $E,F \in \C$.
We say that $X \in \C$ is an \em extension \rm of $F$ by $E$, and, correspondingly, a \em co-extension \rm of $E$ by $F$, if there
exists an exact triangle 
$$ E \rightarrow X \rightarrow F \rightarrow E[1]. $$
\end{defn}

Up to shifts, the extensions correspond to the cones of morphisms of two objects. The following corresponds to convolutions of complexes of several objects, or, more precisely, to the convolutions of twisted complexes of them in a DG-enhancement:

\begin{defn}
Let $\C$ be a triangulated category and let $E_0, E_1, \dots, E_n \in \C$.
We say that $Q \in \C$ is a \em repeated extension \rm of $E_0$ by
$E_1, \dots, E_n$  if there exist objects $Q_1, \dots, Q_{n-1} \in \C$ and 
exact triangles
\begin{align}
\nonumber
E_1 \rightarrow Q_1 \rightarrow E_0 \rightarrow E_1[1] \\
\label{eqn-repeated-extension-data}
E_2 \rightarrow Q_2 \rightarrow Q_1 \rightarrow E_2[1] \\
\nonumber
\dots \\
\nonumber
E_n \rightarrow Q \rightarrow Q_{n-1} \rightarrow E_n[1].
\end{align}

Similarly, we say that $Q \in \C$ is a \em repeated co-extension \rm of $E_n$ by $E_{n-1}, E_{n-2}, \dots, E_1$ if there exist objects $Q_1, \dots, Q_{n-1} \in \C$ and exact triangles
\begin{align}
\nonumber
E_{n} \rightarrow Q_1 \rightarrow E_{n-1} \rightarrow E_{n}[1] \\
Q_1 \rightarrow Q_2 \rightarrow E_{n-2} \rightarrow Q_1[1] \\
\nonumber
\dots \\
\nonumber
Q_{n-1} \rightarrow Q \rightarrow E_{0} \rightarrow Q_{n-1}[1].
\end{align}
\end{defn}

The data \eqref{eqn-repeated-extension-data} fits into the diagram:
\begin{footnotesize}
\begin{equation}
\label{eqn-repeated-extension-postnikov-tower}
\begin{tikzcd}
E_0
\ar[dotted]{dr}
\ar[phantom]{drr}[description, pos=0.45]{\star}
& &
Q_1 
\ar[phantom]{d}[description]{\circlearrowright}
\ar{ll}
\ar{dr}
\ar[phantom]{drr}[description, pos=0.45]{\star}
& &
Q_2  
\ar[phantom]{d}[description]{\circlearrowright}
\ar{ll}
\ar{dr}
&
\dots
\ar{l}
&
Q_{n-2}
\ar[phantom]{d}[description]{\circlearrowright}
\ar[phantom]{drr}[description, pos=0.45]{\star}
\ar{l}
\ar{dr}
& 
& 
Q_{n-1}
\ar[phantom]{d}[description]{\circlearrowright}
\ar{dr}
\ar{ll}
& & 
Q
\ar[phantom]{dll}[description, pos=0.45]{\star}
\ar{ll}
\\
&
E_1
\ar{ur}
\ar[dotted]{rr}
& ~ &
E_2 
\ar[dotted]{rr}
\ar{ur}
& ~ & 
\dots
\ar[dotted]{rr} 
\ar{ur}
& ~ &
E_{n-1}
\ar[dotted]{rr}
\ar{ur}
&~& 
E_n
\ar{ur}
& 
\end{tikzcd}
\end{equation}
\end{footnotesize}
The dotted arrow morphisms define of a differential complex on the
objects $E_0, E_1[1], \dots, E_{n}[n]$ and the diagram
\eqref{eqn-repeated-extension-postnikov-tower} is an instance of a \em
Postnikov tower \rm associated to this complex. In particular, the
object $Q$ is its convolution. Thus repeated extensions of $E_0$ by
$E_1$, \dots, $E_n$ are convolutions of differential complexes with
objects $E_0, E_1[1], \dots, E_{n}[n]$. Similarly, repeated
co-extensions of $E_n$ by $E_{n-1}, \dots, E_0$ are also convolutions
of differential complexes with these objects. This can be made more
precise in the language of twisted complexes: 

\begin{prps}
\label{prps-extensions-coextensions-and-twisted-complexes}
Let $\C$ be an enhanced triangulated category and let $E_0, \dots E_n \in \C$.  For any objects $Q \in \C$ the following are equivalent:

\begin{enumerate}
\item $Q$ is a repeated extension of $E_0$, by $E_1, \dots, E_n$.
\item $Q$ is a repeated co-extension of $E_n$ by $E_{n-1}, \dots, E_0$. 
\item $Q$ is the convolution of a one-sided twisted complex whose objects are $E_i[i]$ in degrees $0 \leq i \leq n$ and $0$ in all other degrees.
\item $Q$ is the convolution of a one-sided twisted complex whose objects 
are $E_{n-i}[-i]$ in degrees $-n \leq i \leq 0$ and $0$ in all other degrees. 
\end{enumerate}
\end{prps}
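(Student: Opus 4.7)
The plan is a cyclic argument of the form (3) $\Leftrightarrow$ (1), (4) $\Leftrightarrow$ (2), together with a re-indexing argument (3) $\Leftrightarrow$ (4). The organising idea is that all four conditions are different packagings of the same data: the Postnikov tower \eqref{eqn-repeated-extension-postnikov-tower}, read left-to-right, records a repeated extension; read right-to-left, it records a repeated co-extension; and it is equivalent to the choice of a one-sided twisted complex whose convolution is $Q$. Thus the proposition is essentially a dictionary between three equivalent formulations.

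For (3) $\Rightarrow$ (1), I would take a one-sided twisted complex $X = (X_i, \alpha_{ij})$ representing $Q$ with $X_i = E_i[i]$ in positions $0 \leq i \leq n$, and consider its stupid sub-complexes $X_{\leq k}$. One-sidedness guarantees that each inclusion $X_{\leq k-1} \hookrightarrow X_{\leq k}$ is a closed map in $\pretriag \C$ whose cone is homotopy equivalent to $X_k = E_k[k]$. Setting $Q_k := \tot(X_{\leq k})$, the induced exact triangles $Q_{k-1} \to Q_k \to E_k[k] \to Q_{k-1}[1]$ rotate to the extension triangles in \eqref{eqn-repeated-extension-data}, with $Q = Q_n$. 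Conversely, for (1) $\Rightarrow$ (3), I would build a one-sided twisted complex by induction on $n$: at each step the connecting morphism $Q_{i-1} \to E_i[1]$ of the $i$-th extension triangle lifts, by the enhancement hypothesis, to a degree-$1$ component $\alpha_{i-1,i}$ of the twisted complex differential, and higher-order components $\alpha_{jk}$ for $k-j > 1$ are added so that the Maurer--Cartan equation is satisfied. The existence of such higher components follows from the fact that the next convolution $Q_i$ is produced inside the pre-triangulated category $\pretriag \C$, which is where the DG-enhancement is essential.

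The equivalence (4) $\Leftrightarrow$ (2) is then proved symmetrically, replacing the ascending stupid filtration $X_{\leq k}$ by the descending one $X_{\geq k}$, which for a complex indexed in non-positive positions is the complementary closed piece; the resulting triangles match the co-extension triangles defining (2). Finally, (3) $\Leftrightarrow$ (4) is pure bookkeeping: a one-sided twisted complex in positions $[0,n]$ with objects $E_i[i]$ is the same data as a one-sided twisted complex in positions $[-n,0]$ with objects $E_{n-i}[-i]$, the object shifts being chosen precisely so that the two convolutions agree on the nose rather than up to an overall shift. The main obstacle I anticipate is the Maurer--Cartan bookkeeping in the (1) $\Rightarrow$ (3) direction: the higher components of the twisted complex differential are determined by the triangle data only up to homotopy, and the induction must be run carefully so that they assemble into an honest twisted complex. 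Everything else is a direct application of one-sidedness and the behaviour of stupid truncations in $\pretriag \C$.
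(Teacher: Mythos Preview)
Your proposal is correct and is exactly the kind of argument the paper has in mind: the paper's own proof consists of the single sentence ``Straightforward verification.'' Your outline --- stupid truncations of one-sided twisted complexes to pass between (3) and (1) (and symmetrically (4) and (2)), together with the re-indexing (3) $\Leftrightarrow$ (4) --- is precisely how one unpacks that verification, and the Postnikov-tower discussion immediately preceding the proposition in the paper already signposts this.
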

\begin{proof}
Straightforward verification . 
\end{proof}

\subsection{Cyclic extensions}
\label{section-cyclic-extensions}

Throughout the paper we apply the constructions outlined 
in this section to an autoequivalence $h$ of $D(\A)$. This requires
us to fix a DG-enhancement of $h$. As per 
\S\ref{section-dg-enhancements-of-categories-and-functors}
this is equivalent to a choice of a bimodule $H \in D(\AbimA)$. 
However, the constructions below do not need the functor enhanced by $H$ 
to be an autoequivalence, so we present them in this greater
generality: 

\begin{defn}
Let $H \in D(\AbimA)$. A \em cyclic
extension of degree $n$ \rm (resp. \em co-extension\rm) of $\id_\A$ 
by $H$ is a repeated extension (resp.  co-extension) of 
$\id_\A$ by $H$, $H^2$, \dots, $H^n$. 
\end{defn}

It is more convenient for us in this paper to use the language of
cyclic co-extensions. To this end, we introduce some standard
notation. By the definition above, the structure of a degree 
$n$ cyclic co-extension of $\id$ by $H$ on an object 
$Q_n \in D(\AbimA)$ is the following diagram in $D(\AbimA)$:
\begin{footnotesize}
\begin{equation}
\label{eqn-cyclic-coextension-of-id-by-H-of-degree-n}
\begin{tikzcd}
\id 
\ar[phantom]{drr}[description, pos=0.45]{\star}
\ar{rr}{\iota_1}
& &
Q_1 
\ar{rr}{\iota_2}
\ar{ld}{\mu_1}
\ar[phantom]{drr}[description, pos=0.45]{\star}
\ar[phantom]{d}[description]{\circlearrowright}
& &
Q_2  
\ar{r}
\ar{ld}{\mu_2}
\ar[phantom]{d}[description]{\circlearrowright}
&
\dots
\ar{r}
&
Q_{n-2}
\ar[phantom]{drr}[description, pos=0.45]{\star}
\ar{rr}{\iota_{n-1}}
\ar{ld}
\ar[phantom]{d}[description]{\circlearrowright}
& 
& 
Q_{n-1}
\ar{ld}{\mu_{n-1}}
\ar{rr}{\iota_{n}}
\ar[phantom]{d}[description]{\circlearrowright}
& & 
Q_n.
\ar[phantom]{dll}[description, pos=0.45]{\star}
\ar{ld}{\mu_n}
\\
&
H
\ar[dashed]{lu}{\sigma_1}
& ~ &
H^2 
\ar[dashed]{lu}
\ar[dashed]{ll}{\sigma_2}
& ~ & 
\dots 
\ar[dashed]{ll}
\ar[dashed]{lu}
& ~ &
H^{n-1}
\ar[dashed]{ll}
\ar[dashed]{lu}
& ~ & 
H^n
\ar[dashed]{lu}
\ar[dashed]{ll}{\sigma_n}
& 
\end{tikzcd}
\end{equation}
\end{footnotesize}
Define $\iota: \id \rightarrow Q_n$ to be the composition $\iota_n
\circ \dots \circ \iota_1$. 

On DG level, the data above can be lifted to a twisted complex
$\bar{Q}_n$ over $\AmodbarA$ and an isomorphism of its convolution to $Q_n$. 
The twisted complex is of form
\begin{equation}
\label{eqn-cyclic-coextension-of-id-by-H-of-degree-n-dg}
\begin{tikzcd}
H^{\bartimes n}[-n]
\ar{r}{\sigma_{n}}
\ar[dashed, bend left=20]{rrrrr}
\ar[dashed, bend left=19]{rrrr}
\ar[dashed, bend left=18]{rrr}
& 
H^{\bartimes (n-1)}[-(n-1)]
\ar{r}
\ar[dashed, bend left=19]{rrrr}
\ar[dashed, bend left=18]{rrr}
\ar[dashed, bend left=17]{rr}
& 
\dots 
\ar{r}
&
H^{\bartimes 2}[-2]
\ar{r}{\sigma_2}
\ar[dashed, bend left=17]{rr}
&
H[-1] 
\ar{r}{\sigma_1}
&
\underset{\degzero}{\A}
\end{tikzcd}
\end{equation}
where, by abuse of notation, $\sigma_i$ denote 
\em arbitrary \rm lifts of the maps $\sigma_i$ in 
\eqref{eqn-cyclic-coextension-of-id-by-H-of-degree-n} to 
$\AmodbarA$.
The isomorphism identifies the objects $Q_i$ with the convolutions of
the subcomplexes
\begin{equation}
\label{eqn-object-Qi-dg}
\begin{tikzcd}
H^{\bartimes i}[-i]
\ar{r}
\ar[dashed, bend left=19]{rrrr}
\ar[dashed, bend left=18]{rrr}
\ar[dashed, bend left=17]{rr}
& 
\dots 
\ar{r}
&
H^{\bartimes 2}[-2]
\ar{r}{\sigma_2}
\ar[dashed, bend left=17]{rr}
&
H[-1] 
\ar{r}{\sigma_1}
&
\underset{\degzero}{\A}
\end{tikzcd}
\end{equation}
of \eqref{eqn-cyclic-coextension-of-id-by-H-of-degree-n-dg},
the maps $\iota_i: Q_{i-1} \rightarrow Q_i$ with 
the twisted complex inclusions 
\begin{equation}
\label{eqn-map-iota_i-dg}
\begin{tikzcd}
&
H^{\bartimes {i-1}}[-(i-1)]
\ar{r}
\ar[equals]{d}
& 
\dots 
\ar{r}
&
H^{\bartimes 2}[-2]
\ar{r}{\sigma_2}
\ar[equals]{d}
&
H[-1] 
\ar{r}{\sigma_1}
\ar[equals]{d}
&
\underset{\degzero}{\A} 
\ar[equals]{d}
\\
H^{\bartimes {i}}[-i]
\ar{r}{\sigma_i}
&
H^{\bartimes {i-1}}[-(i-1)]
\ar{r}
& 
\dots 
\ar{r}
&
H^{\bartimes 2}[-2]
\ar{r}{\sigma_2}
&
H[-1] 
\ar{r}{\sigma_1}
&
\underset{\degzero}{\A},
\end{tikzcd}
\end{equation}
and the maps $\mu_i: Q_i \rightarrow H^i$ with the twisted
complex projections
\begin{equation}
\label{eqn-map-mu_i-dg}
\begin{tikzcd}
H^{\bartimes {i}}[-i]
\ar{r}{\sigma_i}
\ar[equals]{d}
&
H^{\bartimes {i-1}}[-(i-1)]
\ar{r}
& 
\dots 
\ar{r}
&
H^{\bartimes 2}[-2]
\ar{r}{\sigma_2}
&
H[-1] 
\ar{r}{\sigma_1}
&
\underset{\degzero}{\A}. 
\\
\underset{\text{deg.}-i}{H^{\bartimes i}[-i]}
&
&
&
&
&
\end{tikzcd}
\end{equation}
For the computations on DG level, we fix a choice of any such DG 
lift $\bar{Q}_n$. We can then replace the objects $Q_i$ and the maps $\iota_i$
and $\mu_i$ with the convolutions of the twisted complexes 
\eqref{eqn-object-Qi-dg} and of the maps \eqref{eqn-map-iota_i-dg} and 
\eqref{eqn-map-mu_i-dg} and work with the latter instead.

\begin{defn}
Let $Q_n$ be a degree $n$ cyclic co-extension of $\id$ by $H$.
Define $J_n \in D(\AbimA)$ as 
\begin{align}
J_n := \cone \left( \id \xrightarrow{\iota} Q_n \right). 
\end{align}
\end{defn}

Let now $\bar{J}_n$ be the following subcomplex of $\bar{Q}_n$
\begin{equation}
\label{eqn-Jn-twisted-complex}
\begin{tikzcd}
H^{\bartimes n}[-n]
\ar{r}{\sigma_{n}}
\ar[dashed, bend left=19]{rrrr}
\ar[dashed, bend left=18]{rrr}
& 
H^{\bartimes (n-1)}[-(n-1)]
\ar{r}
\ar[dashed, bend left=18]{rrr}
\ar[dashed, bend left=17]{rr}
& 
\dots 
\ar{r}
&
H^{\bartimes 2}[-2]
\ar{r}{\sigma_2}
&
\underset{\degminusone}{H[-1]}. 
\end{tikzcd}
\end{equation}
Define $\lambda\colon \bar{J}_n[-1] \rightarrow \A$ to be 
the closed degree zero map
\begin{equation}
\label{eqn-J_n[-1]-to-A-map}
\begin{tikzcd}
H^{\bartimes n}[-n]
\ar{r}{\sigma_{n}}
\ar[dashed]{drrrr}
& 
H^{\bartimes (n-1)}[-(n-1)]
\ar{r}
\ar[dashed]{drrr}
& 
\dots 
\ar{r}
&
H^{\bartimes 2}[-2]
\ar{r}{\sigma_2}
\ar[dashed]{dr}
&
\underset{\degzero}{H[-1]}
\ar{d}{\sigma_1}
\\
&
&
&
&
\underset{\degzero}{\A}
\end{tikzcd}
\end{equation}
whose components are the differentials in 
\eqref{eqn-cyclic-coextension-of-id-by-H-of-degree-n-dg}
with the image $\A$. Define $\kappa\colon \bar{Q}_n \rightarrow \bar{J}_n$ 
to be the closed degree zero map 
\begin{equation}
\label{eqn-Q_n-to-J_n-map}
\begin{tikzcd}
H^{\bartimes n}[-n]
\ar{r}{\sigma_{n}}
\ar[equals]{d}
& 
H^{\bartimes (n-1)}[-(n-1)]
\ar{r}
\ar[equals]{d}
& 
\dots 
\ar{r}
&
H^{\bartimes 2}[-2]
\ar{r}{\sigma_2}
\ar[equals]{d}
&
H[-1]
\ar{r}{\sigma_1}
\ar[equals]{d}
&
\underset{\degzero}{\A}
\\
H^{\bartimes n}[-n]
\ar{r}{\sigma_{n}}
& 
H^{\bartimes (n-1)}[-(n-1)]
\ar{r}
& 
\dots 
\ar{r}
&
H^{\bartimes 2}[-2]
\ar{r}{\sigma_2}
&
\underset{\degminusone}{H[-1]}. 
&
\end{tikzcd}
\end{equation}

We then have:
\begin{lemma}
The following is an exact triangle in $H^0(\pretriag(\AbarA))$
\begin{align}
\label{eqn-J_n-A-Q_n-exact-triangle}
\bar{J}_n[-1]
\xrightarrow{\lambda}
\A
\xrightarrow{\iota} 
\bar{Q}_n
\xrightarrow{\kappa}
\bar{J}_n
\end{align}
\end{lemma}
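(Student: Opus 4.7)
The plan is to recognise $\bar{Q}_n$ as the mapping cone of $\lambda$ in $\pretriag(\AbarA)$ and then invoke the standard exact triangle associated to any closed degree-$0$ morphism in a pretriangulated DG-category.

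First I would sort out the shift bookkeeping. The twisted complex $\bar{J}_n[-1]$ has its term $H^{\bartimes i}[-i]$ sitting in position $-i+1$; in particular $H[-1]$ sits in position $0$, which matches the position reserved for $\A$ in the target of $\lambda$. The map $\lambda$ of \eqref{eqn-J_n[-1]-to-A-map} is assembled from precisely those components of the twisted differential of $\bar{Q}_n$ that land in the $\A$-summand. Its closedness is a direct consequence of $d_{\bar{Q}_n}^2 = 0$: reading off the components of $d_{\bar{Q}_n}^2$ starting at $H^{\bartimes i}[-i]$ and landing in $\A$ yields, component by component, the identity $d\lambda = \lambda \circ d_{\bar{J}_n[-1]}$.

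Next, I would write $\cone(\lambda)$ out explicitly. Its underlying graded object is $\bar{J}_n[-1][1] \oplus \A = \bar{J}_n \oplus \A$, with $H^{\bartimes i}[-i]$ at position $-i$ for $1 \leq i \leq n$ and $\A$ at position $0$. The cone's twisted differential has three kinds of components: the internal differentials of $\bar{J}_n$ (which are, by definition, the restrictions of those of $\bar{Q}_n$ to positions $-n,\dots,-1$), the zero differential on the $\A$-summand, and $\lambda$ as the connecting piece from $\bar{J}_n$ into $\A$. Comparing with \eqref{eqn-cyclic-coextension-of-id-by-H-of-degree-n-dg} together with the definitions of $\bar{J}_n$ and $\lambda$, one sees that this is precisely the twisted differential of $\bar{Q}_n$, so $\cone(\lambda) = \bar{Q}_n$ as twisted complexes.

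Finally, under this identification the canonical inclusion of $\A$ into $\cone(\lambda)$ agrees with $\iota = \iota_n \circ \dots \circ \iota_1$ from \eqref{eqn-map-iota_i-dg}, and the canonical projection $\cone(\lambda) \twoheadrightarrow \bar{J}_n[-1][1] = \bar{J}_n$ agrees with $\kappa$ from \eqref{eqn-Q_n-to-J_n-map}. The desired exact triangle now follows from the general fact that $X \xrightarrow{f} Y \to \cone(f) \to X[1]$ is an exact triangle in $H^0(\pretriag \C)$ for every closed degree-$0$ morphism $f$ in a DG-category $\C$. I do not expect any real obstacle beyond the twisted-complex bookkeeping needed to match positions, shifts, and signs so that the components of $\lambda$ are indeed the off-diagonal entries of the differential of the cone.
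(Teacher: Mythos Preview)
Your proposal is correct and follows essentially the same route as the paper: recognise $\bar{Q}_n$ as the cone (equivalently, the total complex) of the closed degree-$0$ map $\bar{J}_n[-1] \xrightarrow{\lambda} \A$, and then invoke the standard exact triangle associated to any such morphism in a strongly pretriangulated DG-category. The paper states this more tersely, whereas you spell out the matching of $\iota$ and $\kappa$ with the canonical cone maps; both arguments are the same in substance.
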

\begin{proof}
This is an exercise in understanding the total complex functor
$\pretriag(\pretriag(\AbarA)) \xrightarrow{\tot} \pretriag(\AbarA)$ of 
\cite{BondalKapranov-EnhancedTriangulatedCategories}. It is an
equivalence and the Yoneda embedding $\pretriag(\AbarA) \hookrightarrow 
\modd(\pretriag(\AbarA))$ identifies it with the standard convolution
of twisted complexes. Thus, in particular, $\pretriag(\AbarA)$ is
strongly pretriangulated with $\tot$ as the convolution functior. 

Recall, quite generally, that for any strongly pre-triangulated category
$\C$ and any closed degree $0$ morphism $c_1 \xrightarrow{\gamma} c_2$ 
in $\C$, there is an exact triangle in $H^0(\C)$
\begin{align}
c_1 \xrightarrow{\gamma} c_2 \rightarrow \left\{ c_1
\xrightarrow{\gamma} c_2 \right\} \rightarrow c_1[1] 
\end{align}
whose other two maps are induced by the twisted complex maps
\begin{align}
\label{eqn-connecting-maps-of-the-cone-construction-for-twisted-complexes}
\vcenter{
\xymatrix{
& 
\underset{\degzero}{c_2}
\ar[d]^{\id}
\\
c_1 
\ar[r]^{\gamma}
&
\underset{\degzero}{c_2}
}
}
\quad\quad \text{ and } \quad\quad
\vcenter{
\xymatrix{
c_1
\ar[r]^{\gamma}
\ar[d]^{\id}
&
\underset{\degzero}{c_2}
\\
\underset{\degminusone}{c_1}
&
}
}. 
\end{align}
See \cite[\S3.3]{AnnoLogvinenko-SphericalDGFunctors} for the details
and a vastly more general statement. 

Now observe that the map 
$$ \bar{J}_n[-1] \xrightarrow{\lambda} \A$$
is closed of degree $0$ and its total complex is $\bar{Q}_n$. 
Thus, the general fact given above with $\C = \pretriag(\AbarA)$ and 
$\gamma = \lambda$ 
yields the desired exact triangle in $H^0(\pretriag{\AbarA})$. 
\end{proof}

\begin{cor}
For any DG-lift $\bar{Q}_n$ of $Q_n$, we have 
$$ J_n \simeq \left\{ \bar{J}_n \right\}. $$
\end{cor}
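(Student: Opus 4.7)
My plan is to derive this corollary as an immediate consequence of the preceding lemma. The key observation is that passing from $\pretriag(\AbarA)$ to $D(\AbimA)$ via the convolution (totalisation) functor is exact, and by construction sends $\bar{Q}_n$ to $Q_n$ and the morphism $\A \xrightarrow{\iota} \bar{Q}_n$ (the inclusion of the degree-$0$ summand) to the morphism $\id \xrightarrow{\iota} Q_n$ that defines $J_n$ as a cone.

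More precisely, the lemma above provides an exact triangle
\[
\bar{J}_n[-1] \xrightarrow{\lambda} \A \xrightarrow{\iota} \bar{Q}_n \xrightarrow{\kappa} \bar{J}_n
\]
in $H^0(\pretriag(\AbarA))$. I would first apply the convolution functor $\tot\colon \pretriag(\AbarA)\to \AbarA$ followed by the projection to $D(\AbimA)$. Both are triangulated (the former because $\pretriag(\AbarA)$ is strongly pretriangulated with $\tot$ as the convolution functor, the latter because it is a Verdier quotient), so the image is an exact triangle in $D(\AbimA)$.

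Under this composition, $\bar{Q}_n$ goes to $Q_n$ by our choice of DG-lift, $\A$ goes to $\id$, and the map $\iota$ goes to the map $\id \xrightarrow{\iota} Q_n$ from the cyclic co-extension data (this follows from the identification in \eqref{eqn-map-iota_i-dg} applied iteratively, since the composition $\iota_n\circ\cdots\circ\iota_1$ is realised on DG level precisely by the inclusion of the degree-$0$ summand). Thus the resulting exact triangle in $D(\AbimA)$ has the form
\[
\{\bar{J}_n\}[-1] \to \id \xrightarrow{\iota} Q_n \to \{\bar{J}_n\}.
\]
Comparing with the defining triangle of $J_n$, namely $\id \xrightarrow{\iota} Q_n \to J_n \to \id[1]$, and invoking the (non-canonical) uniqueness of cones in a triangulated category, we obtain the desired isomorphism $J_n \simeq \{\bar{J}_n\}$. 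No serious obstacle is expected here; the only thing one has to be a little careful about is checking that the DG-level identification of $\iota$ on the twisted-complex side really matches the composition $\iota_n\circ\cdots\circ\iota_1$ under convolution, but this is built into the description \eqref{eqn-map-iota_i-dg}.
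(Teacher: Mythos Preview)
Your proposal is correct and matches the paper's intended reasoning: the paper gives no explicit proof of the corollary, treating it as an immediate consequence of the preceding lemma, and your argument (apply the exact convolution functor to the triangle of the lemma, then compare with the defining triangle of $J_n$ via uniqueness of cones) is exactly how one fills in that step.
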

We can therefore simply replace $J_n$ by the convolution of
$\bar{J}_n$, and work with it and the exact triangle
\begin{equation}
\id_\A \xrightarrow{\iota} Q_n \xrightarrow{\kappa} J_n 
\xrightarrow{\lambda} \id_\A[1]
\end{equation}
induced by $\eqref{eqn-J_n-A-Q_n-exact-triangle}$, instead.

\subsection{Truncated twisted tensor algebras}
\label{section-truncated-twisted-tensor-algebras}

The most obvious example of a degree $n$ cyclic coextension 
of $\id$ by $H$ is the direct sum 
\begin{align}
H^{n} \oplus \dots \oplus H \oplus \id. 
\end{align}
It has a natural structure of an algebra in $D(\AbimA)$
defined by truncating the tensor algebra $\bigoplus_{i = 0}^{\infty} H^i$. 

An arbitrary cyclic coextension of $\id$ by $H$ does not carry a natural 
structure of an algebra in $D(\AbimA)$. However, 
the definition of a $\mathbb{P}^n$-functor requires the adjunction monad of
the functor to be isomorphic to a cyclic co-extension, thus equipping 
the latter \em a posteriori \rm with an algebra structure. 

We therefore consider the following family of cyclic coextensions with
a natural algebra structure. It is parametrised by the elements
$\sigma \in \ext^1_{D(\AbimA)}(H,\id)$.  The case $\sigma = 0$ is the
truncated tensor algebra example above. We construct these cyclic
co-extensions on DG level, however we then prove it to be independent
of the choices of DG lifts involved.  It is somewhat suprising that,
though the isomorphism class of the pair $(H,\sigma) \in D(\AbimA)$
uniquely determines the isomorphism class of the resulting cyclic
co-extension, the authors were unable to find a construction that
works purely on the level of the triangulated category $D(\AbimA)$.

\begin{defn}
Let $(H,\sigma)$ be a pair consisting of a bimodule $H \in \AmodbarA$
and a closed morphism of degree minus one $H \xrightarrow{\sigma}\A$.
Define  $\bartta_{H, \sigma, n}$ of $(H,\sigma)$
(denoted by $\bartta_n$ where no confusion is possible) 
to be the following twisted complex over $\AmodbarA$:
\begin{align}
\label{eqn-n-truncated-twisted-tensor-algebra-of-bimodule}
\xymatrix{ 
H^{\bartimes n} [-n]
\ar[r]^{\xi_{n}}
& 
\dots 
\ar[r]
&
H^{\bartimes 2}[-2] 
\ar[r]^{\xi_2}
&
H [-1]
\ar[r]^{\xi_1}
&
\underset{\degzero}{\A}
}
\end{align}
where 
$$\xi_{k+1}=\sum_{i=0}^{k} (-1)^i \id^{\bartimes(i)} \bartimes \sigma \bartimes
\id^{\bartimes(k-i)}$$ 
and all the higher differentials are zero.
\end{defn}

The convolution of $\bartta_n$ in $D(\AbimA)$ has a natural $\A$-algebra
structure defined as follows:

\begin{enumerate}
\item The unit map $\epsilon\colon A \rightarrow \left\{ \bartta_n \right\}$ is defined by  the twisted complex map:
\begin{align}
\label{eqn-A-unit-map-for-R_n}
\vcenter{
\xymatrix{ 
& & & & 
\underset{\degzero}{\A} 
\ar[d]^{\id}
\\
H^{\bartimes n}[-n] 
\ar[r]^{\xi_{n}}
& 
\dots 
\ar[r]
&
H^{\bartimes 2}[-2]
\ar[r]^{\xi_2}
&
H[-1]
\ar[r]^{\xi_1}
&
\underset{\degzero}{\A} 
}
}
\end{align}
\item As per \cite[Lemma 3.42(1)]{AnnoLogvinenko-BarCategoryOfModulesAndHomotopyAdjunctionForTensorFunctors}
the bimodule $\left\{ \bartta_n \right\} \bartimes_\A \left\{ \bartta_n \right\}$ is isomorphic in $D(\AbimA)$ to 
the convolution of a twisted complex whose degree $-i$ part is
$\oplus_{k + l = i} H^{\bartimes k} \bartimes_\A H^{\bartimes l}[-i]$ 
for $-2n \leq -i \leq 0$. 

The multiplication map $\mu \colon \left\{ \bartta_n \right\} \bartimes_\A \left\{ \bartta_n \right\} \rightarrow \left\{ \bartta_n \right\}$ 
is defined by the following map from that twisted 
complex to \eqref{eqn-n-truncated-twisted-tensor-algebra-of-bimodule}:
\begin{align}
\label{eqn-A-mult-map-for-R_n}
(-i,-i)\colon \bigoplus_{k+l = i} H^{\bartimes k} \bartimes_\A H^{\bartimes l} [-i]
\xrightarrow{\sum \id} H^{\bartimes i}[-i] \quad \quad \quad -n \leq -i \leq 0. 
\end{align}
\end{enumerate}
  
\begin{lemma}
\label{lemma-derived-category-class-of-R_n-depends-only-on-that-of-H-sigma}
The isomorphism class of the $\A$-algebra $\left\{ \bartta_n \right\}$ in $D(\AbimA)$
depends only on the isomorphism class of the pair $(H, \sigma)$ in $D(\AbimA)$. 
\end{lemma}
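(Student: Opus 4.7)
My plan is to prove invariance by extending the construction $(H, \sigma) \mapsto \{\bartta_{H, \sigma, n}\}$ to a functor with respect to homotopy equivalences of DG lifts, and then using the fact that $\AmodbarA$ is a DG enhancement of $D(\AbimA)$ to lift derived-level isomorphisms. The first step reduces to DG data: any isomorphism $(H, \sigma) \simeq (H', \sigma')$ in $D(\AbimA)$ can be represented, possibly after replacing $H$ and $H'$ by homotopy equivalent bimodules, by a closed degree zero homotopy equivalence $f \colon H \to H'$ in $\AmodbarA$ together with a morphism $h \colon H \to \A$ of degree $-2$ satisfying $dh = \sigma' \circ f - \sigma$. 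The existence of such $(f,h)$ uses the homotopical structure on $\AmodbarA$ established in Section~\ref{section-preliminaries-dg-and-ainfty-categories}.

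From the data $(f, h)$ I would construct a closed degree zero morphism $\Phi \colon \bartta_{H, \sigma, n} \to \bartta_{H', \sigma', n}$ of twisted complexes whose diagonal components are $\Phi_{-k,-k} = f^{\bartimes k}$. The first off-diagonal components, from $H^{\bartimes k}[-k]$ to $H'^{\bartimes (k-1)}[-(k-1)]$, would be the alternating sums
$$\sum_{i=0}^{k-1} (-1)^i f^{\bartimes i} \bartimes h \bartimes f^{\bartimes (k-1-i)},$$
whose coboundary cancels the discrepancy $\xi_k' \circ f^{\bartimes k} - f^{\bartimes(k-1)} \circ \xi_k$ by a direct Leibniz computation using $dh = \sigma' f - \sigma$ together with the signs in the definition of $\xi_k$. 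Higher off-diagonal components involve iterated insertions of $h$ and are defined inductively to enforce closedness of $\Phi$. That $\Phi$ is a homotopy equivalence then follows by induction on the length of the twisted complex, since on the graded factor $H^{\bartimes k}[-k]$ the morphism $\Phi$ restricts to $f^{\bartimes k}$, which is itself a homotopy equivalence.

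Finally I would verify that the convolution $\{\Phi\}$ respects the algebra structures. Compatibility with the unit map \eqref{eqn-A-unit-map-for-R_n} is automatic since the unit targets only the rightmost component $\A$, on which $\Phi$ acts as the identity. Compatibility with the multiplication \eqref{eqn-A-mult-map-for-R_n} holds strictly on the diagonal via the identification $f^{\bartimes k} \bartimes f^{\bartimes l} = f^{\bartimes(k+l)}$, while the off-diagonal contributions assemble into a coherent homotopy using associativity of $\bartimes$ and a second application of $dh = \sigma' f - \sigma$.

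The main technical obstacle is the explicit combinatorial construction of the higher components of $\Phi$ and the bookkeeping for the multiplicative coherence; conceptually these are forced by the iterated use of the single homotopy $h$, but writing them down requires care with signs. If the combinatorics become unwieldy, a cleaner substitute is to recognise $\{\bartta_n\}$ as an iterated mapping cone built from $\sigma$ in the sense of \S\ref{section-cyclic-extensions}, and inductively extend $(f, h)$ up this tower by invoking the functoriality of cones in the enhanced triangulated setting, at each stage using the corresponding algebra map for the previous truncation.
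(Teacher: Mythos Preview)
Your approach is essentially the same as the paper's: both construct an explicit closed degree zero map of twisted complexes $\bartta_{H,\sigma,n} \to \bartta_{H',\sigma',n}$ with diagonal components $f^{\bartimes k}$ and off-diagonal components given by tensor products where some copies of $f$ are replaced by the homotopy $\beta$ (your $h$), then deduce the homotopy equivalence from the diagonal components and check compatibility with unit and multiplication. The only difference is that the paper writes down a single closed formula with explicit signs for all off-diagonal components at once and invokes the Rectangle Lemma, whereas you describe the first off-diagonal explicitly and leave the higher ones to an inductive construction; the paper's explicit formula is exactly what such an induction would produce, so neither approach buys anything the other does not.
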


\begin{proof}
Let $(H, \sigma)$ and $(H', \sigma')$ be isomorphic in $D(\AbimA)$. 
Then there exists a homotopy equivalence $H \xrightarrow{f} H'$ in 
$\AmodbarA$ such that $\sigma = \sigma' \circ f + d \beta$ 
for some $\beta \in \barhom^{-1}_{\AbimA}(H,\A)$. 

Let $\iota$ be the twisted complex map $\bartta_{H,\sigma, n} \rightarrow \bartta_{H',\sigma',n}$ 
whose component $H^{\bartimes i+k}[-i-k] \rightarrow H'^{\bartimes i}[-i]$ is:
\begin{align}
\label{eqn-induced-map-iota-from-Rn-to-R'n}
\sum_{0 \leq i_1 < \dots < i_k \leq i + k - 1 } 
(-1)^{ik + \frac{1}{2}k(k-1)}
(-1)^{i_1 + \dots + i_k}\;
\phi_0 \bartimes \dots \bartimes \phi_{i+k-1}
\quad
\text{ where }
\phi_j = 
\begin{cases}
\beta  \text{ if } j \in \left\{ i_1, \dots, i_k \right\} \\
f \text{ otherwise. }
\end{cases}
\end{align}

For the readers convenience we've illustrated $\iota$ for $n = 3$:

\begin{tiny}
\begin{align*}
\xymatrix@C=0.75in@R=0.5in{
H\bartimes H\bartimes H[-3]
\ar[rrr]^-{
\sigma\bartimes\id\bartimes\id-\id\bartimes\sigma\bartimes\id+\id\bartimes\id\bartimes\sigma}
\ar[d]|{f\bartimes f \bartimes f}
\ar@{-->}[drrr]|>>>>>>>>>>>>>>>>>>>>{\beta \bartimes f \bartimes f - f
\bartimes \beta \bartimes f + f \bartimes f \bartimes \beta} 
\ar@{.>}[drrrrr]|<<<<<<<<<<<<<<<<<<<<<<<<<<<<<<<<<<<<<<<<<<<{\beta\bartimes\beta\bartimes f
- \beta \bartimes f \bartimes \beta + f \bartimes \beta \bartimes \beta} 
\ar@/^0.75pc/@{.>}[drrrrrr]|<<<<<<<<<<<{\beta\bartimes\beta\bartimes\beta}
&&& 
H\bartimes H[-2]
\ar[rr]^-{\sigma\bartimes\id - \id\bartimes\sigma} 
\ar[d]|{f \bartimes f}
\ar@{-->}[drr]|>>>>>>>>>>>>>>{-(\beta \bartimes f - f \bartimes\beta) \quad} 
\ar@{.>}[drrr]|<<<<<<<<<<<<<<<<<<<<<<<{\beta \bartimes \beta}
&&
H[-1]
\ar[d]|{f}
\ar[r]^{\sigma}
\ar@{-->}[dr]|{\beta}
&
\A
\ar[d]|{\id}
\\ 
H' \bartimes H' \bartimes H'[-3]
 \ar[rrr]_-{\sigma'\bartimes\id\bartimes\id-\id\bartimes\sigma'\bartimes\id+
\id\bartimes\id\bartimes\sigma'}
&&&
H' \bartimes H'[-2]
\ar[rr]_-{\sigma'\bartimes\id-\id\bartimes\sigma'}
&& 
H'[-1]
\ar[r]_{\sigma'}
&
\A
} 
\end{align*}
\end{tiny}

The map $\iota$ is clearly of degree $0$ and is readily seen to be closed. 
Since all the degree zero components of $\iota$ are
quasi-isomorphisms $f^{\bartimes i}$ we conclude by the Rectangle Lemma
\cite[Lemma 2.12]{AnnoLogvinenko-BarCategoryOfModulesAndHomotopyAdjunctionForTensorFunctors}
that $\iota$ itself is a quasi-isomorphism. It therefore defines an isomorphism 
$\left\{ \bartta_{H,\sigma,n} \right\} \xrightarrow{\iota} \left\{  \bartta_{H', \sigma',n} \right\}$ in 
$D(\AbimA)$. This isomorphism is readily seen to intertwine  
the convolutions of the twisted complex maps \eqref{eqn-A-unit-map-for-R_n} and
\eqref{eqn-A-mult-map-for-R_n}. Thus $\iota$ is an isomorphism of $\A$-algebras, as required. 
\end{proof}

The above allows us to make the following definition:

\begin{defn}
Let $H \in D(\AbimA)$ and let $\sigma \in \homm^1_{D(\AbimA)}(H,\id)$. Define the \em $n$-truncated 
twisted tensor algebra \rm $\tta_{H, \sigma, n} \in D(\AbimA)$
(denoted by  $\tta_n$ where no confusion is possible)
to be the convolution of the twisted complex $\bartta_{n}$ for any lift of  $(H, \sigma)$ to 
the DG-enhancement $\AmodbarA$. 
\end{defn}

\subsection{Algebraic geometry and Fourier-Mukai transforms}
\label{section-Fourier-Mukai}

In this section, we introduce the notation and state the generalities 
regarding the bicategory $\fmcatweak$ 
of Fourier-Mukai kernels on separated schemes
of finite type over a field. We then state and prove several key
results which are needed for the Fourier-Mukai computations in 
\S\ref{section-examples-of-non-split-Pn-functors} where we give geometric
examples of non-split $\mathbb{P}^n$-functors. These results  
are a part of a bigger framework which will be treated systematically
and in depth in further papers. 

\subsubsection{Generalities}

Let us fix some generalities. For any object 
$F \in D_{qc}(X \times Y)$ we denote by 
$$ \Phi_F\colon D_{qc}(X) \rightarrow D_{qc}(Y)$$
the corresponding Fourier-Mukai transform
$$ \pi_{Y *}\left(F \otimes \pi_X^* (-) \right). $$
Here $\pi_Y$ and $\pi_X$ denotes the projection from the fibre product 
$X \times Y$ to the corresponding components. Where it is more convenient 
we also use the notation where
$\pi_{i_1 \dots i_k}$ denotes the projection from a fibre product to the product of  its 
components numbered $i_1$, \dots, $i_k$ from left to right. 

For the details on the twisted inverse image functor $f^!$, the
projection formula, the base change morphisms, the K{\"u}nneth
morphism, etc., see an excellent in-depth exposition in
\cite{Lipman-NotesOnDerivedFunctorsAndGrothendieckDuality} or its 
summary in \cite[\S2]{AnnoLogvinenko-OnTakingTwistsOfFourierMukaiFunctors}. 
Let $f\colon X \rightarrow Y$ be a map of separated schemes of finite type 
over $k$. Let
\begin{equation}
\label{eqn-natural-map-f^*-to-Hom-f^!-f^!}
f^* \rightarrow \shhomm(f^!\mathcal{O}_Y, f^!)
\end{equation}
be the morphism right adjoint to the natural map $f^* \otimes
f^!\mathcal{O}_Y \rightarrow f^! $ via Tensor-Hom adjunction. We define
$$ f_! \overset{\text{def}}{=} 
f_* (f^! \mathcal{O}_Y \otimes (-))
\colon \quad
D_{qc}(X) \rightarrow D_{qc}(Y). $$
Thus $f_!$ is the left adjoint to 
the RHS of \eqref{eqn-natural-map-f^*-to-Hom-f^!-f^!}. 
When $f$ is perfect and proper, \eqref{eqn-natural-map-f^*-to-Hom-f^!-f^!} 
is an isomoprhism \cite[Lemma 2.1.10]{AvramovIyengarLipman-ReflexivityAndRigidityForComplexesIISchemes}. 
It follows that $f_!$ is then the left adjoint of $f^*$. The
adjunction unit is given by
$$ \id \rightarrow \shhomm\Bigl(f^! \mathcal{O}_Y, f^! \mathcal{O}_Y \otimes (-)\Bigr)
\rightarrow \shhomm\Bigl(f^! \mathcal{O}_Y, f^! f_*(f^! \mathcal{O}_Y \otimes -)\Bigr)
\xrightarrow{\sim} f^* f_!, $$
where the first two morphisms  are the units of Tensor-Hom adjunction and of $(f_*,f^!)$ adjunction, 
respectively. The adjunction counit is given by 
$$ f_! f^* = f_*(f^! \mathcal{O}_Y \otimes f^*) \xrightarrow{\sim} f_* f^* \rightarrow \id, $$
where the last morphism is the counit of $(f^*, f_*)$ adjunction. 

Given two objects $F \in D_{qc}(X \times Y)$ and $G \in D_{qc}(Y \times Z)$ 
we have the notion of their composition as Fourier-Mukai kernels:
\begin{equation}
\label{eqn-Fourier-Mukai-kernel-composition}
G \star F : =  \pi_{13 *} \left( \pi_{12}^* F \otimes \pi_{23}^* G \right)
\quad \in D_{qc}(X \times Z).   
\end{equation}
Where no confusion is possible, we suppress the $\star$ and write simply $GF$. 
This notion of composition gives the Fourier-Mukai kernels the
structure of a bicategory $\fmcatweak$. 
For the details on bicategories, which are a certain kind of weak 2-categories, 
see \cite{Benabou-IntroductionToBicategories}. The objects of $\fmcatweak$
are separated schemes of finite type over a field, its categories of
$1$-morphisms are the derived categories $D_{qc}(X \times Y)$ of their
products, the horizontal composition is given by
\eqref{eqn-Fourier-Mukai-kernel-composition}, and the identity
$1$-morphism is the structure sheaf of the diagonal 
$\Delta_* \mathcal{O}_X \in D_{qc}(X \times X)$. The associator and
the unitor natural transformations are defined via the usual
combinations of the projection formula and the base-change
isomorphisms. 
For smooth projective algebraic varieties this bicategory was studied
in \cite{CaldararuWillerton-TheMukaiPairingIACategoricalApproach}. 
 
\subsubsection{Standard kernels and the Key Lemma}
\label{section-standard-fourier-mukai-kernels-and-the-key-lemma}

The Fourier-Mukai kernels for the functors of direct and inverse image, 
tensor product, etc.~are well-known. For example, in the case of the direct 
image functor the (structure sheaf of the) graph of the corresponding map of
schemes is usually taken as its Fourier-Mukai kernel. In the lemma below
we define these standard kernels, but in what may seem at first to be 
a slightly unusual form.

\begin{lemma}[Standard kernels]
\label{lemma-standard-fm-kernels}
Let $X$ be a separated scheme of finite type over $k$ and let 
$Q \in D_{qc}(X)$. The object
$$ T_Q  = \pi_2^* Q \otimes \Delta_* \mathcal{O}_X \in D_{qc}(X \times X) $$
is the Fourier-Mukai kernel for $Q \otimes (-)$.  

Let  $f\colon X \rightarrow Y$  be a map of separated schemes of finite type 
over $k$. We have the maps 
$$(\id_X \times f)\colon X \times X \rightarrow X \times Y, $$
$$(\id_Y \times f)\colon Y \times X \rightarrow Y \times Y.$$ 
The objects
$$ F_* = (\id_X \times f)_* \Delta_* \mathcal{O}_X \in D_{qc}(X \times Y), $$
$$ F^* = (\id_Y \times f)^* \Delta_* \mathcal{O}_Y \in D_{qc}(Y \times X), $$
are the Fourier-Mukai kernels for $f_*$ and $f^*$, respectively. If $f$ is perfect, 
then also the object
$$ F^! = (\id_Y \times f)^! \Delta_* \mathcal{O}_Y \in D_{qc}(Y \times X), $$
$$ F_! = (\id_X \times f)_! \Delta_* \mathcal{O}_X \in D_{qc}(X \times Y), $$
are the Fourier-Mukai kernel for $f^!$ and $f_!$, respectively. 
\end{lemma}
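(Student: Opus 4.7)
The plan is to verify each of the five claims directly, by computing $\Phi_K(-)$ from its definition and reducing to the identity kernel via projection formula and base change. Throughout, the key geometric facts are that $\pi_i\circ\Delta_X=\id_X$, that $(\id_X\times f)\circ\Delta_X=\Gamma_f$ (the graph of $f$), and that the square
\[
\begin{array}{ccc}
X & \xrightarrow{(f,\id_X)} & Y\times X \\
{\scriptstyle f}\downarrow & & \downarrow{\scriptstyle \id_Y\times f} \\
Y & \xrightarrow{\Delta_Y} & Y\times Y
\end{array}
\]
is Cartesian.

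First I would dispatch the $T_Q$ case: starting from $\Phi_{T_Q}(A)=\pi_{2*}\bigl(\pi_2^*Q\otimes\Delta_*\mathcal{O}_X\otimes\pi_1^*A\bigr)$, apply the projection formula along $\pi_2$ to pull out $Q$, then the projection formula along $\Delta$ together with $\Delta^*\pi_1^*=\id$ to collapse the diagonal, giving $Q\otimes A$. Next, for $F_*$, note that $F_*=\Gamma_{f*}\mathcal{O}_X$ since $(\id_X\times f)\circ\Delta_X=\Gamma_f$; then projection formula along $\Gamma_f$ together with $\pi_X\circ\Gamma_f=\id_X$ and $\pi_Y\circ\Gamma_f=f$ reduces $\Phi_{F_*}(A)$ to $f_*A$ in a single step.

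For $F^*$, I would first apply base change along the Cartesian square above to rewrite $F^*=(\id_Y\times f)^*\Delta_{Y*}\mathcal{O}_Y$ as $(f,\id_X)_*\mathcal{O}_X$; this is flat base change when appropriate, but in general one uses the isomorphism $Lf^*\circ R\Delta_{Y*}\simeq R(f,\id_X)_*\circ Lf^*$ which holds here because the square is a Tor-independent (in fact transverse) pullback of closed immersions. A computation analogous to the $F_*$ case then yields $f^*B$. The case of $F^!$ proceeds by the same mechanism, replacing $(-)^*$ by $(-)^!$: the base change isomorphism $(\id_Y\times f)^!\Delta_{Y*}\mathcal{O}_Y\simeq(f,\id_X)_*f^!\mathcal{O}_Y$ holds in the stated setting (perfectness of $f$, the proper map $\Delta_Y$), and the remaining steps match those for $F^*$. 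Finally, for $F_!$, I would unfold the definition $F_!=(\id_X\times f)_*\bigl((\id_X\times f)^!\mathcal{O}_{X\times Y}\otimes\Delta_*\mathcal{O}_X\bigr)$, use that $(\id_X\times f)^!\mathcal{O}_{X\times Y}\simeq\pi_1^*f^!\mathcal{O}_Y$ (base change for $f^!$ along the flat map $\pi_X$), and then repeat the projection-formula manipulation of the $F_*$ case, recovering $f_*(f^!\mathcal{O}_Y\otimes-)=f_!$.

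The main obstacle, and the only nontrivial point, is to assemble the correct base change isomorphisms for $f^*$ and especially $f^!$: one must verify that the Cartesian square above satisfies the Tor-independence needed for flat-base-change of $Lf^*$, and that perfectness of $f$ gives the corresponding base-change isomorphism for $f^!$ (this is a standard consequence of the formalism developed in \cite{Lipman-NotesOnDerivedFunctorsAndGrothendieckDuality}). Once these are in place, every claim reduces to projection formula plus the tautology $\pi_i\circ\Delta=\id$; the full details will appear in \cite{AnnoLogvinenko-2CategoricalAdjunctionsForFourierMukaiKernels}.
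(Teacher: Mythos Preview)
Your approach is essentially the same as the paper's: projection formula plus $\pi_i\circ\Delta=\id$ for $T_Q$ and $F_*$, base change across the Cartesian square with diagonal $\Delta_Y$ for $F^*$ and $F^!$, and the decomposition $(\id_X\times f)^!\mathcal{O}_{X\times Y}\simeq\pi_X^*f^!\mathcal{O}_Y$ for $F_!$.

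The one place where the paper is more explicit than your sketch is the justification of twisted base change for $F^!$. Your parenthetical ``perfectness of $f$, the proper map $\Delta_Y$'' does not quite carry it: the standard reference (Lipman, Thm.~4.7.4) for $(\id_Y\times f)^!\Delta_{Y*}\simeq(f,\id_X)_*f^!$ would want $\id_Y\times f$ proper, which is not assumed. The paper instead uses perfectness concretely: it writes $(\id_Y\times f)^!(-)\simeq(\id_Y\times f)^*(-)\otimes(\id_Y\times f)^!\mathcal{O}_{Y\times Y}$, applies ordinary Tor-independent base change to the first factor to get $(f,\id_X)_*f^*$, and computes the second factor as $\pi_X^*f^!\mathcal{O}_Y$ via flat base change along the projection $\pi_X$; reassembling and using $f^*\otimes f^!\mathcal{O}_Y\simeq f^!$ (perfectness again) gives $(f,\id_X)_*f^!$. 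This is exactly the mechanism you allude to in your $F_!$ paragraph, just applied one level up; making it explicit here closes the only gap in your outline.
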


We refer to these as the \em standard kernels \rm for these functors. Furthermore, for any 
composition of these functors its standard kernel is the composition of the standard kernels
of its composants. For example, given a functor $f_*(Q \otimes f^*(-))$, 
its standard kernel is $F_* \star T_Q \star F^*$. 

\begin{proof}
This is standard. We have 
$$ \Phi_{T_Q} = \pi_{2 *} \left(\pi_{2}^* Q \otimes \Delta_* \mathcal{O}_X \otimes \pi_1^* \right)
\simeq \pi_{2 *} \Delta_* \Delta^* (\pi_{2}^* Q \otimes \pi_1^*) \simeq Q \otimes \id, $$
by the projection formula and since $\pi_1 \circ \Delta =  \pi_2 \circ \Delta = \id$. 

Next, we have two commutative diagrams:
\begin{equation}
\label{eqn-fourier-mukai-double-product-commutative-squares}
\begin{tikzcd}
X
\ar{r}{\Delta}
\ar[']{dr}{(\id, f)}
&
X \times X
\ar{d}{\id_X \times f}
\ar{r}{\pi_X}
& 
X 
\ar{d}{f}
\\
& 
X \times Y 
\ar{r}{\pi_Y}
&
Y 
\end{tikzcd}
\quad \quad \quad 
\begin{tikzcd}
X
\ar{d}{f} 
\ar{r}{(f,\id)}
& 
Y \times X 
\ar{d}{\id_Y \times f}
\ar{r}{\pi_X}
& 
X
\ar{d}{f}
\\
Y 
\ar{r}{\Delta}
& 
Y \times Y	
\ar{r}{\pi_Y}
&
Y. 
\end{tikzcd}
\end{equation}
All three squares are $\tor$-independent and thus we have
the base change isomorphisms for them. Thus 
$$ \Phi_{F_*} = 
\pi_{Y *}\Bigl((\id_X \times f)_* \Delta_* \mathcal{O}_X \otimes \pi_X^* \Bigr) \simeq
\pi_{Y *}\Bigl((\id,f)_* \mathcal{O}_X \otimes \pi_X^* \Bigr)
\simeq \pi_{Y *} (\id,f)_* (\id,f)^* \pi_X^* \simeq f_* \id^* \simeq f_*, $$
by the projection formula and since $\Delta \circ (\id_X \times f) = (\id,f)$, $\pi_X \circ (\id,f) = \id$, and $\pi_Y \circ (\id,f) = f$. 
Similarly
$$\Phi_{F^*} = \pi_{X *}\Bigl((\id_Y \times f)^* \Delta_* \mathcal{O}_Y \otimes \pi_Y^*  \Bigr) \simeq
\pi_{X *}\Bigl((f,\id)_* \mathcal{O}_X\otimes \pi_Y^*  \Bigr) \simeq 
\pi_{X *} (f,\id)_* (f,\id)^* \pi_Y^*  \Bigr) \simeq f^*. $$

Now, assume $f$ to be perfect. The projections $\pi_X$ and $\pi_Y$ are flat, 
thus the two squares on 
\eqref{eqn-fourier-mukai-double-product-commutative-squares} 
involving them admit 
twisted base change isomorphisms. 
Were $f$ to be proper,
\cite[Theorem 4.7.4]{Lipman-NotesOnDerivedFunctorsAndGrothendieckDuality}
would have given us the twisted base change for $f^!$ and 
$(\id_Y \times f)^!$ in the middle square on 
\eqref{eqn-fourier-mukai-double-product-commutative-squares}. 
Turns out we still have it because the middle square splits 
the rightmost one:
\begin{align*}
(\id_Y \times f)^! \Delta_* \simeq \; 
& (\id_Y \times f)^* \Delta_* \otimes (\id_Y \times f)^! \mathcal{O}_{Y
\times Y} \simeq 
(f,\id)_* f^* \otimes \pi_X^* f^! \mathcal{O}_Y 
\simeq 
\\
\simeq\;
& (f,\id)_* \left(f^* \otimes (f,\id)^* \pi_X^* f^! \mathcal{O}_Y \right) 
\simeq 
(f,\id)_* \left(f^* \otimes f^! \mathcal{O}_Y \right)
\simeq (f,\id)_* f^!,
\end{align*}
Thus 
\begin{align*}
& \Phi_{F^!} \simeq
\pi_{X *}\Bigl((\id_Y \times f)^! \Delta_* \mathcal{O}_Y \otimes \pi_Y^*  \Bigr)
\simeq
\pi_{X *}\Bigl((f,\id)_* f^! \mathcal{O}_Y \otimes \pi_Y^*  \Bigr) \simeq 
\\
\simeq & \;
\pi_{X *} (f,\id)_* \Bigl(f^! \mathcal{O}_Y \otimes (f,\id)^* \pi_Y^*  \Bigr)
\simeq \id_* \Bigl(f^! \mathcal{O}_Y \otimes f^* \Bigr)
\simeq  f^! \mathcal{O}_Y \otimes f^* \simeq f^!
\end{align*}
with the last isomorphism using again the fact that $f$ is perfect. Finally, 
\begin{align*}
\Phi_{F_!} = \;
& \pi_{Y *}\Bigl( (\id_X \times f)_*\left( \Delta_* \mathcal{O}_X \otimes 
(\id_X \times f)^! \mathcal{O}_{X \times Y}\right)\otimes \pi_X^*  \Bigr)
\simeq
\pi_{Y *}\Bigl( (\id_X \times f)_*\left( \Delta_* \mathcal{O}_X \otimes 
\pi_X^* f^! \mathcal{O}_{Y}\right)\otimes \pi_X^*  \Bigr)
\simeq
\\
\simeq \;
& \pi_{Y *}(\id_X \times f)_* \Delta_* \Bigl(\Delta^* \pi_X^* f^!\mathcal{O}_{Y} 
\otimes \Delta^* (\id_X \times f)^* \pi_X^*  \Bigr) \simeq 
f_* \left(f^! \mathcal{O}_{Y}\otimes \id \right) = f_!. 
\end{align*}

\end{proof}

The following crucial result explains the reason for our choice of the
standard kernels above:

\begin{lemma}[The Key Lemma]
\label{lemma-pullbacks-and-pushforwards-of-Fourier-Mukai-kernels}
Let $f\colon X' \rightarrow X$ and $g\colon Y' \rightarrow Y$ be maps 
of separated schemes of finite type over a field. 
Let $M \in D_{qc}(X)$ and $N \in D_{qc}(Y)$. Let 
$Q \in D_{qc}(X \times Y)$ and $Q' \in D_{qc}(X' \times Y')$.   
\begin{enumerate}
	\item 
	\label{item-pullback-of-FMK-over-a-product-map}
	        There exists the following isomorphism which 
                is functorial in $Q$:
		$$ (f \times g)^* Q \simeq G^* \star Q \star F_*. $$
        \item If $f$ and $g$ are perfect, there exist 
                the following isomorphism which is functorial in $Q$:
		$$ (f \times g)^! Q \simeq G^! \star Q \star F_!. $$
	\item There exists the following isomorphism
              which is functorial in $Q'$:
		$$ (f \times g)_* Q' \simeq G_* \star Q' \star F^*. $$
        \item If $f$ and $g$ are perfect, there exists 
              the following isomorphism which is functorial in $Q$:
		$$ (f \times g)_! Q \simeq G^! \star Q \star F_!. $$
        \item There exists the following isomorphism which is functorial in $Q$:
   $$ \pi_X^* M \otimes \pi_Y^* N \otimes Q \simeq T_N \star Q \star T_M. $$
\end{enumerate} 
\end{lemma}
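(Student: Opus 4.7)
My plan is to reduce each of the five statements to one-variable versions by using the factorisations $(f \times g) = (\id_X \times g) \circ (f \times \id_{Y'}) = (f \times \id_Y) \circ (\id_{X'} \times g)$ together with the associativity of $\star$. Thus it suffices to establish four one-sided isomorphisms of the form
\begin{align*}
Q \star F_* &\simeq (f \times \id_Y)^* Q, &
Q' \star F^* &\simeq (f \times \id_{Y'})_* Q', \\
Q \star F_! &\simeq (f \times \id_Y)^! Q, &
Q \star T_M &\simeq \pi_X^* M \otimes Q,
\end{align*}
(with perfection assumed in the $F_!$ line, and likewise for $F^!$), together with their $g$-counterparts that compose on the opposite side; all five parts of the lemma are then assembled from these via associativity of $\star$.

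For the $F_*$ statement I would identify $F_* = \gamma_{f*} \mathcal{O}_{X'}$, where $\gamma_f = (\id_{X'}, f) \colon X' \to X' \times X$ is the graph of $f$. Flat base change along the Cartesian square
\[
\begin{tikzcd}
X' \times Y \ar{r}{\gamma_f \times \id_Y} \ar{d}{\pi_{X'}} & X' \times X \times Y \ar{d}{\pi_{12}} \\
X' \ar{r}{\gamma_f} & X' \times X
\end{tikzcd}
\]
(valid because $\pi_{12}$ is flat) then gives $\pi_{12}^* F_* \simeq (\gamma_f \times \id_Y)_* \mathcal{O}_{X' \times Y}$. Applying the projection formula, together with the identities $\pi_{13} \circ (\gamma_f \times \id_Y) = \id_{X' \times Y}$ and $\pi_{23} \circ (\gamma_f \times \id_Y) = f \times \id_Y$, then collapses the defining expression for $Q \star F_*$ to $(f \times \id_Y)^* Q$. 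The statement for $F^*$ follows by the same template, with the graph replaced by the cograph $(f, \id_{X'}) \colon X' \to X \times X'$ and the identification $F^* \simeq (f, \id_{X'})_* \mathcal{O}_{X'}$ already established in the proof of Lemma \ref{lemma-standard-fm-kernels}. For $T_M$, the projection formula along the closed immersion $\delta \colon X \times Y \hookrightarrow X \times X \times Y$, $\delta(x,y) = (x,x,y)$, together with $\pi_{13} \circ \delta = \id$, yields the formula in a single step. Functoriality in $Q$ (respectively $Q'$) is automatic from naturality of every step in the derivation.

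The main obstacle is the $F^!$ and $F_!$ cases, where the pullback at the corresponding step is replaced by $f^!$ or $f_!$ and one must invoke the twisted base change of Grothendieck duality \cite[Theorem 4.7.4]{Lipman-NotesOnDerivedFunctorsAndGrothendieckDuality}. As in the proof of Lemma \ref{lemma-standard-fm-kernels}, the relevant base-change squares split so as to reduce the twisted case to the flat base change already treated for the graph square, using the identity $(\id \times f)^! \Delta_* \simeq (f,\id)_*\bigl(f^* \otimes f^! \mathcal{O}_X\bigr)$ established there and the fact that $f^* \otimes f^! \mathcal{O}_X \simeq f^!$ for perfect $f$. With this reduction in hand, the graph-and-projection-formula computation used for $F_*$ carries over, with $\mathcal{O}_{X'}$ replaced by $f^! \mathcal{O}_X$ and an additional twist by the relative dualising complex propagating through the calculation.
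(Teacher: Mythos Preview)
Your approach is correct and genuinely different from the paper's. The paper proves assertion~\eqref{item-pullback-of-FMK-over-a-product-map} directly on the quadruple product $X' \times X \times Y \times Y'$: it expands $G^* \star Q \star F_*$ as a single pushforward $\pi_{X'Y'*}$ of a triple tensor product, applies flat base change twice to turn the outer kernels into pushforwards $i_*\mathcal{O}$ and $j_*\mathcal{O}$ along the graph embeddings, and then uses a K\"unneth isomorphism for the $\tor$-independent fibre square $X' \times Y' = (X' \times X \times Y') \times_{X' \times X \times Y \times Y'} (X' \times Y \times Y')$ to collapse everything to $(f \times g)^* Q$. Your route instead factors $(f \times g)$ into two one-variable steps, proves the one-sided identities $Q \star F_* \simeq (f \times \id_Y)^* Q$ and $G^* \star Q'' \simeq (\id_{X'} \times g)^* Q''$ separately by the graph-plus-projection-formula argument, and assembles via the associator of $\star$. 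The trade-off is that you avoid the explicit K\"unneth step but absorb its content into the associativity of the bicategory $\fmcatweak$, which the paper establishes by the same projection-formula and base-change mechanisms; conversely, the paper's all-at-once computation is slightly heavier but makes the isomorphism explicit without invoking the bicategory structure. Your handling of parts~(2) and~(4) is also sound: the reduction to the perfect-map identity $f^! \simeq f^* \otimes f^! \mathcal{O}$ is exactly what the paper does in the proof of Lemma~\ref{lemma-standard-fm-kernels} and would carry through here unchanged.
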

\begin{proof}
We only prove the assertion 
\eqref{item-pullback-of-FMK-over-a-product-map}, the other assertions
are proved similarly. We have
$$ G^* \star Q \star F_*  \simeq \pi_{X'Y' *} \bigl( \pi_{YY'}^* G^* \otimes \pi_{XY}^* Q \otimes \pi_{X'X}^* F_* \bigr)
\simeq 
\pi_{X'Y' *} \bigl( \pi_{YY'}^* (g,\id)_* \mathcal{O}_{Y'} \otimes \pi_{XY}^* Q \otimes \pi_{X'X}^*(\id,f)_* \mathcal{O}_{X'} \bigr). $$
By base change around the squares
\begin{equation}
\begin{tikzcd}[column sep={2.4cm}, row sep={1cm}]
X' \times Y \times Y'
\ar{r}{i := (\id,f) \times \id_{Y \times Y'}}
\ar{d}{\pi_{X'}}
&
X'  \times X \times Y \times Y'
\ar{d}{\pi_{X' X}} 
\\
X'
\ar{r}{(\id,f)}
&
X' \times X. 
\end{tikzcd}
\;
\begin{tikzcd}[column sep={2.4cm}, row sep={1cm}]
X' \times X \times Y'
\ar{r}{j := \id_{X' \times X} \times (g,\id)}
\ar{d}{\pi_{Y'}}
&
X'  \times X \times Y \times Y'
\ar{d}{\pi_{YY'}}
\\
Y'
\ar{r}{(g,\id)}
&
Y \times Y'. 
\end{tikzcd}	
\end{equation}
the above is isomorphic to 
\begin{equation}
\label{eqn-G^*QF_*-kernel-after-the-base-change}
\pi_{X'Y' *} \bigl( j_* \mathcal{O}_{X' \times X \times Y'} 
\otimes \pi_{XY}^* Q \otimes i_* \mathcal{O}_{X' \times Y \times Y'} \bigr).
\end{equation}
The fiber square 
\begin{equation}
\begin{tikzcd}[column sep={1.5cm}, row sep={1cm}]
X' \times Y'
\ar{r}{\id_{X'} \times (g,\id)}
\ar{d}{(\id,f) \times \id_{Y'}}
&
X'  \times  Y \times Y'
\ar{d}{i} 
\\
X'  \times  X \times Y'
\ar{r}{j}
&
X' \times X \times Y \times Y' 
\end{tikzcd}	
\end{equation}
is $\tor$-independent since any pullback from  $X' \times X$ to $X' \times X \times Y \times Y'$ is flat over $Y \times Y'$.
Hence we have the K{\"u}nneth isomorphism 
$$ j_* \mathcal{O}_{X' \times X \times Y'} \otimes  i_* \mathcal{O}_{X' \times Y \times Y'} \simeq h_* 
\mathcal{O}_{X' \times Y'} $$
where $h := (\id,f) \times (g,\id)$ is the composition of the maps along either contour of the  square. 
Thus \eqref{eqn-G^*QF_*-kernel-after-the-base-change} is further isomorphic to:
$$ \pi_{X'Y' *} \bigl( h_* \mathcal{O}_{X' \times Y'} 
\otimes \pi_{XY}^* Q \bigr) \simeq \pi_{X'Y' *} h_* h^* \pi_{XY}^* Q \simeq \id_* (f \times g)^* Q \simeq (f \times g)^* Q.$$ 
\end{proof}

\subsubsection{2-categorical adjunctions for standard kernels}

Lemma \ref{lemma-pullbacks-and-pushforwards-of-Fourier-Mukai-kernels} 
allows us to systematically obtain 2-categorical adjunctions in $\fmcatweak$ 
for the standard kernels of Lemma \ref{lemma-standard-fm-kernels} 
from the functorial adjunctions of derived functors:

\begin{prps}[2-categorical adjunctions for standard kernels] 
\label{prps-2-categorical-adjunctions-for-standard-kernels}
Let $X$ and $Y$ be separated schemes of finite type over
a field and let $f\colon X \rightarrow Y$ be a scheme map.  
\begin{enumerate}
\item  
Let 
\begin{equation}
\id_Y \xrightarrow{\epsilon} F_* \star F^* 
\end{equation}
\begin{equation}
F^* \star F_* \xrightarrow{\mu} \id_X  
\end{equation}
be the morphisms identified by the isomorphisms of the Key
Lemma (Lemma \ref{lemma-pullbacks-and-pushforwards-of-Fourier-Mukai-kernels})
with the morphisms 
\begin{equation}
\Delta_* \mathcal{O}_Y 
\rightarrow 
(\id_Y \times f)_* (\id_Y \times f)^* \Delta_* \mathcal{O}_Y 
\end{equation}
\begin{equation}
(\id_X \times f)^* (\id_X \times f)_* \Delta_* \mathcal{O}_X 
\rightarrow 
\Delta_* \mathcal{O}_X 
\end{equation}
which are the unit and counit of 
the corresponding functorial adjunctions. Then $(F^*, F_*, \epsilon,
\mu)$ is a 2-categorical adjunction in $\fmcatweak$. 

\item Assume further that $f$ is perfect and proper. 
Let 
\begin{equation}
\id_X \xrightarrow{\epsilon} F^! \star F_* 
\end{equation}
\begin{equation}
F_* \star F^* \xrightarrow{\mu} \id_Y  
\end{equation}
be the morphisms identified by the isomorphisms of 
the Key Lemma with the morphisms 
\begin{equation}
\Delta_* \mathcal{O}_X 
\rightarrow 
(\id_X \times f)^! (\id_X \times f)_* \Delta_* \mathcal{O}_X 
\end{equation}
\begin{equation}
(\id_Y \times f)_* (\id_Y \times f)^! \Delta_* \mathcal{O}_Y 
\rightarrow 
\Delta_* \mathcal{O}_Y 
\end{equation}
which are the unit and the counit of 
the corresponding functorial adjunctions. Then $(F_*, F^!, \epsilon,
\mu)$ is a 2-categorical adjunction in $\fmcatweak$. 

\item Assume further that $f$ is perfect and proper. 
Let 
\begin{equation}
\id_X \xrightarrow{\epsilon} F^* \star F_! 
\end{equation}
\begin{equation}
F_! \star F^! \xrightarrow{\mu} \id_Y  
\end{equation}
be the morphisms identified by the isomorphisms of
the Key Lemma 
with the morphisms 
\begin{equation}
\Delta_* \mathcal{O}_X 
\rightarrow 
(\id_X \times f)^* (\id_X \times f)_! \Delta_* \mathcal{O}_X 
\end{equation}
\begin{equation}
(\id_Y \times f)_! (\id_Y \times f)^* \Delta_* \mathcal{O}_Y 
\rightarrow 
\Delta_* \mathcal{O}_Y 
\end{equation}
which are the unit and the counit of 
the corresponding functorial adjunctions. 
Then $(F_!, F^*, \epsilon, \mu)$ is a 2-categorical adjunction 
in $\fmcatweak$. 

\item Let $M \in D(X)$ be a perfect object. Let 
\begin{equation}
\id_X \xrightarrow{\epsilon} T_{M^\vee} \star T_M 
\end{equation}
\begin{equation}
T_M \star T_{M^\vee} \xrightarrow{\mu} \id_X  
\end{equation}
be the morphisms identified by the isomorphisms of 
the Key Lemma with the morphisms 
\begin{equation}
\Delta_* \mathcal{O}_X 
\rightarrow 
\pi_2^* M^\vee \otimes \pi_2^* M \otimes \Delta_* \mathcal{O}_X
\end{equation}
\begin{equation}
\pi_2^* M \otimes \pi_2^* M^\vee \otimes \Delta_* \mathcal{O}_X
\rightarrow 
\Delta_* \mathcal{O}_X 
\end{equation}
which are the the unit and the counit of 
the functorial adjunction $\left((-) \otimes \pi_2^* M, (-) \otimes \pi_2^*
M^\vee)\right)$. Then $(T_M,T_{M^\vee}, \epsilon, \mu)$ 
is a 2-categorical adjunction in $\fmcatweak$. 
\end{enumerate}
\end{prps}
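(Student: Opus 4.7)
The plan is to verify, for each of the four cases, the two triangle (zig-zag) identities that define a $2$-categorical adjunction in $\fmcatweak$. All four cases follow the same template, so I would set up the argument once and then comment on the minor variations.

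First I would recall that a $2$-categorical adjunction $(L, R, \epsilon, \mu)$ between $1$-morphisms $L \in D_{qc}(X \times Y)$ and $R \in D_{qc}(Y \times X)$ in $\fmcatweak$ asks for two equalities of $2$-morphisms, namely that the composites
\[
L \xrightarrow{\epsilon \star \id_L} L \star R \star L \xrightarrow{\id_L \star \mu} L, \qquad R \xrightarrow{\id_R \star \epsilon} R \star L \star R \xrightarrow{\mu \star \id_R} R,
\]
modulo associators and unitors of the bicategory, equal the corresponding identity $2$-morphisms. The strategy is to translate each such composite, using the Key Lemma (Lemma \ref{lemma-pullbacks-and-pushforwards-of-Fourier-Mukai-kernels}) together with the definition of the unit/counit in the statement, into the analogous composite for the underlying functorial adjunction between derived functors on the product schemes, where the triangle identities hold tautologically.

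Concretely, for the first case I would proceed as follows. Applying the Key Lemma repeatedly, together with the fact that $(\id \times f)$-pullback and $(\id \times f)$-pushforward on products of $\Delta_*\mathcal{O}$ are themselves functorial, identifies the threefold composition $F^* \star F_* \star F^*$ with $(\id_Y \times f)^* (\id_Y \times f)_* (\id_Y \times f)^* \Delta_* \mathcal{O}_Y$, and similarly for $F_* \star F^* \star F_*$. Under these identifications, $\epsilon \star \id_{F^*}$ and $\id_{F^*} \star \mu$ go over, by the definition stated in the proposition, to the whiskerings $\eta (\id \times f)^* \Delta_* \mathcal{O}_Y$ and $(\id \times f)^* \Delta_* \mathcal{O}_Y \circ \mu'$ of the unit $\eta$ and counit $\mu'$ of the ordinary $((\id_Y \times f)^*, (\id_Y \times f)_*)$-adjunction. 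Hence the zig-zag composite equals the image under the Key Lemma isomorphism of the functorial zig-zag composite, which is the identity. The second triangle is symmetric. The other three cases follow verbatim, substituting the Key Lemma clause for $(-)^!$, $(-)_!$, or $T_{(-)}$ as appropriate and appealing to the functorial triangle identities for $(f_*,f^!)$, $(f_!,f^*)$, or $((-)\otimes M, (-)\otimes M^\vee)$; the perfectness and properness assumptions on $f$ in cases (2) and (3) are precisely what is needed for the Key Lemma clauses about $F^!$ and $F_!$ to apply, and for $M$ the analogous role is played by perfectness, which guarantees $T_{M^\vee}$ is honestly both adjoints of $T_M$.

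The main obstacle is coherence: one must check that the Key Lemma isomorphisms are compatible with the associator and unitor of $\fmcatweak$ in such a way that the horizontal compositions appearing in the zig-zag identities really do translate to the whiskerings of $\eta$ and $\mu'$ on the product scheme side. This reduces to checking that the chains of base-change, projection-formula and K\"unneth isomorphisms underlying the Key Lemma commute with the analogous chains used to define horizontal composition in $\fmcatweak$; these are the same standard diagrams that will be assembled systematically in \cite{AnnoLogvinenko-2CategoricalAdjunctionsForFourierMukaiKernels}, so I would simply invoke them once for each of the four cases.
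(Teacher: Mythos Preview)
Your proposal is correct and is essentially an elaboration of the paper's own proof, which simply reads ``One can verify this directly, or see \cite{AnnoLogvinenko-2CategoricalAdjunctionsForFourierMukaiKernels}.'' You have spelled out what that direct verification amounts to---reducing the bicategorical triangle identities via the Key Lemma to the ordinary functorial triangle identities on the product schemes---and correctly identified that the coherence checks are deferred to the cited companion paper.
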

\begin{proof}
Direct verification.  
\end{proof}

The unit and the counit of a 2-categorical adjunction are unique up to
a unique isomorphism. Throughout the rest of the paper we shall refer 
to the morphisms defined in Proposition 
\ref{prps-2-categorical-adjunctions-for-standard-kernels}
as \em the \rm units and \em the \rm counits of the $2$-categorical 
adjunctions $(F^*, F_*)$, $(F_*, F^!)$, and $(T_M, T_{M^\vee})$.  
Up to isomorphism, they admit the following descriptions which are
frequently more convenient for computational purposes:

\begin{prps}
\label{prps-simplifying-units-and-counit-for-standard-kernels}
Let $f\colon X \rightarrow Y$ be a map of separated schemes 
of finite type over a field.
\begin{enumerate}
\item 
\label{item-fm-kernel-for-f^*f_*-alt1}
There is an isomorphism 
\begin{equation}
\label{eqn-fm-kernel-for-f^*f_*-alt1}
F^* \star F_* \simeq (\id_X \times f)^* (\id,f)_* \mathcal{O}_X
\end{equation}
which identifies the adjunction counit 
$F^* \star F_* \xrightarrow{\mu} \id_X$ with 
the morphism 
\begin{equation}
\label{eqn-fm-f^*f_*-counit-alt1}
(\id_X \times f)^* (\id,f)_* \mathcal{O}_X \rightarrow  \Delta_* \mathcal{O}_X
\end{equation}
which is the base change map for the commutative square:
	\begin{equation}
	\label{eqn-X-and-XxY-fiber-square}
		\begin{tikzcd}[column sep={1cm}, row sep={1cm}]
			X 
			\ar[equals]{r}
			\ar{d}{\Delta}
			&
			X
			\ar{d}{(\id,f)}
			\\
			X \times X
			\ar{r}{\id_X \times f}
			&
			X \times Y. 
		\end{tikzcd}
\end{equation}

\item 
\label{item-fm-kernel-for-f^*f_*-alt2}
There is an isomorphism 
\begin{equation}
\label{eqn-fm-kernel-for-f^*f_*-alt2}
F^* \star F_* \simeq (\id_X \times f)^* (f \times \id_Y)^* 
\Delta_* \mathcal{O}_Y
\end{equation}
which identifies the adjunction counit 
$F^* \star F_* \xrightarrow{\mu} \id_X$ with the composition 
\begin{equation}
\label{eqn-fm-f^*f_*-counit-alt2}
(\id_X \times f)^* (f \times \id_Y)^* \Delta_* \mathcal{O}_Y
\xrightarrow{\sim}
(\id_X \times f)^* (\id,f)_* \mathcal{O}_X
\rightarrow 
i_* \mathcal{O}_{X \times_Y X}
\rightarrow 
\Delta_* \mathcal{O}_X
\end{equation}
where the first map is the base change isomorphism for the
$\tor$-independent fiber square at the bottom of
the following commutative diagram:
\begin{equation}
\label{eqn-Y-and-XxX-fiber-square}
\begin{tikzcd}[column sep={1cm}, row sep={1cm}]
X 
\ar{r}{\Delta}
&
X \times_Y X
\ar{r}{i}
\ar{d}{\pi_1}
&
X \times X
\ar{d}{\id_X \times f}
\\
&
X 
\ar{r}{(\id,f)}
\ar{d}{f}
&
X \times Y 
\ar{d}{f \times \id_Y}
\\
&
Y
\ar{r}{\Delta}
&
Y \times Y,
\end{tikzcd}
\end{equation}
the second map is the base change map for its top fiber square, 
and the third map is the natural restriction of sheaves which 
is the image under $i_*$ of the 
adjunction unit 
$\mathcal{O}_{X \times_Y X} \rightarrow \Delta_* \Delta^* 
\mathcal{O}_{X \times_Y X}$. 
 
\item 
\label{item-fm-kernel-for-f_*f^*-alt1}
We have an isomorphism 
\begin{equation}
\label{eqn-fm-kernel-for-f_*f^*-alt1}
F_* \star F^* \simeq \Delta_* f_* \mathcal{O}_X
\end{equation}
which identifies the adjunction unit $\id_Y \xrightarrow{\epsilon} 
F_* \star F^*$ with the morphism 
\begin{equation}
\label{eqn-fm-f^*f_*-unit-alt1}
\Delta_* \mathcal{O}_Y \rightarrow \Delta_* f_* \mathcal{O}_X
		\end{equation}
which is the image under $\Delta_*$ of the adjunction unit for $(f^*, f_*)$. 

\item 
\label{item-fm-kernel-for-f_*f^!-alt1}
If $f$ is perfect and proper, we have an isomorphism
\begin{equation}
\label{eqn-fm-kernel-for-f_*f^!-alt1}
    F_* \star F^! \simeq \Delta_* f_* f^! \mathcal{O}_Y
\end{equation}
which identifies the adjunction counit $F_* \star F^! \xrightarrow{\mu} \id_Y$ 
with the morphism 
\begin{equation}
\label{eqn-fm-f_*f^!-counit-alt1}
\Delta_* f_* f^! \mathcal{O}_Y \rightarrow \Delta_* \mathcal{O}_Y
\end{equation}
which is the image under $\Delta_*$ of the adjunction counit for $(f_*, f^!)$. 
		
\item 
\label{item-fm-kernel-for-f^!f_*-alt1}
If $f$ is perfect and proper, we have an isomorphism
\begin{equation}
\label{eqn-fm-kernel-for-f^!f_*-alt1}
F^! \star F_* \simeq (\id_X \times f)^!  (\id,f)_* \mathcal{O}_X 
\end{equation}
which identifies the adjunction unit $\id_X \xrightarrow{\epsilon} 
F^! \star F_*$ 
with the morphism
\begin{equation}
\label{eqn-fm-f_*f^!-unit-alt1}
  \Delta_* \mathcal{O}_X \rightarrow (\id_X \times f)^! (\id,f)_* \mathcal{O}_X
\end{equation}
which is the twisted base change map 
$\Delta_* \id^! \rightarrow (\id_X \times f)^! (\id, f)_*$
for the commutative square \eqref{eqn-X-and-XxY-fiber-square}.

\item 
\label{item-fm-kernel-for-f^!f_*-alt2}
If $f$ is perfect and proper, we have an isomorphism
\begin{equation}
\label{eqn-fm-kernel-for-f^!f_*-alt2}
F^! \star F_* \simeq (\id_X \times f)^! (f \times \id_Y)^* \Delta_* 
\mathcal{O}_Y
\end{equation}
which identifies the adjunction unit $\id_X \xrightarrow{\epsilon} 
F^! \star F_*$ with the composition
\begin{equation}
\label{eqn-fm-f^!f_*-unit-alt2}
\Delta_* \mathcal{O}_X 
\rightarrow 
i_* \pi_1^! \mathcal{O}_X
\rightarrow 
(\id_X \times f)^! (\id,f)_* \mathcal{O}_X
\xrightarrow{\sim}
(\id_X \times f)^! (f \times \id_Y)^* \Delta_* \mathcal{O}_Y
\end{equation}
where the first map is the image under $i_*$ of the adjunction counit 
$ \Delta_* \Delta^! \pi_1^! \mathcal{O}_X 
\rightarrow \pi_1^!  \mathcal{O}_X, $
the second map is the twisted base change map for the top fiber square
in \eqref{eqn-Y-and-XxX-fiber-square}, and the third map is the base
change isomorphism for the $\tor$-independent bottom fiber square
in \eqref{eqn-Y-and-XxX-fiber-square}. 
\end{enumerate}
\end{prps}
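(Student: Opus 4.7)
The plan is to treat each of the six items by the same two-step strategy. First, use the Key Lemma (Lemma~\ref{lemma-pullbacks-and-pushforwards-of-Fourier-Mukai-kernels}) together with the identifications $F_* \simeq (\id,f)_* \mathcal{O}_X$ and $F^* \simeq (f,\id)_* \mathcal{O}_X$ (which follow from the relations $(\id_X \times f) \circ \Delta = (\id,f)$ and $(\id_Y \times f) \circ (f,\id) = \Delta \circ f$ together with base change) to rewrite the Fourier--Mukai composition on the left as a pullback or pushforward of a single kernel supported on a graph-like subscheme. Second, identify the unit or counit inherited from Proposition~\ref{prps-2-categorical-adjunctions-for-standard-kernels} with the stated base change or restriction morphism, by unravelling the chain of projection-formula and base-change coherences.

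For item~\ref{item-fm-kernel-for-f^*f_*-alt1} the rewriting is immediate: $(\id_X \times f)^*(\id_X \times f)_* \Delta_* \mathcal{O}_X = (\id_X \times f)^* (\id, f)_* \mathcal{O}_X$. The counit of Proposition~\ref{prps-2-categorical-adjunctions-for-standard-kernels} is, by its very definition, induced from the $((\id_X \times f)^*, (\id_X \times f)_*)$-counit applied to $\Delta_* \mathcal{O}_X$, which under this identification is precisely the base-change map for the square~\eqref{eqn-X-and-XxY-fiber-square}. Items~\ref{item-fm-kernel-for-f_*f^*-alt1} and~\ref{item-fm-kernel-for-f_*f^!-alt1} are entirely analogous and slightly simpler, reducing directly to $\Delta_*$ applied to the unit or counit of $(f^*, f_*)$ or $(f_*, f^!)$.

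Items~\ref{item-fm-kernel-for-f^*f_*-alt2} and~\ref{item-fm-kernel-for-f^!f_*-alt2} involve more bookkeeping; here the plan is to factor their stated descriptions through the shorter ones in items~\ref{item-fm-kernel-for-f^*f_*-alt1} and~\ref{item-fm-kernel-for-f^!f_*-alt1}. The key input is the $\tor$-independence of the bottom fibre square of~\eqref{eqn-Y-and-XxX-fiber-square}, which identifies $(f \times \id_Y)^* \Delta_* \mathcal{O}_Y$ with $(\id,f)_* \mathcal{O}_X$ (in the twisted variant one additionally invokes the projection-formula isomorphism $f^* \otimes f^! \mathcal{O}_Y \simeq f^!$). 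Each of the three stated arrows then acquires a clean meaning: the first is this base-change isomorphism, the second factors through $i_* \mathcal{O}_{X \times_Y X}$ via (twisted) base change around the top fibre square of~\eqref{eqn-Y-and-XxX-fiber-square}, and the third is the natural sheaf-restriction morphism, which is $i_*$ applied to the $(\Delta^*, \Delta_*)$-unit or the $(\Delta_*, \Delta^!)$-counit respectively.

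The main obstacle will be the twisted item~\ref{item-fm-kernel-for-f^!f_*-alt2}: one must combine the $(\Delta_*, \Delta^!)$-counit applied to $\pi_1^! \mathcal{O}_X$ with both twisted and untwisted base changes in the commutative diagram~\eqref{eqn-Y-and-XxX-fiber-square}, and verify that the resulting composition equals the 2-categorical adjunction unit $\id_X \to F^! \star F_*$ of Proposition~\ref{prps-2-categorical-adjunctions-for-standard-kernels}. This is a coherence statement for twisted base change in the Grothendieck six-functor formalism, most cleanly performed either by an explicit diagram chase using the compatibilities catalogued in~\cite{Lipman-NotesOnDerivedFunctorsAndGrothendieckDuality}, or by deferring to the systematic treatment promised in~\cite{AnnoLogvinenko-2CategoricalAdjunctionsForFourierMukaiKernels}.
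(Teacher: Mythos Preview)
Your proposal is correct and matches the paper's approach. The paper only proves item~\ref{item-fm-kernel-for-f^*f_*-alt1} in detail, claiming the rest are similar; its argument is exactly your two-step strategy (Key Lemma rewriting followed by unravelling the Proposition~\ref{prps-2-categorical-adjunctions-for-standard-kernels} counit as a base change map), with the only added precision being an explicit check that the base change map for~\eqref{eqn-X-and-XxY-fiber-square} and the adjunction counit are both adjoint to the same pseudofunctoriality isomorphism $(\id,f)_* \simeq (\id_X \times f)_* \Delta_*$.
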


\begin{proof}
We only prove the assertion \eqref{item-fm-kernel-for-f^*f_*-alt1}, 
as the proofs of the remaining assertions are similar. 

Define \eqref{eqn-fm-kernel-for-f^*f_*-alt1} be the composition of 
the isomorphism of the Key Lemma
(Lemma \ref{lemma-pullbacks-and-pushforwards-of-Fourier-Mukai-kernels})
$$ 
F^* \star F_* 
\simeq  
(\id_X \times f)^* (\id_X \times f)_* \Delta_* \mathcal{O}_X
$$
with the image under $(\id_X \times f)^*$ of the 
pseudofunctoriality isomorphism 
\begin{equation}
\label{eqn-pseudofunctoriality-isomoprhism-id,f-to-id-times-f-circ-Delta}
(\id,f)_* \mathcal{O}_X 
\xrightarrow{\sim}
(\id_X \times f)_* \Delta_* \mathcal{O}_X. 
\end{equation}
due to $(\id,f) = (\id_X \times f)_* \Delta_*$. 
Then, by its definition, the adjunction counit 
$F^* \star F_* \xrightarrow{\mu} \id_X$
gets identified by \eqref{eqn-fm-kernel-for-f^*f_*-alt1}  
with the composition 
$$ (\id_X \times f)^* (\id,f)_* \mathcal{O}_X
\xrightarrow{
\eqref{eqn-pseudofunctoriality-isomoprhism-id,f-to-id-times-f-circ-Delta}
}
(\id_X \times f)^* (\id_X \times f)_* \Delta_* \mathcal{O}_X
\rightarrow 
\Delta_* \mathcal{O}_X, $$
where the second morphism is the adjunction counit for 
$ \left( (\id_X \times f)^*, (\id_X \times f)_* \right)$. Thus, 
under this adjunction, the composition above is the morphism adjoint to the
pseudofunctoriality isomorphism 
\eqref{eqn-pseudofunctoriality-isomoprhism-id,f-to-id-times-f-circ-Delta}. 

On the other hand, by its definition the base change map for 
\eqref{eqn-X-and-XxY-fiber-square} is the morphism adjoint to 
the composition 
$$ (\id,f)_* \mathcal{O}_X  
\xrightarrow{\sim} (\id,f)_* \id_{X *} \id_X^* \mathcal{O}_X
\rightarrow 
(\id_X \times f)_* \Delta_* \id_X^* \mathcal{O}_X. 
$$
Given that $\id_{X *} = \id_X^* = \id_{D(X)}$, this is tautologically
equal to the pseudofunctoriality isomorphism 
\eqref{eqn-pseudofunctoriality-isomoprhism-id,f-to-id-times-f-circ-Delta}
above. We conclude that the morphism identified by 
\eqref{eqn-fm-kernel-for-f^*f_*-alt1} with the  
 adjunction counit $F^* \star F_* \xrightarrow{\mu} \id_X$
equals the base change isomorphism for 
\eqref{eqn-X-and-XxY-fiber-square}, as desired. 
\end{proof}

\subsubsection{Exterior algebra, short exact sequences, and dualities}

Let $X$ be a scheme and $\mathcal{V}$ a locally free sheaf 
of rank $n$ on $X$. Denote  
the product in the exterior algebra $\wedge^* \mathcal{V}$ of
$\mathcal{V}$ by
\begin{equation}
w\colon \wedge^* \mathcal{V} \otimes \wedge^* \mathcal{V}
\rightarrow \wedge^* \mathcal{V}. 
\end{equation}
By abuse of notation, we also denote by $w$ 
each of its surjective components
\begin{align*}
w\colon \wedge^i \mathcal{V} \otimes \wedge^j \mathcal{V} 
& \twoheadrightarrow 
\wedge^{i + j} \mathcal{V} \quad \quad \quad i,j \in \mathbb{Z}, 
\\
\left( v_1 \wedge \dots \wedge v_i \right) \otimes 
\left( w_1 \wedge \dots \wedge w_j \right) 
& \rightarrow 
v_1 \wedge \dots \wedge v_i \wedge w_1 \wedge \dots \wedge w_j. 
\end{align*}
These are sometimes referred to as the ``wedging maps''. 

We have the canonical isomorphism 
$$ (\wedge^i \mathcal{V})^\vee \simeq \wedge^i \mathcal{V}^\vee 
\quad\quad\quad i \in \mathbb{Z}
$$
and hence we have two canonical evaluation maps 
$$ \ev\colon \wedge^i \mathcal{V} \otimes \wedge^j \mathcal{V}^\vee \rightarrow \wedge^{i-j} \mathcal{V}, $$
$$ \ev\colon \wedge^i \mathcal{V} \otimes \wedge^j \mathcal{V}^\vee
\rightarrow \wedge^{j-i} \mathcal{V}^\vee. $$
The former map is an isomorphism for $i = n$ and the latter for $j =
n$, giving rise to dualities.

Suppose now that we have a short exact sequence 
\begin{equation}
0 \rightarrow \mathcal{P} 
\overset{\iota}{\hookrightarrow} \mathcal{V}
\overset{\pi}{\twoheadrightarrow} \mathcal{Q} 
\rightarrow 0
\end{equation}
of locally free sheaves on $X$. Let $p$ and $q$ be the ranks of 
$\mathcal{P}$ and $\mathcal{Q}$. 

\begin{prps}
\label{prps-useful-maps-for-exterior-algebra}
\begin{enumerate}
\item For any $i \in \mathbb{Z}$ the composition 
$$ \wedge^p \mathcal{P} \otimes \wedge^i \mathcal{V} 
\xrightarrow{\wedge^p{\iota} \otimes \id} 
\wedge^p \mathcal{V} \otimes \wedge^i \mathcal{V}
\xrightarrow{w} \wedge^{i+p} \mathcal{V} $$
filters through the natural projection 
$$ \wedge^p \mathcal{P} \otimes \wedge^i \mathcal{V}
\overset{\id \otimes \wedge^i \pi}{\twoheadrightarrow} 
\wedge^p \mathcal{P} \otimes \wedge^i \mathcal{Q} $$
and thus defines a canonical map 
\begin{equation}
\wedge^p \mathcal{P} \otimes \wedge^i \mathcal{Q} 
\rightarrow \wedge^{i+p} \mathcal{V}. 
\end{equation}
For $i = q$ this gives the canonical determinantal isomorphism 
\begin{equation}
\label{eqn-canonical-determinantal-isomorphism}
\wedge^p \mathcal{P} \otimes \wedge^q \mathcal{Q}
\xrightarrow{\sim} \wedge^{n} \mathcal{V}. 
\end{equation}

\item For any $i \in \mathbb{Z}$ the composition 
$$ \wedge^i \mathcal{V} \otimes \wedge^q \mathcal{Q}^\vee 
\xrightarrow{\id \otimes \wedge^q \pi^\vee}
\wedge^i \mathcal{V} \otimes \wedge^q \mathcal{V}^\vee 
\xrightarrow{\ev} \wedge^{i-q} \mathcal{V} $$
filters through the inclusion 
$ \wedge^{i-q} \mathcal{P} \hookrightarrow \wedge^{i-q} \mathcal{V} $
giving rise to the canonical projection 
\begin{equation}
\label{eqn-wedging-with-the-top-power-of-the-quotient}
\wedge^i \mathcal{V} \otimes \wedge^q \mathcal{Q}^\vee 
\twoheadrightarrow 
\wedge^{i-q} \mathcal{P}. 
\end{equation}
For $i = p$ this gives the line bundle isomorphism 
\begin{align*}
\wedge^n \mathcal{V} \otimes \wedge^q \mathcal{Q}^\vee
\xrightarrow{\sim} \wedge^p \mathcal{P}
\end{align*}
which is adjoint to the inverse 
\begin{equation}
\label{eqn-the-inverse-of-canonical-determinantal-isomorphism}
\wedge^n \mathcal{V} \xrightarrow{\sim} 
\wedge^q \mathcal{Q} \otimes \wedge^p \mathcal{P}
\end{equation}
of the canonical isomorphism 
\eqref{eqn-canonical-determinantal-isomorphism}. 

\item 
\label{item-wedging-with-the-top-power-of-the-quotion-dual-to-the-inclusion}
For any $i \in \mathbb{Z}$ the following diagram commutes 
\begin{equation}
\label{eqn-duality-for-wedging-with-the-top-power-of-the-quotient}
\begin{tikzcd}[column sep={1cm}, row sep={1cm}]
    \wedge^i \mathcal{V} \otimes \wedge^n \mathcal{V}^\vee  
    \ar{r}{\ev}[']{\sim}
    \ar{d}{\id \otimes \eqref{eqn-the-inverse-of-canonical-determinantal-isomorphism}^\vee}[']{\sim}
    &
    \wedge^{n-i} \mathcal{V}^\vee 
    \ar[two heads]{dd}{\wedge^{n-i} \iota^\vee}
    \\  
    \wedge^i \mathcal{V} \otimes \wedge^q \mathcal{Q}^\vee \otimes \wedge^p \mathcal{P}^\vee 
    \ar{d}[two heads]{\eqref{eqn-wedging-with-the-top-power-of-the-quotient} \otimes \id }
    & 
    \\
    \wedge^{i-q} \mathcal{P} \otimes \wedge^p \mathcal{P}^\vee 
    \ar{r}{\ev}[']{\sim}
    &
    \wedge^{n-i} \mathcal{P}^{\vee}.
\end{tikzcd}
\end{equation}
\end{enumerate}
\end{prps}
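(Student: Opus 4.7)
The plan is to reduce all three assertions to a purely local computation on $X$. Each part is a statement about morphisms of locally free sheaves of finite rank: a factorization of a sheaf map through a subsheaf or quotient, an isomorphism of line bundles, and the commutativity of a diagram of sheaf maps. All such properties are local on $X$, so it suffices to check them after passing to a sufficiently small affine open $U\subseteq X$. On such a $U$ the short exact sequence $0\to\mathcal{P}\to\mathcal{V}\to\mathcal{Q}\to 0$ is a sequence of free $\mathcal{O}(U)$-modules of finite rank and therefore splits. This yields $\mathcal{V}|_U \simeq \mathcal{P}|_U\oplus\mathcal{Q}|_U$ together with the induced decomposition $\wedge^i\mathcal{V}|_U \simeq \bigoplus_{a+b=i}\wedge^a\mathcal{P}|_U\otimes\wedge^b\mathcal{Q}|_U$ and the analogous one for $\wedge^i\mathcal{V}^\vee$. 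Throughout, I would work on such a $U$ and suppress the restriction from the notation.

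For part (1), the kernel of $\wedge^i\pi$ is, in the split picture, the sum of summands with $a\geq 1$. On each of these, wedging with a local section of $\wedge^p\mathcal{P}$ lands in $\wedge^{\geq p+1}\mathcal{P}\otimes\wedge^b\mathcal{Q}=0$, so the composition $\wedge^p\mathcal{P}\otimes\wedge^i\mathcal{V}\to\wedge^{i+p}\mathcal{V}$ annihilates this kernel and factors through $\wedge^p\mathcal{P}\otimes\wedge^i\mathcal{Q}$; for $i=q$ the resulting map is visibly the identification of the two expressions $\wedge^p\mathcal{P}\otimes\wedge^q\mathcal{Q}$ and $\wedge^n\mathcal{V}$ of the top exterior power. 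For part (2), the image of $\wedge^q\pi^\vee$ inside $\wedge^q\mathcal{V}^\vee$ consists of forms vanishing on any tensor with a $\mathcal{P}$-factor; contraction of a section $\alpha_1\wedge\alpha_2\in\wedge^i\mathcal{V}$ with $\alpha_1\in\wedge^a\mathcal{P}$ and $\alpha_2\in\wedge^b\mathcal{Q}$ against such a form is therefore zero unless $b=q$, and lies in $\wedge^a\mathcal{P}=\wedge^{i-q}\mathcal{P}$ in that case. This establishes the factorization \eqref{eqn-wedging-with-the-top-power-of-the-quotient}, and for $i=p$ a direct evaluation on local generators identifies the resulting line bundle map with the adjoint of the inverse determinantal isomorphism \eqref{eqn-the-inverse-of-canonical-determinantal-isomorphism}.

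For part (3), I would pick local bases $e_1,\ldots,e_p$ of $\mathcal{P}$ and $f_1,\ldots,f_q$ of $\mathcal{Q}$ with top-wedge generators $v$, $w$ and $vw=v\wedge w$ of $\wedge^p\mathcal{P}$, $\wedge^q\mathcal{Q}$ and $\wedge^n\mathcal{V}$, and verify commutativity on an element $(\alpha_1\wedge\alpha_2)\otimes(vw)^\ast$ with $\alpha_1\in\wedge^a\mathcal{P}$ and $\alpha_2\in\wedge^b\mathcal{Q}$. Both paths vanish unless $b=q$, in which case both produce the element of $\wedge^{n-i}\mathcal{P}^\vee$ whose value on a local generator $\delta$ of $\wedge^{n-i}\mathcal{P}$ is the coefficient of $vw$ in $\alpha_1\wedge\alpha_2\wedge\delta$. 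The only genuine obstacle is bookkeeping the Koszul signs introduced by permuting $\mathcal{P}$- and $\mathcal{Q}$-factors: the sign on the top path (coming from the evaluation into $\wedge^{n-i}\mathcal{V}^\vee$ and from $\wedge^{n-i}\iota^\vee$) must match the sign on the bottom-right path (coming from the dual of the swap $\wedge^n\mathcal{V}\simeq\wedge^q\mathcal{Q}\otimes\wedge^p\mathcal{P}$ together with the two evaluations). Checking that these agree is routine but slightly tedious, and is where essentially all the content of part (3) lies.
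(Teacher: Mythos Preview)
Your proposal is correct and follows exactly the paper's approach: the paper's proof is the single sentence ``All the assertions can be checked locally where they follow by choosing appropriate bases,'' and you have simply written out what that local check actually entails. Your treatment is more detailed than the paper's, but the strategy---localise, split the sequence, use the resulting direct sum decomposition of $\wedge^\bullet\mathcal{V}$, and do the sign bookkeeping for part (3)---is identical.
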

\begin{proof}
All the assertions can be checked locally where they follow
by choosing appropriate bases.  
\end{proof}

\subsubsection{The excess bundle formula} 

Let $Z$ be a smooth algebraic variety and let $X$ and $Y$ be locally complete
intersection subvarieties of $Z$ whose intersection $W = X \cap Y$
is a locally complete subvariety of $X$ and $Y$. Consider the fiber square
\begin{equation}
\label{eqn-excess-bundle-formula-fiber-square}
\begin{tikzcd}[column sep={1cm}, row sep={1cm}]
    W 
    \ar{r}{q}
    \ar{d}{p}
    \ar{dr}{h}
    &
    Y
    \ar{d}{j}
    \\
    X
    \ar{r}{i}
    &
    Z,
\end{tikzcd}
\end{equation}
where $i,j,p,q$, and $h$ are the corresponding regular immersions. 

\begin{defn}
The \em excess bundle $\mathcal{E}_W$ \rm of the intersection of $X$
and $Y$ in $Z$ is the locally free sheaf on $W$ which is the cokernel 
of the following natural injection:
\begin{equation}
\label{eqn-excess-bundle} 
\mathcal{N}_{W/Z} \rightarrow 
p^* \mathcal{N}_{X/Z} \oplus q^* \mathcal{N}_{Y/Z}. 
\end{equation}
\end{defn}

The following is a well-known result which appears e.g. in 
\cite{Scala-NotesOnDiagonalsOfTheProductAndSymmetricVarietyOfASurface}
or
\cite{ArinkinCaldararuHablicsek-FormalityOfDerivedIntersectionsAndTheOrbifoldHKRIsomorphism}:
\begin{theorem}[The Excess Bundle Formula]
The cohomology sheaves of the object $i^* j_* \mathcal{O}_Y \in D(X)$ 
are
\begin{equation}
\label{eqn-excess-bundle-formula}
H^{-m}(i^* j_* \mathcal{O}_Y) \simeq p_* (\wedge^m
\mathcal{E}_W^\vee). 
\end{equation}
\end{theorem}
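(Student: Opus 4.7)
The plan is to compute $Li^*Rj_*\mathcal{O}_Y$ by resolving $j_*\mathcal{O}_Y$ locally on $Z$ by a Koszul complex and pulling back. The statement is local on $Z$, so I may assume there exists a locally free sheaf $\tilde{\mathcal{N}}^\vee$ on $Z$ restricting to $\mathcal{N}_{Y/Z}^\vee$ on $Y$, together with a section $s\colon \mathcal{O}_Z \to \tilde{\mathcal{N}}$ cutting $Y$ out regularly. The Koszul complex
$$K^\bullet := \Bigl[ \wedge^k \tilde{\mathcal{N}}^\vee \to \cdots \to \tilde{\mathcal{N}}^\vee \to \mathcal{O}_Z \Bigr], \qquad k = \rank \mathcal{N}_{Y/Z},$$
is then a locally free resolution of $j_*\mathcal{O}_Y$, so $Li^*j_*\mathcal{O}_Y$ is represented by $i^*K^\bullet$, which is itself the Koszul complex on $X$ of the restricted section $i^*s$. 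Since $\mathcal{I}_W = \mathcal{I}_X + \mathcal{I}_Y$, its vanishing locus is exactly $W$.

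The next step is to identify the excess kernel on $W$. The canonical surjection $\tilde{\mathcal{N}}^\vee \twoheadrightarrow \mathcal{I}_Y$, pulled back to $W$ and reduced modulo $\mathcal{I}_{W/X}^2$, gives a surjection $q^*\mathcal{N}_{Y/Z}^\vee \twoheadrightarrow \mathcal{N}_{W/X}^\vee$. A snake lemma chase on the normal sequence $0 \to \mathcal{N}_{W/X} \to \mathcal{N}_{W/Z} \to p^*\mathcal{N}_{X/Z} \to 0$ and the defining sequence \eqref{eqn-excess-bundle} of $\mathcal{E}_W$ canonically identifies the kernel with $\mathcal{E}_W^\vee$, yielding
$$0 \to \mathcal{E}_W^\vee \to q^*\mathcal{N}_{Y/Z}^\vee \to \mathcal{N}_{W/X}^\vee \to 0.$$

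The heart of the argument is then the standard Koszul-with-excess computation. Shrinking further, I would lift the dual splitting $i^*\tilde{\mathcal{N}}|_W \simeq \mathcal{N}_{W/X} \oplus \mathcal{E}_W$ to a decomposition $i^*\tilde{\mathcal{N}} \simeq \mathcal{V}_1 \oplus \mathcal{V}_2$ on a neighbourhood of $W$ in $X$, splitting $i^*s$ as $s_1 + s_2$. Then $s_1$ cuts out $W$ regularly, so $K(s_1)$ is a locally free resolution of $\mathcal{O}_W$, while $s_2|_W = 0$. Hence
$$i^*K^\bullet \simeq K(s_1) \otimes K(s_2) \simeq \mathcal{O}_W \otimes^L K(s_2) \simeq p_*\bigl(K(s_2)|_W\bigr),$$
and the restriction $K(s_2)|_W$ has zero differentials and equals $\wedge^\bullet \mathcal{E}_W^\vee$. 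Taking cohomology sheaves gives the desired isomorphism locally; because the identification of the excess kernel in the previous step was canonical, these local isomorphisms glue to a global one.

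The main obstacle is verifying that the Koszul-with-excess isomorphism is independent of the chosen local splitting $\mathcal{V}^\vee|_W \simeq \mathcal{N}_{W/X}^\vee \oplus \mathcal{E}_W^\vee$, so that the gluing is unambiguous. Two such splittings differ by an element of $\homm(\mathcal{N}_{W/X}^\vee, \mathcal{E}_W^\vee)$; since this element preserves the canonical subbundle $\mathcal{E}_W^\vee \subset q^*\mathcal{N}_{Y/Z}^\vee$ and acts trivially on the quotient, its induced action on $\wedge^m \mathcal{E}_W^\vee$ is the identity, which closes the argument.
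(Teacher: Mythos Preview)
The paper does not prove this theorem; it is stated as a well-known result with citations to Scala and Arinkin--C\u{a}ld\u{a}raru--Hablicsek. Your local Koszul argument is the standard proof and is correct. The paper implicitly endorses exactly this approach: in the proof of the subsequent generalisation (Proposition~\ref{prps-excess-bundle-formula-generalised}), parts \eqref{item-excess-bundle-base-change-for-i^*j_*-F} and \eqref{item-excess-bundle-twisted-base-change-for-i^!j_*-F} are handled by reducing to the case of global complete intersections and then carrying out a Koszul complex computation in which the object splits as the direct sum of its cohomologies --- precisely the mechanism you describe.
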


We need some generalisations of it:

\begin{prps}
\label{prps-excess-bundle-formula-generalised}
Let $\mathcal{F} \in \cohcat(Y)$ be such that $q^* \mathcal{F} \in D(W)$ 
is concentrated in the degree zero. If $q^* \mathcal{F} \simeq 0$, 
then $i^* j_* \mathcal{F} \simeq i^! j_* \mathcal{F} \simeq 0$. 
Otherwise:
\begin{enumerate}
\item  
\label{item-excess-bundle-cohomology-sheaves-of-i^*j_*-F}
We have an isomorphism functorial in $\mathcal{F}$:
\begin{equation}
\label{eqn-excess-bundle-formula-generalised}
H^{-m}(i^* j_* \mathcal{F}) \simeq 
p_* (q^* \mathcal{F} \otimes \wedge^m \mathcal{E}_W^\vee). 
\end{equation}
Thus the non-zero cohomologies 
are in 
the degrees from $\codim_Z(W) - \codim_Z(X) - \codim_Z(Y)$ to $0$. 
\item 
\label{item-excess-bundle-base-change-for-i^*j_*-F}
The base change map 
\begin{equation}
\label{eqn-excess-bundle-base-change-for-i^*j_*-F}
i^* j_* \mathcal{F} \rightarrow p_* q^* \mathcal{F} 
\end{equation}
is the projection onto the rightmost (degree $0$) cohomology. 
\item  
\label{item-excess-bundle-cohomology-sheaves-of-i^!j_*-F}
We have an isomorphism functorial in $\mathcal{F}$:
\begin{equation}
\label{eqn-twisted-excess-bundle-formula-generalised}
H^{-m}(i^! j_* \mathcal{F}) \simeq 
p_* (q^* \mathcal{F} \otimes \wedge^{\codim_Z(X)+m} \mathcal{E}_W^\vee) \otimes
\omega_{X/Z}. 
\end{equation}
Thus the non-zero cohomologies 
are in the degrees from $\codim_W(Y)$ to $\codim_Z(X)$. 
\item 
\label{item-excess-bundle-twisted-base-change-for-i^!j_*-F}
The twisted base change map 
$$ p_* q^! \mathcal{F} \rightarrow i^! j_* \mathcal{F} $$
is the inclusion of the leftmost (degree $\codim_Y W$) cohomology. 
\end{enumerate}
\end{prps}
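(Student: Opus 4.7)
The plan is to compute $Li^* Rj_* \mathcal{F}$ locally by a Koszul resolution, then derive parts (3) and (4) via Grothendieck duality for the regular immersion $i$. Since all claims are local on $Z$, I would work in an affine neighbourhood where $X = V(f_1, \dots, f_a)$, $Y = V(g_1, \dots, g_b)$, and $W = V(f_1, \dots, f_a, g_1, \dots, g_b)$, with each of $f_\bullet$, $g_\bullet$, and their union a regular sequence (the last being a regular sequence of length $\codim_Z W$, reflecting that $W$ is lci in $Z$; this follows from the hypotheses that $W$ is lci in $Y$ and $Y$ is lci in $Z$).

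The Koszul complex $K(f_\bullet)$ is a resolution of $\mathcal{O}_X$ over $\mathcal{O}_Z$, so $Li^*j_*\mathcal{F}$ is computed locally by $K(\bar f_\bullet; \mathcal{F})$ on $Y$, where $\bar f_i := f_i|_Y$. These $\bar f_i$ generate $\mathcal{I}_W \subset \mathcal{O}_Y$ but are not a minimal generating set; by the lci hypothesis on $W \subset Y$ one may, after a local change of generators, arrange that $\bar f_1, \dots, \bar f_k$ with $k = \codim_Y W$ form a regular sequence cutting out $W$, while the remaining $\bar f_{k+1}, \dots, \bar f_a$ lie in $\mathcal{I}_W$ and represent, under the natural map, a basis of $\mathcal{E}_W^\vee$ via the defining exact sequence of the excess bundle. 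A further adjustment can be made so the residual generators annihilate $q^*\mathcal{F}$. The Koszul complex then factors as
\[ K(\bar f_\bullet; \mathcal{F}) \simeq K(\bar f_1, \dots, \bar f_k; \mathcal{F}) \otimes_{\mathcal{O}_Y} \wedge^\bullet \mathcal{O}_Y^{a-k}[\bullet], \]
where the first factor resolves $q_*q^*\mathcal{F}$ by the regularity of $\bar f_1, \dots, \bar f_k$ on $\mathcal{F}$ (itself a consequence of $q^*\mathcal{F}$ being concentrated in degree zero), and the second factor has zero differentials after tensoring with $q^*\mathcal{F}$. Identifying the second factor with $\wedge^\bullet \mathcal{E}_W^\vee$ and pushing forward by $p$ yields
\[ Li^* j_*\mathcal{F} \simeq \bigoplus_{m \geq 0} p_*(q^*\mathcal{F} \otimes \wedge^m \mathcal{E}_W^\vee)[m], \]
with zero differentials. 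Globalising this local identification --- which is the principal technical obstacle, requiring the naturality of the excess bundle and the patching of local generator choices --- yields (1), and the degenerate case $q^*\mathcal{F} \simeq 0$ is immediate.

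For (2), the base change map $i^* j_*\mathcal{F} \to p_* q^*\mathcal{F}$ corresponds in this Koszul picture to the projection of the complex onto its degree-$0$ piece $q^*\mathcal{F}$, and the above decomposition identifies this as the canonical projection onto the $m = 0$ summand, that is, the projection onto $H^0$.

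For (3) and (4), I would invoke Grothendieck duality for the regular closed immersion $i$: $i^! \simeq Li^*(-) \otimes \omega_{X/Z}[-\codim_Z X]$ with $\omega_{X/Z} = \det \mathcal{N}_{X/Z}$. Substituting the decomposition from (1) and reindexing $m \mapsto m + \codim_Z X$ produces the formula in (3) with the cohomological range $[\codim_W Y, \codim_Z X]$ as claimed. The twisted base change map in (4) is the Grothendieck dual of the base change map in (2), so it factors through the top summand $m = \rank \mathcal{E}_W$; the determinantal identity $\omega_{W/Y} \simeq p^*\omega_{X/Z} \otimes \det \mathcal{E}_W^\vee$ --- obtained from the defining exact sequence of $\mathcal{E}_W$ combined with the conormal sequence for $W \subset Y \subset Z$ --- identifies this summand with $p_* q^!\mathcal{F}$ placed in cohomological degree $\codim_W Y$, completing the proof.
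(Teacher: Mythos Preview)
Your proposal is essentially correct and follows a standard path, but the paper takes a genuinely different route for part~(1). You compute $Li^*Rj_*\mathcal{F}$ by a direct local Koszul manipulation with coefficients in $\mathcal{F}$, and then face --- as you acknowledge --- the problem of globalising the identification with $\wedge^\bullet\mathcal{E}_W^\vee$ and of checking functoriality. The paper instead works with the Fourier--Mukai kernel $I^*J_* \simeq (j\times i)^*\Delta_*\mathcal{O}_Z$ of the functor $i^*j_*$: it observes that this kernel is itself an excess intersection (of $Y\times X$ with the diagonal inside $Z\times Z$, with excess bundle canonically isomorphic to $\mathcal{E}_W$), applies the classical excess bundle formula once to obtain $H^{-m}(I^*J_*)\simeq (q,p)_*\wedge^m\mathcal{E}_W^\vee$ as global sheaves on $Y\times X$, and then recovers the statement for arbitrary $\mathcal{F}$ via the projection formula and a spectral sequence. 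This kernel trick eliminates the globalisation step entirely, and functoriality in $\mathcal{F}$ comes for free from the Fourier--Mukai formalism. One small caution about your write-up: the displayed splitting $Li^*j_*\mathcal{F}\simeq\bigoplus_m p_*(q^*\mathcal{F}\otimes\wedge^m\mathcal{E}_W^\vee)[m]$ is only a local statement; globally the complex need not split, and you should only be claiming the identification of cohomology sheaves.

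For parts (2)--(4) your approach and the paper's coincide. The paper also reduces (2) to a local Koszul computation (after first using the kernel trick to reduce to $\mathcal{F}=\mathcal{O}_Y$); it deduces (3) from $i^!\simeq i^*(-)\otimes\omega_{X/Z}[-\codim_Z X]$ exactly as you do; and it obtains (4) either by the analogous local argument or by relative Verdier duality from (2), which is the precise form of the duality you invoke.
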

\begin{proof}
\eqref{item-excess-bundle-cohomology-sheaves-of-i^*j_*-F}:

By Lemma \ref{lemma-pullbacks-and-pushforwards-of-Fourier-Mukai-kernels} 
the standard Fourier-Mukai kernel $I^* J_*$ of $i^* j_*$
is isomorphic to the object
$$ (j \times i)^* \Delta_* \mathcal{O}_Z \in D(Y \times X). $$ 
Consider the fiber square 
\begin{equation}
\label{eqn-excess-bundle-formula-FM-kernel-square}
\begin{tikzcd}[column sep={1cm}, row sep={1cm}]
    W 
    \ar{r}{h}
    \ar{d}{(q,p)}
    &
    Z
    \ar{d}{\Delta}
    \\
    Y \times X 
    \ar{r}{j \times i}
    &
    Z \times Z.
\end{tikzcd}
\end{equation}
The excess bundle $\mathcal{E}'_{W}$ 
of the intersection of $Y \times X$ and $Z$
inside $Z \times Z$ is the cokernel of the natural map 
\begin{equation} 
\label{eqn-excess-bundle-formula-FM-kernel-computation}
\mathcal{N}_{W/Z \times Z} \xrightarrow{q_1 \oplus q_2} 
h^* \mathcal{N}_{Z/Z \times Z}
\oplus (j \times i)^*  \mathcal{N}_{Y \times X / Z \times Z}.
\end{equation}
The map $q_1$ is a surjection whose kernel is $\mathcal{N}_{W/Z}
\subset \mathcal{N}_{W/Z \times Z}$. On the other hand, we can identify  
$\mathcal{N}_{Y \times X/Z \times Z}$ with $\pi_Y^* \mathcal{N}_{Y/Z} \oplus
\pi_X^* \mathcal{N}_{X/Z}$. Hence the cokernel of
\eqref{eqn-excess-bundle-formula-FM-kernel-computation} is isomorphic
to the cokernel of 
\eqref{eqn-excess-bundle}, i.e. $\mathcal{E}'_W \simeq \mathcal{E}_W$. 

Now, we have 
$$ i^* j_* F \simeq \pi_{X *} \left( I^*J_* \otimes \pi_{Y}^*
\mathcal{F} \right), $$
so by the excess bundle formula 
$$ H^{-m}(I^* J_*) \simeq (q,p)_* \wedge^m
\mathcal{E}^\vee_W.$$
By the projection formula we have 
\begin{equation}
\label{eqn-cohomology-sheaves-of-I^*J_*-times-F}
H^{-m}(I^* J_*) \otimes \pi_{Y}^* \mathcal{F} \simeq 
\mathcal{F} \simeq (q,p)_* \left( \wedge^m \mathcal{E}^\vee_W
\otimes (q,p)^* \pi_Y^* \mathcal{F}\right)
\simeq (q,p)_* \left(\wedge^m \mathcal{E}^\vee_W \otimes q^*
\mathcal{F}\right). 
\end{equation}
By assumption $q^* \mathcal{F}$ is concentrated in degree zero, and
$\wedge^m \mathcal{E}^\vee_W \otimes (-)$ and $(q,p)_*$ are 
exact. Thus \eqref{eqn-cohomology-sheaves-of-I^*J_*-times-F}
is also concentrated in degree $0$. It follows by a spectral sequence
argument that 
$$ H^{-m} \left( I^* J_* \otimes \pi_{Y}^* \mathcal{F} \right) 
\simeq (q,p)_* \left(\wedge^m \mathcal{E}^\vee_W \otimes q^*
\mathcal{F}\right).$$ 
Finally, since $p_*$ is exact the object 
$$ \pi_{X *} H^{-m} \left( I^* J_* \otimes \pi_{Y}^* \mathcal{F} \right)
\simeq \pi_{X *} (q,p)_* \left(\wedge^m \mathcal{E}^\vee_W \otimes q^*
\mathcal{F}\right) \simeq p_* \left(\wedge^m \mathcal{E}^\vee_W \otimes q^*
\mathcal{F}\right) $$
is again concentrated in degree zero. The isomorphism
\eqref{eqn-excess-bundle-formula-generalised} now 
follows by a similar spectral sequence argument. Its functoriality
is due to the functoriality of the spectral sequences involved. 

\eqref{item-excess-bundle-base-change-for-i^*j_*-F}:

It suffices to prove that the base change map 
\eqref{eqn-excess-bundle-base-change-for-i^*j_*-F}
induces an isomorphism on degree $0$ cohomology sheaves. 
A quick way to establish this is to observe that  
the induced map is just the base change map in $\cohcat(X)$ 
for the ordinary, non-derived functors. That is readily
seen to be an isomorphism.  

There is a more conceptual way which also works for 
\eqref{item-excess-bundle-twisted-base-change-for-i^!j_*-F}. 
Arguing via Fourier-Mukai kernels as in 
\eqref{item-excess-bundle-cohomology-sheaves-of-i^*j_*-F}
we reduce to the case $\mathcal{F} = \mathcal{O}_Y$. Then 
we observe that the question is local on $Z$. Firstly, being an
isomorphism on degree $0$ cohomologies is a local property of 
a morphism in $D(X)$. Secondly, base change maps commute with 
localisation to open subsets or local rings. Indeed, given an open subset 
$U \subset Z$ we can localise the whole fiber square 
\eqref{eqn-excess-bundle-formula-fiber-square} to it. 
The pullback to $X \cap U$ of the base change map 
\eqref{eqn-excess-bundle-base-change-for-i^*j_*-F}
for the original fiber square is then naturally isomorphic to 
the base change map of the localised square applied to $\mathcal{F}_U$. 

Thus it suffices to assume that $X$ and $Y$ are global 
complete intersections in $Z$ and $\mathcal{F} = \mathcal{O}_Y$. 
It is then possible to compute explicitly using Koszul complexes. 
Computing $i^* j_* \mathcal{O}_Y$ via the Koszul complex of 
$j_* \mathcal{O}_Y$ one readily sees that it splits up as 
the direct sum of its cohomologies and the base change map 
is the projection onto the degree zero summand. 

\eqref{item-excess-bundle-cohomology-sheaves-of-i^!j_*-F}:

This follows immediately from the isomorphism
$$ i^! \simeq i^* \otimes i^! \mathcal{O}_Z \simeq i^* \otimes
\omega_{X/Z}[-\codim_Z X]. $$

\eqref{item-excess-bundle-twisted-base-change-for-i^!j_*-F}:

The proof is analogous to that 
of \eqref{item-excess-bundle-base-change-for-i^*j_*-F}: 
reduction to the global complete intersection case, and then 
a Koszul complex computation. Alternatively, the desired 
statement can be obtained 
from \eqref{item-excess-bundle-base-change-for-i^*j_*-F}
via the relative Verdier duality over $Z$ since we have 
$$ i^! j_* \simeq \mathcal{D}_{X/Z} i^* j_* \mathcal{D}_{Y/Z} $$
and the dualisation exchanges adjunction units and adjunction 
counits. See
\cite[\S2.1]{AnnoLogvinenko-OrthogonallySphericalObjectsAndSphericalFibrations}
for more details.  
\end{proof}

Suppose now we further have a locally complete intersection subvariety
$Y' \subset Y$ which carves out a complete intersection 
subvariety $W' \subset W$. We then have the following pair of fiber
squares, all of whose maps are regular immersions:

\begin{equation}
\label{eqn-excess-bundle-formula-FM-kernel-square-pair}
\begin{tikzcd}[column sep={1cm}, row sep={1cm}]
    W' 
    \ar{dr}{g}
    \ar{r}{r}
    \ar{d}{s}
    &
    Y' 
    \ar{d}{k}
    \\
    W 
    \ar{r}{q}
    \ar{d}{p}
    \ar{dr}{h}
    &
    Y
    \ar{d}{j}
    \\
    X
    \ar{r}{i}
    &
    Z.
\end{tikzcd}
\end{equation}

Let $\mathcal{E}_W$ be as before and let $\mathcal{E}_W'$ be the
excess bundle of the intersection of $Y'$ and $W$ at $W'$. We 
then have the following $3 \times 3$ commutative diagram in
$\cohcat(W')$ whose rows and columns are short exact sequences:

\begin{equation}
\label{eqn-two-excess-bundles-3x3-diagram}
\begin{tikzcd}[column sep={1cm}, row sep={1cm}]
    \mathcal{N}_{W'/W} 
    \ar[hook]{d}
    \ar[hook]{r}
    &
    0 \oplus r^* \mathcal{N}_{Y'/Y} 
    \ar[two heads]{r}
    \ar[hook]{d}
    &
    \mathcal{K}
    \ar[hook]{d}
    \\
    \mathcal{N}_{W'/Z}
    \ar[hook]{r}
    \ar[two heads]{d}
    &
    s^* p^* \mathcal{N}_{X/Z} \oplus r^* \mathcal{N}_{Y'/Z} 
    \ar[two heads]{r}
    \ar[two heads]{d}
    &
    \mathcal{E}_{W'}
    \ar[two heads]{d}
    \\
    s^* \mathcal{N}_{W/Z}
    \ar[hook]{r}
    &
    s^* p^* \mathcal{N}_{X/Z} \oplus g^* \mathcal{N}_{Y/Z} 
    \ar[two heads]{r}
    &
    s^* \mathcal{E}_{W},
\end{tikzcd}
\end{equation}
where the sheaf $\mathcal{K}$ is the cokernel of the natural 
inclusion $\mathcal{N}_{W'/W} \hookrightarrow r^* \mathcal{N}_{Y'/Y}$.

\begin{prps}
\label{prps-excess-bundle-formula-double-intersection}
Let $\mathcal{F} \in \cohcat(Y)$ be such that $q^* \mathcal{F} \in
D(W)$, $k^* \mathcal{F} \in D(Y')$ and $g^* \mathcal{F} \in D(W')$
are all concentrated in the degree zero.
\begin{enumerate}
\item 
\label{item-excess-bundle-double-intersection-straight}
Let 
\begin{equation}
\label{eqn-excess-bundle-double-intersection-straight}
i^* j_* \mathcal{F} \rightarrow i^* j_* k_* k^* \mathcal{F} 
\end{equation} 
be the map given by the adjunction unit $\id_Y \rightarrow k_* k^*$.
In other words, by the sheaf restriction 
$\mathcal{F} \twoheadrightarrow \mathcal{F}_{Y'}$. The induced
map on degree $-m$ cohomology sheaves is the map
\begin{equation}
\label{eqn-excess-bundle-double-intersection-twisted}
p_* \left(\wedge^{m} \mathcal{E}_W^\vee \otimes q^* \mathcal{F} \right)
\rightarrow  
p_* \left(s_* \wedge^{m} \mathcal{E}_{W'}^\vee \otimes q^* \mathcal{F} \right) 
\end{equation}
induced by the natural projection $\mathcal{E}_{W'} \rightarrow s^*
\mathcal{E}_W$ of 
\eqref{eqn-two-excess-bundles-3x3-diagram}. 
\item
\label{item-excess-bundle-double-intersection-twisted}
 Let $d = \codim_Y Y'$ and let 
\begin{equation}
i^* j_* k_* k^! \mathcal{F} \rightarrow i^* j_* \mathcal{F}   
\end{equation} 
be the map given by the adjunction counit $k_* k^! \rightarrow \id_Y$.
The induced map on degree $-m$ cohomology sheaves is the map
\begin{equation}
p_* \left(s_*\left( \wedge^{m+d} \mathcal{E}_{W'}^\vee \otimes 
r^* \wedge^d \mathcal{N}_{Y'/Y}\right) \otimes q^* \mathcal{F} \right)
\rightarrow  
p_* \left(s_* \wedge^{m} \mathcal{E}_{W}^\vee \otimes q^* \mathcal{F} \right) 
\end{equation}
induced by the composition 
$$ \wedge^{m+d} \mathcal{E}_{W'}^\vee \otimes 
r^* \wedge^d \mathcal{N}_{Y'/Y} \rightarrow 
\wedge^{m+d} \mathcal{E}_{W'}^\vee \otimes \wedge^d \mathcal{K}
\rightarrow s^* \wedge^m \mathcal{E}_{W}^\vee $$
where the first map is induced by the natural projection 
$r^* \mathcal{N}_{Y'/Y} \twoheadrightarrow \mathcal{K}$
and the second map is induced by the  
rightmost column of \eqref{eqn-two-excess-bundles-3x3-diagram}. 
\end{enumerate}
\end{prps}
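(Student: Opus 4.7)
The plan is to parallel the proof of Proposition \ref{prps-excess-bundle-formula-generalised}: reduce via functoriality in $\mathcal{F}$ and localisation on $Z$ to an explicit Koszul complex computation in the global complete intersection case.

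More precisely, both assertions are functorial in $\mathcal{F}$, and the arguments of Proposition \ref{prps-excess-bundle-formula-generalised}(i)--(ii) show that each side is local on $Z$ and compatible with restriction to open subsets. Translating into Fourier-Mukai kernels on $Z\times Z$ via Lemma \ref{lemma-pullbacks-and-pushforwards-of-Fourier-Mukai-kernels} and using the projection formula for $\mathcal{F}$, we reduce to checking the statement for $\mathcal{F} = \mathcal{O}_Y$ after passing to an affine open of $Z$ in which $X$, $Y$, $Y'$ are each cut out by regular sequences. In this setting $j_* \mathcal{O}_Y$, $(jk)_* \mathcal{O}_{Y'}$, and $i_* \mathcal{O}_X$ all admit explicit Koszul resolutions, and so do the objects obtained by applying $i^*$ or $i^!$ to them; the cohomology sheaves of the resulting complexes split off as specific exterior powers of the relevant conormal/excess bundles, recovering \eqref{eqn-excess-bundle-formula-generalised} in a concrete way.

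For part (1), pseudofunctoriality $j_* k_* = (jk)_*$ identifies the unit-induced morphism with the natural map $j_* \mathcal{O}_Y \to (jk)_* \mathcal{O}_{Y'}$. At the Koszul level this map comes from extending the regular sequence cutting out $Y$ in $Z$ by the one cutting out $Y'$ in $Y$. Pulling back by $i^*$ and passing to cohomology, the induced map on exterior algebra summands is identified, via the bottom row of the $3\times 3$ diagram \eqref{eqn-two-excess-bundles-3x3-diagram}, with the wedge map coming from the natural surjection $\mathcal{E}_{W'} \twoheadrightarrow s^* \mathcal{E}_W$, as required. Part (2) is handled in the same spirit: using the standard formula $k^! \mathcal{O}_Y \simeq \det \mathcal{N}_{Y'/Y}[-d]$ for a regular immersion of codimension $d$, the counit $k_* k^! \to \id$ at the Koszul level is a shift-and-quotient map. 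Its restriction to exterior powers of conormal bundles, combined with wedging by the top power of $\mathcal{N}_{Y'/Y}$ as in Proposition \ref{prps-useful-maps-for-exterior-algebra}(ii), produces precisely the composition described in the statement, with the intermediate $\wedge^d \mathcal{K}$ arising from the top row of \eqref{eqn-two-excess-bundles-3x3-diagram}.

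The main obstacle will be sign and duality bookkeeping in part (2): one must carefully coordinate the canonical determinantal isomorphisms \eqref{eqn-canonical-determinantal-isomorphism} applied to the rows and columns of \eqref{eqn-two-excess-bundles-3x3-diagram}, together with the duality of Proposition \ref{prps-useful-maps-for-exterior-algebra}(iii), to verify that the map produced by the Koszul computation matches the stated composition exactly. A cleaner alternative, which I expect will in fact work, is to deduce part (2) from part (1) by relative Verdier duality over $Z$: the Verdier dualisation functor over $Z$ interchanges $i^*$ with $i^!$ and the unit $\id \to k_* k^*$ with the counit $k_* k^! \to \id$, in analogy with how Proposition \ref{prps-excess-bundle-formula-generalised}(iv) was deduced from (ii).
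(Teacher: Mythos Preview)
Your proposal is correct and follows essentially the same route as the paper: a Fourier-Mukai reduction to $\mathcal{F}=\mathcal{O}_Y$, localisation on $Z$ to the global complete intersection case, and then an explicit Koszul computation in which both sides split as sums of their cohomologies. The paper treats both parts directly by this method rather than deducing (2) from (1) via Verdier duality, but your suggested alternative is reasonable and parallels the analogous remark in Proposition~\ref{prps-excess-bundle-formula-generalised}\eqref{item-excess-bundle-twisted-base-change-for-i^!j_*-F}.
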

\begin{proof}
Both assertions are proved analogously to the proof of Proposition 
\ref{prps-excess-bundle-formula-generalised}
\eqref{item-excess-bundle-base-change-for-i^*j_*-F}: we use
a Fourier-Mukai argument to reduce to the case $\mathcal{F} = \mathcal{O}_X$
and a locality argument to reduce to the case of global complete
intersections. Then it is a straightforward Koszul complex
computation with both sides of 
\eqref{eqn-excess-bundle-double-intersection-straight}
and 
\eqref{eqn-excess-bundle-double-intersection-twisted}
splitting up as sums of their cohomologies. 
\end{proof}

\subsubsection{Splitting trick} 
\label{section-splitting-trick}

Let $X$ and $Y$ be schemes over a base scheme $S$. 
The following result allows us, in certain circumstances, to deduce
that the cohomology sheaves of an external tensor product 
$M \boxtimes_S N$ split up as the direct sums of the external 
tensor products of the cohomology sheaves of $M$ and $N$: 

\begin{lemma}
\label{lemma-splitting-trick}
Let $X$ and $Y$ be schemes over a base scheme $S$. 
Let $X \times_S Y$ be the fiber product and denote by 
$(-) \boxtimes (-)$ the external tensor product 
$\pi_1^*(-) \otimes \pi_2^*(-)$ on $X \times_S Y$. 

Let $M \in D_{qc}(X)$ be such that all its cohomology sheaves $H^i(M)$
are flat over $S$. Then for all $N \in D_{qc}(Y)$ and all 
$r \in \mathbb{Z}$ we have a natural isomorphism in $\qcohcat(X \times Y)$:
\begin{equation}
H^r(M \boxtimes N) \simeq \bigoplus_{p+q = r} H^p(M) \boxtimes H^q(N).
\end{equation}
\end{lemma}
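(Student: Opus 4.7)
The assertion is local on all of $X$, $Y$, and $S$, so I first reduce to the affine case: $S = \spec(A)$, $X = \spec(B)$, $Y = \spec(C)$, whence $X \times_S Y = \spec(B \otimes_A C)$ and $M \boxtimes N$ is computed by the derived tensor product $M \ldertimes_A N$, viewed as a complex of $B \otimes_A C$-modules. The claim becomes: given a complex $M$ of $B$-modules with each $H^i(M)$ flat over $A$, and any complex $N$ of $C$-modules, there is a natural isomorphism
$$H^r(M \ldertimes_A N) \simeq \bigoplus_{p+q=r} H^p(M) \otimes_A H^q(N).$$

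The key tool is the standard K\"unneth spectral sequence
$$E_2^{p,q} = \bigoplus_{i+j=q} \tor^A_{-p}\bigl(H^i M,\; H^j N\bigr) \Longrightarrow H^{p+q}(M \ldertimes_A N),$$
obtained from a Cartan--Eilenberg resolution of $M$ over $A$ and the associated double-complex spectral sequence. The flatness of each $H^i(M)$ over $A$ forces $\tor^A_j(H^i M,\,-) = 0$ for $j > 0$, so $E_2^{p,q} = 0$ whenever $p < 0$. The spectral sequence therefore collapses onto its $p = 0$ row, and reading off the abutment yields the desired decomposition. Naturality in $N$ is built into the construction. The local isomorphisms on affine pieces of $X \times_S Y$ are compatible with restriction to smaller opens, so they glue to an isomorphism of quasi-coherent sheaves on $X \times_S Y$.

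The main obstacle lies in the convergence of the K\"unneth spectral sequence for unbounded $M$ and $N$ in $D_{qc}$. For bounded $M$ it is automatic, which suffices for the applications later in the paper. In general, one exploits the flatness of the $H^i(M)$ to produce a K-flat-over-$A$ resolution $P \to M$ whose terms are $A$-flat $B$-modules, built by inductively resolving the truncations $\tau_{\leq n} M$ using that their cohomologies are already $A$-flat. Then $M \ldertimes_A N$ is computed by the ordinary total tensor product $P \otimes_A N$, and the double-complex spectral sequence applied to $P \otimes_A N$ produces the claimed decomposition directly: flatness of each $P^i$ over $A$ yields $H^p(P^i \otimes_A N) = P^i \otimes_A H^p(N)$, and a second application of flatness (this time of $H^*(P) = H^*(M)$) collapses the resulting spectral sequence onto a single row, giving the canonical, functorial isomorphism.
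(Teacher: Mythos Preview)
Your first-paragraph argument is correct and takes a genuinely different route from the paper's. The paper works directly with the bicomplex $M_p \boxtimes N_q$ built from bounded-above flat resolutions of both $M$ and $N$; its column-filtration spectral sequence has $E_2^{p,q} = H^p(M) \boxtimes H^q(N)$, spread over the whole grid. The paper then argues that this sequence \emph{degenerates} at $E_2$ (every class lifts to a genuine cycle in the bicomplex) and finally invokes the K\"unneth map to split the resulting filtration on $H^r(M \boxtimes N)$. Your hyper-Tor spectral sequence instead carries the Tor terms on the $E_2$ page; flatness of each $H^i(M)$ kills everything off the $p=0$ column, so the sequence collapses outright and neither the degeneration nor the filtration-splitting step is needed. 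Your approach is cleaner at the end, at the cost of invoking the heavier Cartan--Eilenberg / hyper-Tor machinery.

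There is a gap in your second paragraph, however. The double-complex spectral sequence you run on $P \otimes_A N$ has, after the two flatness steps you describe, $E_2^{i,p} = H^i(M) \otimes_A H^p(N)$: flatness of $H^*(P) = H^*(M)$ is precisely what lets you pass from $H^i(P^\bullet \otimes_A H^p(N))$ to $H^i(M) \otimes_A H^p(N)$, but it does \emph{not} concentrate this on a single row. You land exactly on the paper's $E_2$ page, and you still owe the degeneration and filtration-splitting arguments. So your fallback for unbounded complexes is incomplete as written; to close it you would need either the paper's ``genuine cycles'' observation together with the K\"unneth-map splitting, or the EGA reference the paper defers to.
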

\begin{proof}
This is a typical instance of the K{\"u}nneth map
\begin{equation}
\label{eqn-kunneth-map-for-Tor-of-sheaves} 
\bigoplus_{p+q = r} H^p(M) \boxtimes H^q(N)
\rightarrow H^r(M \boxtimes N) 
\end{equation}
being an isomorphism. For a very general exposition, 
covering this particular case, 
see \cite{Grothendieck-EGA-III-2}, $\S6$ and specifically \em 
Proposition \rm 6.7.7. 

For the benefit of the reader, we give a brief sketch of the argument. 
Let $M_\bullet$ and $N_\bullet$ be resolutions of $M$ and $N$ by
bounded above complexes of flat sheaves. We then have a bicomplex 
$$ C_{p,q} = M_p \boxtimes N_q $$ 
and there is a standard spectral sequence, associated 
to the filtration of its totalisation $\tot C_{\bullet, \bullet}$ 
by columns, see e.g.  
\cite[\S{III.7.5}]{GelfandManin-MethodsOfHomologicalAlgebra}. 
It has 
$$ E^{p,q}_0 = C_{p,q} = M_p \boxtimes N_q, $$
$$ E^{p,q}_1 = H^p (C_{\bullet,q}) \simeq H^p(M) \boxtimes N_q, $$ 
$$ E^{p,q}_2 = H_*^p (H_\bullet^q (C_{*,\bullet})) \simeq 
H^q(H^p(M) \boxtimes N_\bullet) \simeq H^p(M) \boxtimes H^q(N), $$ 
with the last isomorphism due to $H^p(M)$ being flat over $S$. 
Now since $E^{p,q}_2 \simeq H^p(M) \boxtimes H^q(N)$
any class in it can be represented by a genuine cycle, and not just 
$2$-almost cycle. That is, by an element of $M_p \boxtimes N_q$ which 
is killed by the differential of the bicomplex. Therefore 
all the differentials on the page $E_2$ die and the spectral sequence 
degenerates. Since the bicomplex is bounded above in both directions, 
the spectral sequence converges to its total cohomology. 
We thus have a filtration on $H^{r}(M \boxtimes N)$ 
whose assocated graded object is $\bigoplus_{p+q=r}
H^p(M) \boxtimes H^q(N)$. The K{\"u}nneth maps splits this filtration, 
and is therefore an isomorphism. 
\end{proof}

\section{$\mathbb{P}$-twists}
\label{section-ptwists}

In this section, given an enhanced functor between enhanced
triangulated categories equipped with \em $\mathbb{P}$-twist \rm 
data, we construct an endomorphism of the target category 
called the \em $\mathbb{P}$-twist\rm.
 
\subsection{Generalities}
\label{section-ptwists-generalities}

Below we fix some notation to be used throughout 
the section. Let $\A$ and $\B$ be two DG-categories. 
Let $D(\A) \xrightarrow{f} D(\B)$ be a continuous functor, that is ---
it commutes with infinite direct sums. Assume that $f$ has 
continuous left and right adjoints $D(\B) \xrightarrow{l,r} D(\A)$. 
Fix an enhancement $M \in D(\AbimB)$ of $f$. To be more precise, 
for the computations on DG level we fix a choice of a specific 
bimodule $M \in \AmodbarB$. All our constructions, however, 
depend only on the isomorphism class of $M$ in $D(\AbimB)$. 

As $l$ and $r$ exist and are continuous, by 
\cite[Prop. 5.2]{AnnoLogvinenko-BarCategoryOfModulesAndHomotopyAdjunctionForTensorFunctors} 
$M$ is $\A$- and $\B$-perfect and $M^\barA$ and $M^\barB$ are 
DG-enhancements of $l$ and $r$. We therefore denote $M$, $M^\barA$, 
and $M^\barB$ by $F$, $L$, and $R$, respectively. 

Since the tensor product of bimodules enhances the composition of 
the corresponding functors we adopt from now on 
the ``functorial'' notation detailed in
\cite[\S5]{AnnoLogvinenko-BarCategoryOfModulesAndHomotopyAdjunctionForTensorFunctors}.
To sum it up briefly, we suppress the tensor signs and write 
bimodules in the order of the composed functors. For example, 
we write
\begin{align*}
FR \quad\quad\text{ for }\quad\quad  M^{\barB} \bartimes_\A M \in \BmodbarB,  
\\
LF \quad\quad\text{ for }\quad\quad  M \bartimes_\B M^{\barA} \in \AmodbarA.
\end{align*}
Moreover, we denote
the diagonal bimodules $\A$ and $\B$ by $\id_\A$ and $\id_\B$, 
or by $\id$ where no confusion is possible. For further subtleties
concerning diagonal bimodules in this notation see 
\cite[note after Defn.~4.6]{AnnoLogvinenko-BarCategoryOfModulesAndHomotopyAdjunctionForTensorFunctors}. 

\begin{defn}
\label{defn-spherical-twists-and-spherical-cotwists}
\begin{itemize}
\item The \em (spherical) twist \rm $T$ and the \em (spherical) dual
twist \rm $T'$ of $F$ are the convolutions in $\BmodbarB$
of the following twisted complexes in $\pretriag(\BbarB)$:
\begin{align*}
FR \xrightarrow{\trace} \underset{\degzero}{\id_\B}, \\
\underset{\degzero}{\id_\B} \xrightarrow{\action} FL.
\end{align*}
\item the \em (spherical) cotwist \rm $C$ and
the \em dual cotwist \rm of $F$ are the convolutions in $\AmodbarA$
of the following twisted complexes in $\pretriag(\AbarA)$:
\begin{align*}
\underset{\degzero}{\id_\A} \xrightarrow{\action} RF, \\
LF \xrightarrow{\trace} \underset{\degzero}{\id_\A}. \\
\end{align*}
\end{itemize}
\end{defn}

As explained in \cite[\S3.3]{AnnoLogvinenko-SphericalDGFunctors} we have 
the following natural exact triangles in 
$D(\BbimB)$ and $D(\AbimA)$ 
\begin{equation}
\label{eqn-twist-exact-triangle-DG}
\begin{tikzcd}
FR  
\ar{r}{\trace} 
& \id_\B
\ar{r} 
&
T,
\ar[dotted,bend left=20]{ll}
\end{tikzcd}
\end{equation}
\begin{equation}
\label{eqn-dual-twist-exact-triangle-DG}
\begin{tikzcd}
T'
\ar{r}
& \id_\B
\ar{r}{\action}
&
FL,
\ar[dotted,bend left=20]{ll}
\end{tikzcd}
\end{equation}
\begin{equation}
\label{eqn-cotwist-exact-triangle-DG}
\begin{tikzcd}
C
\ar{r}
& \id_\A
\ar{r}{\action}
&
RF,
\ar[dotted,bend left=20]{ll}
\end{tikzcd}
\end{equation}
\begin{equation}
\label{eqn-dual-cotwist-exact-triangle-DG}
\begin{tikzcd}
LF
\ar{r}{\trace} 
& \id_\A
\ar{r} 
&
C'.
\ar[dotted,bend left=20]{ll}
\end{tikzcd}
\end{equation}

Finally, recall that by 
\cite[Theorem 4.1]{AnnoLogvinenko-BarCategoryOfModulesAndHomotopyAdjunctionForTensorFunctors}
there is a twisted complex in $\pretriag(\BbarB)$:

\begin{equation}
\label{eqn-FR-FRFR-FR-Id-twisted-complex}
\begin{tikzcd}[column sep={3cm},row sep={1.5cm}] 
FR
\ar{r}{F{\action}R}
\ar[bend left=20, dashed]{rr}{\xi'_\B}
& 
FRFR
\ar{r}{FR\trace - \trace FR}
&
FR
\ar{r}{\trace}
&
\underset{\degzero}{\B}.
\end{tikzcd}
\end{equation}

\subsection{$\mathbb{P}$-twist data}

Conceptually, the $\mathbb{P}$-twist data for 
$D(\A) \xrightarrow{f} D(\B)$ is an $f$-split coextension $q_1$ of 
$\id_\A$ by an autoequivalence $h \in \autm D(\A)$ together 
with a natural transformation $q_1 \xrightarrow{\gamma} rf$
which intertwines the natural map $\id_\A \rightarrow q_1$ 
with the adjunction unit $\id_\A \rightarrow rf$. Out of this data
we extract a morphism $fhr \xrightarrow{\psi} fr$ and then 
define the $\mathbb{P}$-twist to be the unique convolution 
\cite{AnnoLogvinenko-OnUniquenessOfPTwists}
of the three step complex 
\begin{align*}
fhr \xrightarrow{\psi} fr \xrightarrow{\trace} \id. 
\end{align*}
Aposteriori, the morphism $\psi$ is the only data needed to 
construct the $\mathbb{P}$-twist of $F$. However, defined 
as above the $\mathbb{P}$-twist data is
a natural subset of the data of a $\mathbb{P}^n$-functor, cf. 
\ref{section-pn-functor-data}. 
Conversely, we conjecture in \S\ref{section-introduction}
that up to an isomorphism each 
$\mathbb{P}^n$-functor data is, in fact, uniquely 
determined by its $\mathbb{P}$-twist data.  

As usual, in practice we need to work with DG-enhancements of 
these functors in $D(\AbimA)$:

\begin{defn}
A \em $\mathbb{P}$-twist data $(H, Q_1, \gamma)$ for $F \in D(\AbimB)$ is:
\begin{enumerate}
\item $H \in D(\AbimA)$ such that $h = (-) \ldertimes_\A H$ is 
an autoequivalence of $D(\A)$. 
\item An $F$-split coextension $Q_1 \in D(\AbimA)$ of $\id$ by $H$. 
\item A morphism $Q_1 \xrightarrow{\gamma} RF$ in $D(\AbimA)$
which intertwines 
$\id_\A \xrightarrow{\action} RF$ and 
$\id_\A \xrightarrow{\iota} Q_1$. 
\end{enumerate}
\end{defn}
Here by $F$-split we mean that $FQ_1$ is a split coextension of 
$F$ by $FH$.

To construct the $\mathbb{P}$-twist of $F$ from the $\mathbb{P}$-twist data, 
we need to fix a choice of splitting of $FQ_1R$. We first show that
the resulting morphism $\psi$ doesn't depend on the choice of the
splitting:

\begin{lemma}
\label{lemma-psi-is-independent-of-the-choice-of-phi}
Let $\phi\colon FHR \rightarrow FQ_1R$ be any splitting of the
exact triangle 
\begin{align}
\label{eqn-FQ1R-exact-triangle}
FR \xrightarrow{F \iota R} FQ_1R \xrightarrow{F \mu_1 R} FHR
\end{align}
and define $\psi\colon FHR \rightarrow FR$ to be the composition
\begin{align}
\label{eqn-defn-of-psi}
FHR \xrightarrow{\phi} FQ_1R \xrightarrow{F \gamma R} FRFR
\xrightarrow{FR{\trace} - {\trace}FR} FR.
\end{align}
The map $\psi$ is independent of the choice of $\phi$. 
\end{lemma}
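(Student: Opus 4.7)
The plan is to show that for any two splittings $\phi_1, \phi_2$ of the exact triangle \eqref{eqn-FQ1R-exact-triangle}, the resulting maps $\psi_1, \psi_2\colon FHR \to FR$ built by \eqref{eqn-defn-of-psi} satisfy $\psi_1 - \psi_2 = 0$. Since the construction is additive in $\phi$, this amounts to showing that $(FR\trace - \trace FR) \circ F\gamma R \circ (\phi_1 - \phi_2) = 0$.

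First, both $\phi_1$ and $\phi_2$ are right inverses of $F\mu_1 R$, so their difference lies in the kernel of $F\mu_1 R$. By exactness of the triangle
$$ FR \xrightarrow{F\iota R} FQ_1 R \xrightarrow{F\mu_1 R} FHR, $$
the difference $\phi_1 - \phi_2$ factors through $F\iota R$, giving some $\delta \in \homm_{D(\BbimB)}(FHR, FR)$ with $\phi_1 - \phi_2 = F\iota R \circ \delta$. Thus it suffices to prove the single identity
$$ (FR\trace - \trace FR) \circ F\gamma R \circ F\iota R = 0. $$

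Next, I would use the defining property of $\gamma$ from the $\mathbb{P}$-twist data: the composition $\gamma \circ \iota$ equals the adjunction unit $\action\colon \id_\A \to RF$. Applying $F(-)R$, we get $F\gamma R \circ F\iota R = F\action R$. Consequently the identity reduces to
$$ (FR\trace - \trace FR) \circ F\action R = 0, $$
which is an immediate consequence of the two triangle identities for the adjunction $(F, R)$: each of $FR\trace \circ F\action R$ and $\trace FR \circ F\action R$ equals $\id_{FR}$, and their difference vanishes. Composing with $\delta$ on the right gives the desired conclusion, so $\psi$ is independent of the choice of splitting $\phi$.

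The only conceptual subtlety is that the factorization $\phi_1 - \phi_2 = F\iota R \circ \delta$ lives in the derived category $D(\BbimB)$, whereas the map $\psi$ is being defined at the DG level; but since the statement of the lemma is about $\psi$ as a morphism (i.e., up to homotopy), passing through the triangulated homotopy category is harmless, and the standard enhanced-functor formalism of \cite{AnnoLogvinenko-BarCategoryOfModulesAndHomotopyAdjunctionForTensorFunctors} guarantees that the triangle identities and the intertwining property of $\gamma$ hold at this level. No real obstacle is expected.
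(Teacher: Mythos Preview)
Your proof is correct and follows essentially the same route as the paper: factor the difference of two splittings through $F\iota R$, use $\gamma \circ \iota = \action$ to reduce to $(FR\trace - \trace FR)\circ F\action R$, and conclude via the triangle identities. The closing remark about DG versus derived level is unnecessary here, since $\psi$ is being defined as a morphism in $D(\BbimB)$ from the outset, but it does no harm.
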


\begin{proof}
Let $\delta \colon FHR \rightarrow F Q_1 R$ be the difference of 
any two splittings of \eqref{eqn-FQ1R-exact-triangle}. Then 
$\delta$ composes to $0$ with 
$F \mu_1 R \colon FQ_1R \rightarrow FHR, $ 
and therefore filters through 
$ F \iota R\colon FR \rightarrow FQ_1R$.

It now suffices to show that the composition 
$$
FR \xrightarrow{F \iota R} FQ_1R \xrightarrow{F \gamma R} FRFR
\xrightarrow{FR{\trace} - {\trace}FR} FR
$$
is zero. But by the definition of $\mathbb{P}$-twist data
we have $F \gamma R \circ F \iota R = F \action R$, and
$$ FR{\trace} \circ F{\action} R = {\trace}FR \circ F{\action} = \id_\B $$
since $R$ is the right adjoint of $F$.  
\end{proof}

It was shown in \cite[Theorem 3.2]{AnnoLogvinenko-OnUniquenessOfPTwists}
that for any $G \in D(\BbimA)$ and any natural
transformation $FG \rightarrow FR$ all convolutions of the three term 
complex $FG \rightarrow FR \xrightarrow{\action} \id_B$ are
isomorphic. We therefore define:

\begin{defn}
Let $F \in D(\AbimB)$ and let $(H,Q_1,\gamma)$ be a $\mathbb{P}$-twist
data for $F$. Then the \em $\mathbb{P}$-twist \rm $P_F$ of $F$ is 
the unique convolution of the three-term complex
\begin{align}
\label{eqn-defn-P-twist}
FHR \xrightarrow{\psi} FR \xrightarrow{\action} \underset{\degzero}{\B}.
\end{align}
\end{defn}

\subsection{DG construction of $\mathbb{P}$-twists}
\label{section-P-twists-via-DG}

Any coextension $Q_1$ of $\id_\A$ by $H$ is realised on the DG-level as the
convolution of a twisted complex 
\begin{equation}
\label{eqn-DG-lift-of-Q1}
H[-1] \xrightarrow{\sigma_1} \underset{\degzero}{\A}
\end{equation}
over $\AmodbarA$ for some closed morphism 
$\sigma_1\colon H \rightarrow \A$ of degree $1$, 
cf. \S\ref{section-repeated-extensions}. 
The class of $\sigma_1$ in $\ext^1_{D(\AbimA)}(H,\id_\A)$ determines 
$Q_1$ up to isomorphism. The condition that $Q_1$ is $F$-split
is equivalent to $F \sigma_1 R$ being zero in 
$\ext^1_{D(\BbimB)}(FHR,FR)$, i.e. 
to $F \sigma_1 R$ being a boundary in $\BmodbarB$. 

Since the convolution functor is a quasi-equivalence, any morphism 
$\gamma\colon Q_1 \rightarrow RF$ in $D(\AbimA)$ lifts to a closed
degree zero map of twisted complexes 
\begin{equation}
\begin{tikzcd}[column sep={2cm},row sep={1cm}] 
\label{eqn-DG-lift-of-gamma-in-Q_1-case}
H[-1]
\ar{r}{\sigma_1}
\ar{dr}{\gamma_1}
&
\underset{\degzero}{\A}
\ar{d}{\gamma_0}
\\
&
\underset{\degzero}{RF}.
\end{tikzcd}
\end{equation}
On the other hand, 
as per \S\ref{section-repeated-extensions}, 
the natural map $\id_\A \xrightarrow{\iota} Q_1$ is 
the convolution of the twisted complex map 
\begin{equation}
\begin{tikzcd}[column sep={2cm},row sep={1cm}] 
\label{eqn-DG-lift-of-iota1}
&
\underset{\degzero}{\A}
\ar[equals]{d}
\\
H[-1]
\ar{r}{\sigma_1}
&
\underset{\degzero}{\A}. 
\end{tikzcd}
\end{equation}
The condition of $\gamma$ intertwining $\iota$
and the adjunction unit 
$\id_\A \xrightarrow{\action} RF$ is therefore equivalent to there
existing a lift $(\gamma_1, \gamma_0)$ of $\gamma$ with $\gamma_0 =
\action$. The if implication is clear, and for the converse observe 
that the composition of an arbitrary lift of $\gamma$ with
\eqref{eqn-DG-lift-of-iota1} is the map $\A \xrightarrow{\gamma_0} FR$. 
If $\gamma \circ \iota = \action$ in $D(\AbimA)$, then in $\AmodbarA$
we have $\gamma_0 = \action + d (\delta)$ for some degree $-1$ map 
$\delta\colon\A \rightarrow RF$. Subtracting from \eqref{eqn-DG-lift-of-gamma-in-Q_1-case} 
the boundary of the map 
\begin{equation}
\begin{tikzcd}[column sep={2cm},row sep={1cm}] 
H[-1]
\ar{r}{\sigma_1}
&
\underset{\degzero}{\A}
\ar{d}{\delta}
\\
&
\underset{\degzero}{RF}
\end{tikzcd}
\end{equation}
we obtain the desired lift of $\gamma$:
\begin{equation}
\begin{tikzcd}[column sep={2cm},row sep={1cm}] 
\label{eqn-DG-lift-of-gamma-in-Q_1-case-with-gamma0-equals-action}
H[-1]
\ar{r}{\sigma_1}
\ar{dr}{\gamma_1 - \delta \circ \sigma_1}
&
\underset{\degzero}{\A}
\ar{d}{\action}
\\
&
\underset{\degzero}{RF}.
\end{tikzcd}
\end{equation}

We therefore define:
\begin{defn}
A \em DG $\mathbb{P}$-twist data $(H, \sigma_1, \gamma_1)$ for $F$ is:
\begin{enumerate}
\item $H \in \AmodbarA$ such that $h = (-) \ldertimes_\A H$ is 
an autoequivalence of $D(\A)$. 
\item A closed degree $1$ map $\sigma_1\colon H \rightarrow \A$
in $\AmodbarA$ with $F \sigma_1 R$ a boundary. 
\item A degree $1$ map $\gamma_1\colon H \rightarrow RF$ in 
$\AmodbarA$ with $d \gamma_1 = \action \circ \sigma_1$. 
\end{enumerate}

The induced $\mathbb{P}$-twist data $(H,Q_1, \gamma)$ in $D(\AbimA)$
is then given by $H$, the convolution of the twisted complex
\begin{equation}
\begin{tikzcd}[column sep={2cm},row sep={1cm}] 
H[-1]
\ar{r}{\sigma_1}
&
\underset{\degzero}{\A}
\end{tikzcd}
\end{equation}
and the convolution of the twisted complex map 
\begin{equation}
\begin{tikzcd}[column sep={2cm},row sep={1cm}] 
H[-1]
\ar{r}{\sigma_1}
\ar{dr}{\gamma_1}
&
\underset{\rm \degzero}{\A}
\ar{d}{\action}
\\
&
\underset{\rm \degzero}{RF}.
\end{tikzcd}
\end{equation}
\end{defn}

We then have on the DG level the following direct formula for 
the $\mathbb{P}$-twist of $F$:
\begin{prps}
\label{prps-dg-construction-of-p-twists}
Let $(H,\sigma_1, \gamma_1)$ be a DG $\mathbb{P}$-twist data for $F$
in $\AmodbarA$. The $\mathbb{P}$-twist $P_F \in D(\BbimB)$ defined by 
the induced $D(\AbimA)$ data is isomorphic to the convolution 
of 
\begin{equation}
\label{eqn-canonical-P-twist-twisted-complex}
\begin{tikzcd}[column sep={2cm},row sep={1cm}] 
FHR
\ar{rr}{(FR\trace - \trace FR)\circ F\gamma_1R - \xi'_B \circ
F{\sigma_1}R}
& &
FR
\ar{r}{\trace}
&
\underset{\rm \degzero}{\id_\B}.
\end{tikzcd}
\end{equation}
\end{prps}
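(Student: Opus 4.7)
The plan is to construct an explicit DG splitting $\phi\colon FHR \to FQ_1R$ of the exact triangle $FR \to FQ_1R \to FHR$ using the DG data $(H,\sigma_1,\gamma_1)$, substitute it into the definition \eqref{eqn-defn-of-psi} of $\psi$, and show that the resulting morphism agrees with the one in \eqref{eqn-canonical-P-twist-twisted-complex} up to a boundary.

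First, realise $FQ_1R$ as the convolution of the two-term twisted complex $\{FHR[-1] \xrightarrow{F\sigma_1R} FR\}$ in $\pretriag(\BbarB)$, with $F\iota R$ and $F\mu_1R$ its structural inclusion and projection. Since $F\sigma_1R$ is a boundary by definition of DG $\mathbb{P}$-twist data, fix $\beta \in \barhom^0_{\BbimB}(FHR, FR)$ with $d\beta = F\sigma_1R$. Define $\phi\colon FHR \to FQ_1R$ to be the unique closed degree-zero map of twisted complexes whose component onto $FHR[-1]$ is the identity and whose component onto $FR$ is $-\beta$; closedness reduces precisely to $d\beta = F\sigma_1R$, and $F\mu_1R \circ \phi = \id_{FHR}$ shows $\phi$ is a splitting.

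Second, lift $\gamma\colon Q_1 \to RF$ to the DG-level twisted complex map \eqref{eqn-DG-lift-of-gamma-in-Q_1-case-with-gamma0-equals-action}, which has components $\gamma_1$ and $\action$. Applying $F(-)R$ and composing with $\phi$ gives
\begin{equation*}
F\gamma R \circ \phi \;=\; F\gamma_1 R - F\action R \circ \beta.
\end{equation*}
Postcomposing with $FR\trace - \trace FR$, and invoking the twisted complex \eqref{eqn-FR-FRFR-FR-Id-twisted-complex} in which $\xi'_\B$ is a higher differential witnessing $(FR\trace - \trace FR)\circ F\action R = d\xi'_\B$, yields
\begin{equation*}
\psi \;=\; (FR\trace - \trace FR)\circ F\gamma_1 R \;-\; (d\xi'_\B)\circ \beta.
\end{equation*}

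Third, the Leibniz rule together with $d\beta = F\sigma_1 R$ gives $d(\xi'_\B \circ \beta) = (d\xi'_\B)\circ \beta \pm \xi'_\B \circ F\sigma_1 R$, the sign being determined by the degree of $\xi'_\B$ as a higher differential in \eqref{eqn-FR-FRFR-FR-Id-twisted-complex}. Consequently $\psi$ and the map $\psi' := (FR\trace - \trace FR)\circ F\gamma_1 R - \xi'_\B \circ F\sigma_1 R$ appearing in \eqref{eqn-canonical-P-twist-twisted-complex} differ by $\mp\, d(\xi'_\B \circ \beta)$, and hence represent the same element of $\homm^1_{D(\BbimB)}(FHR, FR)$. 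By \cite{AnnoLogvinenko-OnUniquenessOfPTwists} any two three-term twisted complexes of the form $\{FHR \to FR \xrightarrow{\trace} \id_\B\}$ whose leftmost map induces the same morphism in $D(\BbimB)$ have canonically isomorphic convolutions; since the convolution associated to $\psi$ is by construction $P_F$, so is the convolution of \eqref{eqn-canonical-P-twist-twisted-complex}. The only bookkeeping obstacle is fixing the sign conventions for the twisting map and the resulting Koszul sign in the Leibniz rule; this is routine once the degree of $\xi'_\B$ is pinned down from \eqref{eqn-FR-FRFR-FR-Id-twisted-complex}.
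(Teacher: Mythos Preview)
Your proof is correct and follows essentially the same route as the paper: you build the explicit splitting $\phi$ from a primitive $\beta$ of $F\sigma_1 R$, compute $\psi = (FR\trace - \trace FR)\circ F\gamma_1 R - (d\xi'_\B)\circ\beta$, and identify the difference with the canonical form as the boundary $d(\xi'_\B\circ\beta)$. The only cosmetic difference is that you invoke the uniqueness-of-convolution result explicitly at the end, whereas the paper tacitly reduces to showing the two maps agree in $D(\BbimB)$; your sign bookkeeping can be settled by noting $\xi'_\B$ has degree $-1$, giving exactly $d(\xi'_\B\circ\beta)=(d\xi'_\B)\circ\beta-\xi'_\B\circ F\sigma_1 R$.
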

\begin{proof}
It suffices to show that the map 
\begin{equation}
\label{eqn-canonical-form-of-map-psi}
(FR\trace - \trace FR)\circ F\gamma_1R - \xi'_B \circ F{\sigma_1}R 
\end{equation}
equals in $D(\BbimB)$ the map $\psi: FHR \rightarrow FR$ of Lemma 
\ref{lemma-psi-is-independent-of-the-choice-of-phi}.

By assumption, $F \sigma_1 R$ is a boundary. Let therefore
$\beta\colon FR \rightarrow FRFR$ be such that $d\beta = F \sigma_1 R$. 
The composition \eqref{eqn-defn-of-psi} defining $\psi$ then lifts
to the following composition of twisted complex maps:
\begin{equation}
\begin{tikzcd}[column sep={2cm},row sep={1cm}] 
\underset{\degminusone}{FHR[-1]}
\ar[equals]{d}
\ar{dr}{- \beta}
&
\\
FHR[-1]
\ar{r}{F{\sigma_1}R}
\ar{dr}{F{\gamma_1}R}
&
\underset{\degzero}{FR}
\ar{d}{F{\action}R}
\\
&
\underset{\degzero}{FRFR}
\ar{d}{FR\trace - \trace FR}
\\
&
\underset{\degzero}{FR}.
\end{tikzcd}
\end{equation}
Thus $\psi$ is the image in $D(\BbimB)$ of the map 
\begin{equation}
\label{eqn-noncanonical-DG-lift-of-psi}
(FR\trace - \trace FR)\circ F\gamma_1R - 
(FR\trace - \trace FR) \circ F \action R \circ \beta. 
\end{equation}
It remains to show that the difference 
$$ (FR\trace - \trace FR) \circ F \action R \circ \beta
- \xi'_B \circ F{\sigma_1}R $$
between \eqref{eqn-canonical-form-of-map-psi}
and \eqref{eqn-noncanonical-DG-lift-of-psi} is a boundary, 
and thus vanishes in $D(\BbimB)$. 

Recall now the twisted complex
\eqref{eqn-FR-FRFR-FR-Id-twisted-complex}. 
The fact that it is a twisted complex 
implies, in particular, that 
$d \xi'_\B = (FR\trace - \trace FR) \circ F \action R$. 
We thus have
\begin{align}
d(\xi'_\B \circ \beta) = (d \xi'_\B) \circ \beta 
- \xi'_\B \circ (d \beta) = 
(FR\trace - \trace FR) \circ F \action R \circ \beta 
- \xi'_\B \circ F \sigma_1 R 
\end{align}
as desired. 
\end{proof}

\section{$\mathbb{P}^n$-functors}
\label{section-Pn-functors}

Conceptually, a $\mathbb{P}^n$-functor is a functor $f: D(\A)
\rightarrow D(\B)$ whose adjunction monad $rf$ is isomorphic
to a cyclic coextension $q_n$ of $\id_\A$ by an autoequvalence $h
\in \autm D(\A)$. The isomorphism $\gamma: q_n \xrightarrow{\sim} rf$ 
has to intertwine the adjunction unit 
$\action\colon \id_\A \rightarrow RF$ 
and the natural map $\iota\colon \id_\A \rightarrow q_n$, 
and to satisfy three more conditions detailed below: the \em monad
condition\rm, the \em adjoints condition\rm, and the \em 
highest degree term \rm condition.

The data of $(h,q_n, \gamma)$ defines the structure of a $\mathbb{P}^n$-functor
on $f: D(\A) \rightarrow D(\B)$. The restriction of this data to 
the first coextension $q_1$ via the map 
$\eta: q_1 \xrightarrow{\iota_n \circ ... \circ \iota_2} q_n$ is then
a $\mathbb{P}$-twist data for $f$ as per \S\ref{section-ptwists}. 
Indeed, $q_n \xrightarrow{\gamma} rf$ intertwines $\action$ with
$\iota = \eta \circ \iota_1$ and hence 
$q_1 \xrightarrow{\gamma \circ \eta} rf$ intertwines
$\action$ with $\iota_1$. On the other hand, by adjunction  
$fr \xrightarrow{f(\action)r} frfr$ is a retract. 
Since $\gamma$ is an isomorphism intertwining $\action$ and $\iota$, 
$fr \xrightarrow{f(\iota)r} f{q_n}r$ is also split, and hence $f q_1 r$ a
split co-extension of $fr$. Thus $(h,q_1, \gamma \circ \eta)$ is 
a $\mathbb{P}$-twist data for $f$, and we define its $\mathbb{P}$-twist
$P_f$ constructed in \S\ref{section-ptwists} to be the $\mathbb{P}$-twist of
the functor $f$ with the $\mathbb{P}^n$-functor data $(h, q_n, \gamma)$. 

In this section we make this precise and show 
the $\mathbb{P}$-twist of a $\mathbb{P}^n$-functor to be an autoequivalence. 

\subsection{$\mathbb{P}^n$-functor data}
\label{section-pn-functor-data}

Throughout the section we use the assumptions and the notation
introduced in \S\ref{section-ptwists-generalities} which fixes
a DG-enhancement $F \in D(\AbimB)$ of an exact functor 
$D(\A) \rightarrow D(\B)$, its left and right adjoints $L$ and $R$, 
and the corresponding spherical twists $T$, $T'$ and cotwists $C$ and
$C'$. 
 
Let $Q_n$ be any cyclic coextension of $\id_\A$ by $H$ of degree $n$. 
Recall that we have exact triangles
\begin{align}
Q_{n-1} \xrightarrow{\iota_{n-1}} Q_n \xrightarrow{\mu_n} H^n
\rightarrow Q_{n-1}[1] \\
\id_\A \xrightarrow{\iota} Q_n \xrightarrow{\kappa} J_n
\xrightarrow{\lambda} \id_\A[1]
\end{align}
in $D(\AbimA)$ defined on the level of twisted complexes 
over $\AmodbarA$, cf.~\S\ref{section-repeated-extensions}. 

Assume now that there exists an isomorphism $\gamma\colon Q_n \simeq RF$ 
intertwining $\iota$ and the adjunction unit. Note that $J_n$ is
then isomorphic to $C[1]$. Define 
$\eta: Q_1 \rightarrow Q_n$ to be the composition $\iota_n \circ ... \circ \iota_2$, so that $\iota = \eta \circ \iota_1$. 
Then $\gamma \circ \eta: Q_1 \rightarrow RF$ intertwines $\iota_1\colon \id \rightarrow Q_1$ 
and the adjunction unit.  Moreover,  $F \xrightarrow{F \iota} F Q_n$ is a retract, since $F
\xrightarrow{F \action} FRF$ is. But then so is 
$F \xrightarrow{\iota_1} FQ_1$, and thus $Q_1$ is an $F$-split coextension of $\id$ by $H$ and $(H,Q_1, \gamma \circ \eta)$
is a $\mathbb{P}$-twist data for $F$.  This data determines, as 
per \S\ref{section-ptwists}, the map $\psi\colon FHR \rightarrow FR$.
By abuse of notation, the map $FHQ_n \rightarrow FQ_n$ identified by $\gamma$ with 
$FHRF \xrightarrow{\psi F} FRF$ is also denoted by $\psi F$. 

\begin{defn}
\label{defn-Pn-functor}
Let $\A$ and $\B$ be DG-categories and let $F \in D(\AbimB)$ be
an enhanced functor $D(\A) \rightarrow D(\B)$. 
A \em $\mathbb{P}^n$-functor data for $F$\rm, 
or alternatively \em the structure of a $\mathbb{P}^n$-functor on $F$\rm, 
is the collection $(H, Q_n, \gamma)$ of
\begin{enumerate}
\item $H \in D(\AbimA)$, an enhanced autoequivalence of 
$D(\A)$ with $H(\krn F) = \krn F$. 
\item An object $Q_n \in D(\AbimA)$ with a structure of a cyclic
coextension of $\id$ by $H$ of degree $n$. 
\item An isomorphism $Q_n \xrightarrow{\gamma} RF$ in $D(\AbimA)$
which intertwines  
$\id_\A \xrightarrow{\action} RF$ and 
$\id_\A \xrightarrow{\iota} Q_n$.
\end{enumerate}
which satisfy the following three conditions
\begin{enumerate}
\item \em The monad condition: \rm The following composition 
is an isomorphism:
\begin{equation}
\label{eqn-monad-condition}
\nu\colon FHQ_{n-1} \xrightarrow{FH \iota_{n-1}} FH Q_n 
\xrightarrow{\psi F} FQ_n \xrightarrow{F\kappa} FJ_n. 
\end{equation}
\item \em The adjoints condition: \rm The following composition 
is an isomorphism:
\begin{equation}
\label{eqn-adjoints-condition}
FR \xrightarrow{FR \action} FRFL \xrightarrow{F\mu_n L} FH^n L.
\end{equation}
 \item \em The highest degree term condition: \rm There exists 
$F H^n L \xrightarrow{\sim} F H H^n H' L$ making the diagram
\begin{equation}
\label{eqn-highest-degree-term-condition}
\begin{tikzcd}[column sep={2cm}]
FHQ_{n-1}L
\ar{r}{FH\iota_n L}
\ar[equals]{d}
&
FHRFL
\ar{r}{\psi FL}
&
FRFL
\ar{r}{F\mu_n L}
&
FH^nL
\ar[dashed]{d}{\text{isomorphism}}
\\
FHQ_{n-1}L
\ar{r}{FH\iota_n L}
&
FHRFL
\ar{r}{FHR\psi'}
&
FHRFH'L
\ar{r}{FH\mu_n H'L}
&
FHH^nH'L
\end{tikzcd}
\end{equation}
commute. Here $\psi': FL \to FH'L$ is the left dual of $\psi$.
\end{enumerate}
\end{defn}

\bf Remark: \rm
\begin{enumerate}

\item  
We introduce a separate object $Q_n \in D(\AbimA)$ and 
an isomorphism $\gamma$ from it to $RF$ for technical reasons. 
We want the flexibility, as described in \S\ref{section-cyclic-extensions}, 
to replace $Q_n$ and all the intermediate coextensions $Q_i$ by the
convolutions of some twisted complex of form 
\eqref{eqn-cyclic-coextension-of-id-by-H-of-degree-n-dg} and its
corresponding subcomplexes. In practice, the $\mathbb{P}^n$-functor
structure on $F$ can always be given, up to an obvious notion of
isomorphism of such structures, with $Q_n = RF$ and $\gamma = \id$. In 
other words --- by an autoequivalence $H$ and a Postnikov tower 
\eqref{eqn-cyclic-coextension-of-id-by-H-of-degree-n} with $Q_n = RF$
and $\iota$ equal to the adjunction unit. 

\item The condition $H(\krn f) = \krn f$ holds for all the
$\mathbb{P}^n$-functors known to the authors. It can be weakened to
the condition $H^{n+1}(\krn f) = \krn f$ which we show to be 
necessary in 
Corollary
\ref{cor-if-H^n+1-krnF-is-not-krnF-Ptwist-is-not-autoequivalence}. 
However, we would then need to strengthen the adjoints condition. 
Indeed, under the weakened assumption $H^{n+1}(\krn f) = \krn f$
the proof that the $\mathbb{P}^n$-twist is autoequivalence
(Theorem \ref{theorem-Pn-functor-gives-autoequivalence})
works in exactly the same way provided that, in addition to 
the adjoints condition \eqref{eqn-adjoints-condition}, the
following composition is also an isomorphism
\begin{equation}
\label{eqn-adjoints-condition-twisted}
FHR \xrightarrow{FHR \action} FHRFL \xrightarrow{FH\gamma L} FHQ_nL
\xrightarrow{FH\mu_n L} FH^{n+1} L. 
\end{equation}
Of course, \eqref{eqn-adjoints-condition}
and \eqref{eqn-adjoints-condition-twisted} both follow
from the following composition being an isomorphism 
\begin{equation}
\label{eqn-adjoints-condition-true}
R \xrightarrow{R \action} RFL \xrightarrow{\gamma L} FHQ_nL
\xrightarrow{\mu_n L} H^{n} L, 
\end{equation}
which is closer to the original adjoints condition in
\cite{Addington-NewDerivedSymmetriesOfSomeHyperkaehlerVarieties}. 
\end{enumerate}

Our main result is the following:

\begin{theorem}
\label{theorem-Pn-functor-gives-autoequivalence}
Let $\A$ and $\B$ be DG-categories and $F \in D(\AbimB)$ 
an enhanced functor $D(\A) \rightarrow D(\B)$. Let  
$(H, Q_n, \gamma)$ be the structure of a $\mathbb{P}^n$-functor on
$F$.  Then the $\mathbb{P}$-twist $P_F$ that we get from 
the associated $\mathbb{P}$-twist data $(H, Q_1, \gamma\circ\eta)$
is an autoequivalence of the category $D(\B)$.
\end{theorem}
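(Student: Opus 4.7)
The strategy is to construct an explicit two-sided inverse $P_F^\vee$ for $P_F$ by dualising the entire $\mathbb{P}^n$-functor structure, and then to show that $P_F \circ P_F^\vee \simeq \id_\B$ by building the composition as a convolution of a $3\times 3$ twisted bicomplex and collapsing it using the three conditions in the definition. Concretely, using the left adjoint $L$ in place of $R$, the autoequivalence $H' = H^{-1}$ in place of $H$, and the left dual $\psi' \colon FL \to FH'L$ of $\psi$, I would set
\begin{equation*}
P_F^\vee \;=\; \bigl\{\,\id_\B \xrightarrow{\action} FL \xrightarrow{\psi'} FH'L\,\bigr\}[-1],
\end{equation*}
i.e.\ the unique convolution of this three-term complex (whose uniqueness follows from \cite{AnnoLogvinenko-OnUniquenessOfPTwists} applied to the dual situation).

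The main computation is the analysis of $P_F \circ P_F^\vee$. Writing out both twists as twisted complexes and tensoring, one obtains a $3\times 3$ twisted bicomplex in $\BbarB$ with corners $FHR,FR,\id_\B$ horizontally and $\id_\B,FL,FH'L$ vertically. The diagonal and off-diagonal terms produce $FHRFL$ and $FRFH'L$ in the interior, glued via $\psi F L$, $F R \psi'$ and the adjunction (co)units. The first step is to apply the isomorphism $\gamma\colon Q_n \xrightarrow{\sim} RF$ to rewrite the composed monads $RF$ appearing inside these terms as cyclic coextensions of $\id$ by $H$. Using Lemma \ref{lemma-resolution-of-true-filtration} and the filtration structure of $Q_n$, the interior of the bicomplex breaks into pieces indexed by the factors $H^i$ for $i = 0, \dots, n$, allowing the composition to be rewritten as an iterated convolution whose layers are controlled by the three defining conditions.

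At this point the three conditions enter in a coordinated way. The adjoints condition identifies the ``top'' layer $FR$ with $FH^nL$, which is precisely what is needed for the $\id_\B$-term of $P_F$ and the $FH'L$-term of $P_F^\vee$ to cancel (up to a twist by $H$) against each other after accounting for the unit $\action\colon \id \to FL$. The monad condition says $\nu\colon FHQ_{n-1} \xrightarrow{\sim} FJ_n$, equivalently that the interior middle-diagonal piece built from $\psi F$ and $F\kappa$ is an isomorphism; via the exact triangle $\id \to Q_n \to J_n$ (and $J_n \simeq C[1]$), this collapses $n-1$ of the $n+1$ intermediate layers. Finally, the highest degree term condition guarantees that the composed identifications of the top $FH^nL \simeq FHH^nH'L$ piece through the $\psi F$-route and the $F\psi'$-route agree, which is exactly what is needed for the cancellation with the bottom $\id_\B$-term of $P_F^\vee$ to be compatible with the gluing morphisms inherited from the convolution. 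Putting these together makes all nontrivial layers of the bicomplex exact, and the convolution collapses to $\id_\B$. The same argument with the roles of $L,R$ and $\psi,\psi'$ exchanged (invoking the Dualisation Lemma \ref{lemma-left-dual-morphism} to relate $\psi$ and $\psi'$ on the bar level) yields $P_F^\vee \circ P_F \simeq \id_\B$.

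\textbf{Main obstacle.} The combinatorial bookkeeping is the main difficulty: the composition $P_F \circ P_F^\vee$ is naturally a convolution of a bicomplex whose entries are themselves convolutions of $(n+1)$-term twisted complexes coming from $Q_n \simeq RF$, so one is really manipulating a three-dimensional twisted array and has to check that every homotopy one writes down is coherent with every other. Lemmas \ref{lemma-onesidedness-via-true-filtration}, \ref{lemma-resolution-of-true-filtration} and \ref{lemma-true-filtration-filtering-via-double-filtration-filtering} are designed to keep this under control by reducing the required checks to filtration-wise statements, and the $\homm^{-1}$-vanishing hypothesis built into their use is what makes the highest degree term condition suffice as a ``global'' compatibility rather than demanding compatibility at every intermediate level. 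The delicate point is that the monad condition only guarantees $\nu$ is an isomorphism without specifying it, so one cannot read off the inverse directly; instead one must argue via the triangulated structure that $P_F^\vee$ is inverse to $P_F$ without ever needing an explicit splitting of $\nu^{-1}$.
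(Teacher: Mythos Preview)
Your overall plan for the first half---construct the left dual $P'_F$ and analyse $P_F P'_F$ as the convolution of a $3\times 3$ bicomplex which collapses to $\id_\B$ using the three conditions---is essentially what the paper does. The details differ: the paper does not break the interior into $n+1$ layers indexed by powers of $H$, but instead uses binary homotopy splittings (e.g.\ $FRFL\simeq FL\oplus FJ_nL$ and $FHRFL\simeq FHH^nL\oplus FHQ_{n-1}L$ coming from the split map $F\xrightarrow{F\action}FRF$ and from the monad condition itself) together with repeated applications of the Replacement Lemma to strip off acyclic pieces. The filtration Lemmas \ref{lemma-onesidedness-via-true-filtration}--\ref{lemma-true-filtration-filtering-via-double-filtration-filtering} and the $\homm^{-1}$-vanishing you invoke are not used here at all; they belong to \S\ref{section-strong-monad-condition}, which concerns a different statement. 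So your mechanism for the collapse is not the paper's, and as written it is too vague to be convincing, but the target ($P_FP'_F\simeq\id_\B$) is correct.

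The genuine gap is your last sentence: ``the same argument with the roles of $L,R$ and $\psi,\psi'$ exchanged yields $P_F^\vee\circ P_F\simeq\id_\B$''. This does not work. The three conditions in Definition~\ref{defn-Pn-functor} are not symmetric in $L$ and $R$: the adjoints condition gives $FR\simeq FH^nL$, not $FL\simeq F(H')^nR$; the monad condition concerns the monad $RF$, not the comonad $LF$; and there is no reason the dual versions follow. The paper handles the other direction by a completely different route: from $P_FP'_F\simeq\id_\B$ and Johnstone's Lemma it deduces that $P'_F$ is fully faithful, and then it shows $\ker P_F=0$ separately. The kernel argument goes via Proposition~\ref{prps-PF-is-isomorphic-to-FH^n+1}, which gives $P_FF\simeq FH^{n+1}[2]$ (this uses only the monad condition), combined with the hypothesis $H(\ker F)=\ker F$ from the definition; one then invokes \cite[Lemma 2.1]{BKR01} to conclude that a functor with fully faithful left adjoint and trivial kernel is an equivalence. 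This is precisely where the condition $H(\ker F)=\ker F$, which plays no role in your sketch, actually enters.
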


We prove Theorem \ref{theorem-Pn-functor-gives-autoequivalence}
in sections \ref{subsection-left-adjoint}-\ref{subsection-P-equivalence} below.
Meanwhile, let us explain why for $\mathbb{P}^n$-functor structures 
we should only consider those autoequivalences $H$ for which 
$H^{n+1}(\krn F) = \krn F$. Addington proved for split
$\mathbb{P}^n$-functors in
\cite[Prop.~3.3]{Addington-NewDerivedSymmetriesOfSomeHyperkaehlerVarieties}
that $P_F F \simeq FH^{n+1} [2]$. This also holds in our more general
context:
\begin{prps}
\label{prps-PF-is-isomorphic-to-FH^n+1}
Let $\A$ and $\B$ be DG-categories and $F \in D(\AbimB)$
an enhanced functor $D(\A) \rightarrow D(\B)$. Let $H$
be an enhanced autoequivalence of $D(\A)$, $Q_n$ a 
cyclic extension of $\id$ by $H$, and $Q_n \xrightarrow{\gamma} RF$
an isomorphism intertwining $\id_\A \xrightarrow{\action} RF$ 
and $\id_\A \xrightarrow{\iota} Q_n$. Suppose 
the monad condition from Definition \ref{defn-Pn-functor} holds. 

Let $P_F$ be the $\mathbb{P}$-twist we get from the associated 
$\mathbb{P}$-twist data $(H, Q_1, \gamma\circ\eta)$. Then 
$$ P_F F \simeq FH^{n+1}[2].$$
\end{prps}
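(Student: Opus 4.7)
The plan is to unfold $P_F F$ as a convolution, identify $RF$ with $Q_n$ via $\gamma$, use the unit/counit triangle identity to split off an identity piece, and finally apply the octahedral axiom together with the monad condition.

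First I would use that the $\mathbb{P}$-twist is compatible with right multiplication by $F$: since $P_F$ is the convolution of $FHR \xrightarrow{\psi} FR \xrightarrow{\trace} \id_\B$, its composition with $F$ is the convolution of
\begin{equation*}
FHRF \xrightarrow{\psi F} FRF \xrightarrow{\trace F} F.
\end{equation*}
Pre-composing with the isomorphisms $F\gamma\colon FQ_n \xrightarrow{\sim} FRF$ and $FH\gamma$ turns this into
\begin{equation*}
FHQ_n \xrightarrow{\psi F} FQ_n \xrightarrow{\tilde\trace} F,
\end{equation*}
where $\tilde\trace := \trace F \circ F\gamma$ and $\psi F$ has the meaning fixed in \S\ref{section-pn-functor-data}. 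Because $\gamma$ intertwines $\iota$ with $\action$, we have $\tilde\trace \circ F\iota = \trace F \circ F\action = \id_F$ by the adjunction triangle identity. Thus $F\iota\colon F \to FQ_n$ is split by $\tilde\trace$, so the exact triangle $F \xrightarrow{F\iota} FQ_n \xrightarrow{F\kappa} FJ_n \to F[1]$ splits, yielding $FQ_n \simeq F \oplus FJ_n$ in which $\tilde\trace$ is projection to $F$ and $F\kappa$ is projection to $FJ_n$.

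Next I would simplify the three-term complex. Under the decomposition above, $\psi F\colon FHQ_n \to FQ_n$ has components $(\tilde\trace\circ\psi F,\; F\kappa\circ\psi F)$; the first component is a boundary because $\trace\circ\psi$ is a boundary in the convolution defining $P_F$. Hence the sub-twisted-complex $0 \to F \xrightarrow{\id} F$ (in degrees $-2,-1,0$) is contractible, and modding it out identifies our convolution with the convolution of the two-term complex $FHQ_n \xrightarrow{F\kappa\circ\psi F} FJ_n$ concentrated in degrees $-2,-1$. The totalisation of the latter is $\cone(F\kappa\circ\psi F)[1]$, so
\begin{equation*}
P_F F \;\simeq\; \cone\bigl(F\kappa\circ\psi F\bigr)[1].
\end{equation*}

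Finally I would apply the octahedral axiom to the composition
\begin{equation*}
FHQ_{n-1} \xrightarrow{FH\iota_{n-1}} FHQ_n \xrightarrow{F\kappa\circ\psi F} FJ_n.
\end{equation*}
The total composition is the map $\nu$ of the monad condition, hence an isomorphism, so its cone vanishes. The cone of $FH\iota_{n-1}$ is $FH^{n+1}$ by the defining triangle $Q_{n-1}\to Q_n\to H^n$. The octahedral triangle $FH^{n+1} \to 0 \to \cone(F\kappa\circ\psi F) \to FH^{n+1}[1]$ then gives $\cone(F\kappa\circ\psi F) \simeq FH^{n+1}[1]$, and therefore $P_F F \simeq FH^{n+1}[2]$.

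The main obstacle is the ``split off the identity piece'' step: although triangulated-categorical reasoning suffices at the level of isomorphism classes, for the enhanced/convolutional statement one has to realise the splitting on the level of twisted complexes. The cleanest way is to lift $\gamma$ to a DG-level map compatibly with $\action$ (as in the construction of \S\ref{section-P-twists-via-DG}), then to subtract from $\psi F$ the boundary witnessing $\trace\circ\psi\sim 0$ to obtain a representative landing in the $FJ_n$-summand. Once this splitting is achieved at DG-level, the contractible subcomplex can be excised using the Replacement Lemma (Lemma \ref{lemma-replacement-lemma}), and the rest is the octahedral computation above.
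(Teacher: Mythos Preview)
Your argument is correct and matches the paper's proof essentially step for step: express $P_FF$ as a convolution of $FHQ_n\to FQ_n\to F$, split off $F$ via $F\iota$ (using that $\gamma$ intertwines $\iota$ and $\action$), reduce to $\cone(F\kappa\circ\psi F)[1]$, and then use that $\nu$ is an isomorphism together with the triangle $Q_{n-1}\to Q_n\to H^n$ to identify this with $FH^{n+1}[2]$. Your closing worry about realising the splitting at DG level is unnecessary here: since the convolution of $FHR\to FR\to\id_\B$ is unique up to isomorphism (\cite{AnnoLogvinenko-OnUniquenessOfPTwists}), the computation can be carried out entirely in the triangulated category, which is exactly what the paper does.
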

\begin{proof}
By the definition of the $\mathbb{P}$-twist, $P_F F$ is
isomorphic to any convolution of the two-step complex:
\begin{equation*}
FHRF \xrightarrow{\psi F} FRF \xrightarrow{\trace F}
\underset{\degzero}{F}. 
\end{equation*}
The morphism $FRF \xrightarrow{\trace F} F$ has a right semi-inverse
$F \xrightarrow{F\action} FRF$. Since $\gamma$ intertwines $\action$
and $\iota$, it identifies the resulting direct sum decomposition 
of $FRF$ with $FQ_n \simeq F \oplus FJ_n$ and the projection onto 
the other direct summand with the map 
$FQ_n \xrightarrow{F\kappa} FJ_n$. It follows that $P_F$
is isomorphic to 
\begin{equation*}
\cone \left(
FHQ_n \xrightarrow{F\kappa \circ \psi F} FJ_n
\right)[1]
\end{equation*}
By the monad condition the composition of 
$F\kappa \circ \psi F$  
with 
\begin{equation}
\label{eqn-PF-as-cone-FHQ_n-1-to-FQ_n}
FHQ_{n-1} \xrightarrow{FH\iota_{n-1}} FHQ_n 
\end{equation}
is an isomorphism, hence
$$ \cone(F\kappa \circ \psi F) \simeq \cone (FH\iota_{n-1})[1].$$ 
Thus $P_F F$ is isomorphic to $\cone (FH\iota_{n-1})[2]$. 
By the exact triangle 
\begin{equation*}
Q_{n-1} \xrightarrow{\iota_{n-1}} Q_n \xrightarrow{\mu_n} H^n
\rightarrow Q_{n-1}[1] 
\end{equation*}
we conclude that $P_F F \simeq FH^{n+1}[2]$, as required. 
\end{proof}

It follows immediately that unless $H^{n+1}(\krn F) = \krn F$ the
$\mathbb{P}$-twist of $F$ can not be an autoequivalence: 

\begin{cor}
\label{cor-if-H^n+1-krnF-is-not-krnF-Ptwist-is-not-autoequivalence}
Let $F$ and $(H, Q_n, \gamma)$ be as in 
Proposition \ref{prps-PF-is-isomorphic-to-FH^n+1}. 
Then $H^{n+1}(\krn F) \subset \krn F$. Moreover, if this inclusion 
is strict, then $P_F$ is not an autoequivalence.   
\end{cor}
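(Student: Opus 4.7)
The plan is to extract both statements from the isomorphism $P_F F \simeq F H^{n+1}[2]$ provided by Proposition \ref{prps-PF-is-isomorphic-to-FH^n+1}, which holds under the monad condition alone. Everything reduces to chasing objects through this isomorphism.

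First I would prove the inclusion $H^{n+1}(\krn F) \subset \krn F$. Take any $a \in \krn F$, so that $F a \simeq 0$. Then $P_F F a \simeq 0$, but by Proposition \ref{prps-PF-is-isomorphic-to-FH^n+1} this object is also isomorphic to $F H^{n+1} a [2]$. Hence $F H^{n+1} a \simeq 0$, i.e.\ $H^{n+1} a \in \krn F$. Note this step uses only that $P_F$ is a well-defined functor, not that it is an autoequivalence.

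For the second assertion, suppose toward a contradiction that $P_F$ is an autoequivalence while the inclusion $H^{n+1}(\krn F) \subsetneq \krn F$ is strict. Pick $b \in \krn F$ with $b \notin H^{n+1}(\krn F)$. Since $H$ is an autoequivalence, the object $a := H^{-(n+1)} b$ is well-defined, and $b = H^{n+1} a$; the assumption $b \notin H^{n+1}(\krn F)$ forces $a \notin \krn F$, so $F a \not\simeq 0$. On the other hand, $F H^{n+1} a \simeq F b \simeq 0$, so by Proposition \ref{prps-PF-is-isomorphic-to-FH^n+1} we have $P_F (F a) \simeq F H^{n+1} a [2] \simeq 0$. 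If $P_F$ were an autoequivalence, $P_F (F a) \simeq 0$ would force $F a \simeq 0$, contradicting $a \notin \krn F$.

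There is no real obstacle here: both parts are essentially one-line consequences of the identity $P_F F \simeq F H^{n+1}[2]$ together with invertibility of $H$. The only minor point to be careful about is that for the first statement we do not need $P_F$ to be invertible, so the corollary genuinely has content in both directions.
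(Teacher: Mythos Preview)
Your proof is correct. The second assertion is argued exactly as in the paper: pick an element of $\krn F$ not in $H^{n+1}(\krn F)$, pull it back through $H^{n+1}$, and use $P_F F \simeq F H^{n+1}[2]$ to produce a nonzero object killed by $P_F$.

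For the first assertion your route differs slightly from the paper's. You apply Proposition~\ref{prps-PF-is-isomorphic-to-FH^n+1} directly: $a \in \krn F$ gives $P_F F a \simeq 0$, hence $F H^{n+1} a \simeq 0$. The paper instead unpacks the monad condition by hand: from $a \in \krn F$ one has $Q_n a \simeq 0$, so $F J_n a \simeq 0$ and $F H Q_{n-1} a \simeq F H^{n+1} a[-1]$, and the isomorphism $\nu$ between these forces $F H^{n+1} a \simeq 0$. Your argument is cleaner since it reuses the packaged statement of Proposition~\ref{prps-PF-is-isomorphic-to-FH^n+1} rather than redoing a piece of its proof; the paper's version has the minor virtue of making explicit that only the monad condition is used, not the full construction of $P_F$.
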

\begin{proof}
Let $a \in \krn F$. Then $RFa = 0$, and hence $Q_n a = 0$. Then 
$$ FJ_n a \simeq \cone( Fa \xrightarrow{F\iota a} FQ_na) \simeq Fa[1]
\simeq 0, $$
but 
$$ FQ_{n-1}a \simeq \cone \left(FQ_{n}a \xrightarrow{\mu_n a}
FH^{n+1}a\right)[-1] \simeq FH^{n+1}a[-1].$$
By the monad condition the two are isomorphic, and hence $H^{n+1} a \in
\krn F$. 

On the other hand, suppose there exists $a \in \krn F$ such that
$a \notin H^{n+1} (\krn F)$. Then $FH^{-(n+1)}a$ is not a zero object, 
but on the other hand by Proposition \ref{prps-PF-is-isomorphic-to-FH^n+1}
$$ P_F(FH^{-(n+1)}a) \simeq FH^{n+1}H^{-(n+1)}a[2] \simeq Fa [2]
\simeq 0,$$
whence $P_F$ is not an autoequivalence.  
\end{proof}

If $H^{n+1}(\krn F) = \krn F$, then 
in the proof of Theorem \ref{theorem-Pn-functor-gives-autoequivalence} 
we need the condition $H(\krn F) = \krn F$ only for the following:
\begin{lemma}
Let $H(\krn F) \subset \krn F$ and let 
the map \eqref{eqn-adjoints-condition} be an isomorphism. Then the map
\eqref{eqn-adjoints-condition-twisted}
is an isomorphism as well.
\end{lemma}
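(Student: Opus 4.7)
The plan is to factor both maps through a single natural transformation ``upstairs'' and reduce the claim to transporting the kernel hypothesis across an exact autoequivalence. Specifically, I will introduce the map $\tilde{\alpha}\colon R \to H^nL$ obtained as the composition
\begin{equation*}
R \xrightarrow{R\action} RFL \xrightarrow{\gamma^{-1}L} Q_nL \xrightarrow{\mu_n L} H^nL,
\end{equation*}
and observe that the map in \eqref{eqn-adjoints-condition} is literally $F\tilde{\alpha}$, while the map in \eqref{eqn-adjoints-condition-twisted} is literally $FH\tilde{\alpha}$ (here the composition $FH\gamma L$ in \eqref{eqn-adjoints-condition-twisted} is read using the isomorphism $\gamma$ as $FH\gamma^{-1}L$ in the opposite direction). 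This reformulation is the only nontrivial identification needed and is immediate from the definitions once one unravels the functorial notation.

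Once both composites are presented as $F$ and $FH$ applied to the same $\tilde{\alpha}$, the argument is purely formal. Since $F$ is exact, $F\tilde{\alpha}$ being an isomorphism is equivalent to $F\cone(\tilde{\alpha}) = 0$, i.e.\ to the bimodule $\cone(\tilde\alpha) \in D(\BbimA)$ sending every object of $D(\B)$ into $\krn F$. By the hypothesis $H(\krn F) \subset \krn F$, post-composing with $H$ keeps us inside $\krn F$, so $FH\cone(\tilde\alpha) = 0$ as well. Since $H$ is an (exact) autoequivalence of $D(\A)$, it commutes with cones, so $H\cone(\tilde\alpha) \simeq \cone(H\tilde\alpha)$, and therefore $\cone(FH\tilde{\alpha}) \simeq FH\cone(\tilde{\alpha}) = 0$. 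This shows $FH\tilde\alpha$ is an isomorphism, which is exactly \eqref{eqn-adjoints-condition-twisted}.

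The only possible obstacle is bookkeeping: making sure the two maps really do assemble as $F(-)$ and $FH(-)$ applied to a common $\tilde{\alpha}$, rather than differing by some coherence isomorphism that must be named. I expect this to be routine because $\gamma$ is assumed to intertwine $\iota$ and $\action$, so no additional compatibility is required for $\mu_n\gamma^{-1}$; the steps $R\action$, $\gamma^{-1}L$, and $\mu_nL$ are all natural in the obvious sense, and prepending $H$ commutes with each of them by functoriality. Everything else is general nonsense about kernels of exact functors, so no further input is needed.
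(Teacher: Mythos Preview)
Your proof is correct and is essentially the same argument as the paper's: the paper lets $X = \cone(\tilde\alpha)$ (where your $\tilde\alpha$ is exactly the map \eqref{eqn-adjoints-condition-true}), observes that \eqref{eqn-adjoints-condition} and \eqref{eqn-adjoints-condition-twisted} are $F$ and $FH$ applied to $\tilde\alpha$, and then uses $H(\krn F)\subset \krn F$ to pass from $FX\simeq 0$ to $FHX\simeq 0$. Your write-up is slightly more explicit about why the cone argument transports across $H$, but the content is identical.
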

\begin{proof}
Let $X$ be the cone of the map
\eqref{eqn-adjoints-condition-true}.
The adjoints condition \eqref{eqn-adjoints-condition} 
is equivalent to $FX \simeq 0$,
i.e. $X\in \krn F$. Then $HX\in \krn F$, so $FHX\simeq 0$, which is 
in turn equivalent to 
\eqref{eqn-adjoints-condition-twisted} being an isomorphism.
\end{proof}
Note that $H(\krn F) \subset \krn F$ and $H^{n+1} (\krn F)=\krn F$
imply that $H(\krn F)=\krn F$.

\subsection{The left adjoint} 
\label{subsection-left-adjoint}
Recall as per \S\ref{section-ptwists-generalities} that in an enhanced setting 
if an exact functor $f$ with left and right adjoints $l$ and $r$
is enhanced by an $\AbimB$ bimodule $M$, then $l$ can be enhanced by its left dual $M^\barA$ and 
$r$ can be enhanced by its right dual $M^\barB$. Moreover, $(-)^\barA$ and
$(-)^\barB$ induce exact functors $D(\AbimB)^\opp \to D(\BbimA)$.

Let $F\in D(\AbimB)$ be equipped with the $\mathbb{P}^n$-functor data $(H, Q_n, \gamma)$.
Denote by $H'\in D(\AbimA)$ the left dual to $H\in D(\AbimA)$. Since the functor underlying $H$ is an autoequivalence, 
the same is true for $H'$. For $i=1,\ldots, n$ let $Q'_i$ be the left dual of $Q_i$. 
Let the exact triangles
\begin{align}
(H')^i \xrightarrow{\mu'_i} Q'_i \xrightarrow{\iota'_i} Q'_{i-1} \to (H')^i[1]
\end{align}
be left dual to the exact triangles
\begin{align}
Q_{i-1} \xrightarrow{\iota_i} Q_i \xrightarrow{\mu_i} H^i \to Q_{i-1}[1].
\end{align}
This gives $Q'_n$ the structure of a cyclic extension of $\id_\A$ by $H'$. 

Denote by $\gamma'\colon LF \to Q_n'$ the left dual of the isomorphism $\gamma$.
It then follows from the functoriality of dualisation and from the fact that
the counit $LF \to \id_\A$ is left dual to the unit $\id_\A \to RF$ that $\gamma'$ is
an isomorphism which intertwines the counit $LF \to \id_\A$ with
the composition $\iota'=\iota'_1 \circ \ldots \circ \iota'_n: Q'_n \to \id_\A$.

Let $\psi'\colon FL \to FH'L$ be the left dual to the map 
$\psi\colon FHR \to FR$.
Similarly to the results of \cite{AnnoLogvinenko-OnUniquenessOfPTwists},
one can prove that the left dual
\begin{align}
\B \xrightarrow{\action} FL \xrightarrow{\psi'} FH'L
\end{align}
to the complex \eqref{eqn-defn-P-twist}
has a unique convolution  $P'_F$ 
which is isomorphic in $D(\BbimB)$ to the left dual of $P_F$.

\subsection{DG $\mathbb{P}^n$-functor data}
As described in \S\ref{section-cyclic-extensions}, 
every cyclic coextension of $\id_\A$ by $H$ is isomorphic in $D(\AbimA)$
to the convolution of a twisted complex of form
\eqref{eqn-cyclic-coextension-of-id-by-H-of-degree-n-dg}. Throughout
the rest of this section we make use of the notations and conventions
introduced in \S\ref{section-cyclic-extensions}
for a complex $\bar{Q}_n$ of this form:
the truncated subcomplexes $\bar{Q}_i$ and $\bar{J}_n$, 
the maps $\iota_i, \iota, \mu_i, \kappa$, etc.

\begin{defn}
A \em DG $\mathbb{P}^n$-functor data \rm $(H, \bar{Q}_n, \gamma)$ for $F$ is the collection of
\begin{enumerate}
\item $H \in \AmodbarA$ such that $h = (-) \ldertimes_\A H$ is 
an autoequivalence of $D(\A)$. 
\item A twisted complex $\bar{Q}_n \in \pretriag(\AbarA)$ of form \eqref{eqn-cyclic-coextension-of-id-by-H-of-degree-n-dg}. 
\item An homotopy equivalence $\bar{Q}_n \xrightarrow{\gamma} RF$ in $\pretriag(\AbarA)$.
\end{enumerate}
such that $(H, Q_n, \gamma)$ is a $\mathbb{P}^n$-functor data in $D(\AbimA)$.
\end{defn}

Let us fix some additional notation. For each $(H, \bar{Q}_n,
\gamma)$ as above choose and fix a homotopy inverse to $\gamma$:
\begin{equation}
\gamma^{-1}\colon RF \rightarrow \bar{Q}_n. 
\end{equation}
We use $\gamma$ and $\gamma^{-1}$ to implicitly identify $Q_n$ with $RF$ in the
following sense: for any map to $Q_n$ or from $RF$, e.g. $\iota_n: Q_{n-1}\to Q_n$, denote by the same
letter its composition with $\gamma$, e.g. $\iota_n: Q_{n-1}\to RF$,
and similarly 
for any map from $Q_n$ or to $RF$ and its composition with $\gamma^{-1}$. 

The monad condition asks for the composition
\begin{equation}
\nu\colon 
FHQ_{n-1} 
\xrightarrow{FH\iota_n} 
FHQ_n 
\xrightarrow{FH\gamma}
FHRF
\xrightarrow{\psi F}
FRF
\xrightarrow{F\gamma^{-1}}
FQ_n
\xrightarrow{F\kappa}
FJ_n
\end{equation}
to be a homotopy equivalence. Choose and fix a homotopy inverse $\nu^{-1}$
of $\nu$.

Any DG $\mathbb{P}^n$-functor data contains DG $\mathbb{P}$-twist data
$(H, \bar{Q}_1, \gamma \circ \iota_1)$. Denote by $\bar{P}_F$ the corresponding twisted complex 
\eqref{eqn-canonical-P-twist-twisted-complex} that we have constructed in \S\ref{section-P-twists-via-DG}.
Its first map is a lift of the map $\psi$ from the derived category to the bar category of modules,
so by abuse of notation, we are going to denote it by $\psi$ as well:
\begin{equation}
\bar{P}_F = \left( 
\begin{tikzcd}[column sep={2cm},row sep={1cm}] 
FHR
\ar{r}{\psi}
&
FR
\ar{r}{\trace}
&
\B
\end{tikzcd}
\right).
\end{equation}
By \cite[Lemma 3.43]{AnnoLogvinenko-BarCategoryOfModulesAndHomotopyAdjunctionForTensorFunctors}
we can construct a twisted complex of form 
\begin{equation}
\label{equation-twisted-complex-P-dual-raw-version}
\begin{tikzcd}[column sep={2cm},row sep={1cm}] 
\B
\ar{r}
\ar[dashed, bend left="10"]{rr}
&
FL
\ar{r}
&
FH'L
\end{tikzcd}
\end{equation}
whose convolution is homotopy left dual to the convolution of
$\bar{P}_F$. By \cite[\S 2.2]{AnnoLogvinenko-SphericalDGFunctors}
the map $\id_\B \xrightarrow{\action} FL$ is left dual to the map $FR \xrightarrow{\trace} \id_\B$ 
in $D(\AbimA)$, hence the map $\B \to FL$ in \eqref{equation-twisted-complex-P-dual-raw-version} is homotopic to $\B \xrightarrow{\action} FL$.
Choose and fix a twisted complex $\bar{P}'_F$ homotopy equivalent to
\eqref{equation-twisted-complex-P-dual-raw-version}
where the first map will be $\action$. Denote its second map by $\psi'$
and its convolution by $P'_F$:
\begin{equation}
\label{equation-twisted-complex-P-dual}
\bar{P}'_F = \left( 
\begin{tikzcd}[column sep={2cm},row sep={1cm}] 
\B
\ar{r}{\action}
\ar[dashed, bend left="10"]{rr}
&
FL
\ar{r}{\psi'}
&
FH'L
\end{tikzcd}
\right).
\end{equation}

%

\subsection{Homotopy splitting} 
%
Let $\mathcal{C}$ be a pre-triangulated category. Consider a diagram
in $\mathcal{C}$ 
\begin{equation}
\label{eqn-abstract-homotopy-split-triple}
X \xrightarrow{f} Y \xrightarrow{g} Z
\end{equation}
that becomes an exact triangle in $H^0(\mathcal{C})$.
We say that \eqref{eqn-abstract-homotopy-split-triple} is \em 
homotopy split \rm if $f$ has a homotopy left inverse $h: Y \to
X$ of $f$. Since retracts are split in triangulated categories 
the map $Y \xrightarrow{h \oplus g} X \oplus Z$ is then
a homotopy equivalence. 
This situation occurs twice in our setting: the map  
$FRF \xrightarrow{\trace F} F$  homotopy splits the diagram
$$F \xrightarrow{F\action} FRF \xrightarrow{F\kappa} FJ_n, $$ 
and the map
$FHRF \xrightarrow{\psi F} FRF \xrightarrow{F\kappa } FJ_n
\xrightarrow{\nu^{-1}} FHQ_{n-1}$ homotopy splits the diagram
$$FHQ_{n-1} \xrightarrow{FH \iota_n} FHRF \xrightarrow{FH \mu_n}
FHH^n. $$
Therefore, in each of the following cases there exists a morphism in $\BmodbarB$ which replaces $(*)$ 
producing mutually inverse isomorphisms in $D(\BbimB)$:
\begin{align}
\label{eqn-map-breaking-FRFL}
FRFL  \xrightarrow{\trace FL \oplus F\kappa L} & FL \oplus FJ_nL 
\xrightarrow{F\action L \oplus *} FRFL\\
\label{eqn-map-breaking-FHRFL}
FHRFL  \xrightarrow{FH \mu_n L \oplus (\nu^{-1}L) \circ (F\kappa L)\circ (\psi FL)} & FH^{n+1}L \oplus FHQ_{n-1}L 
\xrightarrow{* \oplus FH\iota_n L} FHRFL\\
\label{eqn-map-breaking-FRFH'L}
FRFH'L  \xrightarrow{\trace FH'L \oplus F\kappa H'L} & FH'L \oplus FJ_nH'L 
\xrightarrow{F\action H'L \oplus *} FRFH'L\\
\label{eqn-map-breaking-FHRFH'L}
FHRFH'L  \xrightarrow{FH\mu_n H'L \oplus (\nu^{-1}H'L) \circ (F\kappa H' L)\circ (\psi FH'L)} & FH^nL \oplus FHQ_{n-1}H'L
\xrightarrow{* \oplus FH\iota_n H'L} FHRFH'L.
\end{align}

Note that in the case of $FRFL$ the map $*: FJ_nL \to FRFL$ 
can be given explicitly as 
$$FJ_nL \xrightarrow{\nu^{-1}} FHQ_{n-1}L \xrightarrow{FH\iota_n L} FHRFL \xrightarrow{\psi FL} FRFL,$$
 since $(\trace FL) \circ (\psi FL)$ equals zero in $D(\BbimB)$.


\subsection{Breaking down the tensor product}
To prove that
$P_F$ is an equivalence, we first establish
that $P_F P'_F$ is homotopy equivalent to $\id_B$. 
Here and below by the convolution
of a twisted bicomplex we mean the convolution of its totalization.
\begin{prps}
\label{prps-PP'-skeleton}
$P_FP'_F$ is homotopy equivalent to the convolution of the twisted bicomplex
\begin{equation}
\label{eqn-diagram-PP'-skeleton}
\begin{tikzcd}[column sep={2cm}]
&
FR
\ar{r}{\trace}
\ar{d}{-(F\kappa L) \circ (FR\action)}
&
\underset{\degzero}{\B}
\\
FHQ_{n-1}L
\ar{r}{\nu L}
\ar[']{d}{(FH\mu_n H'L) \circ (FHR\psi') \circ (FH\iota_n L)}
&
\underset{\degzero}{FJ_nL}
&
\\
\underset{\degzero}{FHH^nH'L}.
&
&
\end{tikzcd}
\end{equation}
\end{prps}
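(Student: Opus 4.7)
The plan is to realise $P_F P'_F$ as the convolution in $\pretriag(\BbarB)$ of the $3\times 3$ twisted bicomplex $\bar{P}_F \bartimes_\B \bar{P}'_F$, and then to progressively simplify it by repeated applications of the Replacement Lemma (Lemma \ref{lemma-replacement-lemma}). Laying out $\bar{P}_F$ vertically (rows $FHR$, $FR$, $\B$ in degrees $-2$, $-1$, $0$) and $\bar{P}'_F$ horizontally (columns $\B$, $FL$, $FH'L$ in degrees $0$, $1$, $2$), the four interior entries are $FRFL$, $FRFH'L$, $FHRFL$ and $FHRFH'L$, with horizontal differentials inherited from $\action$ and $\psi'$, vertical differentials from $\psi$ and $\trace$, and additional higher-order components coming from the dashed arrows of $\bar{P}_F$ and $\bar{P}'_F$.

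Next I would apply the Replacement Lemma four times, using the splittings \eqref{eqn-map-breaking-FRFL}--\eqref{eqn-map-breaking-FHRFH'L}, to replace each interior entry by a direct sum:
\begin{align*}
FRFL &\simeq FL \oplus FJ_nL, & FRFH'L &\simeq FH'L \oplus FJ_nH'L,\\
FHRFL &\simeq FH^{n+1}L \oplus FHQ_{n-1}L, & FHRFH'L &\simeq FH^nL \oplus FHQ_{n-1}H'L,
\end{align*}
where in the last line the identification $FHH^nH'L \simeq FH^nL$ is the isomorphism furnished by the highest degree term condition. By the construction of the splittings, together with the triangle identities for the two adjunctions, many of the newly produced differentials are identity maps between direct summands in adjacent rows or columns. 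Each such identity assembles with its source and target into a contractible two-term subcomplex which can be cancelled.

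Peeling off these contractible subcomplexes one by one collapses the bottom $FL$ and $FH'L$ (against the $FL$- and $FH'L$-summands of the decompositions of $FRFL$ and $FRFH'L$) and then the $FH^{n+1}L$--$FH^nL$ pair (cancelled against each other via the $FHR\psi'$ arrow, using the adjoints condition and the highest degree term isomorphism to identify them as sitting in adjacent columns). What remains is precisely the bicomplex \eqref{eqn-diagram-PP'-skeleton}: the top row $FR\xrightarrow{\trace}\B$; the middle row $FHQ_{n-1}L\xrightarrow{\nu L} FJ_nL$, where the horizontal arrow is recognised as $\nu L$ via the very definition of $\nu$ given in the monad condition \eqref{eqn-monad-condition}; the bottom object $FHH^nH'L$; and the two vertical connecting maps $-(F\kappa L)\circ(FR\action)$ and $(FH\mu_n H'L)\circ(FHR\psi')\circ(FH\iota_n L)$ as displayed.

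The main obstacle is the careful bookkeeping required to track all the differentials through the chain of replacements and cancellations, including the higher-order (dashed) components produced both by the Replacement Lemma and inherited from $\bar{P}_F$ and $\bar{P}'_F$, and to verify that the residual connecting map $FHQ_{n-1}L \to FHH^nH'L$ really reduces on the nose to the stated composition. The highest degree term condition is invoked precisely at the step where the $FH^{n+1}L$ summand from the decomposition of $FHRFL$ is matched with the $FH^nL$ summand from the decomposition of $FHRFH'L$, so that it is essential not only for defining $\psi'$ in $\bar{P}'_F$ but also for guaranteeing that this intended cancellation actually takes place and produces the expected diagonal arrow in \eqref{eqn-diagram-PP'-skeleton}.
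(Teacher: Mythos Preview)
Your setup is right: form the $3\times3$ bicomplex, then use the Replacement Lemma with the splittings \eqref{eqn-map-breaking-FRFL}--\eqref{eqn-map-breaking-FHRFH'L}. The gap is in which pairs you cancel afterwards. You propose to cancel the summand $FH^{n+1}L$ of $FHRFL$ against the summand $FHH^nH'L$ of $FHRFH'L$ via the $FHR\psi'$ arrow. But $FHH^nH'L$ is precisely the object that must \emph{survive} into the skeleton \eqref{eqn-diagram-PP'-skeleton}; cancelling it leaves you without the bottom-left entry. Moreover, your cancellations say nothing about $FHR$, $FHQ_{n-1}H'L$, or $FJ_nH'L$, so after your three pairings you still have seven summands, not five. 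Finally, the component of $FHR\psi'$ from $FHH^nL$ to $FHH^nH'L$ is not obviously an isomorphism: the highest degree term condition \eqref{eqn-highest-degree-term-condition} constrains $FHR\psi'$ only after precomposition with $FH\iota_nL$, i.e.\ on the $FHQ_{n-1}L$ summand, not on $FHH^nL$.

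The paper's route is to cancel the other way. It removes (i) $FH'L\to FH'L$ in the bottom row; (ii) $FHQ_{n-1}H'L\xrightarrow{\nu H'L}FJ_nH'L$ in the bottom row, acyclic by the monad condition; (iii) $FL\to FL$ in the second row; and (iv) $FHR\to FHH^nL$ in the first column, acyclic by the (twisted) adjoints condition \eqref{eqn-adjoints-condition-twisted}. The order matters: each removal is justified by there being no arrows in (or no arrows out) at that stage, so no new differentials are created. After these four removals exactly the five objects of \eqref{eqn-diagram-PP'-skeleton} remain, with the displayed maps. The highest degree term condition plays no role in this proposition; it enters only in the next step (Proposition~\ref{prps-PP'-headless-skeleton-acyclic}) when showing the headless skeleton is acyclic.
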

\begin{proof}
Our first claim is that  $P_FP'_F$ 
is homotopy equivalent to the convolution of the twisted bicomplex
\begin{equation}
\label{eqn-diagram-PP'-initial}
\begin{tikzcd}[column sep={2cm}]
FHR
\ar{r}{\psi}
\ar[']{d}{FHR\action}
&
FR
\ar{r}{\trace}
\ar{d}{-FR\action}
&
\underset{\degzero}{\B}
\ar{d}{\action}
\\
FHRFL
\ar{r}{\psi FL}
\ar[']{d}{FHR\psi'}
&
\underset{\degzero}{FRFL}
\ar{r}{\trace FL}
\ar{d}{-FR\psi'}
&
FL
\ar{d}{\psi'}
\\
\underset{\degzero}{FHRFH'L}
\ar{r}{\psi FH'L}
&
FRFH'L
\ar{r}{\trace FH'L}
&
FH'L
\end{tikzcd}
\end{equation}
with some higher differentials going down and right. 
By \cite[Lemma 3.42]{AnnoLogvinenko-BarCategoryOfModulesAndHomotopyAdjunctionForTensorFunctors}, 
since both twisted complexes $\bar{P}_F$ and $\bar{P}'_F$ are one-sided,
$P_FP'_F$ is homotopy equivalent to the convolution of a twisted bicomplex
whose entries are tensor products of those of $\bar{P}_F$ and $\bar{P}'_F$ in $\BmodbarB$.
Its differentials are tensor products of the differentials in the original complexes with
appropriate signs. Thus all differentials in this twisted bicomplex run right and down.
It differs however from \eqref{eqn-diagram-PP'-initial} in the first row and the third column,
because in $\BmodbarB$ the functors $(-)\bartimes \B$ and $\B\bartimes (-)$ are homotopic to
identity, not isomorphic to it. We now use the 
Replacement Lemma (Lemma \ref{lemma-replacement-lemma}) 
to replace the entries in the first row and the third column one by one.
The mutually inverse homotopy equivalences we use are, 
in the notation of \cite[\S 3.3]{AnnoLogvinenko-BarCategoryOfModulesAndHomotopyAdjunctionForTensorFunctors}:
\begin{align*}
(-) \xrightarrow{\beta_{(-)}} (-)\bartimes \B \xrightarrow{\alpha_{(-)}} (-)
\qquad & \text{for $FL$ and $FH'L$}\\
(-) \xrightarrow{\beta_{(-)}} \B \bartimes (-) \xrightarrow{\alpha_{(-)}} (-)
\qquad & \text{for $FR$ and $FHR$} \\
\B \xrightarrow{\beta_{\B}^l} \B\bartimes\B \xrightarrow{\alpha_\B} \B
\qquad & \text{for $\B$ in the top right corner.} 
\end{align*}
By \cite{AnnoLogvinenko-BarCategoryOfModulesAndHomotopyAdjunctionForTensorFunctors},
Prop. 3.27 (1) the degree zero differentials in the resulting complex coincide with those in
\eqref{eqn-diagram-PP'-initial}. We acquire some higher differentials in the process, but since we
replace one node at a time, these new differentials only go down and right.
The claim follows. 

Now, we apply the Replacement Lemma 
to replace $FHRFL$, $FRFL$, $FHRFH'L$, and $FRFH'L$ 
using the homotopy equivalences 
\eqref{eqn-map-breaking-FHRFL},
\eqref{eqn-map-breaking-FRFL},
\eqref{eqn-map-breaking-FHRFH'L}, and 
\eqref{eqn-map-breaking-FRFH'L} respectively.
We obtain the bicomplex:
\begin{equation}
\label{eqn-diagram-PP'-homotopy-split}
\begin{tikzcd}[column sep={2cm}, row sep={1.5cm}]
FHR
\ar{r}{\psi}
\ar[']{d}{\begin{pmatrix}(FH\mu_n L)\circ (FHR\action) \\ *  \end{pmatrix}}
&
FR
\ar{r}{\trace}
\ar{d}{\begin{pmatrix}
* \\ -(F\kappa R) \circ (FR\action)  \end{pmatrix}}
&
\id
\ar{d}{\action}
\\
FHH^nL \oplus FHQ_{n-1}L
\ar{r}{\begin{pmatrix} * \amsamp * \\ \nu L \amsamp * \end{pmatrix}}
\ar[']{d}{\begin{pmatrix} * \amsamp FH(\mu_n H'L\circ R\psi' \circ \iota_n L) \\ * \amsamp * \end{pmatrix}}
&
FL \oplus FJ_nL
\ar{r}{(\sim \ * )}
\ar{d}{\begin{pmatrix} * \amsamp * \\ * \amsamp * \end{pmatrix}}
&
FL
\ar{d}{\psi'}
\\
FHH^nH'L \oplus FHQ_{n-1}H'L
\ar{r}{\begin{pmatrix} * \amsamp * \\ \nu H'L \amsamp * \end{pmatrix}}
&
FH'L \oplus FJ_nH'L
\ar{r}{( \sim \ * )}
&
FH'L
\end{tikzcd}
\end{equation}
where  $(*)$ denotes unknown maps, and  $\sim$ denotes homotopy equivalences.
Once again, the higher differentials only go down and right.
Finally, the Replacement Lemma allows us to remove 
acyclic subcomplexes without acquiring any new differentials 
if there are no arrows going in or no arrows going out of these subcomplexes.
We apply it to remove parts of \eqref{eqn-diagram-PP'-homotopy-split} in the following order:
\begin{enumerate}
\item The subcomplex $FH'L \to FH'L$ in the bottom row (no arrows out);
\item The subcomplex $FHQ_{n-1}H'L \to FJ_nH'L$ in the bottom row (no arrows out after the previous step);
\item The subcomplex $FL \to FL$ in the second row (no arrows out after the previous step);
\item The subcomplex $FHR \to FHH^nL$ in the first column (no arrows in).
\end{enumerate}
We obtain the bicomplex \eqref{eqn-diagram-PP'-skeleton}.
It has no higher differentials since at every step of the proof we ensured that the higher differentials only
went down and right. 
\end{proof}

\subsection{$P'_F$ is fully faithful}
By abuse of notation, we use $\mu_n$ to 
also denote the map $J_n \to H^n$ that comes from the projection
of the twisted complex $\bar{J}_n$ onto its first term $H^n[-n]$.

\begin{prps}
\label{prps-PP'-headless-skeleton-acyclic}
The following bicomplex has an acyclic convolution:
\begin{equation}
\label{bicomplex-headless-skeleton}
\begin{tikzcd}[column sep={2cm}]
&
FR
\ar{d}{(F\kappa L)\circ (FR\action)}
\\
{FHQ_{n-1}L}
\ar{r}{\nu L}
\ar[']{d}{(FH\mu_n H'L)\circ (FHR\psi')\circ (FH\iota_n L)}
&
\underset{\degzero}{FJ_nL}
\\
\underset{\degzero}{FHH^nH'L}. 
\end{tikzcd}
\end{equation}
\end{prps}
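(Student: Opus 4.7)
The plan is to totalize the bicomplex \eqref{bicomplex-headless-skeleton} into a two-term complex
\[
FR \oplus FHQ_{n-1}L \;\longrightarrow\; FJ_nL \oplus FHH^nH'L
\]
concentrated in cohomological degrees $-1$ and $0$, and then exploit the fact that the horizontal map $\nu L$ is an isomorphism by the monad condition. The total differential is representable by the matrix
\[
\begin{pmatrix} (F\kappa L)\circ (FR\action) & \nu L \\ 0 & \phi \end{pmatrix},
\]
where $\phi$ denotes the vertical composition $(FH\mu_n H'L)\circ (FHR\psi')\circ (FH\iota_n L)$. Since $\nu L$ is invertible, a change of basis on both source and target will reduce the total complex, up to homotopy equivalence, to the direct sum of $\nu L$ itself and the ``effective'' map
\[
\varphi \;:=\; \phi \circ (\nu L)^{-1}\circ (F\kappa L)\circ (FR\action) \colon FR \longrightarrow FHH^nH'L.
\]
Thus acyclicity of the totalization will be equivalent to showing that $\varphi$ is an isomorphism.

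To identify $\varphi$, I will invoke the highest degree term condition, which furnishes an isomorphism $\zeta \colon FH^n L \xrightarrow{\sim} FHH^n H'L$ with
\[
\phi \;=\; \zeta \circ (F\mu_n L)\circ (\psi FL)\circ (FH\iota_n L).
\]
The key additional observation is that at the twisted complex level the map $\mu_n\colon Q_n \to H^n$ factors as $Q_n \xrightarrow{\kappa} J_n \xrightarrow{\mu_n^{J_n}} H^n$, both being projections onto the top summand $H^n[-n]$ of the defining twisted complex (this is precisely the abuse of notation adopted just before Proposition \ref{prps-PP'-headless-skeleton-acyclic}). Combined with $\nu L = (F\kappa L)\circ (\psi FL)\circ (FH\iota_n L)$, this gives
\[
(F\mu_n L)\circ (\psi FL)\circ (FH\iota_n L) \;=\; (F\mu_n^{J_n} L)\circ (F\kappa L)\circ (\psi FL)\circ (FH\iota_n L) \;=\; (F\mu_n^{J_n} L)\circ \nu L,
\]
whence $\phi\circ (\nu L)^{-1} = \zeta\circ (F\mu_n^{J_n} L)$. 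Substituting and again using $(F\mu_n^{J_n}L)\circ (F\kappa L) = F\mu_n L$ yields $\varphi = \zeta\circ (F\mu_n L)\circ (FR\action)$. But $(F\mu_n L)\circ (FR\action)$ is precisely the adjoints condition map \eqref{eqn-adjoints-condition}, an isomorphism by hypothesis, and $\zeta$ is an isomorphism by construction, so $\varphi$ is an isomorphism.

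The main obstacle will be carrying the change of basis through rigorously at the DG / bar-category level rather than merely in the derived category, since what is actually required is acyclicity of the totalization inside $\BmodbarB$. In the derived category this is a routine double-cone manipulation, but on the nose one must verify that the bicomplex does not carry any higher differentials that would obstruct the simplification. Fortunately, Proposition \ref{prps-PP'-skeleton} was organised so that its higher differentials run only down-and-right and are all either absorbed into the entries already visible in \eqref{eqn-diagram-PP'-skeleton} or involve the excised ``head'' $\B$; one can therefore safely apply the Replacement Lemma (Lemma \ref{lemma-replacement-lemma}) with $\nu L$ as the homotopy equivalence used to collapse the edge $FHQ_{n-1}L \to FJ_nL$, producing a genuine two-term complex $FR \to FHH^nH'L$ realising $\varphi$, whose acyclicity then follows from the identification above.
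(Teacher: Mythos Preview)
Your proof is correct and is essentially the same argument as the paper's: both collapse the acyclic row $FHQ_{n-1}L \xrightarrow{\nu L} FJ_nL$ via the Replacement Lemma to reduce to the single map $\phi\circ(\nu L)^{-1}\circ(F\kappa L)\circ(FR\action)\colon FR\to FHH^nH'L$, then identify this (up to an isomorphism supplied by the highest degree term condition) with the adjoints-condition map $(F\mu_n L)\circ(FR\action)$ via the factorisation $\mu_n = \mu_n^{J_n}\circ\kappa$. Your closing worry about inherited higher differentials is unnecessary---the bicomplex \eqref{bicomplex-headless-skeleton} is given explicitly with none---but otherwise the arguments coincide.
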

\begin{proof}
Since $\nu$ is a homotopy equivalence, the complex 
$FHQ_{n-1}L \xrightarrow{\nu L} FJ_nL$ is acyclic. 
Let $b_0$ and $b_1$ be any degree $-1$ maps that
lift the boundaries $\id-\nu^{-1} \circ \nu$ and $\id-\nu \circ \nu^{-1}$, respectively.
Then the map
\begin{equation}
\begin{tikzcd}[column sep={1.5cm}, row sep={1cm}]
\big(FHQ_{n-1}L
\ar{r}{\nu L} 
\ar[']{d}{-b_1L}
&
FJ_nL\big)
\ar[']{ld}{\nu^{-1} L}
\ar{d}{b_0L}
\\
\big(FHQ_{n-1}L
\ar{r}{\nu L} 
&
FJ_nL\big)
\end{tikzcd}
\end{equation}
is a contracting homotopy.
Thus by the Replacement Lemma the convolution of \eqref{bicomplex-headless-skeleton}
is homotopy equivalent to the convolution of 
\begin{equation}
\label{eqn-extract-of-skeleton}
FR
\xrightarrow{(FH\mu_n H'L)\circ (FHR\psi')\circ (FH\iota_n L) \circ (\nu^{-1}L) \circ (F\kappa L)\circ (FR\action)}
FHH^nH'L.
\end{equation}
It suffices to show that this map is a homotopy equivalence.
By the highest degree term condition, the map 
$(FH\mu_n H'L)\circ (FHR\psi')\circ (FH\iota_n L)$ is homotopic to the composition of
$$
(F\mu_n L)\circ (\psi FL) \circ (\iota_n L) = (F\mu_n L) \circ (\nu L)
$$
with a homotopy equivalence.
It remains to show that 
$$
 (F\mu_n L) \circ (\nu L) \circ (\nu^{-1}L) \circ (F\kappa L)\circ (FR\action)
$$
is a homotopy equivalence. Since $(\nu L) \circ (\nu^{-1}L)\sim\id$,
this follows from the adjoints condition.
\end{proof}

\begin{cor}
\label{cor-functor-P'F-is-fully-faithful}
The functor $P'_F$ is fully faithful.
\end{cor}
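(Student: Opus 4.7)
The plan is to deduce from the two preceding propositions that the adjunction unit $\id_\B \to P_F P'_F$ for the adjunction $P'_F \dashv P_F$ is an isomorphism in $D(\BbimB)$, which is equivalent to $P'_F$ being fully faithful.

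First I would observe that the bicomplex \eqref{eqn-diagram-PP'-skeleton} of Proposition \ref{prps-PP'-skeleton} is obtained from the bicomplex \eqref{bicomplex-headless-skeleton} of Proposition \ref{prps-PP'-headless-skeleton-acyclic} by adjoining the object $\id_\B$ in the top-right corner together with the arrow $FR \xrightarrow{\trace} \id_\B$; the sign discrepancy on the arrow $FR \to FJ_nL$ between the two bicomplexes is immaterial, since negating a single arrow in a twisted complex produces a homotopy equivalent object. Writing $X$ for the convolution of \eqref{eqn-diagram-PP'-skeleton} and $Y$ for that of \eqref{bicomplex-headless-skeleton}, this inclusion of sub-bicomplexes yields a short exact sequence of twisted complexes in $\pretriag(\BbarB)$, and hence an exact triangle
$$Y \longrightarrow X \longrightarrow \id_\B \longrightarrow Y[1]$$
in $D(\BbimB)$ in which the second arrow is the canonical projection onto the $\id_\B$-summand. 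By Proposition \ref{prps-PP'-headless-skeleton-acyclic} we have $Y \simeq 0$, so this projection is an isomorphism; composing its inverse with the equivalence $P_F P'_F \simeq X$ of Proposition \ref{prps-PP'-skeleton} yields an isomorphism $\id_\B \xrightarrow{\sim} P_F P'_F$ in $D(\BbimB)$.

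The remaining step, which I expect to be the main obstacle, is to verify that this isomorphism coincides with the adjunction unit. To do this I would unwind the proof of Proposition \ref{prps-PP'-skeleton} and track how the $\id_\B$-corner of \eqref{eqn-diagram-PP'-skeleton} originates: it first appears in \eqref{eqn-diagram-PP'-initial} as the entry $\B \bartimes \B$ of the naive tensor product $\bar{P}_F \bartimes \bar{P}'_F$, and then gets replaced by $\id_\B$ by the Replacement Lemma homotopy equivalence $\B \xrightarrow{\beta_\B^l} \B\bartimes\B$, while all the subsequent applications of the Replacement Lemma only touch other entries. By naturality of the duality pairings of \cite[\S4]{AnnoLogvinenko-BarCategoryOfModulesAndHomotopyAdjunctionForTensorFunctors}, the resulting closed map $\id_\B \to \tot(\bar{P}_F \bartimes \bar{P}'_F) \simeq P_F P'_F$ is exactly the unit of the 2-categorical adjunction $(P'_F, P_F)$ induced by the left duality $P_F \dashv P'_F$. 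Hence the unit is an isomorphism, and $P'_F$ is fully faithful.
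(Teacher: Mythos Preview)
Your first paragraph is correct and matches the paper: combining Propositions \ref{prps-PP'-skeleton} and \ref{prps-PP'-headless-skeleton-acyclic} gives an isomorphism $P_F P'_F \simeq \id_\B$ in $D(\BbimB)$.

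Where you diverge from the paper is in the second step. You try to show that the specific isomorphism you built coincides with the adjunction unit, by tracking the $\id_\B$-corner through all the Replacement Lemma manipulations in the proof of Proposition \ref{prps-PP'-skeleton}. You flag this as ``the main obstacle'' and then dispose of it with an appeal to ``naturality of the duality pairings''. That appeal is not a proof: the chain of homotopy equivalences produced by the Replacement Lemma is not obviously compatible with the canonical duality unit, and verifying this would require an explicit comparison that you have not carried out. So as written there is a genuine gap at exactly the point you identified as delicate.

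The paper sidesteps this entirely. It simply notes that Propositions \ref{prps-PP'-skeleton} and \ref{prps-PP'-headless-skeleton-acyclic} give \emph{some} isomorphism $P_F P'_F \simeq \id_\B$, and then invokes Johnstone's Lemma (Lemma A1.1.1 in \emph{Sketches of an Elephant}): if $L \dashv R$ and $RL$ is naturally isomorphic to the identity by any natural isomorphism whatsoever, then the adjunction unit is already an isomorphism. Applied here with $L = P'_F$ and $R = P_F$ on the level of underlying exact functors, this immediately gives that the unit is an isomorphism on every object of $D(\B)$, hence $P'_F$ is fully faithful. No tracking is needed.
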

\begin{proof}
By Prop. \ref{prps-PP'-skeleton} and \ref{prps-PP'-headless-skeleton-acyclic}
we have $P_FP'_F\simeq \id_\B$ in $D(\BbimB)$.
By Johnstone's Lemma 
\cite[Lemma 1.1.1]{Johnstone-SketchesOfAnElephantAToposTheoryCompendiumV1}
the adjunction unit $\id_B \xrightarrow{\action} P_FP'_F$ is an
isomorphism on every object of $D(\B)$,
whence $P'_F$ is fully faithful.
\end{proof}

\subsection{$P_F$ is an equivalence}
\label{subsection-P-equivalence}

\begin{proof}[Proof of Theorem \ref{theorem-Pn-functor-gives-autoequivalence}]

We have shown in Corollary \ref{cor-functor-P'F-is-fully-faithful}
that the left adjoint $P'_F$ of $P_F$ is fully faithful. By 
\cite[Lemma 2.1]{BKR01} if $\krn P_F = 0$ then 
$P_F$ is an equivalence. Let $a \in \krn P_F$, then the complex 
$$ FHRa \xrightarrow{\psi a} FRa \xrightarrow{\trace a} a $$
is acyclic, whence $a \in \img F$. It now remains to show that $\krn
P_F F = \krn F$. By Proposition 
\ref{prps-PF-is-isomorphic-to-FH^n+1}
we have 
$$ P_F F \simeq FH^{n+1}[2], $$
whence $\krn P_F F = \krn FH^{n+1}$. 
By our assumptions in Definition \ref{defn-Pn-functor} we have 
$H^{n+1}(\krn F) = \krn F$. Thus 
$$ \krn F = \krn FH^{n+1} = \krn P_F F,$$
as desired. 
\end{proof}

We have established in Theorem \ref{theorem-Pn-functor-gives-autoequivalence}
that the exact functor $D(\B) \rightarrow D(\B)$ underlying the
enhanced functor $P_F \in D(\BbimB)$ is invertible. It follows
trivially that $P_F$ is invertible as an enhanced functor.
Indeed, the adjunction unit $\id_\B \xrightarrow{\action} P_FP'_F$ and 
the adjunction counit $P'_F P_F \xrightarrow{\trace} \id_B$ in
$D(\BbimB)$ applied to any object of $D(B)$ give the adjunction unit
and counit of the exact functors underlying $P_F$ and $P'_F$. Since
these exact functors are equivalences, we conclude that the cones of 
$\id_\B \xrightarrow{\action} P_FP'_F$ and 
$P'_F P_F \xrightarrow{\trace} \id_B$ are zero on every object of
$D(\B)$ and hence are isomorphic to zero in $D(\BbimB)$. Hence 
$P_F P'_F \simeq P'_F P_F \simeq \id_B$ in $D(\BbimB)$. 

\section{The strong monad condition}
\label{section-strong-monad-condition}

In this section we first analyse the case where the filtration 
$Q_i$ on the monad $RF$ of a $\mathbb{P}^n$-functor $F$ is split. 
This was the case treated by Addington in 
\cite[\S3]{Addington-NewDerivedSymmetriesOfSomeHyperkaehlerVarieties}. 
We compare his definition of a (split) $\mathbb{P}^n$-functor with our 
Definition \ref{defn-Pn-functor} and show that the former 
implies the latter. Specifically, the definition in  
\cite[\S3]{Addington-NewDerivedSymmetriesOfSomeHyperkaehlerVarieties}
also asks for the adjunction monad $RF$ to be the direct sum of $\id$, 
$H$, \dots, $H^n$, but then it imposes two different conditions which we call
the \em strong monad condition \rm and the \em weak adjoints condition\rm. 
We show, as our choice of these names implies, that the strong monad condition 
implies the monad condition in our Definition \ref{defn-Pn-functor}, 
while the weak adjoints condition follows from its adjoints
condition. We further show that if, in the split case,
the strong monad condition is satisfied, then 
the weak adjoints condition implies the adjoints condition, 
and the highest degree term condition is automatically satisfied.

Motivated by this, we find the appropriate analogue 
of the strong monad condition in the general (non-split) case. 
It turns out to be the condition that the monad multiplication 
of $RF$ respects the filtration $Q_i$ in an obvious way. 
We then show, that as in the split case, the strong monad condition implies 
the monad and the highest degree term conditions of 
our Definition \ref{defn-Pn-functor}. We further show that if 
the strong monad condition holds and, additionally,
$\homm^{-1}_{D(\AbimA)}(\id, H^{i})$ vanishes for all $i > 0$, 
then again the weak adjoints condition implies the adjoints condition. 
Having to demand this $\ext^{-1}$ vanishing is unfortunate, however
it does hold in most examples of $\mathbb{P}^n$-functors known 
to the authors to date.  

\subsection{The split case}
\label{section-strong-monad-condition-split-case}
Suppose we have $\mathbb{P}^n$-functor data $(H, Q_n, \gamma)$ where 
$Q_n$ is completely split:
\begin{equation}
\label{eqn-Q_n-as-a-direct-sum-in-split-case}
Q_n\simeq \id \oplus H \oplus \ldots \oplus H^n. 
\end{equation}
Then we also have
$$ Q'_n \simeq \id \oplus H' \oplus \ldots \oplus (H')^n $$
$$ Q_n Q'_n \simeq \bigoplus_{i,j = 0}^n H^{i} (H')^j $$
and the adjunction unit $\id \xrightarrow{\action} Q_nQ'_n$ 
is the sum of $\id \xrightarrow{\action} H^i (H')^i$.
Let $\gamma_1$ denote the composition
$$ H \hookrightarrow Q_n \xrightarrow{\gamma} RF. $$ 
It is the image in $D(\AbimA)$ of the map $\gamma_1$ of any DG-lift
of $(H, Q_n, \gamma)$. Let $\gamma_1': LF \to H'$ be its left dual, 
the projection of $LF$ onto the direct summand $H'$.
By Lemma \ref{lemma-psi-is-independent-of-the-choice-of-phi} we have 
$$ \psi = (FR \trace - \trace FR) \circ F \gamma_1 R.$$ 

Let 
$$ c_{ij}^k\colon H^iH^j \rightarrow  H^k $$
be the components of the monad multiplication 
$$ RFRF \xrightarrow{R\trace F} RF $$ 
under the identification
\eqref{eqn-Q_n-as-a-direct-sum-in-split-case}. 
Let $A_l$ be the map of the left multiplication by $H$ in the monad $RF$: 
$$ A_l \; := \; HRF \xrightarrow{\gamma_1RF} RFRF \xrightarrow{R \trace F} RF, $$
and let $(a_{ij})$ be the matrix of its component maps, 
in other words $a_{ij}=c_{1j}^i$. Similarly, let  
$A_l$ be the map of the right multiplication by $H$:
$$ A_r\; := \; RFH \xrightarrow{RF \gamma_1} RFRF \xrightarrow{R
\trace F} RF $$
 
The bimodule $RF$ is an algebra in the category $D(\AbimA)$.
The bimodule $LF$ is a coalgebra, and the map
$$ RF \xrightarrow{R\action F} RFLF $$ makes $RF$ a right $LF$-comodule. 
Let $B$ be the map of the right coaction of $H'$ in this
comodule: 
$$ B \; := \; RF \xrightarrow{R\action F} RFLF \xrightarrow{RF
\gamma'_1}
RFH' $$
and let $(b_{ij})$ be its matrix of components, where  
$b_{ij}$ is the component $H^j \to H^iH'$. 
We can relate $A_r$ and $B$:

\begin{lemma}
\label{lemma-mult-H-comult-H'-relation}
The composition $RF \xrightarrow{RF\action} RFHH' \xrightarrow{A_rH'} RFH'$ equals $B$.
\end{lemma}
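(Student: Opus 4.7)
My plan is to unfold both sides of the asserted identity and reduce it to a single application of the monad triangle identity for $RF$.

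Expanding $A_r = (R\trace F)\circ(RF\gamma_1)$ rewrites the LHS as $(R\trace FH')\circ(RF\gamma_1 H')\circ(RF\action)$, where $\trace\colon FR\to\id_\B$ is the counit of $(F,R)$ and $\action\colon\id_\A\to HH'$ is the unit coming from the autoequivalence $H$. The two innermost whiskerings combine into $RF$ applied to $\gamma_1 H'\circ\action\colon\id_\A\to RFH'$, which is the ``mate'' of $\gamma_1$ under the $(F_H,F_{H'})$ adjunction. Applying Corollary \ref{cor-f-and-f-dual-commuting-square} to $\gamma_1\colon H\to RF$ identifies this mate with the composition $\id_\A\xrightarrow{\action}RFLF\xrightarrow{RF\gamma_1'}RFH'$, where the second $\action$ now denotes the unit of the composed adjunction $LF\dashv RF$. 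After substitution, the LHS becomes $(R\trace FH')\circ(RFRF\gamma_1')\circ(RF\action)$.

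The morphism $\gamma_1'$ acts on the innermost copy of $LF$ while the counit $\trace$ acts on the outer copy of $FR$; these whiskerings sit at disjoint positions in the functorial string and the interchange law lets us commute them, yielding $(RF\gamma_1')\circ(R\trace FLF)\circ(RF\action)$. It now suffices to show that $(R\trace FLF)\circ(RF\action)$ coincides with $R\action F\colon RF\to RFLF$, where this last $\action$ is the unit of $(L,F)$, since then composing with $RF\gamma_1'$ produces exactly $B$. For this, use the standard decomposition of the unit of a composed adjunction: the unit $\id_\A\to RFLF$ of $LF\dashv RF$ factors as $(R\action F)\circ\action$, with the trailing $\action$ being the unit of $(F,R)$. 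After whiskering by $RF$ and a further application of the interchange law to move $R\action F$ past $R\trace F$, the composition becomes $(R\action F)\circ(R\trace F)\circ(RF\action)$; the last two factors are precisely the monad triangle identity for $RF$ and compose to $\id_{RF}$, leaving $R\action F$ as required.

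The main obstacle is purely bookkeeping: one must carefully track the positions at which several natural transformations are whiskered into long composition strings and apply the interchange law to commute transformations acting at disjoint positions. Beyond Corollary \ref{cor-f-and-f-dual-commuting-square}, the only structural ingredients are the decomposition of the unit of $LF\dashv RF$ into units of $(L,F)$ and $(F,R)$, and the monad triangle identity of $RF$.
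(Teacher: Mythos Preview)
Your proof is correct and uses the same ingredients as the paper: Corollary~\ref{cor-f-and-f-dual-commuting-square} applied to $\gamma_1\colon H\to RF$, the factorisation of the unit of $LF\dashv RF$ through the units of $(L,F)$ and $(F,R)$, the interchange law, and the triangle identity $(R\trace F)\circ(RF\action)=\id_{RF}$. The only difference is the direction of travel: you expand the left-hand side and reduce it to $B$, whereas the paper first packages the triangle-identity-plus-interchange step into the commutative diagram~\eqref{diagram-mult-comult-relation}, uses it to rewrite $B$ through $RFRFLF$, and then invokes the corollary at the end. The arguments are logically identical.
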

\begin{proof}
First, the diagram commutes:
\begin{equation}
\label{diagram-mult-comult-relation}
\begin{tikzcd}[column sep={2cm}]
RF 
\ar{r}{RF\action}
\ar[equal]{rd}
&
RFRF
\ar{r}{RFR\action F}
\ar{d}{R\trace F}
&
RFRFLF
\ar{d}{R\trace FLF}
\\
&
RF
\ar{r}{R\action F}
&
RFLF.
\end{tikzcd}
\end{equation}
Therefore, 
$$ B \;=\; 
RF \xrightarrow{RF\action} RFRFLF \xrightarrow{R\trace FLF}
RFLF \xrightarrow{RF\gamma'_1} RFH' $$
and by functoriality of the tensor product we further have
$$ B \;=\; 
RF \xrightarrow{RF\action} RFRFLF \xrightarrow{RFRF\gamma'_1}
RFRFH' \xrightarrow{R\trace FH'} RFH', $$
whence it remains to show that the following diagram commutes:
\begin{equation}
\begin{tikzcd}[column sep={2cm}]
RF
\ar{r}{RF \action}
\ar{d}{RF \action}
&
RFHH'
\ar{d}{RF\gamma_1 H'}
\\
RFRFLF
\ar{r}{RFRF\gamma'_1}
&
RFRFH'. 
\end{tikzcd}
\end{equation}
This diagram is $RF$ applied to the diagram  
\begin{equation}
\begin{tikzcd}[column sep={2cm}]
\id
\ar{r}{\action}
\ar{d}{\action}
&
HH'
\ar{d}{\gamma_1 H'}
\\
RFLF
\ar{r}{RF\gamma'_1}
&
RFH'
\end{tikzcd}
\end{equation}
which commutes by 
Corollary \ref{cor-f-and-f-dual-commuting-square}
since $\gamma_1$ is left dual to $\gamma'_1$.
\end{proof}

\subsection{Split $\mathbb{P}^n$-functors}
In
\cite{Addington-NewDerivedSymmetriesOfSomeHyperkaehlerVarieties}
Addington defined a (split) $\mathbb{P}^n$-functor as 
an enhanced functor $F$ equipped with an isomorphism
$$ RF \simeq \id \oplus H \ldots H^n $$ 
for which the following two conditions hold:
\begin{itemize}
\item \em Strong monad condition: \rm The matrix $A_l$ of the 
left multiplication by $H$ in $RF$ has the form 
\begin{equation}
\left(
\begin{tikzcd}[column sep={0cm}, row sep={0cm}]
* & * & \dots & * & * 
\\
1 & * & \dots & * & * 
\\
0 & 1 & \dots & * & * 
\\
\vdots & \vdots & \ddots & \vdots & \vdots 
\\
0 & 0 & \dots & 1 & *
\end{tikzcd}
\right), 
\end{equation}
in other words $a_{kj}=0$ for $k>j+1$ and
$a_{j+1, j}$ are the identity maps for $0 \leq j < n$.
\item \em Weak adjoints condition: \rm
There exists \em some \rm isomorphism $R \simeq H^nL$.
\end{itemize}

Recall that $a_{ij}$ are the components $c^{i}_{1j}$ of the whole 
monad multiplication of $RF$. The associativity of the monad $RF$ 
implies that the strong monad condition above
is equivalent to an even stronger condition:
\begin{lemma}
\label{lemma-split-case-strong-monad-iff-stronger-monad}
Strong monad condition holds if and only if the following holds:
\begin{itemize}
\item Stronger monad condition: 
\begin{equation}
\label{eqn-stronger-monad-condition}
\begin{cases}
c_{ij}^k=0   & \quad \quad  k > i+j 
\\
c_{ij}^{i+j} \text{ is the identity map }
& \quad \quad i+j\leq n.
\end{cases}
\end{equation}
\end{itemize}
\end{lemma}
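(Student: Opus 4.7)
The direction ``stronger $\Rightarrow$ strong'' is immediate: setting $i = 1$ in the stronger condition recovers the strong condition exactly, since $a_{kj} = c^k_{1j}$ by definition. All the content lies in the converse, which I would prove by induction on $i$, using the associativity of the monad multiplication $m \colon RFRF \to RF$ as the only ingredient beyond the strong monad condition. The base case $i = 1$ is the strong monad condition itself (and for $i = 0$ the unit axiom of $RF$ forces $c^k_{0j} = \delta_{kj}\id$ for free).

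For the inductive step, I would apply associativity to the triple product $H \cdot H^{i-1} \cdot H^j \to H^k$, which, after identifying $H \cdot H^{i-1}$ with $H^i$, yields the equality
\begin{equation*}
\sum_l c^k_{lj} \circ \bigl(a_{l, i-1} \otimes \id_{H^j}\bigr)
\;=\; \sum_l a_{kl} \circ \bigl(\id_H \otimes c^l_{i-1, j}\bigr)
\end{equation*}
in $\homm_{D(\AbimA)}(H^i H^j, H^k)$. The plan is then to show that for each $k$ with $k \geq i + j$ both sides of this identity collapse to a single term. On the left, the strong monad condition kills $a_{l, i-1}$ unless $l \leq i$ and fixes $a_{i, i-1} = \id$, while the inductive hypothesis kills $c^k_{lj}$ for every $l < i$ (since then $k \geq i+j > l+j$); thus only the $l = i$ term survives, leaving $c^k_{ij}$. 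On the right, the inductive hypothesis kills $c^l_{i-1, j}$ unless $l \leq i + j - 1$ and fixes $c^{i+j-1}_{i-1, j} = \id$, while the strong monad condition kills $a_{kl}$ unless $l \geq k - 1$; combined with $k \geq i + j$, the unique surviving index is $l = i + j - 1 = k - 1$, leaving $a_{k, i+j-1}$. This vanishes when $k > i+j$ by strong monad, yielding $c^k_{ij} = 0$, and equals $\id$ when $k = i + j$ (since then $(i+j) - 1 < n$), yielding $c^{i+j}_{ij} = \id$.

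The only real obstacle is the combinatorial bookkeeping in the inductive step. The strong monad condition provides constraints of ``upper triangular plus one subdiagonal'' shape on the $a_{kl}$, while the inductive hypothesis provides strict upper triangular constraints on the $c^l_{i', j}$ for $i' < i$. Verifying that the intersection of these two constraint patterns forces exactly one surviving term on each side of the associativity identity, and that each surviving term evaluates to $0$ or $\id$ via the two nontrivial diagonal entries $a_{i, i-1} = \id$ and $c^{i+j-1}_{i-1, j} = \id$, is a matter of careful indexing but presents no deeper difficulty.
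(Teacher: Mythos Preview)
Your proposal is correct and follows essentially the same route as the paper: induction on $i$ with base case $i=1$, using associativity of the triple product $H \cdot H^{i-1} \cdot H^j$ to express the $H^k$-component in two ways, then invoking the strong monad condition on the $a_{\bullet\bullet}$ factors and the inductive hypothesis on the $c^{\bullet}_{\bullet\bullet}$ factors to collapse each side to a single term. The paper phrases this via the two contours of a commutative square rather than your explicit summation over $l$, but the term-by-term analysis is identical.
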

\begin{proof}
The strong monad condition is the restriction of 
\eqref{eqn-stronger-monad-condition}
to the case $i = 1$. This shows the ``if'' implication, and
provides the base for proving the ``only if'' part by the induction on $i$. 
Indeed, suppose that we have established 
\eqref{eqn-stronger-monad-condition} for all $i < m$.
Since for $i + j > n$ the condition
\eqref{eqn-stronger-monad-condition} is vacuous, it suffices
to establish \eqref{eqn-stronger-monad-condition}
for $i = m$ and $0 \leq j \leq n - m$. 
By associativity of the algebra $RF$ the following square commutes:
\begin{equation}
\label{eqn-H-H(m-1)-Hj-associativity-square}
\begin{tikzcd}[column sep={2cm}]
HH^{m-1}H^j
\ar{r}
\ar{d}
&
\left( \bigoplus\limits_{s=0}^n H^s \right) H^j
\ar{d}
\\
H \left(\bigoplus\limits_{t=0}^n H^t \right)
\ar{r}
&
\bigoplus\limits_{k=0}^n H^k.
\end{tikzcd}
\end{equation}

Let 
$$ f^k \colon H H^{m-1} H^j \rightarrow H^k \quad \quad k \in [0,n] $$
be the corresponding component of the map obtained by travelling around
either the top or the bottom contour of  
\eqref{eqn-H-H(m-1)-Hj-associativity-square}. 
By applying the induction assumption to its first
map we can decompose the top contour into the sum of two maps 
\begin{align}
\label{eqn-top-contour-Hm-Hj-part}
HH^{m-1}H^j \rightarrow H^mH^j \rightarrow \bigoplus_{k=0}^n H^k, 
\\
\label{eqn-top-contour-the rest}
HH^{m-1}H^j \rightarrow  (\bigoplus_{s=0}^{m-1} H^s)H^j 
\rightarrow \bigoplus_{k=0}^n H^k.
\end{align}
The composition \eqref{eqn-top-contour-Hm-Hj-part} 
contributes the summand 
$$ HH^{m-1}H^j \xrightarrow{c^m_{1,m-1}} H^m H^j
\xrightarrow{c^k_{mj}} H^k $$ 
to each $f^k$. On the other hand, by the induction assumption 
each summand of the second map in 
\eqref{eqn-top-contour-the rest} whose target is $H^k$ with $k \geq
m+j$ vanishes. Hence \eqref{eqn-top-contour-the rest} contributes
nothing to $f^k$ for $k \geq m+j$, and thus we have 
$$ f^k = c^k_{mj} \circ c^m_{1,m-1} \quad \quad \text{ for } k \geq
m+j. $$

On the other hand, applying the induction assumption to both maps in 
the bottom contour, we see that $f^k$ is $0$ for $k > m + j$ and 
the identity map for $k = m + j$. Hence the same is true of 
the maps $c^{k}_{mj}$, since by the induction assumption 
$c^m_{1,m-1}$ is the identity map. 
\end{proof}

The stronger monad condition makes it possible 
to choose a different isomorphism $RF \simeq \id \oplus \dots \oplus H^n$
which renormalises the monad multiplication maps $c^k_{ij}\colon H^i
H^j \rightarrow H^k$ to satisfy the best possible conditions:

\begin{cor}[Strongest monad condition]
\label{cor-the-strongest-monad-condition}
Given the stronger monad condition, there exists a new 
$$\gamma'\colon \id \oplus \dots \oplus H^n \xrightarrow{\sim} RF $$
which has the same inclusion $H \hookrightarrow RF$ (and thus the same map 
$\psi\colon FHR \rightarrow FR$ and the same $\mathbb{P}$-twist), 
but makes the monad multiplication symmetric
$$ \forall\; i,j,k \in [0,\dots, n] \quad \quad \quad c^k_{ij} = c^k_{ji}, $$
and strictly diagonal on lower degree terms
$$
\forall\; i,j,k \in [0,\dots, n] 
\text{ with } i+j \leq n
\quad \quad \quad 
c^k_{ij} = 
\begin{cases}
\id & \text{ if } k = i+j \\
0 & \text{ otherwise.}
\end{cases}
$$
\end{cor}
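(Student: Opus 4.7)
The plan is to define a new $\gamma'$ by iterated monad multiplication of $\gamma_1\colon H \to RF$ with itself. Writing $m\colon RF\, RF \to RF$ for the monad multiplication and using that the $\id_\A$-component $\gamma_0$ of $\gamma$ is precisely the adjunction unit $\action$ (since $\gamma$ intertwines $\iota$ with $\action$), set
\[
\gamma'_0 := \action, \qquad \gamma'_1 := \gamma_1, \qquad \gamma'_l := m \circ (\gamma_1 \otimes \gamma'_{l-1}) \text{ for } 2 \leq l \leq n,
\]
and let $\gamma' := (\gamma'_0, \ldots, \gamma'_n)\colon \id \oplus H \oplus \ldots \oplus H^n \to RF$. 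Associativity of $m$ ensures $\gamma'_l = m^{(l)} \circ \gamma_1^{\otimes l}$ independently of the parenthesization of the $l$-fold product.

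To prove $\gamma'$ is an isomorphism, I would compute the matrix $M = (M^k_l)$ of $\gamma^{-1} \circ \gamma'$ relative to the original direct sum decomposition, so that each $M^k_l\colon H^l \to H^k$ is the $H^k$-component of $\gamma'_l$. Decomposing $\gamma'_{l-1} = \sum_j \gamma_j \circ M^j_{l-1}$, where $\gamma_j$ is the $H^j$-component of the original $\gamma$, and applying the definition of $c^k_{1,j}$ yields
\[
M^k_l = \sum_j c^k_{1,j} \circ (\id_H \otimes M^j_{l-1}).
\]
The stronger monad condition (Lemma \ref{lemma-split-case-strong-monad-iff-stronger-monad}) supplies $c^k_{1,j} = 0$ for $k > j+1$ and $c^{j+1}_{1,j} = \id$ for $j < n$. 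A direct induction on $l$ then forces $M^l_l = \id$ and $M^k_l = 0$ for $k > l$. So $M$ is upper triangular with identities on the diagonal and hence invertible, making $\gamma'$ an isomorphism. Since $\gamma'_1 = \gamma_1$ by construction, the inclusion $H \hookrightarrow RF$ is unchanged and so is the associated $\mathbb{P}$-twist data.

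Finally, for the renormalized multiplication components $c'^k_{ij}$, associativity of $m$ gives
\[
m \circ (\gamma'_i \otimes \gamma'_j) = m^{(i+j)} \circ \gamma_1^{\otimes(i+j)},
\]
a map depending only on $i+j$, not on the split. Symmetry $c'^k_{ij} = c'^k_{ji}$ follows immediately. When $i + j \leq n$, this common value equals $\gamma'_{i+j}$ by construction, so projecting via $(\gamma')^{-1}$ onto the $H^k$ summand yields $\delta_{k,i+j}\,\id$. The main (but routine) obstacle is the inductive matrix calculation establishing invertibility of $\gamma'$; once that is in place, symmetry and the explicit lower-degree values drop out of associativity.
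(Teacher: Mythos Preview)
Your proposal is correct and follows essentially the same approach as the paper: your $\gamma'_l = m^{(l)} \circ \gamma_1^{\otimes l}$ is exactly the paper's $\gamma'_i\colon H^i \xrightarrow{(\gamma_1)^i} (RF)^i \xrightarrow{R(\trace)^{i-1}F} RF$, and both arguments establish invertibility by showing $\gamma^{-1}\gamma'$ is upper triangular with identities on the diagonal (the paper states this directly from the stronger monad condition, while you spell out the induction), then deduce symmetry and strict diagonality on lower terms from associativity of the monad multiplication.
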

In particular, this strongest monad condition makes the matrices of
the left monad multiplication map $A_l\colon HRF \rightarrow RF$
and the right monad multiplication map $A_r\colon RFH \rightarrow RF$
both be
\begin{equation}
\left(
\begin{tikzcd}[column sep={0cm}, row sep={0cm}]
0 & 0 & \dots & 0 & * 
\\
1 & 0 & \dots & 0 & * 
\\
0 & 1 & \dots & 0 & * 
\\
\vdots & \vdots & \ddots & \vdots & \vdots 
\\
0 & 0 & \dots & 1 & *
\end{tikzcd}
\right). 
\end{equation}
\begin{proof}
Take the inclusion $\gamma_1: H \hookrightarrow RF$ and 
define for each $1 \leq i \leq n$ a new map  
$$ 
\gamma'_i: 
H^i \xrightarrow{(\gamma_1)^i} (RF)^i \xrightarrow{R (\trace)^{i-1} F} RF. 
$$
Observe that $\gamma'_1 = \gamma_1$. Now set 
$\gamma'_0 = \gamma_0 = \action$, and define 
$$ \gamma' \colon 
\id \oplus \dots \oplus H^n \xrightarrow{\sum \gamma'_i} RF. 
$$
The stronger monad condition ensures that each $\gamma'_i$ filters
through the inclusion 
$$ \id \oplus \dots \oplus H^i \hookrightarrow
\id \oplus \dots \oplus H^n \xrightarrow{\gamma} RF, $$
and is the identity map when composed with the projection
$\gamma_i^{-1} \colon RF \rightarrow H^i$, i.e. the matrix of the map 
$$ \id \oplus \dots \oplus H^n \xrightarrow{\gamma'} RF
\xrightarrow{\gamma^{-1}} \id \oplus \dots \oplus H^n $$
is upper triangular with $\id$s on the main diagonal. The map 
is thus an isomorphism, and hence so is $\gamma'$. The
symmetricity and the lower term strict diagonality of the new maps
$c^k_{ij}$ now follow trivially from the associativity of the monad 
multiplication $RFRF \xrightarrow{\trace} RF$.  
\end{proof}

The strongest monad condition implies trivially 
the stronger and the strong monad condition. Since the weak adjoints
condition doesn't involve $\gamma$, it also 
still holds after the renormalisation of Corollary
\ref{cor-the-strongest-monad-condition}. Thus, renormalising if necessary, 
we can assume that any $F$ which satisfies  
the strong monad and the weak adjoints conditions, 
i.e.~the \cite{Addington-NewDerivedSymmetriesOfSomeHyperkaehlerVarieties}
definition of a split $\mathbb{P}^n$-functor, also satisfies the 
strongest monad condition. 

Next, we show that our monad condition in \S\ref{section-Pn-functors}
is indeed weaker than the strong monad condition:

\begin{lemma}
For split $\mathbb{P}^n$-functors the strong monad condition
implies the monad condition.  
\end{lemma}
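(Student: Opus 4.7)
My plan is a direct matrix computation.  In the split case, $Q_n\simeq\id\oplus H\oplus\cdots\oplus H^n$ and the cyclic co-extension data makes $\iota_{n-1}$ the canonical inclusion of $Q_{n-1}=\bigoplus_{k=0}^{n-1}H^k$ into $Q_n$ and $\kappa$ the canonical projection of $Q_n$ onto $J_n=\bigoplus_{i=1}^n H^i$. Reindexing the source by $m=k+1$, both $FHQ_{n-1}$ and $FJ_n$ identify with $\bigoplus_{m=1}^n FH^m$, so $\nu$ acquires a matrix $(\nu_{i,m})_{i,m=1}^n$ of components $FH^m\to FH^i$; I want to show this matrix is upper triangular with identities on the diagonal.

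Using the formula $\psi=(FR\trace-\trace FR)\circ F\gamma_1 R$ from Lemma~\ref{lemma-psi-is-independent-of-the-choice-of-phi}, I whisker by $F$ on the right to get
\[
\psi F \;=\; FR\trace F\circ F\gamma_1 RF \;-\; \trace FRF\circ F\gamma_1 RF.
\]
The first term reorganises as $FA_l$, where $A_l:HRF\to RF$ is the left monad-multiplication by $H$, whose matrix entries are by definition the components $a_{i,j}=c^i_{1,j}$ controlled by the strong monad condition. The second term equals $\sigma RF$ for $\sigma:=\trace F\circ F\gamma_1:FH\to F$, by a direct naturality unpacking: expanding the central $H$ of $FHRF$ into $RF$ and then contracting the outermost $FR$-pair is the same as contracting the outermost $FR$ of the intermediate $FRFRF$ and letting the trailing $RF$ be whiskered along.

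Then I will read off the matrix entries. The first term contributes $F(a_{i,m-1})$ at position $(i,m)$, which by the strong monad condition vanishes for $i>m$, equals $\id$ for $i=m$, and is arbitrary for $i<m$ --- already upper triangular with identities on the diagonal. The second term $\sigma RF$ acts on the $FHH^k$-summand of $FHRF$ as $\sigma H^k:FHH^k\to FH^k$, landing in the $FH^k$-summand of $FRF\simeq\bigoplus_k FH^k$; after the projection $F\kappa$ kills the $k=0$ component, the only surviving entries sit at positions $(m-1,m)$ for $m\geq 2$ --- all strictly above the diagonal. Subtracting therefore does not alter the diagonal or the entries below it, so $\nu$ remains upper triangular with identities on the diagonal and is hence an isomorphism.

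The main obstacle is purely bookkeeping: tracking which summand of $Q_n$ survives each of the successive inclusions, restrictions, whiskerings, and projections, and making sure the two summands of $\psi F$ are correctly identified with $FA_l$ and $\sigma RF$. Once that decomposition is in place, the triangular structure --- and hence invertibility of $\nu$ --- follows immediately from the strong monad condition.
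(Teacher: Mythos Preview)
Your proof is correct and follows essentially the same route as the paper's. Both arguments decompose $\psi F$ into the two summands $FA_l$ and (what you call) $\sigma RF$, observe that the first gives an upper-triangular matrix with identities on the diagonal by the strong monad condition, and that the second is strictly upper-triangular; the paper phrases the latter as ``components of form $(FH\to F)H^i$'' rather than naming the map $\sigma$, but the content is identical.
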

\begin{proof}
The monad condition of 
Definition \ref{defn-Pn-functor} asks for the map 
$$ \nu: FHQ_{n-1} \rightarrow FJ_n $$
defined in \eqref{eqn-monad-condition} to be an isomorphism.  
In the split case, we have
\begin{align*}
Q_{n-1} & \simeq \id \oplus H \oplus \dots \oplus H^{n-1}, \\
J_n & \simeq H \oplus H^2 \oplus \dots \oplus H^n,
\end{align*}
and the map $\nu$ is the map 
\begin{align}
\label{eqn-the-map-nu-in-the-split-case}
FH \oplus FH^2 \oplus \dots \oplus FH^n \rightarrow 
FH \oplus FH^2 \oplus \dots \oplus FH^n 
\end{align}
given by
$$ FH(\id) \oplus FH H \oplus \dots \oplus FH H^{n-1} 
\hookrightarrow 
FRFRF \xrightarrow{FR \trace F - \trace FRF } FRF 
\twoheadrightarrow
FH \oplus FH^2 \oplus \dots \oplus FH^n. $$
The summand of $\nu$ which comes from $-\trace FRF$ 
leaves the second $RF$ in $FRFRF$ completely untouched and 
thus only has components of form $(FH \rightarrow F)H^i$.
It is therefore given by a strictly upper triangular matrix. 
On the other hand, the summand of $\nu$ which comes from 
$FR\trace F$ is given by $F$ applied to the matrix $A_l$ of
the left multiplication by $H$ in $RF$ with the top row
and the leftmost column removed. The strong monad condition 
asserts that this matrix is upper triangular and has $\id$s   
on the main diagonal. We conclude that the map $\nu$ is also given 
by an upper triangular matrix with $\id$s on the main diagonal, 
and thus is an isomorphism. 
\end{proof}

\begin{lemma}
For split $\mathbb{P}^n$-functors the strong monad condition and 
the weak adjoints condition imply the adjoints condition.  
\end{lemma}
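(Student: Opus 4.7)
The plan is to proceed in three steps. First, I will renormalize using Corollary \ref{cor-the-strongest-monad-condition}: after replacing $\gamma$ by the isomorphism constructed there, the algebra structure on $RF \simeq \bigoplus_{i=0}^n H^i$ satisfies the \emph{strongest monad condition}, meaning that the multiplication $c^k_{ij}\colon H^iH^j \to H^k$ equals the identity when $k = i+j \leq n$ and vanishes when $k \neq i+j$ for $i+j \leq n$. Since the weak adjoints condition does not depend on the choice of $\gamma$, it continues to hold after the renormalization.

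Second, I will show that $\phi F\colon RF \to H^nLF$ is an isomorphism in $D(\AbimA)$, where $\phi = \mu_n L \circ R\action\colon R \to H^nL$. In the direct-sum decompositions $RF \simeq \bigoplus_i H^i$ and $H^nLF \simeq \bigoplus_j H^n(H')^j$, the map $\phi F$ is represented by a matrix of components $(\phi F)_{j,k}\colon H^k \to H^n(H')^j$. Using Lemma \ref{lemma-mult-H-comult-H'-relation} and its iterates, which identify the projection of the right $LF$-coaction on $RF$ onto the $(H')^j$-summand with an iterate of the right multiplication map $A_r$, and using that the matrix of $A_r$ is prescribed by the strongest monad condition, I will show that $(\phi F)_{j,k}$ vanishes whenever $k+j < n$, while for $k+j = n$ it equals the composition $H^k \xrightarrow{H^k \action^{(j)}} H^k H^j (H')^j \simeq H^n(H')^j$ built from the iterated adjunction unit $\action^{(j)}\colon \id \xrightarrow{\sim} H^j(H')^j$, which is an isomorphism because $H$ is an autoequivalence. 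Hence the matrix of $\phi F$ is upper anti-triangular with isomorphisms on the anti-diagonal, so $\phi F$ is itself an isomorphism.

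Third, I will deduce that $F\phi\colon FR \to FH^nL$ is an isomorphism in $D(\BbimB)$. From $\phi F$ iso we have $\cone(\phi)F = 0$ in $D(\AbimA)$; the connecting morphism $\partial\colon \cone(\phi) \to R[1]$ of the triangle $R \xrightarrow{\phi} H^nL \to \cone(\phi) \to R[1]$ therefore becomes zero after whiskering with $F$, and a Cancellation Lemma argument (Lemma \ref{lemma-cancelling-F-from-natural-transformations-involving-R-or-L}) forces $\partial$ itself to vanish. Consequently the triangle splits as $H^nL \simeq R \oplus \cone(\phi)$, and applying $F$ on the left yields $FH^nL \simeq FR \oplus F\cone(\phi)$. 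Combining this with the weak adjoints isomorphism $FR \simeq FH^nL$, and using the fact noted in the introduction that split $\mathbb{P}^n$-functors satisfy $\ker F = 0$, a Krull--Schmidt-style cancellation in $D(\BbimB)$ forces $F\cone(\phi) = 0$, so that $F\phi$ is an isomorphism as required.

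The main obstacle will be the third step: the passage from $\phi F$ iso to $F\phi$ iso is not a formal consequence of either the Cancellation Lemma or weak adjoints alone, but requires a delicate combination of the two together with the kernel-vanishing property of split $\mathbb{P}^n$-functors. The matrix computation of the second step is in principle straightforward given the strongest monad condition, though bookkeeping the entries via the iterated form of Lemma \ref{lemma-mult-H-comult-H'-relation} needs care.
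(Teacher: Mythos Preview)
Your Steps 1 and 2 align with the paper: one shows that the canonical map $\phi F\colon RF \to H^nLF$ is an isomorphism by checking its matrix in the split decompositions is (anti-)triangular with isomorphisms on the diagonal. The paper argues this directly from the square \eqref{diagram-mult-comult-relation} and the stronger monad condition (Lemma~\ref{lemma-split-case-strong-monad-iff-stronger-monad}) rather than iterating Lemma~\ref{lemma-mult-H-comult-H'-relation}, and does not bother renormalising to the strongest monad condition, but these differences are cosmetic.

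Step 3, however, contains two genuine gaps. First, the Cancellation Lemma does \emph{not} apply to $\partial\colon \cone(\phi)\to R[1]$ from the hypothesis $\partial F=0$: cases (1) and (3) require the source to be of the form $GR$ or the target of the form $GL$, neither of which holds here, while cases (2) and (4) concern $F\alpha$ rather than $\alpha F$. Second, the Krull--Schmidt cancellation $FH^nL \simeq FH^nL \oplus F\cone(\phi) \Rightarrow F\cone(\phi)\simeq 0$ is simply not available in $D(\BbimB)$ in general, and the remark that $\ker f=0$ on $D(\A)$ is irrelevant to whether tensoring bimodules by $F$ on the left is conservative.

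The paper repairs this by invoking the weak adjoints isomorphism \emph{before} the cancellation step (this is packaged as Proposition~\ref{prps-R-HnL-canonical-map}). Given some isomorphism $\psi\colon R \xrightarrow{\sim} H^nL$, compose the two remaining maps of the triangle $R \xrightarrow{\phi} H^nL \xrightarrow{\alpha} \cone(\phi) \xrightarrow{\partial} R[1]$ with $\psi$: then $\alpha\circ\psi\colon R \to \cone(\phi)$ has the form $GR\to H$ and $\psi[1]\circ\partial\colon \cone(\phi)\to H^nL[1]$ has the form $H\to GL$, so Cancellation Lemma (1) and (3) now apply. Since $\cone(\phi)F=0$, both composites vanish after whiskering with $F$, hence $\alpha=\partial=0$, and $\phi$ admits both a left and a right semi-inverse. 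This gives the stronger conclusion that $\phi\colon R\to H^nL$ is itself an isomorphism, from which the adjoints condition follows immediately; no Krull--Schmidt is needed.
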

\begin{proof}
The adjoints condition of Definition \ref{defn-Pn-functor}
asks for the map \eqref{eqn-adjoints-condition}
$$
FR \xrightarrow{FR\action} FRFL \xrightarrow{F\mu_n L} FH^n L
$$
to be an isomorphism. We are going to prove a stronger result: that the map
\eqref{eqn-adjoints-condition-true}
$$
R \xrightarrow{R\action} RFL \xrightarrow{\mu_n L} H^n L
$$
is an isomorphism.
By commutativity of the diagram 
\eqref{diagram-mult-comult-relation}
the map $\eqref{eqn-adjoints-condition-true}F$ equals the composition
\begin{equation}
\label{eqn-adjoints-condition-map-F}
RF \xrightarrow{RF\action} RFRFLF \xrightarrow{R\trace FLF} 
RFLF \xrightarrow{\mu_n LF} H^n LF.
\end{equation}
The first map here is the sum of the isomorphisms 
$RF(\id \rightarrow H^j (H')^j)$. 
The second map is the monad multiplication for which the strong
monad condition holds. The last map is the projection onto the $H^n$
term of the monad $RF$. It follows that the restriction of 
\eqref{eqn-adjoints-condition-map-F} to the summand $H^i$ of $RF$
has the form  
$$ H^i \rightarrow \bigoplus_{j = 0}^n H^i H^j (H')^j
\rightarrow \bigoplus_{j = 0}^{n} \bigoplus_{k = 0}^{\min(i+j,n)} H^{k} (H')^j 
\rightarrow \bigoplus_{j = n-i}^{n} H^{n} (H')^j, $$
Moreover, the component  $H^i \rightarrow H^{n} (H')^{n-i}$ 
is the map 
$$ H^i \xrightarrow{\sim} H^i H^{n-i} (H')^{n-i} \rightarrow H^n
(H')^{n-i} $$
and thus an isomorphism since by the strong monad condition 
its second composant is the identity map.  

We conclude that the map $\eqref{eqn-adjoints-condition-true}F$ is a map 
$$ \id \oplus H \oplus H^2 \oplus \dots \oplus H^n
\rightarrow H^n 
\Bigl( (H')^n \oplus \dots \oplus H' \oplus \id \Bigr) $$
given by an upper triangular matrix with $\id$s on the main
diagonal. It is therefore an isomorphism. The fact that 
$\eqref{eqn-adjoints-condition-true}$ itself is an isomorphism now 
follows from the weak adjoints condition by the result which 
we prove in the next section in Prps.~\ref{prps-R-HnL-canonical-map}
since it also holds in the non-split case: 
if $RF \xrightarrow{\eqref{eqn-adjoints-condition-true}F} H^nLF$ is an isomorphism 
and there exists some isomorphism $R \simeq H^n L$, then 
$R \xrightarrow{\eqref{eqn-adjoints-condition-true}} H^nL$ is also an
isomorphism. 
\end{proof}

\begin{lemma}
For split $\mathbb{P}^n$-functors the strong monad condition implies 
the highest degree term condition.  
\end{lemma}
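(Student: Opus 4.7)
The plan is to reduce to the strongest monad condition, identify a candidate isomorphism built from the autoequivalence structure of $H$, and check commutativity by a direct computation of the two rows of \eqref{eqn-highest-degree-term-condition} on each summand of $FHQ_{n-1}L$.

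First, by Corollary \ref{cor-the-strongest-monad-condition} we may replace $\gamma\colon \id \oplus H \oplus \dots \oplus H^n \xrightarrow{\sim} RF$ by another isomorphism with the same $\gamma_1\colon H \hookrightarrow RF$, hence the same $\psi$, $\psi'$, and the same diagram \eqref{eqn-highest-degree-term-condition}. Under this replacement the monad multiplication $c^k_{ij}\colon H^i H^j \to H^k$ becomes symmetric ($c^k_{ij} = c^k_{ji}$) and strictly diagonal on low-degree terms: $c^k_{ij} = \id$ for $k = i+j \leq n$, and $c^k_{ij} = 0$ for $k \leq n$ with $k \neq i+j$. In particular $A_l = A_r$, with $\id$ on the sub-diagonal and zero on all other entries of degree $\leq n$.

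Since $H$ is an autoequivalence, the unit $\action\colon \id_\A \to HH'$ is an isomorphism, and we take the candidate isomorphism of \eqref{eqn-highest-degree-term-condition} to be
\[
FH^n L \xrightarrow{\;FH^n\action L\;} FH^n(HH')L \;=\; FHH^nH'L,
\]
with the last equality by associativity of the tensor product of bimodules.

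We then decompose $FHQ_{n-1}L \simeq \bigoplus_{i=0}^{n-1} FH^{i+1}L$ and compute both rows on each summand. For the top row, substituting $\psi = (FR\trace - \trace FR)\circ F\gamma_1 R$ and using the strongest monad condition to annihilate all off-subdiagonal low-degree contributions, the restriction of $F\mu_n L \circ \psi FL \circ FH\iota_n L$ to $FH^{i+1}L$ picks out the $H^n$-component of $A_l$ acting on $H\cdot H^i$, namely the entry $c^n_{1,i}$. By strictness this is $\id$ for $i=n-1$ and $0$ otherwise. For the bottom row, by the Dualisation Lemma (Lemma \ref{lemma-left-dual-morphism}) and Corollary \ref{cor-f-and-f-dual-commuting-square} the composition $FHR\psi'\circ FH\iota_n L$ can be rewritten using the right $H'$-coaction $B\colon RF\to RFH'$ induced by $F\gamma'_1 L$. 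By Lemma \ref{lemma-mult-H-comult-H'-relation}, $B = A_r H'\circ RF\action$, so the coaction factors as right multiplication by $H$ followed by the insertion of $\action\colon\id\to HH'$. Hence the bottom composition on $FH^{i+1}L$ extracts the entry $c^n_{1,i}$ of $A_r$ and then post-composes with $FH^n\action L$.

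Because $A_l = A_r$ under the strongest monad condition, the two rows produce the same entries on each summand, and differ precisely by the post-composition with $FH^n\action L$. This is exactly the isomorphism we chose, so \eqref{eqn-highest-degree-term-condition} commutes.

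The main obstacle will be the careful DG-level identification of $\psi'$ as the left dual of $\psi$ and the translation of the composition $FHR\psi'$ into a coaction-based computation via the Dualisation Lemma; once that bridge is built and Lemma \ref{lemma-mult-H-comult-H'-relation} is in hand, the equality of the two rows follows formally from the symmetricity $A_l = A_r$.
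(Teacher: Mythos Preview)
Your approach is essentially that of the paper, but with an unnecessary detour. The paper does not renormalise $\gamma$: it observes that the strong monad condition already implies the stronger monad condition (Lemma \ref{lemma-split-case-strong-monad-iff-stronger-monad}), which is enough to conclude that both $\mu_n\circ A_l$ and $\mu_n\circ A_r$ vanish on the components $H,\dots,H^{n-1}$ and equal $\id$ on $H^n$. After noting that the $-\trace FR$ summand of the top row and the $-FL\action$ summand of the bottom row vanish (because each factors through $\mu_n\circ\iota_n=0$ --- not because of any monad condition, contrary to your phrasing), the two rows reduce to $F(\mu_n\circ A_l)L$ and $FH(\mu_n H'\circ B)L$; Lemma \ref{lemma-mult-H-comult-H'-relation} converts the latter to $A_r$, and the same candidate isomorphism $FH^n(\action)L$ closes the diagram. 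The components you are comparing on the bottom row are $c^n_{i,1}$, not $c^n_{1,i}$; under the stronger monad condition these coincide anyway, so the slip is harmless.

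Your renormalisation to the strongest monad condition is valid, but your assertion that the diagram \eqref{eqn-highest-degree-term-condition} is literally unchanged needs justification: the maps $\mu_n$ and $\iota_n$ also appear, and a priori depend on $\gamma$. The claim is in fact true --- the change of basis in Corollary \ref{cor-the-strongest-monad-condition} is upper-triangular with identities on the diagonal, which preserves both the projection $\mu_n$ onto the top summand $H^n$ and the subobject $Q_{n-1}$ (hence $\iota_n$) --- but you should say so explicitly. Alternatively, drop the renormalisation entirely and argue as the paper does: you never need the full symmetry $A_l=A_r$, only that their $H^n$-components agree on $HQ_{n-1}$, and the stronger monad condition already gives that.
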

\begin{proof}
The highest degree term condition of Definition \ref{defn-Pn-functor}
asks for an existence of an isomorphism, marked by a dashed arrow on 
the diagram below, which makes the rest of the diagram commute:
\begin{equation*}
\begin{tikzcd}[column sep={2cm}]
FHQ_{n-1}L
\ar{r}{FH\iota_n L}
\ar[equals]{d}
&
FHRFL
\ar{r}{\psi FL}
&
FRFL
\ar{r}{F\mu_n L}
&
FH^nL
\ar[dashed]{d}
\\
FHQ_{n-1}L
\ar{r}{FH\iota_n L}
&
FHRFL
\ar{r}{FHR\psi'}
&
FHRFH'L
\ar{r}{FH\mu_n H'L}
&
FHH^nH'L
\end{tikzcd}
\end{equation*}
where $\psi': FL \to FH'L$ is the left dual of $\psi$.
In the split case, we can rewrite the diagram above
as: 
\begin{equation*}
\begin{tikzcd}[column sep={2cm}]
FHQ_{n-1}L
\ar{r}{FH\iota_n L}
\ar[equals]{d}
&
FHRFL
\ar{rr}{((FR \trace - \trace FL)\circ F\gamma_1R)FL}
&&
FRFL
\ar{r}{F\mu_n L}
&
FH^nL
\ar[dashed]{d}
\\
FHQ_{n-1}L
\ar{r}{FH\iota_n L}
&
FHRFL
\ar{rr}{FHR((\action FL - FL \action) \circ F\gamma'_1L)}
&
&
FHRFH'L
\ar{r}{FH\mu_n H'L}
&
FHH^nH'L
\end{tikzcd}
\end{equation*}
The summand of the top contour which corresponds to $ - \trace FR$ in
its second composant equals zero since $\mu_n \circ \iota_n=0$.
Similarly, the summand of the bottom contour which corresponds 
to $ - FL\action$ is also zero. Without those summands the  
corresponding maps become $F(A_l)L$, and $F (HB) L$, 
using the terms introduced in
\S\ref{section-strong-monad-condition-split-case}. 
It thus suffices to prove the existence of an isomorphism that makes 
the following diagram commute:
\begin{equation}
\label{eqn-highest-degree-term-condition-split-case-mult-comult}
\begin{tikzcd}[column sep={2cm}]
HQ_{n-1}
\ar{r}{H\iota_n}
\ar[equals]{d}
&
HRF
\ar{r}{A_l}
&
RF
\ar{r}{\mu_n}
&
H^n
\ar[dashed]{d}
\\
HQ_{n-1}
\ar{r}{H\iota_n}
&
HRF
\ar{r}{HB}
&
HRFH'
\ar{r}{H\mu_n H'}
&
HH^nH'
\end{tikzcd}
\end{equation}
whence it becomes clear that the highest degree term condition
is about the left multiplication by $H$ and the right comultiplication by $H'$
having the same effect when projected onto the highest degree term. 

By Lemma \ref{lemma-mult-H-comult-H'-relation} 
we can rewrite the right comultiplication of $H'$ in terms of the right
multiplication by $H$. It 
then remains to show that the top and the bottom contour commute in:
\begin{equation}
\label{eqn-highest-degree-term-condition-as-Al-and-Ar-commuting-on-Hn}
\begin{tikzcd}[column sep={2cm}]
HQ_{n-1}
\ar{r}{H\iota_n}
&
HRF
\ar{r}{A_l}
\ar{d}{HRF\action}
&
RF
\ar{r}{ \mu_n}
&
H^n
\ar[dashed]{d}
\\
&
HRFHH'
\ar{r}{HA_rH'}
&
HRFH'
\ar{r}{H\mu_n H'}
&
HH^nH'.
\end{tikzcd}
\end{equation}
The strong monad condition implies the stronger monad condition, 
hence both $HRF \xrightarrow{\mu_n \circ A_l} H^n$ and $RFH
\xrightarrow{\mu_n \circ A_r} H^n$ are maps 
$$ H \oplus H^2 \oplus \dots \oplus H^n \oplus H^{n+1} \rightarrow H^n $$
which are $0$ on the components $H, \dots, H^{n-1}$ and $\id$ 
on the component $H^n$. Hence the natural isomorphism 
$$ H^n \xrightarrow{H^n \action} H^n H H' = H H^n H' $$
makes the square in the diagram above commute on all components of
$HRF$ except for $H^{n+1}$. Therefore it makes the whole diagram commute, 
since $H^{n+1}$ is the complement of $HQ_{n-1}$ in $HRF$. 
\end{proof}

\subsection{Segal's conjecture}
\label{section-segals-conjecture}

Corollary \ref{cor-the-strongest-monad-condition} allows us to prove a conjecture 
posed by Segal in \cite{Segal-AllAutoequivalencesAreSphericalTwists}. 
We briefly recall the setup. 
Let
$F\colon D(\A) \rightarrow D(\B)$ be a split $\mathbb{P}^n$-functor with 
\begin{equation}
RF \simeq \id_\A \oplus H \oplus \dots \oplus H^n
\end{equation}
where $H$ is an enhanced autoequivalence of $D(\A)$. Equip the 
bimodule
$$ \A_{H} \overset{\text{def}}{=} \A \oplus H^{r\barA}[-1] \in
\AmodA $$
with the structure of a degree $1$ truncated tensor algebra of $H^{r\barA}[-1]$ 
as per \S\ref{section-truncated-twisted-tensor-algebras}. Explicitly, 
the multiplication 
$$ \A_{H} \otimes \A_{H} \rightarrow \A_{H} $$ 
is given by the natural isomorphisms  
\begin{align}
\label{eqn-natural-isomorphisms-tensoring-with-diagonal-bimodule}
A \otimes_\A (-) \xrightarrow{\sim} (-) 
\quad \text{ and } \quad
(-) \otimes_\A \A \xrightarrow{\sim} (-)
\end{align}
on the components involving $\A$ and by the zero map on 
$H^{r\barA} \otimes_\A H^{r\barA} [-2]$. The
unit map 
$$ \A \rightarrow \A_{H} $$
is given by the inclusion of a direct summand. 

Any $\A$-algebra in $\AmodA$ defines the structure of a
new DG-category with the same set of objects as $\A$ and a functor
from $\A$ to this category which is identity on objects. 
For $\A_H$, we denote this new category also by $\A_H$, and 
the functor $\A \hookrightarrow \A_H$ is faithful.  
The category $D_c(\A_H)$ is the category
constructed by Segal in
\cite[\S2.3]{Segal-AllAutoequivalencesAreSphericalTwists}, it should 
be considered as the derived category of objects supported on 
the zero section of a noncommutative line bundle over $D_c(\A)$
corresponding to $H$. 

Now, let
$$ h = FH \hookrightarrow FRF \xrightarrow{\trace F} F \in \AmodbarA $$
and let $\tilde{F}$ be the convolution in $\AmodbarB$ of the following
twisted complex over $\AmodbarB$
$$ H \bartimes_\A F \xrightarrow{h} \underset{\degzero}{F}. $$
We next give it the structure of a left $\A_H$-module by having
$H^{r\barA}$ act by sending $H \bartimes_\A F$ to $F$, and $F$ to zero. To be 
more precise, consider the map of twisted complexes
\begin{equation}
\label{eqn-AH-module-structure-on-tildeF-via-twisted-complexes}
\left(\A \oplus H^{r\barA}[-1]\right) \bartimes_\A 
\left(H \bartimes_\A F \xrightarrow{h} \underset{\degzero}{F}\right) 
\rightarrow 
\left(H \bartimes_\A F \xrightarrow{h} \underset{\degzero}{F}\right) 
\end{equation}
whose $\A \bartimes_\A \bullet$ components are given 
by the natural isomorphisms 
\eqref{eqn-natural-isomorphisms-tensoring-with-diagonal-bimodule}, 
whose $H^{r\barA}[-1] \bartimes_\A H \bartimes_\A F$ component is
given by the adjunction counit $H^{r\barA} \bartimes_\A H \xrightarrow{\trace} \A$, 
and whose $H^{r\barA}[-1] \bartimes_\A F$ component is zero. 
Note that \eqref{eqn-AH-module-structure-on-tildeF-via-twisted-complexes}
is fibrewise left-$\modA$-map in the language of 
\cite[\S3.4]{AnnoLogvinenko-BarCategoryOfModulesAndHomotopyAdjunctionForTensorFunctors},
i.e. all its components are the $\AmodbarB$ maps whose corresponding
$\AmodB$ maps filter through $\tau \otimes \id$.  
The convolution functor $\pretriag \AmodbarB \rightarrow \AmodbarB$ 
filters through $\AmodB$ and from its definition in
\cite[\S3.6]{AnnoLogvinenko-BarCategoryOfModulesAndHomotopyAdjunctionForTensorFunctors}
we see that applying it to  
\eqref{eqn-AH-module-structure-on-tildeF-via-twisted-complexes}
yields an $\AmodB$ map which is a composition 
$$ \barA \otimes_\A \A_H \otimes_\A \tilde{F} 
\xrightarrow{\tau \otimes \id}
\A_H \otimes_\A \tilde{F} \rightarrow \tilde{F},$$
whose second composant we use to give $\tilde{F}$ the structure of an
$\A_H$-$\B$-bimodule. 

On the other hand, 
let $\tilde{H}$ be the convolution in $\AmodbarA$ of the twisted complex 
\begin{equation}
\begin{tikzcd}[column sep = 2cm]
\underset{\degzero}{H}
\ar[phantom]{r}{\oplus}
& 
H \bartimes_\A H^{r\barA}. 
\end{tikzcd}
\end{equation}
Similarly to the above, we give $\tilde{H}$ the structure of an
$\A_H$-$\A_H$-bimodule by having $H^{r\barA}[-1]$ act on the right 
by sending $H$ to $H \bartimes_\A H^{r\barA}$ and 
sending $H \bartimes_\A H^{r\barA}$ to zero, and act on the left by sending $H$
to $H \bartimes_\A H^{r\barA}$ by $H^{r\barA} \bartimes_\A H
\xrightarrow{\trace} \id_\A \xrightarrow{\action} H \bartimes_\A H^{r\barA}$
and sending $H \bartimes_\A H^{r\barA}$ to zero. 

We thus have an enhanced functor $\tilde{F}: D(\A_H) \rightarrow D(\B)$ 
and an enhanced endofunctor $\tilde{H}$ of $D(\A_H)$. The latter 
is an autoequivalence --- its inverse is defined by the convolution 
of the $\AmodbarA$ twisted complex 
\begin{equation}
\begin{tikzcd}[column sep = 2cm]
\underset{\degzero}{H^{r\barA}}
\ar[phantom]{r}{\oplus}
& 
H^{r\barA} \bartimes_\A H^{r\barA}, 
\end{tikzcd}
\end{equation}
with $H^{r\barA}[-1]$ acting on the left and on the right by sending
$H^{r\barA}$ to $H^{r\barA} \bartimes_\A H^{r\barA}$ and 
$H^{r\barA} \bartimes_\A H^{r\barA}$ to zero. 

The following was conjectured by Segal in 
\cite[Remark 4.6]{Segal-AllAutoequivalencesAreSphericalTwists}:
\begin{theorem} 
\label{theorem-segals-conjecture}
If $F\colon D(\A) \rightarrow D(\B)$ is a split 
$\mathbb{P}^n$-functor satisfying the strong monad condition, 
then  
$$\tilde{F}\colon D(\A_H) \rightarrow D(\B)$$ 
is a spherical functor with the cotwist $\tilde{H}^{n+1}$ and the
twist $P_F$, the $\mathbb{P}^n$-twist of $F$. 
\end{theorem}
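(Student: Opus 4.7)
The plan is to prove Segal's conjecture by identifying $\tilde{F}\tilde{R}$ and $\tilde{R}\tilde{F}$ as explicit bicomplex convolutions, reading off the candidate twist and cotwist, and then invoking the result that any two of the four spherical conditions imply all four. By Corollary \ref{cor-the-strongest-monad-condition} we may assume, after renormalising $\gamma$, that $F$ satisfies the strongest monad condition: the monad multiplication on $RF \simeq \id \oplus H \oplus \dots \oplus H^n$ is symmetric and satisfies $c^k_{ij} = \id$ whenever $k = i+j \leq n$ and is zero otherwise. The subsequent bimodule bookkeeping is then essentially combinatorial.

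First I would produce an explicit enhancement of the right adjoint $\tilde{R}$ of $\tilde{F}$. Since $\tilde{F}$ is enhanced as an $\A_H$-$\B$-bimodule by the convolution of $HF \xrightarrow{h} F$ together with a left $\A_H$-action extending the left $\A$-action, the right dualisation functor (cf.\ Lemma \ref{lemma-left-dual-morphism}) produces a $\B$-$\A_H$-bimodule enhancement of $\tilde{R}$ as the convolution of $R \xrightarrow{h'} H'R$, where $h'$ is the dual of $h$; the right $\A_H$-action on $\tilde{R}$ is encoded by this very map $h'$. With this in hand, $\tilde{F}\tilde{R}$ is the convolution of the $2\times 2$ bicomplex assembled from $HF$, $F$, $R$, and $H'R$. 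The antisymmetric convolution of its differentials produces exactly the map $hR - Fh'\colon FHR \to FR$, which, as noted in \S\ref{section-introduction}, equals $\psi$. Cancelling against the adjunction counit $FR \xrightarrow{\trace} \id_\B$ via the Replacement Lemma then yields $\tilde{F}\tilde{R}$ as an extension of $\id_\B$ by the convolution of $FHR \xrightarrow{\psi} FR \xrightarrow{\trace} \id_\B$, which by definition is $P_F$, the $\mathbb{P}$-twist. This identifies the spherical twist of $\tilde{F}$ with $P_F$.

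The harder step is the analogous computation for $\tilde{R}\tilde{F}$ as an $\A_H$-$\A_H$-bimodule, where one must isolate a copy of $\id_{\A_H}$ and recognise the remaining factor as $\tilde{H}^{n+1}[1]$. Using the splitting $RF = \id \oplus H \oplus \dots \oplus H^n$ under the strongest monad condition, the relevant bicomplex reduces by a telescoping sequence of Replacement Lemma cancellations to a two-step extension $\id_{\A_H} \to \tilde{R}\tilde{F} \to \tilde{H}^{n+1}[1]$: the degree $0$ summand $\id \subset RF$, together with the two $\A_H$-actions, contributes the diagonal $\A_H$-$\A_H$-bimodule $\id_{\A_H}$, while the top summand $H^n \subset RF$ paired with the extra shift $H^{r\barA}[-1]$ in $\A_H$ assembles into the bimodule underlying $\tilde{H}^{n+1}$. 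The strongest monad condition is essential here, because it is exactly what makes the intermediate summands $H, H^2, \dots, H^{n-1}$ cancel cleanly; the symmetry of $c^k_{ij}$ ensures that these cancellations are compatible with the two-sided $\A_H$-structure.

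Finally, since $P_F$ is an autoequivalence of $D(\B)$ by Theorem \ref{theorem-Pn-functor-gives-autoequivalence} and $\tilde{H}^{n+1}$ is an autoequivalence of $D(\A_H)$ as a power of the autoequivalence $\tilde{H}$, the conditions that $\tilde{F}\tilde{R}$ is an extension of $\id_\B$ by an autoequivalence and $\tilde{R}\tilde{F}$ is a coextension of $\id_{\A_H}$ by an autoequivalence are both verified. By \cite[Theorem 5.1]{AnnoLogvinenko-SphericalDGFunctors} these two conditions imply all four spherical conditions, so $\tilde{F}$ is spherical with twist $P_F$ and cotwist $\tilde{H}^{n+1}$. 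The main obstacle is the combinatorial bookkeeping in the third step above: the intermediate summands must be cancelled in the right order, and at each step one must verify that no stray higher differentials appear. Without the symmetry part of the strongest monad condition, rather than merely the weaker monad condition of Definition \ref{defn-Pn-functor}, extra terms would survive and obstruct identifying $\tilde{R}\tilde{F}$ with the desired two-step extension.
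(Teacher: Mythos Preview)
Your approach matches the paper's: renormalise via Corollary \ref{cor-the-strongest-monad-condition}, construct $\tilde{R}$ explicitly, identify the twist and cotwist of $\tilde{F}$ as $P_F$ and $\tilde{H}^{n+1}$, and conclude via \cite[Theorem 5.1]{AnnoLogvinenko-SphericalDGFunctors}. One technical caveat: $\tilde{F}\tilde{R}$ is a tensor product over $\A_H$, not over $\A$, so it is not literally the convolution of the $2\times 2$ bicomplex you describe. The paper computes it as the cokernel of the map encoding the difference of the two $H^{r\barA}[-1]$-actions on $\tilde{R}\bartimes_\A\tilde{F}$, and it is this cokernel step that collapses the four-term bicomplex down to the two-term complex $FHR \xrightarrow{\psi} FR$; without it you would get a spurious extra copy of $FR$ and an $FH^{-1}R$ term. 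For $\tilde{R}\tilde{F}$ (where the tensor really is over $\B$, so your bicomplex description is correct) the paper proceeds slightly more explicitly than your direct telescoping cancellation: it first writes down the adjunction unit and an explicit $\A_H$-$\A_H$-bimodule map $\alpha\colon \tilde{R}\tilde{F} \to \tilde{H}^{n+1}[1]$ --- verifying that $\alpha$ respects both $\A_H$-actions is exactly where the symmetry $A_r = A_l$ enters, as you correctly anticipate --- and only then applies the Replacement Lemma to show the resulting three-term complex is acyclic as an $\A$-$\A$-bimodule.
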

\begin{proof}
Let $\tilde{R} \in \AmodbarA$ be the convolution of the twisted complex 
$$ \underset{\degzero}{R} \xrightarrow{h'} H^{r\barA}R, $$
where 
$$ h' = R \xrightarrow{\action R} H^{r\barA} H R 
\hookrightarrow H^{r\barA}RFR 
\xrightarrow{\H^{r\barA}R\trace} H^{r\barA} R. $$
Similarly to the above, we give $\tilde{R}$ the structure of 
a right $\A_H$-module by having $H^{r\barA}$ act by sending $R$ to
$H^{r\barA}R$, and $H^{r\barA}R$ to zero. Thus we have an 
enhanced functor $\tilde{R}\colon D(\B) \rightarrow D(\A_H)$. 

It can readily seen that $\tilde{R}$ is a $2$-categorical right
adjoint of $\tilde{F}$ with the following adjunction unit and counit. 
The composition $\tilde{R}\tilde{F}$ is isomorphic in 
$D(\A_H\text{-}\A_H)$ to the object defined by the $\AbimA$-bimodule  
\begin{align}
\label{eqn-adjunction-monad-tildeR-tildeF-as-twisted-complex}
\left\{
H \bartimes_\A F \xrightarrow{h} \underset{\degzero}{F}
\right\} 
\bartimes_\B
\left\{\underset{\degzero}{R} \xrightarrow{h'} R \bartimes_\A H^{r\barA} \right\} 
\end{align}
with left and right actions of $H^{r\barA}[-1]$ as the in definitions of
$\tilde{F}$ and $\tilde{R}$. By \cite[Lemma
3.42(1)]{AnnoLogvinenko-BarCategoryOfModulesAndHomotopyAdjunctionForTensorFunctors}
it is therefore isomorphic to the convolution of the twisted complex 
\begin{align*}
H \bartimes_{\A} F \bartimes_{\B} R 
\xrightarrow{
\left(\begin{smallmatrix}
h \bartimes \id \\
- \id \bartimes h'
\end{smallmatrix}
\right)
}
{\bigl(F \bartimes_{\B} R \bigr)}
\underset{\degzero}{\oplus}
{\bigl(H \bartimes_{\A} F \bartimes_{\B} R \bartimes_{\A} H^{r\barA}\bigr)}
\xrightarrow{
\left(\begin{smallmatrix}
\id \bartimes 
h' \; & \;
h \bartimes \id
\end{smallmatrix}
\right)
}
F \bartimes_{\B} R \bartimes_{\A} H^{r\barA}
\end{align*}
with the corresponding left and and right actions of $H^{r\barA}[-1]$. 
We therefore define the adjunction unit 
$$ \id_{\A_H} \xrightarrow{\action} \tilde{R}\tilde{F} $$
by the following map of twisted complexes over $\AmodbarA$ 
\begin{equation}
\label{eqn-adjunction-unit-for-tildeF-and-tildeR-via-twisted-complexes}
\begin{tikzcd}
&
\id_\A 
\ar{d}[']{
\left(\begin{smallmatrix}
\action 
\\
(\id \bartimes \action \bartimes \id) \circ \action  
\end{smallmatrix}\right)
}
\ar[phantom]{r}{\oplus}
\ar[dotted]{dr}[description]{\xi}
&
H^{r\barA}
\ar{d}{\action \otimes \id}
\\
H \bartimes_{\A} F \bartimes_{\B} R 
\ar{r}
&
{\bigl(F \bartimes_{\B} R \bigr)}
\underset{\degzero}{\oplus}
{\bigl(H \bartimes_{\A} F \bartimes_{\B} R \bartimes_{\A} H^{r\barA}\bigr)}
\ar{r}
&
F \bartimes_{\B} R \bartimes_{\A} H^{r\barA}. 
\end{tikzcd}
\end{equation}
Here $\xi$ is a degree $-1$ map defined as in 
\cite[Prop.4.10
(2)]{AnnoLogvinenko-BarCategoryOfModulesAndHomotopyAdjunctionForTensorFunctors}, 
and it is irrelevant to this proof. The map 
\eqref{eqn-adjunction-unit-for-tildeF-and-tildeR-via-twisted-complexes}
commutes with the left and right actions of $H^{r\barA}[-1]$
defined above, and thus defines a map of $\A_H$-$\A_H$-bimodules.  

On the other hand, the composition $\tilde{F}\tilde{R}$ is isomorphic in 
$D(\B\text{-}\B)$ to the object 
\begin{align*}
\left\{\underset{\degzero}{R} \xrightarrow{h'} R \bartimes_{\A} 
H^{r\barA} \right\} 
\bartimes_{\A_H}
\left\{
H \bartimes_\A F \xrightarrow{h} \underset{\degzero}{F}
\right\}. 
\end{align*}
The tensor product over $\A_H$ can be computed as the cokernel of 
the natural map 
\begin{align*}
\left\{\underset{\degzero}{R} \xrightarrow{h'} R \bartimes_\A H^{r\barA} \right\} 
\bartimes_{\A} 
H^{r\barA}[-1]
\bartimes_{\A}
\left\{
H \bartimes_\A F \xrightarrow{h} \underset{\degzero}{F}
\right\} 
\rightarrow 
\left\{\underset{\degzero}{R} \xrightarrow{h'} R \bartimes_\A H^{r\barA} \right\} 
\bartimes_{\A} 
\left\{
H \bartimes_\A F \xrightarrow{h} \underset{\degzero}{F}
\right\}
\end{align*}
given by the difference between the actions of
$H^{r\barA}[-1]$ on $R \xrightarrow{h'} R \bartimes_\A H^{r\barA}$ 
and $H \bartimes_\A F \xrightarrow{h} F$, respectively. Thus 
$\tilde{F}\tilde{R}$ is isomorphic in $D(\B\text{-}\B)$ to the
convolution of the cokernel of the map 
\begin{scriptsize}
\begin{equation*}
\begin{tikzcd}[column sep={1cm}]
&
R \bartimes_\A (H^{r\barA}) \bartimes_\A H \bartimes_\A F
\ar{r}
\ar{d}{
\left(\begin{smallmatrix}
\id \bartimes \trace \bartimes \id
\\
\id
\end{smallmatrix}\right)
}
& 
\bigl(R \bartimes_\A (H^{r\barA}) \bartimes_\A F\bigr)
\underset{\degzero}{\oplus}
\bigl(R \bartimes_\A H^{r\barA} \bartimes_\A 
(H^{r\barA}) \bartimes_\A H \bartimes_\A F\bigr)
\ar{d}{\begin{pmatrix} 
\id
\amsamp
\id \bartimes \trace \bartimes \id^{\bartimes 2}
\end{pmatrix}
}
\\
R \bartimes_\A H \bartimes_\A F
\ar{r}{
\left(\begin{smallmatrix}
\id \bartimes h 
\\
h' \bartimes \id
\end{smallmatrix}\right)
}
& 
\bigl( R \bartimes_\A F \bigr)
\oplus
\bigl(R \bartimes_\A H^{r\barA} \bartimes_\A H \bartimes_\A F\bigr)
\ar{r}{\begin{pmatrix} 
h' \bartimes \id
\amsamp
- \id \bartimes h 
\end{pmatrix}
}
& 
R \bartimes_\A H^{r\barA} \bartimes_\A F. 
\end{tikzcd}
\end{equation*}
\end{scriptsize}
Since $\id \bartimes h - (\id \bartimes \trace \bartimes \id) \circ
(h' \bartimes \id)$ is the map 
$$
R \bartimes_\A H \bartimes_\A F
\hookrightarrow 
R \bartimes_\A F \bartimes_\B R \bartimes_\A F
\xrightarrow{\id \bartimes \trace - \trace \bartimes \id}
R \bartimes_\A F, 
$$
which is the map $-\psi$, we conclude that $\tilde{F}\tilde{R}$ is 
isomorphic in $D(\BbimB)$ to the convolution of the twisted complex 
$$ 
R \bartimes_\A H \bartimes_\A F
\xrightarrow{\psi}
R \underset{\degzero}{\bartimes_\A}F.
$$
We therefore define the adjunction counit 
$$ \tilde{F} \tilde{R} \xrightarrow{\trace} \id_\B $$
by the following map of twisted complexes over $\BmodbarB$:
\begin{equation}
\label{eqn-adjunction-counit-for-tildeF-and-tildeR-via-twisted-complexes}
\begin{tikzcd}
R \bartimes_\A H \bartimes_\A F
\ar{r}{\psi}
& 
R \bartimes_\A F
\ar{d}{\trace}
\\
& 
\underset{\degzero}{\id_\B}. 
\end{tikzcd}
\end{equation}

With these definitions in mind, we proceed to prove the assertions of
the theorem. The cone of $\tilde{F}\tilde{R} \xrightarrow{\trace}
\id_\B$ is isomorphic
to the convolution of the total complex of
\eqref{eqn-adjunction-counit-for-tildeF-and-tildeR-via-twisted-complexes} 
which is the twisted complex defining the $\mathbb{P}$-twist
of $F$. Hence the spherical twist of $\tilde{F}$ 
coincides with the $\mathbb{P}$-twist of $F$, and thus is an
autoequivalence. 

On the other hand, computing the bar tensor product of the 
twisted complex of $\tilde{H}$ with itself over $\A_H$
shows that $\tilde{H}^{n+1}$ is 
homotopy equivalent to the convolution of the $\AmodbarA$ twisted 
complex
\begin{equation}
\begin{tikzcd}[column sep = 2cm]
\underset{\degzero}{H^{\bartimes k}}
\ar[phantom]{r}{\oplus}
& 
H^{\bartimes k} \bartimes_\A H^{r\barA}, 
\end{tikzcd}
\end{equation}
with the left and the right actions of $H^{r\barA}[-1]$ as in the 
definition of $\tilde{H}$. We now claim that there exists a degree $-1$ map 
$\chi\colon RFH \rightarrow H^{r\barA} H^{n+1}$ such that the following is a closed
degree $0$ map of twisted complexes:
\begin{equation}
\label{eqn-tildeR-tildeF-to-tilde-Hn+1-as-twisted-complexes}
\begin{tikzcd}[column sep=2cm]
H \bartimes_{\A} F \bartimes_{\B} R 
\ar{r}{
\left(\begin{smallmatrix}
h \bartimes \id \\
- \id \bartimes h'
\end{smallmatrix}
\right)
}
\ar{d}{\id \bartimes \mu_n}
\ar[dotted]{dr}[description]{\chi}
&
{\bigl(F \bartimes_{\B} R \bigr)}
\underset{\degzero}{\oplus}
{\bigl(H \bartimes_{\A} F \bartimes_{\B} R \bartimes_{\A} H^{r\barA}\bigr)}
\ar{r}{
\left(\begin{smallmatrix}
\id \bartimes 
h' \; \amsamp \;
h \bartimes \id
\end{smallmatrix}
\right)
}
\ar{d}{
\begin{pmatrix}
(\id \bartimes \action) \circ \mu_n
\;\amsamp\; \id \bartimes \mu_n \bartimes \id
\end{pmatrix}
}
&
F \bartimes_{\B} R \bartimes_{\A} H^{r\barA}
\\
H^{n+1}
\ar[phantom]{r}{\oplus}
&
H^{n+1} \underset{\degzero}{\bartimes_\A} H^{r\barA}. 
&
\end{tikzcd}
\end{equation}
The claim is equivalent to the following diagram commuting in
$D(\AbimA)$:
\begin{equation}
\label{eqn-right-action-analogue-of-the-highest-degree-term-condition}
\begin{tikzcd}[column sep = 1.5cm]
RFH 
\ar[equals]{d}
\ar{r}{Rh}
&
RF
\ar{r}{\mu_n}
&
H^{n} 
\ar{d}{\action H^n}
\\
RFH
\ar{r}{h'FH}
&
H^{r\barA}RFH
\ar{r}{H^{r\barA} \mu_n H}
&
H^{r\barA}H^{n+1}.
\end{tikzcd}
\end{equation}
The map $Rh\colon RFH \rightarrow RF$ is  
the right monad multiplication $A_r$ defined 
in \S\ref{section-strong-monad-condition-split-case},
while $h'F$ is the composition 
$$ RF 
\xrightarrow{\action RF} 
H^{r\barA}HRF
\xrightarrow{H^{r\barA}A_l} 
H^{r\barA}RF, 
$$
where $HRF \xrightarrow{A_l} RF$ is the left monad multiplication.
We can therefore ensure the commutativity of the diagram 
\eqref{eqn-right-action-analogue-of-the-highest-degree-term-condition}
by renormalising the isomorphism $\gamma$ as per Corrolary 
\ref{cor-the-strongest-monad-condition} to have $A_r = A_l$. 

The map \eqref{eqn-tildeR-tildeF-to-tilde-Hn+1-as-twisted-complexes}
commutes with the left and right actions of $H^{r\barA}[-1]$ on its two
twisted complexes. It therefore defines an $\A_H$-$\modbar$-$\A_H$ map 
\begin{equation}
\label{eqn-tildeR-tildeF-to-tilde-Hn+1}
\alpha: \tilde{R} \tilde{F} \rightarrow \tilde{H}^{n+1}[1]. 
\end{equation}
Observe that the twisted complex maps 
\eqref{eqn-adjunction-unit-for-tildeF-and-tildeR-via-twisted-complexes}
and \eqref{eqn-tildeR-tildeF-to-tilde-Hn+1-as-twisted-complexes}
compose to zero. Therefore, they form a two-step twisted complex of twisted 
complexes. For simplicity, we rewrite in the functorial notation:
\begin{equation}
\label{eqn-id-to-tildeRtildeF-to-tildeH^n+1-twisted-complex}
\begin{tikzcd}[column sep=4.70cm]
& 
\id_\A
\ar[phantom]{r}{\oplus}
\ar{d}[']{
\begin{pmatrix}
\action \\ H^{r\barA}{\action}H \circ \action
\end{pmatrix}
}
\ar[dotted]{dr}[description]{\xi}
&
H^{r\barA}
\ar{d}{{\action}H^{r\barA}}
\\
RFH
\ar{r}{
\left(\begin{smallmatrix}
A_r \\
- H^{r\barA}A_l H \circ {\action}RFH
\end{smallmatrix}
\right)
}
\ar{d}{\mu_n H}
\ar[dotted]{dr}[description]{\chi}
&
RF
\oplus
H^{r\barA} RF H
\ar{r}{
\left(\begin{smallmatrix}
H^{r\barA}A_l \circ {\action}RF
\;\amsamp\;
H^{r\barA}A_r
\end{smallmatrix}
\right)
}
\ar{d}{
\begin{pmatrix}
{\action}H^n \circ \mu_n
\;\amsamp\; H^{r\barA} \mu_n H
\end{pmatrix}
}
&
H^{r\barA} RF
\\
H^{n+1}
\ar[phantom]{r}{\oplus}
&
\underset{\degzero}{H^{r\barA} H^{n+1}}. 
&
\end{tikzcd}
\end{equation}
We claim that the convolution  
of \eqref{eqn-id-to-tildeRtildeF-to-tildeH^n+1-twisted-complex} is
acyclic in $\AmodbarA$ and hence in $\A_H$-$\modbar$-$\A_H$. 
Then the cone of $\id_{\A_H} \xrightarrow{\action}
\tilde{R}\tilde{F}$ is isomorphic to $\tilde{H}^{n+1}[1]$ in
$D(\A_H\text{-}\A_H)$. Thus 
$\tilde{F}$ has the spherical twist $P_F$ and the spherical cotwist 
$\tilde{H}^{n+1}$. Since both of these are autoequivalences, 
$\tilde{F}$ is spherical by 
\cite[Theorem 5.1]{AnnoLogvinenko-SphericalDGFunctors}. 

For the claim, apply 
the Replacement Lemma (Lemma \ref{lemma-replacement-lemma})
to replace the vertical arrows $RFH \rightarrow H^{n+1}$, 
$H^{r\barA}RFH \rightarrow H^{r\barA}H^{n+1}$, 
$H^{r\barA} \rightarrow H^{r\barA}RF$, and $\id_\A \rightarrow RF$, 
in that order, by their cones. Since $RF$ splits up as $\id \oplus H
\oplus \dots \oplus H^n$ identifying these four maps with direct
summand inclusions/projections, their cones are simply the direct
sums of the remaining summands. We thus obtain a twisted complex
\begin{equation*}
\label{eqn-id-to-tildeRtildeF-to-tildeH^n+1-twisted-complex-simplified}
\begin{tikzcd}[column sep=2cm]
H \oplus \dots \oplus H^n
\ar[dotted, bend right=10]{rr}[description]{?}
\ar{r}{
\left(\begin{smallmatrix}
A_r \\
?
\end{smallmatrix}
\right)
}
&
\left(H \oplus \dots \oplus H^n\right)
\underset{\degzero}{\oplus}
H^{r\barA} \left(H \oplus \dots \oplus H^n\right)
\ar{r}{
\left(\begin{smallmatrix}
?
\;\amsamp\;
H^{r\barA}A_r
\end{smallmatrix}
\right)
}
&
H^{r\barA}(H \oplus \dots \oplus H^n), 
\end{tikzcd}
\end{equation*} 
where the question signs denote the maps which are irrelevant to this argument. 
By the stronger monad condition established  
in Lemma \ref{lemma-split-case-strong-monad-iff-stronger-monad} the
following map is a homotopy equivalence: 
$$H \oplus \dots \oplus H^n \xrightarrow{A_r} 
H \oplus \dots \oplus H^n. $$
We can therefore remove the corresponding null-homotopic subcomplex, 
leaving us with the complex
\begin{align*}
H^{r\barA} (H \oplus \dots \oplus H^n) 
& \xrightarrow{H^{r\barA}A_r}  
H^{r\barA} (H \oplus \dots \oplus H^n) 
\end{align*}
which is null-homotopic as well. We conclude that 
\eqref{eqn-id-to-tildeRtildeF-to-tildeH^n+1-twisted-complex} is 
homotopy equivalent to zero, as desired.   
\end{proof}

\subsection{The general case}
\label{section-strong-monad-condition-the-general-case}

Throughout this section we adopt the following shorthand where it
doesn't cause any confusion. Given a cyclic degree 
$n$ coextension $Q_n$ of $\id$ by $H$ as on
\eqref{eqn-cyclic-coextension-of-id-by-H-of-degree-n} we denote by
$$ Q_i \xrightarrow{\iota} Q_{j} \quad \quad j \geq i $$
the compositions of the corresponding coextension maps:
$$ Q_i \xrightarrow{\iota_i} Q_{i+1}
\xrightarrow{\iota_{i+1}} \dots 
\xrightarrow{\iota_j} Q_j. $$

The weak adjoints condition of 
\cite{Addington-NewDerivedSymmetriesOfSomeHyperkaehlerVarieties}
applies equally well in the general (non-split) case. 
The strong monad condition doesn't, but it has the following
natural analogue. Observe that in the split case 
$$ Q_i \simeq \id \oplus \dots \oplus H^i, $$
and the condition that $HRF \xrightarrow{A_l} RF$ has 
no components of form $H H^i \rightarrow H^j$ with $i + 1 < j$ is equivalent
to
$$ HQ_i \xrightarrow{\iota} HQ_n \xrightarrow{A_l} Q_n $$
filtering through $Q_{i+1} \xrightarrow{\iota} Q_n$.
This still has the problem that the map 
$A_l$ doesn't exist in the non-split case, but it is further equivalent to 
$$ Q_1 Q_i \xrightarrow{\iota\;\iota} Q_n Q_n \xrightarrow{
\text{monad mult. }} Q_n $$
filtering through $Q_{i+1} \rightarrow Q_n$ as some map 
$$ m_{1,i}\colon Q_1 Q_i \rightarrow Q_{i+1}.$$ 
In this form, the condition makes sense in the fully general case. 
The other condition of the components $H H^{i} \rightarrow H^{i+1}$ of
$HRF \xrightarrow{A_l} RF$ being isomorphisms is then equivalent
to there existing isomorphisms  $H H^{i+1} \rightarrow H^i$
which intertwine with the maps $m_{1,i}$ via the highest degree term 
projections $\mu_{\bullet}$. The stronger monad condition 
can be reformulated in a similar way. 
We thus arrive at:
\begin{defn}
\label{defn-strong-monad-condition-general-case}
Let $\A$ and $\B$ be DG-categories and let $F \in D(\AbimB)$ be
an enhanced functor $D(\A) \rightarrow D(\B)$. 
Let \rm $(H, Q_n, \gamma)$ be a collection of 
\begin{enumerate}
\item $H \in D(\AbimA)$ such that $h = (-) \ldertimes_\A H$ is 
an autoequivalence of $D(\A)$ and $H(\krn F)=\krn F$.
\item A cyclic coextension $Q_n \in D(\AbimA)$ of $\id$ by $H$. 
\item An isomorphism $Q_n \xrightarrow{\gamma} RF$ in $D(\AbimA)$
which intertwines the adjunction unit 
$\id_\A \xrightarrow{\action} RF$ and the natural map 
$\id_\A \xrightarrow{\iota} Q_n$.
\end{enumerate}
Let
$$ m: Q_n Q_n \rightarrow Q_n $$
be the map induced from the monad multiplication on $RF$. 
We define following three conditions:
\begin{itemize}
\item \em Stronger monad condition: \rm 
For $i, j \geq 0$ with $i + j \leq n$ the map 
\begin{align}
\label{eqn-the-map-Q_i-Q_j-to-Q_n}
Q_i Q_j \xrightarrow{\iota \iota} Q_nQ_n \xrightarrow{m}  Q_n 
\end{align}
filters through $Q_{i+j} \xrightarrow{\iota} Q_n$ via some map 
$$ m_{ij}\colon Q_iQ_j \rightarrow  Q_{i+j}. $$ 
Moreover, there is an isomorphism 
$$\rho_{ij}\colon H^iH^j \xrightarrow{\quad \sim \quad} H^{i+j}$$ 
that makes the following diagram commute:
\begin{equation}
\label{eqn-diagonal-isomorphisms-in-strong-monad-condition-square}
\begin{tikzcd}[column sep={2cm}]
Q_i Q_j
\ar{r}{m_{ij}}
\ar{d}{\mu_i\mu_j}
&
Q_{i+j}
\ar{d}{\mu_{i+j}}
\\
H^iH^j
\ar{r}{\rho_{ij}}
&
H^{i+j}.
\end{tikzcd}
\end{equation}
\item \em Strong monad condition: \rm The stronger monad condition 
with $i = 1$. 
\item \em Weak adjoints condition: \rm 
There exists \em some \rm isomorphism $FR \simeq FH^nL$.
\end{itemize}
\end{defn}

In practice, it suffices to only check strong or stronger 
monad conditions for $i, j \geq 1$:

\begin{lemma}
\label{lemma-maps-m_i0-and-m_0j-are-identity}
The compositions 
$$ Q_i Q_0 \xrightarrow{\iota \iota} Q_n Q_n \xrightarrow{m} Q_n $$
$$ Q_0 Q_i \xrightarrow{\iota \iota} Q_n Q_n \xrightarrow{m} Q_n $$
decompose into canonical isomorphisms $Q_i \id_\A \simeq Q_i$ and 
$\id_\A Q_i \simeq Q_i$ followed by the map 
$Q_i \xrightarrow{\iota} Q_n$.  
\end{lemma}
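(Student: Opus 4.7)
The plan is to unfold the definition of $m$ in terms of $\gamma$ and the monad multiplication on $RF$, and then invoke the unit axioms of the monad. Recall that $m\colon Q_n Q_n \to Q_n$ is defined as the composition
\begin{equation*}
Q_nQ_n \xrightarrow{\gamma\gamma} RFRF \xrightarrow{R\trace F} RF \xrightarrow{\gamma^{-1}} Q_n.
\end{equation*}
The crucial input is the intertwining property of $\gamma$: the composition $\id_\A \xrightarrow{\iota} Q_n \xrightarrow{\gamma} RF$ equals the adjunction unit $\action\colon \id_\A \to RF$ (where, by the conventions of \S\ref{section-cyclic-extensions}, $Q_0 = \id_\A$ and $\iota_0 = \iota\colon \id_\A \to Q_n$).

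For the first assertion, factor the map $\iota\cdot\iota\colon Q_iQ_0 \to Q_nQ_n$ using functoriality of $\bartimes_\A$ as
\begin{equation*}
Q_i \cdot \id_\A \xrightarrow{\iota \cdot \id_\A} Q_n \cdot \id_\A \xrightarrow{Q_n \cdot \iota_0} Q_n \cdot Q_n.
\end{equation*}
Composing with $m$ and using the intertwining identity $\gamma\circ\iota_0 = \action$ on the right tensor slot, the whole composition $Q_iQ_0 \xrightarrow{\iota\iota} Q_nQ_n \xrightarrow{m} Q_n$ becomes
\begin{equation*}
Q_i \xrightarrow{\iota} Q_n \xrightarrow{\gamma} RF \xrightarrow{RF\cdot\action} RFRF \xrightarrow{R\trace F} RF \xrightarrow{\gamma^{-1}} Q_n,
\end{equation*}
modulo the canonical isomorphism $Q_i\cdot\id_\A \simeq Q_i$. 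The tail $RF \xrightarrow{RF\action} RFRF \xrightarrow{R\trace F} RF$ is the identity by the right unit axiom of the monad $RF$, so the composition collapses to $Q_i \xrightarrow{\iota} Q_n$, as required.

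The second assertion is symmetric: factor $\iota\cdot\iota\colon Q_0Q_i \to Q_nQ_n$ through $\id_\A \cdot Q_i \xrightarrow{\iota_0 \cdot Q_i} Q_n \cdot Q_i \xrightarrow{Q_n \cdot \iota} Q_n Q_n$ and apply the left unit axiom $R\trace F \circ \action RF = \id_{RF}$ to reduce $m\circ(\iota\iota)$ to the map $\iota\colon Q_i \to Q_n$ composed with the canonical isomorphism $\id_\A \cdot Q_i \simeq Q_i$. No step here presents a real obstacle; the argument is purely formal and the only thing to be careful about is bookkeeping of the unit intertwining together with the definition of $m$.
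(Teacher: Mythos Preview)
Your proof is correct and follows essentially the same approach as the paper: both arguments use that $\gamma$ intertwines $\iota$ with the adjunction unit $\action$, and then invoke the monad unit axiom $R\trace F \circ RF\action = \id_{RF}$ (resp.\ $R\trace F \circ \action RF = \id_{RF}$). The only cosmetic difference is that the paper first treats the case $i=n$, showing $Q_nQ_0 \xrightarrow{\iota\iota} Q_nQ_n \xrightarrow{m} Q_n$ is the canonical isomorphism $Q_n\id_\A \simeq Q_n$, and then deduces the general case by naturality of the unitor, whereas you go directly for general $i$; the content is identical.
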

\begin{proof}
By one of our assumptions in Definition \ref{defn-Pn-functor} the map 
$$ Q_0 = \id_{\A} \xrightarrow{\iota} Q_n $$ 
in $D(\AbimA)$ is intertwined by the isomorphism $\gamma$ with 
the adjunction unit $\id_\A \to RF$. Therefore  
$$ Q_nQ_0 \xrightarrow{\iota \iota} Q_n Q_n \xrightarrow{m} Q_n $$ 
is the canonical isomorphism 
$Q_n \id_\A \xrightarrow{\sim} Q_n$.  
Hence the composition  
$$ Q_i Q_0 \xrightarrow{\iota \iota} Q_nQ_n \xrightarrow{m} Q_n $$
equals 
$$ Q_i Q_0 \xrightarrow{\iota Q_0} Q_n Q_0 \xrightarrow{\sim} Q_n $$
and by functoriality of the canonical isomorphism further equals 
$$ Q_i Q_0 \xrightarrow{\sim} Q_i \xrightarrow{\iota} Q_n. $$
The case $i = 0$ is treated similarly. 
\end{proof}

Thus when $i = 0$ or $j = 0$ the map \eqref{eqn-the-map-Q_i-Q_j-to-Q_n} 
always filters through $Q_{i+j} \xrightarrow{\iota} Q_n$, and $m_{ij}$
and $\rho_{ij}$ can be taken to be the identity map. 
Moreover, as in the split case, we have:

\begin{lemma} The strong monad condition implies the monad condition
in Definition \ref{defn-Pn-functor}. 
\end{lemma}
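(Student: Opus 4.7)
The plan is to imitate the filtration-based argument from the split case already given in Section \ref{section-strong-monad-condition-split-case}. Both the source and the target of $\nu$ carry natural $n$-step filtrations whose associated graded pieces match up: on $FHQ_{n-1}$ we use the subobjects $FHQ_j$ for $0 \le j \le n-1$, with the $j$-th graded piece $FHH^j$ identified with $FH^{j+1}$ via $F\rho_{1,j}$; on $FJ_n$ we use the subobjects $FJ^{(k)} := \img(F\kappa|_{FQ_k}) \simeq FJ_k$ for $1 \le k \le n$, with graded pieces $FH^k$. The key claim is that $\nu$ sends $FHQ_j$ into $FJ^{(j+1)}$ and induces $F\rho_{1,j}$ on the $(j+1)$-th graded piece. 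A straightforward induction on $j$ combined with the $5$-lemma for exact triangles of the filtration cofibers then yields that $\nu$ itself is an isomorphism, with the $j=0$ case following from Lemma \ref{lemma-maps-m_i0-and-m_0j-are-identity} (where $m_{1,0}$ and $\rho_{1,0}$ reduce to canonical isomorphisms).

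To verify the filtration compatibility and to identify the induced maps on graded pieces, we would decompose $\psi F\colon FHRF \to FRF$ at the DG level using Proposition \ref{prps-dg-construction-of-p-twists}, writing it modulo the coboundary term $\xi'_\B F \circ F\sigma_1 RF$ as $FA_l^{(1)} - \beta RF$, where $A_l^{(1)} = (R\trace F) \circ (\gamma_1 RF)$ is the ``left monad multiplication by $H$ via $\gamma_1$'' and $\beta = (\trace FR) \circ F\gamma_1\colon FH \to F$ is the adjoint of $\gamma_1\colon H \to RF$. By naturality of $\beta$, its contribution to $\nu|_{FHQ_j}$ factors through $F\iota\colon FQ_j \to FQ_n$ and hence, after $F\kappa$, lands in $FJ^{(j)} \subseteq FJ^{(j+1)}$, contributing zero on the $(j+1)$-th graded piece. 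The $A_l^{(1)}$-contribution, identified via $\gamma$ with $F$ applied to $HQ_j \to HQ_n \to Q_nQ_n \xrightarrow{m} Q_n$, reduces (using the $F$-split coextension $\id \to Q_1 \to H$) to $F$ applied to $Q_1 Q_j \to Q_nQ_n \xrightarrow{m} Q_n$ modulo contributions that land in $F\iota(F) \subset FQ_n$ and are killed by $F\kappa$. The strong monad condition then provides the factorization of this composite through $Q_{j+1}\xrightarrow{\iota} Q_n$ via $m_{1,j}$, and the commutative square \eqref{eqn-diagonal-isomorphisms-in-strong-monad-condition-square} identifies the induced map on the $(j+1)$-th graded piece as $F\rho_{1,j}$, an isomorphism.

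The principal technical obstacle is that $\gamma_1\colon H \to RF$ exists only as a degree-$1$ map with nontrivial differential $\action \circ \sigma_1$, so the decomposition $\psi F = FA_l^{(1)} - \beta RF$ is valid only modulo the coboundary term above. The entire argument must therefore be carried out on explicit twisted-complex lifts of $Q_n$ and of the filtration subobjects, and one has to verify that the coboundary contributions neither obstruct the filtration compatibility nor alter the induced maps on the graded pieces. A secondary subtlety is the careful identification of the two filtrations on $FQ_n = FRF$: the one coming from the cyclic coextension structure of $Q_n$ via $\gamma$, and the one needed to observe that $\beta$'s contribution factors through $F\iota$. These bookkeeping issues are expected to form the most delicate part of the proof; the overall structural strategy, however, is a direct non-split analogue of the split-case argument given earlier in this section.
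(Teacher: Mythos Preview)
Your overall strategy is correct and matches the paper's: filter $FHQ_{n-1}$ by $FHQ_j$ and $FJ_n$ by the images of $FQ_{j+1}$, show $\nu$ respects these filtrations and induces $F\rho_{1,j}$ on each graded piece, then conclude by induction starting from $\nu_0 = \id$.

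The one unnecessary complication in your plan is the detour through the DG-level decomposition of $\psi F$ via the non-closed degree-$1$ map $\gamma_1$ and the coboundary term $\xi'_\B F \circ F\sigma_1 RF$. The paper bypasses all of this by working directly with the derived-category splitting $\phi\colon FH \to FQ_1$ (which exists because $Q_1$ is $F$-split). Using $\phi$, the paper writes $\nu|_{FHQ_i}$ explicitly as a difference of two genuine derived-category maps: the first is $FHQ_i \xrightarrow{\phi Q_i} FQ_1Q_i \xrightarrow{Fm_{1,i}} FQ_{i+1} \xrightarrow{F\kappa_{i+1}} FJ_{i+1}$, which by the strong monad condition square \eqref{eqn-diagonal-isomorphisms-in-strong-monad-condition-square} induces $F\rho_{1,i}$ on the top graded piece; the second is $FHQ_i \xrightarrow{\phi Q_i} FQ_1Q_i \to FRFQ_i \xrightarrow{\trace FQ_i} FQ_i \xrightarrow{F\kappa_i} FJ_i \hookrightarrow FJ_{i+1}$, which by construction lands in the smaller filtration step and so contributes nothing to the top graded piece. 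Both compatibility squares (with $\iota$ and with $\mu$) are then immediate, and no DG bookkeeping is needed. Your ``principal technical obstacle'' simply disappears once you start from $\phi$ rather than from $\gamma_1$.
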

\begin{proof}
Let $\phi: FH \to FQ_1$ be any splitting such that $(F\mu_1)\circ\phi=\id$. 
By Lemma \ref{lemma-psi-is-independent-of-the-choice-of-phi}
the map $\nu$ in the monad condition \eqref{eqn-monad-condition} 
equals the composition
$$
FHQ_{n-1}
\xrightarrow{FH \iota_n}
FHRF
\xrightarrow{\phi RF}
FQ_1RF
\xrightarrow{F\iota RF}
FRFRF
\xrightarrow{FR \trace F - \trace FRF}
FRF
\xrightarrow{F\kappa}
FJ_n.
$$
$FHQ_{n-1}$ has a filtration by $FHQ_i$ and 
$FJ_n$ has a filtration by $FJ_{i+1}$ for $i=0,\ldots, n-1$. 
Define 
$$ \nu_i\colon FHQ_i \to FJ_{i+1} $$
to be the difference of two maps: the composition of the top row
in the commutative diagram
\begin{equation*}
\begin{tikzcd}[column sep={2cm}]
FHQ_{i}
\ar{r}{\phi Q_i}
\ar{d}{FH\mu_i}
&
FQ_1Q_i
\ar{r}{F m_{1i}}
\ar[']{d}{F\mu_1\mu_i}
&
FQ_{i+1}
\ar{r}{F\kappa_{i+1}}
\ar{d}{F\mu_{i+1}}
&
FJ_{i+1}
\ar{d}{F\mu_{i+1}}
\\
FHH^i
\ar[equals]{r}
&
FHH^i
\ar{r}{\rho_{ij}}[']{\sim}
&
FH^{i+1}
\ar[equals]{r}
&
FH^{i+1}
\end{tikzcd}
\end{equation*}
and the composition
\begin{equation}
FHQ_i
\xrightarrow{\phi Q_i}
FQ_1Q_i
\xrightarrow{}
FRFQ_i
\xrightarrow{\trace FQ_i}
FQ_i
\xrightarrow{F\kappa_i}
FJ_i
\xrightarrow{F\iota_{i+1}}
FJ_{i+1}.
\end{equation}
The maps $\nu_i$ are compatible with each other and induce isomorphisms on the factors of the filtration, 
i.e. the following two squares commute:
\begin{equation*}
\begin{tikzcd}[column sep={2cm}]
FHQ_i
\ar{r}{\nu_i}
\ar{d}[']{FH\iota_{i+1}}
&
FJ_{i+1}
\ar{d}{F\iota_{i+2}}
\\
FHQ_{i+1}
\ar{r}{\nu_{i+1}}
&
FJ_{i+2}
\end{tikzcd}
\qquad\qquad
\begin{tikzcd}[column sep={2cm}]
FHQ_i
\ar{r}{\nu_i}
\ar[']{d}{FH\mu_{i}}
&
FJ_{i+1}
\ar{d}{F\mu_{i+1}}
\\
FHH^i
\ar{r}{F\rho_{1i}}[']{\sim}
&
FH^{i+1}.
\end{tikzcd}
\end{equation*}

The map $\nu_0\colon FH \to FH$ can be computed explicitly: 
since the multiplication map $Q_1Q_0\simeq Q_1 \to Q_1$
is the identity map and the second component is zero since $J_0=0$, we have $\nu_0=\id$.
Then by induction all $\nu_i$, including $\nu=\nu_{n-1}$, are isomorphisms.
\end{proof}

Unfortunately to have the strong monad condition and 
the weak adjoints condition imply, as in the split case, 
the adjoints condition and the highest degree term condition 
we need to make an extra assumption. 
The filtrations do not offer as much control as direct sums 
and try as they did the authors couldn't prove the results below
without assuming:
\begin{defn}
With $\A$ and $H$ as in
Definition \ref{defn-strong-monad-condition-general-case}  
we define the \em $\ext^{-1}$-vanishing condition \rm to be:
\begin{align}
\label{eqn-the-minus-one-ext-assumption}
\homm^{-1}_{D(\AbimA)}(\id, H^i) = 0 \quad \quad 1 \leq i \leq n. 
\end{align}
\end{defn}

We first introduce some notation. Recall that, as explained in
\S\ref{section-cyclic-extensions}, we have fixed 
for our given cyclic coextension $Q_n$
a twisted complex $\bar{Q}_n$ over $\AmodbarA$ whose convolution 
is $Q_n$. The intermediate coextensions $Q_i$ are then the
convolutions of the subcomplexes of $\bar{Q}_n$ which sit 
in the degrees from $-i$ to $0$. 

\begin{defn}
For any $i \in [0,n]$ let $\bar{K}_{i+1}$ be the subcomplex
of $\bar{Q}_n$ which sits in the degrees
from $-n$ to $-(i+1)$.  Denote by $\pi_{i+1}$, or just $\pi$ where it doesn't
cause confusion, the natural projection 
$\bar{Q}_n \twoheadrightarrow K_{i+1}$.
Let $K_{i+1}$ be the convolution of $\bar{K}_{i+1}$, we 
then have the following exact triangle in $D(\AbimA)$: 
\begin{equation}
\label{eqn-Q_i-Q_n-K_i+1-exact-triangle}
Q_i \xrightarrow{\iota} Q_n \xrightarrow{\pi} K_{i+1} \rightarrow Q_i[1]. 
\end{equation}

For any $j > i$ denote further by $\pi_{i,j}$, or just $\pi$ where it
doesn't cause confusion, the natural projection 
from $\bar{K}_i$ to $\bar{K}_j$ with $j > i$. 
\end{defn}

\begin{lemma}
\label{lemma-strong-monad-condition-one-sided}
Assume that $\ext^{-1}$-vanishing condition 
\eqref{eqn-the-minus-one-ext-assumption} holds. 
Then the strong monad condition implies that for all $i,j$ 
the following map filters through $Q_{i+j}$:
$$ Q_i Q_j \xrightarrow{\iota \iota}  Q_nQ_n \xrightarrow{m} Q_n. $$ 
\end{lemma}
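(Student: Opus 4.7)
The plan is to induct on $i+j$. Base cases are handled immediately: when $i=0$ or $j=0$ by Lemma~\ref{lemma-maps-m_i0-and-m_0j-are-identity}, and when $i=1$ with $j$ arbitrary by the strong monad condition itself. For the inductive step with $i,j\ge 1$, I would work at the DG level. Take $X=\bar Q_i\bartimes\bar Q_j$, the one-sided twisted bicomplex over $\AmodbarA$ with terms $X_{-p,-q}=H^{\bartimes p}\bartimes H^{\bartimes q}[-(p+q)]$ whose totalisation is $Q_iQ_j$; set $Y=\bar Q_n$ with the projection $\pi\colon Y\twoheadrightarrow\bar K_{i+j+1}$; and fix a DG lift $f\colon \tot(X)\to Y$ of $m\circ(\iota\iota)$ whose component on the top corner $X_{-i,-j}$ is the canonical identification $H^{\bartimes i}\bartimes H^{\bartimes j}\simeq H^{\bartimes(i+j)}$ placed into the degree $-(i+j)$ term of $\bar Q_{i+j}\subset Y$. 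It then suffices to show that $\pi\circ f$ is null-homotopic.

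The key step is Lemma~\ref{lemma-true-filtration-filtering-via-double-filtration-filtering} applied to $X$, $Y$, and $m=-(i+j-1)$. Its $\ext^{-1}$-vanishing requirement reduces, via the invertibility of $H$ and the identification $\homm^{\bullet}_{D(\AbimA)}(H^a,H^b)\cong\homm^{\bullet}_{D(\AbimA)}(\id,H^{b-a})$, precisely to the hypothesis $\homm^{-1}_{D(\AbimA)}(\id,H^s)=0$ for $1\le s\le n$, i.e.~to \eqref{eqn-the-minus-one-ext-assumption}. For each $(-p,-q)$ with $p+q\le i+j-1$ the sub-bicomplex $\tot(X_{\ge-p,\ge-q})\cong Q_pQ_q$ satisfies the lemma's hypothesis because, by the inductive assumption, $Q_pQ_q\to Q_n$ factors through $Q_{p+q}\subseteq Q_{i+j-1}=Y_{\ge-(i+j-1)}$. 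The conclusion is then that the restriction of $\pi\circ f$ to the sub-twisted-complex $\tot(X)_{\ge-(i+j-1)}$, which contains every term of $X$ except the top corner $X_{-i,-j}$, is null-homotopic.

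To finish, I would combine this with the vanishing of $\pi\circ f|_{X_{-i,-j}}$, which holds on the nose by the construction of $f$, via the short exact sequence $0\to\tot(X)_{\ge-(i+j-1)}\to\tot(X)\to X_{-i,-j}\to 0$ of twisted complexes. The obstruction to extending the null-homotopy across the top corner is an element of $\homm^{-1}_{D(\AbimA)}(X_{-i,-j},\bar K_{i+j+1})$ filtered by the groups $\homm^{-1}(\id, H^s)$ for $1\le s\le n-i-j$, all of which vanish by \eqref{eqn-the-minus-one-ext-assumption}. The hardest part of the argument is exactly this final coherent extension step; the induction and the DG setup are arranged precisely so that the needed vanishing comes from the single hypothesis \eqref{eqn-the-minus-one-ext-assumption}, without which the filtration obstruction could be non-trivial.
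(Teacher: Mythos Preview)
Your induction and the use of Lemma~\ref{lemma-true-filtration-filtering-via-double-filtration-filtering} to handle $\tot(X)_{\ge-(i+j-1)}$ track the paper's proof, but the final step has a genuine gap. Once the restriction of $\pi\circ f$ to $X_{ij}:=\tot(X)_{\ge-(i+j-1)}$ is null-homotopic, the class $[\pi\circ f]$ factors as $Q_iQ_j\xrightarrow{\mu_i\mu_j}H^iH^j\xrightarrow{\lambda}K_{i+j+1}$, and what remains is to show $\lambda=0$. You attempt this via (a) the on-the-nose vanishing of the top-corner component of $\pi\circ f$, and (b) an obstruction in $\homm^{-1}$. But (a) does not control $\lambda$: if $h_S$ is your null-homotopy on $X_{ij}$ and $\alpha$ the attaching map, then $\lambda$ is represented by $-h_S\circ\alpha$, which has degree~$0$ regardless of the top-corner component of $f$. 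So the obstruction lies in $\homm^0_{D(\AbimA)}(H^{i+j},K_{i+j+1})$, whose graded pieces are $\homm^0(\id,H^s)$, and~\eqref{eqn-the-minus-one-ext-assumption} says nothing about these. (A red flag: your inductive step never invokes the strong monad condition beyond the base $i=1$, nor the associativity of $m$; both turn out to be essential. Also, the existence of a lift $f$ with the prescribed top-corner component is not justified and is close to assuming what is to be proved.)

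The paper supplies the missing ingredient via associativity. From the commuting square
\[
\begin{tikzcd}[column sep=1.8cm]
Q_1Q_{i-1}Q_j \ar{r}{Q_1 m_{i-1,j}} \ar[']{d}{m_{1,i-1}Q_j} & Q_1Q_{i+j-1} \ar{d}{m\circ\iota\iota} \\
Q_iQ_j \ar{r}{m\circ\iota\iota} & Q_n
\end{tikzcd}
\]
and the strong monad condition applied to $Q_1Q_{i+j-1}$, the composition $Q_1Q_{i-1}Q_j\to Q_iQ_j\to K_{i+j+1}$ vanishes. Rewriting via the square~\eqref{eqn-diagonal-isomorphisms-in-strong-monad-condition-square}, this gives $\lambda\circ(\rho_{1,i-1}H^j)\circ(\mu_1\mu_{i-1}\mu_j)=0$, so $\lambda\circ\rho_{1,i-1}H^j$ lies in the image of $\homm^{-1}(X_{1,i-1,j},K_{i+j+1})$. \emph{That} group is filtered by $\homm^{-1}(\id,H^s)$ and vanishes by~\eqref{eqn-the-minus-one-ext-assumption}; since $\rho_{1,i-1}$ is an isomorphism, $\lambda=0$.
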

\begin{proof}
We prove the assertion of the Lemma by the induction on $i$ and $j$. The base
of the induction are the cases where $i = 0$ or $j = 0$. 

By the exact triangle \eqref{eqn-Q_i-Q_n-K_i+1-exact-triangle}  
to establish that the map 
$$ Q_p Q_q \xrightarrow{ \iota \iota } Q_n Q_n \xrightarrow{m} Q_n $$
filters through $Q_{p+q}$ it suffices to prove that 
the composition 
\begin{equation}
\label{eqn-Q_i-Q_j-to-K-i+j+1}
Q_p Q_q \xrightarrow{\iota \iota} Q_n Q_n \xrightarrow{m} Q_n
\xrightarrow{\pi} K_{p+q+1}
\end{equation}
vanishes. Suppose now that we have established our assertion for
all $p$ and $q$ with $p \leq i$ and $q \leq j$ except $p = i$, $q = j$.   
Let us establish it for for $p = i$ and $q = j$. 

Denote by $\bar{X}_{ij}$ the subcomplex of
$\bar{Q}_i\bar{Q}_j$ that sits in degrees $-i-j+1$ to $0$, and 
by $X_{ij}$ its convolution. We thus have an exact triangle 
$$ 
X_{ij} \rightarrow Q_iQ_j \xrightarrow{\mu_i \mu_j} H^iH^j \rightarrow  X_{ij}[1]. 
$$ 
By Lemma \ref{lemma-true-filtration-filtering-via-double-filtration-filtering}
the induction assumption that \eqref{eqn-Q_i-Q_j-to-K-i+j+1} vanishes 
for all $p$ and $q$ with
$p \leq i$ and $q \leq j$ except $p = i$, $q = j$
implies that the composition of $X_{ij} \rightarrow Q_iQ_j$ 
with \eqref{eqn-Q_i-Q_j-to-K-i+j+1} also vanishes.

Consider now the associativity diagram for the monad multiplication:
\begin{equation*}
\begin{tikzcd}[column sep={2cm}]
Q_1Q_{i-1}Q_j
\ar{r}{Q_1m_{i-1,j}}
\ar[']{d}{m_{1,i-1}Q_j}
&
Q_1Q_{i+j-1}
\ar{d}{m \circ \iota\iota}
&
\\
Q_iQ_j
\ar{r}{m \circ \iota\iota}
&
Q_n
\ar{r}{\pi}
&
K_{i+j+1}.
\end{tikzcd}
\end{equation*}
By the strong monad condition 
the composition along the top of the diagram is zero, 
therefore the composition along the bottom is zero too.
Similar to above, let $X_{1,i-1,j}$ be the convolution of 
the subcomplex of $\bar{Q}_1\bar{Q}_{i-1}\bar{Q}_j$ 
that sits in degrees $-i-j+1$ to $0$.
The following diagram commutes:
\begin{equation*}
\begin{tikzcd}[column sep={2cm}]
X_{1,i-1,j}
\ar{d}
&
X_{ij}
\ar{d}
\ar{rd}{0}
&
\\
Q_1Q_{i-1}Q_j
\ar{r}{m_{1,i-1}Q_j}
\ar[']{d}{\mu_1\mu_{i-1}\mu_j}
&
Q_iQ_j
\ar{r}
\ar{d}{\mu_i\mu_j}
&
K_{i+j+1}
\\
HH^{i-1}H^j
\ar{r}{\rho_{1,i-1}H^j}
&
H^iH^j
\ar[dashed]{ru}{\lambda}
&
\end{tikzcd}
\end{equation*}
Since its composition with $X_{ij} \rightarrow  Q_iQ_j$ vanishes,  
the map $Q_iQ_j \to K_{i+j+1}$ filters 
through $\mu_i \mu_j$ as some map $\lambda\colon 
H^iH^j \rightarrow K_{i+j+1}$. 
By commutativity of the diagram the following composition vanishes:
\begin{equation}
Q_1Q_{i-1}Q_j
\xrightarrow{\mu_1 \mu_{i-1}\mu_j}
HH^{i-1}H^j 
\xrightarrow{\rho_{1,i-1} H^j} 
H^iH^j 
\xrightarrow{\lambda}
K_{i+j+1}
\end{equation}
Hence, by exactness of the first column, 
the map $\lambda \circ \rho_{1,i-1} H^j$
filters through some map $X_{1,i-1,j}[1] \rightarrow K_{i+j+1}$.  
Therefore $\lambda \circ \rho_{1,i-1} H^j = 0$ , since 
by assumption $\homm^{-1}_{D(\AbimA)}(X_{1,i-1,j},K_{i+j+1})=0$. 
Since $\rho_{1,i-1}$ is an isomorphism, the map $\lambda$ is zero as
well, and hence so is $Q_iQ_j \to K_{i+j+1}$ as desired. 
\end{proof}

\begin{cor}
\label{cor-from-strong-monad-to-monad-multiplication-one-sided}
Suppose that $\ext^{-1}$-vanishing condition 
\eqref{eqn-the-minus-one-ext-assumption} 
and the strong monad condition hold. Then:
\begin{enumerate}
\item 
\label{item-monad-multiplication-is-DG-one-sided}
The monad multiplication map $m\colon Q_nQ_n\to Q_n$ in
$D(\AbimA)$ lifts to a one-sided map 
$$\bar{m}\colon \bar{Q}_n \bar{Q}_n \rightarrow \bar{Q}_n$$
of twisted complexes over $\AmodbarA$.  
\item 
\label{item-strong-monad-condition-to-stronger-monad-condition}
The stronger monad condition holds. 
The requisite maps $m_{ij}\colon Q_i Q_j \rightarrow Q_{i+j}$ 
and the isomorphisms $\rho_{ij}: H^iH^j \to H^{i+j}$
can be defined by the appropriate components of the map $\bar{m}$. 
Such $m_{ij}$ are, in particular, intertwined by 
the inclusion maps $\iota$.
\end{enumerate}
\end{cor}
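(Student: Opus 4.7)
The plan is to first produce the one-sided lift $\bar m$ claimed in Part (1) and then to read off the data $(m_{ij},\rho_{ij})$ of Part (2) directly from its components, using associativity of the monad multiplication to check that each $\rho_{ij}$ is an isomorphism.

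For Part (1), I would view $\bar Q_n\bar Q_n$ as a twisted bicomplex over $\AmodbarA$, one-sided in both directions, whose subcomplex $\bar Q_i\bar Q_j$ convolves to $Q_iQ_j$ and whose total-degree truncation $(\bar Q_n\bar Q_n)_{\geq -k}$ convolves to the $k$-th piece of the natural filtration of $Q_nQ_n$. Lemma \ref{lemma-strong-monad-condition-one-sided} supplies, for every pair $(i,j)$, that the restriction of $m$ to $Q_iQ_j$ is homotopic to a map factoring through $Q_{i+j}\xrightarrow{\iota}Q_n$. Applying Lemma \ref{lemma-true-filtration-filtering-via-double-filtration-filtering} would then promote these bidegree filterings to a single filtering of $(\bar Q_n\bar Q_n)_{\geq -k}$ through $\bar Q_k$ up to homotopy, for each $k$. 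The $\ext^{-1}$-vanishing hypothesis of that lemma, translated via the autoequivalence property of $H$ to a statement about $\homm^{-1}$ between $\id$ and powers of $H$, is supplied by the hypothesis \eqref{eqn-the-minus-one-ext-assumption}. With this in hand, Lemma \ref{lemma-onesidedness-via-true-filtration} applied to $X=\bar Q_n\bar Q_n$ and $Y=\bar Q_n$ produces the desired one-sided lift $\bar m\colon \bar Q_n\bar Q_n\to\bar Q_n$ of $m$.

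For Part (2), I would define $m_{ij}\colon Q_iQ_j\to Q_{i+j}$ as the map of convolutions induced by the restriction $\bar m|_{\bar Q_i\bar Q_j}\colon \bar Q_i\bar Q_j\to \bar Q_{i+j}$, which is well-defined precisely because $\bar m$ is one-sided; and I would define $\rho_{ij}\colon H^iH^j\to H^{i+j}$ to be the single component of $\bar m$ sending $H^{\bartimes i}\bartimes H^{\bartimes j}[-(i+j)]$ to $H^{\bartimes(i+j)}[-(i+j)]$, under the DG-level identifications. The intertwining of the $m_{ij}$ with the $\iota$-inclusions is then automatic from one-sidedness, and the square \eqref{eqn-diagonal-isomorphisms-in-strong-monad-condition-square} commutes on the nose, since $\mu_{i+j}$ and $\mu_i\mu_j$ are just the projections onto the extreme-degree components of their respective twisted complexes.

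The remaining step, which I expect to be the main obstacle, is to verify that each $\rho_{ij}$ is an isomorphism for $i,j\geq 1$. The strong monad condition gives this for $i=1$. For $i\geq 2$ I would argue by induction on $i$ using associativity of the monad multiplication on $RF$: the two paths $Q_1Q_{i-1}Q_j\to Q_n$ given by $m\circ(m\bartimes\id)$ and $m\circ(\id\bartimes m)$ agree in $D(\AbimA)$, and projecting onto the lowest-degree component via $\mu_{i+j}$ yields a commutative square whose vertical arrows are $\rho_{1,i-1}\bartimes\id$ and $\id\bartimes\rho_{i-1,j}$ and whose horizontal arrows are $\rho_{ij}$ and $\rho_{1,i+j-1}$. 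Three of these four maps are isomorphisms by the inductive hypothesis and the strong monad condition, forcing $\rho_{ij}$ to be an isomorphism as well. The subtlety is that the associativity is a priori only a statement in $D(\AbimA)$, so to obtain the precise equality of $\rho$-components one must either lift the associator to a DG homotopy compatible with $\bar m$, or argue via a spectral sequence on components; either way this should succeed by another application of the same filtration machinery and the $\ext^{-1}$-vanishing used above, at which point the induction closes and the stronger monad condition is established.
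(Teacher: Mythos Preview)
Your proposal is correct and follows essentially the same route as the paper: Part (1) matches exactly (Lemma \ref{lemma-strong-monad-condition-one-sided} feeds into Lemma \ref{lemma-true-filtration-filtering-via-double-filtration-filtering} and then Lemma \ref{lemma-onesidedness-via-true-filtration}), and for Part (2) the paper likewise reads off the $m_{ij}$ and $\rho_{ij}$ from the components of $\bar m$ and establishes that each $\rho_{ij}$ is an isomorphism by an associativity-based induction parallel to the proof of Lemma \ref{lemma-strong-monad-condition-one-sided}. The subtlety you flag---that associativity only holds in $D(\AbimA)$---is handled precisely as you predict, via the $\ext^{-1}$-vanishing hypothesis.
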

\begin{proof}
The first assertion follows from applying the Lemma
\ref{lemma-true-filtration-filtering-via-double-filtration-filtering}
and then Lemma \ref{lemma-onesidedness-via-true-filtration}
to the statement established in Lemma
\ref{lemma-strong-monad-condition-one-sided}. 
The second assertion then follows in a straightforward way: 
the assertion that the maps $\rho_{ij}: H^iH^j \to H^{i+j}$
defined by the appropriate components of the map $\bar{m}$ are
isomorphisms is established by an induction on $i$ and $j$ analogous 
to that in the proof of Lemma \ref{lemma-strong-monad-condition-one-sided}. 
\end{proof}

\begin{lemma} 
\label{lemma-RF-isomorphic-to-HnLF}
Suppose that $\ext^{-1}$-vanishing condition 
\eqref{eqn-the-minus-one-ext-assumption} holds or
that on DG level the monad multiplication is one-sided as per
Corollary \ref{cor-from-strong-monad-to-monad-multiplication-one-sided}\eqref{item-monad-multiplication-is-DG-one-sided}. 
The strong monad condition implies that the canonical map 
$$ RF \xrightarrow{\eqref{eqn-adjoints-condition}F} H^nLF $$ 
is an isomorphism.
\end{lemma}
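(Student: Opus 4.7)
The plan is to adapt the argument from the split case (given just above in the paper) to the general filtered setting. First I would use the commutative diagram \eqref{diagram-mult-comult-relation}, which has already been established in full generality, to rewrite the canonical map
$$\phi \;:=\; \eqref{eqn-adjoints-condition}F \;\colon\; RF \;\longrightarrow\; H^n L F$$
as the four-fold composition
$$\phi \;=\; (\mu_n LF) \circ (R\trace FLF) \circ (RFR\action F) \circ (RF\action).$$
Under the isomorphism $\gamma$ the monad multiplication $R\trace F\colon RFRF \to RF$ becomes $m\colon Q_nQ_n \to Q_n$, and the unit $\action\colon \id_\A \to RF$ becomes $\iota\colon \id \to Q_n$.

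Next I would invoke Corollary~\ref{cor-from-strong-monad-to-monad-multiplication-one-sided}: under either auxiliary hypothesis, the strong monad condition implies the stronger monad condition, so we obtain filtration-compatible maps $m_{ij}\colon Q_iQ_j \to Q_{i+j}$ for $i+j \le n$ together with isomorphisms $\rho_{ij}\colon H^iH^j \xrightarrow{\sim} H^{i+j}$ on the top-degree graded pieces. In the DG one-sided case, these come from a genuine one-sided lift $\bar m$ of $m$ that respects the twisted-complex filtrations on the nose; in the $\ext^{-1}$-vanishing case, they are obtained inductively as in Lemma~\ref{lemma-strong-monad-condition-one-sided}.

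With these tools in hand, I would argue by induction along the filtration $Q_0 \subset Q_1 \subset \cdots \subset Q_n$ on the source. For each $i$, I would show that the restriction $\phi|_{Q_i}$ factors in a triangular manner through a corresponding piece of $H^n LF$, and that on the graded piece $Q_i/Q_{i-1} \simeq H^i$ it induces an isomorphism assembled from $\rho_{i,n-i}$ together with an adjunction-unit map of the form $\id \to H^{n-i}(H^{n-i})'$, precisely mirroring the matrix-diagonal computation of \S\ref{section-strong-monad-condition-split-case}. A filtration/five-lemma argument (immediate in the DG one-sided case, spectral-sequential in the other) will then give the conclusion.

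The hard part will be producing and matching the correct filtration on the target $H^nLF$: without the weak adjoints condition we cannot identify $LF$ with $Q_n'$, so this filtration does not come from an intrinsic decomposition of $LF$. Instead I would assemble it from the iterated application of the stronger monad condition to the four-fold composition itself. In the DG one-sided case this is entirely concrete on twisted complexes; in the $\ext^{-1}$-vanishing case, the role of that hypothesis is exactly to kill the obstructions that would otherwise prevent lifting the graded-piece isomorphisms to a filtered one, exactly as it was used to propagate one-sidedness in the proof of Lemma~\ref{lemma-strong-monad-condition-one-sided}.
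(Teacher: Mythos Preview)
Your overall strategy is correct and matches the paper's, but the ``hard part'' you identify is a non-issue born of a misconception. You say that without the weak adjoints condition we cannot identify $LF$ with $Q_n'$; this is false. The weak adjoints condition concerns $FR \simeq FH^nL$, which is entirely unrelated. The identification $LF \simeq Q_n'$ is automatic: in \S\ref{subsection-left-adjoint} the paper defines $Q_n'$ as the left dual of $Q_n$ and $\gamma'\colon LF \to Q_n'$ as the left dual of the isomorphism $\gamma$, so $\gamma'$ is itself an isomorphism. Thus $LF \simeq Q_n'$ carries a canonical cyclic-\emph{extension} structure (dual to the coextension on $Q_n$), and $H^nLF \simeq H^nQ_n'$ comes with the twisted-complex filtration for free. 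There is nothing to ``assemble from the iterated application of the stronger monad condition.''

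With that corrected, the paper's execution is slightly cleaner than your inductive five-lemma plan, though equivalent in content. After rewriting the map via \eqref{diagram-mult-comult-relation} as
\[
Q_n \xrightarrow{Q_n\action} Q_nQ_nQ_n' \xrightarrow{mQ_n'} Q_nQ_n' \xrightarrow{\mu_nQ_n'} H^nQ_n',
\]
the paper observes that each of the three maps lifts to a one-sided map of twisted complexes (the first and third canonically; the middle by Corollary~\ref{cor-from-strong-monad-to-monad-multiplication-one-sided}, which is where the hypothesis is used). The Rectangle Lemma then reduces the question to checking the degree-zero components $H^i \to H^n(H')^{n-i}$, which are precisely the compositions
\[
H^i \xrightarrow{H^i\action} H^iH^{n-i}(H')^{n-i} \xrightarrow{\rho_{i,n-i}(H')^{n-i}} H^n(H')^{n-i}
\]
you already identified. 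These are isomorphisms because $H$ enhances an autoequivalence and $\rho_{i,n-i}$ is an isomorphism by the stronger monad condition.
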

\begin{proof}
By the commutativity of \eqref{diagram-mult-comult-relation}
the map $\eqref{eqn-adjoints-condition}F$ is 
identified by the isomorphism $\gamma: Q_n \rightarrow RF$ with 
\begin{equation}
\label{eqn-map-RF-HnLF}
Q_n \xrightarrow{Q_n\action} Q_nQ_nQ'_n \xrightarrow{m Q'_n} 
Q_nQ'_n \xrightarrow{\mu_n Q'_n} H^n Q'_n.
\end{equation}
The first and the third maps in this composition canonically
lift to one-sided maps of the corresponding twisted complexes: 
the former has a single degree-$0$-to-degree-$0$ component equal
to the sum of  $\id \xrightarrow{\action} H^i (H')^i$, and the latter
is the projection of $\bar{Q}_n$ onto its highest degree term. 
The second map lifts to a one-sided map of twisted complexes 
by the assumptions of this lemma in the view
of Corollary \ref{cor-from-strong-monad-to-monad-multiplication-one-sided}. 
Thus the whole of \eqref{eqn-map-RF-HnLF} lifts to a one-sided map 
$$ \bar{Q}_n \rightarrow H^n \bar{Q}'_n. $$
of twisted complexes over $\AmodbarA$. 

By the Rectangle Lemma
\cite[Lemma 2.12]{AnnoLogvinenko-BarCategoryOfModulesAndHomotopyAdjunctionForTensorFunctors}
to establish that \eqref{eqn-map-RF-HnLF} is a homotopy equivalence it
suffices to establish that so are its closed degree zero components 
\begin{equation}
\label{eqn-degree-zero-components-of-RF-HnLF}
H^i \rightarrow H^n (H')^{n-i}. 
\end{equation}
We can write these down explicitly: 
they are the compositions
$$ H^i \xrightarrow{H^i \action} H^i H^{n-i} (H')^{n-i}
\xrightarrow{\rho_{i,n-i}(H')^{n-i}} H^n (H')^{n-i}. $$
Both composants in them are homotopy equivalences:
the first since $H$ is an enhancement of an autoequivalence, 
and the second by 
Corollary \ref{cor-from-strong-monad-to-monad-multiplication-one-sided}
\eqref{item-strong-monad-condition-to-stronger-monad-condition}. 
\end{proof}

\begin{prps} 
\label{prps-R-HnL-canonical-map}
Let $G\in D(\CbimB)$ for some $\C$.
If there is some isomorphism $GR \xrightarrow{\phi} GH^n L$
and the canonical map 
$$RF \xrightarrow{\eqref{eqn-adjoints-condition}F} H^nLF $$ 
is an isomorphism, then the canonical map 
\begin{equation}
\label{eqn-adjoints-condition-G}
GR \xrightarrow{G\eqref{eqn-adjoints-condition-true}} GH^nL
\end{equation}
 is also an isomorphism. 
\end{prps}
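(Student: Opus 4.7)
The plan is to show that the cone of the canonical map $G\alpha$ vanishes, by combining the Cancellation Lemma with the abstract isomorphism $\phi$ via a long exact sequence argument. Denote by $\alpha\colon R \to H^nL$ the canonical map \eqref{eqn-adjoints-condition-true}, so the hypothesis reads that $\alpha F$ is an isomorphism. Whiskering with $G$ on the left, $G\alpha F\colon GRF \to GH^nLF$ is then an isomorphism in $D(\CbimB)$, so the cone $K := \cone(G\alpha) \in D(\CbimA)$ satisfies $KF \simeq 0$; the goal is to prove $K \simeq 0$, which is equivalent to $G\alpha$ being an isomorphism.

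The first step is to apply the Cancellation Lemma (Lemma \ref{lemma-cancelling-F-from-natural-transformations-involving-R-or-L}(1)), which says that for any $Y$ the whiskering map $\Hom_{D(\CbimA)}(GR, Y) \to \Hom_{D(\CbimB)}(GRF, YF)$ is injective. Taking $Y = K[i]$ for each $i \in \mathbb{Z}$ and using $K[i]F \simeq 0$ yields $\Hom(GR, K[i]) = 0$ for every $i$. Now the given isomorphism $\phi\colon GR \xrightarrow{\sim} GH^nL$ transports this vanishing: pre-composition with $\phi$ provides a bijection $\Hom(GH^nL, K[i]) \xrightarrow{\sim} \Hom(GR, K[i]) = 0$, so $\Hom(GH^nL, K[i]) = 0$ as well.

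The second step finishes the argument through the long exact sequence of $\Hom_{D(\CbimA)}(-, K)$ applied to the defining triangle $GR \xrightarrow{G\alpha} GH^nL \to K \to GR[1]$. The segment
\[
\Hom(GR[1], K) \to \Hom(K, K) \to \Hom(GH^nL, K)
\]
has vanishing outer terms (the left one equals $\Hom(GR, K[-1]) = 0$), so by exactness $\Hom(K, K) = 0$, forcing $\id_K = 0$ and hence $K \simeq 0$.

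The main potential worry is whether the Cancellation Lemma, stated in the paper for additive functors, really applies in the derived bimodule setting with $G \in D(\CbimB)$; but its proof only uses the 2-categorical triangular identity $R\trace \circ \action R = \id_R$, which remains valid after whiskering with $G$, so the conclusion extends to morphisms out of $GR$ in $D(\CbimA)$. The essential role of the hypothesis that some isomorphism $\phi$ exists is this: the Cancellation Lemma alone yields vanishing only for Hom's \emph{out of $GR$} into $K[i]$, whereas the long exact sequence further requires the vanishing of Hom's \emph{out of $GH^nL$}, and this can only be obtained by transporting the first vanishing across $\phi$.
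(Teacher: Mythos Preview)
Your proof is correct and shares the paper's core strategy: complete $G\alpha$ to a triangle, note that the cone $K$ satisfies $KF\simeq 0$, then combine the Cancellation Lemma with the given isomorphism $\phi$. The execution differs. The paper shows directly that the two connecting maps $GH^nL\to K$ and $K\to GR[1]$ vanish: for the first it precomposes with $\phi$ to obtain a map out of $GR$ and applies item~(1) of the Cancellation Lemma; for the second it postcomposes with $\phi[1]$ to obtain a map into $GH^nL[1]=(GH^n)L[1]$ and applies item~(3). Once both connecting maps vanish, $G\alpha$ has a left and a right semi-inverse and is therefore an isomorphism. Your route instead uses only item~(1), but for all shifts, to get $\Hom(GR,K[i])=0$, transports this via $\phi$ to $\Hom(GH^nL,K[i])=0$, and then kills $\Hom(K,K)$ through the long exact sequence of the triangle. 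This is a clean variant that avoids invoking the dual part of the Cancellation Lemma at the cost of one long exact sequence step.
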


\begin{proof}
Complete \eqref{eqn-adjoints-condition-G} to an exact triangle
$$ GR \xrightarrow{\eqref{eqn-adjoints-condition-G}} GH^n L
\xrightarrow{\alpha} Z \xrightarrow{\beta} GR[1].$$
Then 
$$ GRF \xrightarrow{\eqref{eqn-adjoints-condition-G}F} GH^n LF
\xrightarrow{\alpha F} ZF \xrightarrow{\beta F} GRF[1]$$
is also an exact triangle. Since $\eqref{eqn-adjoints-condition-G}F$
is an isomorpshim, we have $\alpha F = 0$ and $\beta F = 0$. 
Thus the composition
$$ GRF \xrightarrow{\phi F} GH^nLF \xrightarrow{\alpha F} ZF $$
is zero. By the Cancellation Lemma
\ref{lemma-cancelling-F-from-natural-transformations-involving-R-or-L}\eqref{item-GR-alpha-H}
so is
$$ GR \xrightarrow{\phi} GH^nL \xrightarrow{\alpha} Z. $$
Since $\phi$ is an isomorphism, we conclude that $\alpha = 0$.
Similarly, we obtain that $\beta = 0$. 

Since $\alpha = 0$ and $\beta = 0$ the morphism 
$GR \xrightarrow{\eqref{eqn-adjoints-condition-G}} GH^n L$ admits
both right and left semi-inverse, and is therefore an isomorphism. 
\end{proof}

\begin{lemma} 
Suppose that $\ext^{-1}$-vanishing condition 
\eqref{eqn-the-minus-one-ext-assumption} holds or
that on DG level the monad multiplication is one-sided as per
Corollary \ref{cor-from-strong-monad-to-monad-multiplication-one-sided}\eqref{item-monad-multiplication-is-DG-one-sided}. 
The strong monad condition and the weak adjoints condition 
imply the adjoints condition.
\end{lemma}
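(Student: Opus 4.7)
The plan is to combine the two technical results that have just been established: Lemma \ref{lemma-RF-isomorphic-to-HnLF} and Proposition \ref{prps-R-HnL-canonical-map}. The former gives us the canonical map in the ``$F$ on the right'' direction; the latter lets us transfer this information to the ``$F$ on the left'' direction, which is what the adjoints condition actually demands, provided we can ensure the existence of \emph{some} isomorphism between the two objects in question. The weak adjoints condition supplies exactly that existence.

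More concretely, first I would invoke Lemma \ref{lemma-RF-isomorphic-to-HnLF}, whose hypotheses are precisely those assumed here: the strong monad condition together with either the $\ext^{-1}$-vanishing condition or the DG one-sidedness of the monad multiplication (via Corollary \ref{cor-from-strong-monad-to-monad-multiplication-one-sided}). This yields that the canonical map
\begin{equation*}
RF \xrightarrow{\eqref{eqn-adjoints-condition}F} H^nLF
\end{equation*}
is an isomorphism in $D(\AbimA)$.

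Next, I would apply Proposition \ref{prps-R-HnL-canonical-map} with $\C = \A$ and $G = F \in D(\AbimB)$. Its two hypotheses hold: the first is the existence of \emph{some} isomorphism $FR \simeq FH^nL$, which is precisely the weak adjoints condition; the second is that $RF \to H^nLF$ is an isomorphism, which we have just verified. The conclusion of Proposition \ref{prps-R-HnL-canonical-map} is that the canonical map
\begin{equation*}
FR \xrightarrow{\eqref{eqn-adjoints-condition}} FH^nL
\end{equation*}
is an isomorphism, which is exactly the adjoints condition of Definition \ref{defn-Pn-functor}.

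There is essentially no obstacle here since the bulk of the work has already been done in the preceding lemma and proposition; the only thing to verify is that the hypotheses line up correctly, and in particular that the ``some isomorphism'' appearing in Proposition \ref{prps-R-HnL-canonical-map} is furnished verbatim by the weak adjoints condition. The proof is therefore a short two-step chain.
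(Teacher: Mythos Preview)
Your proposal is correct and matches the paper's own proof essentially verbatim: first apply Lemma \ref{lemma-RF-isomorphic-to-HnLF} to get that the canonical map $RF \to H^nLF$ is an isomorphism, then apply Proposition \ref{prps-R-HnL-canonical-map} with $G = F$ (so $\C = \A$) to conclude that $FR \to FH^nL$ is an isomorphism, which is the adjoints condition. The weak adjoints condition provides exactly the ``some isomorphism $GR \simeq GH^nL$'' hypothesis needed in the proposition.
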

\begin{proof}
By Lemma \ref{lemma-RF-isomorphic-to-HnLF} the canonical map $RF \to H^nLF$ is an isomorphism.
Then by Proposition \ref{prps-R-HnL-canonical-map} the canonical map $FR \to FH^nL$ is an isomorphism as well, which
completes the proof.
\end{proof}

\begin{lemma} 
\label{lemma-from-strong-monad-to-highest-degree-term}
Suppose that $\ext^{-1}$-vanishing condition 
\eqref{eqn-the-minus-one-ext-assumption} holds or
that on DG level the monad multiplication is one-sided as per
Corollary \ref{cor-from-strong-monad-to-monad-multiplication-one-sided}\eqref{item-monad-multiplication-is-DG-one-sided}. 
The strong monad condition implies the highest degree term condition.
\end{lemma}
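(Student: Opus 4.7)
The plan is to mirror the split-case argument given above, using the filtrations in place of direct summands and the one-sided DG lift of the monad multiplication provided by Corollary~\ref{cor-from-strong-monad-to-monad-multiplication-one-sided}\eqref{item-monad-multiplication-is-DG-one-sided} in place of projections.

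First, I would reduce the highest degree term condition to a statement purely about the monad multiplication. By Lemma~\ref{lemma-psi-is-independent-of-the-choice-of-phi} we have $\psi = (FR\trace - \trace FR)\circ F\gamma_1 R$, and dually $\psi' = (\action FL - FL\action)\circ F\gamma_1' L$. On the top row of the highest degree term square, the contribution of $-\trace FR$ to $F\mu_n L \circ \psi FL \circ FH\iota_n L$ vanishes because $F\mu_n\circ F\iota_n = 0$ (i.e.\ $\mu_n \circ \iota_n = 0$ in $D(\AbimA)$). Similarly, on the bottom row, the $-FL\action$ contribution drops out. Thus, exactly as in the split case, the square reduces, after stripping an outer $F(-)L$, to the diagram
\begin{equation*}
\begin{tikzcd}[column sep={1.5cm}]
HQ_{n-1}\ar{r}{H\iota_n}\ar[equals]{d} & HRF \ar{r}{A_l} & RF \ar{r}{\mu_n} & H^n \ar[dashed]{d}\\
HQ_{n-1}\ar{r}{H\iota_n} & HRF \ar{r}{HB} & HRFH' \ar{r}{H\mu_n H'} & HH^n H',
\end{tikzcd}
\end{equation*}
and, by Lemma~\ref{lemma-mult-H-comult-H'-relation}, the bottom path factors as $H\mu_n H' \circ H A_r H' \circ HRF\action$.

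Second, I would propose the isomorphism $H^n\xrightarrow{\sim} HH^nH'$ to be $H^n\action$, and verify commutativity by restricting to the subobject $HQ_{n-1}\subset HRF$. Both paths, composed with $H\iota_n$, land in $H^n$ via the highest-degree projection of an iterated monad multiplication $Q_1 Q_{n-1}\to Q_n \xrightarrow{\mu_n} H^n$. By Corollary~\ref{cor-from-strong-monad-to-monad-multiplication-one-sided}\eqref{item-strong-monad-condition-to-stronger-monad-condition}, the strong monad condition upgrades to the stronger monad condition, supplying a well-defined map $m_{1,n-1}\colon Q_1 Q_{n-1}\to Q_n$ together with the isomorphism $\rho_{1,n-1}\colon HH^{n-1}\xrightarrow{\sim}H^n$ compatible with $\mu_{\bullet}$. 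The top contour composes with $H\iota_n$ to $\mu_n \circ m \circ (\iota\iota) \circ (H\iota_n)$ which, by Lemma~\ref{lemma-maps-m_i0-and-m_0j-are-identity} and the associativity/naturality of $m$, coincides with $\rho_{1,n-1}\circ \mu_1\mu_{n-1}$. The bottom contour, after applying Corollary~\ref{cor-f-and-f-dual-commuting-square} to move $\action$ past $\gamma_1$, reduces to the same expression post-composed with $H^n\action\colon H^n \to HH^nH'$.

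Third, I would promote commutativity on $HQ_{n-1}$ to commutativity of the full square. In the split case this was immediate because $HQ_{n-1}$ was a direct summand of $HRF$ with complement $H^{n+1}$; in the non-split case, the complement is only the cone $HJ_n[-1]$ of $HQ_{n-1}\hookrightarrow HRF$ (essentially $HH^n$). The obstruction to extending the commutativity witnessed on $HQ_{n-1}$ to all of $HRF$ lives in $\homm^{-1}_{D(\AbimA)}(HH^n,HH^nH')\simeq \homm^{-1}_{D(\AbimA)}(\id,H^n)$ after cancelling the autoequivalence $H(-)H'$, which vanishes by the $\ext^{-1}$-vanishing hypothesis. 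Alternatively, if one argues directly on DG level using the one-sided DG lift $\bar m$ of the monad multiplication, the same obstruction is already presented as a boundary, allowing us to construct the isomorphism explicitly as a closed degree-$0$ map of twisted complexes.

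The main obstacle I anticipate is precisely this last step: extending the commutativity from $HQ_{n-1}$ to $HRF$. The rest of the argument is a careful bookkeeping exercise paralleling the split case, but the filtration-theoretic control of the complement of $HQ_{n-1}$ in $HRF$ is what forces us to assume either the $\ext^{-1}$-vanishing condition or one-sidedness of the DG monad multiplication; this is directly analogous to the role these assumptions played in Lemmas~\ref{lemma-strong-monad-condition-one-sided} and \ref{lemma-RF-isomorphic-to-HnLF}.
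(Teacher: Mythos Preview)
There is a genuine gap: you cannot strip the outer $F(\,\cdot\,)L$. The maps $A_l$, $A_r$, $B$ and Lemma~\ref{lemma-mult-H-comult-H'-relation} all live in the split-case subsection \S\ref{section-strong-monad-condition-split-case} and rely on a direct-summand inclusion $\gamma_1\colon H\hookrightarrow RF$, which does not exist for a non-split coextension. What the $\mathbb{P}^n$-data guarantees is only that $FQ_1$ splits (giving $\phi\colon FH\to FQ_1$), not $Q_1$ itself; so any analogue of $A_l$ must carry the $F$ on the left. Your reduced diagram with source $HQ_{n-1}$ and maps $A_l$, $HB$ is therefore not well-defined in the general case, and the formula ``$\psi=(FR\trace-\trace FR)\circ F\gamma_1R$'' you quote is the split-case one. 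Your third step is also misplaced: the highest degree term condition already has source $FHQ_{n-1}L$, so there is nothing to ``promote'' to $HRF$; the $\ext^{-1}$-vanishing is not used in the way you suggest.

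The paper's proof keeps $F(\,\cdot\,)L$ throughout. After the same cancellation of the $-\trace FR$ and $-FL\action$ contributions, it uses the commutativity of \eqref{diagram-mult-comult-relation} (not the full Lemma~\ref{lemma-mult-H-comult-H'-relation}) and rewrites both rows via $\phi$, $\phi'$ to obtain diagram~\eqref{diagram-highest-degree-term-diagram-via-Q_n}, whose maps are built from $m\colon Q_nQ_n\to Q_n$, the unit $Q_n\action\colon Q_n\to Q_nQ_nQ'_n$, and the projections $\mu_n$---all of which exist without any splitting of $Q_1$. The key observation is that every map in this diagram lifts to a \emph{one-sided} map of twisted complexes (this is where the one-sided DG lift of $m$ enters). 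Since the source $FHQ_{n-1}L$ lifts to a twisted complex concentrated in degrees $-n$ through $-1$ while both targets $FH^nL$ and $FHH^nH'L$ sit in the single degree $-n$, one-sidedness forces each composite to be determined entirely by its degree $(-n)$-to-$(-n)$ component. These components are $F\rho_{1,n-1}L$ and $(FH\rho_{n-1,1}H'L)\circ(FHH^{n-1}\action L)$, both isomorphisms by the stronger monad condition (Corollary~\ref{cor-from-strong-monad-to-monad-multiplication-one-sided}\eqref{item-strong-monad-condition-to-stronger-monad-condition}); any intertwiner between them witnesses the highest degree term condition.
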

\begin{proof}
As in the proof of Proposition \ref{prps-dg-construction-of-p-twists}, 
we can find a one-sided map of twisted complexes over $\AmodbarA$
\begin{equation}
\label{eqn-FQ1-FH-splitting-on-DG-level}
\phi\colon 
\begin{tikzcd}[column sep={2.5cm}]
\underset{\degminusone}{FH[-1]}
\ar{dr}{\beta}
\ar[equals]{d}
&
\\
FH[-1]
\ar{r}{F\sigma_1}
&
F
\end{tikzcd} 
\end{equation}
whose convolution $\phi: FH \rightarrow FQ_1$ splits 
the projection $F\mu_1: FQ_1 \rightarrow FH$ in $D(\AbimA)$. 
Similarly, by $\phi'$ we denote both the left dual 
of \eqref{eqn-FQ1-FH-splitting-on-DG-level} and 
its convolution $Q_1'L \rightarrow H'L$. 

The highest degree term condition of Definition \ref{defn-Pn-functor}
asks for an isomorphism $FH^nL \to FHH^nH'L$ that makes the following diagram
commute:
\begin{equation}
\label{eqn-highest-degree-term-diagram-via-RF}
\begin{tikzcd}[column sep={2.25cm}]
FHQ_{n-1}L
\ar{r}{(F\gamma \circ F\iota\circ\phi){\iota}L}
\ar[']{d}{FH{\iota}L}
&
FRFRFL
\ar{r}{(FR\trace - \trace FR)FL}
&
FRFL
\ar{r}{F \mu_nL}
&
FH^nL
\ar[dashed]{d}
\\
FHRFL
\ar{r}{FHR( \action FL - FL\action)}
&
FHRFLFL
\ar{r}{FHRF(\phi'\circ \iota'L \circ \gamma'L)}
&
FHRFH'L
\ar{r}{FH\mu_n H'L}
&
FHH^nH'L.
\end{tikzcd}
\end{equation}
The composition 
$$ (F\mu_n L) \circ (\trace FRFL) \circ
((F\gamma \circ F\iota\circ\phi)\iota L) $$ 
vanishes since $\mu_n \circ \iota: Q_{n-1} \rightarrow H^n$ is the
zero map. Similarly, the composition 
$$ (FH\mu_nH'L)\circ (FHRF(\phi'\circ \iota'L \circ \gamma'L)) \circ
(FHRFL\action) \circ (FH\iota L)$$
vanishes as well. On the other hand by the commutativity of 
the diagram \eqref{diagram-mult-comult-relation} 
the composition 
$$
FHRFL
\xrightarrow{FHR\action FL}
FHRFLFL
\xrightarrow{FHRF(\phi'\circ \iota'L \circ \gamma'L)}
FHRFH'L
\xrightarrow{FH\mu_n H'L}
FHH^nH'L
$$
equals the composition
$$
FHRFL
\xrightarrow{FHR{\action}L}
FHRFRFLFL
\xrightarrow{FHR{\trace}FLFL}
FHRFLFL
\xrightarrow{FHRF(\phi'\circ \iota'L \circ \gamma'L)}
FHRFH'L
\xrightarrow{FH\mu_n H'L}
FHH^nH'L.
$$

We can now use the isomorphisms $\gamma\colon Q_n \simeq RF$ and
$\gamma'\colon LF \simeq Q'_n$ to rewrite 
\eqref{eqn-highest-degree-term-diagram-via-RF} as:
\begin{equation}
\label{diagram-highest-degree-term-diagram-via-Q_n}
\begin{tikzcd}[column sep={2.1cm}]
FHQ_{n-1}L
\ar{r}{(F\iota\circ\phi){\iota}L}
\ar[']{d}{FH{\iota}L}
&
FQ_nQ_nL
\ar{r}{FmL}
&
FQ_nL
\ar{r}{F\mu_nL}
&
FH^nL
\ar[dashed]{d}
\\
FHQ_nL
\ar{r}{FHQ_n{\action}L}
&
FHQ_nQ_nQ'_nL
\ar{r}{FHmQ'_nL}
&
FHQ_nQ'_nL
\ar{r}{FH\mu_n(\phi' \circ \iota'L)}
&
FHH^nH'L.
\end{tikzcd}
\end{equation}

Now observe that all maps in the diagram
\eqref{diagram-highest-degree-term-diagram-via-Q_n} lift to one-sided
maps of corresponding twisted complexes over $\AmodbarA$. Moreover, 
the object
$FHQ_{n-1}L$ lifts to the twisted complex 
$ \left\{\underset{\degminusone}{FH[-1]}\right\}  
\bartimes \; \bar{Q}_{n-1} $
which is concentrated in degrees from $-n$ to $-1$. Similarly,
$FH^nL$ and $FHH^nH'L$ lift to twisted complexes concentrated in the
single degree $-n$. Since the maps $FHQ_{n-1}L \rightarrow FH^nL$ and 
$FHQ_{n-1}L \rightarrow FHH^nH'L$ in
\eqref{diagram-highest-degree-term-diagram-via-Q_n} lift to one-sided
maps of twisted complexes, we conclude that their only non-zero
components are the degree $-n$ to degree $-n$ ones. These we can write
out explicitly:
\begin{align*}
&FHH^{n-1}L \xrightarrow{F \rho_{1,n-1}L} FH^nL, \\
&FHH^{n-1}L \xrightarrow{FHH^{n-1}{\action}L} FHH^{n-1}HH'L 
\xrightarrow{FH\rho_{n-1,1}H'L} FHH^nH'L. 
\end{align*}
Since $H$ enhances an autoequivalence and since by our assumptions
the stronger monad condition holds, these are both isomorphisms in $D(\AbimA)$.
Hence they can be intertwined by an isomorphism as desired. 
\end{proof}

Summarizing the results above , we have proved the following:
\begin{theorem}
\label{theorem-strong-monad-and-weak-adjoints-imply-pn-functor}
Suppose that $\ext^{-1}$-vanishing condition 
\eqref{eqn-the-minus-one-ext-assumption} holds or
that on DG level the monad multiplication is one-sided as per
Corollary \ref{cor-from-strong-monad-to-monad-multiplication-one-sided}\eqref{item-monad-multiplication-is-DG-one-sided}. 
Then the strong monad condition and the weak adjoints condition imply 
the three conditions from
Definition \ref{defn-Pn-functor}.
\end{theorem}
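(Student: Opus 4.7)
The plan is to assemble the theorem directly from the three principal lemmas of this subsection; no new argument is required, and I expect that the only bookkeeping is verifying that both disjuncts of the hypothesis feed properly into each constituent lemma.

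First I would dispatch the monad condition by invoking the earlier lemma asserting that the strong monad condition implies the monad condition of Definition \ref{defn-Pn-functor}. That lemma's proof already operates under the strong monad condition alone, using the maps $m_{1,i}$ and isomorphisms $\rho_{1,i}$ to construct intermediate maps $\nu_i \colon FHQ_i \to FJ_{i+1}$ and to establish by induction on $i$ that each is an isomorphism, with base case $\nu_0 = \id$; this step does not require the extra hypothesis of the theorem.

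Second, for the adjoints condition, I would combine Lemma \ref{lemma-RF-isomorphic-to-HnLF} with Proposition \ref{prps-R-HnL-canonical-map}. Under either disjunct of the hypothesis, Lemma \ref{lemma-RF-isomorphic-to-HnLF} produces the isomorphism $RF \to H^n LF$ by means of a one-sided DG lift of the monad multiplication, available via Corollary \ref{cor-from-strong-monad-to-monad-multiplication-one-sided}\eqref{item-monad-multiplication-is-DG-one-sided} in the $\ext^{-1}$-vanishing case, and by the hypothesis itself in the other. Proposition \ref{prps-R-HnL-canonical-map} then uses the weak adjoints condition (which supplies \emph{some} isomorphism $FR \simeq FH^n L$) to promote this to the isomorphism $FR \xrightarrow{\eqref{eqn-adjoints-condition}} FH^n L$, which is the adjoints condition. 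For the highest degree term condition I would simply cite Lemma \ref{lemma-from-strong-monad-to-highest-degree-term}, which was proved under exactly the same dichotomy of hypotheses: its argument identifies the two relevant degree $(-n)$-to-$(-n)$ components of diagram \eqref{eqn-highest-degree-term-condition} with $F\rho_{1,n-1}L$ and $(FH\rho_{n-1,1}H'L)\circ(FHH^{n-1}\action L)$, both of which are isomorphisms by the stronger monad condition and because $H$ enhances an autoequivalence, and then intertwines them by the required isomorphism.

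The main obstacle, such as it is, lies not in this assembly but in the prior lemmas, principally Lemma \ref{lemma-strong-monad-condition-one-sided} and Corollary \ref{cor-from-strong-monad-to-monad-multiplication-one-sided}, where the $\ext^{-1}$-vanishing hypothesis is used to run the induction that promotes the strong monad condition to the stronger one and to a one-sided DG lift of the monad multiplication. Once those are in hand, each of the three conditions of Definition \ref{defn-Pn-functor} has already been proved in the precise form demanded, so the theorem reduces to their conjunction.
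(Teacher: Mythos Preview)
Your proposal is correct and matches the paper's own proof, which is literally the one-line ``Summarizing the results above, we have proved the following''; the three conditions are handled by exactly the lemmas you name (strong monad $\Rightarrow$ monad; Lemma~\ref{lemma-RF-isomorphic-to-HnLF} plus Proposition~\ref{prps-R-HnL-canonical-map} for the adjoints condition; Lemma~\ref{lemma-from-strong-monad-to-highest-degree-term} for the highest degree term). Your observation that the substantive work lives in the earlier lemmas, particularly Lemma~\ref{lemma-strong-monad-condition-one-sided} and Corollary~\ref{cor-from-strong-monad-to-monad-multiplication-one-sided}, is also exactly right.
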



\section{Examples of split $\mathbb{P}^n$-functors}
\label{section-examples-of-split-Pn-functors}

In this section, we give an overview of the examples of split 
$\mathbb{P}^n$-functors which appeared in the algebro-geometric 
literature to date. To see that these are indeed
$\mathbb{P}^n$-functors in the sense of \S\ref{section-Pn-functors} 
there is a standard Morita DG-enhancement framework to
apply our results for enhanced triangulated categories in the context
of algebraic geometry. For full details we refer the reader to 
\cite[\S5.2]{AnnoLogvinenko-SphericalDGFunctors},\cite[\S8]{Toen-TheHomotopyTheoryOfDGCategoriesAndDerivedMoritaTheory},
\cite{LuntsSchnurer-NewEnhancementsOfDerivedCategoriesOfCoherentSheavesAndApplications}.

In brief, let $Z$ and $X$ be separated schemes of finite type over an
algebraically closed field $k$. Choosing strong generators identifies 
$D(Z)$ and $D(X)$ with $D(\A)$ and $D(\B)$ for some smooth DG-algebras 
$\A$ and $\B$. We can then identify $D(\AbimB)$, $D(\BbimA)$, $D(\AbimA)$, 
and $D(\BbimB)$ with $D(X \times Z)$, $D(Z \times X)$, $D(X \times X)$, 
and $D(Z \times Z)$ in such a way that DG-enhancing bimodules
correspond to Fourier-Mukai kernels, the derived tensor product of
bimodules corresponds to the standard composition of Fourier-Mukai 
kernels via the triple fibre product, and the dualisation functors are
given by the relative Verdier duality. E.g. DG-enhancements
in $D(\AbimB)$ of exact functors $D(\A) \rightarrow D(\B)$ correspond
to Fourier-Mukai kernels in $D(X \times Z)$ of exact functors $D(X)
\rightarrow D(Z)$. 

All the key definitions and the results of \S\ref{section-ptwists}
and \S\ref{section-Pn-functors} are stated in the terms of
the triangulated subcategories of DG-enhancements of exact functors 
in the derived categories of DG-bimodules over the respective enhanced
triangulated categories. We can therefore apply them instead 
to the triangulated subcategories of Fourier-Mukai kernels in 
the derived categories of the fibre products of respective varieties. 

\subsection{Spherical objects
\cite{SeidelThomas-BraidGroupActionsOnDerivedCategoriesOfCoherentSheaves}}

These were introduced by Seidel and Thomas in
\cite{SeidelThomas-BraidGroupActionsOnDerivedCategoriesOfCoherentSheaves} 
as the mirror-symmetric analogues of Lagrangian spheres on a 
symplectic manifold and their associated Dehn twists. It was 
the genesis of this whole subject.  

Let $X$ be a smooth projective variety. An object $E \in D(X)$ is
said to be a \em spherical \rm if:
\begin{enumerate}
\item $$\homm^i(E,E) = 
\begin{cases}
k, &\quad  i = 0, \dim(X), \\
0, &\quad \text{ otherwise. } 
\end{cases}$$
\item $E \otimes \omega_X \simeq E$. 
\end{enumerate}
For such $E$ an autoequivalence of $D(X)$ 
called the \em spherical twist \rm $T_E$ was constructed in 
\cite{SeidelThomas-BraidGroupActionsOnDerivedCategoriesOfCoherentSheaves}. 
If mirror symmetry identifies $E$ with a Lagrangian sphere, 
then it identifies $T_E$ with the corresponding Dehn twist.  

Let $Z = \spec k$. Any object $E \in D(X)$ defines the functor 
$$ f \colon D(Z) \xrightarrow{(-) \otimes_k E} D(X).$$
Moreover, viewing $E$ as the Fourier-Mukai kernel in 
$D(Z \times X) \simeq D(X)$ gives a DG-enhancement $F$ of $f$. 
If $E$ is a spherical object, then $F$ is a spherical functor
in the sense of \cite{AnnoLogvinenko-SphericalDGFunctors}. 
Moreover, we have 
\begin{align*}
R  & \simeq E^\vee, \\ 
RF & \simeq \rder\homm_X(E,E) \simeq k \oplus k[- \dim (X)],
\end{align*}
thus the adjunction monad $RF$ is a direct sum of $\id_Z$ and the 
autoequivalence $H = [- \dim(X)]$. This decomposition
gives $F$ the structure of a split $\mathbb{P}^1$-functor 
in the sense of this paper and 
\cite{Addington-NewDerivedSymmetriesOfSomeHyperkaehlerVarieties}.  
Indeed, $E$ is a spherical object if and only if $F$ is a
spherical/$\mathbb{P}^1$-functor, see e.g. 
\cite[Example
3.5]{AnnoLogvinenko-OrthogonallySphericalObjectsAndSphericalFibrations}. 

The spherical twist $T_E$ of 
\cite{SeidelThomas-BraidGroupActionsOnDerivedCategoriesOfCoherentSheaves} 
is the cone of the adjunction counit $FR \rightarrow \id_X$. 
The $\mathbb{P}$-twist $P_E$ is its square $T_E^2$, 
as originally observed in  
\cite{HuybrechtsThomas-PnObjectsAndAutoequivalencesOfDerivedCategories}
for spherical objects and later in 
\cite{Addington-NewDerivedSymmetriesOfSomeHyperkaehlerVarieties}
for split spherical functors. Below, in 
\S\ref{section-examples-spherical-functors} we establish the same for
all spherical functors.

The condition of $E \otimes \omega_X \simeq E$ implies that on
varieties whose canonical bundle is ample or anti-ample the support 
of any spherical object must be zero-dimensional. On such varieties, 
therefore, spherical objects do not exist when $\dim X \geq 2$ and are precisely
the point sheaves when $\dim X = 1$. Most of the geometrical examples
of spherical objects occur on Calabi-Yau varieties where they include, 
among many others, all line bundles, the structure sheaves of 
$(-2)$-curves on $K3$-surfaces, the structure sheaves of 
$(-1,-1)$-curves and $\mathbb{P}^2$s with the normal bundle
$\mathcal{O}(-3)$ on Calabi-Yau 3-folds, etc. 

\subsection{$\mathbb{P}^n$-objects and $\mathbb{P}^n[k]$-objects
\cite{HuybrechtsThomas-PnObjectsAndAutoequivalencesOfDerivedCategories}, 
\cite{Krug-VarietiesWithPUnits} 
}

The $\mathbb{P}^n$-objects were introduced by Huybrechts and Thomas
in \cite{HuybrechtsThomas-PnObjectsAndAutoequivalencesOfDerivedCategories},
which began the $\mathbb{P}^n$-chapter of this story. 
They are the mirror symmetric analogue of Lagrangian complex
projective spaces with their associated Dehn twists in the same way
spherical objects are one for Lagrangian spheres.  

Let $X$ be a smooth projective variety. An object $E \in D(X)$ is
said to be a \em $\mathbb{P}^n$-object \rm if $\homm^*_{D(X)}(E,E)$ is
isomorphic as a graded ring to $H^*(\mathbb{P}^n, k)$, and if 
$E \otimes \omega_X \simeq E$. Note that the Serre duality then 
implies that $\dim X = 2n$. 

Again, let $Z = \spec k$ and consider the functor 
$$ f \colon D(Z) \xrightarrow{(-) \otimes_k E} D(X)$$
with a DG-enhancement $F$ given by 
$E \in D(Z \times X) \simeq D(X)$. 
The adjunction monad 
$$ RF \simeq \rder\homm_X(E,E) \simeq 
H^*(\mathbb{P}^n, k) \simeq k \oplus k[-2] \oplus \dots \oplus k[-2n], $$
decomposes as a direct sum of $\id_Z$ and 
$H$, $H^2$, \dots, $H^n$, where $H$ is the autoequivalence $[-2]$. 
As observed in 
\cite{Addington-NewDerivedSymmetriesOfSomeHyperkaehlerVarieties}
this decomposition gives $F$ the structure of 
a split $\mathbb{P}^n$-functor. Indeed, we have 
$$ L \simeq E^\vee \otimes \omega_X [2n], $$
so the weak adjoints condition is satisfied since 
$E \otimes \omega_X \simeq E$. On the other hand, the strong monad 
condition is satisfied since the monad structure on $RF$ is 
the graded ring structure on 
$\rder\homm_X(E,E) \simeq \homm^*_{D(X)}(E,E)$ which is isomorphic
as a graded ring to $H^*(\mathbb{P}^n, k)$.   
Thus we can identify the adjunction monad $RF$ with 
the graded ring $k[h]/h^{n+1}$ where $\deg h = 2$. The filtration $Q_i$ 
on $RF$ corresponds to the filtration by the degree of a polynomial 
on $k[h]/h^{n+1}$ and the projection $Q_i \twoheadrightarrow H^i$ 
to the projection onto the highest degree term $h^i$. The strong monad 
condition now corresponds to the statement that multiplying polynomials 
of degree $\leq 1$ and $\leq n$ we get a polynomial of degree $\leq n+1$ 
and that $h h^i = h^{i+1}$.

Conversely, it is also clear from the above that if $F$ admits a structure 
of a $\mathbb{P}^n$-functor with $H = [-2]$, then $E$ is a
$\mathbb{P}^n$-object. More generally, $D(Z)$ is the derived 
category of vector spaces and its only autoequivalences are 
the shift functors $[k]$ for $k \in \mathbb{Z}$. Thus if
$F$ admits a structure of a $\mathbb{P}^n$-functor for any 
$H \in \autm D(Z)$, then $E \otimes \omega_X \simeq E$ and 
$\homm^*_{D(X)}(E,E)$ is isomorphic as a graded ring to $k[h]/h^{n+1}$
with $\deg h = k$. This very natural generalisation of $\mathbb{P}^n$-objects 
was first made by Krug in \cite[Defn.~2.4]{Krug-VarietiesWithPUnits}, 
who called these the \em $\mathbb{P}^n[k]$-objects \rm and noted that 
they are precisely the split $\mathbb{P}^n$-functors 
$D(\spec k) \rightarrow D(X)$. 

Geometrical examples of $\mathbb{P}^n$-objects include 
the structure sheaves of $\mathbb{P}^n$s on a holomorphic symplectic manifold 
of $\dim = 2n$ and line bundles on a hyperk{\"a}hler manifold of $\dim = 2n$
\cite[Example
1.3]{HuybrechtsThomas-PnObjectsAndAutoequivalencesOfDerivedCategories}.
Geometrical examples of $\mathbb{P}^n[2k]$-objects include all line
bundles on the canonical cyclic cover of the fiber product of $k$ 
strict Enriques varieties \cite[Theorem 4.5]{Krug-VarietiesWithPUnits}. 

\subsection{Hilbert schemes of points on K3 surfaces
\cite{Addington-NewDerivedSymmetriesOfSomeHyperkaehlerVarieties}, 
}

This is the example which motivated Addington to introduce split
$\mathbb{P}^n$-functors in 
\cite{Addington-NewDerivedSymmetriesOfSomeHyperkaehlerVarieties}. 
Let $Z$ be a projective $K3$-surface and let $X = Z^{[n]}$, the
Hilbert scheme of $n$-points on $Z$.
Let $F$ be the DG-enhanced functor $D(Z) \rightarrow D(X)$ 
defined on the Fourier-Mukai level by the universal ideal sheaf 
$\mathcal{I} \in D(Z \times X)$. 

Addington proved in 
\cite[Theorem 2]{Addington-NewDerivedSymmetriesOfSomeHyperkaehlerVarieties}
that we have 
\begin{align}
RF \simeq \id \oplus H \oplus H^2 \oplus ... \oplus H^{n-1} 
\end{align}
for the autoequivalence $H = \id_Z [-2]$ and that this isomorphism 
gives $F$ the structure of a split $\mathbb{P}^{n-1}$-functor. More
specifically, he had shown that the monad multiplication on $RF$
restricts to the map 
$$ H RF \rightarrow RF $$
as the sum of natural isomorphisms $H H^{i} \rightarrow H^{i+1}$
plus an unknown map $H H^{n-1} \rightarrow RF$. In other words,
its restriction to the map $ H Q_{n-2} \rightarrow J_{n-1} $ is 
the map 
$$ H \oplus \dots \oplus H^{n-1} \rightarrow H \oplus \dots \oplus
H^{n-1} $$ 
given by the identity matrix.

\subsection{Generalised Kummer varieties and the Albanese map
\cite{Meachan-DerivedAutoequivalencesOfGeneralisedKummerVarieties}, 
\cite{KrugMeachan-UniversalFunctorsOnSymmetricQuotientStacksOfAbelianVarieties}
}

Let $Z$ be an abelian surface. Let $Z^{[n+1]}$ be the Hilbert scheme 
of $n+1$-points on $Z$ and let $\alpha \colon Z^{[n+1]} \rightarrow Z$
be the Albanese map. We can think of $\alpha$ as the composition
of the Hilbert-Chow morphism $Z^{[n+1]} \rightarrow S^{n+1} Z$ into
the symmetric product, and then the summation map $S^{n+1} Z
\rightarrow Z$. Let $X$ be the corresponding generalised Kummer
variety, that is --- the fiber of $\alpha$ over $0 \in Z$. 
Let $F$ be the DG-enhanced functor $D(Z) \rightarrow D(X)$ defined
by the Fourier-Mukai kernel of the universal ideal sheaf 
$\mathcal{I} \in D(Z \times X)$. 

Generalised Kummer varieties are the other well-known infinite
family of hyperk{\"a}hler manifolds, the first being the Hilbert
schemes of points on K3 surfaces. 
In \cite{Meachan-DerivedAutoequivalencesOfGeneralisedKummerVarieties}
Meachan had observed that, similar to the case of the Hilbert schemes 
of K3 surfaces \cite{Addington-NewDerivedSymmetriesOfSomeHyperkaehlerVarieties}, we have 
\begin{align}
RF \simeq \id \oplus H \oplus H^2 \oplus ... \oplus H^{n-1} 
\end{align}
for the autoequivalence $H = \id_Z [-2]$ and that this isomorphism 
gives $F$ the structure of a split $\mathbb{P}^{n-1}$-functor.
He then showed that the restriction of the monad multiplication on $RF$ 
to the map $H Q_{n-2} \rightarrow J_{n-1}$ is again the map  
$$ H \oplus \dots \oplus H^{n-1} \rightarrow H \oplus \dots \oplus
H^{n-1} $$ 
given by the identity matrix.

The Albanese map $\alpha\colon Z^{[n+1]} \rightarrow Z$ is an
isotrivial fibration whose fibers are each isomorphic to the
hyperk{\"a}hler manifold $X$. Let $F'$ 
be the standard DG-enhancement of the functor 
$\alpha^*\colon D(Z) \rightarrow D(Z^{[n+1]})$ given by 
the Fourier-Mukai kernel $(\alpha, \id)_* \mathcal{O}_{(Z^{[n+1]})}
\in D(Z^{[n+1]} \times Z)$, the graph of $\alpha$. It follows 
that 
\begin{align*}
R'F' \simeq \Delta_* \alpha_* \mathcal{O}_{(Z^{[n+1]})}
\simeq \id \oplus H \oplus H^2 \oplus ... \oplus H^{n-1}
\end{align*}
for $H = \id_Z [-2]$. This decomposition gives $F'$ the structure of a split
$\mathbb{P}^{n-1}$-functor \cite[\S
5]{Meachan-DerivedAutoequivalencesOfGeneralisedKummerVarieties}.

Finally, Krug and Meachan showed in 
\cite{KrugMeachan-UniversalFunctorsOnSymmetricQuotientStacksOfAbelianVarieties}
that the above-mentioned 
$\mathbb{P}^{n-1}$-functor $F: D(Z) \rightarrow D(X)$ extends 
in a family from the zero fibre $X$ to the whole the Albanese map fibration 
$\alpha\colon Z^{[n+1]} \rightarrow Z$. Let $F''$ be the DG-enhanced 
functor $D(Z \times Z) \rightarrow D(Z^{[n+1]})$ defined by the
Fourier-Mukai kernel $(\id_Z, (m,\id_{Z^{[n+1]}}))_* I$ 
which is the direct image of the universal ideal sheaf $I \in D(Z
\times Z^{[n+1]}$ under the map 
$$ Z \times Z^{[n+1]} \xrightarrow{\id_Z, (m,\id_{Z^{[n+1]}}}  Z
\times Z \times Z^{[n+1]}. $$
In 
\cite[\S 2.2]{KrugMeachan-UniversalFunctorsOnSymmetricQuotientStacksOfAbelianVarieties}
it is shown using equivariant methods and the Bridgeland-King-Reid-Haiman
equivalence \cite{BKR01} \cite{Haiman-HilbertSchemesPolygraphsAndTheMacdonaldPositivityConjecture}
that 
$$ R'' F'' \simeq \id \oplus H \oplus H^2 \oplus ... \oplus H^{n-1} $$
for $H = \id_{Z \times Z} [-2]$  and this decomposition gives $F''$
the structure of a split $\mathbb{P}^{n-1}$-functor and that $F''$ 
restricts to $F$ on $Z \times \left\{0\right\}$. 

\subsection{Symmetric quotient stack of a smooth projective surface
\cite{Krug-NewDerivedAutoequivalencesOfHilbertSchemesAndGeneralisedKummerVarieties}
}

Let $Z$ be a smooth projective surface. 
Let $[Z^n/S_n]$ be the quotient stack with respect to the natural action 
of the symmetric group $S_n$ on the fibre product $Z^n$.
Let 
$$ d\colon Z \rightarrow [Z^n/S_n] $$
be the diagonal embedding and let $F$ the DG-enhancement of the 
functor $d_*\colon D(Z) \rightarrow D([Z^n / S_n])$ given 
by the Fourier-Mukai kernel $(d, \id)_* \mathcal{O}_{[Z^n / S_n]} 
\in D(Z \times [Z^n / S_n])$, the graph of $d$. 

Krug showed in 
\cite[\S3]{Krug-NewDerivedAutoequivalencesOfHilbertSchemesAndGeneralisedKummerVarieties} that we have
\begin{align}
RF \simeq \id \oplus H \oplus H^2 \oplus ... \oplus H^{n-1} 
\end{align}
for $H = \Delta_* \omega^{-1}_Z [-2]$, the inverse of the Serre functor on
$D(Z)$ and that this decomposition gives $F$ the structure of a split
$\mathbb{P}^{n-1}$-functor. 

\subsection{$\mathbb{P}^n$-functor versions of the Nakajima operators 
\cite{Krug-P-functorVersionsOfTheNakajimaOperators}
}

Let $X$ be a smooth projective surface and let $n,l \in \mathbb{Z}$
with $n \geq 2$. Let $[X^l/S_l]$ be the symmetric quotient stack with
respect to the natural action of $S_l$ on $X^l$, and similarly for
$[X^{n+l}/S_{n+l}]$. In \cite{Krug-P-functorVersionsOfTheNakajimaOperators} 
Krug had constructed an enhanced functor
$$ P_{l,n}\colon D(X \times [X^l/S_l]) \rightarrow D([X^{n+l}/S_{n+l}]) $$
whose Fourier-Mukai kernel in $D(X \times [X^l/S_l]
\times [X^{n+l}/S_{n+l}])$ is given by the complex of equivariant sheaves  
$$ \underset{\degzero}{\mathcal{P}_0} \rightarrow 
\mathcal{P}_1 \rightarrow \mathcal{P}_2 \rightarrow \dots \rightarrow
\mathcal{P}_l $$
where each $\mathcal{P}_i$ is a direct sum (with a sign-twisted 
action of $S_{n+l}$) of the structure sheaves of the subvarieties 
$$ \Gamma_{I,J,\mu} 
\overset{\text{def}}{=}
\left\{ (x, x_1, \dots, x_l, y_1, \dots, y_{l+n}) \;|\; 
x = x_a = y_b\; \forall\; a \in I, b \in J \text{ and } x_c = y_{\mu(c)}
\; \forall\; c \in \bar{I} \right\} 
\quad \quad \subset X \times X^l \times X^{n+l}, $$
where $I \subset [1,l]$ with $|I| = i$, 
$J \subset [1,n+l]$ with $|J| = n+i$, 
and $\mu$ is a bijection $\bar{I} \rightarrow \bar{J}$. 
That is, $(x_1, \dots, x_l)$ and $(y_1,
\dots, y_{n+l})$ contain $i$ and $n+i$ copies of $x$ in fixed
positions, and the remaining elements of each are the same $l-i$
points of $X$ permuted in a fixed way. Clearly, adding the same element
of $[1,l]$ to both $I$ and $J$ defines a subvariety of 
$\Gamma_{I,J,\mu}$. The differentials in the complex above 
are given by the sheaf restriction maps for such subvarieties.  

Krug had proved in \cite[Theorem C]{Krug-P-functorVersionsOfTheNakajimaOperators}
that for $n > l$
$$ P_{l,n}^{\text{r.adj.}} P_{l,n} \simeq \id_Z \oplus H \oplus H^2
\oplus \dots \oplus H^{n-1} $$
for the enhanced autoequivalence $H$ of $D(X \times [X^l/S_l])$ defined 
by the Fourier-Mukai kernel
$$\Delta_* (\omega_X \boxtimes \mathcal{O}_{X_l})[2] \in D(X \times
[X^l/S_l] \times X \times [X^l/S_l]),$$
and this decomposition gives
$P_{l,n}$ the structure of a $\mathbb{P}^{n-1}$-functor. Feeding this
through the Bridgeland-King-Reid-Haiman
equivalence \cite{BKR01}
\cite{Haiman-HilbertSchemesPolygraphsAndTheMacdonaldPositivityConjecture}
produces a $\mathbb{P}^{n-1}$-functor
$$ P_{l,n} \colon D(X \times X^{[l]}) \rightarrow D(X^{[n+l]}) $$
whose Fourier-Mukai kernel is supported on the correspondence which defines
the Nakajima operator
$$ q_{l,n}\colon H^*(X \times X^{[l]},\mathbb{Q}) \rightarrow
H^*(X^{[n+l]},\mathbb{Q}). $$
These were used in
\cite{Nakajima-HeisenbergAlgebraAndHilbertSchemesOfPointsOnProjectiveSurfaces}
\cite{Grojnowski-InstantonsAndAffineAlgebrasITheHilbertSchemeAndVertexOperators}
to construct the celebrated action of the Heisenberg algebra 
on $\bigoplus_{n \geq 0} H^*(X^{[n]},\mathbb{Q})$. 

Finally, Krug had shown in 
\cite[\S4.12]{Krug-P-functorVersionsOfTheNakajimaOperators}
that when $X$ is an abelian surface, the $\mathbb{P}^{n-1}$ functor 
$$ P_{l,n} \colon D(X \times X^{[l]}) \rightarrow D(X^{[n+l]}) $$
restricts to the functor 
$$ \hat{P}_{l,n} \colon D(K_{l,n}) \rightarrow D(K_{n+l-1}),$$
where $K_{l,n} \subset X \times X^{[l]}$ and $K_{n+l-1} \subset X^{[n+l]}$ 
are the fibers over $0 \in X$ of the summation maps
$n \cdot x + x_1 + \dots + x_l$ and $y_1 + \dots + y_{n + l}$. Note
that $K_{n+l-1}$ is the generalised Kummer variety. Krug had further
proved in \cite[Theorem C']{Krug-P-functorVersionsOfTheNakajimaOperators}
that for $n > l$
$$ \hat{P}_{l,n}^{\text{r.adj.}}\hat{P}_{l,n} \simeq \id_{K_{l,n}}
\oplus \hat{H} \oplus \hat{H}^2 \oplus \dots \oplus \hat{H}^{n-1},$$
for $\hat{H} = [-2]$ and this decomposition gives $\hat{P}_{l,n}$ the
structure of a $\mathbb{P}^{n-1}$-functor. 

\subsection{Truncated universal ideal functors for even-dimensional Calabi-Yau varieties
\cite{KrugSosna-OnTheDerivedCategoryOfTheHilbertSchemeOfPointsOnAnEnriquesSurface}
}

Let $Z$ be a smooth projective variety.
Let $[Z^n/S_n]$ be the quotient stack with respect to the natural action 
of the symmetric group $S_n$ on the fibre product $Z^n$.
Let $D_i \subset Z \times Z^n$ be reduced subvariety supported on  
the closed subset defined by
$$ \left\{ (z, z'_1, \dots, z'_n) | z = z'_i \right\}. $$
The action of $S_n$ permutes $D_i$ giving natural 
equivariant structure to the sheaf $\oplus_{i = 1}^{n}
\mathcal{O}_{D_i}$. The \em truncated universal ideal functor \rm $F$ 
is the enhanced functor $D(Z) \rightarrow D([Z^n/S_n])$ defined by 
the Fourier Mukai kernel 
\begin{align}
\label{eqn-truncation-of-the-complex-resolving-the-ideal-sheaf-on-Hilb-scheme} 
\underset{\degzero}{\mathcal{O}_{Z \times [Z^n/S_n]}} \quad
\longrightarrow \quad
\oplus_{i = 1}^{n} \mathcal{O}_{D_i} 
\hspace{2cm} 
\in \quad D(Z \times [Z^n/S_n]), 
\end{align}
where the differential is the sum of the natural restriction maps. 

Krug and Sosna proved in 
\cite[Theorem 5.4]{KrugSosna-OnTheDerivedCategoryOfTheHilbertSchemeOfPointsOnAnEnriquesSurface} 
that if $Z$ is an even-dimensional Calabi-Yau variety, then we have
\begin{align}
RF \simeq \id \oplus H \oplus H^2 \oplus ... \oplus H^{n-1} 
\end{align}
for $H = \id_Z [-2]$ and that this decomposition gives $F$ the
structure of a split $\mathbb{P}^{n-1}$-functor.

The choice of the name ``truncated universal ideal functor'' is due to 
the fact that the complex 
\eqref{eqn-truncation-of-the-complex-resolving-the-ideal-sheaf-on-Hilb-scheme}
is a truncation of a larger natural complex of equivariant sheaves on 
$Z \times Z^n$ which was shown by Scala
\cite{Scala-CohomologyOfTheHilbertSchemeOfPointsOnASurfaceWithValuesInRepresentationsOfTautologicalBundles}
to define the object in 
$D(Z \times [Z^n/S_n])$ which corresponds 
to the universal ideal sheaf on $Z^{[n]}$ under the
Bridgeland-King-Reid-Haiman derived equivalence $D(Z^{[n]}) \simeq
D([Z^n/S_n])$ \cite{BKR01}
\cite{Haiman-HilbertSchemesPolygraphsAndTheMacdonaldPositivityConjecture}. 
For surfaces, the adjunction monad of $F$ is isomorphic to that of the
full universal ideal functor \cite[Lemma 5.2]{KrugSosna-OnTheDerivedCategoryOfTheHilbertSchemeOfPointsOnAnEnriquesSurface}, and thus $F$ being a $\mathbb{P}^n$-functor for $K3$s is equivalent to the
original result of 
\cite{Addington-NewDerivedSymmetriesOfSomeHyperkaehlerVarieties}. For 
higher dimensional Calabi-Yaus, the adjunction monads of $F$ and
of the universal ideal functor are different and $F$ is the right one
to take. The coincidence of the two in the case of surfaces was
further explained in \cite[Theorem
3.6]{Krug-RemarksOnTheDerivedMcKayCorrespondenceForHilbertSchemesOfPointsAndTautologicalBundles}
where it was shown that there is another natural version of the 
Bridgeland-King-Reid-Haiman derived equivalence $D(Z^{[n]}) \simeq
D([Z^n/S_n])$, where we use the same Fourier-Mukai kernel but in the
opposite direction, and that this version identifies the universal
ideal sheaf directly with the truncated complex 
\eqref{eqn-truncation-of-the-complex-resolving-the-ideal-sheaf-on-Hilb-scheme}. 

\subsection{Family $\mathbb{P}$-twists over the base with $HH^{\odd} = 0$ \cite{AddingtonDonovanMeachan-MukaiFlopsAndPTwists}}
\label{section-examples-split-pnfunctors-split-Mukai-flops}

Let $Z$ be a smooth projective variety whose odd Hochschild cohomology
vanishes: 
$$ HH^{\odd}(Z) = 0. $$
Let $\mathcal{V}$ be a vector bundle of rank $n+1$ over $Z$, 
let $\mathbb{P}\mathcal{V}$ be its 
projectificaton, and let $\pi\colon \mathbb{P}\mathcal{V} \rightarrow Z$ 
be the corresponding $\mathbb{P}^n$-fibration. Let $\iota$  be a closed embedding of
$\mathbb{P}V$ into a smooth  projective variety $X$ with the normal bundle isomorphic to 
$\Omega^1_{\mathbb{P}V/Z}$:
\begin{equation}
\begin{tikzcd}[column sep={1cm}, row sep={1cm}]
\mathbb{P}\mathcal{V} 
\ar[hookrightarrow]{r}{\iota}
\ar[twoheadrightarrow]{d}[']{\pi}
&
X.
\\
Z
&
\end{tikzcd}
\end{equation}

Let $F_k$ be the DG-enhancement of the exact functor 
$$ \iota_* \circ \left(\mathcal{O}_{\mathbb{P}\mathcal{V}}(k)
\otimes \pi^*(-)\right)\colon D(Z) \rightarrow D(X) $$
given by $(\pi, \iota)_*
\mathcal{O}_{\mathbb{P}\mathcal{V}}(k) \in D(Z \times X)$. 
Addington, Donovan, and Meachan proved in 
\cite{AddingtonDonovanMeachan-MukaiFlopsAndPTwists}
that we have 
$$ R_kF_k \simeq \id \oplus H \oplus \dots \oplus H^n $$
with $H = \id_Z[-2]$ and observed that this decomposition gives 
$F_k$ the structure of a split $\mathbb{P}^n$-functor. 

Now, let $\tilde{X} \rightarrow X$ be the blowup of $X$ in $\mathbb{P}\mathcal{V}$. Its exceptional 
divisor $E \subset \tilde{X}$ is isomorphic to the incidence subvariety of
$\mathbb{P}\mathcal{V} \times \mathbb{P}\mathcal{V^\vee}$.  Suppose  the Mukai flop $X \leftarrow \tilde{X} \rightarrow X^{+}$ exists, where $\tilde{X} \rightarrow X^{+}$ contracts $E$ to $\mathbb{P}\mathcal{V^\vee}$.  Addington, Donovan, and Meachan further proved in \cite{AddingtonDonovanMeachan-MukaiFlopsAndPTwists} that the $\mathbb{P}$-twist $P_{F_k}$ is, in a sense, the derived monodromy of this Mukai flop. 

More precisely, it was shown by Kawamata \cite[\S5]{Kawamata-DEquivalenceAndKEquivalence} and Namikawa \cite {Namikawa-MukaiFlopsAndDerivedCategories} that the derived categories of $D(X)$ and $D(X^+)$ are equivalent and 
that the structure sheaf of the reducible subvariety 
$$ \tilde{X} \cup \left( \mathbb{P}\mathcal{V} \times \mathbb{P}\mathcal{V}^\vee \right) \;  \subset \; X \times  X^+ $$
gives the Fourier-Mukai kernel of such equivalence. Let  
$$ KN_m\colon  D(X) \rightarrow D(X^+) $$
$$ KN'_m\colon  D(X^+) \rightarrow D(X) $$
be the enhanced functors defined the Fourier-Mukai kernel above twisted by $m$-th power of the line bundle obtained by gluing $\mathcal{O}(E)$ on $\tilde{X}$ with $\mathcal{O}(-1,-1)$ on $\mathbb{P}\mathcal{V} \times \mathbb{P}\mathcal{V}^\vee$. It is shown in \cite[Theorem B']{AddingtonDonovanMeachan-MukaiFlopsAndPTwists} that 
$$ P_{F_k} \simeq KN'_{-k} \circ KN_{n+k+1} \quad \quad \quad \text{`` flop-flop = twist''}. $$

\subsection{Moduli of genus $g$ curves on $K3$ surfaces of Picard rank $1$ and degree $2g-2$
\cite{AddingtonDonovanMeachan-ModuliSpacesOfTorsionSheavesOnK3SurfacesAndDerivedEquivalences}}

Let $Z$ be a K3 surface of Picard rank $1$ and degree $2g-2$. 
Let $X$ be the moduli space of the stable sheaves on $Z$ with 
the Mukai vector $(0,1,d+1-g)$. Generic element of $X$ is a smooth
genus $g$ curve $C \subset S$ in the linear system
$|\mathcal{O}_Z(1)| \simeq \mathbb{P}^g$
equipped with a degree $d$ line bundle. Thus $X$ can be identified with 
a compactification $\overline{\picr}^d(\C/\mathbb{P}^g)$
of the relative degree $d$ Picard variety of the universal curve
$\mathcal{C}$ over $|\mathcal{O}_Z(1)|$.  

Let $\alpha$ be the Brauer class
on $X$ which obstructs the existence of the universal sheaf. 
Let $F$ be the DG-enhanced functor $D(Z) \rightarrow D(X,\alpha)$
into the derived category of $\alpha$-twisted sheaves on $X$ defined
on the Fourier-Mukai level by the pseudo-universal sheaf 
$\mathcal{F} \in D(Z \times X, 1 \boxtimes \alpha)$.  Addington, Donovan, and Meachan 
found a twisted variant 
$$
FM\colon D(\overline{\picr}^{-1}(\C/\mathbb{P}^g), \beta) \xrightarrow{\sim} D(\overline{\picr}^d(\C/\mathbb{P}^g), \alpha) 
$$
of the original Fourier-Mukai equivalence between an Abelian variety and its dual, which identifies $F$ with the functor
$$
AJ_* \circ \varpi^*(- \otimes \mathcal{O}_S(l))
$$
where $\varpi\colon \mathcal{C} \twoheadrightarrow S$ is the natural $\mathbb{P}^{n-1}$ fibration, $AJ\colon \mathcal{C}
\hookrightarrow \overline{\picr}^{-1}(\C/\mathbb{P}^g)$  is the Abel-Jacobi embedding, and $l \in \mathbb{Z}$ is an integer
\cite[Prop. 3.1]{AddingtonDonovanMeachan-ModuliSpacesOfTorsionSheavesOnK3SurfacesAndDerivedEquivalences}. The embedding $AJ$ satisfies the normal bundle condition in 
\S\ref{section-examples-split-pnfunctors-split-Mukai-flops} with respect to the fibration $\varpi$ and 
the corresponding Mukai flop exists. It follows immediately that 
$$ RF \simeq \id \oplus H \oplus \dots \oplus H^{g-1} $$
giving $F$ the structure of a $\mathbb{P}^{g-1}$-functor whose $\mathbb{P}$-twist is the conjugation by $FM$ of the derived monodromy of the Mukai flop of $\overline{\picr}^{-1}(\C/\mathbb{P}^g)$ with the center $AJ(\mathcal{C})$. 

\section{Examples of non-split $\mathbb{P}^n$-functors}
\label{section-examples-of-non-split-Pn-functors}

\subsection{Spherical functors}
\label{section-examples-spherical-functors}

Let, as in \S\ref{section-ptwists-generalities}, $\A$ and $\B$ be 
two DG categories, let $F \in D(\AbimB)$ be an enhanced functor 
$D(\A) \rightarrow D(\B)$ which has left and right enhanced 
adjoints $L, R \in D(\BbimA)$. Let $T \in D(\BbimB)$ 
be the spherical twist of $F$ with its natural exact triangle 
in $D(\BbimB)$
\begin{equation}
\label{eqn-spherical-functors-twist-exact-triangle}
\begin{tikzcd}
FR  
\ar{r}{\trace} 
& \id_\B
\ar{r} 
&
T,
\ar[dotted,bend left=20]{ll}
\end{tikzcd}
\end{equation}
and let $C \in D(\AbimA)$ 
be the spherical cotwist of $F$ with its natural exact triangle in 
$D(\AbimA)$
\begin{equation}
\label{eqn-spherical-functors-cotwist-exact-triangle}
\begin{tikzcd}
C
\ar{r}
& \id_\A
\ar{r}{\action}
&
RF.
\ar[dotted,bend left=20]{ll}
\end{tikzcd}
\end{equation}

The functor $F$ is \em spherical \rm if the following four conditions hold: 
\begin{enumerate}
\item The spherical twist $T$ is an autoequivalence of $D(\B)$. 
\item 
\label{item-spherical-functor-cotwist-is-an-auto-equivalence}
The spherical cotwist $C$ is an autoequivalence of $D(\A)$. 
\item \em ``The twist identifies adjoints'': \rm  The following composition is
an isomorphism:
\begin{equation}
LT \xrightarrow{\eqref{eqn-spherical-functors-twist-exact-triangle}} LFR[1] \xrightarrow{{\trace}R} R[1]. 
\end{equation}
\item
\label{item-spherical-functor-cotwist-identifies-adjoints}
 \em ``The cotwist identifies adjoints'': \rm 
The following composition is an isomorphism:
\begin{equation}
R \xrightarrow{R{\action}} RFL
\xrightarrow{\eqref{eqn-spherical-functors-cotwist-exact-triangle}} CL[1]. 
\end{equation}
\end{enumerate}
It is, in fact, enough for any two of these conditions to hold
for $F$ to be spherical. See \cite{AnnoLogvinenko-SphericalDGFunctors}
for full details. 

\begin{prps}
\label{prps-every-spherical-functor-is-a-P1-functor}
The functor $F$ is spherical if and only if it is a $\mathbb{P}^1$-functor 
with $H = C[1]$ and the degree $1$ cyclic coextension of 
$\id_\A$ by $H$ structure on $RF$ defined by the exact triangle
\eqref{eqn-spherical-functors-cotwist-exact-triangle}. 
\end{prps}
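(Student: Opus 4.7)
The plan is to exploit the natural identification provided by the cotwist triangle. Specifically, the exact triangle $\id_\A \xrightarrow{\action} RF \to C[1] \to \id_\A[1]$ gives $RF$ the structure of a degree one cyclic coextension of $\id_\A$ by $H := C[1]$, with $\iota_1 = \action$, $\mu_1$ the connecting map $RF \to C[1]$, and $\gamma = \id$. The induced $\mathbb{P}$-twist data recovers exactly the pieces that appear in the DG construction of the spherical twist, so the two notions should match structurally; what remains is to verify the three conditions of Definition \ref{defn-Pn-functor} against the four conditions of sphericality.

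For the forward direction (spherical implies $\mathbb{P}^1$-functor), I would first verify the data conditions: $H = C[1]$ is an autoequivalence since $C$ is, and $H(\krn F) = \krn F$ because the triangles $FC \to F \to FRF$ and $FR \to \id \to T$ together yield $FH \simeq TF$, with $T$ an autoequivalence. The adjoints condition is immediate, being precisely $F$ tensored on the left into the spherical identification $R \xrightarrow{\sim} CL[1]$. For the monad condition, I would unfold $\nu\colon FH \to FH$ at the DG level using $\gamma = \id$, a splitting of $FQ_1R = FRFR$ induced by $F\action R$, and the explicit DG formula for $\psi$ from Proposition \ref{prps-dg-construction-of-p-twists}; then a diagram chase based on the triangle identities for $(F,R)$ and the cotwist triangle reduces $\nu$ to the canonical automorphism of $FH$ coming from $FH \simeq TF$. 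For the highest degree term condition, since $H$ is an enhanced autoequivalence its left dual $H'$ is a (quasi-)inverse, so the target $FHH^nH'L$ is canonically isomorphic to $FH^nL$; the required automorphism is then produced by applying the Dualisation Lemma (Lemma \ref{lemma-left-dual-morphism}) to $\psi$, with the compatibility square reducing to the spherical symmetry between twist and cotwist.

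For the reverse direction ($\mathbb{P}^1$-functor implies spherical), the cotwist $C = H[-1]$ is an autoequivalence, giving one of the four spherical conditions. By \cite[Theorem 5.1]{AnnoLogvinenko-SphericalDGFunctors} it suffices to establish one more, and I would target the cotwist-identifies-adjoints condition $R \simeq CL[1]$. The adjoints condition from Definition \ref{defn-Pn-functor} gives that $F$ tensored on the left into the canonical map $R \to CL[1]$ is an isomorphism, so its cone $X \in D(\BbimA)$ satisfies $FX = 0$; the remaining task is to promote this to $X = 0$. I would extract this by combining the monad condition --- which rigidifies the splitting of $FQ_1R$ so that $\psi$ arises from a canonical natural transformation, not merely from a choice --- with the highest degree term condition, and then invoke the Cancellation Lemma (Lemma \ref{lemma-cancelling-F-from-natural-transformations-involving-R-or-L}, assertion (2), applied with $J = R$ and $I = L$) to cancel $F$ and conclude $X \simeq 0$.

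The main obstacle I anticipate is this last cancellation step: the adjoints condition as formulated in this paper is genuinely weaker than its classical analogue, and lifting an iso ``after applying $F$'' to an iso in $D(\BbimA)$ requires pulling together the monad condition, the highest degree term condition, and the preservation $H(\krn F) = \krn F$ in a non-trivial way. The monad-condition verification in the forward direction is also delicate, since one must match the DG formula for $\psi$ against the spherical triangle identities without assuming the splitting of the cotwist triangle.
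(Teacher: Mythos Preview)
Your forward direction (spherical $\Rightarrow$ $\mathbb{P}^1$) differs from the paper's. Rather than directly verifying the monad and highest degree term conditions, the paper observes that for $n=1$ the strong monad condition of Definition~\ref{defn-strong-monad-condition-general-case} is vacuous (only $j=0$ arises, automatic by Lemma~\ref{lemma-maps-m_i0-and-m_0j-are-identity}), checks that the monad multiplication is one-sided on the DG level (since $\iota=\action$, the degree-zero piece $\id_\A \to Q_1Q_1 \xrightarrow{m} Q_1$ is just the adjunction unit, hence homotopic to $\iota$), and then invokes Theorem~\ref{theorem-strong-monad-and-weak-adjoints-imply-pn-functor}. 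Your direct approach might be completable, but the proposal is vague at the crucial points; the paper's route avoids unpacking $\nu$ or $\psi'$ altogether. (A minor slip: $FH \simeq TF[-1]$, not $TF$; the paper instead reads $H|_{\krn F}\simeq \id[1]$ straight off the cotwist triangle.)

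Your reverse direction ($\mathbb{P}^1$ $\Rightarrow$ spherical) has a genuine gap. The Cancellation Lemma does not apply as you propose: assertion~(2) concerns maps $J \to RI$, and with $J=R$, $I=L$ the target would have to be $RL$, not $CL[1]$. More fundamentally, $FX=0$ only says $X\in\krn F$; nothing in the $\mathbb{P}^1$-data forces $\krn F=0$, so $X=0$ does not follow, and indeed the paper never claims it. The paper's argument is different: writing $W$ for your $X$, from $FW=0$ it gets $RFW=0$, hence $CW\simeq W$ via the cotwist triangle; applying $C^{-1}[-1]$ to the triangle $R\to CL[1]\to W$ and invoking \cite[Lemma~5.9(4)]{AnnoLogvinenko-SphericalDGFunctors} identifies the resulting map $C^{-1}R[-1]\to L$ with a specific canonical composition, and $FW=0$ then shows that $F$ applied to \emph{this} map is an isomorphism as well. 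The paper then appeals to the proof of \cite[Theorem~5.1]{AnnoLogvinenko-SphericalDGFunctors}, where this pair of ``isomorphism after $F$'' conditions suffices to conclude that $T$ is an autoequivalence; together with $C$ being one, $F$ is spherical. The idea you are missing is to extract a second, dual ``after-$F$'' isomorphism and feed both into the external machinery, rather than trying to lift a single one.
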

\begin{proof}
\em ``Only if''\rm : Suppose
\eqref{eqn-spherical-functors-cotwist-exact-triangle} defines a
$\mathbb{P}^1$-functor structure on $F$ as per Definition 
\ref{defn-Pn-functor}. Then $C$ is an autoequivalence, and the condition 
\eqref{item-spherical-functor-cotwist-is-an-auto-equivalence}
in the definition of a spherical functor holds. 

Let $W \in D(\BbimA)$ be defined by the exact triangle
\begin{equation}
\label{eqn-cone-of-R-to-CL1} R \xrightarrow{\mu_1 L \circ R\action} CL[1] \xrightarrow{} W \xrightarrow{} R[1].
\end{equation}
The adjoints condition implies that when we apply $F$ on the left the
first map in this triangle becomes an isomorphism, so $FW\simeq 0$.
Therefore, $RFW\simeq 0$, so by the exact triangle 
\eqref{eqn-spherical-functors-cotwist-exact-triangle}
we have $CW\simeq W$. Applying 
the autoequivalence $C^{-1}[-1]$ to the triangle
\eqref{eqn-cone-of-R-to-CL1} we get
\begin{equation}
C^{-1}R[-1] \xrightarrow{} L \xrightarrow{} W[-1] \xrightarrow{} C^{-1}R.
\end{equation}
By \cite{AnnoLogvinenko-SphericalDGFunctors} Lemma 5.9 (4) the first map in this triangle is
the composition 
\begin{equation}
\label{eqn-C'R(-1)-to-L-map}
C^{-1}R[-1]
\xrightarrow{\eqref{eqn-dual-cotwist-exact-triangle-DG}R} LFR
\xrightarrow{L\trace} L.
\end{equation}
Since $FW\simeq 0$, it follows that the map  
$$
FC^{-1}R[-1] 
\xrightarrow{F \eqref{eqn-C'R(-1)-to-L-map}}
FL
$$
is an isomorphism as well. By the proof of Theorem 5.1 in
\cite{AnnoLogvinenko-SphericalDGFunctors} the adjoints condition 
and the map $F \eqref{eqn-C'R(-1)-to-L-map}$ being an isomorphism 
imply that $T$ is an equivalence. Since $C$ is also an equivalence, 
$F$ is spherical.

\em ``If''\rm : By the exact triangle
\eqref{eqn-spherical-functors-cotwist-exact-triangle}
the monad $RF$ is a coextension of $\id$ by $C[1]$. 
If $F$ is spherical, then $H = C[1]$ is an autoequivalence. 
Moreover, $H|_{\krn F} \simeq \id[1]$ and thus $H^{n+1}(\krn F) = \krn F$. 


We next observe that the adjoints condition holds for $F$ since it
is the ``cotwist identifies adjoints'' condition in the definition 
of a spherical functor. The strong monad condition
also holds. Indeed, it concerns the maps  
$$ Q_1 Q_j \xrightarrow{\iota \iota} Q_n Q_n \xrightarrow{m} Q_n $$
for $0 \leq j \leq n-1$. We have $n = 1$, thus we only need to
consider the case $j = 0$. There, as noted in 
\S\ref{section-strong-monad-condition-the-general-case}, 
the desired assertion holds automatically 
by Lemma \ref{lemma-maps-m_i0-and-m_0j-are-identity}. 

We would now like to apply Theorem 
\ref{theorem-strong-monad-and-weak-adjoints-imply-pn-functor}, but
for that we need either the 
$\ext^{-1}$-vanishing condition \eqref{eqn-the-minus-one-ext-assumption} 
to hold or the monad multiplication be one-sided on DG level as
per Corollary
\ref{cor-from-strong-monad-to-monad-multiplication-one-sided}\eqref{item-monad-multiplication-is-DG-one-sided}.
There is little chance of the former, since $H$ could be  
an arbitrary autoequivalence. Fortunately, since $n = 1$, 
there are not many ways in which monad multiplication can fail to 
be one-sided on DG-level. 

Indeed, $\bar{Q}_1 \bar{Q}_1$ and $\bar{Q}_1$ are twisted complexes of form 
\begin{equation*}
\begin{tikzcd}
H^2
\ar[dotted,bend left=15]{rr}
\ar{r}
& H \oplus H 
\ar{r}{\action}
&
\underset{\degzero}{\id_\A},
\end{tikzcd}
\end{equation*}
\begin{equation*}
\begin{tikzcd}
& H 
\ar{r}{\action}
&
\underset{\degzero}{\id_\A}.
\end{tikzcd}
\end{equation*}
Thus the map $\bar{Q}_1 \bar{Q}_1 \xrightarrow{m} \bar{Q}_1$ 
being homotopic to a one-sided map is equivalent to the composition 
\begin{equation}
\label{eqn-degree-zero-part-of-monad-multiplication-on-DG-level}
\id_\A \xrightarrow{\iota \iota} 
\bar{Q}_1 \bar{Q}_1 \xrightarrow{m} \bar{Q}_1 
\end{equation}
being homotopic to one-sided map. In $D(\AbimA)$ the composition
\eqref{eqn-degree-zero-part-of-monad-multiplication-on-DG-level}
is identified by the isomorphism $\gamma$ with 
$$ \id_\A \xrightarrow{\action \circ \action} RFRF 
\xrightarrow{R \trace F } RF $$
which is just the adjunction unit $\id_\A \xrightarrow{\action} RF$.
Since $\gamma$ intertwines $\id_\A \xrightarrow{\action} RF$
and $\id_\A \xrightarrow{\iota} Q_n$, 
we conclude that the composition
\eqref{eqn-degree-zero-part-of-monad-multiplication-on-DG-level}
is homotopic to the canonical inclusion of $\id_\A$ 
into $\bar{Q}_1$ which is a one-sided map. 

Thus the strong monad condition and the adjoints condition hold, 
while the monad multiplication is one-sided on DG-level as per 
Corollary
\ref{cor-from-strong-monad-to-monad-multiplication-one-sided}\eqref{item-monad-multiplication-is-DG-one-sided}.
We conclude by Theorem 
\ref{theorem-strong-monad-and-weak-adjoints-imply-pn-functor}
that $F$ is a $\mathbb{P}^1$-functor. 
\end{proof}

The left duals of the adjunction unit $\id_\A \rightarrow RF$ and 
the adjunction counit $FR \rightarrow \id_\B$ are the adjunction
counit $LF \rightarrow \id_\A$ and the adjunction unit $\id_\B
\rightarrow FL$ \cite[Lemma 2.13]{AnnoLogvinenko-SphericalDGFunctors}. 
Thus a functor is spherical if and only if both its adjoints are 
spherical. In particular, spherical functors can easily have 
non-trivial kernels. 

Finally, we have:
\begin{prps}
\label{eqn-P1-twist-is-the-square-of-a-spherical-twist}
We have 
$$ P_F = T^2_F $$
where $P_F$ is the $\mathbb{P}$-twist of $F$ as a
$\mathbb{P}^1$-functor and $T_F$ is the spherical twist of $F$. 
\end{prps}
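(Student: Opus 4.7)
The plan is, by Proposition \ref{prps-every-spherical-functor-is-a-P1-functor}, to use that $F$ carries a canonical $\mathbb{P}^1$-functor structure with $H = C[1]$ and $Q_1 = RF$, so that $P_F$ is by definition the unique convolution in $D(\BbimB)$ of the three-term twisted complex $FHR \xrightarrow{\psi} FR \xrightarrow{\trace} \id_\B$. It will then suffice to realise $T_F^2$ as some convolution of this same complex; uniqueness of such convolutions then forces $T_F^2 \simeq P_F$.

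I would first compute $T_F^2 = T_F \bartimes_\B T_F$ as the convolution in $\pretriag(\BbarB)$ of the tensor square of the defining two-term twisted complex of $T_F$. Totalising the resulting $2 \times 2$ twisted bicomplex produces a three-term twisted complex
\[ FRFR \;\longrightarrow\; FR \oplus FR \;\longrightarrow\; \id_\B \]
whose incoming differential has components $\trace FR$ and $-FR\trace$ (Koszul-signed) and whose outgoing differential is $(\trace, \trace)$. Next, I would apply $F(-)R$ to the cotwist triangle $C \to \id_\A \to RF$ to obtain a triangle $FR \xrightarrow{F\action R} FRFR \to FHR$, which is split in $D(\BbimB)$ because $\trace FR$ is a retraction of $F\action R$. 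Choosing a DG-level section $\phi\colon FHR \to FRFR$ of the cone projection that is orthogonal to $F\action R$ (i.e.\ $\trace FR \circ \phi = 0$) and applying the Replacement Lemma (Lemma \ref{lemma-replacement-lemma}), I would replace $FRFR$ in the above twisted complex by $FR \oplus FHR$. The characterisation of $\psi$ in Lemma \ref{lemma-psi-is-independent-of-the-choice-of-phi} then identifies $\trace FR$ with $(\id, 0)$ and $FR\trace$ with $(\id, \psi)$ in this decomposition.

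A closed degree-zero change of basis $(a, b) \mapsto (a, a+b)$ on the middle $FR \oplus FR$ would diagonalise the incoming differential to $\diag(\id, -\psi)$ and transform the outgoing one to $(0, \trace)$. The twisted complex then splits as the direct sum of an acyclic piece $FR \xrightarrow{\id} FR$ and the complex $FHR \xrightarrow{-\psi} FR \xrightarrow{\trace} \id_\B$ defining $P_F$ (up to the sign on $\psi$, absorbed into the isomorphism). Cancelling the acyclic summand by another application of the Replacement Lemma then yields $T_F^2 \simeq P_F$.

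The main obstacle will be careful bookkeeping: verifying that the Koszul signs in the tensor bicomplex, the DG-level splitting $FRFR \simeq FR \oplus FHR$, and the subsequent change of basis compose without introducing higher differentials that connect the acyclic summand to the $P_F$ summand. This should go through because $T_F$ is a two-term twisted complex (so its tensor square carries no higher differentials beyond Koszul signs), and because the orthogonality $\trace FR \circ \phi = 0$ makes the identifications of $\trace FR$ and $FR\trace$ strict rather than only up to homotopy.
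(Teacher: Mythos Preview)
Your proposal is correct and follows the same overall strategy as the paper: realise $T_F^2$ as a convolution of the three-term complex $FHR \xrightarrow{\psi} FR \xrightarrow{\trace} \id_\B$, then invoke the uniqueness result of \cite{AnnoLogvinenko-OnUniquenessOfPTwists}. The difference lies in how $T_F^2$ is first presented. The paper cites \cite[Theorem 4.2]{AnnoLogvinenko-BarCategoryOfModulesAndHomotopyAdjunctionForTensorFunctors} to write $T_F^2$ as the convolution of the \emph{four}-term twisted complex
\[
FR \xrightarrow{F\action R} FRFR \xrightarrow{FR\trace - \trace FR} FR \xrightarrow{\trace} \id_\B,
\]
and then contracts the first two terms using the splitting $FRFR \simeq FR \oplus FHR$ along $F\action R$, immediately identifying the remaining map $FHR \to FR$ with $\psi$. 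You instead compute the tensor square of the two-term complex for $T_F$ directly, obtaining the \emph{three}-term complex $FRFR \to FR \oplus FR \to \id_\B$, and then perform a change of basis on the middle term to peel off an acyclic $FR \xrightarrow{\id} FR$ summand. Your route is more self-contained (no external citation for the shape of $T_F^2$) at the cost of an extra basis-change step; the paper's route is shorter because the cited four-term complex already has the map $FR\trace - \trace FR$ in the right place, so the identification with $\psi$ is immediate after one contraction. The sign-and-splitting bookkeeping you flag as the main obstacle is genuine but routine, and your orthogonality condition $\trace FR \circ \phi = 0$ is exactly what makes $FR\trace \circ \phi = \psi$ hold on the nose.
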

\begin{proof}
In \cite[Theorem 4.2]{AnnoLogvinenko-BarCategoryOfModulesAndHomotopyAdjunctionForTensorFunctors}
it was shown that $T_F^2$ is isomorphic to a (certain) convolution of 
the three-step twisted complex
$$ FR \xrightarrow{F{\action}R} FRFR \xrightarrow{FR\trace - \trace
FR} FR \xrightarrow{\trace} \id_\B. $$
As $FRFR$ splits as $FR \oplus FHR$ identifying $F{\action}R$ 
with the direct summand inclusion, we can replace the first two terms
in the complex above by $FHR$ and the map into $FR$ by the
composition of the direct summand inclusion with  
$FR \trace - \trace FR$. In other words, by the map  
$\psi\colon FHR \rightarrow FR$ in the definition of a
$\mathbb{P}$-twist. Thus $T^2_F$ is isomorphic to a (certain) convolution 
of the complex 
$$ FHR \xrightarrow{\psi} FR \xrightarrow{\trace} \id_\B.$$
By \cite[Theorem 3.2]{AnnoLogvinenko-OnUniquenessOfPTwists} all
convolutions of this twisted complex are isomorphic and
the $\mathbb{P}$-twist $P_F$ is defined to be this unique convolution.  
Thus $P_F = T_F^2$, as desired.
\end{proof}

\subsection{Extensions by zero}
\label{section-extensions-by-zero}

Let, as in \S\ref{section-ptwists-generalities}, $\A$, $\B$, and $\C$ be
DG categories. Let $F \in D(\AbimB)$ be an enhanced functor 
$D(\A) \rightarrow D(\B)$ which has left and right enhanced 
adjoints $L, R \in D(\BbimA)$. Let $n$ be an
odd integer and let $(H,Q_n, \gamma)$ be a $\mathbb{P}^n$-functor
structure on $F$.

Suppose now that $D(\A)$ fits into a DG-enhanced semi-orthogonal 
decomposition. That is --- suppose we have another DG-category $\C$ and 
a gluing DG-bimodule $N \in D(\CbimA)$. Let $\D$ be the DG-gluing 
of $\A$ and $\C$ along $N$ as per
\cite[\S4-\S5]{Efimov-HomotopyFinitenessOfSomeDGCategoriesFromAlgebraicGeometry}
\cite[\S4]{KuznetsovLunts-CategoricalResolutionsOfIrrationalSingularities}. 
The inclusions of $\A$ and $\C$ into $\D$ then induce inclusions 
of $D(\A)$ and $D(\C)$ into $D(\D)$ which form a semiorthogonal decomposition 
$$ D(\D) = \left<D(\A), D(\C)\right>. $$ 
By construction, for any $X \in D(\C)$ and $Y \in D(\A)$ we have
$$
\homm_{D(\D)}(X, Y)=0, \qquad \homm_{D(\D)}(Y,X) \simeq 
\homm_{D(\A)}(Y,X\ldertimes_\C N).
$$

We now describe the matrix notation for $\D$-modules and 
$\DbimD$-, $\BbimD$-, and $\DbimB$-bimodules which we use throughout
the rest of this section. 
A right $\D$-module $E$ is equivalently described by a matrix of modules 
\begin{equation}
\label{eqn-matrix-description-modD-object-matrix}
\left( 
\begin{matrix}
E_\C & E_\A
\end{matrix} \right)
\quad \quad E_\C \in \modC, \; E_\A \in \modA
\end{equation}
and a closed, degree $0$ structure morphism  
$$ \rho \in \homm_{\A}(E_\C\otimes_\C N, E_\A). $$ 
A morphism $\alpha \in \homm^i_{\D}(E,F)$
is then equivalently described by a pair of degree $i$ morphisms 
$$ 
\begin{pmatrix} 
E_\C & E_\A
\end{pmatrix} 
\xrightarrow{(\alpha_\C \quad \alpha_\A)}
\begin{pmatrix} 
F_\C & F_\A
\end{pmatrix} 
$$
in $\modC$ and $\modA$ which commute with the structure morphisms
of $E$ and $F$ in the following sense:
\begin{equation}
\label{eqn-morphism-in-mod-D-commutative-square-condition}
\begin{tikzcd}
E_\C \otimes_\C N 
\ar{r}{\alpha_\C \otimes \id}
\ar{d}{\rho}
&
F_\C \otimes_\C N 
\ar{d}{\rho}
\\
E_\A
\ar{r}{\alpha_\A}
&
F_\A.
\end{tikzcd}
\end{equation}
The differential and composition are computed componentwise in $\modC$
and $\modA$ respectively, e.g. 
$$ d_{\modD} \left(\alpha_\C \quad \alpha_\A\right) = 
\left(d_{\modC} (\alpha_\C) \quad d_{\modA} (\alpha_\A)\right). $$

Likewise, a $\DbimD$-bimodule $M$ is equivalently described 
by a matrix of bimodules
$$
\begin{pmatrix}
\label{eqn-objects-in-D-mod-D}
\biCMC & \biCMA \\
\biAMC & \biAMA
\end{pmatrix}
$$
and a closed, degree $0$ structure morphism 
\begin{align}
\label{eqn-objects-in-D-mod-D-structure-morphism}
\rho \in \;
&\homm_{\CbimA}\left(\biCMC  \otimes_\C N, \biCMA \right) \oplus
\homm_{\AbimA}\left(\biAMC  \otimes_\C N, \biAMA \right) \oplus
\\
&
\oplus 
\homm_{\CbimC}\left(N \otimes_\A (\biAMC), \biCMC \right) \oplus
\homm_{\CbimA}\left(N\otimes_\A (\biAMA), \biCMA \right)
\end{align}
whose four components make the following diagram commute:
$$
\begin{tikzcd}[column sep={1cm}]
N\otimes_\A (\biAMC) \otimes_\C N
\ar{r}
\ar{d}
&
N\otimes_\A (\biAMA)
\ar{d}
\\
(\biCMC) \otimes_\C N
\ar{r}
&
\biCMA.
\end{tikzcd}
$$
A morphism $\alpha \in \homm^i_{\DbimD}(M,L)$ is equivalently
described by a quadruple of degree $i$ morphisms 
\begin{equation}
\begin{pmatrix}
\biCMC & \biCMA \\
\biAMC & \biAMA
\end{pmatrix}
\xrightarrow{\left(\begin{smallmatrix}\alpha_{\C\C} & \alpha_{\C\A} \\
\alpha_{\A\C} & \alpha_{\A\A}\end{smallmatrix}\right)}
\begin{pmatrix}
\biCLC & \biCLA \\
\biALC & \biALA
\end{pmatrix}
\end{equation}
which commute with the components of the structure morphisms of
$M$ and $L$ in the same sense as in
\eqref{eqn-morphism-in-mod-D-commutative-square-condition}. The
differential and the composition are again computed componentwise. 

The diagonal bimodule $\D$ is given by the matrix 
\begin{equation}
\begin{pmatrix}
\C & N \\
0 & \A
\end{pmatrix},
\end{equation} 
with the obvious structure morphisms. The bar complex
bimodule $\barD$, cf. 
\cite[\S2.2]{AnnoLogvinenko-BarCategoryOfModulesAndHomotopyAdjunctionForTensorFunctors}
is given by the matrix 
\begin{equation}
\begin{pmatrix}
\barC & 
\left\{
\barC \otimes_\C N \otimes_\A \barA 
\xrightarrow{\left(
\begin{smallmatrix}
- \id \otimes \tau \\
\tau \otimes \id 
\end{smallmatrix}
\right)
}
\underset{\degzero}{\left(\barC \otimes_\C N\right) \oplus 
\left(N \otimes_\A \barA\right)}
\right\}
\\
0 & \barA 
\end{pmatrix},
\end{equation} 
whose non-zero structure morphism components are given by the twisted
complex maps
\begin{equation}
\begin{tikzcd}
& 
\barC \otimes_\C N
\ar{d}{
\left(
\begin{smallmatrix}
- \id \\
0
\end{smallmatrix}
\right)
}
\\
\barC \otimes_\C N \otimes_\A \barA 
\ar{r}
&
\underset{\degzero}{\left(\barC \otimes_\C N\right) \oplus 
\left(N \otimes_\A \barA\right)}
\end{tikzcd}
\; \text{ and } \;
\begin{tikzcd}
& 
N \otimes_\A \barA
\ar{d}{
\left(
\begin{smallmatrix}
0
\\
\id 
\end{smallmatrix}
\right)
}
\\
\barC \otimes_\C N \otimes_\A \barA 
\ar{r}
&
\underset{\degzero}{\left(\barC \otimes_\C N\right) \oplus 
\left(N \otimes_\A \barA\right)}. 
\end{tikzcd}
\end{equation}

This matrix description of the bar complex of $\D$ allows us to
give a matrix description of the bar category $\modbarD$
similar to the one of $\modD$ above. The category $\modbarD$ 
has the same objects as $\modD$, thus they are given by the data of 
the matrix \eqref{eqn-matrix-description-modD-object-matrix} and 
a structure morphism $\rho$. We implicitly identify $\rho$
with its image under the inclusion 
$\modD \hookrightarrow \modbarD$, cf. 
\cite[Prop. 3.3]{AnnoLogvinenko-BarCategoryOfModulesAndHomotopyAdjunctionForTensorFunctors}. 
A morphism $\alpha \in \barhom^i_{\D}(E,F)$
is equivalently described by a triple $(\alpha_\C, \alpha_\A,
\alpha_{\C\A})$ of degree $(i,i,i-1)$ morphisms 
\begin{equation*}
\begin{tikzcd}
E_\C 
\ar{d}{\alpha_\C}
& E_\A
\ar{d}{\alpha_\A}
\\
F_\C & F_\A
\end{tikzcd}
\quad \quad
\begin{tikzcd}
E_\C  \bartimes_\C N
\ar[dashed]{dr}{\alpha_{\C\A}}
& 
\\
& F_\A
\end{tikzcd}
\end{equation*}
in $\modbarC$, $\modbarA$, and $\modbarA$, respectively. 
The composition is given by
$$\left(\alpha_\C, \alpha_\A, \alpha_{\C\A}\right) \circ 
\left(\beta_\C, \beta_\A, \beta_{\C\A}\right) = 
\left(\alpha_\C \circ \beta_\C, \alpha_\A \circ \beta_\A, \alpha_\A \circ
\beta_{\C\A} + \alpha_{\C\A} \circ (\beta_\C \bartimes N) \right),$$
and the differential is given by
$$
d\left(\alpha_\C, \alpha_\A, \alpha_{\C\A}\right) 
= \bigl( d\alpha_\C, d\alpha_\A, d\alpha_{\C\A} - (-1)^i\left(\alpha_\A \circ
\rho_E - \rho_F \circ (\alpha_\C \bartimes N)\right) \bigr), $$
with all the differentials, compositions, and tensor products computed 
in the corresponding bar categories. In particular, a triple 
$(\alpha_\C, \alpha_\A, \alpha_{\C\A})$ defines a closed degree $0$ 
morphism in $\modbarD$, and hence a morphism in $D(\D)$, if 
$\alpha_\C$ and $\alpha_\A$ are closed degree $0$ morphisms 
in $\modbarC$ and $\modbarA$, while $\alpha_{\C\A}$ is a degree 
$-1$ morphism in $\modbarA$ whose differential is the commutator 
of the square
\eqref{eqn-morphism-in-mod-D-commutative-square-condition}. In 
other words, we now only ask 
that \eqref{eqn-morphism-in-mod-D-commutative-square-condition}
commutes up to homotopy and fix that homotopy. 

Finally, the category $\DmodbarD$ admits a similar description: 
its objects are $2 \times 2$ bimodule matrices
\eqref{eqn-objects-in-D-mod-D} equipped with a structure morphism 
$\rho$ as in \eqref{eqn-objects-in-D-mod-D-structure-morphism}. 
A degree $i$ morphism $\alpha\colon M \rightarrow L$ 
consists of:
\begin{itemize}
\item $4$ degree $i$ morphisms $\alpha_{\C\C}$, $\alpha_{\C\A}$, 
$\alpha_{\A\C}$, $\alpha_{\A\A}$ between the corresponding matrix
entries of $M$ and $L$, 
\item $4$ degree $i-1$ morphisms $\alpha_{\A\C\C\C}$,
$\alpha_{\A\C\A\A}$, $\alpha_{\A\A\C\A}$, $\alpha_{\C\C\C\A}$
each fitting as the diagonal into the four squares similar to 
\eqref{eqn-morphism-in-mod-D-commutative-square-condition}
formed by the degree $i$ morphisms with the components of
$\rho_M$ and $\rho_L$,
\item $1$ degree $i-2$ morphism 
$ \alpha_{\A\C\C\A}\colon 
N \otimes_\A (\biAMC) \otimes_\C N \rightarrow \biCLA$.
Together with the structure morphisms of $M$ and $L$ this data fits
into a cube
\begin{equation}
\label{eqn-the-data-of-a-morphism-in-DmodbarD}
\begin{tikzcd}[row sep={1.5cm}, column sep={1.5cm}]
&
\biCLC 
\ar{rr}[near start, description]{\rho_{\C\C\C\A}}
& 
& 
\biCLA
\\
\biCMC
\ar{rr}[near start, sloped, description]{\rho_{\C\C\C\A}}
\ar{ur}[near start, sloped, description]{\alpha_{\C\C}}
\ar[dotted]{urrr}[pos=0.1, sloped, description]{\alpha_{\C\C\C\A}}
&& 
\biCMA
\ar{ur}[near start, sloped, description]{\alpha_{\C\A}}
&
\\
&
\biALC
\ar{rr}[near start, sloped, description]{\rho_{\A\C\A\A}}
\ar{uu}[near end, sloped, description]{\rho_{\A\C\C\C}}
& 
&
\biALA
\ar{uu}[near end, sloped, description]{\rho_{\A\A\C\A}}
\\
\biAMC
\ar{rr}[near start, sloped, description]{\rho_{\A\C\A\A}}
\ar{ur}[near start, sloped, description]{\alpha_{\A\C}}
\ar{uu}[near end, sloped, description]{\rho_{\A\C\C\C}}
\ar[dotted]{uuur}[near start, sloped, description]{\alpha_{\A\C\C\C}}
\ar[dotted]{urrr}[near start, sloped, description]{\alpha_{\A\C\A\A}}
\ar[dashed, bend left=15]{uuurrr}[sloped, description]{\alpha_{\A\C\C\A}}
& 
& 
\biAMA
\ar{ur}[near start, sloped, description]{\alpha_{\A\A}}
\ar{uu}[near end, sloped, description]{\rho_{\A\A\C\A}}
\ar[dotted]{uuur}[pos=0.125, sloped, description]{\alpha_{\A\A\C\A}}, 
&
\end{tikzcd}
\end{equation}
\end{itemize}
provided we've suppressed tensoring the bimodules by $N$ on either side, e.g. 
the morphism $\rho_{\A\C\C\C}\colon N \bartimes_\A (\biAMC) \rightarrow \biCMC$
is represented by the arrow from $\biAMC$ to $\biCMC$. 

The composition and the differential are defined similarly to the
above. The closed degree $0$ morphisms are those 
whose degree $0$ components are closed and commute with the structure 
morphisms up to homotopy. The four degree $-1$ components are choices of 
these homotopies, and themselves commute with the structure morphisms
up to homotopy. The degree $-2$ component is a choice of this
homotopy. The $\A\C$ components of all the bimodules we work 
with in this section are always zero, so this degree $-2$ component is
always zero. Thus, in particular, we have the following useful fact:

\begin{lemma}
\label{lemma-matrix-criterion-for-bimodules-to-be-iso-in-D-DbimD}
Let $M, L \in D(\DbimD)$ with $\biAMC = \biALC = 0$. 
If there exists a matrix of isomorphisms 
\begin{equation}
\label{eqn-matrix-of-isomorphisms-ensuring-iso-in-D-DbimD}
\begin{pmatrix}
\iota_{\C\C} & \iota_{\C\A} \\
0 & \iota_{\A\A} 
\end{pmatrix}
\end{equation}
in the corresponding derived categories between the components of 
$M$ and $L$ such that the diagram 
\begin{equation}
\label{eqn-condition-for-upper-triag-bimodules-to-be-iso}
\begin{tikzcd}[row sep={1cm}, column sep={0.25cm}]
&
\left(\biCLC\right) \ldertimes_\C N
\ar{rr}[description]{\rho_{\C\C\C\A}}
& 
& 
\biCLA
\\
\left(\biCMC\right) \ldertimes_\C N
\ar{rr}[sloped, description]{\rho_{\C\C\C\A}}
\ar{ur}[sloped, description]{\iota_{\C\C} \ldertimes \id}
&& 
\biCMA
\ar{ur}[sloped, description]{\iota_{\C\A}}
&
\\
&
& 
&
N \ldertimes_\A \left(\biALA\right)
\ar{uu}[sloped, description]{\rho_{\A\A\C\A}}
\\
& 
& 
N \ldertimes_\A \left(\biAMA\right)
\ar{ur}[sloped, description]{\id \ldertimes \iota_{\A\A}}
\ar{uu}[sloped, description]{\rho_{\A\A\C\A}}
&
\end{tikzcd}
\end{equation}
commutes in $D(\CbimA)$, then $M \simeq L$ in $D(\DbimD)$
\end{lemma}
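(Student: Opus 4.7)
The plan is to construct explicitly a closed degree $0$ morphism $\alpha\colon M \rightarrow L$ in $\DmodbarD$ whose underlying degree zero components realise the given isomorphisms $\iota_{\C\C}, \iota_{\C\A}, \iota_{\A\A}$, and then to deduce that it descends to an isomorphism in $D(\DbimD)$.

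First I would unpack the data of a closed degree zero morphism $M \to L$ in $\DmodbarD$ using the cube \eqref{eqn-the-data-of-a-morphism-in-DmodbarD}. Because $\biAMC = \biALC = 0$, most of that cube collapses: the components $\alpha_{\A\C}$, $\alpha_{\A\C\C\C}$, $\alpha_{\A\C\A\A}$, and $\alpha_{\A\C\C\A}$ all have $\biAMC$ or $\biALC$ as source or target, and are therefore forced to be zero. The structure morphisms $\rho_{\A\C\C\C}$ and $\rho_{\A\C\A\A}$ are likewise zero. Thus the data of $\alpha$ reduces to three closed degree zero maps $\alpha_{\C\C}$, $\alpha_{\C\A}$, $\alpha_{\A\A}$ in $\modbarC$, $\CmodbarA$, $\modbarA$ respectively, together with two degree $-1$ maps $\alpha_{\C\C\C\A}$ and $\alpha_{\A\A\C\A}$ in $\CmodbarA$ whose differentials are the commutators of the two remaining squares of the cube, namely those involving $\rho_{\C\C\C\A}$ and $\rho_{\A\A\C\A}$.

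Next I would lift the given isomorphisms. Choose closed degree $0$ representatives $\alpha_{\C\C}, \alpha_{\C\A}, \alpha_{\A\A}$ in the bar categories of $\iota_{\C\C}, \iota_{\C\A}, \iota_{\A\A}$ respectively; each is automatically a homotopy equivalence since the passage to the derived category is the localisation at homotopy equivalences in $\modbar$. The hypothesis that \eqref{eqn-condition-for-upper-triag-bimodules-to-be-iso} commutes in $D(\CbimA)$ says exactly that the two commutator maps
\[
\alpha_{\C\A}\circ \rho^M_{\C\C\C\A} - \rho^L_{\C\C\C\A}\circ(\alpha_{\C\C}\bartimes\id),\qquad
\alpha_{\C\A}\circ \rho^M_{\A\A\C\A} - \rho^L_{\A\A\C\A}\circ(\id\bartimes\alpha_{\A\A})
\]
are null-homotopic in $\CmodbarA$. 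Choose $\alpha_{\C\C\C\A}$ and $\alpha_{\A\A\C\A}$ to be any such null-homotopies. Assembling this data gives a closed degree $0$ morphism $\alpha\colon M \to L$ in $\DmodbarD$.

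Finally, I would observe that $\alpha$ is a quasi-isomorphism in $\DmodbarD$ and therefore an isomorphism in $D(\DbimD)$. This is a triangulated argument: there is a short exact sequence of bimodules describing $M$ (and similarly $L$) as a two-step extension of its diagonal entries glued by $\rho_{\C\C\C\A}$ and $\rho_{\A\A\C\A}$, so the cone of $\alpha$ fits into triangles whose vertices are the cones of $\alpha_{\C\C}$, $\alpha_{\C\A}$, $\alpha_{\A\A}$; these are zero by hypothesis, hence so is the cone of $\alpha$. The main (though minor) obstacle is purely bookkeeping: verifying that the chosen degree $-1$ homotopies satisfy the $\DmodbarD$ cocycle conditions, which in this upper-triangular situation reduces precisely to the vanishing of the degree $-2$ component $\alpha_{\A\C\C\A}$, which is automatic from $\biAMC = \biALC = 0$.
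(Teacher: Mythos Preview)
Your proposal is correct and follows essentially the same approach as the paper: reduce the cube \eqref{eqn-the-data-of-a-morphism-in-DmodbarD} to its two nontrivial faces using $\biAMC=\biALC=0$, lift the derived-category isomorphisms and the commutativity of \eqref{eqn-condition-for-upper-triag-bimodules-to-be-iso} to closed degree $0$ maps plus chosen homotopies, and conclude. The only difference is cosmetic: the paper invokes the Rectangle Lemma \cite[Lemma~2.12]{AnnoLogvinenko-BarCategoryOfModulesAndHomotopyAdjunctionForTensorFunctors} for the final step, whereas you give an equivalent cone/filtration argument directly.
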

\begin{proof}
Since $\biAMC = \biALC = 0$, the two squares 
on \eqref{eqn-condition-for-upper-triag-bimodules-to-be-iso} are the
only two remaining faces of the cube defining a morphism 
$L \rightarrow M$ in $\DmodbarD$ as per the diagram 
\eqref{eqn-the-data-of-a-morphism-in-DmodbarD}. For a morphism 
to be closed of degree $0$, its components have to have degrees $0$,
$-1$ and $-2$ and the cube has to homotopy commute. In our case, 
each of the two remaining faces has to homotopy commute, independently
of each other. 

A commutative square in $D(\CbimA)$ can always be lifted to a homotopy
commutative square in $\CmodbarA$. Thus we can lift 
\eqref{eqn-matrix-of-isomorphisms-ensuring-iso-in-D-DbimD}
to the data of a closed degree $0$
morphism $\bar{\iota}\colon M \rightarrow L$ whose degree $0$
components are homotopy equivalences. By Rectangle Lemma 
\cite[Lemma
2.12]{AnnoLogvinenko-BarCategoryOfModulesAndHomotopyAdjunctionForTensorFunctors}
it follows that $\bar{\iota}$ itself is a homotopy equivalence, and thus
$M \simeq L$ in $D(\DbimD)$, as desired. 
\end{proof}

\begin{defn}
The \em extension of $F$ by zero \rm from $D(\A)$ to $D(\D)$ is
the enhanced functor 
$$\widetilde{F}\colon  D(\D) \to D(\B)$$
defined by the bimodule
$
\left(
\begin{smallmatrix}
0
\\
F
\end{smallmatrix}
\right)
 \in D(\DbimB).
$ 
\end{defn}
The compositions of $\widetilde{F}$ with the natural inclusions of
$D(\A)$ and $D(\C)$ into $D(\D)$ are $F$ and $0$, respectively, whence
our choice of the terminology. 
The right adjoint $\widetilde{R}\colon D(\B)\to D(\D)$ of
$\widetilde{F}$ is defined by the bimodule
$$
(\widetilde{F})^\barB
\simeq 
\begin{pmatrix}
0
&
R
\end{pmatrix} \in D(\BbimD),
$$
and its left adjoint $\widetilde{L}\colon D(\B) \rightarrow D(\D)$ is 
defined by the bimodule 
\begin{equation}
(\widetilde{F})^\barD
\simeq
\label{eqn-matrix-form-of-tilde-L}
\begin{pmatrix}
\left(N\bartimes_\A F\right)^\barC[-1] 
\quad 
& 
\quad  
\{\underset{\degzero}{\barhom_\A(F,\A)} \xrightarrow{f \mapsto \alpha
\circ (N \bartimes f) } \barhom_\C(N\bartimes_\A F, N)\}
\end{pmatrix}
\in  D(\BbimD),
\end{equation}
where $\alpha$ is the homotopy equivalence $N \bartimes_\A \barA
\rightarrow N$ as per
\cite[Defn.~3.19]{AnnoLogvinenko-BarCategoryOfModulesAndHomotopyAdjunctionForTensorFunctors}. 
The structure morphism $\rho$ of this bimodule is given by the natural map 
$\eta_\C$ defined in 
\cite[Defn.~3.34]{AnnoLogvinenko-BarCategoryOfModulesAndHomotopyAdjunctionForTensorFunctors}:
$$ 
\left(N\bartimes_\A F\right)^\barC \bartimes_\C N[-1] \xrightarrow{\eta_\C} 
\barhom_\C(N\bartimes_\A F, N)[-1].$$

Finally, we have 
$$
\widetilde{R}\widetilde{F} \simeq 
\left(\begin{matrix}
0 & 0 \\
0 & RF 
\end{matrix}\right) \in D(\DbimD).
$$

Now, suppose that in $D(\CbimA)$ we have 
\begin{equation}
\label{eqn-condition-for-extension-by-zero}
Q_i N \simeq 
\begin{cases}
0, & \quad \text{$i$ odd}, \\
N[i], & \quad \text{$i$ even}. \\
\end{cases}
\end{equation}
From the description of a cyclic coextension on 
the diagram \eqref{eqn-cyclic-coextension-of-id-by-H-of-degree-n}
it follows that in $D(\CbimA)$ 
$$ H^iN \xrightarrow{\sigma_iN}  H^{i-1}N[1] $$
is $0$ for even $i$ and an isomorphism for odd $i$. For each
$0 < 2j+1 \leq n$, we have apriori different isomorphisms 
$$ H^{2j+1}N \xrightarrow{H^{2j}\sigma_1N} H^{2j}N[1], $$ 
$$ H^{2j+1}N \xrightarrow{\sigma_{2j+1}N}  H^{2j}N[1]. $$ 
If $Q_n$ is a truncated twisted tensor algebra, as
described in \S\ref{section-truncated-twisted-tensor-algebras}, we 
have $\sigma_{2j+1} =  H^{2j}\sigma_1 - \sigma_{2j}H$. Since
$\sigma_{2j}N = 0$, the two isomorphisms above are the same:
\begin{equation}
 H^{2j}\sigma_1 N = \sigma_{2j+1}N.
\end{equation}
This is the condition we would ideally like to have. 
It turns out something less would do: in $D(\CbimA)$, the map 
\begin{equation}
\label{eqn-condition-for-extension-by-zero-iso-must-come-from-C}
(H^{2j}\sigma_1 N) \circ (\sigma_{2j+1}N)^{-1} \text{ equals }
H^{2j}N[1] \xrightarrow{\alpha_\C^{-1}} 
H^{2j}N[1]\id_\C
\xrightarrow{H^{2j}N[1]\phi_{2j}}
H^{2j}N[1]\id_\C
\xrightarrow{\alpha_\C}
H^{2j}N[1]
\end{equation}
for some automorphism $\phi_{2j}$ of $\id_\C$. Here $\alpha_\C$
is the natural isomorphism $\C \otimes_\C (-) \rightarrow (-)$.  

\begin{prps} 
\label{prps-extension-by-zero-of-a-pn-functor}
Let $(H, Q_n, \gamma)$ be the structure of a $\mathbb{P}^n$-functor on
$F$ with $n$ odd. If the conditions 
\eqref{eqn-condition-for-extension-by-zero} 
and 
\eqref{eqn-condition-for-extension-by-zero-iso-must-come-from-C}
hold, then there exists a structure of a $\mathbb{P}^n$-functor 
on the extension by zero $\widetilde{F}$ of $F$. 
\end{prps}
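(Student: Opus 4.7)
The plan is to construct $\mathbb{P}^n$-functor data $(\tilde H, \tilde Q_n, \tilde\gamma)$ on the extension by zero $\widetilde F$ by lifting the data $(H, Q_n, \gamma)$ of $F$ through the semiorthogonal decomposition $D(\D)=\langle D(\A), D(\C)\rangle$, and then to reduce the three conditions of Definition~\ref{defn-Pn-functor} for $\widetilde F$ to those already holding for $F$. Throughout I would work in the matrix calculus for $D(\DbimD)$, $\DmodbarD$, $D(\DbimB)$, and $D(\BbimD)$ set up earlier in this section, using Lemma~\ref{lemma-matrix-criterion-for-bimodules-to-be-iso-in-D-DbimD} whenever I need to produce isomorphisms in $D(\DbimD)$.

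The first step is to define the autoequivalence $\tilde H\in D(\DbimD)$ by the upper-triangular matrix
\[
\tilde H \;=\; \begin{pmatrix} \id_\C[1] & N[1] \\ 0 & H \end{pmatrix},
\]
whose $\C\A$-structure morphism on the left is the tautological $\id_\C[1]\bartimes_\C N\xrightarrow{\sim}N[1]$ and on the right is the canonical isomorphism $N\bartimes_\A H\xrightarrow{\sim} N[1]$ arising as the connecting map in the triangle $N\to N\bartimes_\A Q_1\to N\bartimes_\A H$ together with the vanishing $N\bartimes_\A Q_1\simeq 0$ from \eqref{eqn-condition-for-extension-by-zero}. Applying the same vanishing iteratively gives $\tilde H^{\bartimes i}\simeq\bigl(\begin{smallmatrix}\id_\C[i] & N[i] \\ 0 & H^i\end{smallmatrix}\bigr)$; the candidate inverse $\bigl(\begin{smallmatrix}\id_\C[-1] & N[-1] \\ 0 & H^{-1}\end{smallmatrix}\bigr)$ with analogous structure morphisms then shows $\tilde H$ to be an autoequivalence. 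The condition $\tilde H(\krn\widetilde F)=\krn\widetilde F$ is automatic from $H(\krn F)=\krn F$ and the fact that $D(\C)\subset\krn\widetilde F$ is preserved by shifts.

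The second step, which is the main obstacle, is to lift the DG model $\bar Q_n$ of \eqref{eqn-cyclic-coextension-of-id-by-H-of-degree-n-dg} to a twisted complex $\bar{\tilde Q}_n$ over $\DmodbarD$ whose $i$-th term is $\tilde H^{\bartimes i}[-i]$ and whose differentials are induced from the $\sigma_i$ by a matrix prescription compatible with the structure morphisms of $\tilde H$. Its $\A\A$-component is then exactly $\bar Q_n$; the $\C\A$- and $\C\C$-components are twisted complexes on the objects $N[i]$, respectively $\id_\C[i]$, for $0\leq i\leq n$. Condition \eqref{eqn-condition-for-extension-by-zero-iso-must-come-from-C} is used here essentially to pin down the degree-$1$ differentials on the $\C\C$-strand as the automorphisms $\phi_{2j}$ of $\id_\C$ in the appropriate slots, while \eqref{eqn-condition-for-extension-by-zero} forces the remaining differentials to be zero. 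With $n$ odd, a telescoping pairing argument then shows that the convolutions of both the $\C\C$- and $\C\A$-components are acyclic, hence zero in their respective derived categories. Consequently $\tilde Q_n\simeq\bigl(\begin{smallmatrix}0 & 0 \\ 0 & Q_n\end{smallmatrix}\bigr)$ in $D(\DbimD)$; composing this with $\gamma$ on the $\A\A$-entry and invoking Lemma~\ref{lemma-matrix-criterion-for-bimodules-to-be-iso-in-D-DbimD} yields the required isomorphism $\tilde\gamma\colon\tilde Q_n\xrightarrow{\sim}\widetilde R\widetilde F$ intertwining $\iota$ with $\action$. The intermediate coextensions $\tilde Q_i$ and the cone $\tilde J_n$ are produced analogously as convolutions of the corresponding subcomplexes of $\bar{\tilde Q}_n$. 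The careful bookkeeping of the higher-homotopy entries in the cube \eqref{eqn-the-data-of-a-morphism-in-DmodbarD} when writing down these lifts, and verifying the vanishings, is the technical hurdle in this step.

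The final step is a formal verification of the three conditions of Definition~\ref{defn-Pn-functor}. Since $\widetilde F$ annihilates $D(\C)$ and agrees with $F$ on $D(\A)$, there are canonical identifications $\widetilde F\widetilde R\simeq FR$, $\widetilde F\widetilde L\simeq FL$, and $\widetilde F\tilde H^i\widetilde L\simeq FH^iL$, together with their counterparts for $\tilde Q_i$, $\tilde J_n$, and the maps $\tilde\iota_i$, $\tilde\mu_i$, $\tilde\kappa$, $\tilde\psi$, $\tilde\psi'$. Under these identifications, the map $\tilde\nu$, the adjoints composite, and the highest-degree-term square for $(\tilde H, \tilde Q_n, \tilde\gamma)$ each reduce to their counterparts for $(H, Q_n, \gamma)$ applied to $F$. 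All three conditions for $\widetilde F$ thus follow at once from the corresponding conditions for $F$, completing the proof; one further observes that the same reduction makes the defining three-step complex for $P_{\widetilde F}$ isomorphic to that for $P_F$, yielding the stronger statement $P_{\widetilde F}\simeq P_F$ mentioned in the introduction.
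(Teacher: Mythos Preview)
Your overall architecture matches the paper's: define an upper-triangular $\tilde H$, give $\widetilde R\widetilde F$ the structure of a cyclic coextension of $\id_\D$ by $\tilde H$, and then reduce the three conditions of Definition~\ref{defn-Pn-functor} to those for $F$ by exploiting that $\widetilde F$ kills the $\C$-component. The final reduction step is essentially identical to the paper's, which formalises it via a property~$(\dagger)$ on $\DbimD$-bimodules (the structure morphism $_\C M_\C\otimes N\to{}_\C M_\A$ is an isomorphism and $_\A M_\C=0$) together with the observation that $Q_nN\simeq 0$ forces $FN\simeq 0$, whence $\widetilde L\simeq(0\;\;L)$.

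Where you diverge is in the construction of the coextension. You propose lifting the entire twisted complex $\bar Q_n$ to a twisted complex $\bar{\tilde Q}_n$ over $\DmodbarD$ with terms $\tilde H^{\bartimes i}[-i]$, then convolving and arguing via a telescoping pairing that the $\C\C$- and $\C\A$-strands are acyclic. The paper instead \emph{defines the intermediate $\tilde Q_j$ directly} by cases: $\tilde Q_{2i-1}=\bigl(\begin{smallmatrix}0&0\\0&\bar Q_{2i-1}\end{smallmatrix}\bigr)$ and $\tilde Q_{2i}=\bigl(\begin{smallmatrix}\C[2i]&N[2i]\\0&\bar Q_{2i}\end{smallmatrix}\bigr)$, with $\tilde\iota_j$ having only the $\A\A$-component $\iota_j$. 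The cyclic coextension structure is then established by checking $\cone(\tilde\iota_j)\simeq\tilde H^j$ via Lemma~\ref{lemma-matrix-criterion-for-bimodules-to-be-iso-in-D-DbimD}. This sidesteps entirely the ``technical hurdle'' you flag: no lifts of differentials to $\DmodbarD$, no bookkeeping in the cube \eqref{eqn-the-data-of-a-morphism-in-DmodbarD}, and no telescoping argument. Correspondingly, condition \eqref{eqn-condition-for-extension-by-zero-iso-must-come-from-C} enters not to pin down differentials on a $\C\C$-strand, but to supply the matrix of isomorphisms $\bigl(\begin{smallmatrix}\phi_{2j}&\ast\\0&(0\;\mu_{2j+1})\end{smallmatrix}\bigr)$ needed to identify $\cone(\tilde\iota_{2j+1})$ with $\tilde H^{2j+1}$. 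Your route is workable in principle, but the paper's direct definition is considerably shorter and avoids the coherence issues you anticipate.
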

\begin{proof}
Let $$
\widetilde{H} = 
\left(\begin{matrix}
\C[1] & N[1] \\
0 & H 
\end{matrix}\right)
\in \DmodbarD, 
$$
with structure morphisms
$$ \C \bartimes_\C N[1] \xrightarrow{\alpha} N[1], $$
$$  N \bartimes_\A H \xrightarrow{\id \bartimes \sigma_1} N[1]. $$
One can readily verify that $\tilde{H}$ is an enhanced autoequivalence of $D(\D)$. 
We further have
$$
\widetilde{H}^i
\simeq
\left(\begin{matrix}
\C[i] & N[i] \\
0 & H^i 
\end{matrix}\right) \in \DmodbarD,
$$
with the structure morphisms being $\alpha$ and
$$  N \bartimes_\A H^{\bartimes i} \xrightarrow{(\id \bartimes \sigma_1)^i} N[i]. $$

Let 
$$
\widetilde{Q}_{2i-1} = 
\begin{pmatrix}
0 &  0\\
0 & \bar{Q}_{2i-1}
\end{pmatrix}
\quad \in \DmodbarD,
$$ 
$$
\widetilde{Q}_{2i} = 
\begin{pmatrix}
\C[2i] & N[2i]\\
0 & \bar{Q}_{2i}
\end{pmatrix}
\quad \in \DmodbarD,
$$
with the structure morphisms for the latter being $\alpha$ and
\begin{equation}
\label{equation-structure-morphism-for-extended-Q_2i}
N \bartimes_\A \bar{Q}_{2i} 
\xrightarrow{\id \bartimes \mu_{2i}}
N \bartimes_\A H^{\bartimes(2i)} 
\xrightarrow{(\id \bartimes \sigma_1)^{2i}}
N[2i]. 
\end{equation}

Define
$$ \widetilde{\gamma} \colon 
\widetilde{Q}_n 
\xrightarrow{\begin{pmatrix} 0 & 0 \\ 0 & \gamma \end{pmatrix}}
\widetilde{R}\widetilde{F}, 
\quad \quad  
\widetilde{\iota_j} \colon 
\widetilde{Q}_j 
\xrightarrow{\begin{pmatrix} 0 & 0 \\ 0 & \iota_j \end{pmatrix}}
\widetilde{Q}_{j+1}.$$
Here and below, whenever we only
specify the degree $i$ components of a degree $i$ map in $\DmodbarD$,  
the higher homotopy components are all taken to be zero. 
By Lemma \ref{lemma-matrix-criterion-for-bimodules-to-be-iso-in-D-DbimD}
the map $\widetilde{\gamma}$ is an isomorphism, and we claim 
that the maps $\widetilde{\iota}_j$ give $\widetilde{Q}_n$ the structure of a degree $n$ 
cyclic coextension of $\id_\D$ by $\widetilde{H}$. 

It suffices to show that the cone of $\widetilde{\iota}_j$ is isomorphic 
$D(\DbimD)$ to $\widetilde{H}^{j}$. The cone of a map in $D(\DbimD)$
can be constructed as its convolution in $\DmodbarD$, thus
$\cone(\widetilde{\iota}_{2i})$ is given by 
\begin{equation}
\label{eqn-convolution-of-extended-iota-odd}
\left\{
\begin{pmatrix}
0 &  0\\
0 & \bar{Q}_{2i-1}
\end{pmatrix}
\xrightarrow{\begin{pmatrix} 0 & 0 \\ 0 & \iota_{2i-1}\end{pmatrix}}
\underset{\degzero}
{\begin{pmatrix}
\C[2i] & N[2i] \\
0 & \bar{Q}_{2i}
\end{pmatrix}}
\right\}
= 
\begin{pmatrix}
\C[2i] &  N[2i]\\
0 & \left\{ \bar{Q}_{2i-1} \xrightarrow{\iota_{2i-1}} 
\underset{\degzero}{\bar{Q}_{2i}} \right\}
\end{pmatrix}
\end{equation}
with structure morphisms $\alpha$ and 
$$
\left\{ N \bartimes_\A \bar{Q}_{2i-1} \xrightarrow{\iota_{2i-1}} 
\underset{\degzero}{N \bartimes_\A \bar{Q}_{2i}} \right\}
\xrightarrow{\begin{pmatrix} 0 & 
\eqref{equation-structure-morphism-for-extended-Q_2i} \end{pmatrix}}
N[2i]. 
$$
By definition of the maps $\iota_\bullet$ and $\mu_\bullet$ in 
\S\ref{section-cyclic-extensions}, the map 
\begin{equation}
\label{eqn-map-from-cone-of-iota-2i-1-to-H^2i}
\left\{ \bar{Q}_{2i-1} \xrightarrow{\iota_{2i-1}} 
\underset{\degzero}{\bar{Q}_{2i}} \right\} 
\xrightarrow{\begin{pmatrix} 0 & \mu_{2i} \end{pmatrix}}
H^{2i}
\end{equation}
is a homotopy equivalence. In the derived categories the matrix of
isomorphisms 
$$
\begin{pmatrix}
\id &  \id\\
0 & \eqref{eqn-map-from-cone-of-iota-2i-1-to-H^2i}
\end{pmatrix}
$$
is readily seen to satisfy the commutation condition 
\eqref{eqn-condition-for-upper-triag-bimodules-to-be-iso}. Thus, by
Lemma \ref{lemma-matrix-criterion-for-bimodules-to-be-iso-in-D-DbimD}
we have $\cone(\widetilde{\iota}_{2i}) \simeq \widetilde{H}^{2i}$, as desired. 
Similarly, $\cone(\widetilde{\iota}_{2i+1})$ is given by 
\begin{equation}
\label{eqn-convolution-of-extended-iota-even}
\left\{
\begin{pmatrix}
\C[2i] & N[2i] \\
0 & \bar{Q}_{2i}
\end{pmatrix}
\xrightarrow{\begin{pmatrix} 0 & 0 \\ 0 & \iota_{2i}\end{pmatrix}}
\underset{\degzero}
{\begin{pmatrix}
0 &  0\\
0 & \bar{Q}_{2i+1}
\end{pmatrix}}
\right\}
= 
\begin{pmatrix}
\C[2i+1] &  N[2i+1]\\
0 & \left\{ \bar{Q}_{2i} \xrightarrow{\iota_{2i}} 
\underset{\degzero}{\bar{Q}_{2i+1}} \right\}
\end{pmatrix}
\end{equation}
with the structure morphisms $\alpha$ and   
$$ 
\left\{ N \bartimes_\A \bar{Q}_{2i} \xrightarrow{\iota_{2i}} 
\underset{\degzero}{N \bartimes_\A \bar{Q}_{2i+1}} \right\}
\xrightarrow{\begin{pmatrix} 
\eqref{equation-structure-morphism-for-extended-Q_2i}
&
0
\end{pmatrix}}
N[2i+1]. 
$$
By construction of the automorphism $\phi_{2j}$ of $\C$, 
the matrix of isomorphisms 
$$
\begin{pmatrix}
\phi_{2j} & \eqref{eqn-condition-for-extension-by-zero-iso-must-come-from-C}
\\
0 &  (0 \quad \mu_{2j+1}) 
\end{pmatrix}
$$
satisfies 
\eqref{eqn-condition-for-upper-triag-bimodules-to-be-iso}, and 
thus $\cone(\widetilde{\iota}_{2i+1}) \simeq \widetilde{H}^{2i + 1}$ 
in $D(\DbimD)$, as desired. 

It remains to check the monad, the adjoints, and 
the highest degree term condition for $(\widetilde{H}, \widetilde{Q}_n,
\widetilde{\gamma})$. Define a $\DbimD$-bimodule $M$ to have property $(\dagger)$ if
$\rho\colon \,_\C M_\C \otimes N \to \,_\C M_\A$ is an isomorphism 
and $_\A M_\C=0$. The bimodules $\widetilde{H}^j$ and $\widetilde{Q}_j$ 
all have this property. This property is preserved by the bar tensor product. 
Finally, the bar tensor product of a
$\DbimD$ bimodule $M$ with property $(\dagger)$ and a $\DbimB$ bimodule
$K$ with $\leftidx{_\C}K_\B = 0$ has the only non-zero component:
$(_\A M_\A) \otimes_\A (\!_\A K_\B)$. 
 
The monad condition for $(\widetilde{H}, \widetilde{Q}_n, \widetilde{\gamma})$ 
is readily seen to be equivalent to the monad condition for $(H,Q_n,\gamma)$:
its every term is the tensor product of $\widetilde{F}$ whose $\CbimB$ component 
is $0$ and a bimodule with property $(\dagger)$. 

Now, by assumptions \eqref{eqn-condition-for-extension-by-zero}
we have $Q_n N \simeq 0$. Since $Q_n\simeq RF$, we have $RFN \simeq 0$.
This implies $FN \simeq 0$. Thus 
the matrix form \eqref{eqn-matrix-form-of-tilde-L} of 
the bimodule $\widetilde{L}$ simplifies to $(0 \ L)$.
With this, all the maps in the adjoints and the highest degree 
term conditions for
$(\widetilde{H}, \widetilde{Q}_n, \widetilde{\gamma})$ 
are simply equal to their counterparts for $(H, Q_n, \gamma)$. 

\end{proof}

\subsection{Cyclic covers}
\label{section-cyclic-covers}

\subsubsection{The setup}
\label{section-cyclic-covers-the-setup}

Let $X$ be a variety over an algebraically closed field $k$ of 
characteristic coprime to $n+1$. Let $f\colon Z \rightarrow X$ be 
a cyclic $(n+1)$-fold cover of $X$ ramified in an effective Cartier 
divisor $D \subset X$. In other words, $f$ is a finite flat map 
of degree $n+1$ and $Z$ carries 
a fibrewise action of the group $\gpmu_{n+1} \subset k$ of 
$(n+1)$-st roots of unity which fixes the divisor 
$E = \frac{1}{n+1} f^{-1}(D) \subset Z$ 
and acts freely outside it. Choose and fix a primitive generator $\sigma
\in \gpmu_{n+1}$ and denote its action by $\sigma\colon Z \rightarrow Z$:

\begin{equation}
\begin{tikzcd}[column sep={2cm},row sep={1cm}] 
E
\ar[hookrightarrow]{r}
\ar{d}{f}[']{1:1\text{ map }} 
&
Z
\ar{d}{f}[']{(n+1):1 \text{ map }}
\ar[loop,out=-20,in=20,distance=20,swap, "\sigma"]
\\
D 
\ar[hookrightarrow]{r}
&
X.
\end{tikzcd}
\end{equation}

Let $\gpmu_{n+1}^\vee$ be the group of characters $\gpmu_{n+1} 
\rightarrow k^\times$. We identify it with  
$\mathbb{Z}/(n+1)$ as follows: for any $i \in \mathbb{Z}/(n+1)$ 
let $\xi_i$ be the character which sends $\sigma$ to $\sigma^i$. 
The direct image $f_* \mathcal{O}_Z$ is a locally free sheaf of 
rank $(n+1)$. It carries a natural action of $\gpmu_{n+1}$ and 
decomposes with respect to it as
\begin{equation}
\label{eqn-the-decomposition-of-f_*-O_Z}
f_* \mathcal{O}_Z \simeq \mathcal{O}_X \oplus \L^{-1} \oplus
\L^{-2} \oplus \dots \oplus \L^{-n}
\end{equation}
for some $\L \in \picr(X)$ with an action of $\gpmu_{n+1}$ by $\xi_{-1}$.
We further have 
\begin{align}
f^*(\L^{-1}) &\simeq \mathcal{O}_Z(-E),
\\
\L^{-(n+1)}  &\simeq \mathcal{O}_X(-D),
\end{align}
and the structure of $\mathcal{O}_X$-algebra on $f_* \mathcal{O}_Z$ 
is given in terms of the decomposition
\eqref{eqn-the-decomposition-of-f_*-O_Z}
by the natural 
isomorphisms 
$\L^{-i} \otimes_{\mathcal{O}_X} \L^{-j} \xrightarrow{\sim} \L^{-(i+j)}$ 
and the inclusion 
$\mathcal{L}^{-(n+1)} \simeq \mathcal{O}_X(-D) \hookrightarrow \mathcal{O}_X$. 

Conversely, this whole setup can be recovered starting from 
$\L \in \picr(X)$ such that $\L^{-(n+1)} \simeq \mathcal{O}_X(-D)$ for
some divisor $D \subset X$. Define a sheaf of
algebras 
\begin{align}
\label{eqn-f_*-O_Z-direct-sum-algebra}
\mathcal{O}_X \oplus \L^{-1} \oplus \dots \oplus \L^{-n}
\end{align}
on $X$ as above and give it a $\gpmu_{n+1}$-equivariant structure where 
$\gpmu_{n+1}$ acts on each $\L^{-i}$ by $\xi_{i}$. Define $Z$ to be its 
relative $\spec$. Then \eqref{eqn-f_*-O_Z-direct-sum-algebra} is 
the sheaf of algebras $f_* \mathcal{O}_Z$ and its 
$\gpmu_{n+1}$-equivariant structure defines an action 
of $\gpmu_{n+1}$ on $Z$. In particular, the map $\sigma\colon Z \rightarrow Z$
corresponds to the endomorphism of $f_* \mathcal{O}_Z$
which is the multiplication by $\sigma^i$ on each $\L^{-i}$. 
This is a helpful point of view, 
as some objects on $Z$ and its fiber products with itself are
easier understood as sheaves on $X$ of modules over 
\eqref{eqn-f_*-O_Z-direct-sum-algebra} and its tensor powers.  

Since $f$ is a finite morphism, by
\cite[\S{III.6}]{Hartshorne-Residues-and-Duality} 
the relative dualizing complex $f^! \mathcal{O}_X $
is the object of $D(Z)$ which corresponds to the following 
object of $D(f_* \mathcal{O}_Z\text{-}\modd)$:
\begin{align*}
\shhomm_X\left( f_* \mathcal{O}_Z, \mathcal{O}_X \right) 
& \simeq 
\shhomm_X
\left( \mathcal{O}_X \oplus \L^{-1} \oplus \dots \oplus \L^{-n},
\mathcal{O}_X \right) 
\simeq 
\\
& \simeq 
\left(\mathcal{O}_X \oplus \L^{-1} \oplus \dots \oplus \L^{-n}\right) \otimes
\L^n
\simeq 
\\
& \simeq f_*(\mathcal{O}_Z) \otimes \L^{n} \simeq 
f_* f^* \mathcal{L}^{n} \simeq 
f_*(\mathcal{O}_Z(nE)). 
\end{align*}
We thus write $\omega_{Z/X}$ for the sheaf $\mathcal{O}_Z(nE)$ and we have
$$ f^!(\mathcal{O}_X) = \omega_{Z/X} = \mathcal{O}_Z(nE).$$

\subsubsection{The geometry of the fiber product $Z \times_X Z$}
\label{section-cyclic-covers-the-geometry-of-Z_times_X_Z}

We next look at the geometry of the fiber product $Z \times_X Z$. 
It is a reducible subvariety of $Z \times Z$ whose $n+1$ irreducible
components are the $\gpmu_{n+1}$-twisted diagonals 
$$ \Gamma_{\id}, \Gamma_{\sigma}, \dots, \Gamma_{\sigma^n}, $$
where for each $\lambda \in \gpmu_{n+1}$ the twisted diagional
$\Gamma_\lambda$ is the graph of the action of $\lambda$ on $Z$:
\begin{align}
\Gamma_\lambda \overset{\text{def}}{=}
\left\{
(z, \lambda z) \; \middle| \; 
z \in Z
\right\}
\subset Z \times_X Z.
\end{align}

Let $\lambda_0, \dots, \lambda_k \in \gpmu_{n+1}$. We have the natural 
restriction map 
\begin{equation}
\label{eqn-Z-times-X-projection-from-k-to-k-1-components}
\mathcal{O}_{\Gamma_{\lambda_0} \cup \dots \cup \Gamma_{\lambda_k}} 
\twoheadrightarrow
\mathcal{O}_{\Gamma_{\lambda_0} \cup \dots \cup \Gamma_{\lambda_{k-1}}} 
\end{equation}
which sends any function on 
$\Gamma_{\lambda_0} \cup \dots \cup \Gamma_{\lambda_k}$ to its
restriction to $\Gamma_{\lambda_0} \cup \dots \cup
\Gamma_{\lambda_{k-1}}$. Its kernel 
are the functions which vanish on 
$\Gamma_{\lambda_0}, \dots, \Gamma_{\lambda_{k-1}}$, 
and, in particular, vanish with the multiplicity of at least $k$ along $E$. 
One can verify this locally on $X$: if $X \simeq \spec R$ for some
$k$-algebra $R$ and the ideal of $D$ is $(w) \subset S$, then 
$Z \simeq \spec R[t]/(t^{n+1} - w)$ 
and $Z \times_X Z \simeq \spec R[t,t'](t^{n+1} - t'^{n+1}, t^{n+1} - w)$ with 
the twisted diagonals having the coprime ideals $(t' - \sigma^k t)$. It follows 
that restricting the kernel of 
\eqref{eqn-Z-times-X-projection-from-k-to-k-1-components}
to $\Gamma_{\lambda_k}$ is injective, and the image is the ideal subsheaf
$\mathcal{O}_{\Gamma_{\lambda_k}}(-kE) \subset 
\mathcal{O}_{\Gamma_{\lambda_k}}$. We thus have
an exact triangle
\begin{equation}
\label{eqn-exact-triag-projection-from-k-to-k-1-components}
\mathcal{O}_{\Gamma_{\lambda_k}}(-kE) 
\hookrightarrow 
\mathcal{O}_{\Gamma_{\lambda_0} \cup \dots \cup \Gamma_{\lambda_k}} 
\twoheadrightarrow
\mathcal{O}_{\Gamma_{\lambda_0} \cup \dots \cup \Gamma_{\lambda_{k-1}}}. 
\end{equation}
Similarly, we have an exact triangle 
\begin{equation}
\label{eqn-exact-triag-projection-from-k-to-1-components-with-twist}
\mathcal{O}_{\Gamma_{\lambda_0} \cup \dots \cup
\Gamma_{\lambda_{k-1}}}(0,(k-1)E)
\hookrightarrow 
\mathcal{O}_{\Gamma_{\lambda_0} \cup \dots \cup \Gamma_{\lambda_k}}(0,kE)
\twoheadrightarrow
\mathcal{O}_{\Gamma_{\lambda_k}}(kE) 
\end{equation}
which is Verdier dual to
\eqref{eqn-exact-triag-projection-from-k-to-k-1-components}
relative to the projection $Z \times_X Z \xrightarrow{\pi_2} Z$. Note
that since we aim to work with Fourier-Mukai kernels as described in 
the next section, we apply the convention where the Verdier duality on 
a fiber product $S_1 \times S_2$ relative to either of the projections 
$\pi_i\colon S_1 \times S_2 \rightarrow S_i$ takes values in 
the derived category $D(S_2 \times S_1)$. Thus the relative Verdier dual of 
$\mathcal{O}_{\Gamma_{\lambda_0} \cup \dots \cup \Gamma_{\lambda_k}}$
with respect to $\pi_2$ is 
$\mathcal{O}_{\Gamma_{\lambda_0} \cup \dots \cup
\Gamma_{\lambda_k}}(0,kE)$. 

By above we have a sequence of projections 
\begin{equation}
\label{eqn-projection-decomposition-of-O-ZxX}
\mathcal{O}_{\Delta \cup \Gamma_{\sigma} \dots \cup \Gamma_{\sigma^n}} 
\twoheadrightarrow
\mathcal{O}_{\Delta \cup \dots \cup \Gamma_{\sigma^{n-1}}}  
\twoheadrightarrow
\dots
\twoheadrightarrow
\mathcal{O}_{\Delta \cup \Gamma_{\sigma}}  
\twoheadrightarrow
\mathcal{O}_{\Delta}  
\twoheadrightarrow
0
\end{equation}
with kernels $\mathcal{O}_{\Gamma_{\sigma^n}}(-nE), \dots, 
\mathcal{O}_{\Gamma_{\sigma}}(-E), \mathcal{O}_\Delta$. Verdier dually
relative to $\pi_2$ we have a filtration 
\begin{equation}
\label{eqn-filtration-of-O-ZxX-(0,E)}
0
\hookrightarrow 
\mathcal{O}_{\Delta}
\hookrightarrow 
\mathcal{O}_{\Delta \cup \Gamma_{\sigma}}(0,E)  
\hookrightarrow 
\dots
\hookrightarrow 
\mathcal{O}_{\Delta \cup \dots \cup \Gamma_{\sigma^{n-1}}}(0,(n-1)E)
\hookrightarrow 
\mathcal{O}_{\Delta \cup \Gamma_{\sigma} \dots \cup
\Gamma_{\sigma^n}}(0, nE)
\end{equation}
with quotients $\mathcal{O}_{\Delta},
\mathcal{O}_{\Gamma_{\sigma}}(E), \dots,
\mathcal{O}_{\Gamma_{\sigma^n}}(nE)$.

\subsubsection{Enhancing the adjunction monads and comonads}
\label{section-cyclic-covers-dg-enhancing-adjunction-monads-and-comonads}

To describe the structure of a $\mathbb{P}^n$-functor on $f_*$
we first need to fix its DG-enhancement, i.e. a Fourier-Mukai kernel. 
Let $F_* \in D(Z \times X)$ and $F^*, F^! \in D(X \times Z)$ be 
the standard kernels of $f_*$, $f^*$, $f^!$
as per \S\ref{section-standard-fourier-mukai-kernels-and-the-key-lemma}. 
We have
\begin{align}
\begin{array}{c c l}
F_*  & \simeq & (\id_Z,f)_* \mathcal{O}_Z, \\
F^*  & \simeq & (\id_Z,f)_* \mathcal{O}_Z, \\ 
F^!  & \simeq & (\id_Z,f)_* \omega_{Z/X}. 
\end{array}
\end{align}

We next compute the adjunction monads and comonads. Recall that by
Proposition \ref{prps-2-categorical-adjunctions-for-standard-kernels}
we have 2-categorical adjunctions $(F^*, F_*)$ and $(F_*, F^!)$. Their 
units and counits are unique up to unique isomorphism, thus we can
speak of \em the \rm unit and \em the \rm counit of each adjunction. 
By Proposition \ref{prps-simplifying-units-and-counit-for-standard-kernels}
we have isomorphisms
\begin{align}
\begin{array}{c c l c l}
\label{eqn-cyclic-cover-direct-sum-decomposition-of-FL}
F^! F_* & \simeq &  \mathcal{O}_{Z \times_X Z}(0, nE) & \quad \quad \quad & 
\in D(Z \times Z), \\
F_* F^! & \simeq & \Delta_* f_* \omega_{Z/X} \simeq \Delta_* \left( \L^n \oplus
\L^{n-1} \oplus \dots \oplus \mathcal{O}_X \right) & \quad \quad \quad & \in D(X \times X), 
\\
F^*F_*  & \simeq &  \mathcal{O}_{Z \times_X Z} & \quad \quad \quad & \in D(Z \times Z), \\
F_* F^*  & \simeq &  \Delta_* f_* \mathcal{O}_Z \simeq  
\Delta_* \left( \mathcal{O}_X \oplus \L^{-1}
\oplus \dots \oplus \L^{-n} \right) & \quad \quad \quad & \in D(X \times X),
\end{array}
\end{align}
and these isomorphisms identify:
\begin{itemize}
\item The adjunction counit $F^*F_* \xrightarrow{\mu} \id_Z$ with the map
\begin{align}
\mathcal{O}_{Z \times_X Z} \rightarrow 
\mathcal{O}_\Delta 
\end{align} 
which is the unit of $(\Delta^*, \Delta_*)$ adjunction, i.e.
the restriction to the diagonal $\Delta \in Z \times Z$. 
\item 
The adjunction unit $\id_X \xrightarrow{\epsilon} F_*F^*$ with the map 
\begin{align}
\Delta_* \mathcal{O}_X  \rightarrow 
\Delta_* \left( \mathcal{O}_X \oplus \L^{-1} \oplus \dots \oplus \L^{-n} \right)
\end{align} 
which is $\Delta_*$ applied to the $(f_*, f^*)$ adjunction unit, 
i.e.~the inclusion of a direct summand. 
\item
The adjunction counit $F_* F^! \xrightarrow{\mu} \id_X$ with the map
\begin{align}
\Delta_* \left( \L^n \oplus
\L^{n-1} \oplus \dots \oplus \mathcal{O}_X \right)
\rightarrow \Delta_* \mathcal{O}_X,
\end{align} 
which is $\Delta_*$ applied to the $(f_*, f^!)$ adjunction counit, 
i.e.~the projection onto a direct summand. 
\item 
The adjunction unit $\id_Z \xrightarrow{\epsilon} F^!F_*$ with the map
\begin{align}
\mathcal{O}_\Delta 
\rightarrow
\mathcal{O}_{Z \times_X Z}(0,nE) 
\end{align}
which is the twist by $(0,nE)$ of the inclusion 
$\mathcal{O}_\Delta(-nE) \hookrightarrow \mathcal{O}_{Z \times_X Z}$
of the ideal sheaf of $\mathcal{O}_{\Gamma_{\id, \sigma} \cup \Gamma_{
\id, \sigma^2} \cup \dots \cup \Gamma_{\id, \sigma^n}}$. In other
words, it is the composition of the inclusion maps in the filtration 
\eqref{eqn-filtration-of-O-ZxX-(0,E)}. 
\end{itemize}

\subsubsection{The structure of a $\mathbb{P}^n$-functor on $f_*$}
\label{section-f_*-as-a-P^n-functor}

Let 
$$ h \overset{\text{def}}{=} \sigma_*(-) \otimes \mathcal{O}_Z(E).$$
It is an autoequivalence of $D(Z)$. Let $H \in D(Z \times Z)$ be 
its standard Fourier-Mukai kernel. We have 
$$ H \simeq \mathcal{O}_{\Gamma_{\sigma}}(E), $$
$$ H^k \simeq \mathcal{O}_{\Gamma_{\sigma^k}}(kE). $$
The filtration 
\eqref{eqn-filtration-of-O-ZxX-(0,E)}
\begin{equation*}
0
\hookrightarrow 
\mathcal{O}_{\Delta}
\hookrightarrow 
\mathcal{O}_{\Delta \cup \Gamma_{\sigma}}(0,E)  
\hookrightarrow 
\dots
\hookrightarrow 
\mathcal{O}_{\Delta \cup \dots \cup \Gamma_{\sigma^{n-1}}}(0,(n-1)E)
\hookrightarrow 
\mathcal{O}_{Z \times_X Z}(0,nE) \simeq F^!F_*
\end{equation*}
has the quotients 
$$\id \simeq \mathcal{O}_{\Delta} ,
H \simeq \mathcal{O}_{\Gamma_{\sigma}}(E), \dots,
H^n \simeq \mathcal{O}_{\Gamma_{\sigma^n}}(nE),$$ 
and thus gives $F^! F_*$ 
the structure of cyclic coextension of $\id$ by $H$ of degree $n$.

\begin{theorem}
\label{theorem-cyclic-cover-as-a-non-split-Pn-functor}
Let $Z$ and $X$ be algebraic varieties. 
Let $f\colon Z \rightarrow X$ be a cyclic cover of degree $n+1$ ramified
in an effective Cartier divisor $D \subset X$. 
Let $E = \frac{1}{n+1}f^{-1}(D)$ on $Z$
and let $\sigma \in \gpmu_{n+1}$ be a primitive
generator of the cyclic group $\gpmu_{n+1}$ whose action on $Z$ permutes
the branches of the cover. 

Let $h: D(Z) \rightarrow D(Z)$ be 
the autoequivalence $\sigma_*(-)\otimes_Z \mathcal{O}_Z(E)$. 
Let $F_*, F^*, F^!,$ and $H$ be the standard enhancements of 
$f_*, f^*, f^!,$ and $h$, as per 
\S\ref{section-standard-fourier-mukai-kernels-and-the-key-lemma}. 
 
Then the structure of a cyclic coextension of $\id$ by $H$ of degree $n$
on the adjunction monad $F^!F_*$ provided by the filtration 
\eqref{eqn-filtration-of-O-ZxX-(0,E)} makes $F_*$ into a $\mathbb{P}^n$-functor. 
\end{theorem}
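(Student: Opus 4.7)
The plan is to invoke Theorem~\ref{theorem-strong-monad-and-weak-adjoints-imply-pn-functor}, which reduces the verification of the $\mathbb{P}^n$-functor structure to the strong monad condition, the weak adjoints condition, and the $\ext^{-1}$-vanishing hypothesis \eqref{eqn-the-minus-one-ext-assumption}. The preliminary data is already in hand: taking $\gamma$ to be the identity under the identification $Q_n \simeq F^!F_* \simeq \mathcal{O}_{Z\times_X Z}(0,nE)$, with the filtration \eqref{eqn-filtration-of-O-ZxX-(0,E)} providing the cyclic coextension structure, the intertwining of $\iota\colon \id_Z \to Q_n$ with the adjunction unit $\action\colon \id_Z \to F^!F_*$ is precisely the content of the fourth bullet of \S\ref{section-cyclic-covers-dg-enhancing-adjunction-monads-and-comonads}. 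The condition $H(\krn F_*) = \krn F_*$ is automatic, since $f$ is affine and surjective, so $F_*$ is faithful and $\krn F_* = 0$.

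The $\ext^{-1}$-vanishing is immediate: both $\id_Z \simeq \mathcal{O}_\Delta$ and $H^i \simeq \mathcal{O}_{\Gamma_{\sigma^i}}(iE)$ are ordinary coherent sheaves on $Z \times Z$, so $\homm^{-1}_{D(Z\times Z)}(\id_Z, H^i) = 0$ for all $1 \leq i \leq n$. For the weak adjoints condition I would compute $F_* H^n F^*$ directly. Using $h^n(-) \simeq (\sigma^n)_*(-) \otimes \mathcal{O}_Z(nE)$, the equality $f \circ \sigma^n = f$ coming from the Galois action, and the projection formula, one obtains $f_* h^n f^*(-) \simeq (-) \otimes f_*\omega_{Z/X} \simeq f_* f^!(-)$, yielding an isomorphism of Fourier--Mukai kernels $F_* H^n F^* \simeq F_* F^!$.

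The main obstacle, and the bulk of the work, is the strong monad condition. My plan here is first to identify the monad multiplication $m\colon Q_n \star Q_n \to Q_n$ geometrically with the natural product on $\mathcal{O}_{Z\times_X Z}(0,nE)$ induced by the $\mathcal{O}_X$-algebra structure of $f_*\mathcal{O}_Z$, via the triple fiber product $Z \times_X Z \times_X Z$. This identification is obtained by tracking the adjunction counit $F_*F^! \to \id_X$, which by \S\ref{section-cyclic-covers-dg-enhancing-adjunction-monads-and-comonads} is $\Delta_*$ of the projection $\L^n \oplus \ldots \oplus \mathcal{O}_X \twoheadrightarrow \mathcal{O}_X$ onto the last summand, through the relevant base change isomorphisms provided by the Key Lemma (Lemma~\ref{lemma-pullbacks-and-pushforwards-of-Fourier-Mukai-kernels}). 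The key geometric input is then the group law for twisted diagonals: under Fourier--Mukai convolution, $\mathcal{O}_{\Gamma_{\sigma^a}} \star \mathcal{O}_{\Gamma_{\sigma^b}} \simeq \mathcal{O}_{\Gamma_{\sigma^{a+b}}}$ in $D(Z \times Z)$, which follows from the fact that the composition $(z, \sigma^a z)$ with $(\sigma^a z, \sigma^{a+b} z)$ produces $(z, \sigma^{a+b} z)$.

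With $m$ understood geometrically, the strong monad condition follows for $0 < j < n$ as follows. Since $Q_1$ is supported on $\Delta \cup \Gamma_\sigma$ and $Q_j$ on $\Delta \cup \Gamma_\sigma \cup \ldots \cup \Gamma_{\sigma^j}$, their convolution $Q_1 \star Q_j$ is supported on $\Delta \cup \Gamma_\sigma \cup \ldots \cup \Gamma_{\sigma^{j+1}}$ by the group law above, so the composition $Q_1 Q_j \to Q_n \star Q_n \to Q_n$ factors through the inclusion $Q_{j+1} \hookrightarrow Q_n$, yielding the required $m_{1j}\colon Q_1 Q_j \to Q_{j+1}$. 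The induced map on the top filtration quotients is the natural convolution isomorphism $\rho_{1j}\colon HH^j \simeq \mathcal{O}_{\Gamma_\sigma}(E) \star \mathcal{O}_{\Gamma_{\sigma^j}}(jE) \xrightarrow{\sim} \mathcal{O}_{\Gamma_{\sigma^{j+1}}}((j+1)E) \simeq H^{j+1}$, and the commutativity of \eqref{eqn-diagonal-isomorphisms-in-strong-monad-condition-square} reduces to a local compatibility of this isomorphism with the filtration projections $\mu_\bullet$, verifiable using the short exact sequence \eqref{eqn-exact-triag-projection-from-k-to-1-components-with-twist}. Granted all three conditions, Theorem~\ref{theorem-strong-monad-and-weak-adjoints-imply-pn-functor} then produces the desired $\mathbb{P}^n$-functor structure on $F_*$.
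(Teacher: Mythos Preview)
Your overall strategy---invoking Theorem~\ref{theorem-strong-monad-and-weak-adjoints-imply-pn-functor} after checking $\krn F_* = 0$, the $\ext^{-1}$-vanishing, the weak adjoints condition, and the strong monad condition---matches the paper exactly. The first three verifications are essentially as in the paper; the paper proves the slightly stronger $H^n F^* \simeq F^!$ directly via the Key Lemma rather than your $F_* H^n F^* \simeq F_* F^!$, but either suffices for the weak adjoints condition.

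The strong monad verification is where you diverge, and your support argument has a gap. You claim that since $Q_1 \star Q_j$ is supported on $S_{j+1} = \Delta \cup \Gamma_\sigma \cup \ldots \cup \Gamma_{\sigma^{j+1}}$, the composite $Q_1 Q_j \to Q_n$ factors through the subsheaf $Q_{j+1} \hookrightarrow Q_n$. But the quotient $Q_n/Q_{j+1}$ is supported on $\Gamma_{\sigma^{j+2}} \cup \ldots \cup \Gamma_{\sigma^n}$, which \emph{meets} $S_{j+1}$ along the diagonal of $E$; a bare support statement does not force the composite $Q_1Q_j \to Q_n/Q_{j+1}$ to vanish. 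What you actually need is that $Q_{j+1}$ coincides with the full annihilator of $\mathcal{I}_{S_{j+1}}$ inside the line bundle $Q_n$. This is true---locally, $P_{j+1}(t,t')$ is monic in $t'$, hence a non-zerodivisor in $S[t']$, so its annihilator in $S[t']/(P_n)$ is exactly $(P_n/P_{j+1})$---but it is an extra step you have not supplied, and your subsequent ``local compatibility'' for the square~\eqref{eqn-diagonal-isomorphisms-in-strong-monad-condition-square} is similarly only gestured at.

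The paper takes a different and cleaner route: it passes to the comonad $Q'_n = F^*F_* \simeq \mathcal{O}_{Z\times_X Z}$ with the cofiltration~\eqref{eqn-projection-decomposition-of-O-ZxX}, and proves the dual statement that the comultiplication $Q'_n \to Q'_nQ'_n \to Q'_kQ'_1$ factors through the \emph{quotient} $Q'_n \twoheadrightarrow Q'_{k+1}$. This factorisation is exhibited directly as the adjunction unit $\mathcal{O}_{S_{k+1}} \to \pi_{13*}\mathcal{O}_{S_1\times_Z S_k}$ arising from the scheme map $\pi_{13}\colon S_1\times_Z S_k \to S_{k+1}$, and the isomorphism on top factors comes from $\pi_{13}$ restricting to an isomorphism on the single component $\Gamma_\sigma \times_Z \Gamma_{\sigma^k}$ over $\Gamma_{\sigma^{k+1}}$. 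The strong monad condition for $F^!F_*$ is then obtained by applying relative Verdier duality. The advantage is that factoring through a quotient is witnessed directly by a morphism of schemes, whereas factoring through a subsheaf requires the annihilator identification your argument skips.
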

\begin{proof}
Since $f$ is affine, $\krn F_* = 0$. 
The condition of $H(\krn F_*) = \krn F_*$ is thus trivially fulfilled. 
Next, observe that the $\ext^{-1}$-vanishing condition 
\eqref{eqn-the-minus-one-ext-assumption} holds in our setup 
since for any $1 \leq i \leq n$
\begin{align*}
\homm^{-1}_{D(Z \times Z)}(\id_Z, H^i) = 
\homm^{-1}_{D(Z \times Z)}(\Delta_* \mathcal{O}_Z, \Delta_*
\mathcal{O}_{\Gamma_{\sigma^i}}(iE)) = 0 
\end{align*}
as there are no negative $\ext$s between sheaves. 
Thus, by Theorem 
\ref{theorem-strong-monad-and-weak-adjoints-imply-pn-functor}, 
to show $F_*$ to be a $\mathbb{P}^n$-functor 
it suffices to show that the strong monad condition and 
the weak adjoints condition hold. 

\begin{enumerate}
\item \em Strong monad condition: \rm We need to show that 
for any $0 \leq k \leq n-1$ the map
\begin{equation}
\label{eqn-Q_1Q_k-to-Q_n-map-for-cyclic-cover-direct-image} 
Q_1 Q_k \hookrightarrow  Q_n Q_n \xrightarrow{m} Q_n 
\end{equation}
filters through $Q_{k + 1} \hookrightarrow Q_n$ 
as some map 
\begin{equation}
\label{eqn-Q_1Q_k-to-Q_k+1-map-for-cyclic-cover-direct-image} 
m_k \colon Q_1 Q_k  \rightarrow Q_{k+1} 
\end{equation}
for which there exists an isomorphism 
$H H^{k} \xrightarrow{\sim} H^{k+1}$ which makes 
the following diagram commute:
\begin{equation}
\label{eqn-strong-monad-condition-for-cyclic-cover-direct-image}
\begin{tikzcd}
Q_1 Q_k 
\ar{d}{\mu_1 \mu_k}
\ar{r}{m_k}
&
Q_{k+1}
\ar{d}{\mu_{k+1}}
\\
H H^k 
\ar[dashed]{r}{\sim}
&
H^{k+1}.
\end{tikzcd}
\end{equation}

It is possible to establish this directly, using a local computation.
However, we prefer to give a more geometric proof. For this, we work 
with the left adjoint $F^*$, which is more geometric, and establish 
for it the following equivalent of the strong monad condition above. 

Let $Q'_n = F^* F_* \simeq \mathcal{O}_{Z \times_X Z}$ and 
$Q'_k$ be the terms in the cofiltration 
\eqref{eqn-projection-decomposition-of-O-ZxX}. Let $H' = 
\mathcal{O}_{\Gamma_{\sigma}}(-E)$, thus 
$\mu'_k\colon (H')^k \hookrightarrow Q'_k$ 
are the kernels of the maps in 
\eqref{eqn-projection-decomposition-of-O-ZxX}.
We claim that for $0 \leq k \leq n-1$ the map 
\begin{equation}
\label{eqn-Q'n-to-Q'kQ'1-map}
Q'_n \xrightarrow{m'} Q'_n Q'_n \twoheadrightarrow Q'_k Q'_1, 
\end{equation}
where $m'$ is the comonad comultiplication, filters  
through $Q'_n \twoheadrightarrow Q'_{k+1}$ 
as some map 
\begin{equation}
\label{eqn-Q'_k+1-to-Q'_1Q'_k-map-for-cyclic-cover-direct-image} 
m'_k \colon Q'_{k+1} \rightarrow  Q'_k Q'_1
\end{equation}
for which there exists an isomorphism 
$(H')^{k+1} \xrightarrow{\sim} (H')^{k} H'$ making 
the following diagram commute:
\begin{equation}
\label{eqn-strong-comonad-condition-for-cyclic-cover-direct-image}
\begin{tikzcd}
(H')^{k+1}
\ar[dashed]{r}{\sim}
\ar[hook]{d}{\mu'_{k+1}}
&
(H')^k H'.
\ar[hook]{d}{\mu'_1 \mu'_k}
\\
Q'_{k+1}
\ar{r}{m'_k}
&
Q'_k Q'_1. 
\end{tikzcd}
\end{equation}
The strong monad condition above is the 2-categorical right adjoint
of this statement. In other words, it follows by applying the relative 
Verdier duality functor $\rder\shhomm_{Z \times Z}(-,\pi_2^! \mathcal{O}_Z)$.
The map $m_k$ and the diagram 
\eqref{eqn-strong-monad-condition-for-cyclic-cover-direct-image}
are obtained as the images of the map $m'_k$ and the diagram 
\eqref{eqn-strong-comonad-condition-for-cyclic-cover-direct-image}. 

Let $S_k$ be the subscheme 
$\Delta \cup \Gamma_\sigma \cup \dots \cup \Gamma_{\sigma^k} 
\subset Z \times Z$, thus 
$$ Q'_k  = \mathcal{O}_{S_k} \in D(Z \times Z). $$
The subschemes $S_k$ are flat over both 
projections to $Z$, and therefore 
$$Q'_i Q'_j \simeq \pi_{13 *} \mathcal{O}_{S_j \times_Z S_i} \in D(Z \times Z).$$
We have the commutative diagram of scheme maps
\begin{equation}
\begin{tikzcd}
S_1 \times_Z S_k 
\ar[hook]{r} 
\ar{d}{\pi_{13}}
&
S_n \times_Z S_n 
\ar[hook]{r} 
\ar{d}{\pi_{13}}
& 
Z \times Z \times Z
\ar{d}{\pi_{13}}
\\
S_{k+1} 
\ar[hook]{r} 
&
S_n 
\ar[hook]{r} 
&
Z \times Z.
\end{tikzcd}
\end{equation} 
It induces the commutative square of objects in $D(Z \times Z)$
\begin{equation}
\label{eqn-cyclic-covers-commutative-square-filtering-comonad-comult}
\begin{tikzcd}
Q'_k Q'_1 
&
Q'_n Q'_n 
\ar[two heads]{l} 
\\
Q'_{k+1} 
\ar{u}{\id \rightarrow \pi_{13 *} \pi_{13}^*}
&
Q'_n 
\ar{u}[']{\id \rightarrow \pi_{13 *} \pi_{13}^*}
\ar[two heads]{l}.
\end{tikzcd}
\end{equation}
It follows from the description of the adjunction unit 
$\id_Z \rightarrow F^* F_*$ in 
\S\ref{section-cyclic-covers-dg-enhancing-adjunction-monads-and-comonads}
that the left vertical arrow in  
\eqref{eqn-cyclic-covers-commutative-square-filtering-comonad-comult}
is the comonad comultiplication. We thus have shown that
the map \eqref{eqn-Q'n-to-Q'kQ'1-map} does filter through $Q'_{k+1}$
and we define $m'_k$ to be the right vertical arrow in 
\eqref{eqn-cyclic-covers-commutative-square-filtering-comonad-comult}, 
i.e. the adjunction unit 
\begin{equation}
\label{eqn-cyclic-covers-the-map-m'k-explicitly}
\mathcal{O}_{S_{k+1}} \rightarrow \pi_{13 *} \mathcal{O}_{S_1
\times_Z S_k}. 
\end{equation}

Finally, the subsheaf 
$(H')^{k+1} \hookrightarrow Q'_{k + 1}$ is the ideal sheaf of the subscheme 
$$ \overline{S_{k+1} \setminus \Gamma_{\sigma^{k+1}}} \subset S_{k+1}, $$
which is the union of all irreducible components 
complementing $\Gamma_{\sigma^{k+1}}$ in $S_{k+1}$. 
Similarly, the subsheaf $(H')^k H' \hookrightarrow Q'_k Q'_1$ is
the image under $\pi_{13 *}$ of the ideal sheaf of
$$ \overline{S_{1} \times_Z S_{k} \setminus \Gamma_{\sigma} \times_Z
\Gamma_{\sigma^k}} \subset S_{k+1}. $$ 
Since $\pi_{13}: S_{1} \times_Z S_k \rightarrow S_{k+1}$
maps the component $\Gamma_{\sigma} \times_Z \Gamma_{\sigma^k}$
isomorphically onto the component $\Gamma_{\sigma^{k+1}}$, 
the adjunction unit \eqref{eqn-cyclic-covers-the-map-m'k-explicitly}
restricts to an isomorphism on the ideal sheaves of their
complementing components. That is, the map $m'_k$ restricts to 
an isomorphism $(H')^{k+1} \xrightarrow{\sim} (H')^k H'$, as required. 

\item \em Weak adjoints condition: \rm Straightforward. 
We have $F^* \simeq (\id_Z,f)_* \mathcal{O}_Z$, 
$F^! \simeq (\id_Z,f)_* \mathcal{O}_Z(nE)$ and 
$H^n \simeq \mathcal{O}_{\Gamma_{\sigma^n}}(nE)
\simeq (\id_Z, \sigma^n)_* \mathcal{O}_Z(nE) \simeq (\sigma, \id_Z)_*
\mathcal{O}_Z(nE)$. Thus, by the Key Lemma  
(Lemma \ref{lemma-pullbacks-and-pushforwards-of-Fourier-Mukai-kernels})
$$ H^n F^* \simeq (f \times \id_Z)_* (\sigma,\id_Z)_* \mathcal{O}_Z(nE)
\simeq F^!. $$
\end{enumerate}
\end{proof}

\subsubsection{The structure of a split $\mathbb{P}^n$-functor on $f^*$
and on $f^!$}
\label{section-f^*-and-f^!-as-split-P^n-functor}

Let $J$ be the standard kernel of the autoequivalence 
$(-) \otimes \mathcal{L}^{-1}$ of $D(X)$, as per 
\S\ref{section-standard-fourier-mukai-kernels-and-the-key-lemma}. 
Then we have
$$ J \simeq \Delta_* \mathcal{L}^{-1} \in D(X \times X), $$
$$ J^k \simeq \Delta_* \mathcal{L}^{-k} \in D(X\times X) . $$
The direct sum decomposition 
\eqref{eqn-cyclic-cover-direct-sum-decomposition-of-FL}
$$ F_*F^* \quad \simeq \quad \Delta_* f_* \mathcal{O}_Z \simeq  
\Delta_* \left( \mathcal{O}_X \oplus \L^{-1}
\oplus \dots \oplus \L^{-n} \right) \in D(X \times X) $$
gives the adjunction monad $F_* F^*$ a structure of a (trivial) 
cyclic coextension of $\id$ by $J$ of degree $n$.

On the other hand, the right adjoint $f^!$ of $f_*$ has itself a
right adjoint $f^{!!} = f_*(\mathcal{O}_Z(nE) \otimes - )$.
Let $F^{!!} \in D(Z \times X)$ be the standard kernel of $f^{!!}$
as per \S\ref{section-standard-fourier-mukai-kernels-and-the-key-lemma}.
We have
$$ F^{!!} \simeq (\id_Z,f)_* \mathcal{O}_Z(nE) \in D(Z \times X). $$
The same computation as in 
\S\ref{section-cyclic-covers-dg-enhancing-adjunction-monads-and-comonads}
shows that we have 
\begin{equation}
\label{eqn-cyclic-cover-direct-sum-decomposition-of-RR'}
F^{!!}F^! \simeq \Delta_* f_* \mathcal{O}_Z(nE) \simeq
\Delta_*
\left( \L^{n} \oplus \dots \oplus \L \oplus \mathcal{O}_X \right) 
\quad \in D(X \times X) 
\end{equation}
and the adjunction counit is again the projection onto the direct summand 
$\Delta_* \mathcal{O}_X$. This gives the adjunction monad $F^{!!}F^!$ 
as well the structure of a (trivial) cyclic coextension of $\id$ by
$J^{-1}$ of degree $n$.

\begin{prps}
\label{prps-cyclic-cover-as-a-split-Pn-functor}
Under the same assumptions as those of Theorem 
\ref{theorem-cyclic-cover-as-a-non-split-Pn-functor} the direct sum
decompositions \eqref{eqn-cyclic-cover-direct-sum-decomposition-of-FL}
and \eqref{eqn-cyclic-cover-direct-sum-decomposition-of-RR'} of 
the adjunction monads $F_* F^*$ and $F^{!!}F^!$ 
give $F^*$ and $F^!$ 
the structure of (split) $\mathbb{P}^n$-functors. 
\end{prps}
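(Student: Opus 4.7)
The plan is to apply Theorem \ref{theorem-strong-monad-and-weak-adjoints-imply-pn-functor} to both $F^*$ and $F^!$. Since the adjunction monads decompose as direct sums of line bundles on the diagonal, the cyclic coextension structures are split, and we land in the setting of \S\ref{section-strong-monad-condition-split-case}. The hypotheses $H(\krn F) = \krn F$ hold trivially because $\krn F^* = \krn F^! = 0$: the identity is a direct summand of each adjunction monad, making both functors faithful. The $\ext^{-1}$-vanishing condition \eqref{eqn-the-minus-one-ext-assumption} is immediate because each $H^i$ is of the form $\Delta_*\mathcal{L}^{\pm i}$, a sheaf on $X \times X$ supported on the diagonal.

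For the strong monad condition on $F^*$, I use the Key Lemma (Lemma \ref{lemma-pullbacks-and-pushforwards-of-Fourier-Mukai-kernels}) together with Proposition \ref{prps-simplifying-units-and-counit-for-standard-kernels} to identify the monad multiplication on $F_*F^* \simeq \Delta_*f_*\mathcal{O}_Z$ with $\Delta_*$ applied to the $\mathcal{O}_X$-algebra multiplication on $f_*\mathcal{O}_Z \simeq \bigoplus_{i=0}^n \mathcal{L}^{-i}$. Under this identification, the multiplication $\mathcal{L}^{-i} \otimes \mathcal{L}^{-j} \to f_*\mathcal{O}_Z$ is the canonical isomorphism onto $\mathcal{L}^{-(i+j)}$ when $i+j \leq n$, and factors through the inclusion $\mathcal{L}^{-(n+1)} \simeq \mathcal{O}_X(-D) \hookrightarrow \mathcal{O}_X$ when $i+j \geq n+1$. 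The resulting matrix of left multiplication by $J$ on the monad is a shift matrix with identity maps on the subdiagonal and zeros strictly above it, verifying the strong monad condition (in fact, the stronger monad condition of Corollary \ref{cor-the-strongest-monad-condition}). An analogous argument, using the compatible algebra--module structure on $f_*\mathcal{O}_Z(nE)$ over $f_*\mathcal{O}_Z$, handles $F^!$.

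The weak adjoints condition for $F^*$ is a direct Key Lemma calculation. The left adjoint $F_!$ of $F^*$ has standard kernel $(\id_Z, f)_*\mathcal{O}_Z(nE)$, and by the Key Lemma together with the projection formula
\[
H^n \star F_! \;\simeq\; \Delta_*\mathcal{L}^{-n} \star (\id_Z, f)_*\mathcal{O}_Z(nE) \;\simeq\; (\id_Z, f)_*\bigl(\mathcal{O}_Z(nE) \otimes f^*\mathcal{L}^{-n}\bigr) \;\simeq\; (\id_Z, f)_*\mathcal{O}_Z \;\simeq\; F_*,
\]
which gives $F^* H^n F_! \simeq F^* F_*$ as required. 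The same type of computation verifies the weak adjoints condition for $F^!$, where the analogous cancellation $\mathcal{O}_Z(nE) \otimes \mathcal{O}_Z(-nE) \simeq \mathcal{O}_Z$ (or equivalently, the fact that $F^!$ differs from $F^*$ by post-composition with the autoequivalence $(-)\otimes\mathcal{O}_Z(nE)$ on $D(Z)$) ensures the required isomorphism.

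With the strong monad, weak adjoints, and $\ext^{-1}$-vanishing conditions all in hand, Theorem \ref{theorem-strong-monad-and-weak-adjoints-imply-pn-functor} endows both $F^*$ and $F^!$ with the structures of $\mathbb{P}^n$-functors; these are split by construction. The main obstacle is the careful Fourier-Mukai bookkeeping needed to identify the abstract monad multiplication on $F_*F^*$ with the concrete $\mathcal{O}_X$-algebra multiplication on $f_*\mathcal{O}_Z$ via Proposition \ref{prps-simplifying-units-and-counit-for-standard-kernels}; once this identification is in place, the strong monad condition follows by inspection of the algebra multiplication, and the weak adjoints condition by the projection formula.
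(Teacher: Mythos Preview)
Your proposal is correct and follows essentially the same approach as the paper: both reduce to Theorem~\ref{theorem-strong-monad-and-weak-adjoints-imply-pn-functor} by verifying $\krn F^* = 0$, the $\ext^{-1}$-vanishing, the strong monad condition via identification of the monad multiplication with the $\mathcal{O}_X$-algebra multiplication on $f_*\mathcal{O}_Z$, and the weak adjoints condition via the Key Lemma computation $J^n F_! \simeq F_*$. The paper carries out the monad-multiplication identification slightly more explicitly by computing $F_*F^*F_*F^* \simeq \Delta_*\phi_*\mathcal{O}_{Z\times_X Z}$ (with $\phi\colon Z\times_X Z \to X$) and tracing the counit through as restriction to the diagonal, but this is exactly the Fourier--Mukai bookkeeping you flag as the main obstacle.
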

\begin{proof}
We only treat the case of $F^*$, as the case of $F^{!}$ is similar. 
Since $F_* F^*$ has $\id_X$ as a direct summand, we have $\krn F^* = 0$
and the condition $J(\krn F^*) = \krn F^*$ is trivially fulfilled. 
As before, the $\ext^{-1}$-vanishing condition 
\eqref{eqn-the-minus-one-ext-assumption} holds in our setup 
since there are no negative $\ext$s between sheaves. 
Thus it suffices, again, to demonstrate that the strong monad condition 
and the weak adjoints condition hold.
\begin{enumerate}
\item \em Strong monad condition: \rm 
We have 
$$ F_* F^* \simeq \id \oplus J \oplus J^2 \oplus \dots \oplus J^n $$
and in the split case we need to establish that for any 
$0 \leq k \leq n-1$ the map 
$$ J J^k \hookrightarrow F_* F^* F_* F^* \xrightarrow{m} F_* F^* $$
filters through the inclusion 
$$ \id \oplus J \oplus \dots \oplus J^{k+1}
\hookrightarrow 
\id \oplus J \oplus \dots \oplus J^{n} \simeq F_* F^*
$$
with the component $J J^k \rightarrow J^{k+1}$ being an isomorphism.
We claim that, in fact, $J J^k \hookrightarrow F_* F^*$ filters through 
$J^{k+1} \hookrightarrow F_* F^*$ as the isomorphism $J J^k \simeq J^{k+1}$.  

By the Key Lemma  
(Lemma \ref{lemma-pullbacks-and-pushforwards-of-Fourier-Mukai-kernels})
we have 
$$ F_* F^* F_* F^* \simeq (f,f)_* \mathcal{O}_{Z \times_X Z} \simeq
\Delta_* \phi_* \mathcal{O}_{Z \times_X Z} $$
where $\phi$ is the map $Z \times_X Z \rightarrow X$ of projection 
to $X$ and thus $(f,f) = \Delta \circ \phi$. 
Since 
$$Z \times_X Z \simeq
\relspec_{\mathcal{O}_X} f_* \mathcal{O}_Z \otimes f_* \mathcal{O}_Z $$
we have
$$ \phi_* \mathcal{O}_{Z \times_X Z} \simeq  
f_* \mathcal{O}_Z \otimes f_* \mathcal{O}_Z 
\simeq (\mathcal{O}_X \oplus \dots \oplus \L^{-n}) \otimes 
(\mathcal{O}_X \oplus \dots \oplus \L^{-n}) $$
and the map $J J^k \hookrightarrow F_* F^* F_* F^*$ corresponds
to the image under $\Delta_*$ of the map 
$$ \L \otimes \L^{-k} \rightarrow 
(\mathcal{O}_X \oplus \dots \oplus \L^{-n}) \otimes 
(\mathcal{O}_X \oplus \dots \oplus \L^{-n}) $$
given by the tensor product of the direct summand inclusions. 

On the other hand, the monad multiplication 
$m$
is the image under $(f,f)_*$ of the restriction map
\begin{equation}
\label{eqn-restriction-from-O_Z-times_X-Z-to-O_Delta}
\mathcal{O}_{Z \times_X Z} \twoheadrightarrow \mathcal{O}_\Delta. 
\end{equation}
The image of this restriction under $\phi$ is the map  
$$  f_* \mathcal{O}_Z \otimes f_* \mathcal{O}_Z 
\rightarrow f_* \mathcal{O}_Z $$
given by the $\mathcal{O}_X$-algebra multiplication on 
$ f_* \mathcal{O}_Z = \mathcal{O}_Z \oplus \dots \oplus \L^{-n}$.
We conclude that the monad multiplication map corresponds
to the image under $\Delta_*$ of the map  
$$ (\mathcal{O}_X \oplus \dots \oplus \L^{-n}) \otimes 
(\mathcal{O}_X \oplus \dots \oplus \L^{-n}) 
\rightarrow 
(\mathcal{O}_X \oplus \dots \oplus \L^{-n}) $$
given by the maps 
$$ \L^{-i} \otimes \L^{-j} \xrightarrow{\sim} \L^{-(i+j)}
\hookrightarrow \L^{-(i + j \mod n+1)} $$
where the last inclusion is induced by the inclusion $\L^{-(n+1)}
\simeq \mathcal{O}_X(-D) \hookrightarrow \mathcal{O}_X. $

Since $k \leq n-1$, the above means that the map 
$$ J J^k \rightarrow F_* F^* $$
is given by the image under $\Delta_*$ of the map 
$$ \L^{-1} \otimes \L^{-k} \xrightarrow{\sim} \L^{-(k+1)} 
\hookrightarrow \mathcal{O}_X \oplus \L^{-1} \dots \oplus \L^{-n}, $$
whence the claim. 

\item \em Weak adjoints condition: \rm Let $F_!$ be
the standard kernel of the left adjoint $f_!$ of $f^*$. 
We have
$$ F_! \simeq (\id_Z,f)_* \mathcal{O}_Z(nE) \in D(Z \times X). $$ 
Since $J$ is the standard kernel of $(-) \otimes \mathcal{L}^{-1}$, 
we have by the Key Lemma  
(Lemma \ref{lemma-pullbacks-and-pushforwards-of-Fourier-Mukai-kernels})
\begin{align*}
J^n F_! \simeq 
(\id_Z,f)_* \mathcal{O}_Z(nE) \otimes \pi_2^* \L^{-n}
\simeq 
(\id_Z,f)_* \left( \mathcal{O}_Z(nE) \otimes f^* \L^{-n} \right)
\simeq (\id_Z,f)_* \mathcal{O}_Z = F_*. 
\end{align*}
\end{enumerate}
\end{proof}

\subsubsection{The $\mathbb{P}$-twists for $f_*$, $f^*$ and $f^!$}
\label{section-pn-twists-for-f_*-f^*-f^!}

In this section we compute the $\mathbb{P}$-twists of $f_*$, $f^*$, 
$f^!$ with the $\mathbb{P}^n$-functor structures defined in \S
\ref{section-f_*-as-a-P^n-functor}-\ref{section-f^*-and-f^!-as-split-P^n-functor}:

\begin{prps}
Under the assumptions of Theorem  
\ref{theorem-cyclic-cover-as-a-non-split-Pn-functor} and
Prop.~\ref{prps-cyclic-cover-as-a-split-Pn-functor} 
the $\mathbb{P}$-twists $P_{F_*}$, $P_{F^*}$ and $P_{F^!}$ 
of the $\mathbb{P}^n$-functors $F_*$, $F^*$, and $F^!$ 
are:
\begin{align}
P_{F_*} & \simeq J^{-(n+1)}[2], \\
P_{F^*} & \simeq H^{-(n+1)}[2],  \\
P_{F^!} & \simeq H^{n+1}[2]. 
\end{align}
\end{prps}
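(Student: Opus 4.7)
The plan is to derive all three isomorphisms from Proposition \ref{prps-PF-is-isomorphic-to-FH^n+1}, which gives $P_F F \simeq FH^{n+1}[2]$ for any $\mathbb{P}^n$-functor $F$. For each of $F_*$, $F^*$, $F^!$ one first identifies $FH^{n+1}$ as $G\circ F$ for the claimed autoequivalence $G$ of the target, then descends the resulting isomorphism on the image of $F$ to an isomorphism of Fourier--Mukai kernels on the whole target.

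The identifications are routine projection-formula computations. For $F_*$, the relation $\sigma^{n+1}=\id_Z$ combined with $\mathcal{O}_Z((n+1)E)\simeq f^*\mathcal{O}_X(D)$ yields $H^{n+1}\simeq \Delta_*f^*\mathcal{O}_X(D)$, and by the projection formula $F_*H^{n+1}\simeq J^{-(n+1)}F_*$, where $J^{-(n+1)}\simeq \Delta_*\mathcal{O}_X(D)$ follows from $\mathcal{L}^{n+1}\simeq\mathcal{O}_X(D)$. For $F^*$ (resp.~$F^!$) with $H=J$ (resp.~$H=J^{-1}$), analogous computations give $F^*J^{n+1}\simeq H^{-(n+1)}F^*$ (resp.~$F^!J^{-(n+1)}\simeq H^{n+1}F^!$). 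Thus $P_{F_*}F_*\simeq J^{-(n+1)}[2]F_*$, $P_{F^*}F^*\simeq H^{-(n+1)}[2]F^*$, and $P_{F^!}F^!\simeq H^{n+1}[2]F^!$, each as an isomorphism in the appropriate product derived category.

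For the descent to isomorphisms of kernels, the splitting $F_*F^*\simeq \id_X\oplus J\oplus\cdots\oplus J^n$ of \eqref{eqn-cyclic-cover-direct-sum-decomposition-of-FL} handles $P_{F_*}$: composing the iso with $F^*$ on the right yields an iso in $D(X\times X)$ between two direct sums decomposing according to this splitting, and naturality of the original iso together with the fact that the adjunction unit $\id_X\hookrightarrow F_*F^*$ is a split inclusion of the first summand allows extraction of the $\id_X$-summand on each side, giving $P_{F_*}\simeq J^{-(n+1)}[2]$. The dual decomposition \eqref{eqn-cyclic-cover-direct-sum-decomposition-of-RR'} of $F^{!!}F^!$ handles $P_{F^!}$ analogously.

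The principal obstacle is the descent for $P_{F^*}$, since the comonad $F^*F_*\simeq \mathcal{O}_{Z\times_X Z}$ is only filtered with graded pieces $\id_Z, H', \ldots, (H')^n$ and does not split off $\id_Z$. My plan here is to fall back on a direct computation of the defining convolution $F^*JF_*\xrightarrow{\psi}F^*F_*\to \id_Z$, identifying $F^*JF_*\simeq \mathcal{O}_{Z\times_X Z}(-E,0)$ via the Key Lemma and unravelling $\psi$ as a natural morphism between these two sheaves on $Z\times_X Z$; using the filtration machinery of \S\ref{section-cyclic-extensions} and the Excess Bundle Formula to identify $\psi$ graded-piece by graded-piece as an upper-triangular map with isomorphisms on the main diagonal, the double cone collapses to $\Delta_*f^*\mathcal{O}_X(-D)[2]\simeq H^{-(n+1)}[2]$.
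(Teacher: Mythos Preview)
Your descent argument for $P_{F_*}$ and $P_{F^!}$ has a genuine gap. From Proposition~\ref{prps-PF-is-isomorphic-to-FH^n+1} you obtain an isomorphism $P_{F_*}F_* \simeq J^{-(n+1)}[2]\,F_*$ of objects in $D(Z\times X)$, and hence $P_{F_*}F_*F^* \simeq J^{-(n+1)}[2]\,F_*F^*$ in $D(X\times X)$. But this isomorphism is produced abstractly via the monad condition and carries no naturality in the second variable; there is no reason it should be block-diagonal with respect to the splitting $F_*F^*\simeq \id_X\oplus J\oplus\cdots\oplus J^n$, nor that it intertwine the two copies of the adjunction unit. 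All you can conclude is that $P_{F_*}$ is a direct summand of $\Delta_*(\L^{n+1}\oplus\cdots\oplus\L)[2]$, which does not pin it down as $\Delta_*\L^{n+1}[2]$. The same issue arises verbatim for $P_{F^!}$.

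The paper avoids descent entirely and computes the defining two-step complex directly. For $P_{F_*}$ everything pushes forward to $\Delta(X)$, reducing to the cone of a map
\[
\L^{n+1}\oplus\cdots\oplus\L \longrightarrow \L^{n}\oplus\cdots\oplus\L
\]
in $D(X)$. The crucial point is that $\homm_X(\L^i,\L^j)=0$ for $i>j$ forces this map and any semi-inverse to be one-sided in the $\L$-grading; combined with the existence of a left semi-inverse (which follows from the monad condition after applying $F_*$ and then cancelling it via the argument of Proposition~\ref{prps-R-HnL-canonical-map}), the diagonal entries must be isomorphisms and the cone is $\L^{n+1}[1]$. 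For $P_{F^*}$ your fallback plan is close in spirit, but the paper's argument is more concrete and does not use the excess bundle formula or an upper-triangular filtration argument: because the monad $F_*F^*$ is split, the map $\psi$ is computed directly as the difference $l_1 - r_1$ of the two natural inclusions $p^*\L^{-1}\hookrightarrow \mathcal{O}_{Z\times_X Z}$ coming from the identifications $p^*\L^{-1}\simeq\mathcal{O}_{Z\times_X Z}(-E,0)\simeq\mathcal{O}_{Z\times_X Z}(0,-E)$, and one reads off the kernel and cokernel by hand.
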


\begin{proof}

By definition, the $\mathbb{P}$-twist $P_{F_*}$ is the unique convolution of 
the following two step complex in $D(X \times X)$:
$$ F_*HF^! \xrightarrow{\psi} F_* F^! \xrightarrow{\mu} \id_X. $$
where $\psi$ is the map defined in Lemma 
\ref{lemma-psi-is-independent-of-the-choice-of-phi}. 
We have computed in \S\ref{section-cyclic-covers-dg-enhancing-adjunction-monads-and-comonads}:
\begin{align*}
\id &\simeq \Delta_* \mathcal{O}_X \\ 
 F_* F^! & \simeq \Delta_* \left(\L^{n} \oplus \dots \oplus \L \oplus
\mathcal{O}_X \right)
\end{align*}
with $\mu: F_* F^! \rightarrow \id$ being the projection onto the direct
summand. Computing similarly, we obtain 
\begin{align*}
F_*HF^! & \simeq \Delta_* \left( \L^{n+1} \oplus \dots \oplus \L \right). 
\end{align*} 
The map $\psi$ is then identified with the image under $\Delta_*$ of 
a certain map  
\begin{align}
\label{eqn-cyclic-cover-psi-in-terms-of-line-bundles}
\L^{n+1} \oplus \dots \oplus \L 
\rightarrow 
\L^{n} \oplus \dots \oplus \L \oplus \mathcal{O}_X. 
\end{align}
One could compute this directly by working, since everything is
affine over $X$, with sheaves of algebras on $X$ and sheaves of modules 
over them. One would obtain that $\psi$ is linear combination of canonical
inclusions $\L^i \hookrightarrow \L^j$ for every $i \leq j$ whose  
coefficients are various powers of $\sigma$. However it is an involved
computation, and there is the following simpler argument.  

Taking the cone of $\mu\colon F_* F^! \rightarrow \id$ first, 
$P_{F_*}$ is isomorphic to the image under $\Delta_*[1]$ of the cone of a map 
\begin{equation}
\label{eqn-cyclic-cover-psi-in-terms-of-line-bundles-iso-part}
\L^{n+1} \oplus \L^{n} \oplus \dots \oplus \L 
\rightarrow 
\L^{n} \oplus \dots \oplus \L .
\end{equation}
By the monad condition 
$\eqref{eqn-cyclic-cover-psi-in-terms-of-line-bundles-iso-part}F_*$
has a left semi-inverse. So does the natural projection from the LHS
to the RHS of
\eqref{eqn-cyclic-cover-psi-in-terms-of-line-bundles-iso-part}. 
Arguing as in Prop.~\ref{prps-R-HnL-canonical-map} we then see that \eqref{eqn-cyclic-cover-psi-in-terms-of-line-bundles-iso-part} must also have a left semi-inverse. 

On the other hand, $\homm_{X}(\L^i, \L^j) = 0$ for $i > j$. In other 
words, both  
\eqref{eqn-cyclic-cover-psi-in-terms-of-line-bundles-iso-part}
and its left semi-inverse are one-sided with respect to the grading
by the powers of $\L$. It follows immediately that the diagonal 
components $\L^i \rightarrow \L^i$ of
\eqref{eqn-cyclic-cover-psi-in-terms-of-line-bundles-iso-part}
are isomorphisms, and thus the cone of 
\eqref{eqn-cyclic-cover-psi-in-terms-of-line-bundles-iso-part}
is $\L^{n+1}[1]$. We conclude that
$$ P_{F_*} \simeq \Delta_* \L^{n+1}[2] \simeq J^{-(n+1)}[2]. $$

Similarly, the $\mathbb{P}$-twist $P_{F^*}$ of $F^*$ is the unique convolution 
of the following complex in $D(Z \times_X Z)$:
$$ F^*JF_* \xrightarrow{\psi} F^* F_* \xrightarrow{\mu} \id_Z. $$
As per \S\ref{section-cyclic-covers-dg-enhancing-adjunction-monads-and-comonads}
we have $F^* F_* \simeq \mathcal{O}_{Z \times_X Z}$ and therefore
$$ F^*JF_* \simeq p^* \L^{-1} $$
where $p$ is the projection $Z \times_X Z \rightarrow X$.
The counit $\mu$ is the natural restriction 
$\mathcal{O}_{Z \times_X Z} \twoheadrightarrow \Delta_* \mathcal{O}_Z$ 
of sheaves. 
The map $\psi$ is also easy to describe, due to the monad
$F_* F^*$ being split. By definition, $\psi$ is the composition 
$$ F^*JF_* \rightarrow F^* F_* F^* F_* 
\xrightarrow{F^* F_* \mu - \mu F^* F_* } F^* F_* $$
where the first map comes, as described in \S\ref{section-ptwists}, 
from $F^* \xrightarrow{F^*\epsilon} F^* F_* F^*$ being split and $F^*J$ 
having a canonical map into its cone. As per the proof of Proposition 
\ref{prps-cyclic-cover-as-a-split-Pn-functor},
we have 
$$ 
F^* F_* F^* F_* \simeq 
p^* \left(\mathcal{O}_X \oplus \L^{-1} \oplus \dots \oplus \L^{-n} \right), $$
and $F^*JF_* \rightarrow F^*F_*F^*F_*$ corresponds to 
the direct summand inclusion. 

On the other hand, the maps $F^*F_*\mu$ and $\mu F^*F_*$ correspond
to the maps $\sum_{i=0}^n l_i$ and $\sum_{i=0}^n r_i$, respectively,
\begin{align*}
l_i\colon  p^* \L^{-i} \simeq \mathcal{O}_{Z \times_X Z}(-iE,0)
\hookrightarrow \mathcal{O}_{Z \times_X Z}, \\
r_i\colon  p^* \L^{-i} \simeq \mathcal{O}_{Z \times_X Z}(0, -iE)
\hookrightarrow \mathcal{O}_{Z \times_X Z}. 
\end{align*}
We conclude that $\psi$ can be identified with the map 
$$ p^* \L^{-1} \xrightarrow{l_1 - r_1} \mathcal{O}_{Z \times_X Z}. $$
Its kernel and co-kernel are readily seen to be the maps
\begin{align*}
\mathcal{O}_{\Delta}(-nE) \otimes p^* \mathcal{L}^{-1}
& \hookrightarrow  p^* \mathcal{L}^{-1},  \\
\mathcal{O}_{Z \times_X Z} \twoheadrightarrow \mathcal{O}_{\Delta},
\end{align*}
which are the inclusion of the sections which vanish outside the
diagonal and the restriction to the diagonal, respectively. Thus,
with the identifications as above, the counit $\mu$
coincides with the cokernel of $\psi$. Hence 
$$ P_{F^*} \simeq \mathcal{O}_{\Delta}(-nE) \otimes p^* \mathcal{L}^{-1}
[2] \simeq \mathcal{O}_{\Delta}(-(n+1)E)[2] \simeq H^{-(n+1)}[2].$$
\end{proof}

\subsection{Family $\mathbb{P}$-twists and Mukai flops}
\label{section-family-p-twists}

\subsubsection{The setup}
\label{section-family-the-setup}

Let $Z$ be a smooth projective variety of dimension $m$ and let $\mathcal{V}$ be 
a vector bundle of rank $n+1$ over $Z$. Let $P$ denote the
projectification $\mathbb{P}\mathcal{V}$ of $\mathcal{V}$ and let
$$\pi\colon P \twoheadrightarrow Z $$
be the structure map, it is a flat $\mathbb{P}^n$-fibration over $Z$.
Let $X$ be a smooth projective variety and let  
$$ \iota \colon P \hookrightarrow X $$
be a closed embedding with 
$$ \mathcal{N}_{P/X} \simeq  \Omega^1_{P/Z}. $$
Since $P$ and $X$ are both smooth, $\iota$ is a regular immersion 
\cite[Prop. 1.10]{Berthelot-ImmersionsRegulieresEtCalculDuKDUnSchemaEclate}. 
  
We have 
\begin{equation}
\quad\quad\quad
\begin{tikzcd}[column sep={1cm}, row sep={1cm}]
P 
\ar[hookrightarrow]{r}{\iota}
\ar[twoheadrightarrow]{d}[']{\pi}
&
X.
\\
Z
&
\end{tikzcd}
\end{equation}
and we define $f_k: D(Z) \rightarrow D(X) $ to be the exact functor  
$$ \iota_* \circ \left(\mathcal{O}_{P}(k)
\otimes \pi^*(-)\right). $$

By \cite[\S{III.2}, \S{III.7}]{Hartshorne-Residues-and-Duality} 
the relative dualizing complexes of $P$ over $Z$ and 
$X$ are
\begin{align} 
\pi^! \mathcal{O}_Z 
&\simeq 
\omega_{P/Z}[n]
\simeq 
\mathcal{O}_{P}(-n-1)[n]
\\
\iota^! \mathcal{O}_X 
& \simeq 
\omega_{P / X} [-n]
\simeq 
\wedge^n \mathcal{N}_{P / X}[-n]
\simeq
\mathcal{O}_{P}(-n-1)[-n].
\end{align} 

Suppose now that there exists the Mukai flop 
$$ X \xleftarrow{\beta} \tilde{X} \xrightarrow{\gamma} X'. $$
Here $\beta$ is the blowup of $X$ along $P$ and its exceptional divisor
$E \subset \tilde{X}$ can therefore be identified with the incidence
subvariety $F \subset \mathbb{P}\mathcal{V} \times_Z \mathbb{P}\mathcal{V}^\vee$. 
By saying that the Mukai flop exists, we mean there existing $\gamma$ which 
blows down $E$ to $P' \subset X'$ in a way that can 
be identified with the projection of 
$F$ onto $\mathbb{P}\mathcal{V}^\vee$.  

In general, the naive flop functor 
$$ \gamma_* \circ \beta^*\colon D(X) \rightarrow D(X') $$
is not a derived equivalence. Namikawa 
\cite{Namikawa-MukaiFlopsAndDerivedCategories} and 
Kawamata \cite[\S5]{Kawamata-DEquivalenceAndKEquivalence} described
how to modify it to obtain one. The natural map 
$$ \tilde{X} \xrightarrow{\beta, \gamma} X \times X' $$
is a closed immersion, thus we can consider $\tilde{X}$ to be a
subvariety of $X \times X'$. Define 
$$ 
\hat{X} \overset{\text{def}}{=} \tilde{X} \cup (P \times_Z P') \subset X \times X'
$$
and denote as folows that natural projections
$$ \hat{\beta}\colon \hat{X} \rightarrow X
\quad \quad \text{ and } \quad \quad 
\hat{\gamma}\colon \hat{X} \rightarrow X'. $$
The functors 
$\hat{\gamma}_* \circ \hat{\beta}^*$ and $\hat{\beta}_* \circ
\hat{\gamma}^*$ are derived equivalences $D(X) \rightleftarrows D(X')$. 
More generally, let $\mathcal{L} \in \picr(\hat{X})$ be the 
line bundle which restricts to $\tilde{X}$ as
$\mathcal{O}_{\tilde{X}}(E)$ and to $P \times_Z P'$ as $\mathcal{O}_{P
\times_Z P'}(-1,-1)$. For any $k \in \mathbb{Z}$ the functors 
\begin{equation}
kn_{i} 
\quad \overset{\text{def}}{=} \quad 
\hat{\gamma}_* \left(\mathcal{L}^i \otimes
\hat{\beta}^*(-)\right) \colon D(X) \rightarrow D(X')
\end{equation}
\begin{equation}
kn'_{j} 
\quad \overset{\text{def}}{=} \quad
\hat{\beta}_* \left(\mathcal{L}^j \otimes
\hat{\gamma}^*(-)\right) \colon D(X') \rightarrow D(X)
\end{equation}
are the \em Kawamata-Namikawa derived equivalences \rm associated 
to the Mukai flop. These are not mutually inverse --- the composition
$kn'_j \circ kn_i$ is a non-trivial derived autoequivalence of $D(X)$
which we can think of as derived monodromy of the flop. Below we 
prove that for appropriately chosen $i$ and $j$ this monodromy
is a $\mathbb{P}$-twist for a natural $\mathbb{P}^n$-functor structure
on $f_k$. 

Let $KN_i$ and $KN'_j$ be the standard enhancements of $kn_{i}$ and
$kn'_{j}$ as per 
\S\ref{section-standard-fourier-mukai-kernels-and-the-key-lemma}. 
We then have 
$$ KN_i \simeq \mathcal{O}_{\hat{X}}\otimes \mathcal{L}^i 
\quad \quad \in \quad D(X \times X'), $$
$$ KN_j \simeq \mathcal{O}_{\hat{X}}\otimes \mathcal{L}^j 
\quad \quad \in \quad D(X' \times X). $$

\subsubsection{Enhancing adjunction monads and comonads for $\iota_*$} 
\label{section-family-pn-twist-enhanced-adjunction-monads-and-comonads-for-a-regular-immersion}

To compute enhancements of the adjunction monads and comonads for 
$F_k$, we first compute these for $\iota_*$.   
This is completley general, so for the rest of this sections
the reader may assume $X$ and $P$ to be arbitrary smooth varieties
with a closed immersion $\iota\colon P \hookrightarrow X$ of
codimension $n$. 
Let $I_*$, $I^*$ and $I^!$ be the standard enhancements of
$\iota_*$, $\iota^*$ and $\iota^!$ as per 
\S\ref{section-standard-fourier-mukai-kernels-and-the-key-lemma}:
\begin{equation}
I_* \simeq (\id_P \times \iota)_*
\mathcal{O}_{\Delta(P)} \in D(P \times X), 
\end{equation}
\begin{equation}
I^* \simeq (\id_X \times \iota)^*
\mathcal{O}_{\Delta(X)} \in D(X \times P), 
\end{equation}
\begin{equation}
I^! \simeq (\id_X \times \iota)^*
\mathcal{O}_{\Delta(X)} \in D(X \times P).
\end{equation}
\begin{prps}
\label{prps-adjunction-monads-and-comonads-for-a-closed-immersion}
\begin{enumerate}
\item 
\label{item-JI-object-and-the-adjunction-unit}
$I^! I_*$ is an object in 
$D(P \times P )$ with 
\begin{equation}
\label{eqn-cohomology-sheaves-of-JI}
H^i(I^! I_*) =  
\begin{cases}
\Delta_* \wedge^i \mathcal{N}_{P/X}  
& 
0 \leq i \leq n, \\
0 & \text{ otherwise,}
\end{cases}
\end{equation}
and the adjunction unit
$\id_{P} \xrightarrow{\epsilon} I^! I_*$
is the inclusion of the leftmost (degree $0$) cohomology. 
\item
\label{item-IJ-object-and-the-adjunction-counit}
$I_* I^!$ is an object of $D(X \times X)$ with 
\begin{equation}
I_* I^! \simeq  
\Delta_* \iota_* \iota^! \mathcal{O}_X 
\simeq
\Delta_* \iota_* \omega_{P/X} [-n]
\end{equation}
and the adjunction counit $I_* I^!  \xrightarrow{\mu} \id_{X}$
is given by the morphism 
$$ \Delta_* \iota_* \iota^! \mathcal{O}_X \rightarrow \Delta_* \mathcal{O}_X $$
which is an instance of 
the functorial adjunction counit $\iota_* \iota^!  \rightarrow \id$. 
In other words, by the element of 
$\ext^n_{X}(\iota_* \omega_{P/X}, \mathcal{O}_X)$ 
relative Serre dual to $\id \in \homm_P \left(\omega_{P/X}, 
\omega_{P/X}\right)$.
\item 
\label{item-JIJI-and-the-monad-multiplication}
$I^! I_* I^! I_*$ is an object of $D(P \times P)$ with 
\begin{equation}
\label{eqn-cohomology-sheaves-of-JIJI}
H^i(I^! I_* I^! I_*) =  
\begin{cases}
\bigoplus_{p+q = i} \Delta_* 
\left(
\wedge^p \mathcal{N}_{P/X} \otimes \wedge^q \mathcal{N}_{P/X}
\right)
& 
0 \leq i \leq 2n, \\
0 & \text{ otherwise,}
\end{cases}
\end{equation}
and the monad multiplication 
\begin{equation}
I^! I_* I^! I_* \rightarrow I^! I_* 
\end{equation}
is given on the level of cohomology by 
$\Delta_*$ applied to the sum of the wedging maps 
\begin{equation}
\wedge^p \mathcal{N}_{P/X} \otimes \wedge^q \mathcal{N}_{P/X}
\rightarrow 
\wedge^{p+q} \mathcal{N}_{P/X}. 
\end{equation}
\end{enumerate}
\end{prps}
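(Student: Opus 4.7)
The plan is to derive all three parts from the infrastructure assembled in \S\ref{section-Fourier-Mukai}: the alternative descriptions of kernel compositions in Proposition \ref{prps-simplifying-units-and-counit-for-standard-kernels}, the various versions of the Excess Bundle Formula (Propositions \ref{prps-excess-bundle-formula-generalised} and \ref{prps-excess-bundle-formula-double-intersection}), combined with the standard identification $\iota^!\mathcal{O}_X \simeq \det\mathcal{N}_{P/X}[-n]$ valid for a codimension-$n$ regular immersion of smooth varieties.

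For part \eqref{item-JI-object-and-the-adjunction-unit}, I would first apply Proposition \ref{prps-simplifying-units-and-counit-for-standard-kernels}\eqref{item-fm-kernel-for-f^!f_*-alt1} to rewrite $I^! I_* \simeq (\id_P \times \iota)^!(\id_P,\iota)_*\mathcal{O}_P$ and to identify the adjunction unit with the twisted base change map for the corresponding fiber square. The fiber product of $P \times P$ and $P$ over $P \times X$ is simply $W = P$, and identifying the normal bundles in the sequence \eqref{eqn-excess-bundle} shows that the excess bundle is $\mathcal{E}_W \simeq \mathcal{N}_{P/X}$. Plugging this into Proposition \ref{prps-excess-bundle-formula-generalised}\eqref{item-excess-bundle-cohomology-sheaves-of-i^!j_*-F}, and using $\omega_{(P\times P)/(P\times X)} \simeq \pi_2^*\det\mathcal{N}_{P/X}$ together with the standard duality $\wedge^{n-i}\mathcal{N}^\vee_{P/X} \otimes \det\mathcal{N}_{P/X} \simeq \wedge^i\mathcal{N}_{P/X}$, gives \eqref{eqn-cohomology-sheaves-of-JI}. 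Finally, Proposition \ref{prps-excess-bundle-formula-generalised}\eqref{item-excess-bundle-twisted-base-change-for-i^!j_*-F} identifies the twisted base change map with the inclusion of the leftmost (degree $0$) cohomology, as claimed.

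Part \eqref{item-IJ-object-and-the-adjunction-counit} is an immediate consequence of Proposition \ref{prps-simplifying-units-and-counit-for-standard-kernels}\eqref{item-fm-kernel-for-f_*f^!-alt1} together with $\iota^!\mathcal{O}_X \simeq \omega_{P/X}[-n]$. The description of the counit as Grothendieck--Serre dual to $\id_{\omega_{P/X}}$ is then an unravelling of the $(\iota_*,\iota^!)$-adjunction, since the functorial counit $\iota_*\iota^! \to \id$ corresponds under that adjunction to $\id_{\iota^!\mathcal{O}_X}$.

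Part \eqref{item-JIJI-and-the-monad-multiplication} is the most substantial, and is the main obstacle. For the cohomology sheaves, I would use that $\iota^!\iota_*\iota^!\iota_*$ acts on $\mathcal{F} \in D(P)$ by tensoring twice over $\mathcal{O}_P$ with $\iota^!\iota_*\mathcal{O}_P$; this uses the projection formula for $\iota_*$ together with the isomorphism $\iota^! \simeq \iota^*(-) \otimes \iota^!\mathcal{O}_X$, valid because $\iota$ is perfect and proper. Since the cohomology sheaves of $\iota^!\iota_*\mathcal{O}_P$ found in part \eqref{item-JI-object-and-the-adjunction-unit} are locally free, the Tor spectral sequence for the tensor product degenerates at $E_2$ and yields the direct-sum decomposition \eqref{eqn-cohomology-sheaves-of-JIJI}. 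The hardest step is to identify the monad multiplication, induced by $I^!\mu_0 I_*$ where $\mu_0\colon I_*I^! \to \id_X$ is the counit from part \eqref{item-IJ-object-and-the-adjunction-counit}, with the sum of the wedging maps. For this, I would apply Proposition \ref{prps-excess-bundle-formula-double-intersection}\eqref{item-excess-bundle-double-intersection-twisted} to the pair of fiber squares arising from the chain $P \hookrightarrow P \times P \hookrightarrow P \times X$; the $3\times 3$ commutative diagram \eqref{eqn-two-excess-bundles-3x3-diagram} of normal bundles should then identify the induced map on cohomology with the composition of the projection \eqref{eqn-wedging-with-the-top-power-of-the-quotient} with the duality map from Proposition \ref{prps-useful-maps-for-exterior-algebra}\eqref{item-wedging-with-the-top-power-of-the-quotion-dual-to-the-inclusion}, which on exterior algebras recovers the wedge product $\wedge^p\mathcal{N}_{P/X} \otimes \wedge^q\mathcal{N}_{P/X} \to \wedge^{p+q}\mathcal{N}_{P/X}$.
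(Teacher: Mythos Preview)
Your parts \eqref{item-JI-object-and-the-adjunction-unit} and \eqref{item-IJ-object-and-the-adjunction-counit} are correct and close to the paper's. For the cohomology in \eqref{item-JI-object-and-the-adjunction-unit} you work with the fibre square in $P\times X$ coming directly from Proposition~\ref{prps-simplifying-units-and-counit-for-standard-kernels}\eqref{item-fm-kernel-for-f^!f_*-alt1}, whereas the paper first uses the Key Lemma to rewrite $I^!I_*\simeq(\iota\times\iota)^*\Delta_*\mathcal{O}_X\otimes\pi_2^*\iota^!\mathcal{O}_X$ and applies the excess bundle formula to the intersection of $P\times P$ with $\Delta(X)$ inside $X\times X$. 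Both routes give the same excess bundle $\mathcal{N}_{P/X}$; the paper's choice is made so that the ambient space $X\times X$ can be reused in part~\eqref{item-JIJI-and-the-monad-multiplication}. Your tensor-product computation of the cohomology sheaves in \eqref{item-JIJI-and-the-monad-multiplication} is also valid (modulo the small step of passing from the functor $(-)\otimes(\iota^!\iota_*\mathcal{O}_P)^{\otimes 2}$ back to the kernel), and slicker than the paper's second excess-bundle calculation.

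The gap is in your identification of the monad multiplication. Proposition~\ref{prps-excess-bundle-formula-double-intersection}\eqref{item-excess-bundle-double-intersection-twisted} applies to a map $i^*j_*k_*k^!\mathcal{F}\to i^*j_*\mathcal{F}$ induced by the $(k_*,k^!)$-counit, for a nested pair $Y'\overset{k}{\hookrightarrow}Y\overset{j}{\hookrightarrow}Z$ with $X\overset{i}{\hookrightarrow}Z$. Your chain ``$P\hookrightarrow P\times P\hookrightarrow P\times X$'' does not furnish such data: in the $P\times X$ picture the graph $\Gamma_\iota\simeq P$ plays the role of $Y$, and there is no smaller $Y'\subsetneq Y$ for which the monad multiplication becomes $i^*j_*$ of a counit. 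The paper instead stays in $X\times X$: by the Key Lemma and part~\eqref{item-IJ-object-and-the-adjunction-counit} the multiplication is $(\iota\times\iota)^*$ applied (up to the line-bundle twist $\pi_2^*\iota^!\mathcal{O}_X$) to the counit $\Delta_*\iota_*\iota^!\mathcal{O}_X\to\Delta_*\mathcal{O}_X$, which is exactly the shape required by Proposition~\ref{prps-excess-bundle-formula-double-intersection}\eqref{item-excess-bundle-double-intersection-twisted} with $Z=X\times X$, $i=\iota\times\iota$, $j=\Delta_X$, $k=\iota$, i.e.\ the chain $\Delta(P)\subset\Delta(X)\subset X\times X$. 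The two excess bundles are then $\mathcal{E}\simeq\mathcal{N}_{P/X}$ and $\mathcal{E}'\simeq\mathcal{N}_{P/X}\oplus\mathcal{N}_{P/X}$ with projection $\id\oplus(-\id)$, from which the wedging maps follow via Proposition~\ref{prps-useful-maps-for-exterior-algebra}\eqref{item-wedging-with-the-top-power-of-the-quotion-dual-to-the-inclusion}, as you anticipated. Your tensor-product description of the cohomology does not by itself put $I^!I_*I^!I_*$ into this form; to make the last identification go through you must switch to the $X\times X$ picture.
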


\begin{proof}
\eqref{item-JI-object-and-the-adjunction-unit}:
\newline

By Lemma \ref{lemma-pullbacks-and-pushforwards-of-Fourier-Mukai-kernels}
we have
\begin{equation*}
I^! I_* \simeq (\id_P \times \iota)^! (\iota \times \id_P)^* 
\Delta_* \mathcal{O}_X
\simeq (\iota \times \iota)^* \Delta_* \mathcal{O}_X \otimes 
\pi_2^* \iota^! \mathcal{O}_X.  
\end{equation*}
The subvarieties $P \times P$ and $\Delta(X)$ intersect inside
$X \times X$ at $\Delta(P)$, forming the fiber square
\begin{equation}
\label{eqn-PxP-intersect-Delta(X)-square}
\begin{tikzcd}[column sep={1cm}, row sep={1cm}]
P 
\ar{r}{\iota}
\ar{d}{\Delta}
&
X
\ar{d}{\Delta}
\\
P \times P
\ar{r}{\iota \times \iota}
&
X \times X. 
\end{tikzcd}
\end{equation}
The excess bundle $\mathcal{E}$ of this map is the the cokernel of the map 
$$ \mathcal{N}_{\Delta(P)/ X \times X} \xrightarrow{p_1 \oplus p_2} 
\left(\mathcal{N}_{P \times P / X \times X}\right)|_P \oplus 
\left(\mathcal{N}_{\Delta(X) / X \times X}\right)|_P $$
where $p_i$ are the natural surjections. Thus $\mathcal{E}$ is
isomorphic to the quotient of 
$\mathcal{N}_{P \times P / X \times X}|_P$ by $p_1 \krn(p_2)$. 
Identifying $\mathcal{N}_{P \times P / X \times X}|_P$ with
$\mathcal{N}_{P/X} \oplus \mathcal{N}_{P/X}$
we obtain a short exact sequence 
\begin{equation}
0 \rightarrow 
\mathcal{N}_{P/X} \xrightarrow{\id \oplus \id}
\mathcal{N}_{P/X} \oplus \mathcal{N}_{P/X}
\rightarrow 
\mathcal{E}
\rightarrow 0, 
\end{equation}
and, in particular, $\mathcal{E} \simeq \mathcal{N}_{P/X}$. 
We have 
\begin{align*}
H^i(I^! I_*)  
\simeq 
H^i \left( (\iota \times \iota)^* \Delta_* \mathcal{O}_X \otimes 
\pi_2^* \iota^! \mathcal{O}_X\right) \simeq
H^{i-n} \left((\iota \times \iota)^* \Delta_* \mathcal{O}_X \right) 
\otimes \pi_2^* \left( \wedge^n \mathcal{N}_{P/X} \right),
\end{align*}
and by the excess bundle formula 
(Proposition \ref{prps-excess-bundle-formula-generalised}
\eqref{item-excess-bundle-cohomology-sheaves-of-i^*j_*-F}) we 
further have 
\begin{align}
\label{eqn-identifying-H^i-I^!I_*-with-wedge^i-E}
 H^i(I^! I_*) 
\simeq
\Delta_* \left( \wedge^{n-i} \mathcal{E}^\vee \right) 
\otimes \pi_2^* \left( \wedge^n \mathcal{N}_{P/X}\right) 
\simeq 
\Delta_*\left( \wedge^{n-i} \mathcal{E}^\vee 
\otimes \wedge^n \mathcal{E} \right) 
\simeq 
\Delta_* \wedge^i \mathcal{E}, 
\end{align}
whence the assertion \eqref{eqn-cohomology-sheaves-of-JI}. 
 
On the other hand, 
by Proposition \ref{prps-simplifying-units-and-counit-for-standard-kernels}
\eqref{item-fm-kernel-for-f^!f_*-alt1}
we can identify the adjunction unit 
$\id_P \xrightarrow{\epsilon} I^!  I_*$ with a twisted base change
map for a certain intersection fiber square. 
It then follows by
Proposition \ref{prps-excess-bundle-formula-generalised}
\eqref{item-excess-bundle-twisted-base-change-for-i^!j_*-F}
that this map is the inclusion of the leftmost 
cohomology sheaf, as desired. 

\eqref{item-IJ-object-and-the-adjunction-counit}:

By Lemma \ref{lemma-pullbacks-and-pushforwards-of-Fourier-Mukai-kernels}
we have 

$$ I_* I^! \simeq 
(\iota \times \iota)_* 
\left(\pi_1^* \iota^! \mathcal{O}_X \otimes \Delta_* \mathcal{O}_X \right) 
\simeq 
(\iota \times \iota)_* \Delta_* \Delta^* \pi_1^* \iota^! \mathcal{O}_X 
\simeq \Delta_* \iota_* \iota^! \mathcal{O}_X $$
and by 
Proposition 
\ref{prps-simplifying-units-and-counit-for-standard-kernels}
\eqref{item-fm-kernel-for-f_*f^!-alt1}
this identifies the counit $I_* I^! \rightarrow \id_X$ with  
the morphism 
$$ \Delta_* \iota_* \iota^! \mathcal{O}_X \rightarrow \Delta_*
\mathcal{O}_X, $$
induced by the functorial adjunction 
counit $\iota_* \iota^! \rightarrow \id_X$, as desired. 

\eqref{item-JIJI-and-the-monad-multiplication}:

We proceed similarly to the proofs of 
\eqref{item-JI-object-and-the-adjunction-unit} and 
\eqref{item-IJ-object-and-the-adjunction-counit}.
By Lemma \ref{lemma-pullbacks-and-pushforwards-of-Fourier-Mukai-kernels}
we have
\begin{equation}
\label{eqn-I^!I_*I^!I_*-explicit-form}
I^! I_*  I^! I_*  \simeq 
\bigl( (\iota \times \iota)^* \Delta_* \iota_* \iota^! \mathcal{O}_X
\bigr) \otimes \pi_2^* \iota^! \mathcal{O}_X
\simeq 
(\iota \times \iota)^* \Delta_* \iota_* \wedge^n \mathcal{N}_{P/X} 
\otimes \pi_2^* \wedge^n  \mathcal{N}_{P/X} [-2n],
\end{equation}
\begin{align*}
H^{i} \left( I^! I_*  I^! I_* \right)
\simeq 
H^{i-2n} \bigl( (\iota \times \iota)^* \Delta_* \iota_* 
\wedge^n \mathcal{N}_{P/X} \bigr) \otimes 
\pi_2^* \wedge^n \mathcal{N}_{P/X}.
\end{align*}
The subvarieties $P \times P$ and $\Delta(P)$ intersect inside 
$X \times X$, forming the fiber square which can be decomposed into
two fiber squares via the embedding $\Delta(P) \subset \Delta(X)$:
\begin{equation}
\label{eqn-PxP-intersect-Delta(P)-and-Delta(X)-double-square}
\begin{tikzcd}[column sep={0.75cm}, row sep={1cm}]
P 
\ar[equals]{d}
\ar[equals]{r}
&
P
\ar{d}{\iota}
\\
P
\ar{r}{\iota}
\ar{d}{\Delta}
&
X 
\ar{d}{\Delta}
\\
P \times P
\ar{r}{\iota \times \iota}
&
X \times X. 
\end{tikzcd}
\end{equation}
Let $\mathcal{E}'$ denote the excess bundle of this intersection. 
It is the cokernel of the natural map 
$$ \mathcal{N}_{\Delta(P)/ X \times X} \xrightarrow{p_1 \oplus p_2} 
\left(\mathcal{N}_{P \times P / X \times X}\right)|_P \oplus 
\left(\mathcal{N}_{\Delta(P) / X \times X}\right), $$ 
and hence we have $\mathcal{E}' \simeq \mathcal{N}_{P/X} \oplus
\mathcal{N}_{P/X}$. 
Thus, by the excess bundle formula
(Proposition \ref{prps-excess-bundle-formula-generalised}
\eqref{item-excess-bundle-cohomology-sheaves-of-i^*j_*-F}) we have
\begin{align}
\label{eqn-identifying-H^i-I^!I_*I^!I_*-with-wedge^i-E'}
H^{i} \left( I^! I_*  I^! I_* \right) 
& \simeq 
\Delta_* \bigl(\wedge^{2n-i} (\mathcal{E}')^\vee \otimes 
\wedge^n \mathcal{N}_{P/X} \bigr) \otimes 
\pi_2^* \wedge^n \mathcal{N}_{P/X} \simeq 
\\
\nonumber
&
\simeq 
\Delta_* 
\bigl(
\wedge^{2n-i} (\mathcal{E}')^\vee \otimes 
\wedge^n \mathcal{N}_{P/X} \otimes 
\wedge^n \mathcal{N}_{P/X} 
\bigr)  
\simeq 
\\
\nonumber
&\simeq 
\Delta_* 
\bigl(
\wedge^{2n-i}
(\mathcal{E}')^\vee
\otimes 
\wedge^{2n}
(\mathcal{E}')
\bigr)
\simeq 
\Delta_* 
\bigl(
\wedge^{i}
(\mathcal{E}')
\bigr), 
\end{align}
whence we conclude that, as desired, 
\begin{align*}
H^{i} \left( I^! I_*  I^! I_* \right)
\simeq 
\Delta_* \left( \wedge^i (\mathcal{N}_{P/X} \oplus
\mathcal{N}_{P/X})\right)
\simeq 
\Delta_* \left( \bigoplus_{i = p + q} \left( \wedge^p \mathcal{N}_{P/X} \oplus
\wedge^q \mathcal{N}_{P/X} \right) \right). 
\end{align*}

By \eqref{item-IJ-object-and-the-adjunction-counit}
the monad multiplication $I^! I_*  I^! I_* \rightarrow I^! I_*$
can be identified with the map 
\begin{equation}
\label{eqn-monad-multiplication-as-double-excess-bundle-intersection}
\bigl( (\iota \times \iota)^* \Delta_* \iota_* \iota^! \mathcal{O}_X
\bigr) \otimes \pi_2^* \iota^! \mathcal{O}_X
\rightarrow 
\bigl( (\iota \times \iota)^* \Delta_* \mathcal{O}_X
\bigr) \otimes \pi_2^* \iota^! \mathcal{O}_X  
\end{equation}
induced by the adjunction counit $\iota_* \iota^! \mathcal{O}_X
\rightarrow \mathcal{O}_X$. 
We are now in position to apply 
Proposition \ref{prps-excess-bundle-formula-double-intersection}\eqref{item-excess-bundle-double-intersection-twisted}. 
From the diagram \eqref{eqn-two-excess-bundles-3x3-diagram}
we have the natural short exact sequence of excess bundles 
$$ 
0 \rightarrow 
\mathcal{N}_{P/X}
\hookrightarrow 
\mathcal{E'} 
\twoheadrightarrow 
\mathcal{E} 
\rightarrow 0, $$ 
which can be identified with the short exact sequence
$$ 
0 \rightarrow 
\mathcal{N}_{P/X}
\xrightarrow{\id \oplus \id}
\mathcal{N}_{P/X} \oplus \mathcal{N}_{P/X}
\xrightarrow{\id \oplus - \id}
\mathcal{E} 
\rightarrow 0. $$ 
It follows that the map induced by 
\eqref{eqn-monad-multiplication-as-double-excess-bundle-intersection}
on the degree $i$ cohomology sheaves is given by 
$$
\Delta_* \bigl(\wedge^{2n-i} (\mathcal{E}')^\vee \otimes 
\wedge^n \mathcal{N}_{P/X} \otimes \wedge^n \mathcal{N}_{P/X} 
\bigr) 
\xrightarrow{\Delta_*
(\eqref{eqn-wedging-with-the-top-power-of-the-quotient} \otimes \id)}
\Delta_*  \bigl(\wedge^{n-i} 
\mathcal{E}^\vee   
\otimes \wedge^n \mathcal{N}_{P/X}\bigr). 
$$
By Proposition 
\ref{prps-useful-maps-for-exterior-algebra}
\eqref{item-wedging-with-the-top-power-of-the-quotion-dual-to-the-inclusion}
under the identifications
\eqref{eqn-identifying-H^i-I^!I_*-with-wedge^i-E} above 
and
\eqref{eqn-identifying-H^i-I^!I_*I^!I_*-with-wedge^i-E'} above 
this becomes the map 
$$ \Delta_* \wedge^i \mathcal{E}' \rightarrow \Delta_* \wedge^i \mathcal{E}, $$
induced by the projection $\mathcal{E}' \twoheadrightarrow
\mathcal{E}$, i.e. the projection 
$ \mathcal{N}_{P/X} \oplus \mathcal{N}_{P_X} \xrightarrow{\id \oplus
- \id} \mathcal{N}_{P/X}$. 
Thus the monad multiplication is indeed given on the degree $i$
cohomology sheaves by the image under $\Delta_*$ of the wedging map 
$$ \bigoplus_{i = p+q} \wedge^p \mathcal{N}_{P/X} \otimes \wedge^q
\mathcal{N}_{P/X} \rightarrow \wedge^{i} \mathcal{N}_{P/X}. $$
\end{proof}
 
\subsubsection{Enhancing adjunction monads and comonads for $F_k$}
\label{section-family-pn-twist-enhanced-adjunction-monads-and-comonads}

We now return to the setup of \S\ref{section-family-the-setup}. Let
$\Pi_*$, $\Pi^*$, $\Pi^!$, and $\mathcal{T}_{\mathcal{O}(k)}$
be the standard enhancements of $\pi_*$, $\pi^*$, $\pi_!$, 
and $(-) \otimes \mathcal{O}_P(k)$
as per
\S\ref{section-standard-fourier-mukai-kernels-and-the-key-lemma}: 
\begin{equation}
\Pi^* \simeq (\id_Z \times \pi)^* \mathcal{O}_{\Delta(Z)} \in D(Z \times P),
\end{equation}
\begin{equation}
\Pi_* \simeq (\id_P \times \pi)_* \mathcal{O}_{\Delta(P)} \in D(P \times Z), 
\end{equation}
\begin{equation}
\Pi_! \simeq (\id_P \times \pi)^! \mathcal{O}_{\Delta(P)} \in D(P \times Z), 
\end{equation}
\begin{equation}
T_{\mathcal{O}(k)}  \simeq \pi^*_2(\mathcal{O}_P(k))
\in D(P \times P). 
\end{equation}
\begin{lemma}
\label{lemma-units-and-counits-of-Pi_*-Pi^*-and-T_O(k)-T_O(-k)-adjunctions}
\begin{enumerate}
\item 
\label{item-adjunction-unit-and-counit-for-Pi_*-Pi^*}
The adjunction unit 
\begin{equation}
\label{eqn-adjunction-unit-for-Pi_*-Pi^*}
\id_Z \xrightarrow{\epsilon} \Pi_* \Pi^* 
\end{equation}
is an isomorphism in $D(Z \times Z)$. On the other hand, 
there is an isomorphism
$$ \Pi^* \Pi_* \simeq \mathcal{O}_{P \times_Z P} $$
in $D(P \times P)$ which identifies the adjunction counit 
\begin{equation}
\label{eqn-adjunction-counit-for-Pi_*-Pi^*}
\Pi^* \Pi_* \xrightarrow{\mu} \id_P 
\end{equation}
with the restriction of sheaves  
$$ \mathcal{O}_{P \times_Z P} \twoheadrightarrow \mathcal{O}_{\Delta(P)}. $$
\item 
\label{item-adjunction-unit-and-counit-for-T_O(k)-T_O(-k)}
The adjunction unit and counit  
$$ \id_P \rightarrow T_{\mathcal{O}(-k)} T_{\mathcal{O}(k)}, $$
$$ T_{\mathcal{O}(k)} T_{\mathcal{O}(-k)} \rightarrow \id_P $$
are isomorphisms in $D(P \times P)$.  
\end{enumerate}
\end{lemma}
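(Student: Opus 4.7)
The plan is to reduce all four claims to standard facts about $\pi\colon P \twoheadrightarrow Z$ and tensoring with line bundles, by invoking the Key Lemma (Lemma \ref{lemma-pullbacks-and-pushforwards-of-Fourier-Mukai-kernels}) and the identifications of units and counits in Proposition \ref{prps-simplifying-units-and-counit-for-standard-kernels}.

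First I would handle the unit in (\ref{item-adjunction-unit-and-counit-for-Pi_*-Pi^*}). By Proposition \ref{prps-simplifying-units-and-counit-for-standard-kernels}(\ref{item-fm-kernel-for-f_*f^*-alt1}) applied to $\pi$, we have $\Pi_* \star \Pi^* \simeq \Delta_* \pi_* \mathcal{O}_P$ in $D(Z \times Z)$, and the unit $\id_Z \to \Pi_*\Pi^*$ is identified with $\Delta_*$ of the ordinary adjunction unit $\mathcal{O}_Z \to \pi_*\pi^* \mathcal{O}_Z \simeq \pi_* \mathcal{O}_P$. Since $\pi$ is a flat $\mathbb{P}^n$-fibration, the canonical map $\mathcal{O}_Z \to \pi_* \mathcal{O}_P$ is an isomorphism (and the higher derived pushforwards vanish), so the unit is an isomorphism.

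Next, for the counit in (\ref{item-adjunction-unit-and-counit-for-Pi_*-Pi^*}), I would invoke Proposition \ref{prps-simplifying-units-and-counit-for-standard-kernels}(\ref{item-fm-kernel-for-f^*f_*-alt2}), which gives
\[
\Pi^* \star \Pi_* \simeq (\id_P \times \pi)^* (\pi \times \id_Z)^* \Delta_* \mathcal{O}_Z
\]
together with an identification of the counit with the composition in (\ref{eqn-fm-f^*f_*-counit-alt2}): a base change isomorphism, followed by a second base change map, followed by restriction to the diagonal. Here the fiber square at the bottom of (\ref{eqn-Y-and-XxX-fiber-square}) is $\tor$-independent by the flatness of $\pi$; moreover, the top square collapses via the base change isomorphism to identify $(\id_P \times \pi)^* (\id,\pi)_* \mathcal{O}_P$ with $i_* \mathcal{O}_{P \times_Z P}$ for the closed immersion $i\colon P \times_Z P \hookrightarrow P \times P$. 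Composing yields $\Pi^*\Pi_* \simeq \mathcal{O}_{P \times_Z P}$, and the counit becomes the natural sheaf restriction $\mathcal{O}_{P \times_Z P} \twoheadrightarrow \mathcal{O}_{\Delta(P)}$, as claimed.

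For (\ref{item-adjunction-unit-and-counit-for-T_O(k)-T_O(-k)}), I would use part (5) of the Key Lemma, which says $T_N \star Q \star T_M \simeq \pi_X^* M \otimes \pi_Y^* N \otimes Q$. Taking $Q = \Delta_* \mathcal{O}_P$ and either $(M,N) = (\mathcal{O}_P(k), \mathcal{O}_P(-k))$ or $(\mathcal{O}_P(-k), \mathcal{O}_P(k))$ yields natural isomorphisms $T_{\mathcal{O}(k)} \star T_{\mathcal{O}(-k)} \simeq \id_P \simeq T_{\mathcal{O}(-k)} \star T_{\mathcal{O}(k)}$ in $D(P \times P)$, coming from $\mathcal{O}_P(k) \otimes \mathcal{O}_P(-k) \simeq \mathcal{O}_P$. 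By Proposition \ref{prps-2-categorical-adjunctions-for-standard-kernels}(4), the unit and counit of the $2$-categorical adjunction $(T_{\mathcal{O}(k)}, T_{\mathcal{O}(-k)})$ are identified (via the Key Lemma) with the evaluation and coevaluation of the tensor--hom adjunction for the line bundle $\mathcal{O}_P(k)$; these are isomorphisms for any line bundle, giving the result.

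The main obstacle I anticipate is the careful bookkeeping in step two, verifying that the counit predicted by Proposition \ref{prps-simplifying-units-and-counit-for-standard-kernels}(\ref{item-fm-kernel-for-f^*f_*-alt2}) really is the naive sheaf-restriction map. The two base change maps and the final restriction need to be composed, and the claim hinges on recognizing that the intermediate base change isomorphism (for the top square in (\ref{eqn-Y-and-XxX-fiber-square})) intertwines the diagonal embedding $P \hookrightarrow P \times_Z P$ with the obvious geometric one, so the composite is literally the ordinary restriction of structure sheaves.
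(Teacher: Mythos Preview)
Your argument is correct. For the counit in (\ref{item-adjunction-unit-and-counit-for-Pi_*-Pi^*}) you follow exactly the paper's route via Proposition~\ref{prps-simplifying-units-and-counit-for-standard-kernels}\eqref{item-fm-kernel-for-f^*f_*-alt2} and the flatness of $\pi$.

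For the unit in (\ref{item-adjunction-unit-and-counit-for-Pi_*-Pi^*}) and for (\ref{item-adjunction-unit-and-counit-for-T_O(k)-T_O(-k)}) you take a slightly different, more explicit path: you invoke the concrete identifications of Proposition~\ref{prps-simplifying-units-and-counit-for-standard-kernels}\eqref{item-fm-kernel-for-f_*f^*-alt1} and Proposition~\ref{prps-2-categorical-adjunctions-for-standard-kernels}(4), reducing to the fact that $\mathcal{O}_Z \to \pi_*\mathcal{O}_P$ and the (co)evaluation maps for a line bundle are isomorphisms. The paper instead argues at the level of the induced natural transformations of exact functors (which are the ordinary $(\pi^*,\pi_*)$ unit and the unit/counit of the autoequivalence pair $((-)\otimes\mathcal{O}(k),(-)\otimes\mathcal{O}(-k))$), observes these are isomorphisms, and then appeals to the general principle that a morphism of Fourier--Mukai kernels inducing an isomorphism of functors is itself an isomorphism (checked on skyscrapers). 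Your approach avoids this extra principle at the cost of tracking the explicit kernel-level identifications; the paper's approach is quicker but relies on that additional lemma.
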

\begin{proof}
\eqref{item-adjunction-unit-and-counit-for-Pi_*-Pi^*}:

The natural transformation of exact functors induced by 
\eqref{eqn-adjunction-unit-for-Pi_*-Pi^*}
is the adjunction unit $\id_Z \rightarrow \pi_* \pi^*$ 
for the $(\pi^*, \pi_*)$ adjunction. 
This adjunction unit is an isomorphism, because  
$\pi\colon P \rightarrow Z$ is an instance of a relative $\proj$-construction, 
and therefore $\pi^*$ is fully faithful. More generally, the projection 
formula identifies the unit $\id_{D(Z)} \rightarrow \pi_* \pi^* $ with the 
tensor product of $\id_{D(Z)}$ and the adjunction unit $\mathcal{O}_Z \rightarrow \pi_* \pi^* \mathcal{O}_Z$, which is 
an isomorphism since $\pi$ is a flat Fano fibration. 

We can therefore conclude that \eqref{eqn-adjunction-unit-for-Pi_*-Pi^*} 
itself is an isomorphism by the general fact that a morphism of Fourier-Mukai 
kernels which induces an isomorphism of functors is itself an isomorphism. 
Indeed, consider its cone. It is a kernel whose induced Fourier-Mukai
transform is the zero functor. In particular, every skyscraper sheaf
gets sent to zero. Hence the pullback of the kernel to every closed
point of the product is zero, and hence the kernel itself is zero. 

For the adjunction counit \eqref{eqn-adjunction-counit-for-Pi_*-Pi^*}
the desired statement follows from Propostion 
\ref{prps-simplifying-units-and-counit-for-standard-kernels}
\eqref{item-fm-kernel-for-f^*f_*-alt2}
since $\pi$ is flat and hence the base change around  
the top fiber square in \eqref{eqn-Y-and-XxX-fiber-square} is an
isomorphism. 

\eqref{item-adjunction-unit-and-counit-for-T_O(k)-T_O(-k)}:

Similarly, this follows from the induced natural transformations 
of exact functors being isomorphisms as they are 
the unit and the counit for the adjunction of 
autoequivalences $\left((-) \otimes \mathcal{O}_P(k), (-)
\otimes \mathcal{O}_P(-k)\right)$. 
\end{proof}

We choose the following Fourier-Mukai kernel for $f_k$:
$$ F_k \overset{\text{def}}{=} I_* T_{\mathcal{O}(k)} \Pi^* \in D(Z \times X), $$
and by the Key Lemma  
(Lemma \ref{lemma-pullbacks-and-pushforwards-of-Fourier-Mukai-kernels})
we have
\begin{align*}
F_k\;   
\simeq 
(\pi \times \iota)_* \Delta_* \mathcal{O}_{P}(k)
\simeq 
(\pi,\iota)_* \mathcal{O}_{P}(k). 
\end{align*}

Similarly, we choose the following Fourier-Mukai kernels for the left and 
right adjoints of $f_k$:
\begin{align}
L_k \overset{\text{def}}{=} \Pi_! T_{\mathcal{O}(-k)} I^* \in D(X \times Z), 
\\
R_k \overset{\text{def}}{=}  \Pi_* T_{\mathcal{O}(-k)} I^! \in D(X \times Z), 
\end{align}
and we similarly have
\begin{align}
\label{eqn-L_k-explicitly}
L_k & \simeq (\iota,\pi)_* \mathcal{O}_{P}(-n-k-1)[n],
\\
\label{eqn-R_k-explicitly}
R_k &\simeq 
(\iota,\pi)_* \mathcal{O}_{P}(-n-k-1)[-n].  
\end{align}

\begin{prps}
\label{prps-enhancing-monad-RkFk-and-comonad-FkRk}
\begin{enumerate}
\item 
\label{item-RkFk-and-the-adjunction-unit}
$R_k F_k$ is an object in $D(Z \times Z)$ with 
\begin{equation}
\label{eqn-cohomology-sheaves-of-RkFk}
H^i(R_k F_k) \simeq 
\begin{cases}
\Delta_* \mathcal{O}_Z & \quad 0 \leq i \leq 2n, i = 2j \\
0 & \quad \text{ otherwise},  
\end{cases}
\end{equation}
and the adjunction unit
\begin{equation}
\id_Z = \Delta_* \mathcal{O}_Z \rightarrow R_k F_k  
\end{equation}
is an isomorphism on $H^0(-)$. 

\item 
\label{item-FkRk-and-the-adjunction-counit}
$F_k R_k$ is an object in $D(X \times X)$ with 
\begin{equation}
F_k R_k \simeq 
(\iota \times \iota)_* z_* \mathcal{O}_{P \times_Z P}(-k,k) \otimes
\pi_1^* \iota^! \mathcal{O}_X
\simeq 
(\iota \times \iota)_* z_* \mathcal{O}_{P \times_Z P}(\omega_{P/X}-k,k)[-n]
\end{equation}
where $z\colon P \times_Z P \hookrightarrow P \times P$ is the natural
closed immersion.

The adjunction counit $F_k R_k \xrightarrow{\mu} \id_{X}$
is given by the composition 
$$
(\iota \times \iota)_* z_* \mathcal{O}_{P \times_Z P}(-k,k) \otimes
\pi_1^* \iota^! \mathcal{O}_X
\twoheadrightarrow 
I^! I_* 
\rightarrow 
\id_{X}
$$
where the first morphism is the restriction of a (shifted) line bundle 
from $P \times_Z P$ to $\Delta(P)$:
\begin{equation}
\label{eqn-adjunction-counit-FkRk-first-part}
(\iota \times \iota)_* z_* \mathcal{O}_{P \times_Z P}(\omega_{P/X}-k,k) [-n]
\rightarrow 
\Delta_* \iota_* \omega_{P/X}[-n],
\end{equation}
while the second one is the adjunction counit for $I^!I_*$
$$ \Delta_* \iota_* \omega_{P/X}[-n] \rightarrow \Delta_* \mathcal{O}_X $$
given by the element of 
of $\ext^n_{X}(\omega_{P/X}, \mathcal{O}_X)$ 
relative Serre dual to 
$\id_{\omega_{P/X}} \in \homm_{P}\left(\omega_{P/X}, \omega_{P/X}\right)$. 

\item 
\label{item-RkFkRkFk-and-the-monad-multiplication}

$R_k F_k R_k F_k$ is an object in $D(Z \times Z)$ with
\begin{equation}
\label{eqn-cohomology-sheaves-of-RkFkRkFk}
H^j(R_k F_k R_k F_k) \simeq 
\begin{cases}
\Delta_* \mathcal{O}_Z^{\oplus (n+1-|i-n|)}  & \quad  0 \leq j \leq 4n, j = 2i \\
0 & \quad \text{ otherwise},  
\end{cases}
\end{equation}
and the monad multiplication
\begin{equation}
R_k F_k R_k F_k \rightarrow R_k F_k  
\end{equation}
is given on cohomologies by the maps 
$$ \Delta_* \mathcal{O}_Z^{\oplus (i+1)} \xrightarrow{\sum \id}
\Delta_* \mathcal{O}_Z. $$
\end{enumerate}
\end{prps}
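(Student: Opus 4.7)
The overall strategy is to factor $F_k = I_* \star T_{\mathcal{O}(k)} \star \Pi^*$ and $R_k = \Pi_* \star T_{\mathcal{O}(-k)} \star I^!$, apply Proposition~\ref{prps-adjunction-monads-and-comonads-for-a-closed-immersion} to reduce the $I$-parts to explicit complexes in $D(P \times P)$ with cohomology given by wedge powers of $\mathcal{N}_{P/X}$, absorb the twists $T_{\mathcal{O}(\pm k)}$ and the $\Pi$-factors using the Key Lemma (Lemma~\ref{lemma-pullbacks-and-pushforwards-of-Fourier-Mukai-kernels}), Proposition~\ref{prps-simplifying-units-and-counit-for-standard-kernels} and Lemma~\ref{lemma-units-and-counits-of-Pi_*-Pi^*-and-T_O(k)-T_O(-k)-adjunctions}, and finally read off cohomology sheaves via suitable spectral sequences.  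The hypothesis $\mathcal{N}_{P/X} \simeq \Omega^1_{P/Z}$ identifies $\wedge^i \mathcal{N}_{P/X}$ with $\Omega^i_{P/Z}$, putting the calculation in the standard Hodge-theoretic setting on the flat $\mathbb{P}^n$-fibration $\pi\colon P \to Z$.

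For part~\eqref{item-RkFk-and-the-adjunction-unit}, I would first observe that the twists $T_{\mathcal{O}(\pm k)}$ cancel on $I^! I_*$.  Indeed, by the Key Lemma part~5, $T_{\mathcal{O}(-k)} \star I^! I_* \star T_{\mathcal{O}(k)}$ is $I^! I_*$ tensored with $\pi_1^*\mathcal{O}_P(k) \otimes \pi_2^*\mathcal{O}_P(-k)$, whose restriction to $\Delta(P)$ (where every cohomology sheaf of $I^! I_*$ is supported by Proposition~\ref{prps-adjunction-monads-and-comonads-for-a-closed-immersion}\eqref{item-JI-object-and-the-adjunction-unit}) is trivial.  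Combined with $\Pi_* \star (-) \star \Pi^* \simeq (\pi \times \pi)_*$ (Key Lemma part~3), this reduces $R_k F_k$ to $\Delta_{Z*}$ of $\pi_*$ applied to a complex on $P$ with cohomology $\Omega^i_{P/Z}$ in degree $i$.  The hypercohomology spectral sequence $E_2^{p,q} = R^p\pi_*\Omega^q_{P/Z}$ collapses on the diagonal $p=q$ (the standard Hodge decomposition of a projective bundle gives $R^p\pi_*\Omega^q_{P/Z} \simeq \mathcal{O}_Z$ when $p=q \leq n$ and zero otherwise), yielding $H^{2i}(R_k F_k) \simeq \Delta_{Z*}\mathcal{O}_Z$ for $0 \leq i \leq n$ and zero in all other degrees.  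The adjunction unit is the composite of the isomorphism $\id_Z \simeq \Pi_* \Pi^*$ from Lemma~\ref{lemma-units-and-counits-of-Pi_*-Pi^*-and-T_O(k)-T_O(-k)-adjunctions}\eqref{item-adjunction-unit-and-counit-for-Pi_*-Pi^*} with $\Pi_* \star \epsilon \star \Pi^*$ for $\epsilon\colon \id_P \hookrightarrow I^! I_*$ the inclusion of degree-zero cohomology; on $H^0$ this specialises to the identity of $\Delta_{Z*}\mathcal{O}_Z$.

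For part~\eqref{item-FkRk-and-the-adjunction-counit}, the middle factor is now $\Pi^* \star \Pi_* \simeq \mathcal{O}_{P\times_Z P}$ by Proposition~\ref{prps-simplifying-units-and-counit-for-standard-kernels}\eqref{item-fm-kernel-for-f^*f_*-alt1}, which is \emph{not} supported on $\Delta(P)$, so the twists do not cancel; instead, the Key Lemma part~5 converts $T_{\mathcal{O}(k)} \star \mathcal{O}_{P\times_Z P} \star T_{\mathcal{O}(-k)}$ into $z_*\mathcal{O}_{P\times_Z P}(-k,k)$, and further applications of the Key Lemma sandwich this between $I_*$ and $I^!$ to give the asserted formula for $F_k R_k$.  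The adjunction counit decomposes, by the bicategorical compatibility of adjunction counits under composition, as the $\Pi$-counit $\Pi^* \Pi_* \to \id_P$ of Lemma~\ref{lemma-units-and-counits-of-Pi_*-Pi^*-and-T_O(k)-T_O(-k)-adjunctions}\eqref{item-adjunction-unit-and-counit-for-Pi_*-Pi^*} (i.e.\ the restriction $\mathcal{O}_{P\times_Z P} \twoheadrightarrow \mathcal{O}_{\Delta(P)}$) wrapped in $I_*(-)I^!$, followed by the $\iota$-counit $I_* I^! \to \id_X$ of Proposition~\ref{prps-adjunction-monads-and-comonads-for-a-closed-immersion}\eqref{item-IJ-object-and-the-adjunction-counit}, which is exactly the relative Serre-dual of the identity of $\omega_{P/X}$.

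Part~\eqref{item-RkFkRkFk-and-the-monad-multiplication} is the principal obstacle.  The interior factor $T_k \star \mathcal{O}_{P\times_Z P} \star T_{-k} = z_*\mathcal{O}_{P\times_Z P}(-k,k)$ is non-trivial, so Proposition~\ref{prps-adjunction-monads-and-comonads-for-a-closed-immersion}\eqref{item-JIJI-and-the-monad-multiplication} does not apply to the whole composition directly.  My plan is to bypass this by using part~\eqref{item-RkFk-and-the-adjunction-unit} together with the Splitting Trick (Lemma~\ref{lemma-splitting-trick}): viewing $Z \times Z \times Z = (Z \times Z) \times_Z (Z \times Z)$ over its middle copy of $Z$, each cohomology sheaf $H^i(R_k F_k) \simeq \Delta_{Z*}\mathcal{O}_Z$ is flat over $Z$ via either projection, and the Splitting Trick yields $H^r(\pi_{12}^*(R_k F_k) \otimes \pi_{23}^*(R_k F_k)) \simeq \bigoplus_{p+q=r} \Delta^3_{Z*}\mathcal{O}_Z$ with indices running over pairs of even integers in $[0,2n]$; the flat, closed-immersion nature of $\pi_{13}$ restricted to $\Delta^3(Z)$ then makes $\pi_{13*}$ exact, giving $H^{2i}(R_k F_k R_k F_k) \simeq \Delta_{Z*}\mathcal{O}_Z^{\oplus(n+1-|i-n|)}$.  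For the monad multiplication, I would trace its definition through the $\iota$-counit sandwiched in the bicategorical composition: on cohomology the wedging maps of Proposition~\ref{prps-adjunction-monads-and-comonads-for-a-closed-immersion}\eqref{item-JIJI-and-the-monad-multiplication} transport, via $(\pi \times \pi)_*$ and the identification $R^p\pi_*\Omega^p_{P/Z} \simeq \mathcal{O}_Z$, to the sum map $\Delta_{Z*}\mathcal{O}_Z^{\oplus(i+1)} \to \Delta_{Z*}\mathcal{O}_Z$.  The delicate point I anticipate is showing that the $\mathcal{O}_{P\times_Z P}(-k,k)$ twist is neutralised in the identification of cohomology sheaves after $\pi_{13*}$, for which the flatness of the cohomology sheaves of $R_k F_k$ over $Z$ is essential.
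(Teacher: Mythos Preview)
Parts~\eqref{item-RkFk-and-the-adjunction-unit} and~\eqref{item-FkRk-and-the-adjunction-counit} are essentially the paper's argument. For the cohomology sheaves in part~\eqref{item-RkFkRkFk-and-the-monad-multiplication}, your Splitting Trick route is a valid alternative to the paper's: the paper instead writes $R_kF_kR_kF_k \simeq W(I^!F_kR_kI_*)$ with $W = (\pi\times\pi)_*\bigl(\mathcal{O}(k,-k)\otimes -\bigr)$ and computes $H^i(I^!F_kR_kI_*) \simeq z_*\bigl(\bigoplus_{p+q=i}\Omega^p_{P/Z}(-k)\boxtimes\Omega^q_{P/Z}(k)\bigr)$ via the excess bundle formula for the intersection of $P\times P$ and $P\times_Z P$ inside $X\times X$, then pushes down by $W$.

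The gap is the monad multiplication. It is $W$ applied to the composite
\[
I^!F_kR_kI_* \xrightarrow{\;I^!\,\eqref{eqn-adjunction-counit-FkRk-first-part}\,I_*\;} I^!I_*I^!I_* \xrightarrow{\;\text{mult}\;} I^!I_*.
\]
The wedging maps you cite describe only the second arrow. The first arrow is induced by restricting from $P\times_Z P$ to $\Delta(P)$, and identifying its effect on cohomology is exactly what Proposition~\ref{prps-excess-bundle-formula-double-intersection}\eqref{item-excess-bundle-double-intersection-straight} is for; this is where the twist $\mathcal{O}_{P\times_Z P}(-k,k)$ enters and where it is eventually cancelled by the $(k,-k)$ in $W$. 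Your Splitting Trick identification of $H^*(R_kF_kR_kF_k)$ lives purely on $Z\times Z$ and is never matched to $W$ of anything on $P\times P$, so you have no access to either arrow in the display above. Flatness of the cohomology of $R_kF_k$ over $Z$ does not help: that is an input to the K\"unneth decomposition of the source, not a tool for computing how the multiplication acts on it. To carry out what you sketch you would have to redo the cohomology computation via $W(I^!F_kR_kI_*)$ anyway, which is precisely the paper's route; the Splitting Trick shortcut for the cohomology therefore buys nothing once the multiplication is in play.
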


\begin{proof}
\eqref{item-RkFk-and-the-adjunction-unit}:
 
We have 
$$ R_k F_k = \Pi_* T_{\mathcal{O}(-k)} I^! I_* T_{\mathcal{O}(k)} \Pi^*, $$
and by Lemma \ref{lemma-pullbacks-and-pushforwards-of-Fourier-Mukai-kernels} 
the object $R_k F_k$ is the image of $I^! I_*$ under the functor 
\begin{equation}
\label{eqn-RkFk-via-I^!I_*-functor}
W = (\pi \times \pi)_* \left(\pi_2^* \mathcal{O}_P(-k)
\otimes \pi_1^* \mathcal{O}_P(k) \otimes (-)\right). 
\end{equation}
By Proposition \ref{prps-adjunction-monads-and-comonads-for-a-closed-immersion}\eqref{item-JI-object-and-the-adjunction-unit} we have 
$$ H^i (I^! I_*) \simeq \Delta_* \wedge^i \mathcal{N}_{P/X} \simeq \Delta_* \Omega^i_{P/Z} .$$
We now observe that there is a functorial isomorphism 
\begin{align}
\label{eqn-isomorphism-reducing-the-functor-applied-to-I^!I_*I^!I_*}
W \Delta_* = 
& (\pi \times \pi)_* \left(
\pi_2^* \mathcal{O}_P(-k) \otimes \pi_1^* \mathcal{O}_P(k) \otimes 
\Delta_* \right) 
\simeq (\pi \times \pi)_* \Delta_* 
\left(\Delta^* \pi_2^* \mathcal{O}_P(-k)
\otimes \Delta^* \pi_1^* \mathcal{O}_P(k) \otimes (-) \right) 
\simeq
\\
\nonumber
\simeq &
(\pi \times \pi)_* \Delta_* 
\left(\mathcal{O}_P(-k)
\otimes \mathcal{O}_P(k) \otimes (-) \right) 
\simeq
(\pi \times \pi)_* \Delta_* \simeq \Delta_* \pi_*, 
\end{align}
with the second isomorphism due to projection formula, the third
due to the identities $\pi_1 \circ \Delta = \pi_2 \circ \Delta = \id$, 
and the last due to the commutativity of the square
\begin{equation}
\begin{tikzcd}[column sep={1cm},row sep={1cm}]
P 
\ar{d}{\pi}
\ar{r}{\Delta}
& 
P \times P 
\ar{d}{\pi \times \pi}
\\ 
Z 
\ar{r}{\Delta}
&
Z \times Z.
\end{tikzcd}
\end{equation}
Hence 
$$ W( H^i(I^! I_*) ) \simeq \Delta_* \pi_* \Omega^i_{P/Z}, $$
and by relative Bott vanishing we have 
$$ \pi_* \Omega^i_{P/Z} \simeq \mathcal{O}_Z [i]. $$
Since $R_k F_k = W(I^! I_*)$ a standard spectral sequence argument 
now yields the assertion \eqref{eqn-cohomology-sheaves-of-RkFk}, 
and, in particular, for $0 \leq i \leq n$ the natural isomorphisms 
\begin{equation}
\label{eqn-2i-th-cohomology-of-RkFk}
H^{2i}(R_k F_k) \simeq H^i(W(H^i(I^! I_*)) \simeq 
H^i(\Delta_* \pi_* \Omega^i_{P/Z}) \simeq \Delta_* \mathcal{O}_Z. 
\end{equation}

The adjunction unit $\id_Z \rightarrow R_k F_k$ is given 
by the composition of adjunction units
$$ \id_Z \rightarrow \Pi_* \Pi^* \rightarrow \Pi_* T_{\mathcal{O}(-k)} T_{\mathcal{O}(k)} \Pi^* 
\rightarrow \Pi_* T_{\mathcal{O}(-k)} I^! I_* T_{\mathcal{O}(k)} \Pi^*. $$
The first two are isomorphisms by Lemma
\ref{lemma-units-and-counits-of-Pi_*-Pi^*-and-T_O(k)-T_O(-k)-adjunctions}. 
The third is 
an isomorphism on $H^0(-)$ by Proposition \ref{prps-adjunction-monads-and-comonads-for-a-closed-immersion}\eqref{item-JI-object-and-the-adjunction-unit}. 	

\eqref{item-FkRk-and-the-adjunction-counit}:

We have 
$$ F_k R_k  = I_* T_{\mathcal{O}_{(k)}} \Pi^* \Pi_* T_{\mathcal{O}(-k)} I^! , $$
By 
Lemma
\ref{lemma-units-and-counits-of-Pi_*-Pi^*-and-T_O(k)-T_O(-k)-adjunctions}
we have $\Pi^* \Pi_* \simeq \mathcal{O}_{P \times_Z P}$, hence by 
Lemma \ref{lemma-pullbacks-and-pushforwards-of-Fourier-Mukai-kernels}
we have 
\begin{align*}
F_k R_k \simeq (\iota \times \iota)_* z_* \left(\pi_1^* \iota^!
\mathcal{O}_X \otimes \pi_1^* \mathcal{O}_P(-k) \otimes \pi_2^* \mathcal{O}_P(k)  \otimes \mathcal{O}_{P \times_Z P} \right) 
\simeq (\iota \times \iota)_* z_* 
\mathcal{O}_{P \times_Z P}(-k+\omega_{P/X},k)[-n]. 
\end{align*}
The adjunction counit $F_k R_k \rightarrow \id_X$ is given by the 
composition of the adjunction counits 
$$ I_* T_{\mathcal{O}(k)} \Pi^* \Pi_* T_{\mathcal{O}(-k)} I^!
\rightarrow I_* T_{\mathcal{O}(k)} T_{\mathcal{O}(-k)} I^!
\rightarrow I_* I^! \rightarrow \id_X. $$
By Lemma
\ref{lemma-units-and-counits-of-Pi_*-Pi^*-and-T_O(k)-T_O(-k)-adjunctions}
the first two are induced by the sheaf restriction $\mathcal{O}_{P
\times_Z P} \twoheadrightarrow \mathcal{O}_{\Delta(P)}$ and 
by the isomorphism $\mathcal{O}_P(-k) \otimes
\mathcal{O}_P(k) \simeq \mathcal{O}_P$, respectively. 
Hence their composition is the line bundle restriction  
\eqref{eqn-adjunction-counit-FkRk-first-part}. 


\eqref{item-RkFkRkFk-and-the-monad-multiplication}:

As in the proof of \eqref{item-RkFk-and-the-adjunction-unit}
by Lemma \ref{lemma-pullbacks-and-pushforwards-of-Fourier-Mukai-kernels} 
we have 
\begin{equation}
\label{eqn-RkFkRkFk-via-I^!FkRkI*}
R_k F_k R_k F_k \simeq 
W (I^! F_k R_k I_*)
\end{equation}
where $W$ is the functor \eqref{eqn-RkFk-via-I^!I_*-functor}. 
By \eqref{item-FkRk-and-the-adjunction-counit} 
we've 
$$ F_k R_k \simeq (\iota \times_Z \iota)_* \mathcal{O}_{P \times_Z
P}(\omega_{P/X} - k ,k)[-n], $$
and therefore 
by Lemma \ref{lemma-pullbacks-and-pushforwards-of-Fourier-Mukai-kernels} 
again
\begin{equation}
\label{eqn-I^!FkRkI_*-explicit-form}
I^! F_k R_k I_* \simeq 
(\iota \times \iota)^* (\iota \times_Z \iota)_* 
\mathcal{O}_{P \times_Z P}(\omega_{P/X}-k,\omega_{P/X}+k)[-2n].
\end{equation}

The subvarieties $P \times P$ and $P \times_Z P$ intersect within 
$X \times X$ forming the fiber square  
\begin{equation}
\begin{tikzcd}
P \times_Z P
\ar[equals]{r}
\ar{d}{z}
&
P \times_Z P
\ar{d}{(\iota \times \iota) \circ z}
\\
P \times P 
\ar{r}{\iota \times \iota}
& 
X \times X.
\end{tikzcd}	
\end{equation}
Since $P \times_Z P$ lies entirely within in $P \times P$, 
the excess bundle $\mathcal{E}''$ of their intersection is given by 
$$ \mathcal{E}'' \simeq \mathcal{N}_{P \times P/X \times X}|_{P \times_Z P}
\simeq \pi_1^* \mathcal{N}_{P/X} \oplus \pi_2^* \mathcal{N}_{P/X}. $$
It follows by the excess bundle formula 
(Proposition \ref{prps-excess-bundle-formula-generalised}
\eqref{item-excess-bundle-cohomology-sheaves-of-i^*j_*-F})
that 
\begin{align}
\label{eqn-identifying-H^i-I^!FkRkI_*-with-wedge^i-E''}
H^i(I^! F_k R_k I_*) 
& 
\simeq
z_*
\bigl( \wedge^{2n-i} (\mathcal{E}'')^\vee \otimes 
\mathcal{O}_{P \times_Z P}(\omega_{P/X}-k,\omega_{P/X}+k)
\bigr)
\simeq
\\
\nonumber
& 
\simeq 
z_*
\bigl(
\wedge^{2n-i} (\mathcal{E}'')^\vee 
\otimes \wedge^{2n} \mathcal{E}'' \otimes \mathcal{O}_{P \times_Z P}(-k,k)
\bigr)
\simeq 
\\
\nonumber
&
\simeq
z_*
\bigl(
\wedge^{i} \mathcal{E}'' \otimes \mathcal{O}_{P \times_Z P}(-k,k)
\bigr). 
\end{align}
We conclude that 
\begin{equation}
\label{eqn-cohomology-sheaves-of-I^!FkRkI_*}
H^i(I^! F_k R_k I_*)
\simeq 
z_*
\bigl(
\bigoplus_{i = p+q}
\Omega^p_{P/Z}(-k) \boxtimes \Omega^q_{P/Z}(k) 
\bigr).
\end{equation}

We next compute the action of the functor $W$ on the cohomology sheaves
of $I^! F_k R_k I_*$:
\begin{align*}
W H^i (I^! F_k R_k I_*) 
&
\simeq 
(\pi \times \pi)_* \bigl(
\pi_1^* \mathcal{O}_P(k) \otimes 
\pi_2^* \mathcal{O}_P(-k) \otimes 
H^i (I^! F_k R_k I_*) 
\bigr)
\simeq 
\\
&
\simeq 
(\pi \times \pi)_* z_* 
\bigoplus_{i = p+q} \Omega^p_{P/Z} \boxtimes \Omega^q_{P/Z}
\simeq
\\
&
\simeq 
\Delta_* (\pi \times_Z \pi)_* \bigoplus_{i = p+q} 
\Omega^p_{P/Z} \boxtimes \Omega^q_{P/Z}, 
\end{align*}
where $\pi \times_Z \pi$ is the diagonal map 
in the fiber square
\begin{equation}
\label{eqn-P-P-Z-fiber-square}
\begin{tikzcd}
P \times_Z P
\ar{r}{\pi_2}
\ar{d}{\pi_1}
\ar{dr}{\pi \times_Z \pi}
&
P
\ar{d}{\pi}
\\
P 
\ar{r}{\pi}
& 
Z. 
\end{tikzcd}	
\end{equation}
Since the square is $\tor$-independent its K{\"u}nneth map
is an isomorphism, and we have 
$$ 
W H^i (I^! F_k R_k I_*) \simeq 
\Delta_* \bigoplus_{i = p+q}
\pi_*(\Omega^p_{P/Z}) \otimes \pi_*(\Omega^q_{P/Z}).$$
Finally, by relative Bott vanishing we further conclude that  
$$
W H^i (I^! F_k R_k I_*)
\simeq \Delta_* \bigoplus_{i = p+q}
\mathcal{O}_Z[p]\otimes \mathcal{O}_Z[q]
\simeq 
\Delta_* \mathcal{O}_Z^{\oplus (n+1-|i-n|)}[i]. $$
Since $R_k F_k R_k F_k= W(I^! F_k R_k I_*)$ the assertion 
\eqref{eqn-cohomology-sheaves-of-RkFkRkFk}
follows by the usual spectral sequence argument. 

For the monad multiplication $R_k F_k R_k F_k \rightarrow R_k F_k$,
by \eqref{item-FkRk-and-the-adjunction-counit} 
it is the image under $W$ of the composition 
\begin{equation}
\label{eqn-monad-multiplication-for-RkFk-before-applying-W}
I^! F_k R_k I_* \rightarrow I^! I_* I^! I_* \rightarrow I^! I_* 
\end{equation}
whose first map is induced by the map 
$F_k R_k \xrightarrow{\eqref{eqn-adjunction-counit-FkRk-first-part}} I_* I^!$
which is a restriction of (shifted) line bundles from $P \times_Z P$
to $\Delta(P)$ and whose second map is the monad multiplication 
for $I^! I_*$. 

We next compute the maps induced by 
$\eqref{eqn-monad-multiplication-for-RkFk-before-applying-W}$ on 
the cohomology sheaves. Under 
the identifications \eqref{eqn-I^!FkRkI_*-explicit-form} and
\eqref{eqn-I^!I_*I^!I_*-explicit-form} the first map in 
\eqref{eqn-monad-multiplication-for-RkFk-before-applying-W}
becomes the map 
\begin{equation}
\label{eqn-I^!FkRkI*-to-I^!I_*I!I_*-map-explicitly}
(\iota \times \iota)^* (\iota \times_Z \iota)_* 
\mathcal{O}_{P \times_Z P}(\omega_{P/X}-k,\omega_{P/X}+k)[-2n]
\rightarrow 
(\iota \times \iota)^* \Delta_* \iota_* \omega_{P/X} \otimes \omega_{P/X} [-2n]
\end{equation}
induced by the restriction of a shifted line bundle 
$\mathcal{O}_{P \times_Z P}(\omega_{P/X}-k,\omega_{P/X}+k)[-2n]$ from 
$P \times_Z P$ to $\Delta(P)$. We are therefore in a position 
to apply Proposition 
\ref{prps-excess-bundle-formula-double-intersection}\eqref{item-excess-bundle-double-intersection-straight}. 

We have a commutative diagram of fibre squares
\begin{equation}
\begin{tikzcd}
P 
\ar{d}{\Delta}
\ar[equals]{r}
&
P 
\ar{d}{\Delta}
\\
P \times_Z P
\ar[equals]{r}
\ar{d}{z}
&
P \times_Z P
\ar{d}{(\iota \times \iota) \circ z}
\\
P \times P 
\ar{r}{\iota \times \iota}
& 
X \times X.
\end{tikzcd}	
\end{equation}
Above we've computed the excess bundle $\mathcal{E}''$ of the
intersection of $P \times P$ and $P \times_Z P$ to be 
$$ \mathcal{E}'' \simeq 
\pi_1^* \mathcal{N}_{P/X} \oplus \pi_2^* \mathcal{N}_{P/X}. $$
In Proposition 
\ref{prps-adjunction-monads-and-comonads-for-a-closed-immersion}
\eqref{item-JI-object-and-the-adjunction-unit}
we've shown the excess bundle $\mathcal{E}$ of the intersection 
of $P \times P$ and $\Delta(P)$ to be 
$$ \mathcal{E} \simeq \mathcal{N}_{P/X} \oplus \mathcal{N}_{P/X}. $$ 
The natural map $\mathcal{E} \rightarrow \Delta^* \mathcal{E}''$
is readily seen to be an isomorphism. 
Correspondingly, the induced map $(\mathcal{E}'')^\vee \rightarrow 
\Delta_* \mathcal{E}^\vee$ is 
the natural restriction of sheaves from $P \times_Z P$ to $\Delta(P)$. 
It follows 
by Proposition 
\ref{prps-excess-bundle-formula-double-intersection}\eqref{item-excess-bundle-double-intersection-straight}
that the map induced by 
\eqref{eqn-I^!FkRkI*-to-I^!I_*I!I_*-map-explicitly}
on the degree $i$ cohomology is the map 
$$ z_* \bigl(
\wedge^{2n-i} 
(\mathcal{E}'')^\vee
\otimes \mathcal{O}_{P \times_Z P}(\omega_{P/X}-k,\omega_{P/X}+k)\bigl)
\rightarrow 
z_* \Delta_* 
\bigl(
\wedge^{2n-i} \mathcal{E}^\vee \otimes \omega_{P/X} \otimes \omega_{P/X} 
\bigr) 
$$
which is the restriction of a locally free sheaf 
$\wedge^{2n-i} 
(\mathcal{E}'')^\vee
\otimes \mathcal{O}_{P \times_Z P}(\omega_{P/X}-k,\omega_{P/X}+k)
$
from $P \times_Z P$ to $\Delta(P)$. Under the identifications 
\eqref{eqn-identifying-H^i-I^!FkRkI_*-with-wedge^i-E''} and 
\eqref{eqn-identifying-H^i-I^!I_*I^!I_*-with-wedge^i-E'} 
this further becomes the map 
$$ z_* \left( \wedge^i \mathcal{E}'' \otimes \mathcal{O}_{P \times_Z
P}(-k,k) \right) \rightarrow z_* \Delta_* \wedge^i \mathcal{E} $$
given again by a restriction of a locally free sheaf 
$\wedge^i \mathcal{E}'' \otimes \mathcal{O}_{P \times_Z P}(-k,k)$ 
from $P \times_Z P$ to $\Delta(P)$. 

We conclude that the map induced by the first composant 
of \eqref{eqn-monad-multiplication-for-RkFk-before-applying-W}
on the degree $i$ cohomology can be identified with the natural 
sheaf restriction map  
$$ 
z_*
\bigl(
\bigoplus_{i = p+q}
\Omega^p_{P/Z}(-k) \boxtimes \Omega^q_{P/Z}(k) 
\bigr)
\twoheadrightarrow 
\Delta_* \bigl(
\bigoplus_{i = p+q}
\Omega^p_{P/Z} \otimes \Omega^q_{P/Z} 
\bigr). 
$$
On the other hand, the second composant in 
\eqref{eqn-monad-multiplication-for-RkFk-before-applying-W}
is the monad multiplication for $I^! I_*$ and we've computed in 
\ref{prps-adjunction-monads-and-comonads-for-a-closed-immersion}\eqref{item-JIJI-and-the-monad-multiplication}
that the induced map on the degree $i$ cohomology cohomology sheaves 
$$ \Delta_* \bigl(
\bigoplus_{i = p+q}
\Omega^p_{P/Z} \otimes \Omega^q_{P/Z} 
\bigr)
\rightarrow 
\Delta_* \bigl(\wedge^i \Omega^p_{P/Z}\bigr)$$
is the image under $\Delta$ of the wedging map. 

Thus, the map 
$$ H^i(I^! R_k F_k I_*) \rightarrow H^i(I^! I_*) $$
induced by 
\eqref{eqn-monad-multiplication-for-RkFk-before-applying-W}
on the degree $i$ cohomology sheaves is the composition 
\begin{equation}
\label{eqn-monad-multiplication-for-RkFk-before-applying-W-on-deg-i-coh}
z_*
\bigl(
\bigoplus_{i = p+q}
\Omega^p_{P/Z}(-k) \boxtimes \Omega^q_{P/Z}(k) 
\bigr)
\rightarrow 
\Delta_* \bigl(
\bigoplus_{i = p+q}
\Omega^p_{P/Z} \otimes \Omega^q_{P/Z} 
\bigr)
\rightarrow
\Delta_* \bigl(\Omega^i_{P/Z}\bigr)
\end{equation}
of the sheaf restriction from $P \times_Z P$ to $\Delta(P)$
and the wedging map. Note that this is zero unless $0 \leq i \leq n$.  

The image of the first composant of 
\eqref{eqn-monad-multiplication-for-RkFk-before-applying-W-on-deg-i-coh}
under the functor $W$ is therefore the map 
\begin{equation}
\Delta_* (\pi \times_Z \pi)_* 
\bigl(
\bigoplus_{i = p+q} \Omega^p_{P/Z} \boxtimes \Omega^q_{P/Z}
\bigr)
\rightarrow 
\Delta_* \pi_* 
\bigl( \bigoplus_{i = p+q} \Omega^p_{P/Z} \otimes \Omega^q_{P/Z} \bigr)
\end{equation}
which is the the image under $\Delta_* (\pi \times_Z \pi)_*$
of the restriction of sheaves from $P \times_Z P$ to $\Delta(P)$. 
The K{\"u}nneth isomorphism for the square \eqref{eqn-P-P-Z-fiber-square}
identifies it further with the map 
\begin{equation}
\Delta_* \bigl( 
\bigoplus_{i = p+q} \pi_* \Omega^p_{P/Z} \otimes \pi_* \Omega^q_{P/Z}
\bigr)
\rightarrow 
\Delta_* 
\bigl( 
\bigoplus_{i = p+q} 
\pi_* \bigl(\Omega^p_{P/Z} \otimes \Omega^q_{P/Z} 
\bigr)
\bigr)
\end{equation}
which is the image under $\Delta_*$ of the natural map 
$\pi_*(-) \otimes \pi_*(-) \rightarrow \pi_*(- \otimes -)$. 
On the other hand, since $W \Delta_* \simeq \Delta_* \pi_*$
the image of the second composant of 
\eqref{eqn-monad-multiplication-for-RkFk-before-applying-W-on-deg-i-coh}
under the functor $W$
is the map 
\begin{equation}
\Delta_* \pi_* \bigl(
\bigoplus_{i = p+q}
\Omega^p_{P/Z} \otimes \Omega^q_{P/Z} 
\bigr)
\rightarrow
\Delta_* \pi_* \bigl(\wedge^i \Omega^p_{P/Z}\bigr) 
\end{equation}
which is the image under $\Delta_* \pi_*$ of the wedging map. 

We conclude that the image 
$$ W H^i(I^! R_k F_k I_*) \rightarrow WH^i(I^! I_*) $$
under the functor $W$ of the 
\eqref{eqn-monad-multiplication-for-RkFk-before-applying-W-on-deg-i-coh}
is therefore the image under $\Delta_*$ of the map 
\begin{equation}
\label{eqn-W-on-monad-multiplication-for-RkFk-before-applying-W-on-deg-i-coh}
\bigoplus_{i = p+q} \pi_* \Omega^p_{P/Z} \otimes \pi_* \Omega^q_{P/Z}
\rightarrow 
\bigoplus_{i = p+q} 
\pi_* (\Omega^p_{P/Z} \otimes \Omega^q_{P/Z})
\rightarrow
\pi_* \Omega^i_{P/Z}. 
\end{equation}
For $0 \leq i \leq n$ each individual map 
$$
\pi_* \Omega^p_{P/Z} \otimes \pi_* \Omega^q_{P/Z}
\rightarrow 
\pi_* (\Omega^p_{P/Z} \otimes \Omega^q_{P/Z})
\rightarrow
\pi_* \Omega^{i}_{P/Z}
$$
is an isomorphism and the relative Bott vanishing identifications 
$\pi_* \Omega^p_{P/Z} \simeq \mathcal{O}_Z[p]$, 
$\pi_* \Omega^q_{P/Z} \simeq \mathcal{O}_Z[q]$, 
$\pi_* \Omega^i_{P/Z} \simeq \mathcal{O}_Z[i]$ identify this
isomorphism with the identity map 
$$ \mathcal{O}_Z[i] \rightarrow \mathcal{O}_Z[i]. $$
Since we have identified the 
monad multiplication $R_k F_k R_k F_k \rightarrow R_k F_k$
with the image under $W$ of 
the map \eqref{eqn-monad-multiplication-for-RkFk-before-applying-W} 
it now follows by the same spectral sequence argument that 
the induced map on degree $2i$ cohomology for $0 \leq i \leq n$
is the sum of identity maps 
$$ \Delta_* \mathcal{O}_Z^{\oplus (n+1)} \rightarrow \Delta_*
\mathcal{O}_Z, $$
as desired. 
\end{proof}

\subsubsection{The structure of a $\mathbb{P}^n$-functor on $F_k$ and
its $\mathbb{P}$-twist}
\label{section-family-pn-functor-structure-on-Fk}

Let $H$
be the standard enhancements of an autoequivalence $[-2]$ of $D(Z)$
as per
\S\ref{section-standard-fourier-mukai-kernels-and-the-key-lemma}. 
Then:
$$ H \simeq \Delta_* \mathcal{O}_Z[-2] \in D(Z \times Z), $$
$$ H^i \simeq \Delta_* \mathcal{O}_Z[-2i].$$
In \S\ref{section-family-pn-twist-enhanced-adjunction-monads-and-comonads}
we have shown that the cohomology sheaves of the object $R_k F_k \in D(Z
\times Z)$ are  
\begin{equation*}
H^i(R_k F_k) \simeq 
\begin{cases}
\Delta_* \mathcal{O}_Z & \quad 0 \leq i \leq 2n, i = 2j \\
0 & \quad \text{ otherwise}.  
\end{cases}
\end{equation*}

We have the \em standard filtration \rm on $R_k F_k$ built using 
the truncation functors 
$$ \tau_{\leq n}\colon D(Z) \rightarrow D^{\leq n}(Z), $$ 
where $D^{\leq n}$ is the subcategory of objects whose cohomologies
are concentrated in degrees $\leq n$. The functor $\tau_{\leq n}$
is the right adjoint to the natural embedding 
$$ i_{\leq n}\colon D^{\leq n}(Z) \rightarrow D(Z)$$ 
and for any $Q \in D(Z)$ we have
\begin{equation*}
H^i(\tau_{\leq n} Q) \simeq 
\begin{cases}
H^i(Q) & \quad i \leq n \\
0 & \quad \text{ otherwise},  
\end{cases}
\end{equation*}
see 
\cite[\S1.10]{Lipman-NotesOnDerivedFunctorsAndGrothendieckDuality}
or 
\cite[IV.\S4.5]{GelfandManin-MethodsOfHomologicalAlgebra}. Let now
$$ Q_n = R_k F_k \in D(Z \times Z)$$
and define $Q_0, \dots, Q_{n-1}$ inductively by 
$$ Q_i = \tau_{\leq 2i} Q_{i+1} \simeq \tau_{\leq 2i} Q_n. $$
The $(i_{\leq n}, \tau_{\leq n})$ adjunction counits
$Q_i \rightarrow Q_{i+1}$ are termwise injections
of underlying complexes. They form the standard  filtration  on $R_k F_k$ 
\begin{equation}
\label{eqn-the-standard-filtraton-on-RkFk}
0 \rightarrow Q_0 \rightarrow Q_1 \rightarrow \dots \rightarrow Q_{n-1} \rightarrow
Q_n \simeq R_k F_k, 
\end{equation}
whose factors, by the above desription of the cohomology sheaves of $R_k F_k$,
are 
$$ \id_Z = \Delta_* \mathcal{O}_Z, H = \Delta_* \mathcal{O}_Z[-2],
\dots, H^n \simeq \Delta_* \mathcal{O}_Z[-2n]. $$
In particular, the filtration \eqref{eqn-the-standard-filtraton-on-RkFk} gives
$R_k F_k$ the structure of a cyclic coextension of $\id$ by $H$ of
degree $n$. 

We need the following technical result:
\begin{lemma}
\label{lemma-cohomology-sheaves-of-Ql-Qm}
For any $0 \leq l,m \leq n$ we have a natural isomorphism 
\begin{equation}
\label{eqn-cohomology-sheaves-of-Ql-Qm}
H^i(Q_l Q_m) \simeq \bigoplus_{i = p + q} H^p(Q_l) H^q(Q_m),
\end{equation}
which for $l = m = n$ coincides with the isomorphism 
\eqref{eqn-cohomology-sheaves-of-RkFkRkFk}. 
\end{lemma}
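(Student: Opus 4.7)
The plan is to construct a specific natural candidate for the asserted isomorphism and verify it by induction on $m$, using a parity observation to reduce the inductive step to a short exact sequence which fits into a tautologically split one. Concretely, for each pair $(p,q)$ with $p + q = i$, I would form the natural truncation maps in $D(Z \times Z)$
\[
\alpha_p = \tau_{\leq p}\tau_{\geq p}\colon Q_l \to H^p(Q_l)[-p], \qquad \alpha_q\colon Q_m \to H^q(Q_m)[-q],
\]
convolve them, and take $H^i$; summing over all $(p,q)$ yields the natural map
\[
\phi\colon H^i(Q_l Q_m) \to \bigoplus_{p+q=i} H^p(Q_l) \star H^q(Q_m).
\]
Only even $(p,q)$ contribute since $H^p(Q_l)$ and $H^q(Q_m)$ vanish in odd degrees.

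To show $\phi$ is an isomorphism I would induct on $m$, the base case $m = 0$ being trivial because $Q_0 = \id$. For the inductive step, apply $Q_l \star -$ to the coextension triangle $Q_{m-1} \to Q_m \to H^m \to Q_{m-1}[1]$ to obtain $Q_l Q_{m-1} \to Q_l Q_m \to Q_l[-2m]$. The cohomology sheaves of $Q_l$ lie in even degrees, hence so do those of $Q_l[-2m]$ and, by induction, of $Q_l Q_{m-1}$. The long exact sequence then forces all connecting morphisms between adjacent even and odd degrees to vanish, yielding for even $i$ the short exact sequence
\[
0 \to H^i(Q_l Q_{m-1}) \to H^i(Q_l Q_m) \to H^i(Q_l[-2m]) \to 0,
\]
while $H^i(Q_l Q_m) = 0$ for odd $i$.

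Naturality of truncation packages $\phi$ into a morphism of short exact sequences whose lower row is the tautologically split decomposition
\[
0 \to \bigoplus_{q\leq 2(m-1)} H^p(Q_l) H^q(Q_m) \to \bigoplus_{q\leq 2m} H^p(Q_l) H^q(Q_m) \to H^{i-2m}(Q_l) H^{2m}(Q_m) \to 0.
\]
Commutativity of the left square uses that $Q_{m-1} \to Q_m \xrightarrow{\alpha_{2m}} H^{2m}(Q_m)[-2m]$ vanishes (since $Q_{m-1} = \tau_{\leq 2(m-1)} Q_m$ has no cohomology in degree $2m$); commutativity of the right square uses that on the quotient $Q_l[-2m] = Q_l \star H^m$ the map $\phi$ reduces to the natural identification $H^i(Q_l \star H^m) \simeq H^{i-2m}(Q_l)$. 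By induction the outer two $\phi$'s are isomorphisms, so the five lemma gives the middle one; the top sequence, being isomorphic to a split one, therefore splits.

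The main obstacle will be the final coincidence assertion for $l = m = n$: the isomorphism in \eqref{eqn-cohomology-sheaves-of-RkFkRkFk} was obtained geometrically via the excess bundle formula for $I^! F_k R_k I_*$, the K\"unneth map for the fiber square \eqref{eqn-P-P-Z-fiber-square}, and relative Bott vanishing, whereas the present $\phi$ is built abstractly from the standard t-structure. Matching them will require identifying the truncation maps $\alpha_p^{Q_n}$ with the geometric projections onto the $\Omega^p_{P/Z} \boxtimes \Omega^q_{P/Z}$-summands of $H^*(I^! F_k R_k I_*)$ transported through the spectral sequence that identifies $R_k F_k R_k F_k$ with $W(I^! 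F_k R_k I_*)$; this amounts to unpacking the proof of Proposition~\ref{prps-enhancing-monad-RkFk-and-comonad-FkRk}\eqref{item-RkFkRkFk-and-the-monad-multiplication}.
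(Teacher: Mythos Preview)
Your inductive skeleton is sound: applying $Q_l \star (-)$ to the triangle $Q_{m-1}\to Q_m\to H^m$, the parity of the cohomology forces the connecting maps to vanish and you do get the short exact sequence
\[
0 \to H^i(Q_lQ_{m-1}) \to H^i(Q_lQ_m) \to H^{i-2m}(Q_l)\to 0.
\]
The gap is in the construction of $\phi$. There is no natural morphism $\alpha_p\colon Q_l \to H^p(Q_l)[-p]$ for an intermediate $p$: the truncation functors only supply the maps $\tau_{\le p}Q_l \to Q_l$ and $Q_l \to \tau_{\ge p}Q_l$, and neither composes to a projection onto a single middle cohomology. (Such a projection exists precisely when $Q_l$ splits as the sum of its cohomologies, which is exactly what the paper is \emph{not} assuming here.) Without $\phi$ you have no map of short exact sequences to feed into the five lemma, and the sequence above need not split on its own: the obstruction lives in $\Ext^1_{Z\times Z}(\Delta_*\mathcal O_Z,\Delta_*\mathcal O_Z)\simeq HH^1(Z)$, which is allowed to be nonzero in this section (the whole point of removing the $HH^{\mathrm{odd}}(Z)=0$ hypothesis from \cite{AddingtonDonovanMeachan-MukaiFlopsAndPTwists}). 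So your induction yields only a filtration with the correct graded pieces, not the direct sum the lemma asserts.

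The paper's proof takes a different, geometric route that produces the splitting intrinsically and simultaneously handles the compatibility with \eqref{eqn-cohomology-sheaves-of-RkFkRkFk}. It factors $W=W_1W_2$ and shows $Q_l \simeq W_1(\tau_{\le l}(I^!F_k))$, $Q_m\simeq W_2(\tau_{\le m}(R_kI_*))$, hence $Q_lQ_m\simeq W\bigl(\tau_{\le l}(I^!F_k)\,\tau_{\le m}(R_kI_*)\bigr)$. The cohomology sheaves of $I^!F_kR_kI_*$ were already identified in \eqref{eqn-cohomology-sheaves-of-I^!FkRkI_*} with $z_*\bigl(\bigoplus_{p+q=i}\Omega^p_{P/Z}(-k)\boxtimes\Omega^q_{P/Z}(k)\bigr)$, and the direct sum here is the honest splitting of the excess bundle $\mathcal E''\simeq \pi_1^*\mathcal N_{P/X}\oplus\pi_2^*\mathcal N_{P/X}$. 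A local computation on $P\times P$ then shows that truncating $I^!F_k$ and $R_kI_*$ simply cuts this direct sum down to the summands with $p\le l$, $q\le m$. Pushing through $W$ and the K\"unneth isomorphism gives \eqref{eqn-cohomology-sheaves-of-Ql-Qm} as a restriction of \eqref{eqn-cohomology-sheaves-of-RkFkRkFk}, so the compatibility for $l=m=n$ is automatic rather than an afterthought. In short: the splitting you need is not a formal consequence of the $t$-structure but comes from the geometry of the excess bundle, and the paper's argument is built around exactly that.
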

\begin{proof} 
We begin by showing that the truncation functors commute with with 
the functor $W$ used in the proof of Proposition 
\ref{prps-enhancing-monad-RkFk-and-comonad-FkRk}. For this we break up 
$W$ into two: define  
$$ W_1 = (\id_Z \times \pi)_* 
\left(\pi_2^* \mathcal{O}_P(-k) \otimes (-)\right) $$
$$ W_2 = (\pi \times \id_P)_* 
\left(\pi_1^* \mathcal{O}_P(k) \otimes (-)\right), $$
and note that $W \simeq W_1 W_2$.
By Lemma \ref{lemma-pullbacks-and-pushforwards-of-Fourier-Mukai-kernels}
we have 
$$ R_k F_k \simeq W_1 (I^! F_k) \simeq W_2 (R_k I_*), $$
and since $W_1$ and $W_2$ applied to degree $k$ cohomology sheaves 
of $I^! F_k$ and  $R_k I_*$ yields sheaves which are shifted by $k$ to the right, 
we have by the usual spectral sequence argument: 
\begin{equation*}
Q_l = \tau_{\leq 2l} (R_k F_k) \simeq  \tau_{\leq 2l} W_1 (I^! F_k) 
\simeq W_1\left( \tau_{\leq l} \left( I^! F_k \right) \right),
\end{equation*}
\begin{equation*}
Q_m = \tau_{\leq 2m} (R_k F_k) \simeq  \tau_{\leq 2m} W_2 (R_k I_*) 
\simeq W_2\left( \tau_{\leq m} \left( R_k I_* \right) \right).
\end{equation*}

It follows that there exists an isomorphism
\begin{equation}
\label{eqn-Ql-Qm-via-truncations-of-I^!Fk-and-RkI_*}
Q_l Q_m \xrightarrow{\sim} 
W\left(\tau_{\leq l}(I^! F_k) \tau_{\leq m}(R_k I_*)\right), 
\end{equation}
which makes the following square commute
\begin{equation}
\begin{tikzcd}
Q_l Q_m  
\ar{d}{\iota \iota}
\ar{r}{\eqref{eqn-Ql-Qm-via-truncations-of-I^!Fk-and-RkI_*}}[']{\sim}
&
W\left(\tau_{\leq l}(I^! F_k) \tau_{\leq m}(R_k I_*)\right)
\ar{d}
\\
Q_n Q_n 
\ar{r}{\eqref{eqn-RkFkRkFk-via-I^!FkRkI*}}[']{\sim}
& 
W\left(I^! F_kR_k I_*\right).
\end{tikzcd}	
\end{equation}

Recall that in the proof 
of Proposition \ref{prps-enhancing-monad-RkFk-and-comonad-FkRk} 
we've obtained the formula 
\eqref{eqn-cohomology-sheaves-of-RkFkRkFk}
for the cohomology sheaves of $R_k F_k R_k F_k$ by establishing 
an isomorphism 
\begin{equation*}
H^i(I^! F_k R_k I_*)
\underset{\sim}{\xrightarrow{\eqref{eqn-cohomology-sheaves-of-I^!FkRkI_*}}}
z_*
\bigl(
\bigoplus_{i = p+q}
\Omega^p_{P/Z}(-k) \boxtimes \Omega^q_{P/Z}(k) 
\bigr),
\end{equation*}
and feeding it through the functor $W$ via a 
K{\"u}nneth isomorphism and spectral sequence argument 
to obtain 
\begin{equation*} 
H^{2i} (R_k F_k R_k F_k) \simeq 
\left(
\bigoplus_{{i = p + q,}} 
\Delta_* \left(\pi_* \Omega^p_{P/Z} \otimes \pi_* \Omega^q_{P/Z}\right)
\right).
\end{equation*}
Finally, applying the isomorphisms 
$H^{2i}(R_k F_k) \xrightarrow{\eqref{eqn-2i-th-cohomology-of-RkFk}}
H^i(\Delta_* \pi_* \Omega^i_{P/Z})$
we obtain a natural isomorphism 
\begin{equation} 
\label{eqn-cohomologies-of-RkFkRkFk-via-those-of-RkFk-and-RkFk}
H^{2i} (R_k F_k R_k F_k) \simeq 
\left(
\bigoplus_{i = p + q} 
H^{2p}(R_k F_k) H^{2q}(R_k F_k) 
\right). 
\end{equation}
We now claim that the isomorphism
\eqref{eqn-cohomology-sheaves-of-I^!FkRkI_*}
restricts to an isomorphism 
\begin{equation}
\label{eqn-cohomology-sheaves-of-truncated-I^!FkRkI_*}
H^i\left(\tau_{\leq l}(I^! F_k)\tau_{\leq m}(R_k I_*)\right)
\xrightarrow{\sim}
z_*
\bigl(
\bigoplus_{{i = p + q,}} 
\Omega^p_{P/Z}(-k) \boxtimes \Omega^q_{P/Z}(k) 
\bigr). 
\end{equation}
The assertion of the lemma then follows immediately by feeding 
\eqref{eqn-cohomology-sheaves-of-truncated-I^!FkRkI_*}
through the identifications above to obtain the truncated version of  
\eqref{eqn-cohomologies-of-RkFkRkFk-via-those-of-RkFk-and-RkFk}, 
as desired. 

For the claim, it suffices to prove that the natural map 
of sheaves on $P \times P$
$$ H^i\left(\tau_{\leq l}(I^! F_k)\tau_{\leq m}(R_k I_*)\right)
\rightarrow 
H^i\left(I^! F_k R_k I_*\right) $$
is injective and has the right image specified in 
terms of the isomorphism \eqref{eqn-cohomology-sheaves-of-I^!FkRkI_*}.
Thus the whole question is local on $P \times P$. 
On the other hand, the isomorphism 
\eqref{eqn-cohomology-sheaves-of-I^!FkRkI_*} was obtained
by identifying $H^i(I^! F_k R_k I_*)$ with 
the twisted exterior powers $\wedge^i \mathcal{E}''(-k,-k)$
of the excess bundle $\mathcal{E}''$ of the intersection 
of $P \times P$ and $P \times_Z P$ in $X \times X$. 
The excess bundle $\mathcal{E}''$ lives on $P \times_Z P$
and splits naturally as 
$$ \mathcal{E}'' \simeq \pi_1^* \mathcal{N}_{P/X} \oplus \pi_2^*
\mathcal{N}_{P/X}. $$  
Locally, these two direct summands are generated by (the duals of) 
the common generators of the ideal sheaves of $\pi_{12}^* I^!$ and 
$\pi_{23}^* F_k$ and of the ideal sheaves of $\pi_{34}^* R_k$ and
$\pi_{45}^* I_*$ on $P \times X \times Z \times X \times P$ 
restricted to $P \times_Z P$ via the embedding
$$ P \times_Z P \xrightarrow{(\id,\iota,\pi) \times_Z (\pi, \iota, \id)}
P \times X \times Z \times X \times P. $$
It follows that, locally, truncating $I^! F_k$ and $R_k I_*$ 
at cohomology degrees $\leq l$ and $\leq m$, respectively,
corresponds to truncating the direct sum decomposition 
$$ \wedge^i \mathcal{E}'' \simeq \bigoplus_{i = p + q}
\wedge^p (\pi_1^*  \mathcal{N}_{P/X}) \otimes \wedge^q 
(\pi_2^*  \mathcal{N}_{P/X}) $$
at the exterior powers $\leq l$ of $(\pi_1^*  \mathcal{N}_{P/X})$
and $\leq m$ of $\pi_2^* \mathcal{N}_{P/X}$. 
Since 
$$ \wedge^p (\pi_1^*  \mathcal{N}_{P/X}) \otimes \wedge^q 
(\pi_2^*  \mathcal{N}_{P/X}) \otimes \mathcal{O}_{P \times_Z P}(-k,k) 
\simeq \Omega^p_{P/X}(-k) \boxtimes \Omega^q_{P/X}(k), $$
our claim follows. 

\end{proof}

We now prove the main theorem of this section: this cyclic coextension 
structure makes $F_k$ into a $\mathbb{P}^n$-functor, and if the Mukai
flop of $X$ exists, the $\mathbb{P}$-twist of $F_k$ is its derived
monodromy: 

\begin{theorem}
\label{theorem-family-p-twists}
Let $Z$ be a smooth projective variety and let $\mathcal{V}$ be a vector bundle
of rank $n + 1$ on $Z$. Let $P$ be the projectification of $\mathcal{V}$ and
let $\pi \colon P \twoheadrightarrow Z$ be the corresponding $\mathbb{P}^n$-fibration. Let $X$ be 
another smooth projective variety and let $\iota \colon P \hookrightarrow X$ be a codimension $n$
closed immersion with $\mathcal{N}_{P/X} \simeq \Omega^1_{P/Z}$. 

Let 
$$ f_k \quad\overset{\text{def}}{=}\quad
\iota_*\left(\mathcal{O}_{P}(k) \otimes \pi^*(-)\right) \colon 
D(Z) \rightarrow D(X), $$ 
let $r_k$ and $l_k$ be its right and left adjoints, and let
$$h \quad\overset{\text{def}}{=}\quad \id_Z[-2] 
\colon D(Z) \rightarrow D(Z).$$ 
Let $F_k$, $R_k$, $L_k$, and $H$ be their standard
enhancements as described in \S \ref{section-standard-fourier-mukai-kernels-and-the-key-lemma}. 

The structure of a cyclic coextension of $\id$ by $H$ of degree $n$
on the adjunction monad $R_k F_k$ provided by the filtration 
\eqref{eqn-the-standard-filtraton-on-RkFk} makes $F_k$ into a $\mathbb{P}^n$-functor. 

Let $P_{F_k}$ be the $\mathbb{P}$-twist of $F_k$. 
If the Mukai flop 
$X \xleftarrow{\beta} \tilde{X} \xrightarrow{\gamma} X'$ exists 
we have an isomorphism in $D(X \times X)$:
\begin{equation}
\label{eqn-flop-flop-equals-twist}	
KN'_{-k} \circ KN_{n + k + 1} \simeq P_{F_k},
\quad \quad \quad \text{`` flop-flop $=$ twist ''} 
\end{equation}
where $KN_{n + k + 1}$ and $KN'_{-k}$ are Kawamata-Namikawa 
derived flop equivalences $D(X) \rightleftarrows D(X')$, 
cf.~\S\ref{section-family-the-setup}. 
\end{theorem}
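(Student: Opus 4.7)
The plan is to prove this in two stages: first establish the $\mathbb{P}^n$-functor structure on $F_k$, then identify the $\mathbb{P}$-twist with the flop-flop composition.

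For the $\mathbb{P}^n$-functor structure, I would apply Theorem \ref{theorem-strong-monad-and-weak-adjoints-imply-pn-functor}. The trivial conditions check first: since $H = [-2]$ is a shift, $H(\krn F_k) = \krn F_k$ automatically, and the $\ext^{-1}$-vanishing condition holds because $\homm^{-1}_{D(Z\times Z)}(\Delta_*\mathcal{O}_Z, \Delta_*\mathcal{O}_Z[-2i]) = 0$ for $i \geq 1$. For the weak adjoints condition, I would compute $F_k H^n L_k$ via the Key Lemma (Lemma \ref{lemma-pullbacks-and-pushforwards-of-Fourier-Mukai-kernels}) using the explicit forms of $F_k$ and $L_k$ from \eqref{eqn-L_k-explicitly}; the result should match $F_k R_k$ from Proposition \ref{prps-enhancing-monad-RkFk-and-comonad-FkRk}\eqref{item-FkRk-and-the-adjunction-counit} after accounting for the shift by $\omega_{P/X} \simeq \mathcal{O}_P(-n-1)$.

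The strong monad condition is the main technical step. By Lemma \ref{lemma-cohomology-sheaves-of-Ql-Qm}, the cohomology sheaves of $Q_1 Q_j$ are concentrated in even degrees $0, 2, \ldots, 2(j+1)$ and equal to $\Delta_*\mathcal{O}_Z$, matching those of $Q_{j+1}$. I would argue that the truncation functors $\tau_{\leq 2k}$ are compatible with the monad multiplication map $m\colon Q_nQ_n \to Q_n$, so $m$ restricted to $Q_1Q_j$ factors through $\tau_{\leq 2(j+1)} Q_n = Q_{j+1}$, producing the required $m_{1j}$. The induced map on top-degree cohomology identifies with the map $\mathcal{O}_Z \otimes \mathcal{O}_Z \to \mathcal{O}_Z$ of Proposition \ref{prps-enhancing-monad-RkFk-and-comonad-FkRk}\eqref{item-RkFkRkFk-and-the-monad-multiplication}, which yields the required isomorphism $\rho_{1j}\colon HH^j \xrightarrow{\sim} H^{j+1}$.

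For the flop-flop formula, I would compute the Fourier-Mukai composition $KN'_{-k} \star KN_{n+k+1}$ directly. The composition involves the fiber product $\hat{X} \times_{X'} \hat{X}$, which decomposes according to the components $\tilde{X}$ and $P \times P'$ of $\hat{X}$. Using base change along $\hat{\gamma}$ and the geometry of the Mukai flop (in particular, that $\tilde{X} \to X$ is the blowup along $P$ with exceptional divisor $E \simeq F \subset P \times_Z P^\vee$), I would obtain an explicit expression for $KN'_{-k} \star KN_{n+k+1}$ as an object in $D(X \times X)$ supported on $\Delta(X) \cup (P \times P)$. On the other side, $P_{F_k}$ is the unique convolution of $F_k H R_k \xrightarrow{\psi} F_k R_k \xrightarrow{\trace} \id_X$; using Proposition \ref{prps-enhancing-monad-RkFk-and-comonad-FkRk}\eqref{item-FkRk-and-the-adjunction-counit}, both $F_k R_k$ and $F_k H R_k$ are expressible as pushforwards of line bundles on $P \times_Z P$, and the trace identifies with the line bundle restriction followed by Serre duality for $\iota$. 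I would then construct the isomorphism \eqref{eqn-flop-flop-equals-twist} by identifying these two descriptions term-by-term.

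The main obstacle will be the flop-flop identification: the explicit Fourier-Mukai computation of $KN'_{-k} \star KN_{n+k+1}$ requires careful handling of the non-$\tor$-independence of the fiber product defining $\hat{X} \times_{X'} \hat{X}$, and then identifying the resulting object with the convolution defining $P_{F_k}$ involves tracing through the incidence geometry relating $P \times_Z P$ to $F \subset P \times_Z P^\vee$ via the Euler sequence on $P = \mathbb{P}\mathcal{V}$. The split case treated in \cite{AddingtonDonovanMeachan-MukaiFlopsAndPTwists} provides a template, but our non-split filtration makes the matching more delicate; specifically, one must check that the two objects agree as kernels in $D(X \times X)$ and not merely as functors, which uses the support geometry to reduce to a local computation on the normal bundle of $P$ in $X$.
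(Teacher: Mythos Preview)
Your approach to the $\mathbb{P}^n$-functor structure is essentially the same as the paper's: apply Theorem \ref{theorem-strong-monad-and-weak-adjoints-imply-pn-functor} after verifying the $\ext^{-1}$-vanishing, the strong monad condition via Lemma \ref{lemma-cohomology-sheaves-of-Ql-Qm} and the truncation filtration, and the weak adjoints condition. (A minor simplification: the paper checks the weak adjoints condition directly as $R_k \simeq L_k[-2n] = H^n L_k$ from the explicit formulas \eqref{eqn-L_k-explicitly} and \eqref{eqn-R_k-explicitly}, without applying $F_k$.) The factoring of $m|_{Q_1Q_k}$ through $Q_{k+1}$ is argued exactly as you say: the cohomologies of $Q_1Q_k$ sit in degrees $\leq 2(k+1)$, so any map to $Q_n$ lands in $\tau_{\leq 2(k+1)}Q_n = Q_{k+1}$.

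Your approach to the flop-flop formula, however, is quite different from the paper's and likely much harder to carry out. The paper does \emph{not} compute $KN'_{-k} \star KN_{n+k+1}$ directly. Instead it observes that the argument of \cite[Prop.~4.6 and 4.8]{AddingtonDonovanMeachan-MukaiFlopsAndPTwists} goes through \emph{unchanged} in the non-split setting. The method is: (i) pass to the local model $X = \Omega^1_{P/Z}$, where the Mukai flop is a hyperplane section of an Atiyah flop $\mathcal{X} \leftarrow \tilde{\mathcal{X}} \rightarrow \mathcal{X}'$; (ii) use the already-established flop-flop $=$ spherical-twist formula for Atiyah flops from \cite[\S2]{AddingtonDonovanMeachan-MukaiFlopsAndPTwists}; (iii) show that pushforward along the inclusion $s\colon X \hookrightarrow \mathcal{X}$ intertwines $P_{F_k}$ with the spherical twist $T_{\mathcal{F}_k}$ of $\mathcal{F}_k = S_* F_k$, checked fiberwise over $Z$ using the known result for $\mathbb{P}^n$-objects \cite[Prop.~1.4]{HuybrechtsThomas-PnObjectsAndAutoequivalencesOfDerivedCategories}; (iv) for general $X$, use deformation to the normal cone. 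None of these steps invokes the splitting of $R_kF_k$ or the vanishing of $HH^{\text{odd}}(Z)$, which is why the split-case argument transfers verbatim.

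Your direct Fourier--Mukai computation would be formidable: $\hat{X}$ is reducible, $\hat{X} \times_{X'} \hat{X}$ has a complicated component structure with non-$\tor$-independent intersections, and matching the outcome against the three-term convolution defining $P_{F_k}$ without some structural shortcut would be delicate at best. The leverage in the paper's argument is precisely that the hard geometric comparison was already done in \cite{AddingtonDonovanMeachan-MukaiFlopsAndPTwists} for Atiyah flops, and the reduction to that case is insensitive to whether the filtration on $R_kF_k$ splits.
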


\begin{proof}
The condition that the autoequivalence $h$ preserves $\krn f_k$ is
trivially fulfilled since $h = [-2]$. We next 
observe that the $\ext^{-1}$-vanishing condition 
\eqref{eqn-the-minus-one-ext-assumption} holds in our setup 
since for any $1 \leq i \leq n$
\begin{align*}
\homm^{-1}_{D(Z \times Z)}(\id_Z, H^i) = 
\homm^{-1}_{D(Z \times Z)}(\Delta_* \mathcal{O}_Z, \Delta_*
\mathcal{O}_Z[-2i]) \simeq 
\ext^{-2i-1}_{Z \times Z}(\Delta_* \mathcal{O}_Z, \Delta_*
\mathcal{O}_Z) = 0,
\end{align*}
as there are no negative $\ext$s between sheaves. 
Thus, we are once again in the situation where 
Theorem 
\ref{theorem-strong-monad-and-weak-adjoints-imply-pn-functor}
applies and it suffices to show 
that the strong monad condition and the weak adjoints condition hold: 

\begin{enumerate}
\item \em Strong monad condition: \rm 
We need to show that for any $0 \leq k \leq n-1$ the composition
\begin{equation}
\label{eqn-Q_1-Q_k-to-Q_n-monad-multiplication-map}
Q_1 Q_k \xrightarrow{\iota \iota}  Q_n Q_n \xrightarrow{m} Q_n 
\end{equation}
filters through $Q_{k + 1} \xrightarrow{\iota} Q_n$ 
and that the resulting map 
$$ Q_1 Q_k \rightarrow Q_{k + 1} $$
descends to an isomorphism 
$$ H H^{k} \xrightarrow{\sim} H^{k+1}. $$ 

Now since $Q_{k+1} \simeq \tau_{\leq 2(k+1)} R_k F_k$ we have 
an exact triangle
$$ Q_{k+1} \xrightarrow{\iota} R_k F_k
\rightarrow \tau_{\geq 2(k+1) + 1} R_k F_k \rightarrow Q_{k+1}[1]. $$
The cohomology sheaves of $\tau_{\geq 2(k+1) + 1} R_k F_k$
are concentrated in degrees $\geq 2(k+1) + 1$. By 
Lemma \ref{lemma-cohomology-sheaves-of-Ql-Qm}
the cohomology sheaves of $Q_1 Q_k$ are concentrated in degrees $\leq
2(k+1)$. Therefore 
$$ \homm_{D(Z \times Z)}(Q_1 Q_k, \tau_{\geq 2(k+1) + 1} R_k F_k) = 0, $$
and thus any map $Q_1 Q_k \rightarrow Q_n$ filters through $Q_{k+1}
\xrightarrow{\iota} Q_n$. 

On the other hand, the maps $Q_i \rightarrow H^i$ are simply
projections onto the rightmost (degree $2i$) cohomology. Therefore
by Lemma \ref{lemma-cohomology-sheaves-of-Ql-Qm} 
$Q_1 Q_k \rightarrow H^1 H^k$ is also the projection of $Q_1 Q_k$
onto its rightmost (degree $2(k+1)$) cohomology. Since the natural 
map $Q_{k+1} \rightarrow Q_n$ is an isomorphism on the degree $2(k+1)$
cohomology, it remains to show that the map 
\begin{equation}
\label{eqn-map-Q1Qk-to-Q_n-on-degree-2(k+1)-cohomology}
H^{2(k+1)}(Q_1 Q_k) 
\xrightarrow{H^{2(k+1)}(\iota\iota)}
H^{2(k+1)}(Q_n Q_n)
\xrightarrow{H^{2(k+1)}(m)} 
H^{2(k+1)}(Q_n)
\end{equation}
is an isomorphism. By Lemma \ref{lemma-cohomology-sheaves-of-Ql-Qm}
again, we have the natural direct sum decomposition 
$$ H^{2(k+1)}(Q_n Q_n) \simeq \bigoplus_{k+1 = p + q} H^{2p}(Q_n)
H^{2q}(Q_n) \simeq \Delta_* \mathcal{O}_Z^{\oplus(k+1)} $$
which identifies the first composant of 
\eqref{eqn-map-Q1Qk-to-Q_n-on-degree-2(k+1)-cohomology}
with the inclusion of a direct summand 
$$ H^{2}(Q_1) H^{2k}(Q_n) \hookrightarrow \bigoplus_{k+1 = p + q} 
H^{2p}(Q_n) H^{2q}(Q_n). $$
Moreover, this direct sum decomposition coincides with
the direct sum decomposition
\eqref{eqn-cohomology-sheaves-of-RkFkRkFk}.
In Proposition \ref{prps-enhancing-monad-RkFk-and-comonad-FkRk}
\eqref{item-RkFkRkFk-and-the-monad-multiplication}
we've computed the second composant of 
\eqref{eqn-map-Q1Qk-to-Q_n-on-degree-2(k+1)-cohomology}, the monad
multiplication, to be identified by the direct sum decomposition
\eqref{eqn-cohomology-sheaves-of-RkFkRkFk}
$$ H^{2(k+1)}(Q_n Q_n) \simeq \Delta_* \mathcal{O}_Z^{\oplus(k+1)} $$
and the isomorphism \eqref{eqn-cohomology-sheaves-of-RkFk}
$$ H^{2(k+1)}(Q_n) \simeq \Delta_* \mathcal{O}_Z $$
with the sum of the identity maps. It follows that 
$\eqref{eqn-map-Q1Qk-to-Q_n-on-degree-2(k+1)-cohomology}$ is a 
composition of a direct summand inclusion with a sum of isomorphisms
and thus is an isomorphism. 

\item \em Weak adjoints condition: \rm 

In \S\ref{section-family-pn-twist-enhanced-adjunction-monads-and-comonads}
we have established the isomorphisms \eqref{eqn-L_k-explicitly}
and \eqref{eqn-R_k-explicitly}:
\begin{align*}
L_k & \simeq (\iota,\pi)_* \mathcal{O}_{P}(-n-k-1)[n],
\\
R_k & \simeq (\iota,\pi)_* \mathcal{O}_{P}(-n-k-1)[-n].  
\end{align*}
Thus, as desired, we have 
$$ R_k \simeq L_k[-2n] \simeq H^n L_k.$$
\end{enumerate}

Finally, if the Mukai flop 
$X \xleftarrow{\beta} \tilde{X} \xrightarrow{\gamma} X'$ exists, 
the ``flop-flop = twist'' formula \eqref{eqn-flop-flop-equals-twist}
can be established using the beautiful method of
\cite[Prop.~4.6 and
4.8]{AddingtonDonovanMeachan-MukaiFlopsAndPTwists}. It goes through
completely unchanged, even though our $\mathbb{P}^n$-functor is not 
split and we do not assume the Hochschild cohomology vanishing
$HH^{\odd}(Z) = 0$. For the benefit of the reader we recall the main 
steps. 

First, the formula \eqref{eqn-flop-flop-equals-twist} is 
established in the ``local model'' case where $X = \Omega^1_{P/Z}$. 
In that case, our Mukai flop is a hyperplane section of an Atiyah flop
$\mathcal{X} \leftarrow \tilde{\mathcal{X}} \rightarrow
\tilde{\mathcal{X}}'.$
For Atiyah flops, the ``flop-flop = twist'' formula is established 
in \cite[\S2]{AddingtonDonovanMeachan-MukaiFlopsAndPTwists}
without assuming the Hochschild cohomology vanishing or that 
the corresponding spherical functor $\mathcal{F}_k$ is split. 
Thus it also goes through in our case. Then, we note as in 
\cite[Prop.~4.6]{AddingtonDonovanMeachan-MukaiFlopsAndPTwists}
that the spherical functor $\mathcal{F}_k$ is the composition 
$S_* F_k$ where $s$ is the inclusion $X \hookrightarrow \mathcal{X}$,
and that $S_*$ intertwines the $\mathbb{P}^n$-twist $P_{F_k}$ and
the spherical twist $T_{\mathcal{F}_k}$. This is deduced by exploiting
the fact that $F_k$ is a family of $\mathbb{P}^n$-objects over $Z$:
$F_k$ and $\mathcal{F}_k$ restrict in each fibre over each 
point of $Z$ to a $\mathbb{P}^n$-object and a spherical object
for which the claim was already established in  
\cite[Prop.~1.4]{HuybrechtsThomas-PnObjectsAndAutoequivalencesOfDerivedCategories}.
Since $S_*$ also intertwines Kawamata-Namikawa flopping equivalences
with the Bondal-Orlov flopping equivalences, the ``flop-flop = twist''
formula for $F_k$ follows from that for $\mathcal{F}_k$ by observing
that the only endofunctor of $D(X)$ which $S_*$ intertwines with
$\id_\mathcal{X}$ is $\id_X$. Finally, 
to obtain the ``flop-flop = twist'' formula for general $X$, we
use the deformation to the normal cone argument of 
\cite[Prop.~4.8]{AddingtonDonovanMeachan-MukaiFlopsAndPTwists}, 
which also never uses either the Hoschschild cohomology vanishing
or the fact that the $\mathbb{P}^n$-functor is split. 
\end{proof}

\vspace{1cm}


\appendix

\section{$\mathbb{P}^n$-functors and truncated twisted tensor algebras: 
a vector space example}
\begin{center}
by Rina Anno, Timothy Logvinenko, and Sophia Restad
\end{center}
~\newline 

\newcommand{\pair}[1]{\left(#1\right)} 
\newcommand{\set}[1]{\left\{#1\right\}} 
\newcommand{\chev}[1]{\langle#1\rangle} 
\newcommand{\abs}[1]{\left|#1\right|}
\newcommand{\norm}[1]{\left|\left|#1\right|\right|}
\renewcommand{\root}[1]{\sqrt{#1}}
\newcommand{\defbold}[1]{\textnormal{\textbf{#1}}} 
\newcommand{\cat}[1]{\textnormal{\textbf{#1}}} 
\newcommand{\der}[2]{\dfrac{\partial #1}{\partial #2}}

\subsection{Conjecture}
Since the $\mathbb{P}^n$-structure on a functor $F$ is a collection of additional data, the next logical step is investigating 
various $\mathbb{P}^n$-structures on a given functor. In particular, since the $\mathbb{P}$-twist only depends on
the autoequivalence $H$, the map $\sigma: H[-1] \to \A$, and the map 
$$
\gamma_1: \left\{H\xrightarrow{\sigma}\A\right\} \to RF,
$$
one can ask whether a $\mathbb{P}^n$-structure can be reconstructed from this data.
It is clear (see example below) that a $\mathbb{P}^n$-structure with this data may not be unique. We conjecture, however,
that there may be a distinguished $\mathbb{P}^n$-structure with fixed $H$, $\sigma$, and $\gamma_1$,
for which the cyclic co-extension structure on $RF$ is determined by $\sigma$.

\begin{conj}
Let $F: D(\A)\to D(B)$ be an enhanceable functor. Suppose that $(H, Q_n, \gamma)$ is a $\mathbb{P}^n$-structure on $F$.
Let $\sigma: H[-1] \to \A$ be the map that makes $Q_1$ a co-extension of $\id$ by $H$. Then there is a $\mathbb{P}^n$-structure
on $F$ with the same $H$, $\sigma$, and $\gamma_1: Q_1 \to RF$ (thus producing the same $\mathbb{P}$-twist) such that
$Q_n$ is isomorphic to the truncated twisted tensor algebra as an object of $D(\AbimA)$.
\end{conj}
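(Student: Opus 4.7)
The plan is to manufacture the candidate structure $(H, \tta_{H,\sigma,n}, \tilde{\gamma})$ directly from the DG $\mathbb{P}$-twist data $(H, \sigma, \gamma_1)$ associated to the given $\mathbb{P}^n$-structure, as per \S\ref{section-P-twists-via-DG}. First I would fix DG lifts and define a twisted complex map $\tilde{\gamma}\colon \bartta_{H,\sigma,n} \to RF$ by its components $\tilde{\epsilon}_k\colon H^{\bartimes k}[-k] \to RF$, setting $\tilde{\epsilon}_0 = \action$, $\tilde{\epsilon}_1 = \gamma_1$, and more generally
\begin{equation*}
\tilde{\epsilon}_k \;=\; m^{k-1} \circ \gamma_1^{\bartimes k},
\end{equation*}
where $m\colon RF \bartimes RF \to RF$ is the monad multiplication. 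Using that $d\gamma_1 = \action \circ \sigma$ (which is part of the definition of DG $\mathbb{P}$-twist data) and that $\action$ is a two-sided unit for $m$, the Leibniz rule yields
\begin{equation*}
d\tilde{\epsilon}_{k+1} \;=\; m^k \circ \sum_{i=0}^{k} (-1)^i \gamma_1^{\bartimes i} \bartimes (\action \circ \sigma) \bartimes \gamma_1^{\bartimes (k-i)} \;=\; \tilde{\epsilon}_k \circ \xi_{k+1},
\end{equation*}
so the $\tilde{\epsilon}_k$ assemble into a closed, degree zero map of twisted complexes extending $\gamma_1$.

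By associativity of $m$, a direct calculation shows $\tilde{\gamma}$ is an algebra map from the truncated twisted tensor algebra $\tta_{H,\sigma,n}$ to the adjunction monad $RF$; the intertwining $\tilde{\gamma} \circ \iota = \action$ holds tautologically since $\tilde{\epsilon}_0 = \action$. It remains to show that $\tilde{\gamma}$ is an isomorphism in $D(\AbimA)$, and this is the main obstacle. Both $\tta_n$ and $RF$ carry cyclic coextension filtrations with identical graded factors $\id, H, \dots, H^n$ --- on $\tta_n$ this is tautological from the twisted complex presentation, while on $RF$ it comes from the original $(Q_n, \gamma)$. The essential step is to check that $\tilde{\gamma}$ respects these filtrations up to the appropriate order and induces the identity on graded pieces; by a standard five-lemma argument this then forces $\tilde{\gamma}$ to be a homotopy equivalence, provided the obstruction groups $\homm^{-1}_{D(\AbimA)}(H^i, H^j)$ vanish for $i \neq j$ in the relevant range. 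Controlling the induced maps on graded factors amounts to analyzing $\mu_k \circ \tilde{\epsilon}_k$, which via the algebra property reduces to iterated composites of $\gamma_1$ under the monad multiplication of $RF$ --- precisely the data we already control through the original structure.

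Once $\tilde{\gamma}$ is established as an isomorphism, we equip $\tta_n$ with its natural cyclic coextension filtration $\tta_0 \hookrightarrow \tta_1 \hookrightarrow \cdots \hookrightarrow \tta_n$ coming from the truncations of the twisted complex. The resulting data $(H, \tta_n, \tilde{\gamma})$ satisfies the strong monad condition of Definition \ref{defn-strong-monad-condition-general-case} essentially by design: the multiplication on $\tta_n$ restricts to compatible maps $\tta_i \bartimes \tta_j \to \tta_{i+j}$ for $i+j \leq n$, with the induced maps on top-degree factors $H^i \bartimes H^j \to H^{i+j}$ being identities. The weak adjoints condition depends only on $F, R, L, H$ and thus persists from the original structure. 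Assuming the $\ext^{-1}$-vanishing \eqref{eqn-the-minus-one-ext-assumption}, Theorem \ref{theorem-strong-monad-and-weak-adjoints-imply-pn-functor} then promotes this data to a full $\mathbb{P}^n$-structure on $F$; by Proposition \ref{prps-dg-construction-of-p-twists}, the new $\mathbb{P}$-twist coincides with the original since both are determined by the shared $(H, \sigma, \gamma_1)$.

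The hard part is the isomorphism step: without $\ext^{-1}$-vanishing or some extra hypothesis on $\A$, the filtered associated graded argument has non-trivial obstructions, and it is not clear a priori why two cyclic coextensions sharing the same one-step extension $Q_1$ and the same multiset of graded factors should be isomorphic. The appendix treats the case $\A = D(\mathrm{Vect})$, where the vanishing holds trivially and the obstruction calculations become elementary; in general the conjecture remains open for this very reason.
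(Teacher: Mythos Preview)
The statement is a \emph{conjecture}; the paper does not prove it in general. What the paper does establish is the very special case $\A = k$, and it does so by a route completely different from your outline: it applies $R$ to the monad-condition isomorphism $\nu\colon FHQ_{n-1}\to FJ_n$ to extract a rank bound on the linear map $f_h\colon Q_n\otimes_k Q_n\to Q_n\otimes_k Q_n$, $q_1\otimes q_2\mapsto q_1h\otimes q_2 - q_1\otimes hq_2$, and then proves a standalone finite-dimensional linear algebra theorem classifying those $(n{+}1)$-dimensional $k$-algebras $Q$ for which $\dim\ker f_h\leq n+1$. No truncated tensor algebra map, no filtration argument, no appeal to Theorem~\ref{theorem-strong-monad-and-weak-adjoints-imply-pn-functor}.

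Your plan is the natural one and essentially reproduces the construction of Corollary~\ref{cor-the-strongest-monad-condition} in a non-split setting; you are right that the whole difficulty lies in proving $\tilde\gamma$ is an isomorphism. But there is an additional gap you have not named: your filtration argument presupposes that $\tilde\epsilon_k = m^{k-1}\circ\gamma_1^{\bartimes k}$ lands in $Q_k\subset Q_n$, i.e.\ that the monad multiplication of the \emph{original} structure $(H,Q_n,\gamma)$ already respects the filtration. This is exactly the strong monad condition of Definition~\ref{defn-strong-monad-condition-general-case}, which the conjecture does not assume --- the hypothesis is only the weaker monad condition of Definition~\ref{defn-Pn-functor}. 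Without it, there is no reason the induced maps on graded pieces should even be defined, let alone be isomorphisms, and the five-lemma step collapses. The paper's $\A=k$ argument sidesteps this entirely by working with the rank of $f_h$, which is controlled by the monad condition alone.
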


The theorem below shows that this conjecture holds when $\A=k$ and $Q_n$ is split. Note that we cannot require $Q_n$ to be isomorphic to
the truncated twisted tensor algebra as an algebra, since for $\A=k$ that would mean that the algebra $RF$ is isomorphic to
$k[x]/(x^{n+1})$, while it is easy to show that for any polynomial $p(x)$ of degree $n+1$ the algebra $k[x]/(p(x))$ will work.
Another example to consider is extension of a $\mathbb{P}^n$-functor by zero 
(see \S \ref{section-extensions-by-zero})
where the strong monad condition does not hold, as it would were $Q_n$ the truncated twisted tensor algebra.

Let $k$ be a field and let $\A=k$ as a DG algebra concentrated in degree $0$. An autoequivalence $H$ then must be a shift.
Suppose $(H,Q_n,\gamma)$ is a $\mathbb{P}^n$-structure on a functor $F$.
If $H=[m]$ with $m\ne 0, 1$, the cyclic coextension $Q_n$ has to split and the algebra structure on it has to be that of 
$k[x]/x^{n+1}$ with $\mathrm{deg}\, x=m$. Consider the case where $H=k$ is the shift by $0$. Then
$Q_n$ is split as a cyclic coextension and as a $k$-algebra it is $n$-dimensional and concentrated in degree $0$.
Let $h\in Q_n$ be any non-zero element in $H\subset Q_n$.

Apply $R$ to the monad map $\nu: FHQ_{n-1} \to FJ_n$ and identify $RF$ and $Q_n$ via $\gamma$. We get the map
$$
Q_{n-1} \otimes H \otimes Q_n 
\xrightarrow{}
Q_n \otimes H \otimes Q_n
\simeq
Q_n \otimes Q_n
\xrightarrow{(-)h \otimes (-) - (-) \otimes h(-)}
Q_n \otimes Q_n
\xrightarrow{}
J_n \otimes Q_n.
$$
Since this is an isomorphism, the rank of the map in the middle has to be at least $n^2-n$. The theorem below then implies that
in this case, too, the algebra $Q_n$ has to be isomorphic to $k[x]/p(x)$, where $p(x)$ is a polynomial of degree $n+1$
where $x$ is identified with $h$. 
Then we can choose a basis $\{1,h,\ldots, h^n\}$ of $Q_n$ in which the monad multiplication satisfies the strong monad condition.

\subsection{Finite dimensional algebra over a field}

\begin{theorem*}
    Let $k$ be a field, and let $Q$ be a $k$-algebra of finite dimension $n$.
    Choose $h \in Q$, and let
        \[f_h: Q \otimes_kQ \to Q\otimes_K Q\]
    be given by
        \[f_h: q_1 \otimes q_2 \mapsto q_1h \otimes q_2 - q_1 \otimes hq_2.\]
    Then $\dim\ker f_h \leq n \iff Q \simeq k[x]/(p(x)), h \leftrightarrow x$, for some polynomial $p(x)$ of degree $n$.
\end{theorem*}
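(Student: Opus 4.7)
The plan is to reduce the theorem to invariant-factor theory applied to a Sylvester equation. First observe that $m \circ f_h = 0$ for the multiplication map $m\colon Q\otimes_k Q \to Q$, so $f_h$ factors through the noncommutative K\"ahler differentials $\Omega^1_{Q/k} := \ker m$ of $k$-dimension $n^2-n$. Rank-nullity then gives $\dim \ker f_h \geq n$ unconditionally, with equality iff $f_h$ surjects onto $\Omega^1_{Q/k}$. The image of $f_h$ is precisely the $Q$-subbimodule $Q \cdot dh \cdot Q$ generated by $dh := 1\otimes h - h\otimes 1$, so equality occurs iff $dh$ generates $\Omega^1_{Q/k}$ as a $Q$-bimodule. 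For the backward direction of the theorem, $Q = k[x]/p(x)$ and the Leibniz identity $d(h^i) = \sum_{j=0}^{i-1} h^j \cdot dh \cdot h^{i-1-j}$ shows that every element of $\Omega^1_{Q/k}$ lies in $Q \cdot dh \cdot Q$, giving equality.

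For the forward direction I would fix a $k$-basis of $Q$ to identify $Q \otimes_k Q$ with $M_n(k)$, so that $f_h$ becomes the Sylvester-type operator $M \mapsto AM - M B^T$, where $A$ and $B$ are the matrices of right multiplication $r_h$ and left multiplication $l_h$ by $h$ on $Q$. The standard intertwiner formula then gives
$$
\dim\ker f_h \;=\; \dim\Hom_{k[t]}(V_{B^T},V_A) \;=\; \sum_{i,j} \deg\gcd(\alpha_i,\beta_j),
$$
where $V_A, V_{B^T}$ are $k^n$ with $t$-action via $A$ and $B^T$, and $\alpha_1\mid\cdots\mid\alpha_s$, $\beta_1\mid\cdots\mid\beta_t$ are the invariant factors of $r_h$ and $l_h$ respectively (using that $B$ and $B^T$ are similar, so share invariant factors). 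Since the left and right regular representations of $Q$ are faithful, evaluating $p(r_h)$ and $p(l_h)$ at $1 \in Q$ identifies the minimal polynomial of each of $r_h, l_h$ with the minimal polynomial $m(t) \in k[t]$ of $h$ in $Q$, of degree $d := \dim_k k[h]$. As minimal polynomial equals largest invariant factor, $\alpha_s = \beta_t = m$ and every other $\alpha_i$ or $\beta_j$ divides $m$.

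The main estimate will come from bounding the sum above by the contribution of pairs with $j = t$ or $i = s$: using $\deg\gcd(\alpha_i, m) = \deg\alpha_i$ together with $\sum_i \deg\alpha_i = \sum_j \deg\beta_j = n$, and subtracting the double-counted pair $(s,t)$ which contributes $d$, this yields the lower bound
$$
\dim \ker f_h \;\geq\; 2n - d.
$$
The hypothesis $\dim\ker f_h \leq n$ then forces $d \geq n$, so $d = n$ and $k[h] = Q$; in particular $Q \cong k[x]/m(x)$ with $h \leftrightarrow x$ and $\deg m = n$, as required. The only step really requiring care will be the inclusion-exclusion bookkeeping for this estimate; the rest is standard linear algebra plus the observation about minimal polynomials of $r_h$ and $l_h$.
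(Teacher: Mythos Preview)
Your proof is correct and cleaner than the paper's. Both arguments ultimately establish the bound $\dim\ker f_h \geq 2n - d$ where $d = \dim_k k[h]$, but the routes differ. The paper works by hand: it chooses cyclic-decomposition bases for right and for left multiplication by $h$ separately, writes $f_h$ in block form with companion matrices, performs block row reduction, and treats the commutative and noncommutative cases in two passes. You instead (i) observe that $\operatorname{im} f_h \subset \ker(m) = \Omega^1_{Q/k}$, which immediately gives the unconditional bound $\dim\ker f_h \geq n$ and reduces the backward direction to the Leibniz identity, and (ii) recognise $f_h$ as a Sylvester operator and invoke the invariant-factor formula $\dim\ker f_h = \sum_{i,j}\deg\gcd(\alpha_i,\beta_j)$ together with $\alpha_s = \beta_t = m_h$ to extract $2n-d$ by inclusion--exclusion. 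Your approach avoids the commutative/noncommutative case split and the explicit row reduction; the paper's is more self-contained in that it does not cite the intertwiner formula or the similarity $B \sim B^T$. The bookkeeping you flag as needing care is indeed trivial: dropping all terms with $i<s$ and $j<t$ gives $\sum_i \deg\alpha_i + \sum_{j<t}\deg\beta_j = n + (n-d)$.
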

\begin{proof}
($\impliedby$)
    Let $p(x) = x^n + \sum_{i=1}^{n-1}a_ix^i$ be the minimal polynomial that represents multiplication by $x$,
    and let $A$ be the $n \times n$ companion matrix of $p(x)$.
    Explicitly,
        \[A = \begin{pmatrix}
            0 & & & -a_0 \\
            1 & \ddots & & -a_1 \\
              & \ddots & 0 & \vdots \\
              & & 1 & -a_{n-1}  
            \end{pmatrix}\]
Consider the basis $\set{1 \otimes 1, x \otimes 1, \ldots, x^{n-1}
\otimes 1, 1 \otimes x, \ldots, x^{n-1} \otimes x^{n-1}}$ for $Q \otimes Q$.  
With respect to it 
        \[f_h = A \otimes I - I \otimes A =
        \begin{pmatrix}
            A & & & a_0\cdot I \\
            -I & \ddots & & a_1\cdot I \\
               & \ddots & A & \vdots \\
               & & -I & A + a_{n-1}\cdot I
        \end{pmatrix}\]
where $\otimes$ denotes the Kronecker product. It remains to find the rank 
of matrix $A \otimes I - I \otimes A$.
The $n-1$ blocks of negative identity matrices on the subdiagonal
    ensure that the rank is at least $n^2 - n$,
    so $\dim\ker f_h \leq n$.
    To show equality, we use row reduction on the $n \times n$ blocks
    mapping the top ``row'' $R_1$ to $R_1 + AR_2 + A^2R_3 + \cdots  + A^{n-1}R_n$.
    The top ``row'' becomes all zero blocks until the very last block, which will contain the entry
        \[A^n + \sum_{i=0}^{n-1}a_i A^i = p(A).\]
    But $p$ is the characteristic polynomial of $A$; therefore $p(A)$ is the zero matrix,
    and $\dim\ker f_h = n$.
    
($\implies$)
    We first construct a basis of $Q$ that is conducive to
multiplication by $h$. As $Q$ is finite dimensional, there is
a linear dependence for the powers of $h$. Take the 
minimal polynomial representing it 
        \[p(x) = x^{m} + \sum_{i=0}^{m-1}a_{i}x^i\]
where $ m \leq n$. 
If $m = n$ then $\set{1, h, \ldots, h^n}$ is a basis for $Q$ and the proof is complete.
    
    Assume $m < n$, and, for now, that $Q$ is commutative.
    For the purposes of continuing,
    we will relabel $m := m_0, \: p(x) := p_0(x)$, and $A := A_0$.
    So far we have constructed a basis for the subalgebra $\chev{h}$ of $Q$, where
        \[\chev{h} \simeq k[x]/(p_0(x)).\]
    To extend this basis, choose $q_1 \in Q \setminus \chev{h}$, and consider the vector space $q_1\chev{h}$.
    We seek an element that behaves like a minimal polynomial representing the linear dependence of $\set{q_1, q_1h, q_1h^2, \ldots}$.
    Now, $I_1 = \set{ y \in\chev{h} \mid q_1y = 0}$ is an ideal in the principal ideal ring $\chev{h}$,
    and may be generated by a single element.
    Let $p_1(x)$ be the monic polynomial of minimal degree such that $(p_1(h)) = I_1$.
    The ideals of $K[x]/(p_0(x))$ are precisely those generated by polynomials that divide $p_0(x)$, so we may assume that $p_1 \mid p_0$.
    
    Proceeding in this manner, we inductively choose $q_j \in Q\setminus (\chev{h} \oplus q_1\chev{h} \oplus \cdots \oplus q_{j-1}\chev{h})$,
    with corresponding polynomials $p_j$ as constructed above, and companion matrices $A_j$.
    Similar to before, we have for each $i$ that $p_i \mid p_0$.
    This gives the basis
    $\set{1, h, h^2, \ldots, h^{m_0-1}, q_1, q_1h, \ldots, q_kh^{m_k-1}},$
    with
        \[A =
        \begin{pmatrix}
            A_0 & & \\
             & \ddots & \\
             & & A_k
        \end{pmatrix} \]
    representing multiplication by $h$ for all of $Q$,
    where $\sum_{i=0}^k m_i = n$.
    For $Q \otimes_K Q$, choose the basis
        \[\set{1 \otimes 1, h \otimes 1, \ldots, q_kh^{m_k-1} \otimes 1, 1 \otimes h, \ldots, q_kh^{m_k-1} \otimes q_kh^{m_k-1}},\]
    under which the map $f_h$ is represented by the matrix
        \[A \otimes I_n - I_n \otimes A = 
        \begin{pmatrix}
            A \otimes I_{m_0} - I_n \otimes A_0 & & \\
            & \ddots & \\
            & & A \otimes I_{m_k} - I_n \otimes A_k
        \end{pmatrix}.\]
    Clearly $\dim\ker f_h = \sum_{i=0}^k \operatorname{null} (A \otimes I_{m_i} - I_n \otimes A_i)$.
    For each $0 \leq i \leq k$,
        \[A \otimes I_{m_i} - I_n \otimes A_i = 
        \begin{pmatrix}
            A & & & a_{i,0}\cdot I \\
            -I & \ddots & & a_{i,1}\cdot I \\
               & \ddots & A & \vdots \\
               & & -I & A + a_{i,m_i-1}\cdot I
        \end{pmatrix} \mapsto \begin{pmatrix}
        0 & & & p_i(A) \\
        -I & \ddots & & a_{i,1}\cdot I \\
        & \ddots & 0 & \vdots \\
        & & -I & A + a_{i,m_i-1}\cdot I
        \end{pmatrix}\]
    by the same row reduction process outlined above.
    The null space of the resulting matrix is thus at least $m_i$-dimensional.
But as $p_i \mid p_0$ for each $i$, $p_0(A)$ is the zero matrix and $\dim\ker A\otimes I_{m_0} - I_n \otimes A_0 = n$.
    Hence 
        \[\dim\ker f_h = \sum_{i=0}^k \operatorname{null} (A\otimes I_{m_i} - I_n \otimes A_i) = \sum_{i=0}^k \operatorname{null}p_i(A) \geq n + \sum_{i=1}^k m_i > n.\]
    
    For the noncommutative case we construct bases
representing right and left multiplication by $h$ 
        \[\begin{aligned}
        \set{1, h, h^2, \ldots, h^{m_0-1}, q_1, q_1h, \ldots, q_kh^{m_k-1}}, \textnormal{ and }
        \set{1, h, h^2, \ldots, h^{m_0-1}, \tilde{q}_1, h\tilde{q}_1, \ldots, h^{\tilde{m}_1-1}\tilde{q}_1, \tilde{q}_2, \ldots, h^{\tilde{m}_l-1}\tilde{q}_l},
        \end{aligned}\]
    with similar corresponding minimal polynomials $\tilde{p}_j(x)$ of degree $\tilde{m}_j$, and companion matrices $\tilde{A}_j$,
    noting that $\tilde{p_0} = p_0$, and $\tilde{p}_j$ divides $\tilde{p}_0$ for each $1 \leq j \leq l$,
    and that $\sum_{i=0}^k m_i = \sum_{j=0}^l \tilde{m}_j = n$.
    Then 
        \[f_h = A \otimes I - I \otimes \tilde{A} =
            \begin{pmatrix}
                A \otimes I_{\tilde{m_0}} - I_n \otimes \tilde{A}_0 & & \\
                & \ddots & \\
                & & A \otimes I_{\tilde{m}_l} - I_n \otimes \tilde{A}_l
            \end{pmatrix}.\]
    Once again, $\dim\ker f_h = \sum_{j=0}^l \operatorname{null} (A \otimes I_{\tilde{m}_j} - I_n \otimes \tilde{A_j})$,
    and for each $j$ such that $0 \leq j \leq l$,
        \[A \otimes I_{\tilde{m}_j} - I_n \otimes \tilde{A}_j = 
            \begin{pmatrix}
                A & & & \tilde{a}_{j,0}\cdot I \\
                -I & \ddots & & \tilde{a}_{j,1}\cdot I \\
               & \ddots & A & \vdots \\
               & & -I & A + \tilde{a}_{j,\tilde{m}_j-1}\cdot I
            \end{pmatrix} \mapsto \begin{pmatrix}
                0 & & & \tilde{p}_j(A) \\
                -I & \ddots & & \tilde{a}_{j,1}\cdot I \\
                & \ddots & 0 & \vdots \\
                & & -I & A +  \tilde{a}_{j,\tilde{m}-1}\cdot I
            \end{pmatrix}.\]
    So then $\dim\ker f_h = \sum_{j=0}^l \operatorname{null}\tilde{p}_j(A)$.
    For the case $j = 0$ we have that $\operatorname{null}\tilde{p}_0(A) = n$, and for $j \geq 1$, $\operatorname{null}\tilde{p}_j(A) \geq \tilde{m}_j$ because $\tilde{p}_j | p_0$.
    Therefore
        \[\dim\ker f_h = \sum_{j=0}^l \operatorname{null}\tilde{p}_j(A) \geq n + \sum_{j=1}^l\tilde{m}_j > n.\]
\end{proof}

\bibliography{references}
\bibliographystyle{amsalpha}
\end{document}